\documentclass[a4paper,titlepage=false]{scrreprt}

\usepackage{etex}
\usepackage[utf8]{inputenc}
\usepackage{fontenc}
\usepackage{amsmath,amssymb,enumerate,fullpage,float}

\usepackage[dvipsnames]{xcolor}
\usepackage{epsfig}
\usepackage{titling,makeidx}
\usepackage{microtype}
\usepackage[english]{babel}
\usepackage{amsthm}
\usepackage{mathabx}
\usepackage{booktabs,csquotes}
\usepackage{longtable}
\usepackage{comment,setspace}
\usepackage{stmaryrd}
\usepackage{relsize,enumitem}
\usepackage{mathtools,afterpage}
\usepackage{calc,import}
\usepackage{caption,subcaption}
\usepackage{faktor}
\usepackage[all]{xy}
\usepackage{array}
\usepackage{ragged2e}
\usepackage{textcomp,xspace,mparhack}
\usepackage{tabularx,scrhack}

\usepackage[backend=bibtex,bibencoding=utf8,language=auto,style=alphabetic,sorting=nyt,maxbibnames=10,natbib=true,isbn=false,url=false]{biblatex}

\usepackage{listings}
\lstset{language=[LaTeX]Tex,
    morekeywords={PassOptionsToPackage,selectlanguage},
    keywordstyle=\color{RoyalBlue},
    basicstyle=\small\ttfamily,
    commentstyle=\color{Green}\ttfamily,
    stringstyle=\rmfamily,
    numbers=none,
    numberstyle=\scriptsize,
    stepnumber=5,
    numbersep=8pt,
    showstringspaces=false,
    breaklines=true,
    belowcaptionskip=.75\baselineskip
}

\usepackage[capitalize,nameinlink]{cleveref}

\newcommand\numberthis{\addtocounter{equation}{1}\tag{\theequation}}

\newcommand{\RR}{\mathbb{R}}
\newcommand{\CC}{\mathbb{C}}
\newcommand{\ZZ}{\mathbb{Z}}
\newcommand{\NN}{\mathbb{N}}
\newcommand{\QQ}{\mathbb{Q}}

\newcommand{\HH}{\mathbb{H}}
\newcommand{\DD}{\mathbb{D}}
\newcommand{\LL}{\mathbb{L}}

\newcommand{\mft}{\mathfrak t}

\newcommand{\mfK}{\mathfrak K}
\newcommand{\mfh}{\mathfrak h}

\newcommand{\mcM}{\mathcal M}
\newcommand{\mcP}{\mathcal P}
\newcommand{\mcC}{\mathcal C}

\newcommand{\mcX}{\mathcal X}
\newcommand{\mcR}{\mathcal R}

\newcommand{\mcA}{\mathcal A}
\newcommand{\mcF}{\mathcal F}
\newcommand{\mcE}{\mathcal E}
\newcommand{\mcU}{\mathcal U}

\newcommand{\mcT}{\mathcal T}
\newcommand{\mcD}{\mathcal D}
\newcommand{\mcB}{\mathcal B}

\newcommand{\mcG}{\mathcal G}

\newcommand{\mcQ}{\mathcal Q}

\newcommand{\mcH}{\mathcal H}
\newcommand{\mcV}{\mathcal V}
\newcommand{\mcW}{\mathcal W}
\newcommand{\mcI}{\mathcal I}

\newcommand{\dd}{\mathrm{d}}
\newcommand{\D}{\mathrm{D}}

\newcommand{\ii}{\mathrm{i}}
\newcommand{\si}{\Sigma}
\newcommand{\del}{\partial}

\newcommand{\cin}{C^{\infty}}
\newcommand{\lra}{\longrightarrow}
\newcommand{\ol}[1]{\overline{#1}}
\newcommand{\ul}[1]{\underline{#1}}
\newcommand{\wt}[1]{\widetilde{#1}}
\newcommand{\ddt}{\frac{\dd}{\dd t}_{|_{t=0}}}
\newcommand{\glue}{\operatorname{glue}}
\newcommand{\sglue}{\operatorname{sglue}}
\newcommand{\Ad}{\operatorname{Ad}}
\newcommand{\cinl}{C^{\infty}_{\text{loc}}}
\newcommand{\ga}{g_{\text{a}}}

\newcommand{\bq}{\textbf{q}}
\newcommand{\bw}{\textbf{w}}
\newcommand{\bp}{\textbf{p}}
\newcommand{\bz}{\textbf{z}}
\newcommand{\br}{\textbf{r}}
\newcommand{\wh}[1]{\widehat{#1}}
\newcommand{\wwp}{\omega_{\text{WP}}}

\newcommand{\fgt}{\operatorname{fgt}}

\newcommand{\pr}{\operatorname{pr}}
\newcommand{\bG}{\boldsymbol{\Gamma}}

\newcommand{\id}{\operatorname{id}}
\newcommand{\Hom}{\operatorname{Hom}}

\newcommand{\Aut}{\operatorname{Aut}}

\newcommand{\ts}{\mathrm{T}}
\newcommand{\rank}{\operatorname{rk}}

\newcommand{\End}{\operatorname{End}}

\newcommand{\rot}{\operatorname{rot}}

\newcommand{\obj}{\operatorname{Obj}}
\newcommand{\mcg}{\operatorname{MCG}}
\newcommand{\lcm}{\operatorname{lcm}}
\newcommand{\Mor}{\operatorname{Mor}}
\newcommand{\Ob}{\operatorname{Ob}}
\newcommand{\FN}{\operatorname{FN}}
\newcommand{\vol}{\operatorname{vol}}
\newcommand{\Isom}{\operatorname{Isom}}
\newcommand{\dist}{\operatorname{dist}}
\newcommand{\arsinh}{\operatorname{arsinh}}

\renewcommand{\Im}{\operatorname{Im}}

\newcommand{\pt}{\mathrm{pt}}

\newcommand{\ev}{\mathrm{ev}}

\newcommand{\PSL}{\mathrm{PSL}}

\newcommand{\Diff}{\mathrm{Diff}}

\newcounter{dummy}

\theoremstyle{definition}
\newtheorem{thm}{Theorem}[chapter]
\newtheorem{prop}[thm]{Proposition}
\newtheorem{lem}[thm]{Lemma}
\newtheorem{cor}[thm]{Corollary}
\newtheorem{example}[thm]{Example}
\newtheorem{def-lemma}[thm]{Definition/Lemma}

\theoremstyle{definition}
\newtheorem{definition}[thm]{Definition}

\newtheorem{rmk}[thm]{Remark}

\title{\vspace{-2em} Symplectic Geometry of Moduli Spaces of Hurwitz Covers}
\author{Sven Prüfer\thanks{now at Deutsches Zentrum für Luft- und Raumfahrt e.V., thesis was written at Universität Augsburg}}

\addbibresource{promotion.bib}
\makeindex

\setcounter{tocdepth}{1}
\pagestyle{plain}
\setlength{\parindent}{0cm}
\setlength{\parskip}{\medskipamount}

\everymath{\displaystyle}

\begin{document}

\maketitle

\begin{abstract}
We extend results by Mirzakhani in \cite{mirzakhani_weil-petersson_2007} to moduli spaces of Hurwitz covers. In particular we obtain equations relating Weil–Petersson volumes of moduli spaces of Hurwitz covers, Hurwitz numbers and certain Hurwitz cycles on Deligne--Mumford space related to those Riemann surfaces admitting Hurwitz covers of a specified branching profile. We state the precise orbifold structure of the moduli space of Hurwitz covers by applying ideas and results from Robbin--Salamon in \cite{robbin_construction_2006}. Furthermore we prove compactness of the involved moduli spaces by applying SFT-compactness in the Cieliebak--Mohnke version from \cite{cieliebak_compactness_2005}.
\end{abstract}

\tableofcontents

\chapter{Introduction}

\section{The General Picture of Hurwitz Numbers}

In 1891 Adolf Hurwitz started the systematic analysis of the counts of branched covers of Riemann surfaces with prescribed branching profile in his paper \cite{hurwitz_uber_1891} which are nowadays called \emph{Hurwitz numbers}. Since then there have been many publications devoted to this problem in various areas of mathematics such as complex analysis, combinatorics, algebraic geometry, topology and symplectic geometry. In particular there has been a growing interest in Hurwitz numbers starting in the 1990's as they appeared in mathematical physics, Gromov--Witten theory and algebraic geometry.

In order to specify a Hurwitz number one needs to fix a closed target surface $X$ as well as $n$ partitions of the degree $d\in\NN_{>0}$. One then considers equivalence classes of holomorphic maps $u:C\lra X$ of degree $d$ which have $n$ branch points whose fibres have degrees corresponding to the given partitions. Two such Hurwitz covers $u:C\lra X$ and $u':C'\lra X$ are called equivalent if there exists a biholomorphism $\Phi:C\lra C'$ such that $u'=u\circ\Phi$. The Hurwitz number is now defined as the sum of these equivalence classes weighted by the inverse of the number of automorphisms of the Hurwitz cover.

Due to the Riemann existence theorem one can relate these Hurwitz numbers easily to combinatorial properties of the symmetric group $S_d$ by building an appropriate cover of $X\setminus\{n\text{ points}\}$ with monodromy around these $n$ points specified by choices of $d$ permutations with cycle type given by the partition and then gluing in holomorphic discs at the punctures. This way Hurwitz was able to calculate lots of examples of Hurwitz numbers, see \cite{hurwitz_uber_1891} and \cite{hurwitz_ueber_1901}. Nowadays this can be used to calculate any concrete Hurwitz number with the help of a computer. However, these algorithms might become very slow for higher degrees.

Besides the pure interest in calculating these numbers it turns out that they are related to various other areas of mathematics. One example of a rather obvious connection is the relation to Gromov--Witten theory. This subject, originally introduced by Mikhail Gromov in \cite{gromov_pseudo_1985} in 1985, consists of the study of moduli spaces of $J$-holomorphic maps from a Riemann surface into a symplectic manifold. So-called Gromov--Witten invariants count equivalence classes of these maps with prescribed marked points or tangential conditions. In this picture Hurwitz numbers appear as numbers related to the Gromov--Witten invariants of the target Riemann surface $X$. So the theory of Hurwitz numbers appears as the lowest-dimensional example of Gromov--Witten theory which does not have a point as the target.\footnote{Note that the Gromov--Witten theory of a point corresponds to intersection theory on the Deligne--Mumford space which is far from trivial!} This point of view was used for example in \cite{okounkov_gromov-witten_2009} by Andrei Okounkov and Rahul Pandharipande by applying virtual localization techniques to equivariant Gromov--Witten theory on $\CC P^1$.

It is common to restrict one's attention to a subclass of Hurwitz numbers. For example one can look at hyperelliptic ones which have a degree-$2$ cover over the sphere. One very famous class are simple Hurwitz numbers corresponding to an arbitrary branching profile over a special point and with all other branch points being simple, i.e.\ only one degree $2$ critical point and all others unbranched in such a fibre. The double Hurwitz numbers are the same definition just with arbitrarily chosen branching profiles over two such points. The simple Hurwitz numbers have been intensely studied by e.g.\ Hurwitz in \cite{hurwitz_uber_1891} and Ekedahl--Lando--Shapiro--Vainshtein in \cite{ekedahl_hurwitz_2001}. In the latter they prove the so-called ELSV formula

\begin{equation}
  h_{g;k_1,\ldots,k_n}=\frac{(K+n+2g-2)!}{\#\Aut(k_1,\ldots,k_n)}\prod_{i=1}^n\frac{k_i^{k_i}}{k_i!}\int_{\ol{\mcM}_{g,n}}\frac{c(\Lambda^{\ast}_{g,n})}{(1-k_1\psi_1)\cdots (1-k_n\psi_n)},
  \label{eq:elsv}
\end{equation}

where $g$ is the source genus, $K=k_1+\cdots+k_n$ is the sum of the branching degrees over the special point, $\ol{\mcM}_{g,n}$ is the Deligne--Mumford space, $c(\Lambda^{\ast}_{g,n})$ is the total Chern class of the dual of the Hodge bundle over Deligne--Mumford space and $\psi_i$ is the $i$-th $\psi$-class on $\ol{\mcM}_{g,n}$. \cref{eq:elsv} relates Hurwitz numbers with intersection numbers on Deligne--Mumford space. One might expect that there is some relation between these two things as for a fixed target Riemann surface together with a fixed covering one obtains a unique complex structure on the source such that the covering is holomorphic. So Hurwitz numbers count certain Riemann surfaces admitting Hurwitz covers as a source which in turn might be expressable by intersections of characteristic classes that contain some kind of geometric information such as $\lambda$-, $\psi$- and $\kappa$-classes on Deligne--Mumford space.

However, the relation between Hurwitz numbers and Deligne--Mumford spaces goes further than that. Suppose we consider the moduli space of Hurwitz covers with varying source and target complex structure but fixed branching profile. Denote this space by $\mcM_{g,k,h,n}(T)$, where $(g,k)$ and $(h,n)$ correspond to the genera and marked points on the source and target surface, respectively, and $T$ denoting the branching profile. Then we have maps

\begin{equation}
  \xymatrix{
    & \mcM_{g,k,h,n}(T) \ar[ld]^{\fgt} \ar[dr]^{\ev}& \\
    \ol{\mcM}_{g,k} & & \ol{\mcM}_{h,n}
    }
  \label{eq:correspondence}
\end{equation}

which assign to a Hurwitz cover its source and target surface, respectively. Such a triangle is called a correspondence in algebraic geometry and is a main tool in determining the intersection theory of the target spaces, see e.g.\ \cite{fulton_intersection_1998}. Together with the additional structure of Deligne--Mumford spaces such as forgetful maps and gluing maps describing the compactification divisors geometers have been able to say a lot on the intersection theory of Deligne--Mumford spaces.

Regarding intersection theory on Deligne--Mumford spaces there is another famous result by Maryam Mirzakhani in \cite{mirzakhani_weil-petersson_2007} relating Weil--Petersson symplectic volumes of moduli spaces of bordered Riemann surfaces and $\psi$-class intersections on Deligne--Mumford space. This is achieved by applying a localization method for Hamiltonian torus actions on suitable moduli spaces of hyperbolic Riemann surfaces. She then proceeds to cut the surfaces into smaller ones and applies a generalized McShane identity for lengths of closed simple geodesics on hyperbolic surfaces to obtain a recursion formula for the Weil--Petersson volumes as well as the $\psi$-class intersections on Deligne--Mumford space. In this thesis we will adapt the first part of these ideas to moduli spaces of Hurwitz covers.

Note that one corollary that Mirzakhani obtained was that the generating function of $\psi$-class intersections on Deligne--Mumford space satisfies the Korteweg--de Vries hierarchy, a system of partial differential equations originally coming from the theory of waves on water surfaces. This result was first shown by Maxim Kontsevich in \cite{kontsevich_intersection_1992}. Similarly the generating functions of various types of Hurwitz numbers are related to other hierarchies, too. In particular Okounkov proved in \cite{okounkov_toda_2000} that the generating function of double Hurwitz numbers satisfies the Toda lattice hierarchy. This in turn implies that the generating function of connected simple Hurwitz numbers is a solution to the Kadomtsev--Petviashvili hierarchy, as was shown in \cite{kazarian_algebro-geometric_2007}. These properties of generating function of Hurwitz numbers make them very interesting to mathematical physicists as these hierarchies appear in various toy models for quantum or topological gravity.

Yet another approach to Hurwitz numbers is the detailed investigation of properties of the symmetric group $S_d$ as was done in \cite{goulden_transitive_1997}. They obtain the cut--and--join equation whose name refers to the distinction that multiplying a permutation with a transposition can either increase the number of cycles by one or decrease it by one. As we do not want to introduce too much notation we will just state that this can be formulated as a second-order partial differential equation for the generating function of simple Hurwitz numbers.

One more common idea for calculating simple Hurwitz numbers is to try to understand what happens if one deforms the target Riemann surface of a Hurwitz cover by moving a simple branch point to the special branch point. This makes the target surface nodal but it is indeed possible to understand the relation between the corresponding Hurwitz numbers. This gives a degeneration formula which can be found for example in the appendix of \cite{okounkov_gromov-witten_2009}. Note that this can be a viable strategy in more general cases: Deform the target Riemann surface to make it completely nodal such that all its components are spheres, calculate their Hurwitz numbers and then put them back together by recalling how the Hurwitz numbers change when collapsing curves.

The theory of double Hurwitz numbers was investigated in detail in \cite{okounkov_gromov-witten_2006}. They introduced \emph{completed cycles} as special formal linear combinations of cycles in the symmetric group which could be used to formulate a precise relation between gravitational descendants\footnote{Gravitational descendants refers to the fact that one incorporates integrals of $\ev^{\ast}\psi$-classes over the Gromov--Witten moduli space.} of relative Gromov--Witten numbers and Hurwitz numbers involving these completed cycles. This allowed them to calculate for example generating functions for Gromov--Witten invariants of elliptic curves in the so-called infinite wedge formalism, a special operator calculus adapted to calculations involving Toda hierarchies.

We have seen that there exist many different approaches to Hurwitz numbers and we will explain below how this thesis makes use of some of them, in particular the correspondence of Deligne--Mumford spaces, Mirzakhanis ideas of applying fixed-point formulas for Hamiltonian torus actions and relating Hurwitz numbers with $\psi$-class intersections on Deligne--Mumford space. Before describing our approach in more detail let us say a few more words on compactifications and generalizations.

Hurwitz numbers are defined as counts of smooth Hurwitz covers meaning that both source and target are smooth Riemann surfaces. One major ingredient of Gromov--Witten theory as well as intersection theory is compactness of the underlying moduli spaces, so we need some kind of compactification of the space of smooth Hurwitz covers. There are different ways how to approach this. One idea for simple Hurwitz covers with a sphere as the target is for example to consider them as meromorphic functions on a Riemann surface with $\infty$ as the special point. In \cite{ekedahl_hurwitz_2001} this space is embedded into the space of generalized principal parts built from Laurent coefficients of the branched cover at the critical points in the fibre over $\infty$. They then consider the closure of the space of the set of smooth Hurwitz covers in that space and denote this as the \emph{completed Hurwitz space}. Another approach is the \emph{stable-maps compactification} used e.g.\ in \cite{okounkov_gromov-witten_2006} and \cite{okounkov_gromov-witten_2009}. This treats the Hurwitz covers as holomorphic maps and adds stable maps as in Gromov--Witten theory. This means that one allows for example constant components in the domain as well as components with an unstable underlying domain. One advantage is that one can add arbitrary marked points to the moduli space which gives additional structure that can be used. However, this space adds components ``far away'' from the smooth locus, for example there are moduli spaces of stable maps consisting entirely of nodal maps. Another common compactification of the space of Hurwitz covers are \emph{admissible covers}. Here one adds holomorphic maps between nodal Hurwitz covers which map nodes to nodes and such that the cover has the same degree at the nodes from both sides. These degrees are called \emph{kissing numbers}. Admissible covers are used for example by Abramovich, Corti and Vistoli in \cite{abramovich_dan_twisted_2001}.

There are various ways how one can generalize Hurwitz numbers, we will mention only two. One of them is to include a spin structure on the target surface in the data of the Hurwitz cover and then count weighted equivalence classes of these pairs of Hurwitz covers with suitable spin structures. The resulting number is called a \emph{spin Hurwitz number} and various equations similar to results on relative Gromov--Witten numbers were obtained by Junho Lee and Thomas H. Parker in \cite{parker_recursion_2013} using concentration principles for elliptic operators. Interestingly the same result was obtained independently using completely different techniques, namely topological quantum field theories, by Sam Gunningham in \cite{gunningham_spin_2016}. Another generalization that has become very popular in the recent years are \emph{tropical Hurwitz numbers}. In \cite{cavalieri_tropical_2010} Renzo Cavalieri, Paul Johnson and Hannah Markwig define a tropical version of Hurwitz numbers and use them to reprove for example the cut--and--join relation.

\section{Our Approach}

Now that we have mentioned various old and some new ideas surrounding Hurwitz numbers we want to see how we will approach the topic in this thesis and how it relates to the other techniques. The primary goal of this thesis is to apply the symplectic techniques from \cite{mirzakhani_weil-petersson_2007} to moduli spaces of Hurwitz covers. This means that we will define a moduli space of Hurwitz covers with varying source and target complex structures giving us maps to the source and target Deligne--Mumford space, respectively, as in \cref{eq:correspondence}.

We will mark the covers in the following way: Every branch point and every preimage of a branch point will be marked, this means that even if a branch point has non-critical preimages we still mark them. Also we do not mark any other points of the source. Both of these conditions are in fact crucial for two reasons. First, this will allow us to use a mixture of stable-maps and admissible-cover compactifications which captures the geometric intuition about nodal Hurwitz covers very well and allows us to apply the techniques from \cite{robbin_construction_2006} for the orbifold structure of the moduli space. Secondly it solves the issue about differences of various $\psi$-classes in Gromov--Witten theory. In Gromov--Witten theory there are two different notions of $\psi$-classes. One comes from the pull-backs of the $\psi$-classes on Deligne--Mumford space via the evaluation map and the others are directly defined on the Gromov--Witten moduli space as the Chern classes of the line bundle whose fibre is the cotangent space at a marked point. But since Gromov--Witten moduli spaces contain maps whose underlying domains are not stable these $\psi$-classes are different. By marking points in our way we will exclude non-stable domains which allows us to prove that the $\psi$-classes are in fact equal.

Similar to \cite{mirzakhani_weil-petersson_2007} we will consider also moduli spaces of bordered Hurwitz covers equipped with marked points on the boundary. This will give us the structure of a torus action which turns out to be Hamiltonian for the Weil--Petersson symplectic structure. We will then use the Duistermaat--Heckman localization theorem to relate $\psi$-classes of these torus bundles with volume integrals. The volume integrals will be computed from the Weil--Petersson volume of Deligne--Mumford spaces of bordered Riemann surfaces and the degree of the evaluation map given by Hurwitz numbers. This way there will be lots of hyperbolic aspects of Hurwitz covers entering the discussion. However, we will \emph{not} apply the McShane identity in the end because it seems not clear how to simply cut Hurwitz covers in a usable way along closed simple geodesics in the source surface.

Regarding subclasses of Hurwitz numbers we do not need to specialize beyond requiring that the underlying surfaces are stable when including the marked points. It is possible that the techniques developed in this thesis might simplify in special situations but apart from calculating examples we have not pursued this direction any further.

\section{Main Results}

The main results of this thesis are as follows. Let $\mcR_{h,n}$ be the groupoid of stable Riemann surfaces of type $(h,n)$ as well as $\mcR_{g,k,h,n}(T)$ the groupoid of Hurwitz covers $(C,u,X,\bq,\bp)$, where $u:C\lra X$ is a branched cover such that $\bp$ is a tuple of points on $X$ containing all branch points and $\bq$ is precisely the set of preimages of the points $\bp$, $u$ gives a bijection between the set of nodes of the two surfaces and it has the same degree on both sides of a node and $u$ satisfies the branching profile specified in $T$. An orbifold structure on $|\mcR_{g,k,h,n}(T)|$ is an orbifold category $\mcM_{g,k,h,n}(T)$ together with a functor $\mcM_{g,k,h,n}(T)\lra\mcR_{g,k,h,n}(T)$ which is a bijection on the orbit spaces. Furthermore we define the \emph{Hurwitz number} $H_{g,k,h,n}(T)\in\QQ$ as the sum over all equivalence classes of branched covers of fixed topological type and fixed branching profile weigthed by the inverse of the size of the automorphism group.\footnote{Note, however, that our Hurwitz numbers differ from the usual ones by a combinatorial factor explained in \cref{sec:hurwitz-numbers}.} In \cref{sec:main-results} we prove

\begin{thm}
  There exists an orbifold category $\mcM_{g,k,h,n}(T)$ of real dimension $6h-6+2n$ together with functors $\iota:\mcM_{g,k,h,n}(T)\lra\mcR_{g,k,h,n}(T)$ and $\ev:\mcM_{g,k,h,n}(T)\lra\mcM_{h,n}$ such that\footnote{Here, $\mcM_{h,n}$ denotes the orbifold category built from universal unfoldings as in \cite{robbin_construction_2006}.}
  \begin{itemize}
    \item the functor $\iota$ is essentially surjective, faithful and full on the full subcategories of smooth Hurwitz covers and smooth Riemann surfaces and has finite preimages on the locus of nodal Hurwitz covers and nodal Riemann surfaces,
    \item the functor $\ev$ makes the following diagram commute:
      \begin{equation*}
        \xymatrix{
          \mcM_{g,k,h,n}(T) \ar[r]^{\iota} \ar[d]^{\ev} & \mcR_{g,k,h,n}(T) \ar[d]^{\ev} \\
          \mcM_{h,n} \ar[r] & \mcR_{h,n}
          }
        \end{equation*}
      \item there exist local coordinates around $(C,u,X,\bq,\bp)\in\Ob\mcM_{g,k,h,n}(T)$ as well as $(X,\bp)\in\Ob\mcM_{h,n}$ such that $\ev$ looks like
        \begin{align*}
          \DD^{3k-3+n-N}\times\DD^N & \lra\DD^{3k-3+n-N}\times\DD^N \\
          (x,z_1,\ldots,z_N) & \longmapsto \left(x,z_1^{K_1},\ldots,z_N^{K_N}\right),
        \end{align*}
        where $N$ is the number of nodes in $X$ and
      \item the functor $\ev$ is a morphism covering of orbifolds on the full subcategories of smooth Hurwitz covers with degree equal to the Hurwitz number $H_{g,k,h,n}(T)\in\QQ$.      
  \end{itemize}
  \label{thm:main-result-1}
\end{thm}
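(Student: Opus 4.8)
The plan is to follow the universal--unfolding method of Robbin--Salamon \cite{robbin_construction_2006} --- the same machine that produces the orbifold category $\mcM_{h,n}$ appearing in the statement --- but to carry the construction along the cover. Fix a (possibly nodal) Hurwitz cover $P_0=(C,u,X,\bq,\bp)$ with the marking conventions adopted above. The first step is its deformation theory: one shows that $\Aut(P_0)$ is finite (the marked points on the stable source $C$, together with the fact that every preimage of every point of $\bp$ is marked, rigidify $u$ up to finitely many symmetries) and that $P_0$ admits an unfolding with finite--dimensional base. Concretely, start from a universal unfolding $(\pi\colon\mcX\to Q,\bp)$ of the target in the Robbin--Salamon sense, with $\dim_\CC Q=3h-3+n$, and extend $u$ to a family of Hurwitz covers over $Q$. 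Over the locus where the target is smooth this is the parametrised Riemann existence theorem / Grauert--Remmert extension of the topological cover of $X\setminus\bp$, followed by gluing back holomorphic discs at the deformed branch points. The substantial work is near a node of $X$: if the node has plumbing parameter $t$ and the node of $C$ lying over it has plumbing parameter $s$, then matching $u$ with the two plumbings forces $t=(\text{unit})\cdot s^{K_i}$, where $K_i$ is the common local degree of $u$ on the two branches meeting at that node (the kissing number). Absorbing the unit yields, on unfolding bases, the normal form $(x,z_1,\ldots,z_N)\mapsto(x,z_1^{K_1},\ldots,z_N^{K_N})$, with the $z_i$ the source plumbing parameters and the remaining $3h-3+n-N$ coordinates $x$ parametrising the node--free deformations, on which the map is a biholomorphism because there the cover with its labelled branch divisor is rigid once the target is fixed.

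Granting this, one assembles $\cMT$ as the orbifold category whose objects are unfoldings of Hurwitz covers and whose morphisms are germs of fibre--preserving isomorphisms, mirroring the construction of $\mcM_{h,n}$; the étale--groupoid axioms and the existence of compatible orbifold charts then follow from universality of the unfoldings and finiteness of the automorphism groups. Define $\iota\colon\cMT\to\mcR_{g,k,h,n}(T)$ by passing to the central fibre. Essential surjectivity is the existence statement above; over a smooth Hurwitz cover the unfolding is unique up to isomorphism (the cover deforms uniquely once the target does), so $\iota$ is faithful and full there; over a nodal Hurwitz cover the orbifold chart is the $z_i\mapsto z_i^{K_i}$--ramified polydisc and $\iota$ need no longer be injective or full, but its fibres are finite, the multiplicity being governed by the ramification exponents $K_i$. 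Define $\ev\colon\cMT\to\mcM_{h,n}$ by sending an unfolding of $P_0$ to the induced unfolding of $(X,\bp)$; commutativity of the square with $\iota$ and the forgetful functor $\mcR_{g,k,h,n}(T)\to\mcR_{h,n}$ holds by construction, and the displayed local form of $\ev$ is exactly the node computation of the previous paragraph.

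For the degree statement, restrict to the full subcategories of smooth Hurwitz covers and smooth stable curves. There $\ev$ is étale, since by the rigidity noted above the induced map on unfolding bases is a local biholomorphism, and it is proper: a sequence of smooth Hurwitz covers whose targets converge in $\mcM_{h,n}$ to a smooth curve subconverges, by the Cieliebak--Mohnke version of SFT--compactness \cite{cieliebak_compactness_2005} used elsewhere in this work, to a Hurwitz cover, which is again smooth because its target is, so no fibre escapes to the nodal boundary. A proper étale functor of orbifolds is a morphism covering of orbifolds in the sense of \cite{robbin_construction_2006}, and its degree, computed at a generic smooth target $(X,\bp)$, is the number of isomorphism classes of branched covers of $X$ of the prescribed topological type and branching profile $T$, each weighted by the reciprocal of its automorphism group --- which is the definition of $H_{g,k,h,n}(T)$ under the combinatorial normalisation adopted here.

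I expect the main obstacle to be the node analysis of the first paragraph: plumbing $C$ compatibly with the plumbing of $X$ through $u$, proving holomorphic dependence on the base in families, and extracting both the exact relation $t=(\text{unit})\cdot s^{K_i}$ and the versality of the resulting unfolding (a dimension count together with vanishing of the obstruction space for deforming $u$ relative to the branching data). Pinning down the precise fibre multiplicities of $\iota$ over the nodal locus is a secondary point that falls out of the same local model. The remaining items --- the groupoid bookkeeping, the elementary properties of $\iota$ and $\ev$, and the covering/degree assertion --- are then either formal or immediate consequences.
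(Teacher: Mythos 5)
Your general strategy --- extend $u$ over a universal unfolding of the target à la Robbin--Salamon, analyse the node matching, and read off the local normal form and the covering degree --- is indeed the strategy the paper follows. But your node analysis has a genuine gap that in fact conceals the central subtlety of the construction.

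You write as though each node $p$ of $X$ has a \emph{single} preimage node in $C$, with a single kissing number $K_i$, and derive $t=(\text{unit})\cdot s^{K_i}$ from matching one source plumbing parameter $s$ to one target parameter $t$. In general a node $p$ of $X$ has several preimage nodes $q_1,\ldots,q_l$ in $C$, with kissing numbers $k_1,\ldots,k_l$, and compatibility forces the $l$ source gluing parameters $(b_1,\ldots,b_l)$ to lie in
\begin{equation*}
B\coloneqq\{(b_1,\ldots,b_l)\in\DD^l\mid b_1^{k_1}=\cdots=b_l^{k_l}\},
\end{equation*}
which is \emph{not} a manifold at the origin whenever $l\geq2$ and some $k_i>1$: it is a union of $\frac{k_1\cdots k_l}{K}$ holomorphic discs, $K=\lcm(k_1,\ldots,k_l)$, that meet only at $0$. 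The exponent in the normal form is therefore this $\lcm$, not a single kissing number, and more importantly: to build a smooth unfolding of the Hurwitz cover one must \emph{choose} one branch of $B$ (i.e.\ a vector of roots of unity), and different choices give distinct unfoldings of the source that coincide only at the central fibre. This forces the definition of $\Mor\mcM_{g,k,h,n}(T)$ to discard precisely those isomorphisms of nodal Hurwitz covers that do not extend along a full-dimensional family --- namely those relating two different branches. That is the actual reason $\iota$ fails to be full on the nodal locus; it is not an artifact of the ramification $z\mapsto z^{K_i}$ per se, and the finite fibre multiplicity of $\iota$ is governed by the branch count $\frac{k_1\cdots k_l}{K}$ rather than by $K_i$. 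Your sketch never confronts the non-manifold locus $B$ or the resulting morphism-selection problem, and without that the claim that the objects ``assemble into an orbifold category'' by ``groupoid bookkeeping'' is not justified --- the naive morphism set is not a manifold of the right dimension. Once this is fixed, the remaining points of your argument (pulling back deformations by $u$ away from nodes, the local form of $\ev$, étaleness and properness of $\ev$ on the smooth locus, identification of the degree with $H_{g,k,h,n}(T)$) do line up with the paper's route.
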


The meaning of \cref{thm:main-result-1} is as follows. The category $\mcR_{g,k,h,n}(T)$ is not by itself an orbifold category because there are morphisms which give rise to families of a lower dimension. This can be imagined by noticing that from \cite{robbin_construction_2006} we see that morphisms between surfaces extend in a unique way to surfaces nearby in the universal unfolding. So if

\begin{equation*}
  \xymatrix{
    C \ar[r]^{\Phi} \ar[d]^u & C' \ar[d]^{u'} \\
    X \ar[r]^{\phi} & X'
    }
\end{equation*}

is a morphism then $\phi$ extends to a family of morphisms of the correct dimension when varying the target surface. However, this fixes variations of the source surfaces as well and depending on $\Phi$ it is possible that these variations are not identical but only intersect in lower dimensional manifolds.\footnote{Recall that the real dimension of Deligne--Mumford space is $6h-6+2n$ but $6g-6+2k\geq 6h-6+2n$ using Riemann--Hurwitz. This shows that there is enough space to have intersections which have a lower dimension.} In the construction we will see that this phenomenon can only happen at nodal Hurwitz covers.

In any case we need to exclude these morphism in the definition of $\mcM_{g,k,h,n}(T)$ in order to obtain an orbifold. The first two statements in particular imply that for smooth Hurwitz covers the orbifold $\mcM_{g,k,h,n}(T)$ contains all information about $\mcR_{g,k,h,n}(T)$ and in particular the evaluation functor on $\mcM_{g,k,h,n}(T)$ is the same as the standard one. The other two statements say something about the local structure of the evaluation functor. In particular it is not a covering but it is branched over the locus of nodal Hurwitz covers. But again we see that on the smooth Hurwitz covers it is a covering and its (orbifold-)degree computes the Hurwitz number. This idea is the basis of many approaches to Hurwitz numbers, for example it can be found in \cite{ekedahl_hurwitz_2001} and \cite{okounkov_gromov-witten_2009}.

The second main result is a rigorous proof of compactness of $|\mcM_{g,k,h,n}(T)|$. Note that in a few other compactifications such as the completed Hurwitz space from \cite{ekedahl_hurwitz_2001} compactness holds by construction and the problem consists more of finding a way to understand and describe the compactification locus. In our case, as the target surface can degenerate to a nodal surface it is not completely clear how to apply e.g.\ Gromov compactness to such a surface. Instead we apply SFT-compactness developed by Frédéric Bourgeois, Yakov Eliashberg, Helmut Hofer, Kris Wysocki and Eduard Zehnder in \cite{bourgeois_compactness_2003} in the version of Kai Cieliebak and Klaus Mohnke in \cite{cieliebak_compactness_2005}. \cref{thm:main-result-2} is proven in \cref{sec:SFT-compactness}.

\begin{thm}
  The moduli spaces of Hurwitz covers $|\mcM_{g,k,h,n}(T)|$ and $|\mcR_{g,k,h,n}(T)|$ are compact.
  \label{thm:main-result-2}
\end{thm}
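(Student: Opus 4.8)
The plan is to view a sequence of Hurwitz covers as a sequence of holomorphic curves in a target that is being neck-stretched, and to extract a limit using SFT-compactness in the Cieliebak--Mohnke form of \cite{cieliebak_compactness_2005} (itself a version of \cite{bourgeois_compactness_2003}). Concretely, let $(C_j,u_j,X_j,\bq_j,\bp_j)$ be a sequence of objects of $\mcR_{g,k,h,n}(T)$. First I would compactify on the target: the marked points $\bp_j$ make the $X_j$ stable, so Deligne--Mumford compactness of $\ol{\mcM}_{h,n}$, together with finiteness of the possible combinatorial types (dual graph of $X_j$, distribution of $\bp_\infty$ among the components, monodromy/kissing data), lets us pass to a subsequence along which the pointed targets converge to a pointed stable nodal Riemann surface $(X_\infty,\bp_\infty)$ of fixed combinatorial type. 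I would then realise this convergence inside a fixed family over a polydisc in the universal unfolding of $X_\infty$ in the sense of \cite{robbin_construction_2006}: the surfaces $X_j$ arise from $X_\infty$ by opening up the $N$ nodes along vanishing cycles $\gamma_1,\dots,\gamma_N$, i.e.\ by gluing in annuli whose conformal moduli tend to infinity. Choosing on the total family an almost complex structure that is translation-invariant in a neighbourhood of each $\gamma_i$, opening the nodes becomes exactly neck-stretching along the hypersurfaces $\gamma_i$. Finally, normalising the total area of the $X_j$ (using the cusped hyperbolic metric, or an area form of fixed total area that is cylindrical on the necks) makes the energy of $u_j$ equal to $d\cdot\area(X_j)$, hence uniformly bounded.

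With this setup I would apply SFT-compactness to the sequence $u_j$. Since marked points do not run into nodes under Deligne--Mumford degeneration, the $\bp_j$ stay uniformly away from the nodes of $X_j$, and $\bq_j=u_j^{-1}(\bp_j)$; thus all branch points and all their preimages lie in the thick part, branch points can neither collide nor migrate into the necks, and (modulo point (i) below) a sequence element $C_j$ can only degenerate by forming nodes over the nodes of $X_j$. SFT-compactness then yields, after passing to a further subsequence, a stable holomorphic building $u_\infty$: over each component of $X_\infty$ the maps $u_j$ Gromov-converge to a nodal holomorphic map, while over each neck the preimage $u_j^{-1}(\mathrm{neck})$ — an \emph{unbranched} cover of a long cylinder, hence a disjoint union of cylinders winding $K_i$ times — converges to multiply covered trivial cylinders in the symplectization $\RR\times\gamma_i$. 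These intermediate levels carry no moduli, so they can be collapsed: gluing the main-level pieces along the $\gamma_i$ with the multiplicities $K_i$ produces an honest nodal Riemann surface $C_\infty$ and a holomorphic map $u_\infty\colon C_\infty\lra X_\infty$ that is $z\mapsto z^{K_i}$ in suitable coordinates at the $i$-th node. Transporting the marked points through the limit and using that no genus and no energy are lost along an SFT building, one checks that $(C_\infty,u_\infty,X_\infty,\bq_\infty,\bp_\infty)\in\Ob\mcR_{g,k,h,n}(T)$: $u_\infty$ has degree $d$ on every component and equal degree on the two sides of every node, $\bp_\infty$ contains every branch point, $\bq_\infty=u_\infty^{-1}(\bp_\infty)$, and the local ramification orders — locally constant in the thick part under Gromov convergence and trivial over the necks — still realise the profile $T$. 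Hence $|\mcR_{g,k,h,n}(T)|$ is sequentially compact, therefore compact, and since $\iota$ from \cref{thm:main-result-1} identifies the orbit space $|\mcM_{g,k,h,n}(T)|$ with $|\mcR_{g,k,h,n}(T)|$, the space $|\mcM_{g,k,h,n}(T)|$ is compact as well.

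The main obstacle is the identification of the SFT limit, i.e.\ showing that the building has no essential intermediate levels and that its source is a genuine nodal Hurwitz cover with the prescribed discrete invariants rather than a truly multi-level object. I would need to handle: (i) ruling out nodes of $C_j$ not lying over nodes of $X_j$ and ruling out branch points escaping into the necks — which the marked-point and degree constraints provide, but which requires care since Gromov convergence over the thick part does not, by itself, forbid source bubbling that is invisible downstairs; (ii) showing that every neck level of the building is a union of trivial cylinders, equivalently that no branch point of $u_\infty$ sits over a node — here the hypotheses that $u$ is unbranched over nodes and has the same degree on both sides are exactly what is used; and (iii) matching the asymptotic winding numbers of the punctured curves around the $\gamma_i$ with the kissing numbers $K_i$ of \cref{thm:main-result-1}. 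A secondary, more technical point is arranging the neck-stretching on the universal family so that the hypotheses of \cite{cieliebak_compactness_2005} hold verbatim, in particular securing a uniform Hofer-energy bound as the targets degenerate; the area normalisation and the cylindrical complex structure chosen once and for all on the family take care of this.
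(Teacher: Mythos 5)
Your proposal follows essentially the same strategy as the paper's proof: pass to a Deligne--Mumford convergent subsequence of pointed target surfaces; realize the degeneration as a neck-stretching sequence in the sense of Cieliebak--Mohnke, where the hyperbolic collar/cusp regions around the collapsing geodesics play the role of the bi-collars of the stable hypersurface and the hyperbolic area plus a fixed contribution per neck gives the uniform energy bound; invoke SFT-compactness to extract a broken holomorphic curve; and then show that the building is actually single-levelled with nodal source and target, is locally $z\mapsto z^{K_i}$ at the new nodes, and carries the right marked points and ramification data, so that it is a genuine object of $\mcR_{g,k,h,n}(T)$. The obstacles you flag --- (i) ruling out source nodes/bubbles not over target nodes, (ii) showing neck levels are unions of trivial cylinders so they can be collapsed, (iii) matching the asymptotic winding numbers with the kissing degrees --- are precisely the points the paper works through via Riemann--Hurwitz for nodal covers, the observation that marked points never enter the necks, lower semi-continuity of local degrees, and the uniform $\pi_M$-convergence over the cylinders guaranteed by the Cieliebak--Mohnke convergence notion. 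One small imprecision: the functor $\iota:\mcM_{g,k,h,n}(T)\to\mcR_{g,k,h,n}(T)$ does \emph{not} identify the two orbit spaces --- it is only a bijection on the locus of smooth Hurwitz covers, and on orbit classes of nodal covers it can be finitely many-to-one because of the distinct choices of disc $b_{\xi}$ at a node. What the paper actually uses is that the induced map on orbit spaces is a \emph{proper} quotient map (with finite fibres), from which compactness of $|\mcM_{g,k,h,n}(T)|$ follows once $|\mcR_{g,k,h,n}(T)|$ is shown to be compact. With that correction, your plan matches the paper's.
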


The third main result deals with the locus of source surfaces admitting a Hurwitz cover of prescribed type. This is encoded by the \emph{Hurwitz class}
\begin{equation*}
  D_{g,k,h,n}(T)\coloneqq \fgt_*[\mcM_{g,k,h,n}]\in H_{6g-6+2n}(|\mcM_{g,k}|,\QQ),
\end{equation*}
where $|\mcM_{g,k}|$ is the Deligne--Mumford space of the source surface. Here we use the \emph{forgetful functor}
\begin{equation*}
  \fgt(C,u,X,\bq,\bp)\coloneqq(C,\bq).
\end{equation*}
We denote by $V_{h,n}(L)$ the Weil--Petersson volume of the moduli space of bordered Riemann surfaces
\begin{equation*}
  V_{h,n}(L)\coloneqq \int_{\mcM_{h,n}(L)}\frac{\wwp^{3h-3+n}}{(2\pi)^{3h-3+n}},
\end{equation*}
 which is a polynomial in $L_i^2$, where the $L_i$ with $i=1,\ldots,n$ are hyperbolic lengths of the geodesic boundaries.

In \cref{sec:application-duistermaat-heckman} we prove \cref{thm:main-result-3}.

\begin{thm}
  We have
  \begin{align*}
    & K \cdot H_{g,k,h,n}(T)V_{h,n}(L)[3h+n-3] = \\
    & \sum_{\substack{\alpha\in\NN^n,\\ |\alpha|= 3h+n-3}} \sum_{\substack{\beta_1\in\NN^{I_1}\\ |\beta_1|=\alpha_1}}\cdots \sum_{\substack{\beta_n\in\NN^{I_n}\\ |\beta_n|=\alpha_n}} \frac{L^{2\alpha}l_1^{2(\beta_{\nu(1)})_1}\cdots l_k^{2(\beta_{\nu(k)})_k}}{(2d)^{3h+n-3}\beta_1!\cdots\beta_n!} \left\langle {\psi_1}^{(\beta_{\nu(1)})_1}\cdots{\psi_k}^{(\beta_{\nu(k)})_k},D_{g,k,h,n}(T) \right\rangle,
  \end{align*}
  where $[3h+n-3]$ denotes the terms of degree $3h+n-3$ of $V_{h,n}$, $d$ is the degree of the Hurwitz cover, $l_j$ are the degrees of the Hurwitz cover at $q_j\in C$, $K\coloneqq \prod_{i=1}^nK_i$ with $K_i\coloneqq\lcm\{l_j\mid j\in\nu^{-1}(i)\}$, and $\psi_j\in H^2(|\mcM_{g,k}|,\QQ)$ are the $\Psi$-classes on the source Deligne--Mumford space.
  \label{thm:main-result-3}
\end{thm}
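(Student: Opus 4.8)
\emph{Overall plan and Step 1 (the symplectic model).} I would adapt Mirzakhani's argument from \cite{mirzakhani_weil-petersson_2007}: realise a bordered version of $\cMT$ as the symplectic reduction of a ``thickened'' space carrying a Hamiltonian torus action, and apply the Duistermaat--Heckman theorem. Set $m := 3h+n-3$, so $\cMT$ has real dimension $2m$. First I would introduce $\cMT(L)$, the moduli space of bordered Hurwitz covers whose target $X$ has $n$ labelled geodesic boundaries of lengths $L=(L_1,\dots,L_n)$; since away from the $\bp$ and $\bq$ the cover is unramified, the $j$-th boundary of the source is a geodesic of length $l_j L_{\nu(j)}$, and by \cref{thm:main-result-1} and \cref{thm:main-result-2} the map $\ev\colon\cMT(L)\to\mcM_{h,n}(L)$ is an orbifold covering of degree $H_{g,k,h,n}(T)$ (up to the normalising convention fixed in \cref{sec:hurwitz-numbers}) on the smooth locus. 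The key geometric input is that pulling back the source Weil--Petersson form along $\fgt$ and the target one along $\ev$ are related by $\fgt^*\wwp = d\cdot\ev^*\wwp$ on $\cMT(L)$, because a degree-$d$ unramified cover multiplies the integral over the surface defining $\wwp$ by $d$. Following Mirzakhani I would then thicken $\cMT(L)$ by letting $L$ vary and adding a marked point on each boundary geodesic; on the resulting orbifold the twist flows along the $n$ target boundaries lift through the cover to a torus action, but twisting the $i$-th target boundary by its full length $L_i$ induces only a $1/l_j$-fractional twist on each source boundary over it, so the lifted circle has period $K_i L_i$ and the effective torus is $\mathbb{T}^n=\prod_i(\RR/K_iL_i\ZZ)$ rather than the target twist torus. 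Rewriting Wolpert's formula in these coordinates, the action is Hamiltonian for $\fgt^*\wwp$ with moment map $\mu_i = d K_i L_i^2/2$, and this period mismatch is the origin of the factor $K=\prod_iK_i$ (equivalently, it is absorbed into the normalisation of $H_{g,k,h,n}(T)$).

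\emph{Step 2 (the reduced spaces).} The reduction at the moment value $(dK_iL_i^2/2)_i$ is $\cMT(L)$ with the Weil--Petersson form, whose total volume is $\int_{\cMT(L)}(\fgt^*\wwp)^m/m! = d^m\cdot(\deg\ev)\cdot V_{h,n}(L)$ by the scaling above and the definition of $V_{h,n}$ (up to the routine $2\pi$-normalisation in its definition). The reduction at the zero level is $\cMT$ itself, the target boundaries shrinking to the marked points $\bp$, and the Chern class $c_i$ of the $i$-th circle bundle over $\cMT$ is read off from the weights of the lifted action near the resulting cusps: one turn of the $i$-th circle twists the $j$-th source boundary $K_i/l_j$ times, hence $\fgt^*\psi_j = (K_i/l_j)\,c_i$ for every $j\in\nu^{-1}(i)$, i.e.\ $c_i = (l_j/K_i)\fgt^*\psi_j$. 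The behaviour at the $L\to 0$ / nodal boundary of moduli — where the circle actions degenerate — I would handle exactly as Mirzakhani does, identifying the compactified reduced space with Deligne--Mumford space and the circle-bundle Euler class with the relevant $\psi$-class; the compactness supplied by \cref{thm:main-result-2} is what makes the cycle $\fgt_*[\cMT(L)]$ constant in $L$ and equal to $D_{g,k,h,n}(T)$, and it is this cycle that the $\psi$-classes will be paired against.

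\emph{Step 3 (Duistermaat--Heckman and the combinatorics).} The Duistermaat--Heckman theorem (in its orbifold form) gives $[\wwp_{\mathrm{red},L}] = [\wwp_{\mathrm{red},0}] + \sum_i \mu_i\, c_i$, whence
\begin{equation*}
  \int_{\cMT(L)} \frac{(\fgt^*\wwp)^m}{m!} \;=\; \left\langle \exp\!\left([\wwp_{\mathrm{red},0}] + \sum_{i=1}^{n} \frac{dK_iL_i^2}{2}\,c_i\right),\ [\cMT] \right\rangle .
\end{equation*}
Extracting the part of top degree $m$ in the variables $L_i^2$ kills every term containing $[\wwp_{\mathrm{red},0}]$ and leaves $d^m\,(\deg\ev)\,V_{h,n}(L)[m]$ on the left and a pure $\psi$-class polynomial paired with $D_{g,k,h,n}(T)$ on the right. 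Then I would substitute $c_i = (l_j/K_i)\fgt^*\psi_j$, expand the $m$-th power multinomially in the $\fgt^*\psi_j$ (this introduces the sums over $\alpha$ with $|\alpha|=m$ and over $\beta_1,\dots,\beta_n$ with $|\beta_i|=\alpha_i$ and the multinomial denominators $\beta_1!\cdots\beta_n!$), collect the powers of $l_j$ and $K_i$, and use $\sum_{j\in\nu^{-1}(i)}l_j=d$ together with the multinomial identity $\sum_{|\beta_i|=\alpha_i}\binom{\alpha_i}{\beta_i}\prod_{j\in\nu^{-1}(i)}l_j^{(\beta_i)_j}=d^{\alpha_i}$ to convert the leftover $d^m$ into the factor $(2d)^m$ in the denominator while the $2^m$ comes from the halves in $\mu_i$. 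Dividing through by $d^m$ and restoring the factor $K$ between $\deg\ev$ and $H_{g,k,h,n}(T)$ then yields precisely the claimed identity.

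\emph{Main obstacle.} The hard part will be Step 1: constructing the thickened symplectic orbifold and proving rigorously that the lifted twist action is genuinely Hamiltonian for $\fgt^*\wwp$ with the stated moment map, in the presence of orbifold isotropy and of the ideal/nodal boundary where the circle actions degenerate — the analogue of the technical heart of \cite{mirzakhani_weil-petersson_2007}, but now complicated by the cover. A close second is pinning down the Euler-class identity $\fgt^*\psi_j=(K_i/l_j)c_i$ (the source of the $l_j$-weights) and bookkeeping the constants $K$ and $2d$, where the $\lcm$'s $K_i$ and the chosen normalisation of $H_{g,k,h,n}(T)$ have to be reconciled; by contrast the scaling $\fgt^*\wwp=d\cdot\ev^*\wwp$ and the multinomial manipulations of Step 3 should be routine.
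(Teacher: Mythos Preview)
Your overall strategy is exactly the paper's: set up a Hamiltonian $T^n$-action on a thickened moduli space of bordered Hurwitz covers, apply Duistermaat--Heckman, and take $L\to 0$ to identify the circle-bundle Chern classes with $\psi$-classes. Two points deserve comment, one a genuine gap and one a difference in route.

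\textbf{The gap: the $L\to 0$ reduction is not $\cMT$ but a $K$-fold cover of it.} You assert that the reduced space at level zero is $\cMT$ itself. In the paper this is false: the thickened space carries, for each $j$, a marked point $z_j$ on a reference curve near $\partial_jC$ subject to $u(z_j)=u(z_{j'})$ whenever $\nu(j)=\nu(j')$. At $L=0$ these become tangent directions at the $q_j$, and the constraint cuts out a $T^n$-subbundle $\mathring E'$ of the full $T^k$-bundle $\mathring E=\bigoplus_j\fgt^*\mathbb L_j^*$. The quotient $\mathring E'/T^n$ is a $K$-fold cover $\pi$ of $\cMT$, because the embedding $\iota\colon T^n\hookrightarrow T^k$ via $\theta_i\mapsto (K_i/l_j)\theta_i$ has index $K_i$ in each diagonal sub-circle. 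The factor $K$ on the left of the theorem comes precisely from $\deg\pi=K$ (equivalently from $\deg\widehat{\ev}=K\cdot H_{g,k,h,n}(T)$ on the reduced spaces), not from a ``period mismatch'' absorbed into the normalisation of $H$. Your bookkeeping of $K$ in Step~3 will not close without this.

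\textbf{The difference in route: averaged versus pointwise Chern-class relation.} The paper never uses your pointwise identity $c_i=(l_j/K_i)\fgt^*\psi_j$; instead it proves only the averaged relation
\[
c_1(\mathring E')_i \;=\; \frac{1}{dK_i}\sum_{j\in I_i} l_j^{\,2}\,\pi^*\fgt^*\psi_j
\]
by writing down an explicit connection on $\mathring E'$ obtained by restricting a connection on $\mathring E$ and taking the $l_j^2$-weighted average. Expanding $c^\alpha$ then produces the multinomial in $\beta$ and the $l_j^{2(\beta_i)_j}$ directly, with no need for your ``insert $1$ via the multinomial identity'' trick. Your pointwise relation is in fact also true (it is the line-bundle statement $\ev^*\mathbb L_i\cong(\fgt^*\mathbb L_j)^{\otimes l_j}$ coming from the local model $z\mapsto z^{l_j}$, and one can check it by the same connection-form computation), and granting it your partition-of-unity manoeuvre does recover the $l_j^{2}$-weights. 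So this is a legitimate alternative, slightly slicker on the bundle side and slightly more contrived on the combinatorics; the paper's averaged route is what falls out naturally from the sub-torus construction.

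Finally, the paper works throughout with $\widehat\omega=\ev^*\wwp$, not $\fgt^*\wwp$. Your claim $\fgt^*\wwp=d\cdot\ev^*\wwp$ is correct (pulling back a harmonic Beltrami along the locally isometric degree-$d$ cover multiplies its $L^2$-norm by $d$), so this is only a rescaling and not an error --- but it is an extra step the paper simply avoids.
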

Note that this equation relates Hurwitz numbers, Weil--Petersson volumes of moduli spaces of bordered surfaces and $\Psi$-class intersections on Deligne--Mumford space. In \cref{sec:appl-exampl} we will workout a few explicit examples.

\section{A Few Comments on Notation}

Before giving an overview of the individual chapters let us give a warning and some comments about notation. Looking into \cite{robbin_construction_2006} one sees that the notation for a rigorous construction of the orbifold structure on Deligne--Mumford space is already pretty involved. In our case every object and morphism contains twice as many surfaces plus a map between them. This forces us to make a few compromises and to try to stay consistent with our use of symbols. In particular we will try to obey the following rules:

\begin{itemize}
  \item Branching profile data will be denoted by $T$'s.
  \item The topological type of Riemann surfaces will be $(g,k)$ for source surfaces and $(h,n)$ for target surfaces.
  \item Source Riemann surfaces will be denoted by capital roman letters at the beginning of the alphabet so in particular $C$ and $D$, target Riemann surfaces will be denoted by capital roman letters at the end of the alphabet.
  \item Whenever we put hyperbolic metrics on a Riemann surface $C$ with marked points $\bq$ such that these points are cusps we will talk about a hyperbolic metric on $C$ although this metric is not defined at the cusps.
  \item Marked points on the source surface will be denoted by $q$'s and marked points on the target surface by $p$'s.
  \item Tuples will often be denoted by boldface letters.
    \item Running indices for marked points on the source will be $j$'s and those for marked points on the target will be $i$'s. However, we will use these indices also for completely different purposes.
  \item Biholomorphisms will usually be denoted by variations of the Greek letter $\phi$, in particular $\Phi$ for biholomorphisms of the source surface and $\phi$ or $\varphi$ for those of the target surface.
  \item Quite often we will have to do constructions at nodes. In that case we will denote the two sides of a node in the normalization of the surface by $\dagger$ and $\ast$, respectively.
  \item General categories will be denoted by $\mcC, \mcG$ or $\mcH$. This will be important in the chapter on general orbifold structures. Categories of surfaces or Hurwitz covers will be denoted by $\mcR$, in particular the actual moduli space as a set is the orbit space $|\mcR|$.\footnote{Note that morphisms will always be isomorphisms of the corresponding objects and thus categories will be groupoids.}
  \item A category that carries more structure, in particular an orbifold category, will be denoted by $\mcM$. This means that the $\mcM$ categories will contain far fewer objects and morphisms than the corresponding $\mcR$ ones. This is the same idea as in \cite{robbin_construction_2006} or \cite{hofer_applications_2011}.
  \item In particular the Deligne--Mumford space will be denoted by $|\mcM_{g,k}|$ or $|\mcR_{g,k}|$. Both spaces will be in bijection but the former category has a manifold structure whereas the latter contains \emph{all} possible Riemann surfaces as objects.
\end{itemize}

\section{Chapter Overview}

We will now give an overview over the individual chapters.

In \cref{sec:fundamentals-hurwitz-number} we define our notion of Hurwitz covers. Recall that there are a few different notions in particular in regards to nodal Hurwitz covers and marked points. One issue we have is that we need to enumerate marked points on the source surface. This means that we have a different notion of automorphisms and thus need to be careful how our notion of Hurwitz numbers compares to the usual one. Next we prove a nodal version of the Riemann--Hurwitz formula. This is because in \cref{sec:SFT-compactness} we need to consider the limit of this equation for nodal curves in order to prove compactness. Furthermore we give a small recap of the combinatorial description of Hurwitz numbers for completeness.

Afterwards we collect various fundamental statements about hyperbolic geometry of Riemann surfaces in \cref{sec:basics-hyperbolic-geometry}. We need this for our later considerations regarding the Weil--Petersson symplectic structure as well as the compactness theorem. This includes some statements on uniformization, multicurves, Teichmüller spaces and collar neighborhoods. The latter subsection is particularly important for the discussion of the gluing map which is used to treat bordered Hurwitz covers in \cref{sec:orbifold-structure-moduli-space-borderd-huwritz-covers}.

The \cref{sec:orbifolds} deals with orbifolds. We recall fundamental definitions and theorems and give a few comments on orbifold structures and maps between orbifolds. In particular we give a mostly self-contained introduction to bundles over orbifolds as well as their symplectic geometry. Besides this we introduce the notion of a \emph{morphism covering} which captures the properties of the evaluation functor on the category of smooth Hurwitz covers. This is different from an orbifold covering as it can have more automorphisms on the source orbifold whereas an orbifold cover has the base automorphism group acting on the fibre.

In \cref{chap:orbi-structure} we do the main technical work, constructing the orbifold structure on the moduli space of Hurwitz covers. This essentially applies the ideas from \cite{robbin_construction_2006} to our case. A local calculation at nodes shows how a variation of the target complex structure can be lifted to the source complex structure. This then allows to define deformation families by gluing in an appropriate way at the nodes. The object manifold of $\mcM_{g,k,h,n}(T)$ will then be defined by

\begin{equation*}
  \Ob\mcM_{g,k,h,n}(T)\coloneqq \bigsqcup_{\lambda\in\Lambda}O^{\lambda},
\end{equation*}

where $O^{\lambda}$ are domains of the gluing maps $\Psi^{\lambda}:O^{\lambda}\lra\Ob\mcR_{g,k,h,n}(T)$ and the $\lambda$ contain all the necessary choices for the gluing construction. This is the same procedure as in \cite{robbin_construction_2006}. We then prove that morphisms come in families with a dimension determined by the relation between certain holomorphic discs in the universal unfolding of the source surface. This allows us to define the morphisms of $\mcM_{g,k,h,n}(T)$ by only using those which have a family of the appropriate dimension. We also prove \cref{thm:main-result-1} in that chapter.

Besides the usual Hurwitz covers we also need to deal with bordered Hurwitz covers similar to Mirzakhani. To this end we define a gluing map in \cref{sec:orbifold-structure-moduli-space-borderd-huwritz-covers} which glues a hyperbolic pair of pants to the target surface with two marked points. We extend the source surface by gluing pairs of pants at every boundary component and extend the map such that one new marked point is maximally branched and the other is regular. This allows us to pull back the orbifold structure from a covering of the orbifold of closed Hurwitz covers.

In \cref{sec:SFT-compactness} we prove \cref{thm:main-result-2}, i.e.\ the compactness result. We first define the topologies on all the moduli spaces and then recall SFT-compactness from \cite{cieliebak_compactness_2005}. Next we construct various objects on a sequence of Hurwitz covers such that we can interpret this sequence as a neck-stretching sequence. After applying SFT-compactness to this neck-stretching sequence we then show that the limit object is in fact a Hurwitz cover and that SFT-convergence implies converges in our moduli space.

After having established all the technical statements we define the Weil--Petersson symplectic structure together with various Hamiltonian torus actions on our moduli spaces in \cref{sec:sympl-geom-moduli}. This is done akin to \cite{mirzakhani_weil-petersson_2007}. Note that we define the various cotangent line bundles on moduli spaces needed for $\psi$-classes in a rigorous way. Although this is basic we could not find detailed explanations and proofs in the language of orbifolds and so provide them here. Apart from this we use the Duistermaat-Heckman theorem in this chapter to prove \cref{thm:main-result-3}.

In the last \cref{sec:appl-exampl} we calculate the formula \cref{thm:main-result-3} in various examples. In particular we calculate Hurwitz numbers with the help of a computer program and the Weil--Petersson volumes using Mirzakhanis recursion relation. This way we obtain explicit formulas for evaluations of $\psi$-classes on the Hurwitz class $D_{g,k,h,n}(T)$.

For completeness we give some more details in the appendix in \cref{appendix}. This includes details on the shifting maps in the SFT-compactness statement, calculations of the Weil--Petersson volumes used in \cref{sec:appl-exampl} as well as the SAGE/Python source code used for calculating the Hurwitz numbers.

\section{Acknowledgments}


This document is the authors PhD thesis, written at the Universität Augsburg under the supervision of Kai Cieliebak and handed in on May 24th, 2017. At the time of creation of this version the author was a member of the Deutsches Zentrum für Luft- und Raumfahrt e.V. During parts of the work on this thesis the author was supported by the Studienstiftung des Deutschen Volkes.

The author would like to thank his parents and his family for their support throughout his life. Also the author is very grateful to Meru Alagalingam, Ingo Blechschmidt, Kathrin Helmsauer, Rüdiger Kürsten, Manousos Maridakis, \'Akos Nagy and Peter Uebele for fruitful discussions and a lot of support.

Furthermore the author wants to thank the various people who have helped with their insights and discussions during various research stays, in particular Rahul Pandharipande, Tom Parker, Dietmar Salamon and Kai Zehmisch.

And of course the author wants to thank his advisor Kai Cieliebak for always being supportive and teaching him symplectic topology.

\chapter{Fundamentals on Hurwitz Numbers}

\label{sec:fundamentals-hurwitz-number}

\section{Combinatorial Data and Hurwitz Covers}

\label{sec:indexc-data}
\index{Hurwitz Cover}

First we will fix the auxiliary parameters for our Hurwitz covers. We will try to be as consistent as possible and use the same types of indices and abbreviations throughout the whole document.

The source surface will always be denoted by capital Latin letters $C,D,\ldots$ and the target surface by later capital Latin letters like $X,Y,\ldots$. These surfaces may be closed or compact with boundary and this will usually be fixed at the beginning of the chapters. The source genus will be usually denoted by $g$ and the target genus by $h$.

If we have marked points or boundary components on the source these will be enumerated from $1$ to $k$ using the letter $j$ as index and $q$ for the point or $\del C$ for the boundary components, i.e.\ $q_j\in C$ or $\del_jC$, respectively. The marked points $p$ or boundary components $\del X$ on the target will be enumerated by $i=1,\ldots,n$, i.e.\ $p_i\in X$ or $\del_iX$, respectively.

Also we fix a surjective map $\nu:\{1,\ldots,k\}\lra\{1,\ldots,n\}$ and partitions $T_1,\ldots,T_n$ of a fixed natural number $d$, called the degree of the Hurwitz covering. We require that the length of the partition $T_i$ is equal to $|\nu^{-1}(i)|$. In principle this is enough to define Hurwitz numbers but we will make a few more choices in order to simplify our treatment of the corresponding moduli spaces. This means that we do in fact not just enumerate the $n$ branched points but enumerate \emph{all} the $k$ critical points. This means we write

\begin{equation*}
  T_i=\sum_{j\in\nu^{-1}(i)}l_j\qquad\forall i=1,\ldots,n.
\end{equation*}

\index{Combinatorial Data}

These numbers $l_1,\ldots,l_k$ will be part of the combinatorial data we choose. Summarizing, the \emph{combinatorial data} that we fix is given by a tuple

\begin{equation*}
  T=\left(d,\nu,\{T_i\}_{i=1}^n,\{l_j\}_{j=1}^k\right),
\end{equation*}

where of course some of the elements contain the same information. Our main object of interest will be \emph{Hurwitz covers}.

\index{Hurwitz Cover!Closed}

\begin{definition}
  Given some combinatorial data as above, the category $\mcR_{g,k,h,n}(T)$ of \emph{closed Hurwitz covers} is defined as follows. Its objects and morphisms are given by
  \begin{align*}
    \obj\mcR_{g,k,h,n}(T) & \coloneqq \left\{(C,u,X,\bq,\bp)\right\}, \\
    \Mor_{\mcR_{g,k,h,n}(T)}\left( (C,u,X,\bq,\bp),(C',u',X',\bq',\bp') \right) & \coloneqq \left\{(\Phi,\phi)\right\},
  \end{align*}
  where
  \begin{itemize}
    \item $C$ is a connected closed stable nodal Riemann surface of arithmetic genus $g$,
    \item $X$ is a connected closed stable nodal Riemann surface of arithmetic genus $h$,
    \item $u:C\lra X$ is a holomorphic map which is non-constant on every nodal component,\footnote{Here, a \emph{nodal} or \emph{smooth} or \emph{irreducible} component is a connected component of the normalization of a Riemann surface.}
    \item $\bq=(q_1,\ldots,q_k)$ is a set of pairwise distinct points $q_j\in C$,
    \item $\bp=(p_1,\ldots,p_n)$ is a set of pairwise distinct points $p_i\in X$,
    \item $\Phi:C\lra C'$ is a biholomorphism,
    \item $\phi:X\lra X'$ is a biholomorphism,
  \end{itemize}
  such that
  \begin{itemize}
    \item $u(q_j)=p_{\nu(j)}$ for all $j=1,\ldots,k$, $u$ maps nodes to nodes and all preimages of nodes are nodes,
    \item all critical points of $u$ are contained in $\bq$ or are nodes of $C$ and thus all branch points of $u$ are contained in $\bp$ or are nodes of $X$,
    \item the branching profile of $u$ over $p_i$ is given by $T_i$ and $\deg_{q_j}u=l_j$ for all $j=1,\ldots,k$ and $i=1,\ldots,n$,
    \item the degrees of $u$ at the two sides of a node on $C$ coincide and $u$ is surjective on a neighborhood of the target node when restricted to a small neighborhood of any node,\footnote{We will call this property \emph{locally surjective at nodes} for later reference. This means that $u$ can not map both sides of a node $q\in C$ to only one side of a node $p\in X$, i.e.\ it excludes the third Figure in Figure~\cref{fig:bad-hurwitz-covers}.} 
    \item $\Phi(q_j)=q_j'$ and $\phi(p_i)=p_i'$ for all $j=1,\ldots,k$ and $i=1,\ldots,n$ and
    \item the following diagram
      \begin{equation*}
        \xymatrix{ C \ar[r]^{\Phi} \ar[d]^{u} & C' \ar[d]^{u'} \\
          X \ar[r]^{\phi} & X'
          }
      \end{equation*}
      commutes.
  \end{itemize}
  \label{def:hurwitz-cover}
\end{definition}

\index{Locally Surjective at Nodes}

\begin{figure}[!ht]
    \centering
    \def\svgwidth{0.5\textwidth}
    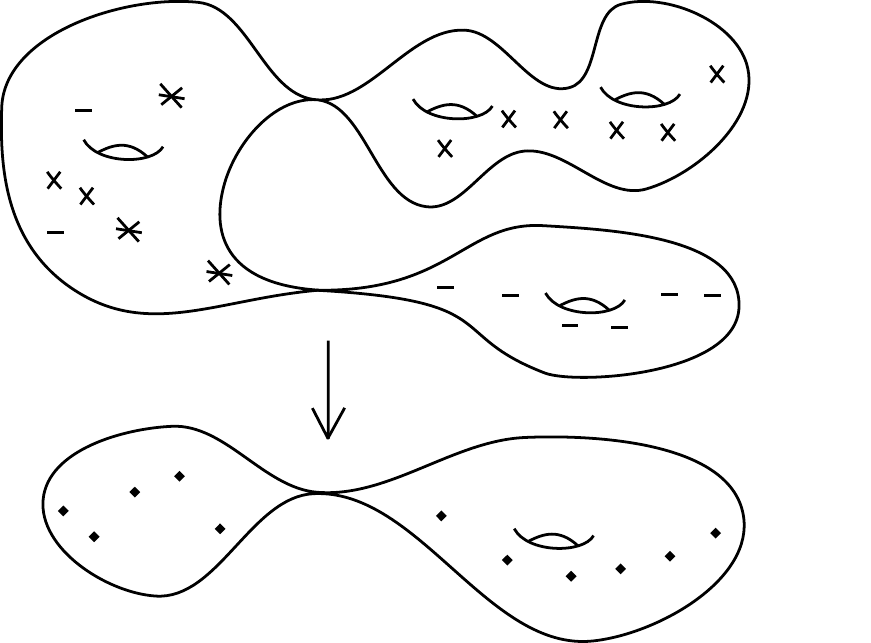
    \caption{This is an example of a (nodal) Hurwitz cover of type $(3,\nu,\{2+1,2+1,3,3,3,2+1,2+1,2+1,2+1,2+1,2+1\},$ $(2,1,1,2,3,3,3,2,1,2,1,2,1,2,1,2,1,2,1))$ where we have not written all the enumerations of the various points for readability. However, we can infer e.g.\ $\nu(1)=1$ and $\nu(19)=11)$.}
    \label{fig:example-hurwitz-cover}
  \end{figure}

\begin{rmk}
  Note that in the definition of $\mcR_{g,k,h,n}(T)$ we do not fix the target surface and instead allow for morphisms to change the target surface as well. Also note that we immediately allow for \emph{nodal Hurwitz covers}, i.e.\ those where $C$ and $X$ contain nodes  in contrast to the usual \emph{smooth Hurwitz covers}, where $C$ and $X$ are smooth. Although the preimage of a node cannot be a circle since $u$ is nonconstant, we need to explicitly require that nodes are mapped to nodes and that there are no smooth points mapped to nodes as these objects cannot arise from the kind of degeneration that we want to consider in this thesis. This excludes the following cases.

\index{Hurwitz Cover!Nodal}
\index{Hurwitz Cover!Smooth}

\begin{figure}[!ht]
    \def\svgwidth{\textwidth}
    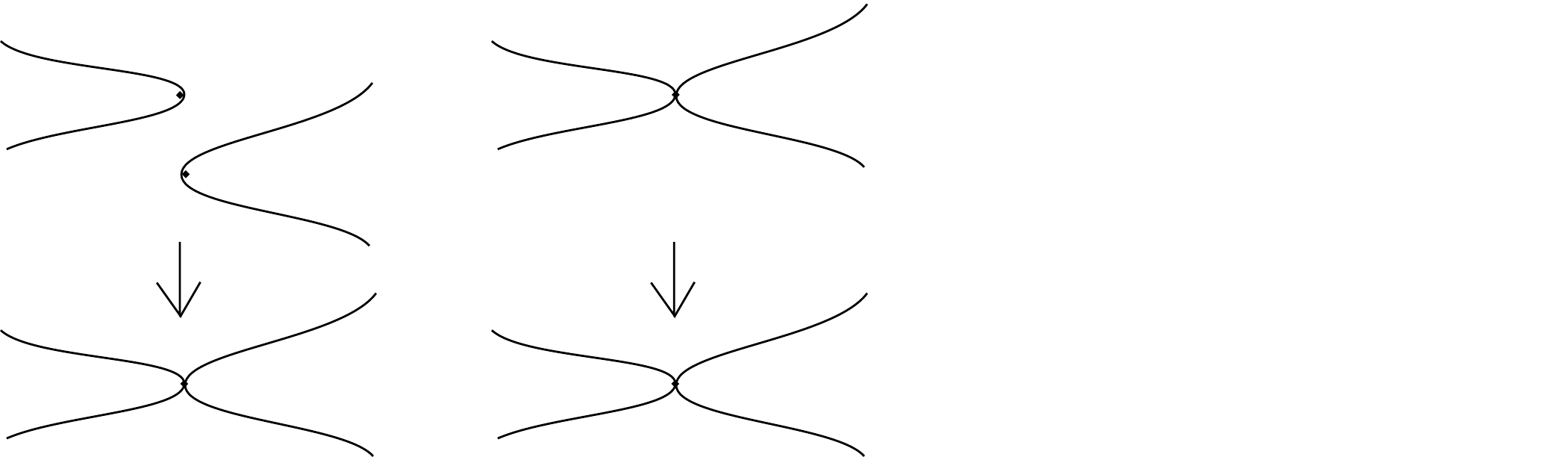
    \caption{The various conditions on the nodes in Definition~\cref{def:hurwitz-cover} of Hurwitz covers exclude in particular the above illustrated cases. From left to right these pictures correspond to non-nodal points being mapped to nodes, the two sides of a node having different degrees, two sides of a node mapped to just one side of a node and a node mapped to a smooth point. One other important forbidden case not depicted is the one of a constant component.}
    \label{fig:bad-hurwitz-covers}
  \end{figure}

We need to understand how the various possible definitions of Hurwitz covers relate to each other which will be explained in the next section.
\end{rmk}

\section{Hurwitz Numbers}

\label{sec:hurwitz-numbers}

The usual definition of a Hurwitz number is as follows. Fix a smooth closed target surface $X$ of genus $h$ as well as $n$ branched points $p_1,\ldots,p_n\in X$. Furthermore we are given $n$ partitions $T_1,\ldots,T_n$ of the degree $d\in\NN$. Now a \emph{standard Hurwitz cover} is a pair $(C,u)$ with $C$ a smooth closed Riemann surface and $u:C\lra X$ such that $u$ is holomorphic, its branched points are given by $p_1,\ldots,p_n$ and the branching profile over $p_i$ is given by $T_i$. Two such standard Hurwitz covers $(C,u)$ and $(C',u')$ are called equivalent if there exists a biholomorphism $\phi:C\lra C'$ such that
\begin{equation}
  \xymatrix{
    C \ar[dr]_u \ar[rr]^{\phi} & & C' \ar[dl]^{u'} \\
    & X &
  }
  \label{eq:automorphism-hurwitz-cover}
\end{equation}
commutes. Automorphisms of $(C,u)$ are such maps $\phi:C\lra C$. Then we define the standard Hurwitz numbers as follows.
\begin{definition}
  The category of \emph{standard Hurwitz covers} $\mcR_h(T_1,\ldots,T_n,X,\bp)$ is defined by
  \begin{align*}
    \obj\mcR_h(T_1,\ldots,T_n,X,\bp) & \coloneqq \left\{(C,u) \text{ standard Hurwitz cover}\right\}, \\
    \Mor_{\mcR_h(T_1,\ldots,T_n,X,\bp)}\left( (C,u),(C',u') \right) & \coloneqq \left\{\Phi:C\lra C'\text{ a biholomorphism}\right. \\
     & \left. \phantom{\coloneqq}\quad \text{s.t. }u'\circ \Phi= u\right\}.
  \end{align*}
  Note that this category is a groupoid with finite automorphism groups and we define the \emph{standard Hurwitz numbers} $\mcH_h(T_1,\ldots,T_n)$ by
  \begin{equation*}
    \mcH_h(T_1,\ldots,T_n)=\sum_{[x]\in|\mcR_h(T_1,\ldots,T_n,X,\bp)|}\frac{1}{|\Aut_{\mcR_h(T_1,\ldots,T_n,X,\bp)}(x)|},
  \end{equation*}
  where $\Aut_{\mcR_h(T_1,\ldots,T_n,X,\bp)}(x)$ is the automorphism group of $x\in\Ob\mcR_h(T_1,\ldots,T_n,X,\bp)$ and thus by definition the same as the automorphisms of the Hurwitz cover in the sense of \cref{eq:automorphism-hurwitz-cover}, sometimes denoted by $\Aut(C,u)$ as well. By $|\mcG|$ we denote the orbit space of a groupoid which is the same as the set of equivalence classes.
\end{definition}
\index{Hurwitz Number!Standard}

\begin{rmk}
  Note that any branching profile $T_i$ determines the degree $d$ and by Riemann--Hurwitz this determines the genus of the source surface. This is why this data is usually suppressed in the notation. Also in contrast to the objects in our Hurwitz spaces from \cref{sec:indexc-data} the branched points are fixed and the critical points are \emph{not} numbered. Furthermore one might think that this definition depends on the target $(X,p_1,\ldots,p_n)$ but the purely combinatorial description in the next Section will show that these numbers are indeed independent of these choices.
\end{rmk}

Recall from the last section that we will consider instead tuples of Hurwitz covers where the target space $(X,\bp)$ is allowed to vary and our notion of morphism takes into account morphisms of the target surface. In order to obtain Hurwitz numbers in our setting we thus need to fix the equivalence class of the target surface instead of the actual surface. To this end we introduce the category of stable Riemann surfaces. Note that all our categories will have markings and so we will usually drop the word ``marked'' in front of ``Riemann surfaces''.
\begin{definition}
  The category $\mcR_{h,n}$ of \emph{closed stable nodal Riemann surfaces} is defined by
  \begin{align*}
    \obj\mcR_{h,n} & \coloneqq \left\{(X,\bp)\right\}\text{ and} \\
    \Mor_{\mcR_{h,n}}\left((X,\bp),(X',\bp')\right) & \coloneqq \left\{\phi:X\lra X'\right\},
  \end{align*}
  such that $X$ is a closed stable nodal Riemann surface of genus $h$ with marked points $\bp=\{p_i\}_{i=1}^n$ such that $p_i\in X$ and $p_i\neq p_j$ for all $i\neq j$. A morphism is a biholomorphism $\phi$ such that $\phi(p_i)=p_i'$ for all $i=1,\ldots,n$.
\end{definition}
\index{Deligne--Mumford Space}
\index{Evaluation Functor}

\begin{rmk}
  The category $\mcR_{h,n}$ is obviously a groupoid and its orbit space $|\mcR_{h,n}|$ is usually called \emph{Deligne--Mumford space}. Note that we have a well defined \emph{evaluation functor}
  \begin{equation*}
    \ev:\mcR_{g,k,h,n}(T)\lra\mcR_{h,n}
  \end{equation*}
  by mapping $(C,u,X,\bq,\bp)\longmapsto (X,\bp)$ and $(\Phi,\phi)\longmapsto \Phi$. This functor descends to a well-defined map
  \begin{equation*}
    \ev:\left|\mcR_{g,k,h,n}(T)\right|\lra\left|\mcR_{h,n}\right|.
  \end{equation*}
\end{rmk}

We will use the following definition for Hurwitz numbers. So now $T,g,k,h$ and $n$ are combinatorial data as in \cref{sec:indexc-data}. 
\index{Hurwitz Number}
\begin{definition}
  Given combinatorial data $T=\left(d,\nu,\{T_i\}_{i=1}^n,\{l_j\}_{j=1}^k\right)$ and an equivalence class $[Y,\br]\in \left|\mcR_{h,n}\right|$ of a smooth target surface we can define the \emph{Hurwitz number} $H_{g,k,h,n}(T)$ as
  \begin{equation}
    H_{g,k,h,n}(T)\coloneqq \left|\Aut(Y,\br)\right|\sum_{\substack{[C,u,X,\bq,\bp]\in|\mcR_{g,k,h,n}(T)| \\ \text{s.t. }\ev([C,u,X,\bq,\bp])=[Y,\br] }}\frac{1}{\left|\Aut(C,u,X,\bq,\bp)\right|}.
    \label{eq:hurwitz-number-definition}
  \end{equation}
\end{definition}

\begin{rmk}
  As we said earlier this definition differs from the standard one in two ways: First, we vary the target surface $(X,\bp)$ and thus include automorphisms of it, which forces us to account for this overcounting by introducing an additional factor. And secondly we enumerate all preimages, which means that we overcount by some combinatorial factor. This is explained in \cref{thm:rel-std-hurwitz-numbers}.
\end{rmk}

\begin{thm}
  The standard Hurwitz numbers $\mcH_h(T)$ and our version $H_{g,k,h,n}(T)$ are related by
  \begin{equation}
    \label{eq:relation-standard-hurwitz-number}
    H_{g,k,h,n}(T)=\mfK\cdot \mcH_h(T_1,\ldots,T_n),
  \end{equation}
  where the combinatorial factor $\mfK$ is given by
  \begin{equation*}
    \mfK\coloneqq\prod_{i=1}^n\prod_{u=1}^d\left(\# \{1\leq j \leq k\mid \nu(j)=i, l_j=u\}\right)!
  \end{equation*}
  and depends on $n,k,d,\nu$ and $\{l_j\}_{j=1}^k$.
  \label{thm:rel-std-hurwitz-numbers}
\end{thm}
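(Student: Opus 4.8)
The plan is to fix a smooth representative $(Y,\br)$ of the class appearing in \cref{eq:hurwitz-number-definition} and to factor the comparison through two forgetful steps, working throughout with groupoid cardinalities: for a finite groupoid $\mcG$ write $\operatorname{mass}(\mcG)\coloneqq\sum_{[x]\in|\mcG|}1/|\Aut(x)|$. Three groupoids enter. First, $\mcG_0$: standard Hurwitz covers $(C,u)$ with $u\colon C\to Y$ branched over $\br$ with profiles $T_1,\dots,T_n$, so that $\operatorname{mass}(\mcG_0)=\mcH_h(T_1,\dots,T_n)$ (which by the Riemann existence theorem / combinatorial description does not depend on the smooth target). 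Second, $\mcG_1$: the same covers enriched with an enumeration $\bq=(q_1,\dots,q_k)$ of $u^{-1}(\br)$ with $u(q_j)=p_{\nu(j)}$ and $\deg_{q_j}u=l_j$, a morphism $(C,u,\bq)\to(C',u',\bq')$ being a biholomorphism $\Phi$ with $u'\circ\Phi=u$ and $\Phi(\bq)=\bq'$. Third, $\mcG_2$: the full subgroupoid of $\mcR_{g,k,h,n}(T)$ on the objects $(C,u,X,\bq,\bp)$ whose image class under $\ev$ is $[Y,\br]$, so that $\operatorname{mass}(\mcG_2)$ is exactly the sum in \cref{eq:hurwitz-number-definition}. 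Because $Y$ is smooth, every object of $\mcG_2$ has $X$ smooth, hence — since nodes map to nodes and preimages of nodes are nodes by \cref{def:hurwitz-cover} — $C$ is smooth, and under the standing stability hypothesis $(C,\bq)$ is automatically stable; thus $\mcG_2$ only sees smooth Hurwitz covers, the ones counted by $\mcH_h$.

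First I would show $\mcG_2\simeq\mcG_1/\!\!/\Aut(Y,\br)$, the homotopy quotient for the action $a\cdot(C,u,\bq)\coloneqq(C,a\circ u,\bq)$, trivial on morphisms. Any object of $\mcG_2$ is isomorphic, via a chosen identification $(X,\bp)\cong(Y,\br)$, to one with $X=Y$ and $\bp=\br$; and a $\mcG_2$-morphism $(C,u,Y,\bq,\br)\to(C',u',Y,\bq',\br)$ is a pair $(\Phi,\phi)$ with $\phi\in\Aut(Y,\br)$, $u'\circ\Phi=\phi\circ u$ and $\Phi(\bq)=\bq'$, which is precisely the data of a morphism in $\mcG_1/\!\!/\Aut(Y,\br)$ with group component $\phi$. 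Since $\operatorname{mass}(\mcG/\!\!/G)=\operatorname{mass}(\mcG)/|G|$ for a finite group acting on a finite groupoid, this yields $\operatorname{mass}(\mcG_2)=\operatorname{mass}(\mcG_1)/|\Aut(Y,\br)|$, hence $H_{g,k,h,n}(T)=|\Aut(Y,\br)|\cdot\operatorname{mass}(\mcG_2)=\operatorname{mass}(\mcG_1)$.

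Next I would analyse the forgetful functor $p\colon\mcG_1\to\mcG_0$, $(C,u,\bq)\mapsto(C,u)$, identity on morphisms. Its fibre over a fixed $(C,u)$ is the set $E_{(C,u)}$ of admissible enumerations: since the branching profile of $u$ over $p_i$ is $T_i=\sum_{j\in\nu^{-1}(i)}l_j$, the $|\nu^{-1}(i)|$ preimages of $p_i$ have degree multiset $\{l_j:\nu(j)=i\}$, so enumerating them compatibly means choosing, for each value $e$, a bijection between the $m_{i,e}\coloneqq\#\{j:\nu(j)=i,\ l_j=e\}$ preimages of degree $e$ and the corresponding indices; hence $|E_{(C,u)}|=\prod_{i=1}^n\prod_{e=1}^d m_{i,e}!=\mfK$, independent of $(C,u)$. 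Now $\Aut(C,u)$ acts on $E_{(C,u)}$, the $\mcG_1$-isomorphism classes over $[(C,u)]$ correspond to its orbits, and $\Aut_{\mcG_1}(C,u,\bq)$ is the stabiliser of $\bq$, so by orbit–stabiliser the total contribution of these classes to $\operatorname{mass}(\mcG_1)$ is $\sum_{O}|O|/|\Aut(C,u)|=|E_{(C,u)}|/|\Aut(C,u)|=\mfK/|\Aut(C,u)|$. Summing over $[(C,u)]\in|\mcG_0|$ gives $\operatorname{mass}(\mcG_1)=\mfK\cdot\operatorname{mass}(\mcG_0)=\mfK\cdot\mcH_h(T_1,\dots,T_n)$, and combined with the first step this is \cref{eq:relation-standard-hurwitz-number}.

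The routine points I would still spell out are: that $\mcG_2$ is the full subgroupoid of $\mcR_{g,k,h,n}(T)$ over $[Y,\br]$, so its mass is literally the right-hand sum of \cref{eq:hurwitz-number-definition}; the elementary identity $\operatorname{mass}(\mcG/\!\!/G)=\operatorname{mass}(\mcG)/|G|$; the degree bookkeeping for the preimages of $p_i$; and the Riemann–Hurwitz check that, whenever $(Y,\br)$ is a valid stable smooth target, $(C,\bq)$ is automatically stable. The one genuinely delicate step is the first one: on the standard side the target is rigid and the critical points are unmarked, whereas in $\mcR_{g,k,h,n}(T)$ the target may be moved by its own automorphisms while all of $\bq$ is marked, and one must be careful that these two discrepancies are governed by the single group $\Aut(Y,\br)$ and nothing more. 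Once the comparison is organised as a homotopy quotient, however, the factor $|\Aut(Y,\br)|$ already built into \cref{eq:hurwitz-number-definition} is exactly the correction that cancels it, and the whole argument is combinatorial — no analysis is involved.
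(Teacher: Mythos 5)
Your proof is correct, but it organizes the comparison in the opposite order from the paper and uses a slightly different formalism, so it is worth spelling out the contrast. The paper introduces an intermediate groupoid $\mcR'_{g,k,h,n}(T)$ whose objects are covers $(C,u,X,\bp)$ with the target still allowed to vary but the enumeration of $\bq$ forgotten; \cref{prop:rel-hurwitz-2} then strips the enumeration first (producing the factor $\mfK$), and \cref{prop:rel-hurwitz-1} rigidifies the target second (cancelling the $|\Aut(Y,\br)|$ prefactor). Your intermediate groupoid $\mcG_1$ is the other hybrid: fixed target $(Y,\br)$, enumeration still present. You rigidify the target first, and that is where the homotopy quotient $\mcG_2\simeq\mcG_1/\!\!/\Aut(Y,\br)$ buys you something the paper's argument works harder for: the identity $\operatorname{mass}(\mcG/\!\!/G)=\operatorname{mass}(\mcG)/|G|$ absorbs the $|\Aut(Y,\br)|$ prefactor in a single line, whereas the paper has to establish the surjection $F_s\colon\Aut_{\mcR'}(C_s,u_s,Y,\br)\twoheadrightarrow\Aut_{\mcG}(C_s,u_s)$ and identify its kernel with $\Aut_{\mcB}(C_s,u_s)$ by hand. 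Your second step (the fibration $p\colon\mcG_1\to\mcG_0$ with fibre $E_{(C,u)}$ of size $\mfK$ and the orbit--stabiliser accounting) is essentially the same content as the paper's analysis of the forgetful map $F\colon\Ob\mcA\to\Ob\mcB$ in the proof of \cref{prop:rel-hurwitz-2}, just expressed as a groupoid-cardinality fibration identity rather than by unwinding five explicit statements. Both routes rely on the same underlying ingredients (choices of representatives, group actions on finite sets, finiteness of all automorphism groups guaranteed by the combinatorial description in \cref{sec:comb-desciption-hurwitz-numbers}); yours is shorter and more conceptual, the paper's is more elementary and self-contained.

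One small point you correctly flag as needing to be spelled out, and which deserves a sentence in a final write-up: in the homotopy quotient step you implicitly assume that the full subcategory of $\mcR_{g,k,h,n}(T)$ over $[Y,\br]$ is equivalent to its full subcategory on objects of the literal form $(C,u,Y,\bq,\br)$. This is true (any object with $\ev=[Y,\br]$ is isomorphic to one via a chosen $\varphi\colon(X,\bp)\to(Y,\br)$, exactly as at the start of the paper's proof of \cref{prop:rel-hurwitz-1}), but you should state the reduction explicitly before invoking the identification with $\mcG_1/\!\!/\Aut(Y,\br)$, since the homotopy quotient is defined on the reduced groupoid.
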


\begin{rmk}
  Although the combinatorial factor $\mfK$ seems strange it can be easily understood. The difference between the two Hurwitz numbers comes from the fact that we enumerated the preimages of the branch points $\bp$. However, if you fix the enumeration of the critical points $\bq$ as well as the map $\nu$ the only choice you have are permutations of the marking $j$ in the same fibre and with the same degree of $u$. Thus we obtain as a factor the product of all these factorials.

Due to the two differences in the definitions of Hurwitz numbers we will introduce an intermediate Hurwitz space where the target surface varies but the critical points are not enumerated. 
\end{rmk}

\begin{definition}
  We define the groupoid $\mcR'_{g,k,h,n}(T)$ by
  \begin{align*}
    \obj\mcR'_{g,k,h,n}(T) & \coloneqq \left\{(C,u,X,\bp)\right\}, \\
    \Mor_{\mcR'_{g,k,h,n}(T)}\left( (C,u,X,\bp),(C',u',X',\bp') \right) & \coloneqq \left\{(\Phi,\phi)\right\},
  \end{align*}
  where $u:C\lra X$ is a smooth Hurwitz cover, $\bp\subset X$ are $n$ enumerated points including all branched points of $u$. In contrast to $\mcR_{g,k,h,n}(T)$ the preimages of $\bp$ are \emph{not} enumerated. Morphisms $(\Phi,\phi)$ are commuting diagrams
  \begin{equation*}
    \xymatrix{
      C \ar[r]^{\Phi} \ar[d]_{u} & C' \ar[d]_{u'} \\
      X \ar[r]^{\phi} & X'
      }
  \end{equation*}
  such that $\phi(p_i)=p_i'$ for all $i=1,\ldots n$. Furthermore we define the \emph{intermediate Hurwitz number} $\mcH'_{g,k,h,n}(T)$ as
  \begin{equation*}
    \mcH'_{g,k,h,n}(T)\coloneqq |\Aut(Y,\br)|\sum_{\substack{[C,u,X,\bp] \\ \text{s.t. } \ev[C,u,X,\bp]=[Y,\br]}}\frac{1}{|\Aut_{\mcR'_{g,k,h,n}(T)}(C,u,X,\bp)|},
  \end{equation*}
  where the evaluation map is defined in the obvious way.
\end{definition}

We will prove two propositions.
\begin{prop}
  Given some combinatorial data we have
  \begin{equation*}
    \mcH'_{g,k,h,n}(T)=\mcH_h(T_1,\ldots,T_n,Y,\br).
  \end{equation*}
  \label{prop:rel-hurwitz-1}
\end{prop}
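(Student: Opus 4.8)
The plan is to exhibit the full subgroupoid $\mcG'\subseteq\mcR'_{g,k,h,n}(T)$ consisting of those objects $(C,u,X,\bp)$ with $\ev[C,u,X,\bp]=[Y,\br]$ as a ``quotient'' of the standard groupoid $\mcR_h(T_1,\ldots,T_n,Y,\br)$ by the finite group $G\coloneqq\Aut(Y,\br)$, and then to check that the two sources of discrepancy built into the definition of $\mcH'_{g,k,h,n}(T)$ — allowing the target to vary and multiplying by $|\Aut(Y,\br)|$ — cancel precisely via an orbit--stabiliser computation. Since $\mcG'$ is a \emph{full} subgroupoid of $\mcR'_{g,k,h,n}(T)$, the sum defining $\mcH'$ may be rewritten as $|\Aut(Y,\br)|\sum_{[x]\in|\mcG'|}1/|\Aut_{\mcG'}(x)|$, so it suffices to prove $|G|\cdot\sum_{[x]\in|\mcG'|}1/|\Aut_{\mcG'}(x)|=\sum_{[C,u]\in|\mcR_h(T_1,\ldots,T_n,Y,\br)|}1/|\Aut(C,u)|$.

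First I would normalise the objects of $\mcG'$: given $(C,u,X,\bp)$ with $(X,\bp)\cong(Y,\br)$, any biholomorphism $\psi\colon X\lra Y$ with $\psi(p_i)=r_i$ yields an isomorphism $(\id_C,\psi)\colon(C,u,X,\bp)\lra(C,\psi\circ u,Y,\br)$ in $\mcR'_{g,k,h,n}(T)$, so every isomorphism class in $\mcG'$ has a representative $(C,u,Y,\br)$ with $u\colon C\lra Y$ a standard Hurwitz cover of the prescribed profile. Next, $G$ acts on the set $S\coloneqq|\mcR_h(T_1,\ldots,T_n,Y,\br)|$ by $\phi\cdot[C,u]\coloneqq[C,\phi\circ u]$, which is well defined because post-composition with the invertible $\phi$ commutes with the defining equation $u'\circ\Phi=u$ of morphisms in $\mcR_h$. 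I would then verify the identification $(C,u,Y,\br)\cong_{\mcG'}(C',u',Y,\br)$ if and only if $[C,u]$ and $[C',u']$ lie in the same $G$-orbit: a morphism $(\Phi,\phi)$ between them is exactly a choice of $\phi\in G$ together with a biholomorphism $\Phi$ with $u'\circ\Phi=\phi\circ u$, i.e.\ an isomorphism $\Phi\colon(C,\phi\circ u)\xrightarrow{\sim}(C',u')$ in $\mcR_h$. This gives a bijection $|\mcG'|\cong S/G$.

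The crucial local step is the computation of automorphism groups. Projecting a self-morphism $(\Phi,\phi)$ of $(C,u,Y,\br)$ in $\mcG'$ onto its target component $\phi$ is a group homomorphism $\Aut_{\mcG'}(C,u,Y,\br)\lra G$ whose kernel is $\{(\Phi,\id_Y)\mid u\circ\Phi=u\}=\Aut(C,u)$ and whose image is exactly the stabiliser $G_{[C,u]}$ of $[C,u]\in S$ (by the same rewriting $u\circ\Phi=\phi\circ u\ \Leftrightarrow\ \Phi\colon(C,\phi\circ u)\xrightarrow{\sim}(C,u)$ in $\mcR_h$). Hence there is a short exact sequence
\begin{equation*}
  1\lra\Aut(C,u)\lra\Aut_{\mcG'}(C,u,Y,\br)\lra G_{[C,u]}\lra 1,
\end{equation*}
so $|\Aut_{\mcG'}(C,u,Y,\br)|=|\Aut(C,u)|\cdot|G_{[C,u]}|$; moreover $|\Aut(C,u)|$ is constant along $G$-orbits since replacing $u$ by $\phi\circ u$ leaves the equation $u\circ\Phi=u$ unchanged. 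Combining this with orbit--stabiliser, $|G\cdot[C,u]|=|G|/|G_{[C,u]}|$, and grouping $S$ into $G$-orbits $O$ with representatives $(C_O,u_O)$ gives
\begin{align*}
  \sum_{[C,u]\in S}\frac{1}{|\Aut(C,u)|}
  &=\sum_{O\in S/G}\frac{|G\cdot[C_O,u_O]|}{|\Aut(C_O,u_O)|}
   =|G|\sum_{O\in S/G}\frac{1}{|\Aut(C_O,u_O)|\,|G_{[C_O,u_O]}|}\\
  &=|G|\sum_{[x]\in|\mcG'|}\frac{1}{|\Aut_{\mcG'}(x)|}.
\end{align*}

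The left-hand side of this display is $\mcH_h(T_1,\ldots,T_n,Y,\br)$ by definition, while the right-hand side equals $|\Aut(Y,\br)|\sum_{[x]}1/|\Aut_{\mcR'_{g,k,h,n}(T)}(x)|=\mcH'_{g,k,h,n}(T)$ by fullness of $\mcG'$ and the definition of the intermediate Hurwitz number, which is exactly the asserted equality. I expect the main obstacle to be bookkeeping rather than conceptual depth: one must track carefully which biholomorphisms of $(Y,\br)$ get absorbed into morphisms of $\mcR'_{g,k,h,n}(T)$ and thereby enlarge the automorphism groups, set up the short exact sequence above with the right maps, and confirm that finiteness of $G=\Aut(Y,\br)$ and of $S$ — standard for stable Riemann surfaces and for branched covers of a fixed profile — makes all the rearrangements of sums legitimate.
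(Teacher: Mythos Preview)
Your proof is correct and follows essentially the same approach as the paper: both arguments normalise representatives to have target $(Y,\br)$, let $G=\Aut(Y,\br)$ act on the set of standard-Hurwitz equivalence classes, identify isomorphism classes in $\mcR'_{g,k,h,n}(T)$ over $[Y,\br]$ with $G$-orbits, and then decompose $\Aut_{\mcR'}(C,u,Y,\br)$ via the short exact sequence $1\to\Aut(C,u)\to\Aut_{\mcR'}\to G_{[C,u]}\to 1$ before concluding with orbit--stabiliser. Your presentation is if anything slightly cleaner, phrasing things directly in terms of full subgroupoids and short exact sequences rather than first choosing a finite set of representatives.
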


\begin{prop}
  Given some combinatorial data we have
  \begin{equation*}
    H_{g,k,h,n}(T)=\mfK\cdot\mcH'_{g,k,h,n}(T).
  \end{equation*}
  \label{prop:rel-hurwitz-2}
\end{prop}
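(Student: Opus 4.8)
The plan is to exhibit the evident forgetful functor
$F\colon\mcR_{g,k,h,n}(T)\lra\mcR'_{g,k,h,n}(T)$, given by $(C,u,X,\bq,\bp)\mapsto(C,u,X,\bp)$ and $(\Phi,\phi)\mapsto(\Phi,\phi)$, and then to run an orbit–stabiliser count on its fibres. First I would check that $F$ is well defined: the conditions imposed on $\bq$ in \cref{def:hurwitz-cover} — that $\bq$ is exactly the preimage $u^{-1}(\bp)$, that all critical points lie in $\bq$, and that $\deg_{q_j}u=l_j$ — refer to no structure beyond $(C,u,X,\bp)$, so objects and morphisms of $\mcR_{g,k,h,n}(T)$ map to legitimate objects and morphisms of $\mcR'_{g,k,h,n}(T)$. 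Since $F$ also intertwines the two evaluation functors, it restricts to a map $|F|\colon S\lra S'$ between the two sets of isomorphism classes lying over the fixed smooth $[Y,\br]\in|\mcR_{h,n}|$ appearing in the two definitions.

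Next I would fix $[C,u,X,\bp]\in S'$ together with a representative and describe the fibre $|F|^{-1}([C,u,X,\bp])$. Let $E$ be the set of bijections $\bq\colon\{1,\dots,k\}\lra u^{-1}(\bp)$ with $u(q_j)=p_{\nu(j)}$ and $\deg_{q_j}u=l_j$ for every $j$; these are exactly the enumerations that upgrade $(C,u,X,\bp)$ to an object of $\mcR_{g,k,h,n}(T)$. The substantive point is the count $|E|=\mfK$: since the branching profile of $u$ over $p_i$ is $T_i=\sum_{j\in\nu^{-1}(i)}l_j$, the multiset of local degrees of the preimages of $p_i$ equals $\{\,l_j : j\in\nu^{-1}(i)\,\}$, so the admissible enumerations over $p_i$ are obtained by permuting, for each $u$, the equal-degree preimages among the indices $j$ with $\nu(j)=i$ and $l_j=u$; this gives $\prod_{u=1}^d(\#\{j : \nu(j)=i,\ l_j=u\})!$ choices over $p_i$, and the product over $i$ is exactly $\mfK$.

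Then I would let $G\coloneqq\Aut_{\mcR'_{g,k,h,n}(T)}(C,u,X,\bp)$ act on $E$ by $(\Phi,\phi)\cdot\bq\coloneqq\Phi\circ\bq$ (which lands in $E$ because $\Phi$ permutes $u^{-1}(\bp)$ and preserves local degrees). Two enumerations lie in the same $G$-orbit precisely when the resulting objects of $\mcR_{g,k,h,n}(T)$ are isomorphic over $[C,u,X,\bp]$, and the stabiliser of $\bq$ is exactly $\Aut_{\mcR_{g,k,h,n}(T)}(C,u,X,\bq,\bp)$. Picking orbit representatives $\bq_1,\dots,\bq_m$, orbit–stabiliser yields
\begin{equation*}
  \sum_{\alpha=1}^{m}\frac{1}{\bigl|\Aut_{\mcR_{g,k,h,n}(T)}(C,u,X,\bq_\alpha,\bp)\bigr|}
  =\frac{1}{|G|}\sum_{\alpha=1}^{m}|G\cdot\bq_\alpha|
  =\frac{|E|}{|G|}
  =\frac{\mfK}{\bigl|\Aut_{\mcR'_{g,k,h,n}(T)}(C,u,X,\bp)\bigr|}.
\end{equation*}
Summing over all $[C,u,X,\bp]\in S'$ and multiplying by $|\Aut(Y,\br)|$ converts the left side into $H_{g,k,h,n}(T)$ — its defining sum over $S$ regrouped along the fibres of $|F|$ — and the right side into $\mfK\cdot\mcH'_{g,k,h,n}(T)$, which is the assertion.

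The routine parts are the well-definedness of $F$ and the formalities of the $G$-action; the only step needing real care is $|E|=\mfK$, i.e.\ the observation that the branching-profile condition pins down the multiset of local degrees over each $p_i$, so that counting admissible enumerations reduces to permuting equal-degree preimages fibrewise. I would also note that the factor $|\Aut(Y,\br)|$ is inert here — it is literally the same in both definitions and merely rides along — its role of correcting for the varying target being handled, together with \cref{prop:rel-hurwitz-1}, in the proof of \cref{thm:rel-std-hurwitz-numbers}.
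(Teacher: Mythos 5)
Your proof is correct and takes essentially the same route as the paper: you regroup the sum defining $H_{g,k,h,n}(T)$ along the fibres of the forgetful functor to $\mcR'_{g,k,h,n}(T)$ and apply orbit–stabiliser to the $\Aut_{\mcR'}$-action on the set of $\mfK$ admissible enumerations, exactly as the paper does via its auxiliary categories $\mcA$, $\mcB$ and the class equation for the $\mcG_s$-actions. The only cosmetic difference is that you phrase the fibrewise count directly with orbit–stabiliser rather than introducing the intermediate full subcategories.
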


\cref{prop:rel-hurwitz-1} and \cref{prop:rel-hurwitz-2} together prove \cref{thm:rel-std-hurwitz-numbers}.

Both propositions will be proven by choosing representatives for the equivalence classes and rewriting the equivalence relations as group actions.

\begin{proof}{(\cref{prop:rel-hurwitz-1})}
  Let $\{(C_s,u_s)\}_{s=1}^m\subset\obj\mcR_h(T_1,\ldots,T_n,Y,\br)$ be representatives for the $m$ equivalence class of $|\mcR_h(T_1,,\ldots T_n,Y,\br)|$. Denote by $\mcB$ the full subcategory generated by these $m$ objects. It has only automorphisms which are precisely the automorphisms from $\mcR_h(T_1,\ldots,T_n,Y,\br)$. Thus
  \begin{equation*}
    \mcH_h(T_1\ldots,T_n)=\sum_{s=1}^m\frac{1}{|\Aut_{\mcB}(C_s,u_s)|}.
  \end{equation*}
  There exists an action by the group $\mcG\coloneqq\Aut(Y,\br)$ on $\obj\mcB$ defined as follows. Let $\varphi: (Y,\br)\lra (Y,\br)$ be a biholomorphism and $(C_s,u_s)\in\obj\mcB$. Then $\varphi\circ u_s:C_s\lra Y$ is again a Hurwitz cover of the fixed type and therefore there exists $t\in\{1,\ldots,m\}$ such that $(C_s,\varphi\circ u_s)\sim_{\mcR_h(T_1,\ldots,T_n)}(C_t,u_t)$. We define $\varphi\cdot(C_s,u_s)\coloneqq (C_t,u_t)$. This is obviously well defined and is a group action by $\mcG$ on $\obj\mcB$.

Next we show that every class $[C,u,X,\bp]\in|\mcR'_{g,k,h,n}(T)$ such that $\ev[C,u,X,\bp]=[Y,\br]$ has a representative $[C_s,u_s,Y,\br]$. First we see that $(C,u,X,\bp)\sim_{\mcR'_{g,k,h,n}(T)}(C,\varphi\circ u,Y,\br)$ because there exists a biholomorphism $\varphi:(X,\bp)\lra(Y,\br)$ and thus the diagram
\begin{equation*}
  \xymatrix{
    C \ar[r]^{\id} \ar[d]_u & C \ar[d]^{\varphi\circ u} \\
    X \ar[r]^{\varphi} & Y
}
\end{equation*}
commutes. Since this implies that $(C,\varphi\circ u)$ is a Hurwitz cover with target $(Y,\br)$ it is equivalent in $\mcR_h(T_1,\ldots,T_n)$ to $(C_s,u_s)\in\obj \mcB$ for some $s\in\{1,\ldots,m\}$. Thus there exists a biholomorphism $\Phi:C\lra C_s$ such that
\begin{equation*}
  \xymatrix{
    C \ar[r]^{\Phi} \ar[d]_{\varphi\circ u} & C \ar[d]^{u_s} \\
    Y \ar[r]^{\id} & Y
}
\end{equation*}
commutes. Composing the two diagrams shows $(C,u,X,\bp)\sim_{\mcR'_{g,k,h,n}(T)}(C_s,u_s,Y,\br)$. This implies that the set $\{(C_s,u_s,Y,\br)\}_{s=1}^m$ contains representatives of all equivalence classes in $|\mcR'_{g,k,h,n}(T)|$. However, some of them might still be identified in that groupoid as we will see next.

We need to show three more statements to conclude the result:
\begin{enumerate}[label=(\roman*), ref=(\roman*)]
  \item We have $(C_s,u_s,Y,\br)\sim_{\mcR'_{g,k,h,n}(T)}(C_t,u_t,Y,\br)\Longleftrightarrow (C_s,u_s)\sim_{\mcG}(C_t,u_t)$ for all $s,t\in\{1,\ldots,m\}$. \label{prop1:item1}
  \item There exist surjective group homomorphisms $$F_s:\Aut_{\mcR'_{g,k,h,n}(T)}(C_s,u_s,Y,\br)\lra\Aut_{\mcG}(C_s,u_s)$$ for all $s=1,\ldots,m$. \label{prop1:item2}
  \item The kernel $\ker F_s$ is isomorphic to $\Aut_{\mcB}(C_s,u_s)$ for all $s=1,\ldots,m$. \label{prop1:item3}
\end{enumerate}
Before proving these statements let us show that this does indeed imply the statement. Combining \cref{prop1:item2} and \cref{prop1:item3} we get 
\begin{equation*}
  |\Aut_{\mcR'_{g,k,h,n}(T)}(C_s,u_s,Y,\br)=|\Aut_{\mcB}(C_s,u_s)|\cdot |\Aut_{\mcG}(C_s,u_s)|\qquad\forall\,s=1,\ldots,m.
\end{equation*}
All in all we have 
\begin{align*}
  \mcH'_{g,k,h,n}(T) & = |\mcG|\sum_{\substack{[C,u,X,\bp]\in|\mcR'_{g,k,h,n}(T)| \\ \text{s.t. } \ev[C,u,X,\bp]=[Y,\br]}}\frac{1}{|\Aut_{\mcR'_{g,k,h,n}(T)}(C,u,X,\bp)|} \\
  & = \sum_{\substack{[C_s,u_s,Y,\br]\in|\mcR'_{g,k,h,n}(T)| \\ \text{for }s=1,\ldots,m}}\frac{|\mcG|}{|\Aut_{\mcB}(C_s,u_s)|\cdot |\Aut_{\mcG}(C_s,u_s)|} \\
  & = \sum_{[C_s,u_s]_{\mcG}} \#[C_s,u_s]_{\mcG}\frac{1}{|\Aut_{\mcB}(C_s,u_s)|} \\
  & = \sum_{(C_s,u_s)\in\obj\mcB}\frac{1}{|\Aut_{\mcB}(C_s,u_s)|} \\
  & = \mcH_g(T_1,\ldots,T_n)
\end{align*}
where the intermediate sums go over all equivalence classes only, the subscript $\mcG$ denotes equivalence classes with respect to the $\mcG$-action and $\#[\cdot]$ denotes the number of elements in the equivalence class.

Now we prove \cref{prop1:item1}, \cref{prop1:item2} and \cref{prop1:item3}.
\begin{enumerate}[label=(\roman*), ref=(\roman*)]
  \item Suppose $(C_s,u_s,Y,\br)\sim_{\mcR'_{g,k,h,n}}(C_t,u_t,Y,\br)$, i.e.\ there exists a morphism $(\Phi,\varphi)$ such that
    \begin{equation}
      \label{eq:equivalence-notions-group-action}
      \xymatrix{
        C_s \ar[r]^{\Phi} \ar[d]_{u_s} & C_t \ar[d]^{u_t} \\
        Y \ar[r]^{\varphi} & Y
}
    \end{equation}
    commutes and $\varphi(r_i)=r_i$ for $i=1,\ldots,n$. You can read this square as a triangle implying $(C_s,\varphi\circ u_s)\sim_{\mcR_h(T_1,\ldots,T_n)}(C_t,u_t)$ and thus $(C_t,u_t)=\varphi\cdot(C_s,u_s)$ for $\varphi\in\mcG$ which was to show. For the other direction notice that $(C_t,u_t)=\varphi\cdot (C_s,u_s)$ implies $(C_s,\varphi\circ u_s)\sim_{\mcR_h(T_1,\ldots,T_n)}(C_t,u_t)$ which gives a commuting triangle that expands to the commuting square \cref{eq:equivalence-notions-group-action}, which proves this direction as well.
  \item Let $(\Phi,\varphi)\in\Aut_{\mcR'_{g,k,h,n}(T)}(C_s,u_s,Y,\br)$. Then we have
    \begin{equation*}
      \xymatrix{
        C_s \ar[r]^{\Phi} \ar[d]_{u_s} & C_s \ar[d]^{u_s} \\
        Y \ar[r]^{\varphi} & Y
}
    \end{equation*}
    implying $\varphi\cdot(C_s,u_s)\sim_{\mcR_h(T_1,\ldots,T_n,Y,\br)}(C_s,u_s)$ and therefore $\varphi\in\Aut_{\mcG}(C_s,u_s)$. This map $(\Phi,\varphi)\mapsto \varphi$ is clearly a group homomorphism and its surjectivity comes from the fact that $\varphi\cdot(C_s,u_s)=(C_s,u_s)$ implies the existence of $\Phi:C_s\lra C_s$ such that
    \begin{equation*}
      \xymatrix{
        C_s \ar[r]^{\Phi} \ar[d]_{\varphi\circ u_s} & C_s \ar[dl]^{u_s} \\
        Y &
}
    \end{equation*}
    commutes which can be expanded to the above square. At this point it is important that in $\mcR'_{g,k,h,n}(T)$ we do not consider the critical points enumerated.
  \item This is obvious as $(\Phi,\id)\in\Aut_{\mcR'_{g,k,h,n}(T)}(C_s,u_s,Y,\br)$ satisfies
    \begin{equation*}
      \xymatrix{
        C_s \ar[r]^{\Phi} \ar[d]_{u_s} & C_s \ar[d]^{u_s} \\
        Y \ar[r]^{\id} & Y 
}
    \end{equation*}
    and thus $\Phi\in\Aut_{\mcB}(C_s,u_s)$, and similarly for the other direction.
\end{enumerate}
\end{proof}

\begin{proof}{\cref{prop:rel-hurwitz-2}}
  Similarly to the proof of \cref{prop:rel-hurwitz-1} we begin by choosing exactly one representative $\{(C_s,u_s,Y,\br)\}_{s=1}^m$ for every equivalence class in $|\mcR'_{g,k,h,n}(T)|$ and denote by $\mcB$ the full subcategory of $\mcR'_{g,k,h,n}(T)$ generated by these elements which again contains only automorphisms. Therefore we have
  \begin{equation*}
    \mcH'_{g,k,h,n}(T)=|\Aut(Y,\br)|\sum_{s=1}^m\frac{1}{|\Aut_{\mcB}(C_s,u_s,Y,\br)|}.
  \end{equation*}
  Now we define another category $\mcA$ built out of $\mcB$ including enumerations. We define
  \begin{align*}
    \obj\mcA & \coloneqq \begin{aligned} \left\{(C_s,u_s,Y,\bq,\br)\mid s=1,\ldots,m, \right. \\ \left. \bq\text{ any admissible enumeration}\right\} \end{aligned} \\
    \Mor_{\mcA}((C_s,u_s,Y,\bq,\br),(C_t,u_t,Y,\bq',\br)) & \coloneqq \\
    \Mor_{\mcR_{g,k,h,n}(T)} & ((C_s,u_s,Y,\bq,\br),(C_t,u_t,Y,\bq',\br)),
  \end{align*}
  i.e.\ the full subcategory generated by the $m$ Hurwitz covers including all admissible enumerations. Here, ``admissible'' means that the Hurwitz cover with the enumeration satisfies $T$, so in particular $\deg_{q_j}u=l_j$ and $\nu(j)=i$. A little bit of combinatorics tells us that given a Hurwitz cover there are $\mfK$ possible enumerations as you can permute precisely those indices which have the same degree inside their fibre. Thus $\mcA$ has $\mfK\cdot m$ objects. Furthermore we define the map
  \begin{align*}
    F:\obj\mcA & \lra \obj\mcB \\
    (C_s,u_s,Y,\bq,\br) & \longmapsto (C_s,u_s,Y,\br)
  \end{align*}
forgetting the enumeration. We now show a few statements about $\mcA, \mcB$ and $F$.
\begin{enumerate}[label=(\roman*), ref=(\roman*)]
  \item Every class $[C,u,X,\bq,\bp]\in|\mcR_{g,k,h,n}(T)|$ with $\ev[C,u,X,\bq,\bp]=[Y,\br]$ has a representative in $\obj\mcA$ and $$|\mcA|=\{[C,u,X,\bq,\bp]\in|\mcR_{g,k,h,n}(T)|\text{ s.t. }\ev([C,u,X,\bq,\bp])=[Y,\br]\}$$ as well as $$\Aut_{\mcR_{g,k,h,n(T)}}(C_s,u_s,Y,\bq,\br)=\Aut_{\mcA}(C_s,u_s,Y,\bq,\br)$$ for every $s=1,\ldots m$. \label{prop2:item1}
  \item If $(C_s,u_s,Y,\bq,\br)\sim_{\mcR_{g,k,h,n}(T)}(C_t,u_t,Y,\bq',\br)$ then $(C_s,u_s,Y,\br)=(C_t,u_t,Y,\br)$, i.e.\ only elements in the fibres of $F$ are identified in $\mcR_{g,k,h,n}(T)$ or $\mcA$, respectively. \label{prop2:item2}
  \item For every $s=1,\ldots,m$ there exists a group action by
    \begin{equation*}
      \mcG_s\coloneqq \Aut_{\mcR'_{g,k,h,n}(T)}(C_s,u_s,Y,\br)=\Aut_{\mcB}(C_s,u_s,Y,\br)
    \end{equation*}
    on the fibre $F^{-1}(C_s,u_s,Y,\br)$. \label{prop2:item3}
  \item For every $s=1,\ldots,m$ we have 
    \begin{equation*}
      (C_s,u_s,Y,\bq,\br)\sim_{\mcA}(C_s,u_s,Y,\bq',\br)\Longleftrightarrow (C_s,u_s,Y,\bq,\br)\sim_{\mcG_s}(C_s,u_s,Y,\bq',\br).
    \end{equation*}
    \label{prop2:item4}
  \item For every $s=1,\ldots,m$ we have 
    \begin{equation*}
      \Aut_{\mcR_{g,k,h,n}(T)}(C_s,u_s,Y,\bq,\br)\simeq \Aut_{\mcG_s}(C_s,u_s,Y,\bq,\br).
    \end{equation*}
    \label{prop2:item5}
\end{enumerate}

  This situation is illustrated in \cref{fig:subcategories-hurwitz-covers}.

  \begin{figure}[!ht]
    \centering
    \def\svgwidth{0.4\textwidth}
    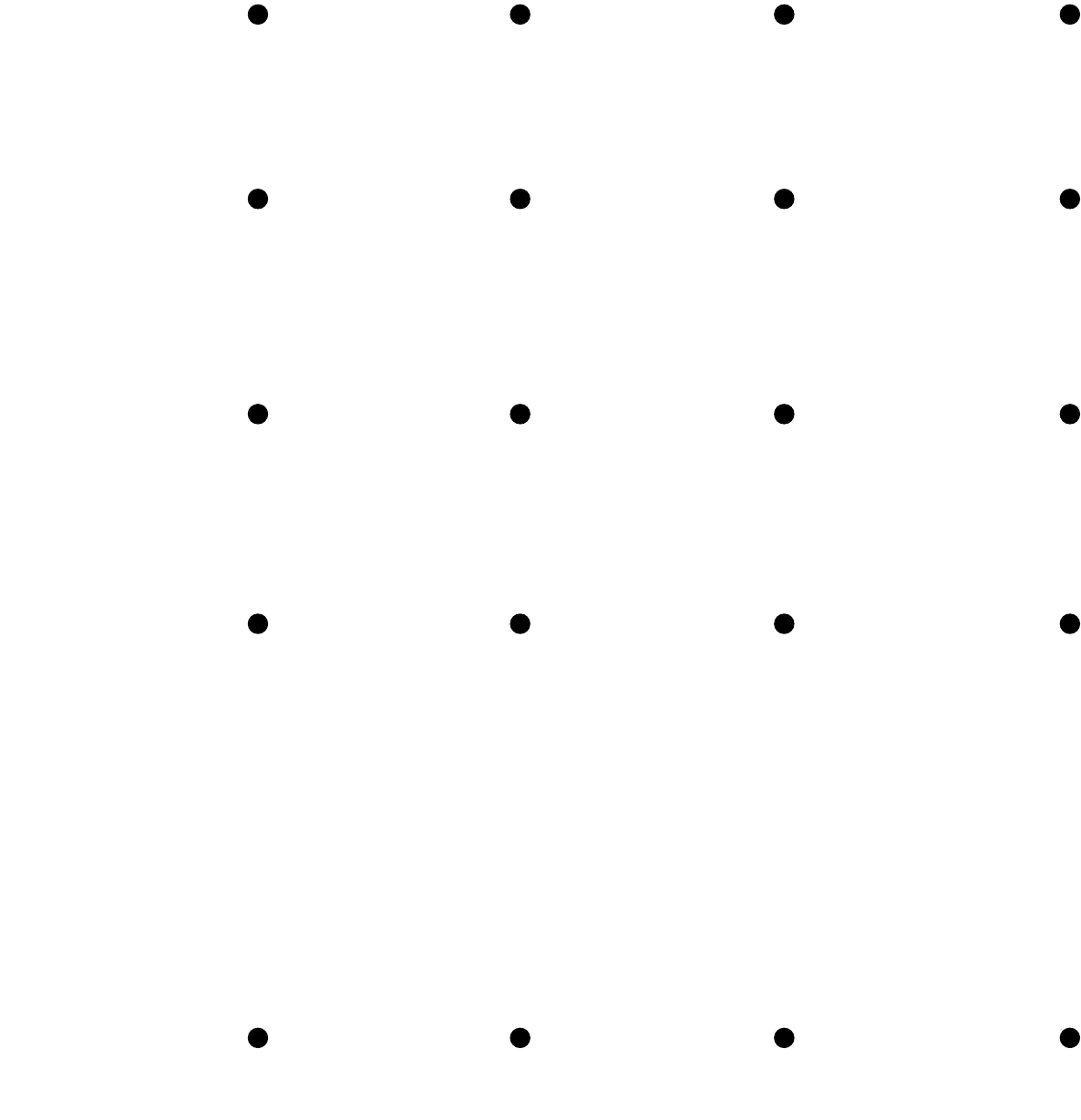
    \caption{In every fibre of the map $\mcF$ the group $\Aut_{\mcB}(C_s,u_s,Y,\br)$ acts by permuting the enumerations. This way we can rewrite the groupoid structure of $\mcR_{g,k,h,n}(T)$ via the one of $\mcA$ and then via these group actions and use the class formula. Notice that the group $G_s$ does indeed depend on $s$.}
    \label{fig:subcategories-hurwitz-covers}
  \end{figure}

Before proving these five statements let us show that this does indeed imply the proposition. We have
\begin{align*}
  H_{g,k,h,n}(T) & \coloneqq \left|\Aut(Y,\br)\right|\sum_{\substack{[C,u,X,\bq,\bp]\in|\mcR_{g,k,h,n}(T)| \\ \text{s.t. }\ev([C,u,X,\bq,\bp])=[Y,\br]}}\frac{1}{\left|\Aut(C,u,X,\bq,\bp)\right|} \\
  & = |\Aut(Y,\br)|\sum_{[C_s,u_s,Y,\bq,\br]\in|\mcA|}\frac{1}{|\Aut_{\mcG_s}(C_s,u_s,Y,\bq,\br)|} \\
  & = |\Aut(Y,\br)|\sum_{s=1}^m\frac{1}{|\mcG_s|}\sum_{\substack{[C_s,u_s,Y,\bq,\br]\in|\mcA| \\ \text{s.t. } F(C_s,u_s,Y,\bq,\br) = (C_s,u_s,Y,\br) }}\frac{|\mcG_s|}{|\Aut_{\mcG_s}(C_s,u_s,Y,\bq,\br)|} \\
  & = |\Aut(Y,\br)|\sum_{s=1}^m\frac{1}{|\mcG_s|}\cdot\mfK \\
  & = \mfK\cdot \mcH'_{g,k,h,n}(T),
\end{align*}
where we have used the class equation for the group actions $\mcG_s$ for $s=1,\ldots,m$.

It remains to prove the above five statements.
\begin{enumerate}[label=(\roman*), ref=(\roman*)]
  \item This is essentially by definition of $\mcA$. In the same way as in the proof of \cref{prop1:item1} for \cref{prop:rel-hurwitz-1} we can show that there exists a morphism $(\Phi,\varphi)$ via which $(C,u,X,\bq,\bp)$ is equivalent to some $(C_s,u_s,Y,\bq',\br)$ where $\bq'=\Phi(\bq)$ is some admissible enumeration of the fibres over $\bp$. Since $\obj\mcA$ contains the combinations of $(C_s,u_s,Y,\br)$ with any admissible enumeration we have $(C_s,u_s,Y,\bq',\br)\in\obj\mcA$. Since $\mcA$ is a full subcategory of $\mcR_{g,k,h,n}(T)$ we have by definition
    \begin{align*}
      (C_s,u_s,Y,\bq,\br)\sim_{\mcR_{g,k,h,n}(T)}(C_t,u_t,Y,\bq',\br) &  \\
      \Longleftrightarrow & (C_s,u_s,Y,\bq,\br) \sim_{\mcA}(C_t,u_t,Y,\bq',\br) \\
      \Aut_{\mcR_{g,k,h,n(T)}}(C_s,u_s,Y,\bq,\br) & =\Aut_{\mcA}(C_s,u_s,Y,\bq,\br).
    \end{align*}
  \item This comes from the fact that morphisms in $\mcR_{g,k,h,n}(T)$ or $\mcA$ are also morphisms in $\mcR'_{g,k,h,n}(T)$ as there are less conditions on the latter ones. Thus equivalent objects in $\mcA$ are mapped to equivalent ones in $\mcB$ but by definition $\obj\mcB$ contains only one representative per class and thus they get mapped to the same object.
  \item We define the group action of $(\Phi,\varphi)\in\mcG_s$ in the obvious way. The element $(C_s,u_s,Y,\bq,\br)$ gets mapped to $(C_s,u_s,Y,\Phi(\bq),\br)$. This is indeed a group action preserving the fibre of $\mcF$.
  \item This follows immediately from the definitions of the morphisms in $\mcA$ and the group action of $\mcG_s$. Notice in particular that by \cref{prop2:item2}.\ in $\mcA$ only objects in the fibre of $F$ have morphisms in between them.
  \item Again this follows from the definitions of $\mcA$ and the group action of $\mcG_s$.
\end{enumerate}
\end{proof}

\section{Riemann--Hurwitz Formula}

\index{Riemann--Hurwitz Formula|(}

The combinatorial data cannot be chosen freely, in particular it needs to satisfy the well-known Riemann--Hurwitz formula. Unfortunately this relation is usually stated for smooth Hurwitz covers and we need the appropriate version for nodal Hurwitz covers e.g.\ in the compactness proof. Also we will need a version for branched covers of nodal surfaces with boundary that have branched points only in the nodes. Before stating our version let us discuss various objects describing properties of nodal surfaces.

\begin{definition}
  Let $C$ be a Riemann surface. Then according to \cite{robbin_construction_2006} and \cite{hummel_gromovs_1997} we can define
  \begin{itemize}
    \item its \emph{normalization} $\wt{C}$ as the Riemann surface obtained by removing all nodes and gluing in two separate points, i.e.\ the unique (up to biholomorphism) closed Riemann surface birationally equivalent\footnote{Here, birationally equivalent means that there exists a holomorphic map $\sigma:\wt{C}\lra C$ such that $\sigma$ is a biholomorphism on $C\setminus\{\text{nodes}\}$.} to $C$ such that the preimage of the nodes consist of finitely many points,
    \item its \emph{signature graph} $\Gamma$ which has one vertex for each irreducible component with its genus as a label and one edge for every node between the corresponding vertices and
    \item its \emph{arithmetic genus} $\ga(C)\in\NN$ which is defined as $\ga(C)\coloneqq b_1(\Gamma)+\sum_{\alpha}g_{\alpha}$ where $b_1\coloneqq \rank H_1(\Gamma,\ZZ)$ is the rank of the first homology of the signature graph $\Gamma$ of $C$ and $g_{\alpha}$ is the genus of the $\alpha$-th connected component of the normalization $\wt{C}$. 
  \end{itemize}
\end{definition}

\index{Euler Characteristic}
\index{Arithmetic Genus}
\index{Normalization}

Notice that we furthermore can consider the topological Euler characteristics of $C$ as well as its normalization $\wt{C}$ which can be calculated in an easier way as it is smooth. They are related as follows.

\begin{lem}
  Let $C$ be a closed nodal Riemann surface with $\delta_C\in\NN$ nodes and $\wt{C}$ its normalization. Then we have for the topological Euler characteristics $\chi$
  \begin{equation*}
    \chi(\wt{C})=\chi(C)+\delta_C.
  \end{equation*}
\end{lem}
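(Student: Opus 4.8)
The plan is to reduce the statement to a cell count, using the basic geometry of the normalization map $\sigma:\wt{C}\lra C$: it is a homeomorphism away from the nodes, and over the $c$-th node it has exactly two distinct preimages $p_c^{\dagger},p_c^{\ast}\in\wt{C}$ which it glues to the single point $p_c\in C$. So topologically $C$ is obtained from $\wt{C}$ by identifying $\delta_C$ pairs of points.

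First I would fix a finite CW structure on $\wt{C}$ — for instance a smooth triangulation, which exists since $\wt{C}$ is a closed smooth (possibly disconnected) surface — chosen so that each of the $2\delta_C$ node preimages $p_c^{\dagger},p_c^{\ast}$ is a $0$-cell; this costs nothing since these are finitely many points. Write $V,E,F$ for the numbers of its cells, so $\chi(\wt{C})=V-E+F$. Next I would push this structure forward along $\sigma$. Because $\sigma$ identifies, for each $c$, only the two \emph{distinct} $0$-cells $p_c^{\dagger}$ and $p_c^{\ast}$ and is injective elsewhere, the images of the cells give a CW structure on $C$ in which the $1$- and $2$-cells together with their attaching maps are unchanged — no cell of positive dimension is collapsed or merged, precisely because the glued points were separate vertices — while the $0$-cell count drops by one per node. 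Hence $C$ has $V-\delta_C$ vertices, $E$ edges and $F$ faces, so
\[
  \chi(C)=(V-\delta_C)-E+F=\chi(\wt{C})-\delta_C,
\]
which is the asserted identity after rearranging.

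I do not expect a genuine obstacle: the only point needing a word of care is that the cell structure really descends, i.e.\ that gluing the pairs of node preimages induces no identification or degeneration among cells of dimension $\geq 1$, and this is exactly what the choice of triangulation (with the $2\delta_C$ node preimages as $0$-cells) guarantees. If one prefers to avoid choosing a triangulation, the same equality follows from additivity of the Euler characteristic under homotopy pushouts: $C$ is the pushout of $\wt{C}\longleftarrow \{p_c^{\dagger},p_c^{\ast}\}_{c=1}^{\delta_C}\longrightarrow\{p_c\}_{c=1}^{\delta_C}$ along the closed cofibration given by the inclusion of the node preimages, so $\chi(C)=\chi(\wt{C})+\delta_C-2\delta_C=\chi(\wt{C})-\delta_C$.
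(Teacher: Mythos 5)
Your proof is correct, and it takes a genuinely different route from the paper. The paper proves the statement one node at a time, showing via a Mayer--Vietoris argument (covering each node by a small pair of discs and its complement in $\wt{C}$, respectively in $C$) that gluing a single pair of node preimages drops $\chi$ by exactly one, then induction over the $\delta_C$ nodes closes the argument. You instead handle all nodes simultaneously by putting a CW structure on $\wt{C}$ in which the $2\delta_C$ node preimages are $0$-cells; since the normalization map $\sigma$ is injective outside the $0$-skeleton and only merges those designated vertices in pairs, the image cell structure on $C$ has $V-\delta_C$ vertices and identical counts of $1$- and $2$-cells, giving the formula directly with no induction. Your closing remark about the homotopy pushout is essentially what the paper's proof gestures at in passing (``this follows directly from additivity of the Euler characteristic''), so the two arguments are in that sense complementary: the paper makes the additivity principle concrete via a local Mayer--Vietoris computation, while you make it concrete via a global cell count. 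Both are complete; yours is arguably shorter and avoids the induction, at the cost of invoking existence of a smooth triangulation of $\wt{C}$ with prescribed vertices, which is standard for closed surfaces.
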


\begin{proof}
  We show that gluing together the surface $\wt{C}$ at two points corresponding to a node on $C$ reduces the topological Euler characteristics by one. By induction the result then follows.

  Of course, this follows directly from additivity of the Euler characteristic but we give a short proof anyway. So denote by $\sigma:\wt{C}\lra C$ the normalization and by $\{x,y\}\lra z$ the node and its preimages. Choose two open discs around $x$ and $y$ in $\wt{C}$ as well as the complement of two slightly smaller closed discs around $x$ and $y$. Denote the union of the two discs by $B\subset \wt{C}$ and define $B'\coloneqq\sigma(B)\subset C$. The complement of the smaller disc is denoted by $A\subset \wt{C}$ and its corresponding set under $\sigma$ is $A'\coloneqq\sigma(A)\subset C$. We then have $A\cup B=\wt{C}$ and $A'\cup B'=C$ as well as $A\simeq A'$ because $\sigma$ is a homeomorphism outside the node $z$. Furthermore it is easy to see that $B$ is homotopy equivalent to two points, $B'$ is homotopy equivalent to one point and $A\cap B$ as well as $A'\cap B'$ are homotopy equivalent to two circles, respectively. This is illustrated in \cref{fig:mayer-vietoris}.

  \begin{figure}[!ht]
    \centering
    \def\svgwidth{0.8\textwidth}
    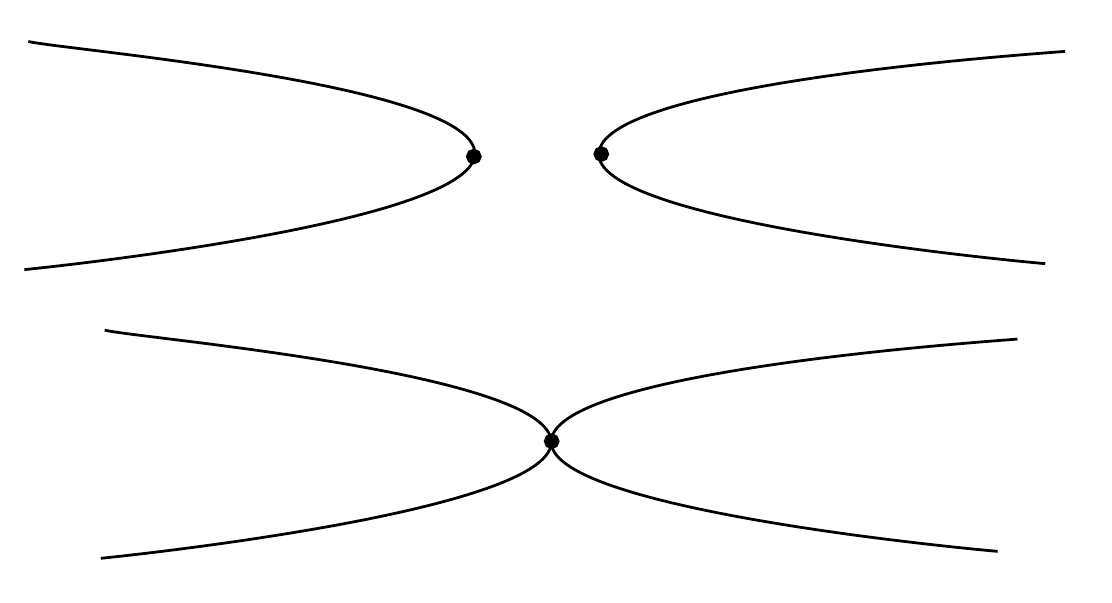
    \caption{This figure illustrates the open sets by which the nodal surface as well as its normalization at one node is covered. The sets $A$ and $A'$ are in red, $B$ and $B'$ are in blue and $A\cap B$ as well as $A'\cap B'$ are in violet.}
    \label{fig:mayer-vietoris}
  \end{figure}

  Since $A$ and $B$ as well as $A'$ and $B'$ cover $\wt{C}$ and $C$, respectively, we obtain long exact Mayer--Vietoris sequences in cohomology, that is
  \begin{align*}
    \ldots \lra H^n(\wt{C},\ZZ) \lra & H^n(A,\ZZ)\oplus H^n(B,\ZZ) \lra H^n(A\cap B,\ZZ) \lra \ldots, \\
    \ldots \lra H^n(C,\ZZ) \lra & H^n(A',\ZZ)\oplus H^n(B',\ZZ) \lra H^n(A'\cap B',\ZZ) \lra \ldots
  \end{align*}
  which implies
  \begin{align*}
    \chi(A)+\chi(B) & =\chi(\wt{C})+\chi(A\cap B) \\
    \chi(A')+\chi(B') & =\chi(C)+\chi(A'\cap B')
  \end{align*}
  and thus
  \begin{equation*}
    \chi(\wt{C})=\chi(A)+2-0=\chi(A)+\chi(B')+1=\chi(C)+1.
  \end{equation*}
\end{proof}

Next we relate the topological Euler characteristic of a connected closed nodal Riemann surface to the number of nodes and the arithmetic genus. Note that the arithmetic genus has the interpretation as the actual genus of the connected closed curve you obtain when replacing discs around the nodes by cylinders.

\begin{lem}
  If $C$ is a connected closed nodal Riemann surface we have
  \begin{equation*}
    \chi(\wt{C})=2-2\ga(C)+2\delta_C.
  \end{equation*}
  \label{lem:euler-nodal-arithmetic}
\end{lem}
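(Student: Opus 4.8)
The plan is to evaluate both sides of the identity in terms of the combinatorial data attached to $C$ --- the genera $g_{\alpha}$ of the connected components of the normalization $\wt{C}$ and the signature graph $\Gamma$ --- and then to compare.

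First I would use that $\wt{C}=\bigsqcup_{\alpha=1}^{c}\wt{C}_{\alpha}$ is a disjoint union of smooth closed connected Riemann surfaces, with one component $\wt{C}_{\alpha}$ of genus $g_{\alpha}$ for each vertex of $\Gamma$ (so $c$ is the number of irreducible components of $C$). By additivity of the topological Euler characteristic over disjoint unions together with the classical value $\chi(\wt{C}_{\alpha})=2-2g_{\alpha}$ for a closed orientable surface,
\begin{equation*}
  \chi(\wt{C})=\sum_{\alpha=1}^{c}(2-2g_{\alpha})=2c-2\sum_{\alpha=1}^{c}g_{\alpha}.
\end{equation*}
Next I would analyse $\Gamma$: it has $c$ vertices and exactly $\delta_C$ edges, since edges of $\Gamma$ correspond bijectively to the nodes of $C$. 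Because $C$ is connected, $\Gamma$ is connected, hence $b_1(\Gamma)=\rank H_1(\Gamma,\ZZ)=\delta_C-c+1$. Combining this with the definition $\ga(C)=b_1(\Gamma)+\sum_{\alpha}g_{\alpha}$ gives $\sum_{\alpha}g_{\alpha}=\ga(C)-\delta_C+c-1$, and substituting into the display above yields
\begin{equation*}
  \chi(\wt{C})=2c-2\bigl(\ga(C)-\delta_C+c-1\bigr)=2-2\ga(C)+2\delta_C,
\end{equation*}
which is the claim.

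The argument is essentially bookkeeping, so I do not expect a serious obstacle; the only point that deserves a sentence of justification is that connectedness of $C$ forces connectedness of $\Gamma$, so that $b_1(\Gamma)=E-V+1$ and not $E-V+k$ for $k>1$. This holds because a path in $C$ joining points on two distinct normalization components must pass through nodes and therefore projects to a walk in $\Gamma$ between the corresponding vertices, so all vertices lie in a single component of $\Gamma$. (Alternatively one could combine the preceding lemma $\chi(\wt{C})=\chi(C)+\delta_C$ with the observation that replacing small discs around the $\delta_C$ nodes of $C$ by cylinders produces a connected closed surface of genus $\ga(C)$ and Euler characteristic $\chi(C)-\delta_C$; but the signature-graph computation above is more self-contained.)
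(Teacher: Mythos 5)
Your argument is correct and is essentially the paper's own proof: both compute $\chi(\wt{C})$ as $\sum_\alpha(2-2g_\alpha)$ over the smooth components, use the signature graph to relate the number of vertices, the number of edges $\delta_C$, and $b_1(\Gamma)$ via connectedness, and then substitute the definition $\ga(C)=b_1(\Gamma)+\sum_\alpha g_\alpha$. The extra sentence you added justifying why connectedness of $C$ forces connectedness of $\Gamma$ is a reasonable supplement to what the paper takes for granted.
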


\begin{proof}
  Denote by $v\in\NN$ the number of vertices of the signature graph $\Gamma$ of $C$. The number of edges is given by $\delta_C$, i.e.\ the number of nodes on $C$. Then we can calculate the Euler characteristic of $\Gamma$ as $b_0-b_1=v-\delta_C$. Note that since $\Gamma$ is connected we have $b_0=1$ and $v$ is the number of connected components of $\wt{C}$. Thus $v+b_1=1+\delta_C$.

As $\wt{C}$ is smooth we can calculate its Euler characteristics by adding $2-2g_{\alpha}$ for all connected components $\alpha$ and obtain
\begin{align*}
  \chi(\wt{C}) & =\sum_{\alpha}(2-2g_{\alpha})=2v-2\sum_{\alpha}g_{\alpha} \\
  & =2v-2(\ga(C)-b_1)=2(1+\delta_C)-2\ga(C)=2-2\ga(C)+2\delta_C.
\end{align*}
\end{proof}

The first Riemann--Hurwitz formula we prove will be for smooth but disconnected Riemann surfaces. This is of course a corollary to the usual Riemann--Hurwitz formula for connected smooth branched covers.
\begin{lem}
  Let $u:C\lra X$ be a smooth Hurwitz cover of type $T$ with possibly disconnected surfaces $C$ and $X$. In this case we have
  \begin{equation*}
    \chi(C)=d\chi(X)-\sum_{j=1}^k(l_j-1)=d\chi(X)-nd+k.
  \end{equation*}
  \label{lem:disconnected-riemann-hurwitz}
\end{lem}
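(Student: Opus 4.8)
\section*{Proof proposal}

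The plan is to reduce everything to the classical Riemann--Hurwitz formula for a connected branched cover of compact Riemann surfaces, namely $\chi(C')=d'\,\chi(X')-\sum_{q}(e_q-1)$ for a degree-$d'$ holomorphic map $C'\lra X'$ with ramification indices $e_q$, together with a short combinatorial computation. I would begin by disposing of the second equality, which is purely combinatorial and involves no geometry: since the parts of the partition $T_i$ sum to $d$ and are indexed by $\nu^{-1}(i)$, the constraints $T_i=\sum_{j\in\nu^{-1}(i)}l_j$ and ``length of $T_i$ equals $|\nu^{-1}(i)|$'' give $\sum_{j\in\nu^{-1}(i)}l_j=d$ and $\sum_{i=1}^n|\nu^{-1}(i)|=k$, hence $\sum_{j=1}^k l_j=nd$ and therefore $\sum_{j=1}^k(l_j-1)=nd-k$.

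For the first equality, write $C=\bigsqcup_a C_a$ and $X=\bigsqcup_b X_b$ as disjoint unions of connected components. Because $u$ is non-constant on every component, $u|_{C_a}$ is a non-constant holomorphic map from a compact connected Riemann surface, hence surjective onto a single component $X_{b(a)}$; set $d_a\coloneqq\deg(u|_{C_a})$. The defining conditions of a Hurwitz cover (all critical points lie in $\bq$, with $\deg_{q_j}u=l_j$) say that the ramification of $u|_{C_a}$ is concentrated at the points $q_j$ lying on $C_a$, each with index $l_j$. Applying the connected Riemann--Hurwitz formula to each $u|_{C_a}\colon C_a\lra X_{b(a)}$ gives
\[
  \chi(C_a)=d_a\,\chi(X_{b(a)})-\sum_{j:\,q_j\in C_a}(l_j-1).
\]
Summing over $a$, using additivity of $\chi$ under disjoint unions and the fact that each $q_j$ lies on exactly one component, the ramification terms assemble to $\sum_{j=1}^k(l_j-1)$, while the leading terms regroup as $\sum_b\chi(X_b)\sum_{a:\,b(a)=b}d_a$. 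Since $\sum_{a:\,b(a)=b}d_a$ is the total degree of $u$ over $X_b$, which is $d$, one gets $\chi(C)=d\,\chi(X)-\sum_{j=1}^k(l_j-1)$, and the combinatorial identity from the first paragraph finishes the argument.

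The step that genuinely needs care is the claim that the total degree of $u$ over each connected component of $X$ equals $d$. When $X$ is connected this is automatic; in general it must be read into the notion of a Hurwitz cover of type $T$ with possibly disconnected surfaces, since the branching-profile condition only pins down the fibres over the marked points $p_i$, and a component of $X$ carrying no $p_i$ would otherwise have undetermined degree. I would state this explicitly as the hypothesis being used, remarking that in the intended applications --- normalizations of connected nodal Hurwitz covers --- it is forced by the matching degrees at the nodes. Everything else is routine bookkeeping.
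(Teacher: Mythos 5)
Your proof follows essentially the same route as the paper's: decompose source and target into connected components, apply the classical Riemann--Hurwitz formula to each connected piece of $u$, and sum, using that the total degree over each component of $X$ is $d$. Your closing remark is a worthwhile refinement the paper leaves implicit: the statement ``degree over every component of $X$ equals $d$'' is genuinely part of what ``smooth Hurwitz cover of type $T$ with possibly disconnected surfaces'' must mean (the branching profile by itself only constrains fibres over the marked points $p_i$), and in the paper's use case --- normalizations of connected nodal Hurwitz covers --- it is indeed forced by the degree-matching condition at nodes.
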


\begin{proof}
  Decompose the target $X$ into its connected components $X=\bigsqcup_{\alpha}X_{\alpha}$. The set of connected components of $C$ can be decomposed into disjoint sets $I_{\alpha}$ of those components mapped to $X_{\alpha}$. If we restrict $u$ further to one component $C_{\beta}$ with $\beta\in C_{\alpha}$ we obtain a connected branched Hurwitz cover of some degree $d_{\beta}$. Thus we can calculate using the Riemann--Hurwitz formula for connected smooth surfaces from e.g.\ \cite{forster_lectures_1981}
  \begin{align*}
    \chi(C) & = \sum_{\alpha}\sum_{\beta\in I_{\alpha}} \chi(C_{\beta}) \\
    & = \sum_{\alpha}\sum_{\beta\in I_{\alpha}} (d_{\beta}\chi(X_{\alpha})-\sum_{p\in C_{\beta}}(\deg_pu-1)) \\
    & = \sum_{\alpha}\left(d\chi(X_{\alpha})-\sum_{p\in u^{-1}(X_{\alpha})}(\deg_pu-1)\right) \\
    & = d\chi(X)-\sum_{j=1}^k(l_j-1).
  \end{align*}
\end{proof}

\begin{rmk}
  Note that a very similar statement also holds true for \emph{bordered Hurwitz covers} which we will encounter in \cref{sec:orb-structure-mod-space-bordered-hurwitz-covers-definitions}. In this case we require that $C$ and $X$ have $k$ or $n$ boundaries, respectively, which are mapped onto each other according to the map $\nu$ and the restriction $u:\del_jC\lra\del_{\nu(j)}X$ is of degree $l_j$ with no branched points on $C$. Then the map is an actual covering and we have $\chi(C)=d\chi(X)$ where $\chi$ now also counts the boundary components. Writing this out we obtain $2-2g(C)=d(2-2g(X))-nd+k$.
\end{rmk}

\begin{prop}
  A nodal Hurwitz cover $(C,u,X,\bp,\bq)\in\obj\mcR_{g,k,h,n}(T)$ satisfies the \emph{Riemann--Hurwitz formula}
  \begin{equation}
    \label{eq:riemann-hurwitz}
    \chi(C)=d\chi(X)-\sum_{j=1}^k(l_j-1)-\sum_{p\in\text{nodes}(C)}(\deg_pu-1)=d\chi(X)-nd+k-d\delta_X+\delta_C,
  \end{equation}
  where the Euler characteristic $\chi$ for a nodal compact Riemann surface refers to its \emph{topological} Euler characteristic. Note that for smooth Hurwitz covers this formula is the usual Riemann--Hurwitz formula.\footnote{Also note that every node in $C$ appears only once in the sum.}
  \label{prop:riemann-hurwitz}
\end{prop}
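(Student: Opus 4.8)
The plan is to pass to normalizations, so that the nodal Riemann--Hurwitz formula follows from the disconnected smooth version (\cref{lem:disconnected-riemann-hurwitz}) together with the Euler-characteristic relation $\chi(\wt C)=\chi(C)+\delta_C$. Write $\sigma_C\colon\wt C\to C$ and $\sigma_X\colon\wt X\to X$ for the normalization maps. First I would produce a holomorphic lift $\wt u\colon\wt C\to\wt X$ with $\sigma_X\circ\wt u=u\circ\sigma_C$: away from the node-preimages of $\wt C$ it is forced, since $u$ carries nodes to nodes and has only nodes over nodes, so $\sigma_X$ is invertible along $u\circ\sigma_C$ there; at a branch $\tilde q$ of a node $q$ of $C$ the image of a small punctured disc is connected and avoids $u(q)$, hence lies in a single branch of that node, which gives a continuous and then (removable singularity) holomorphic extension. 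Since $u$ is non-constant on every nodal component, $\wt u$ is non-constant on every component of $\wt C$, so $\wt u$ is a genuine (possibly disconnected) branched cover.

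Next I would record the relevant data of $\wt u$. Using that the degrees of $u$ agree on the two sides of each node, that $u$ is locally surjective at nodes, and that $X$ (hence its signature graph) is connected, one checks that $\wt u$ has degree $d$ over every irreducible component of $X$; moreover its branch points all lie over $\sigma_X^{-1}(\bp)\cup\sigma_X^{-1}(\operatorname{nodes}(X))$, so $\wt u$ is a smooth Hurwitz cover of a suitable type and \cref{lem:disconnected-riemann-hurwitz} applies, with the branch data read off as follows. The critical points of $\wt u$ are exactly the $k$ smooth points $q_j$, with $\deg_{q_j}\wt u=l_j$, together with the $2\delta_C$ points of $\sigma_C^{-1}(\operatorname{nodes}(C))$, namely two for each node $q$, each carrying the common local degree $\deg_q u$. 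Hence
\[
  \chi(\wt C)=d\,\chi(\wt X)-\sum_{j=1}^k(l_j-1)-2\sum_{q\in\operatorname{nodes}(C)}(\deg_q u-1).
\]

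It then remains to evaluate $\sum_{q\in\operatorname{nodes}(C)}\deg_q u$. Every node of $C$ maps to a node of $X$, and for a fixed node $p$ of $X$ the preimage of one of its two branches is a disjoint union of punctured discs around the node-preimages $\tilde q$ over $p$ on the corresponding side — by local surjectivity each preimage-node $q$ of $p$ contributes exactly one such branch — on which $\wt u$ has local degree $\deg_q u$; since $\wt u$ has degree $d$ over that component, $\sum_{q\mapsto p}\deg_q u=d$, and summing over the $\delta_X$ nodes of $X$ gives $\sum_{q\in\operatorname{nodes}(C)}\deg_q u=d\,\delta_X$, i.e.\ $\sum_{q}(\deg_q u-1)=d\,\delta_X-\delta_C$. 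Now I would substitute $\chi(\wt C)=\chi(C)+\delta_C$ and $\chi(\wt X)=\chi(X)+\delta_X$ into the displayed identity and simplify: the terms combine to $\chi(C)=d\,\chi(X)-\sum_{j=1}^k(l_j-1)-(d\,\delta_X-\delta_C)$, which is the first asserted equality once one identifies $d\,\delta_X-\delta_C=\sum_{p\in\operatorname{nodes}(C)}(\deg_p u-1)$; the second form follows from $\sum_{j=1}^k(l_j-1)=\sum_{i=1}^n|T_i|-k=nd-k$.

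The main obstacle is the bookkeeping at the nodes: establishing that $\wt u$ has degree $d$ over every irreducible component of $X$ and that the local degrees over each target node sum to $d$. Both rest on the two node hypotheses in \cref{def:hurwitz-cover} — equal degrees on the two sides of a node and local surjectivity — and it is worth being explicit about where they enter, since a node both of whose branches mapped to a single branch of a target node would already break the degree balance and hence the count. Everything else is the two lemmas together with elementary arithmetic.
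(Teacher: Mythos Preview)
Your proof is correct and follows essentially the same approach as the paper: pass to normalizations, apply the disconnected Riemann--Hurwitz formula of \cref{lem:disconnected-riemann-hurwitz} to $\wt u\colon\wt C\to\wt X$, then substitute $\chi(\wt C)=\chi(C)+\delta_C$, $\chi(\wt X)=\chi(X)+\delta_X$ and the identity $\sum_{q\in\mathrm{nodes}(C)}(\deg_q u-1)=d\delta_X-\delta_C$. You are in fact more careful than the paper about constructing the lift $\wt u$, checking it has degree $d$ on every component, and explaining where the node hypotheses (equal degrees on both sides, local surjectivity) are used---the paper simply asserts these points.
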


\begin{rmk}
  Recall that our Hurwitz covers are by definition connected. However, normalizations of the surfaces might not be connected.
\end{rmk}

\begin{proof}
  By definition of the normalization there exist birational maps $\sigma_C:\wt{C}\lra C$ and $\sigma_X:\wt{X}\lra X$ which are biholomorphic outside the nodes. Thus we can pull back $u$ to a branched cover $\wt{u}:\wt{C}\lra\wt{X}$ having the critical points and degrees as $u$ including two new critical points with equal degree for every node. By \cref{lem:disconnected-riemann-hurwitz} we have
  \begin{equation*}
    \chi(\wt{C})=d\chi(\wt{X})-\sum_{p\in\wt{C}}(\deg_p\wt{u}-1)
  \end{equation*}
  and thus
  \begin{align*}
    \chi(C) & = \chi(\wt{C})-\delta_C \\
            & = d\chi(\wt{X}) - \sum_{p\in\wt{C}}(\deg_p\wt{u}-1)-\delta_C \\
            & = d\chi(X) - \sum_{p\in\wt{C}}(\deg_p\wt{u}-1) - \delta_C + d\delta_X \\
            & = d \chi(X) - \sum_{j=1}^k(l_j-1) - 2 \sum_{p\in\text{nodes}(C)}(\deg_pu-1) + \overbrace{\sum_{p\in\text{nodes}(C)}(\deg_pu-1)}^{=d\delta_X-\delta_C} \\
            & = d \chi(X) - \sum_{j=1}^k(l_j-1) - \sum_{p\in\text{nodes}(C)}(\deg_pu-1).
  \end{align*}
\end{proof}

From this we can deduce the version that we need later in the compactness proof.

\begin{cor}
  Let $(C,u,X,\bp,\bq)\in\obj\mcR_{g,k,h,n}(T)$ be a nodal Hurwitz cover. Then we have
  \begin{equation*}
    2-2\ga(C)=d(2-2\ga(X))-\sum_{j=1}^k(l_j-1)=d(2-2\ga(X))-nd+k.
  \end{equation*}
  \label{cor:nodal-riemann-hurwitz}
\end{cor}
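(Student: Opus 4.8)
The plan is to combine the topological Riemann--Hurwitz formula from \cref{prop:riemann-hurwitz} with the two Euler-characteristic identities established above, in order to translate everything from topological Euler characteristics into arithmetic genera. No new geometric input is needed; the corollary is pure bookkeeping once those ingredients are in hand.

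First I would record the following consequence of the lemma relating $\chi(C)$ and $\chi(\wt{C})$ together with \cref{lem:euler-nodal-arithmetic}: for any connected closed nodal Riemann surface $S$ with $\delta_S$ nodes one has $\chi(\wt{S})=\chi(S)+\delta_S$ and $\chi(\wt{S})=2-2\ga(S)+2\delta_S$, hence
\begin{equation*}
  \chi(S)=2-2\ga(S)+\delta_S .
\end{equation*}
Applying this to $S=C$ and $S=X$ gives $\chi(C)=2-2\ga(C)+\delta_C$ and $\chi(X)=2-2\ga(X)+\delta_X$. Substituting both into the second form of \cref{eq:riemann-hurwitz}, namely $\chi(C)=d\chi(X)-nd+k-d\delta_X+\delta_C$, the left-hand side becomes $2-2\ga(C)+\delta_C$ and the right-hand side becomes $d(2-2\ga(X))+d\delta_X-nd+k-d\delta_X+\delta_C$; the $\pm d\delta_X$ terms cancel on the right and $\delta_C$ cancels on both sides, leaving exactly
\begin{equation*}
  2-2\ga(C)=d(2-2\ga(X))-nd+k .
\end{equation*}
Finally, since the branching profile $T_i$ over $p_i$ is a partition of $d$ indexed by $\nu^{-1}(i)$, we have $\sum_{j\in\nu^{-1}(i)}l_j=d$ for each $i$ and therefore $\sum_{j=1}^k l_j=nd$, so $\sum_{j=1}^k(l_j-1)=nd-k$, which converts the displayed equation into the first asserted form.

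There is essentially no serious obstacle here. The only point requiring a moment of care is checking that the node contributions cancel correctly: the $d\delta_X$ produced when one expands $d\chi(X)=d(2-2\ga(X)+\delta_X)$ must precisely offset the $-d\delta_X$ already present in \cref{eq:riemann-hurwitz}, and the single $\delta_C$ appearing on each side must match. Both are immediate once the identity $\chi(S)=2-2\ga(S)+\delta_S$ is established, so the proof is short.
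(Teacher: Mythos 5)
Your proof is correct and follows essentially the same route as the paper: substitute the identity $\chi(S)=2-2\ga(S)+\delta_S$ (obtained by combining the Mayer--Vietoris lemma with \cref{lem:euler-nodal-arithmetic}) into the second form of \cref{eq:riemann-hurwitz} and observe the cancellations. You are slightly more explicit than the paper in spelling out how the two Euler-characteristic lemmas combine and in verifying $\sum_{j=1}^k(l_j-1)=nd-k$, but the argument is the same.
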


\begin{proof}
  Using \cref{prop:riemann-hurwitz} we get
  \begin{equation*}
    \chi(C)=d\chi(X)-nd+k-d\delta_X+\delta_C
  \end{equation*}
  where we can apply \cref{lem:euler-nodal-arithmetic} to obtain
  \begin{equation*}
    2-2\ga(C)+\delta_C=d(2-2\ga(X)+\delta_X)-nd+k-d\delta_X+\delta_C
  \end{equation*}
  from which the claim follows.
\end{proof}

\index{Riemann--Hurwitz Formula|)}

\section{Combinatorial Description of Hurwitz Numbers}

\label{sec:comb-desciption-hurwitz-numbers}

So far we implicitly assumed that there only exist finitely many equivalence classes in the sums appearing for Hurwitz numbers. In this section we state the well-known result that Hurwitz numbers can be calculated in terms of the symmetric group only which will tell us that
\begin{itemize}
  \item all the sums are finite,
  \item Hurwitz numbers don't depend on the target surface $(Y,\br)$ and
  \item there exists an easy (but not very efficient) algorithm for computing these numbers.
\end{itemize}
The last point will be important for examples. In the following we will just collect the most important statements to understand the statement for higher-genus surfaces and the standard Hurwitz numbers. From those it is possible to compute our Hurwitz numbers using \cref{thm:rel-std-hurwitz-numbers}.

Given combinatorial data $(T_1,\ldots,T_n), d, n$ and a target surface $(X,\bp)$ with genus $h$ we can look at standard Hurwitz covers $u:C\lra X$. First we need the fundamental group of the target $X$ with base point $x_0\in X$ such that $x_0\not\in\bp$.

\begin{prop}
  Let $(X,\bp)$ be a closed connected smooth Riemann surface  of genus $h$ and with $n$ marked points $p_1,\ldots,p_n$ and choose some base point $x_0\in X\setminus\bp$. Then its fundamental group is given by
  \begin{equation*}
    \pi_1(X\setminus\bp,x_0)\cong \left\langle x_1,\ldots,x_h,y_1,\ldots,y_h,z_1,\ldots,z_n\middle|  \; \prod_{i=1}^h[x_i,y_i]=z_1\cdots z_n\right\rangle.
  \end{equation*}
  \label{prop:pi-1-surface}
\end{prop}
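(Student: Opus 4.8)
The statement is a classical fact in surface topology, so the plan is to run the usual van Kampen argument, but organised so that the relator comes out in exactly the stated form. First I would replace the punctures by small closed disks: choose holomorphic coordinate charts $\psi_i\colon U_i\to\DD$ with $\psi_i(p_i)=0$, the $U_i$ pairwise disjoint and avoiding $x_0$, and set $X'\coloneqq X\setminus\bigcup_i\psi_i^{-1}(\{|z|<\tfrac12\})$. Then $X'$ is a compact connected oriented surface of genus $h$ with $n$ boundary circles, and the radial deformation retraction of each punctured disk $\psi_i^{-1}(\{0<|z|<1\})$ onto $\psi_i^{-1}(\{|z|=\tfrac12\})$ patches together to a deformation retraction $X\setminus\bp\simeq X'$ fixing $x_0$. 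So it suffices to compute $\pi_1(X',x_0)$.

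Next I would exhibit $X'$, up to homotopy, as the union of a wedge $W=\bigvee_{i=1}^h(x_i\vee y_i)$ of $2h$ circles with a ``disk with $n$ holes'' $R$, glued along the outer boundary circle $\partial R$ by a loop whose class in $\pi_1(W)$ is $\prod_{i=1}^h[x_i,y_i]$. This is exactly the standard CW structure of the \emph{closed} surface $X$ (one $0$-cell; the $x_i,y_i$ as $1$-cells; one $2$-cell attached along $\prod_{i=1}^h[x_i,y_i]$) with the $n$ marked points removed from the interior of the $2$-cell. The region $R$ deformation retracts onto a wedge of $n$ circles $z_1,\dots,z_n$ (a small loop around each hole joined to $x_0$ by disjoint arcs), and with compatible orientation conventions its outer boundary $\partial R$ is freely homotopic in $R$ to the product $z_1z_2\cdots z_n$.

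I would then apply van Kampen's theorem to $U$ a regular neighbourhood of $W$ and $V$ a slight enlargement of $R$, so that $U\simeq W$, $V\simeq R$ and $U\cap V$ is an annulus with $\pi_1(U\cap V)\cong\ZZ$. The generator of $\pi_1(U\cap V)$ maps to $\prod_{i=1}^h[x_i,y_i]$ in $\pi_1(U)\cong F(x_1,\dots,y_h)$ and to $z_1\cdots z_n$ in $\pi_1(V)\cong F(z_1,\dots,z_n)$, so van Kampen gives
\begin{equation*}
  \pi_1(X',x_0)\cong\left\langle x_1,\dots,x_h,y_1,\dots,y_h,z_1,\dots,z_n\;\middle|\;\prod_{i=1}^h[x_i,y_i]=z_1\cdots z_n\right\rangle,
\end{equation*}
which together with $\pi_1(X\setminus\bp,x_0)\cong\pi_1(X',x_0)$ is the claim. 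As a consistency check one may eliminate $z_n$ via the relation to see that this group is free of rank $2h+n-1$, matching $\chi(X')=2-2h-n$.

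The only part requiring genuine care is the orientation and basepoint-arc bookkeeping in the second step: one must verify that the outer boundary of $R$ represents $z_1\cdots z_n$ in this order and with these signs, while simultaneously representing the standard relator $\prod_{i=1}^h[x_i,y_i]$ of the closed surface; getting these two identifications mutually consistent is what pins the relation down to exactly $\prod_{i=1}^h[x_i,y_i]=z_1\cdots z_n$ rather than a conjugate or an inverse of it. Alternatively, one could simply cite a standard reference for the presentation of a punctured orientable surface group and omit the argument.
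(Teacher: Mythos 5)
Your proposal is correct and is essentially the paper's own argument written out in full: the paper's proof is a one-line sketch (``move all punctures into a disc and apply van Kampen to the genus-$h$ surface with boundary and the $n$-punctured disc''), which is precisely the decomposition $U\simeq W$, $V\simeq R$ you use. The only thing you add is the explicit verification of the orientation/basepoint bookkeeping that pins down the exact form of the relator, which the paper leaves implicit.
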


\begin{proof}
  This is well-known and can be easily seen by moving all the punctures to a disc and then using the van-Kampen theorem on the genus-$g$ surface with boundary as well as the $n$-punctured disc.
\end{proof}

Now one can use covering theory to find an algebraic description of a smooth Hurwitz cover. Since this is well-known we will only hint at the proof.

\begin{thm}
  Equivalence classes of standard Hurwitz covers of combinatorial type $(T_1,\ldots,T_n,d,h)$ are in bijection with equivalence classes of tuples 
  \begin{equation*}
    (\eta_1,\rho_1,\ldots,\eta_h,\rho_h,\sigma_1,\ldots,\sigma_{n})\in (S_d)^{2h+n}
  \end{equation*}
  such that
  \begin{itemize}
    \item $\prod_{i=1}^h[\eta_i,\rho_i]=\prod_{i=1}^{n}\sigma_i$,
    \item the cycle decomposition of $\sigma_i$ agrees with $T_i$ for all $i=1,\ldots,n$ and
    \item the subgroup of $S_d$ generated by the $\sigma_i$ acts transitively on $\{1,\ldots,d\}$,
  \end{itemize}
  where
  \begin{align*}
    (\eta_1,\rho_1,\ldots,\eta_h,\rho_h,\sigma_1,\ldots,\sigma_n) & \sim (\eta_1',\rho_1',\ldots,\eta_h',\rho_h',\sigma_1',\ldots,\sigma_n') \\
    \Longleftrightarrow & \; \exists \tau\in S_d: \sigma_i'=\tau\cdot \sigma_i\cdot\tau^{-1} \; \forall i=1,\ldots,n \\
    & \text{ and }\eta_i'=\tau\cdot\eta_i\cdot\tau^{-1}, \rho_i'=\tau\cdot\rho_i\cdot\tau^{-1} \; \forall i=1,\ldots,h.
  \end{align*}  
  Furthermore the corresponding automorphism groups are also isomorphic and thus
  \begin{equation*}
    \mcH_h(T_1,\ldots,T_n)=\sum_{\substack{[\eta_1,\rho_1,\ldots,\eta_h,\rho_h,\sigma_1,\ldots,\sigma_n] \\ \text{as above}  }}\frac{1}{|\Aut_{S_d}(\eta_1,\rho_1,\ldots,\eta_h,\rho_h,\sigma_1,\ldots,\sigma_n)|}.
  \end{equation*}
\end{thm}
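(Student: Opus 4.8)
The plan is to use the classical Riemann existence theorem together with covering space theory, following the standard correspondence between branched covers and monodromy representations. First I would fix a basepoint $x_0 \in X \setminus \bp$ and recall from \cref{prop:pi-1-surface} the presentation of $\pi_1(X \setminus \bp, x_0)$ with generators $x_1,\ldots,x_h,y_1,\ldots,y_h,z_1,\ldots,z_n$ subject to the single relation $\prod_{i=1}^h [x_i,y_i] = z_1 \cdots z_n$, where each $z_i$ is a small loop around $p_i$. Given a standard Hurwitz cover $u: C \lra X$, restricting over $X \setminus \bp$ yields an honest degree-$d$ covering $C \setminus u^{-1}(\bp) \lra X \setminus \bp$; after choosing an identification of the fibre over $x_0$ with $\{1,\ldots,d\}$, the monodromy action gives a homomorphism $\pi_1(X \setminus \bp, x_0) \lra S_d$, i.e.\ a tuple $(\eta_1,\rho_1,\ldots,\eta_h,\rho_h,\sigma_1,\ldots,\sigma_n) \in (S_d)^{2h+n}$ satisfying $\prod_{i=1}^h [\eta_i,\rho_i] = \prod_{i=1}^n \sigma_i$. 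Connectedness of $C$ translates to transitivity of the image subgroup, and the local structure of $u$ near a point of $u^{-1}(p_i)$ — where $u$ looks like $z \mapsto z^m$ — shows that the cycle type of $\sigma_i$ records exactly the ramification profile over $p_i$, hence equals $T_i$.

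Next I would go in the reverse direction: given such a tuple, covering space theory produces a connected covering $\pi: \Sigma \lra X \setminus \bp$ with the prescribed monodromy (the subgroup of $\pi_1$ fixing the point $1 \in \{1,\ldots,d\}$ determines $\Sigma$, transitivity guarantees connectedness). Over each punctured disc around $p_i$ the covering is a disjoint union of punctured discs whose degrees are the cycle lengths of $\sigma_i$; one fills in each puncture with a point to obtain a compact surface $C$, and the map extends by $z \mapsto z^m$ in suitable holomorphic coordinates. The complex structure on $C$ is the unique one pulled back from $X$ making $u$ holomorphic (Riemann existence theorem), and by construction $u$ is a branched cover with branch locus $\bp$ and profile $(T_1,\ldots,T_n)$. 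These two constructions are mutually inverse up to the indicated equivalences. The equivalence relation needs care: a change of the identification of the fibre over $x_0$ with $\{1,\ldots,d\}$ is precisely conjugation by some $\tau \in S_d$ applied simultaneously to all of $\eta_i,\rho_i,\sigma_i$, and two Hurwitz covers are equivalent in the sense of \cref{eq:automorphism-hurwitz-cover} if and only if the associated covering spaces of $X \setminus \bp$ are isomorphic, which happens exactly when their monodromy homomorphisms are conjugate — so the bijection descends to equivalence classes.

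For the statement about automorphism groups, I would observe that an automorphism of the Hurwitz cover $(C,u)$ restricts to a deck transformation of the covering $C \setminus u^{-1}(\bp) \lra X \setminus \bp$ (it extends continuously, hence holomorphically, over the filled-in points by the removable singularity theorem). Deck transformations of a connected covering associated to a monodromy homomorphism $\varrho: \pi_1 \lra S_d$ are in canonical bijection with the centralizer of the image — more precisely, with $N_{S_d}(\varrho(\pi_1) \cap \text{stab}(1))/(\text{stab}(1))$, which for a transitive action is naturally identified with the group $\{\tau \in S_d : \tau \varrho(\gamma)\tau^{-1} = \varrho(\gamma) \ \forall \gamma\}$ of permutations commuting with the whole image, i.e.\ exactly $\Aut_{S_d}$ of the tuple as defined in the statement. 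Summing $1/|\Aut|$ over equivalence classes on both sides then gives the displayed formula for $\mcH_h(T_1,\ldots,T_n)$, and finiteness of everything is immediate since $(S_d)^{2h+n}$ is finite.

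The main obstacle I anticipate is not any single deep point but the bookkeeping around the equivalence relation and the automorphism identification: one must be scrupulous that choosing the basepoint in the fibre, versus conjugating by $\tau$, versus isomorphism of covering spaces, versus biholomorphism of Hurwitz covers, all match up, and in particular that deck transformations correspond to centralizing permutations \emph{on the nose} rather than up to some residual ambiguity. The analytic inputs (removable singularities to extend maps over the punctures, Riemann existence to get the complex structure) are entirely standard and, as the excerpt says, only to be hinted at; the genuinely careful part is the group-theoretic translation of the automorphism groups, which is why I would spend most of the write-up there.
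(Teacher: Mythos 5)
Your proposal is correct and follows essentially the same route as the paper: label the fibre over $x_0$, extract the monodromy tuple, check the three conditions from the local model $z\mapsto z^m$ and connectedness, then invert via covering space theory plus the Riemann existence theorem, with both the equivalence relation and the automorphism groups matched under simultaneous $S_d$-conjugation. One small slip worth fixing in a write-up: the intermediate expression $N_{S_d}(\varrho(\pi_1)\cap\mathrm{stab}(1))/\mathrm{stab}(1)$ should have the normalizer taken inside $G=\varrho(\pi_1)$, i.e.\ $N_G(G_1)/G_1$ with $G_1=G\cap\mathrm{Stab}_{S_d}(1)$, which is then canonically isomorphic to $Z_{S_d}(G)$; your final identification of deck transformations with the centralizer of the whole image is the correct one.
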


\begin{proof}
  This is again well-known. The proof proceeds as follows. For one direction you label the preimages of the fibre $u^{-1}(x_0)$ and lift all elements in $\pi_1(X\setminus\bp,x_0)$ to automorphisms of the fibre and thus elements in $S_d$. The images of the generators from \cref{prop:pi-1-surface} then satisfy the three conditions above. Automorphisms of the cover together with reenumerations of the fibre over $x_0$ correspond to the common-conjugacy-class equivalence relation as stated in the theorem.

  For the other direction note that such a tuple of permutations defines a surjective group homomorphism $\pi_1(X\setminus\bp,x_0)\lra S_d$ and there exists exactly one equivalence class of coverings $C'\lra X\setminus\bp$ inducing the map as monodromy map. Using the Riemann existence theorem you can then prove that there is exactly one closed Riemann surface $C$ together with a branched cover $u:C\lra X$ such that $C'\lra X\setminus\bp$ comes from $u$ by removing finitely many points.
\end{proof}

\begin{rmk}
  The last theorem shows in particular that there are only finitely many terms appearing in the sums for the standard Hurwitz numbers as well as our version. Furthermore it allows for algorithms to compute these numbers as we will need them for examples in \cref{sec:appl-exampl}. Notice, however, that the naive algorithms by making complete lists of all admissible tuples are not very efficient as you need to compute common conjugacy classes of $(d!)^{2h+n-1}$ elements.
\end{rmk}

\chapter{Some Basics in Hyperbolic Geometry of Surfaces}

\label{sec:basics-hyperbolic-geometry}

Since we will switch between the complex and the hyperbolic description of Riemann surfaces quite often we collect various useful statements and definitions in this chapter.

\section{Uniformization}

\label{sec:uniformization}
\index{Uniformization}

Under certain topological conditions any complex Riemann surface admits a unique hyperbolic metric inducing the complex structure. The process of passing from a complex structure to this particular hyperbolic structure is called \emph{uniformization}.

\index{Riemann Surface!Admissible}
\index{Riemann Surface!Of Finite Type}

\begin{definition}
  An \emph{admissible Riemann surface} $C$ is a nodal punctured Riemann surface $C$ possibly with boundary of finite type\footnote{Here, \emph{finite type} means that $C$ has finitely many smooth or irreducible components and every such component is homeomorphic to a compact surface with finitely many boundary components of finite genus with finitely many interior points removed.} equipped with a complex structure. This means that around every point $z\in C$ there exists a coordinate chart $\phi$ defined on $z\in U\subset C$ such that
  \begin{itemize}
    \item \emph{(smooth points)} $\phi:U\lra\CC$ with $\phi(z)=0$, 
    \item \emph{(boundary points)} $\phi:U\lra\HH$ with $\phi(z)=0$ or
    \item \emph{(nodal points)} $\phi:U\lra\{(x,y)\in\CC^2\mid xy=0\}$ with $\phi(z)=0$
  \end{itemize}
  where $\phi$ is a homeomorphism onto its image and of course transition functions are biholomorphisms. Furthermore, we require that every smooth component is stable, i.e.\ it has only finitely many automorphisms\footnote{An automorphism is a biholomorphism which preserves cusps, nodes and boundary components. Usually, we will consider enumerated cusps and boundary components so we will require the biholomorphism to respect that enumeration.} or equivalently
  \begin{equation*}
    2-2g-\#\{\text{nodes, cusps, boundary components}\}<0
  \end{equation*}
  on each component.
  \label{def:admissible-riemann-surface}
\end{definition}

\index{Hyperbolic Metric}

\begin{prop}[See \cite{hummel_gromovs_1997}, \cite{buser_geometry_2010} and \cite{hubbard_teichmuller_2006}]
  Each component of an admissible Riemann surface $C$ carries a unique hyperbolic structure $g$ such that
  \begin{enumerate}[label=(\roman*), ref=(\roman*)]
    \item at smooth points the complex structure agrees with rotation by $90^{\degree}$ in the direction of the orientation induced by the complex structure,\footnote{This is the same as $g$ inducing the same \emph{conformal} structure as the complex structure.}
    \item the boundary components are geodesics,
    \item every marked point is a cusp, i.e.\ there exists a neighborhood isometric to
      \begin{equation*}
        \faktor{\{z\in\HH\mid \Im(z)\geq 1\}}{(z\sim z+T)}
      \end{equation*}
      for some $T\in\RR$,
    \item $(C,g)$ has finite area and
    \item $(C,g)$ is complete.\footnote{Of course we allow that geodesics reach boundary components (not cusps) in finite time.}
  \end{enumerate}
  \label{prop:unique-hyperbolic-metric}
\end{prop}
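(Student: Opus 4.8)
The statement to prove is \cref{prop:unique-hyperbolic-metric}: the existence and uniqueness of a hyperbolic structure on each component of an admissible Riemann surface with the five listed properties. This is a classical uniformization result, so the plan is essentially to reduce to the standard uniformization theorem component by component and then verify the geometric refinements about cusps, geodesic boundary, completeness and finite area.

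\paragraph{Plan of proof.}
Since the conclusion is stated componentwise, I would first pass to the normalization $\wt{C}$ and treat a single smooth component $C_\alpha$ at a time; a node on $C$ becomes a pair of punctures on $\wt{C}$, so each component is a smooth connected Riemann surface of finite type, possibly with boundary, and with finitely many punctures (the former marked points and the former node-branches). The stability hypothesis $2-2g-\#\{\text{nodes, cusps, boundary components}\}<0$ guarantees that the Euler characteristic of each component is negative, so it is a hyperbolic-type surface. First I would handle the case without boundary: double nothing, just invoke the classical uniformization theorem (as in \cite{hubbard_teichmuller_2006} or \cite{buser_geometry_2010}) to write $C_\alpha=\faktor{\HH}{\Gamma}$ for a Fuchsian group $\Gamma$; this gives a complete hyperbolic metric compatible with the conformal structure, which is property (i), and completeness is property (v). Then for the boundary case I would use the standard trick of passing to the \emph{Schottky double} $DC_\alpha$ obtained by gluing $C_\alpha$ to its mirror image along the boundary: $DC_\alpha$ is a closed (or punctured) Riemann surface of finite hyperbolic type, it carries an anti-holomorphic involution fixing $\partial C_\alpha$, the unique hyperbolic metric on $DC_\alpha$ is invariant under that involution by uniqueness, hence the fixed-point set $\partial C_\alpha$ is totally geodesic, giving property (ii), and restricting the metric back to $C_\alpha$ yields the desired structure.

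\paragraph{The cusp and finite-area refinements.}
Once the metric exists, properties (iii) and (iv) are local/global consequences of the structure of finitely generated Fuchsian groups of the first kind with parabolic elements. For (iii): each puncture corresponds to a conjugacy class of a parabolic element of $\Gamma$, and a standard computation (the Margulis lemma, or an explicit conjugation sending the fixed point to $\infty$) shows that a punctured neighborhood of the puncture is isometric to the quotient $\faktor{\{z\in\HH\mid\Im z\geq 1\}}{(z\sim z+T)}$ as claimed. For (iv): the surface is $C_\alpha$ with finitely many cusps removed, it is finitely generated, so it has finite hyperbolic area — concretely, decompose $C_\alpha$ into finitely many pairs of pants (and cusp-neighborhoods and half-collars along geodesic boundary), each of which has area $2\pi$ or less, and sum. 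I would cite \cite{buser_geometry_2010} for the pair-of-pants decomposition and the area computation rather than reproduce it.

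\paragraph{Uniqueness.}
For uniqueness I would argue that any hyperbolic metric satisfying (i) lies in the conformal class of the given complex structure, and a complete finite-area hyperbolic metric in a fixed conformal class on a surface of finite type is unique — this is again part of the classical uniformization package, since such a metric is pulled back from the unique hyperbolic metric on $\HH$ via a developing map whose deck group is determined up to conjugacy by the conformal structure. In the boundary case, uniqueness on $C_\alpha$ follows from uniqueness on the double together with the fact that the geodesic-boundary condition forces compatibility with the doubling construction.

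\paragraph{Main obstacle.}
The genuinely nontrivial input is the classical uniformization theorem itself, which I would take as given (it is what the cited references \cite{hummel_gromovs_1997}, \cite{buser_geometry_2010}, \cite{hubbard_teichmuller_2006} supply). Granting that, the only real care needed is the bookkeeping for \emph{boundary} components: making sure the Schottky double is again of admissible (stable, finite) type so that the theorem applies, and checking that the anti-holomorphic involution argument really does force the boundary to be geodesic rather than merely smooth. That verification — that doubling preserves negativity of Euler characteristic and that uniqueness of the doubled metric descends correctly — is the step I would write out most carefully; everything else is either a citation or a routine local model computation.
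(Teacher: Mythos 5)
The paper does not give a proof of this proposition; it is stated as a citation to \cite{hummel_gromovs_1997}, \cite{buser_geometry_2010} and \cite{hubbard_teichmuller_2006}, so there is no ``paper's own proof'' to compare against. Your sketch is the standard uniformization-plus-Schottky-double argument that those references use, and it is correct in outline. The one place where the bookkeeping genuinely matters --- and which you already flagged --- is the uniqueness step in the bordered case. To make it airtight you should say explicitly that \emph{any} candidate metric $g$ on $C_\alpha$ satisfying (i)--(v) extends by reflection to a smooth hyperbolic metric $\hat g$ on the Schottky double $DC_\alpha$ (this uses geodesic boundary and completeness, so that the reflection is a local isometry across $\partial C_\alpha$ and the glued metric is $C^\infty$), that $\hat g$ is compatible with the canonical complex structure on $DC_\alpha$, and that $DC_\alpha$ still has negative Euler characteristic (which follows from the stability hypothesis on $C_\alpha$, since $\chi(DC_\alpha) = 2\chi(C_\alpha)$ once the boundary circles are absorbed). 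Then uniqueness of the hyperbolic metric on $DC_\alpha$ in its conformal class forces $\hat g$ to equal the canonical doubled metric, hence $g$ equals its restriction. The rest --- parabolic cusp models for (iii), pair-of-pants decomposition for finite area in (iv) --- is routine and correctly handled by citation.
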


Of course we can also relate holomorphic maps to local isometries for the corresponding hyperbolic metrics.

\begin{lem}
  Let $u:C\lra X$ be a holomorphic map between admissible Riemann surfaces $C$ and $X$. Now equip both Riemann surfaces with their unique hyperbolic metrics $g_C$ and $g_X$ from \cref{prop:unique-hyperbolic-metric}. Then the map $u$ is a local isometry of nodal surfaces.
  \label{lem:hol-map-loc-isom}
\end{lem}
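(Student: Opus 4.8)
The plan is to prove the sharper statement $u^{*}g_X=g_C$, and to read this off from the uniqueness half of \cref{prop:unique-hyperbolic-metric}: I would check that the pull-back metric $u^{*}g_X$ satisfies properties (i)--(v) on each component of $C$, so that it must equal $g_C$. The assertion is local and \cref{prop:unique-hyperbolic-metric} is componentwise, so it is enough to verify the identity separately on the locus where $u$ is unbranched and on a small neighbourhood of each cusp and each node of $C$. For the maps at hand (in particular Hurwitz covers, where every critical point of $u$ is a marked point or a node) these two regions cover $C$, the first one containing also $\partial C$, since $u$ has no critical points there.

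On the unbranched locus $u$ restricts to a finite unbranched holomorphic covering onto each component of $X$ with its cusps and nodes removed. There $u^{*}g_X=g_C$ is the standard fact that a covering between complete hyperbolic surfaces is a local isometry: a lift to the universal covers $\HH\to\HH$ is a biholomorphism, hence an isometry for the Poincaré metric, and this descends (near $\partial C$ one sees it by doubling, or by continuity from the interior). Equivalently it is once more an instance of uniqueness, since $u^{*}g_X$ is there conformal, complete and of curvature $-1$.

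Near a cusp I would use the explicit model of \cref{prop:unique-hyperbolic-metric}(iii). In the coordinate $w=\exp(2\pi\ii z/T)$ obtained from $\faktor{\{z\in\HH\mid\Im z\ge 1\}}{(z\sim z+T)}$ the cusp sits at $w=0$ and
\begin{equation*}
  g_C=\frac{|\dd w|^{2}}{|w|^{2}\big(\log|w|\big)^{2}},
\end{equation*}
which crucially does not depend on $T$. As $u$ sends this cusp to a cusp of $X$, a removable-singularity argument extends $u$ holomorphically with $u(0)=0$, and after adjusting coordinates $u(w)=w^{l}$ with $l$ the local degree; then, with $v=w^{l}$,
\begin{equation*}
  u^{*}\!\left(\frac{|\dd v|^{2}}{|v|^{2}(\log|v|)^{2}}\right)=\frac{l^{2}|w|^{2l-2}\,|\dd w|^{2}}{|w|^{2l}\,l^{2}(\log|w|)^{2}}=\frac{|\dd w|^{2}}{|w|^{2}(\log|w|)^{2}}=g_C .
\end{equation*}
Hence $u$ is a genuine local isometry on the punctured neighbourhood, and by continuity on the whole cusp neighbourhood; in particular $u^{*}g_X$ again has a cusp there, which is exactly property (iii) for $u^{*}g_X$. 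At a node I would pass to the normalisations $\wt C,\wt X$, where the two branches of a node are cusps; the two branches of a node of $C$ are sent to the two branches of a node of $X$ — here one uses that $u$ maps nodes to nodes, is locally surjective at nodes and has equal degrees on the two sides, as in \cref{def:hurwitz-cover} — and on each branch $\wt u$ is a power map, so the cusp computation applies verbatim. Finally properties (ii), (iv), (v) for $u^{*}g_X$ are immediate ($u$ is a local biholomorphism along $\partial C$ and carries geodesic boundary to geodesic boundary; $\area(u^{*}g_X)=(\deg u)\cdot\area(g_X)<\infty$; completeness follows from that of $g_X$ together with properness of $u$), so \cref{prop:unique-hyperbolic-metric} forces $u^{*}g_X=g_C$ on all of $C$.

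The one point that is not purely formal, and which I would single out as the crux, is the behaviour at cusps and nodes: a map that is genuinely \emph{branched} at such a point is nonetheless a local \emph{isometry} there. This is precisely the $T$-independence of the cusp model above, equivalently the scale invariance of $\tfrac{|\dd w|^{2}}{|w|^{2}(\log|w|)^{2}}$ under $w\mapsto w^{l}$, and it is why one cannot get away with a soft Schwarz--Pick estimate (which would only yield $u^{*}g_X\le g_C$) but must invoke the precise normal form of \cref{prop:unique-hyperbolic-metric}. A subsidiary subtlety is the matching of the two branches at a node and their common degree, where the structural conditions of \cref{def:hurwitz-cover} are used.
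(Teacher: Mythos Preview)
Your proposal is correct and follows essentially the same approach as the paper: verify that $u^{*}g_X$ satisfies the characterizing properties of \cref{prop:unique-hyperbolic-metric} and invoke uniqueness to conclude $u^{*}g_X=g_C$. The paper's argument is considerably terser—it simply records that $u^{*}g_X$ is conformal to $g_C$, has curvature $-1$, is complete with finite area and geodesic boundaries, and then cites uniqueness—while you add the explicit local model computation at cusps (the scale invariance of $\tfrac{|\dd w|^{2}}{|w|^{2}(\log|w|)^{2}}$ under $w\mapsto w^{l}$), which the paper leaves implicit in the completeness and finite-area conditions.
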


\begin{proof}
  Since $u$ is holomorphic and $C$ and $X$ are two-dimensional this implies that $u$ is conformal, i.e.\ there exists a smooth function $f\in\cin(C,\RR)$ such that $u^*g_X=f\cdot g_C$. Note that this is because the pull back of the conformal structure of $X$ under $u$ yields the same conformal structure as the complex structure on $C$ which in turn is the same conformal structure induced by $g_C$. Also, the curvature of the pulled-back metric $u^*g_X$ is constant $-1$ and thus $u^*g_X$ is hyperbolic. Furthermore, it is also complete, has finite area and the boundary components are geodesics because they are preimages of the geodesic boundaries on $X$. By uniqueness of the metric in \cref{prop:unique-hyperbolic-metric} this implies $u^*g_X=g_C$
\end{proof}

\section{Multicurves}

\label{sec:multicurves}
\index{Multicurve}

\begin{definition}
  A \emph{$k$-multicurve} $\Gamma=([\gamma_i])_{i=1}^k$ on an admissible surface $(C,\bq)$ with its hyperbolic metric is a collection of free homotopy classes of simple closed curves $\gamma_i$ on $(C\setminus\{\text{nodes, marked points}\})$ satisfying
  \begin{itemize}
    \item that all classes $[\gamma_i]$ are not contractible on $C\setminus\{\text{nodes, marked points}\}$ and
    \item that intersection numbers $[\gamma_i]\cap[\gamma_j]=0$ for all $i\neq j$, i.e. all classes have pairwise disjoint representatives.
  \end{itemize}
\end{definition}

By an \emph{essential} homotopy class of a simple closed curve we mean a class that is not freely homotopic to a point, a marked point or a node on $C$. The following lemma summarizes various well-known statements on multicurves.

\index{Essential Simple Closed Curve}

\begin{lem}[See e.g.\ \cite{buser_geometry_2010}]
  Let $(C,\bq)$ be an admissible hyperbolic surface. Then 
  \begin{enumerate}[label=(\roman*), ref=(\roman*)]
    \item in every essential free homotopy class there exists a unique geodesic representative and two such geodesics realize the minimal intersection number of the corresponding two free homotopy classes  and
    \item every $k$-multicurve with all non-essential classes having zero self-intersection can be completed to an essentially maximal multicurve, of which the number of essential components only depends on the topology of the surface.
  \end{enumerate}
  Here, \emph{essentially maximal} means that every other essential homotopy class of simple curves intersects the maximal one and we do not have any restriction on the non-essential simple curves in the multicurve. This implies that cutting the surface at the geodesic representatives of the essential classes in the multicurve separates the surface into possibly degenerate hyperbolic pairs of pants.
\end{lem}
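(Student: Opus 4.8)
The plan is to reduce both statements to the classical theory of closed geodesics and pants decompositions on smooth hyperbolic surfaces of finite type, for which complete proofs can be found in \cite{buser_geometry_2010}. The key preliminary observation is that an essential simple closed curve on $(C,\bq)$ lies in $C\setminus\{\text{nodes, marked points}\}$ and, being connected, is contained in a single smooth component $C_\alpha$ of the normalization of $C$; each such component carries its complete finite-area hyperbolic metric from \cref{prop:unique-hyperbolic-metric}, with cusps at the marked points, at the punctures and at the two branches of every node, and with geodesic boundary at the honest boundary components. So it is enough to prove both statements on each such $C_\alpha$.

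For statement (i): an essential free homotopy class of a simple closed curve in $C_\alpha$ is a nontrivial conjugacy class in $\pi_1(C_\alpha)$ whose elements act on the universal cover $\wt{C_\alpha}\subset\HH$ by hyperbolic isometries --- not the identity because the class is noncontractible, and not parabolic because the class is not freely homotopic to a cusp (that is, to a marked point, a puncture or a branch of a node). A hyperbolic isometry of $\HH$ has a unique invariant geodesic, its axis; the image of that axis in $C_\alpha$ is a closed geodesic in the given class, and any closed geodesic in the class lifts to a complete geodesic of $\HH$ invariant under the same isometry, hence coincides with the axis --- this gives uniqueness. For the minimal-intersection claim one invokes the bigon criterion: distinct simple closed geodesics are transverse and bound no complementary bigon, since the two sides of such a bigon would be two distinct geodesic arcs of $\HH$ with the same endpoints, which is impossible; therefore they are in minimal position and realize the minimal geometric intersection number of their classes. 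Curves supported in different components are automatically disjoint.

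For statement (ii): one discards the non-essential classes of the given $k$-multicurve, which impose no condition, and works component by component. On $C_\alpha$ the essential classes of $\Gamma$ supported there have, by (i), pairwise disjoint geodesic representatives and are pairwise non-homotopic, so they form a partial system of disjoint essential simple closed geodesics. By Buser's pants decomposition theorem --- applicable because each component is stable, i.e.\ $2g_\alpha-2+b_\alpha>0$ where $b_\alpha$ counts the boundary components, cusps and node-branches of $C_\alpha$ --- this system extends to a maximal one of cardinality $3g_\alpha-3+b_\alpha$, whose complement in $C_\alpha$ is a disjoint union of (possibly degenerate, i.e.\ partially cusped) hyperbolic pairs of pants. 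Taking the union over all $\alpha$ yields an essentially maximal multicurve on $C$, and cutting $C$ along it decomposes $C$ into hyperbolic pairs of pants that are degenerate exactly where they carry a cusp or a node of $C$; the number of essential components equals $\sum_\alpha(3g_\alpha-3+b_\alpha)$, a quantity determined by the topological type of $C$ alone (in the closed case it equals $3\ga(C)-3+n-\delta_C$). Since all the ingredients are classical, there is no genuine obstacle here; the only step requiring real care is the bookkeeping that matches the admissible-surface conventions --- cusps versus geodesic boundary, the role of nodes, the stability of each component --- to the hypotheses of the cited theorems, and that keeps track of which pairs of pants in the final decomposition are degenerate.
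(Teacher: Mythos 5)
Your proof is correct and is the standard textbook argument; the paper does not supply its own proof of this lemma but simply refers to \cite{buser_geometry_2010}, and your write-up is essentially what one finds there, carefully adapted to the admissible-surface setting (working on each smooth component of the normalization, treating cusps from marked points, punctures and node-branches on the same footing, and allowing geodesic boundary). The only step you treat a bit briskly is the claim that the non-essential classes ``impose no condition'': this is true, but because such classes are freely homotopic into cusp or node neighborhoods and closed geodesics avoid the standard cusp regions, so they can be realized disjointly from any geodesic system; it is worth making that explicit since the multicurve definition requires zero pairwise intersection with the essential classes as well.
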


\index{Multicurve!Maximal}

\begin{rmk}
  Note that we include the non-essential curves in the multicurve because later we want to consider reference curves close to cusps. Imagine that one of the essential geodesics collapses to a node. Then we want to replace the corresponding curve in the multicurve by some simple curve close to that new node which corresponds to two cusps in the hyperbolic picture.
\end{rmk}

\begin{figure}[!ht]
  \centering
  \def\svgwidth{\textwidth}
  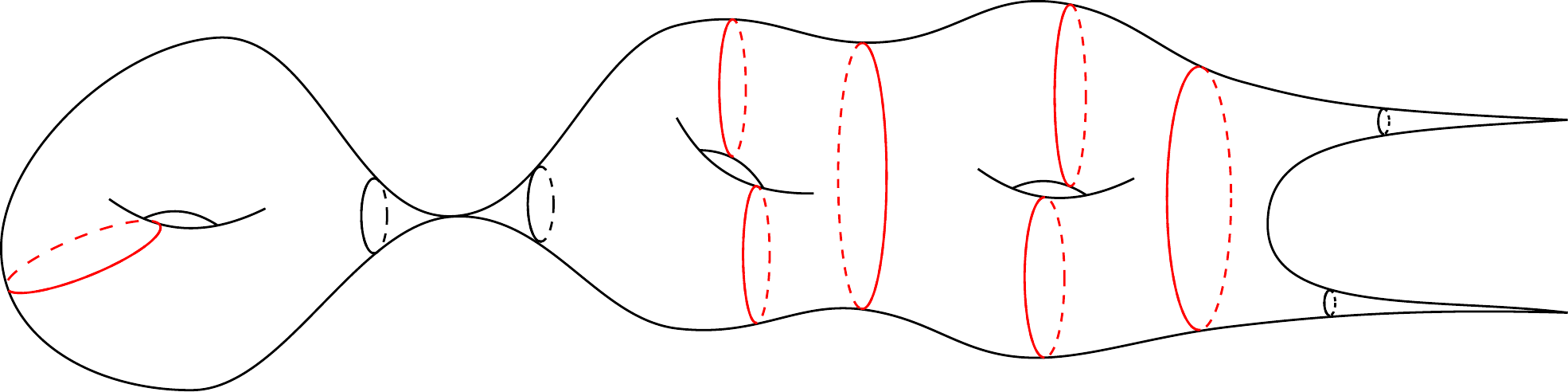
  \caption{This is an example of a maximal multicurve. However, notice that only the red curves are essential and are needed for separating the surface into pairs of pants.}
  \label{fig:example-multicurves}
\end{figure}

\section{Teichmüller Spaces}

\label{sec:teichmueller-spaces}

In this section we will define Teichmüller spaces and fix some notation that we are going to use later on. Note that all our Teichmüller spaces will be those of possibly disconnected \emph{smooth} Riemann surfaces. There is a notion of augmented Teichmüller spaces which incorporates nodes on this level but it is more difficult in particular because it is not even locally compact. As we will later construct coordinates on moduli spaces using gluing techniques we will only need coordinates on the common Teichmüller spaces.

\index{Teichmüller Space}

\begin{def-lemma}
  Let $\si$ be an oriented smooth surface without boundary with genus $g$ and fix $n\in\NN$ such that $2-2g-n<0$. Then the following spaces are in bijection and every one is defined to be the \emph{Teichmüller space} $\mcT_{g,n}$:
  \begin{enumerate}[label=(\roman*), ref=(\roman*)]
    \item The set of complex structures $J\in\cin(\si,\End(\ts \si))$ with $n$-tuples of mutually different marked points $\bq$ on $\si$ inducing the given orientation up to diffeomorphisms of $\si$ isotopic to the identiy.
    \item The set of complete hyperbolic structures on $\si\setminus\bq$ having finite area inducing the given orientation up to diffeomorphisms of $\si$ isotopic to the identity.
    \item The set of \emph{marked Riemann surfaces of type $(g,n)$}, i.e.\ pairs $(C,f)$ where $f:\si\lra C$ is a homeomorphism, $\bq\subset C$ is a set of $n$ fixed points and $C$ is a complex surface. Two pairs $(C,f)$ and $(C',f')$ are identified if the map $(f')^{-1}\circ f:\si\lra\si$ is homotopic to the identity relative to the points $\bq$.
  \end{enumerate}
\end{def-lemma}

\index{Riemann Surface!Marked}

\begin{rmk}
  Note that we did not require the surfaces to be connected. As the complex structures on different connected components are independent of each other, we have
  \begin{equation*}
    \mcT_{g,n}=\bigsqcup_{\substack{0\leq n_1,\ldots,n_l\leq n \\ n_1+\cdots + n_l=n}}\prod_{i=1}^l\mcT_{g_i,n_i},
  \end{equation*}
  where the index $i$ runs over all connected components who have genus $g_i$ and $n_i$ marked points.

  Furthermore, in the case of marked Riemann surfaces one should think of the images $f(\bq)$ as the marked points.

  Also note that there exist corresponding Teichmüller spaces for bordered Riemann surfaces as well. In this case we need to modify the definitions slightly:
  \begin{enumerate}[label=(\roman*), ref=(\roman*)]
    \item Holomorphic transition functions are required to be restrictions of biholomorphisms to the upper half plane at boundary points.
    \item Hyperbolic structures are such that the boundary components are geodesics and away from the boundary the hyperbolic structure is supposed to be complete.
    \item Markings are required to map boundary components to boundary components.
    \item When identifying any two objects in any of the cases above we require the diffeomorphism to map boundary components to boundary components and do \emph{not} require the isotopy to be the identity on the boundary components.
  \end{enumerate}
  The corresponding Teichmüller spaces can then be constructed for example by doubling the surface and restricting to the fixed point set of the involution swapping the two halves.
\end{rmk}

The following is well-known.

\begin{prop}[See e.g.\ \cite{hubbard_teichmuller_2006}]
  The Teichmüller space of connected Riemann surfaces of genus $g$ and $n$ marked points such that $2-2g-n<0$ is a contractible differentiable manifold.
\end{prop}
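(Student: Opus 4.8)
The plan is to construct Fenchel--Nielsen coordinates adapted to a pants decomposition and use them to exhibit an explicit global chart. First I would invoke the lemma on multicurves to fix a maximal multicurve $\Gamma=(\gamma_1,\dots,\gamma_{3g-3+n})$ on $\si\setminus\bq$, so that cutting along $\Gamma$ decomposes $\si$ into $2g-2+n$ topological pairs of pants, some of whose boundary circles are the curves $\gamma_i$ while the remaining ones encircle the marked points $\bq$. Given a point of $\mcT_{g,n}$, represented by a marked hyperbolic structure $(C,f)$ carrying the complete finite-area metric of \cref{prop:unique-hyperbolic-metric}, each free homotopy class $f(\gamma_i)$ has a unique geodesic representative; let $\ell_i>0$ be its length and $\tau_i\in\RR$ the twist parameter measuring, relative to the marking $f$, the shearing with which the two pants incident to $\gamma_i$ are glued along it. This produces a map
\begin{equation*}
  \mathrm{FN}_\Gamma:\mcT_{g,n}\lra(\RR_{>0}\times\RR)^{3g-3+n},\qquad (C,f)\longmapsto(\ell_1,\tau_1,\dots,\ell_{3g-3+n},\tau_{3g-3+n}).
\end{equation*}

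The core geometric input is the classification of hyperbolic pairs of pants: a hyperbolic structure with geodesic boundary exists and is unique up to isometry once the three boundary lengths are prescribed, where a length is allowed to degenerate to $0$, meaning a cusp, for the circles around points of $\bq$. Granting this, $\mathrm{FN}_\Gamma$ is injective, since the $\ell_i$ reconstruct each pants piece and the $\tau_i$ reconstruct the gluing, all compatibly with the marking; and it is surjective, since an arbitrary tuple $(\ell_i,\tau_i)$ is realized by taking the corresponding pants and gluing them with the prescribed lengths and twists, the resulting marked surface being complete of finite area. It is essential here that the marking is retained: the twist lives in $\RR$ rather than $\RR/\ell_i\ZZ$ precisely because a full Dehn twist along $\gamma_i$ is not isotopic to the identity and hence yields a genuinely different point of $\mcT_{g,n}$. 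Since $(\RR_{>0}\times\RR)^{3g-3+n}$ is diffeomorphic to $\RR^{6g-6+2n}$, via $\log$ in the length coordinates, it is a contractible manifold, and $\mathrm{FN}_\Gamma$ transports this structure to $\mcT_{g,n}$.

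It remains to pin down the differentiable structure so that the statement ``differentiable manifold'' is justified. One option is to \emph{define} it by declaring each $\mathrm{FN}_\Gamma$ to be a diffeomorphism; for this to be well posed one must check that for two maximal multicurves $\Gamma,\Gamma'$ the transition map $\mathrm{FN}_{\Gamma'}\circ\mathrm{FN}_\Gamma^{-1}$ is smooth, in fact real-analytic, which is a theorem of Wolpert. Alternatively, one verifies that the length and twist functions are smooth with respect to a manifold structure already available on $\mcT_{g,n}$ --- the complex-analytic one coming from the Bers embedding of $\mcT_{g,n}$ as a bounded domain in $\CC^{3g-3+n}$, or the one from the description of the tangent space by harmonic Beltrami differentials --- so that $\mathrm{FN}_\Gamma$ becomes a diffeomorphism after the fact.

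I expect the classification and gluing of hyperbolic pairs of pants, together with the bookkeeping that makes the glued structure depend smoothly on the parameters and respect the marking, to be the only genuinely nontrivial ingredient; the passage to contractibility is then purely formal. As a cross-check one may instead invoke Teichmüller's theorem, which directly produces a homeomorphism from $\mcT_{g,n}$ onto an open ball in the finite-dimensional space of integrable holomorphic quadratic differentials on a reference surface, giving contractibility and the manifold structure simultaneously; this route avoids hyperbolic geometry entirely at the cost of quasiconformal machinery.
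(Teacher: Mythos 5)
The paper states this proposition with only a citation to Hubbard's book and gives no proof of its own, so there is no in-paper argument to compare against; your proposal has to be judged against the standard literature. On that basis it is correct and is in fact the same Fenchel--Nielsen route the paper implicitly relies on throughout (it introduces $\FN_\Gamma$ a few lines below and later uses Wolpert's theorem for the symplectic form in these coordinates). You correctly isolate the two genuine inputs: the existence-and-uniqueness of hyperbolic pairs of pants with prescribed boundary lengths (allowing cusps), and the fact that the twist coordinate lives in $\RR$ rather than $\RR/\ell_i\ZZ$ because Teichm\"uller space carries a marking, so a full Dehn twist moves the point. You also correctly flag the one subtlety most sketches elide, namely that ``differentiable manifold'' requires either showing the transition maps $\FN_{\Gamma'}\circ\FN_\Gamma^{-1}$ are smooth (Wolpert) or exhibiting a pre-existing smooth/complex structure (Bers embedding, harmonic Beltrami differentials) and checking $\FN_\Gamma$ is smooth with respect to it. One small point worth making explicit before ``transporting'' the structure: you should record that $\FN_\Gamma$ is a homeomorphism for the natural topology on $\mcT_{g,n}$ (uniform convergence of markings up to isotopy, or equivalently the topology from the Teichm\"uller metric), not merely a bijection; otherwise the transported smooth structure need not induce the topology one actually cares about. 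With that added, the argument is complete. Your cross-check via Teichm\"uller's theorem and the Bers embedding is also correct and is the route Hubbard's book takes more directly for the complex structure, while the Fenchel--Nielsen route is closer in spirit to what this paper uses downstream.
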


\begin{definition}
  There is a complex manifold together with a holomorphic map $\pi:\mcC_{g,n}\lra\mcT_{g,n}$ over Teichmüller space with $n$ holomorphic sections $s_1,\ldots,s_n$, called the \emph{universal curve}, such that for all $t\in\mcT_{g,n,}$ the fibre $\pi^{-1}(t)$ together with the points $s_1(t),\ldots,s_n(t)$ is contained in the Teichmüller equivalence class $t$.
\end{definition}

\index{Universal Curve!Over Teichmüller Space}

\begin{prop}[See \cite{hubbard_analytic_2014}]
  The universal curve $\pi:\mcC_{g,n}\lra\mcT_{g,n}$ is a globally differentiably trivializable fibre bundle. It does not possess any holomorphic sections except those of the $n$ marked points. Also there is a continuous bundle metric on $\pi$ which restricts to the hyperbolic metric induced by uniformization on the fibre.
  \label{prop:universal-teichmueller-family-vertical-hyp-metric}
\end{prop}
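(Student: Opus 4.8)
The plan is to prove the three assertions separately; the middle one is the only part with genuine content.

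\textbf{Trivializability.} I would first recall that, by construction of the universal curve (cf.\ \cite{hubbard_teichmuller_2006,hubbard_analytic_2014}), $\pi:\mcC_{g,n}\lra\mcT_{g,n}$ is a proper holomorphic submersion whose fibres are closed Riemann surfaces of genus $g$, carrying the $n$ pairwise disjoint holomorphic sections $s_1,\dots,s_n$. Ehresmann's fibration theorem --- in the version that fixes a finite family of disjoint sections, obtained by choosing a complete smooth distribution transverse to the fibres and tangent to each $s_i(\mcT_{g,n})$ and integrating it --- shows that $\pi$ is a locally trivial differentiable fibre bundle with fibre the model pointed surface $(\si,\bq)$. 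Since $\mcT_{g,n}$ is contractible (the preceding proposition on contractibility of $\mcT_{g,n}$), bundle homotopy invariance over a paracompact base yields a global diffeomorphism $\mcC_{g,n}\cong\mcT_{g,n}\times\si$, and because the bundle over a point is just a pointed surface this trivialization may be arranged to send $s_i(\mcT_{g,n})$ to $\mcT_{g,n}\times\{*_i\}$. The trivialization is only differentiable, not holomorphic, which is all that is claimed.

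\textbf{Holomorphic sections.} The $s_i$ are holomorphic by construction. For the converse, let $s:\mcT_{g,n}\lra\mcC_{g,n}$ be holomorphic. If $s$ agrees with some $s_i$ on a set with nonempty interior then, $\mcT_{g,n}$ being connected, the identity theorem forces $s\equiv s_i$. Otherwise $s$ meets $\bigcup_i s_i(\mcT_{g,n})$ only along a thin analytic subset $Z$, and over $\mcT_{g,n}\setminus Z$ the assignment $t\mapsto(\pi^{-1}(t);s_1(t),\dots,s_n(t),s(t))$ defines a holomorphic section of the point-forgetting projection $\mcT_{g,n+1}\lra\mcT_{g,n}$ (recall $\mcT_{g,n+1}\cong\mcC_{g,n}\setminus\bigcup_i s_i(\mcT_{g,n})$), which a removable-singularity argument extends across $Z$. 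But $\mcT_{g,n+1}\lra\mcT_{g,n}$ admits no holomorphic section: this is the theorem of Hubbard and of Earle--Kra on analytic sections of the universal Teichmüller curve, which I would invoke from \cite{hubbard_analytic_2014} rather than reprove. This is the main obstacle --- the only non-formal ingredient --- and one must be mindful that the classical statement has a few low-dimensional exceptions (e.g.\ torsion sections over $\mcT_{1,1}$), all excluded by the stability hypotheses in force here.

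\textbf{The fibrewise hyperbolic metric.} By \cref{prop:unique-hyperbolic-metric} each fibre $C_t=\pi^{-1}(t)$ carries a unique complete, finite-area hyperbolic metric $g_t$ having the $s_i(t)$ as cusps; I would show that $(t,p)\mapsto (g_t)_p$ is a continuous, fibrewise positive-definite section of $\mathrm{Sym}^2(T^{\mathrm{vert}}\mcC_{g,n})^{*}$, which is precisely a continuous bundle metric restricting to the uniformizing metric on each fibre. Away from the sections one works in a local holomorphic fibre coordinate $(t,z)$ and writes $g_t=e^{2\varphi_t(z)}|\dd z|^{2}$; the conformal factor solves the Liouville equation $\Delta\varphi_t=e^{2\varphi_t}$ on the coordinate domains, which vary continuously in $t$ by the triviality just established, so continuous --- indeed real-analytic --- dependence of solutions of this elliptic equation on the parameter gives continuity of $\varphi_t(z)$ jointly in $(t,z)$. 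Near a section $s_i$ one uses instead the standard cusp normal form $g_t=\bigl(|w|\log|w|\bigr)^{-2}|\dd w|^{2}$ in the uniformizing cusp coordinate $w$, whose dependence on $t$ is holomorphic, hence continuous. Gluing these local expressions with a partition of unity on the base produces the asserted continuous bundle metric; alternatively one quotes the Ahlfors--Wolpert real-analytic dependence of the uniformizing metric on moduli from \cite{hubbard_analytic_2014}.
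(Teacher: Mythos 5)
The paper itself does not prove this proposition; it cites \cite{hubbard_analytic_2014} and, in the remark that follows, only indicates why the three assertions hold: contractibility of the base gives smooth global triviality, scarcity of holomorphic sections excludes a holomorphic trivialization, and continuity of the fibrewise Poincar\'e metric is established in the reference by a Kobayashi-metric argument. Against that, your parts (1) and (3) are correct, and your Liouville-equation/elliptic-regularity route for (3) is a perfectly acceptable alternative to the Kobayashi-metric route the paper points at. You were also right to flag that the ``no extra holomorphic sections'' statement has low-dimensional exceptions (Weierstrass-point and torsion-point sections); the proposition as printed tacitly assumes the generic stable case.

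Part (2), however, contains a genuine error. The identification $\mcT_{g,n+1}\cong\mcC_{g,n}\setminus\bigcup_i s_i(\mcT_{g,n})$ on which your reduction rests is false, and the paper says so explicitly in the remark immediately after this proposition: ``the universal curve $\mcC_{g,n}$ has non-trivial topology and hence is \emph{not} given by the forgetful map $\mcT_{g,n+1}\lra\mcT_{g,n}$.'' By the Bers fibration, the fibre of the forgetful map over a point with underlying punctured surface $C\setminus\bq$ is the universal cover of $C\setminus\bq$, not $C\setminus\bq$ itself; the natural comparison map $\mcT_{g,n+1}\lra\mcC_{g,n}\setminus\bigcup_i s_i(\mcT_{g,n})$ is a fibrewise universal covering, and the two spaces are not even homotopy equivalent ($\mcT_{g,n+1}$ is contractible, while $\mcC_{g,n}$ deformation retracts onto a surface). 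The clean repair is to invoke the Hubbard and Earle--Kra theorem in its direct form, which classifies holomorphic sections of $\mcC_{g,n}\lra\mcT_{g,n}$ itself (not only of the forgetful map) and says they are exactly the $s_i$ away from the exceptional list. If you insist on routing through the forgetful map, you can: a holomorphic section $s$ \emph{disjoint from every} $s_i$ lifts through the covering $\mcT_{g,n+1}\lra\mcC_{g,n}\setminus\bigcup_i s_i(\mcT_{g,n})$ because $\mcT_{g,n}$ is simply connected, and the lift is holomorphic since the covering is a local biholomorphism, giving a forbidden holomorphic section of the Bers fibration. But then your ``removable singularity across $Z$'' step remains a gap: approaching the analytic set $Z$ where $s$ meets some $s_i$, the lift escapes to the ideal boundary of the $\HH$-fibre of $\mcT_{g,n+1}$, so the singularity is \emph{not} removable, and the several-variable identity theorem does not by itself exclude $s$ agreeing with some $s_i$ along a proper analytic hypersurface. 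That residual case needs the direct citation or a separate intersection-theoretic argument.
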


\begin{rmk}
  The universal curve $\pi$ is globally trivializable as a smooth fibre bundle because the base Teichmüller space is contractible. However, $\pi$ is also holomorphic and so one could ask if this trivialization is holomorphic. It turns out that it is not because there are not enough holomorphic sections. Note that the fibres are still biholomorphic to a surface in the Teichmüller class corresponding to the image of the fibre under $\pi$. So in particular the universal curve $\mcC_{g,n}$ has non-trivial topology and hence is \emph{not} given by the forgetful map $\mcT_{g,n+1}\lra\mcT_{g,n}$. Nevertheless, we can use uniformization on each of the fibres to obtain a vertical hyperbolic metric and thus a map from the square of the vertical subbundle of $\ts\mcC_{(S,Z)}$ to $\RR$ which turns out to be continuous in the base point and restricts to the hyperbolic structure on the fibre. The continuity is shown in \cite{hubbard_analytic_2014} by using the Kobayashi metric description of the hyperbolic metric.
\end{rmk}

\begin{rmk}
  Note that the universal curve over the Teichmüller space has a universal property: For any flat family of surfaces $\eta:P\lra T$ such that the fibres are closed Riemann surfaces of type $(g,n)$ together with a marking of the family\footnote{A marking of a family is not a family of markings and its definition is rather subtle. See \cite{hubbard_analytic_2014} for details. As we will not need this here, we skip a discussion.} there exists a unique map $\varphi:T\lra\mcT_{g,n}$ such that $\varphi^*C_{g,n}\cong P$ as a marked flat family of surfaces.

  Also there is a properly discontinuous group action of the mapping class group $\mcg_{g}=\pi_0(\Diff_+(\si))\simeq\faktor{\Diff_+(\si)}{\Diff_0(\si)}$ on the Teichmüller space. The quotient is the moduli space of (smooth) Riemann surfaces which carries an orbifold structure that we will use very intensively in \cref{chap:orbi-structure}.
\end{rmk}

\subsection{Fenchel--Nielsen Coordinates}

In order to define Fenchel--Nielsen coordinates on Teichmüller space we first need a set of simple closed essential decomposing curves $\Gamma\coloneqq\{\gamma_i\}_{i=1}^{3g-3+n}$ on $\si$. Here, $\si$ has genus $g$ and $n$ marked points. Now suppose we have a complex structure on $\si$. Then we can uniformize this surface to obtain a compatible hyperbolic structure. In particular, every free homotopy class of each of the $\gamma_i$ has a unique hyperbolic geodesic representative. This way we obtain $2g-2+n$ hyperbolic pairs of pants with geodesic boundaries or cusps. Denote the lengths of these $3g-3+n$ closed geodesics by $l_1,\ldots,l_{3g-3+n}$. These give half of the coordinates and it remains to define the twisting coordinates. However, we need some kind of ``reference'' for measuring the twisting which is why these coordinates are best defined on a model for Teichmüller space using marked surfaces. Since we only need these coordinates for local descriptions, e.g.\ the symplectic structure on the moduli spaces or for constructing orbifold atlases, we will not give full details but rather describe a local version. Details can be found in \cite{hubbard_teichmuller_2006}.

For the local version we still need to choose a multicurve $\Gamma'$ on $\si$ such that the $\gamma_i$ are intersected by the curves in $\Gamma'$ exactly twice and the arcs on every pair of pants each join two boundary curves in $\Gamma$ on that pair of pants. Such a multicurve can be constructed by choosing two points on every $\gamma_i\in\Gamma$ and connecting these via three pairwise disjoint simple arcs on every embedded pair of pants.

It is now possible to consider the unique geodesic representatives of two such arcs $\alpha$ and $\alpha'$ meeting at a common point on $\gamma_i$ such that they are perpendicular to the boundary curves. The resulting geodesic arcs will no longer meet and we can consider the oriented normalized length of the arc between these two points. Note that this is only well-defined ``locally'' as without a reference we can not count how often we needed to wind around the geodesic $\gamma_i$. We can repeat this construction for every geodesic $\gamma_i\in\Gamma$ to obtain angles $\tau_1,\ldots,\tau_{3g-3+n}$ which are contained in a small interval. Note that they do not depend on which pair of arcs we use as the geodesic representatives are precisely opposite of each other. The following picture summarizes the construction.

\begin{figure}[!ht]
  \centering
  \def\svgwidth{0.6\textwidth}
  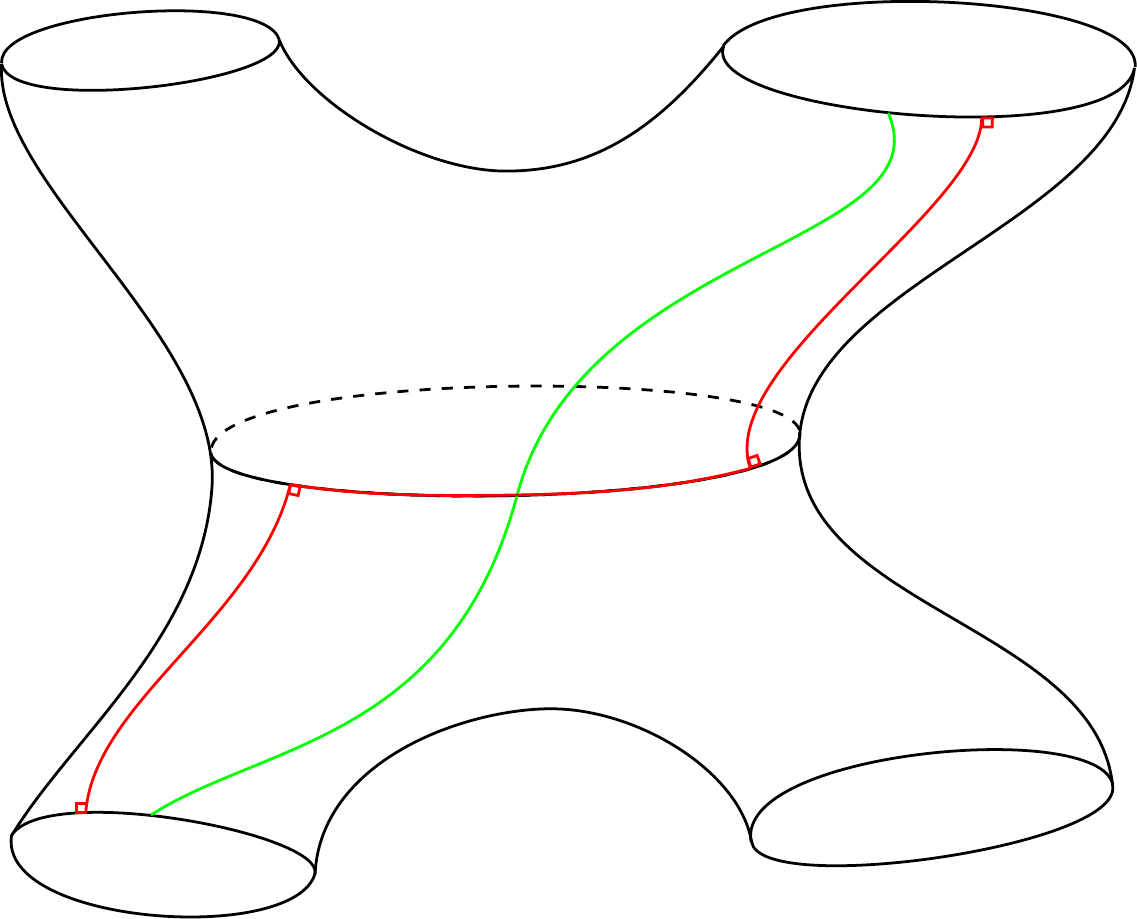
  \caption{The green curve is part of the multicurve $\Gamma'$ which is used as a reference for calculating twist coordinates. It consists of two arcs $\alpha$ and $\alpha'$ which have unique homotopic geodesic representatives such that they are perpendicular to the boundary geodesics. The red curve then consists of these two geodesic arcs as well as a connecting arc along $\gamma_i$. Notice that orientation conventions are not depicted here and we can not count complete wraps of the red arc on $\gamma_i$ around that geodesic as we have no reference curve for measuring this.}
  \label{fig:fenchel-nielsen}
\end{figure}

In total, this defines a homeomorphism onto its image on a small neighborhood $U\subset\mcT_{g,n}$ of $C$,
\begin{equation*}
  \FN_{\Gamma}:U\lra\RR_+^{3g-3+n}\times\RR^{3g-3+n}.
\end{equation*}
Again notice that it is possible to make this globally well defined and then this map will be a homeomorphism, see \cite{hubbard_teichmuller_2006}. In \cref{sec:hyperbolic-gluing} we will use a slightly different version of Fenchel--Nielsen coordinates on the parameter space of the plumbed family, i.e.\ after gluing surfaces at a node. Since the plumbing construction does not give points in Teichmüller space but rather on the quotient by the infinite cyclic subgroup generated by the Dehn twist of the collapsed curve, one obtains twist coordinates defined up to multiples of $2\pi$ but which extend to the nodal curve with length equal to zero. This is explained in \cite{hubbard_analytic_2014}. In particular we have Fenchel--Nielsen coordinates
\begin{equation*}
  \FN_{\Gamma}:U\lra\CC^{3g-3+n},
\end{equation*}
given by $l_ie^{\tau_i}$ in every component.

\subsection{Weil--Petersson Symplectic Structure}

\label{sec:weil-peterss-sympl}
\index{Weil--Petersson!Symplectic Structure}
\index{Weil--Petersson!Inner Product}

This section deals with the Weil--Petersson symplectic structure on Teichmüller space and later on the moduli space of Riemann surfaces and its compactification, the Deligne--Mumford space.

\begin{definition}
  If one identifies the cotangent space of Teichmüller space at a hyperbolic metric $h$ on $\si$ with $\ts_{[h]}^*\mcT_{g,n}\cong Q^2(\si)$, i.e.\ the space of quadratic differentials on $\si$, one defines the \emph{Weil--Petersson inner product} by
  \begin{equation*}
    \langle q_1,q_2\rangle\coloneqq\int_{\si}\frac{\ol{q_1}q_2}{\vol_h^2},
  \end{equation*}
  where the notation for the integrand means $\lambda^{-2}(z)\ol{q_1}(z)q_2(z)\dd z\wedge \dd\ol{z}$ if $\vol_h=\lambda\dd z\wedge\dd\ol{z}$ and $q_i=q_i(z)\dd z^2$. Also, quadratic differentials here are allowed to have certain types of poles at the punctures.
\end{definition}

It turns out that this $L^2$-inner product on the space of quadratic differentials is inherently connected to the hyperbolic geometry of the underlying curve.

\begin{thm}[L. Ahlfors, \cite{ahlfors_remarks_1961-1}]
  The Weil--Petersson inner product is Kähler.
\end{thm}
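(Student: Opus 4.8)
The plan is to reproduce Ahlfors' original computation. First I would transport the inner product from the cotangent to the tangent space: dualizing via the hyperbolic metric identifies $\ts_{[h]}\mcT_{g,n}$ with the space $H(\si)$ of \emph{harmonic Beltrami differentials} $\mu=\lambda^{-1}\ol q$ with $q\in Q^2(\si)$, where $\vol_h=\lambda\,\dd z\wedge\dd\ol z$ locally, and under this identification the Weil--Petersson inner product becomes
\[
  \langle\mu,\nu\rangle=\int_{\si}\mu\,\ol\nu\,\vol_h ,
\]
the integral converging since harmonic Beltrami differentials decay at the cusps. The complex structure of Teichmüller space acts by $\mu\mapsto\ii\mu$, and the pairing above is invariant under $\mu\mapsto\ii\mu$; hence it is a Hermitian metric on $\mcT_{g,n}$, positive definite since $\langle\mu,\mu\rangle=\int_\si|\mu|^2\vol_h>0$, and (up to the usual normalization) minus its imaginary part is a nondegenerate real $(1,1)$-form, which is $\wwp$. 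It therefore remains to show $\dd\wwp=0$.

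Next I would invoke the pointwise Kähler criterion: writing the metric in local holomorphic coordinates as $g_{\alpha\ol\beta}$, closedness of the associated $(1,1)$-form is equivalent to $\partial_\gamma g_{\alpha\ol\beta}=\partial_\alpha g_{\gamma\ol\beta}$ for all $\alpha,\beta,\gamma$. Since the point is arbitrary, it suffices to produce, around any $[X_0]\in\mcT_{g,n}$, a holomorphic chart in which \emph{all} first derivatives of the $g_{\alpha\ol\beta}$ vanish at the centre. For this I would use Bers' coordinates: fix a basis $q_1,\dots,q_N$ of $Q^2(X_0)$ with $N=3g-3+n$, put $\mu_\alpha=\lambda_0^{-1}\ol{q_\alpha}$, and take the chart $(t_1,\dots,t_N)$ for which $X_t$ carries, relative to $X_0$, the Beltrami coefficient $\sum_\alpha t_\alpha\mu_\alpha+O(|t|^2)$, so that $\partial/\partial t_\alpha|_{0}=[\mu_\alpha]$ in $H(X_0)$.

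The core of the proof is then the first variation at $t=0$ of
\[
  g_{\alpha\ol\beta}(t)=\big\langle\,\Pi_t\mu_\alpha,\ \Pi_t\mu_\beta\,\big\rangle_{X_t},
\]
with $\Pi_t$ the harmonic projection on $X_t$. Differentiating in $t_\gamma$ produces the variation of the hyperbolic area element $\vol_{h_t}$ together with the variations of the two harmonic representatives. Here I would use Ahlfors' lemmas: the first variation $\dot\lambda$ of the hyperbolic density is the solution of a linear elliptic equation of the form $(\Delta_0-2)\dot\lambda=f_\gamma$ with $f_\gamma$ built explicitly from $\mu_\gamma$, and the first variation of $\Pi_t\mu_\alpha$ lies in the image of a $\db$-type operator and hence is $L^2$-orthogonal to $H(X_0)$. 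Pairing everything against the harmonic differentials $\mu_\alpha,\mu_\beta$ and using self-adjointness of $\Delta_0-2$ along with this orthogonality, the contributions cancel and $\partial_\gamma g_{\alpha\ol\beta}(0)=0$. Hence $\dd\wwp=0$. The disconnected case reduces to this, since $\mcT_{g,n}$ then splits as a product of connected Teichmüller spaces and the metric is the corresponding product metric.

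The step I expect to be the real obstacle is exactly this last computation — tracking how the hyperbolic metric and the harmonic Beltrami differentials deform along a Bers family and checking the cancellation; in principle it is elementary, but it depends on Ahlfors' explicit formulas for these first variations and on the mapping properties of $\Delta_0-2$ on the surface with cusps. (A logically heavier alternative would be to quote Wolpert's identity $\wwp=\tfrac12\sum_i\dd\ell_i\wedge\dd\tau_i$ in the Fenchel--Nielsen coordinates $\FN_\Gamma$ from \cref{sec:teichmueller-spaces}, from which closedness is immediate; but that identity is itself a substantial theorem, and it is not needed here since we only cite Ahlfors' result.)
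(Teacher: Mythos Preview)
The paper does not give a proof of this theorem at all: it is stated with attribution to Ahlfors and a citation to \cite{ahlfors_remarks_1961-1}, and then the exposition moves on to Wolpert's and Masur's theorems, also without proof. So there is nothing to compare against beyond the citation itself.

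Your outline is a reasonable sketch of Ahlfors' original argument (harmonic Beltrami differentials, Bers coordinates, and the first-variation computation showing $\partial_\gamma g_{\alpha\ol\beta}(0)=0$). For the purposes of this paper, though, all of that is unnecessary: the result is simply quoted as background, and your parenthetical remark about Wolpert's formula is in fact exactly how the paper \emph{uses} the Kähler property downstream---the very next theorem states $\wwp=\sum_i\dd\theta_i\wedge\dd l_i$ in Fenchel--Nielsen coordinates, which is what the later symplectic arguments rely on.
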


\begin{thm}[S. Wolpert, \cite{wolpert_symplectic_1983}]
  In Fenchel--Nielsen coordinates on Teichmüller space $\mcT_{g,n}$ the Weil--Petersson symplectic structure has standard form
  \begin{equation*}
    \wwp=\sum_{i=1}^{3g-3+n}\dd\theta_i\wedge\dd l_i.
  \end{equation*}
\end{thm}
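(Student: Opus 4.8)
The plan is to verify the identity in a Fenchel--Nielsen chart. Fix a pants decomposition $\Gamma=\{\gamma_i\}_{i=1}^{3g-3+n}$ of $\si$; the associated coordinates give a global chart on $\mcT_{g,n}$, with coordinate vector fields the length fields $\partial/\partial l_i$ and the twist fields $t_i\coloneqq\partial/\partial\theta_i$, the latter being the infinitesimal Fenchel--Nielsen twists along $\gamma_i$. It then suffices to show
\begin{equation*}
  \wwp(t_i,\partial/\partial l_j)=\delta_{ij},\qquad \wwp(t_i,t_j)=0,\qquad \wwp(\partial/\partial l_i,\partial/\partial l_j)=0
\end{equation*}
for all $i,j$. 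Everything hinges on \emph{Wolpert's twist--length duality}: the Weil--Petersson symplectic gradient of the geodesic length function $l_i=l_{\gamma_i}$ is the twist field, i.e.\ $\iota_{t_i}\wwp=\dd l_i$. Granting this, the first two families are immediate: $\wwp(t_i,\partial/\partial l_j)=\dd l_i(\partial/\partial l_j)=\delta_{ij}$, and $\wwp(t_i,t_j)=\dd l_i(t_j)=\partial l_{\gamma_i}/\partial\theta_j=0$, since twisting along $\gamma_j$ leaves the length of the disjoint geodesic $\gamma_i$ unchanged, the case $i=j$ being trivial by antisymmetry.

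To prove the duality I would pass to the Teichmüller-theoretic picture and realise $\ts\mcT_{g,n}$ by harmonic Beltrami differentials. The twist field $t_i$ is represented by a distinguished deformation supported in a hyperbolic collar of $\gamma_i$, so in standard collar coordinates around $\gamma_i$ its dual quadratic differential is an explicit local object. On the other hand the first variation of geodesic length is given by a classical formula (Gardiner's formula) expressing $\dd l_{\gamma_i}(\mu)$ as an integral pairing of the Beltrami class $\mu$ against a second explicit quadratic differential, again concentrated near $\gamma_i$. Substituting both into the definition of the Weil--Petersson inner product from the preceding section and carrying out the collar integral identifies $\wwp(t_i,\cdot)$ with $\dd l_i$, the precise normalisation being the one built into the twist parameter $\theta_i$. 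This collar computation is the main obstacle: it is exactly the delicate hyperbolic-geometric analysis that occupies the core of Wolpert's original argument, and there is no shortcut around producing the explicit differentials and integrating them.

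It remains to treat $\wwp(\partial/\partial l_i,\partial/\partial l_j)=0$. The two families already in hand force $\wwp=\sum_i \dd\theta_i\wedge\dd l_i+\beta$ with $\beta=\tfrac12\sum_{i,j}A_{ij}\,\dd l_i\wedge\dd l_j$, where $A_{ij}=\wwp(\partial/\partial l_i,\partial/\partial l_j)$, and since $\wwp$ is closed --- being Kähler by Ahlfors' theorem above --- $\beta$ is a closed $2$-form pulled back along the length coordinates. Closedness alone does not force $\beta=0$, so one further input is needed: I would again use the explicit quadratic-differential representatives, this time of the two length fields $\partial/\partial l_i$ and $\partial/\partial l_j$, and evaluate their Weil--Petersson pairing directly, showing it vanishes when $\gamma_i$ and $\gamma_j$ are disjoint --- this is the content of the computations underlying Wolpert's cosine formula. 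With all three families verified in the chart, the identity $\wwp=\sum_i\dd\theta_i\wedge\dd l_i$ holds on all of $\mcT_{g,n}$.
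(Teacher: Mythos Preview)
The paper does not prove this theorem; it is stated with attribution to Wolpert's original paper \cite{wolpert_symplectic_1983} and used as a black box, alongside the neighbouring theorems of Ahlfors and Masur, which are likewise only cited. So there is no ``paper's own proof'' to compare against.

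Your outline is a faithful sketch of Wolpert's argument: twist--length duality $\iota_{t_i}\wwp=\dd l_i$ is indeed the core, and you correctly identify the collar computation with explicit quadratic differentials as the technical heart. Your handling of the three families of pairings is sound, and you are right that closedness of $\wwp$ alone does not kill the $\dd l_i\wedge\dd l_j$ terms --- one genuinely needs the vanishing of the length--length pairing for disjoint curves, which in Wolpert's work comes from the cosine formula (or more precisely its degenerate case for disjoint geodesics). What you have written is an honest high-level roadmap rather than a proof, since you defer the two substantive computations --- Gardiner's formula plus the collar integral for duality, and the explicit pairing for the length fields --- to Wolpert's paper, which is exactly what the thesis itself does by citing the result.
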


\begin{thm}[H. Masur, \cite{masur_extension_1976}]
  The Weil--Petersson Kähler structure extends to the Deligne--Mumford space.
\end{thm}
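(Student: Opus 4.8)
The plan is to reduce the statement to a local computation near a boundary point of Deligne--Mumford space, using the plumbing description of such a neighborhood together with Wolpert's formula $\wwp=\sum_i\dd\theta_i\wedge\dd l_i$ (the previous theorem) and the classical pinching estimates for the hyperbolic metric. First I would fix a point of $\ol{\mcM}_{g,n}$ represented by a stable nodal surface whose nodes lie along a multicurve $\gamma_1,\dots,\gamma_k$, and use plumbing to obtain local holomorphic coordinates $(t_1,\dots,t_k,s_1,\dots,s_N)$ on a polydisc $\DD^{k+N}$, with $N=3g-3+n-k$, in which the boundary divisor is $\bigcup_{i=1}^k\{t_i=0\}$ and the $s_j$ parametrize the moduli of the normalized pieces. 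Taking the first $k$ curves of a pants decomposition of the nearby smooth surfaces to be $\gamma_1,\dots,\gamma_k$, the corresponding Fenchel--Nielsen lengths $l_1,\dots,l_k$ tend to $0$ at the boundary while the remaining Fenchel--Nielsen data stays bounded, and away from the boundary all of these are smooth functions of $(t,s)$.

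Next I would invoke the pinching estimates of Masur, later refined by Wolpert: for the pinching curve $\gamma_i$ one has, uniformly in the other parameters and with controlled derivatives,
\begin{equation*}
  l_i=\frac{2\pi^2}{\log(1/|t_i|)}\bigl(1+o(1)\bigr),\qquad \theta_i=\frac{l_i}{2\pi}\arg t_i+o(l_i).
\end{equation*}
Differentiating and substituting into Wolpert's formula gives, for each degenerating factor,
\begin{equation*}
  \dd\theta_i\wedge\dd l_i=\frac{c_i}{|t_i|^{2}\bigl|\log|t_i|\bigr|^{3}}\,\frac{\ii}{2}\,\dd t_i\wedge\dd\ol{t_i}\bigl(1+o(1)\bigr)+\bigl(\text{a }2\text{-form extending smoothly across }t_i=0\bigr),
\end{equation*}
with a positive constant $c_i$, while the contribution of the non-degenerating curves is already a smooth closed $2$-form in the $s_j$. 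The coefficient $c_i|t_i|^{-2}\bigl|\log|t_i|\bigr|^{-3}$ is singular along $t_i=0$, but it is locally integrable there (substituting $u=\log(1/|t_i|)$ turns the relevant integral into $\int^{\infty}u^{-3}\,\dd u<\infty$); hence $\wwp$ extends across the polydisc as a positive $(1,1)$--current, and since each $t_i$ appears to first order in $\wwp^{3g-3+n}$, the top power is locally integrable near the boundary, so the Weil--Petersson volume $\int_{\ol{\mcM}_{g,n}}\wwp^{3g-3+n}$ is finite.

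Finally I would globalize: the local currents agree on overlaps because each is determined by its restriction to the dense open part $\mcM_{g,n}$, so they glue to a positive $(1,1)$--current on $\ol{\mcM}_{g,n}$ extending $\wwp$. Positivity is immediate from the previous step; for closedness across the boundary one uses that $\wwp$ is closed on the interior, lies in $L^1_{\mathrm{loc}}$, and that a local Kähler potential extends as an $L^1_{\mathrm{loc}}$ plurisubharmonic function by the same estimates, so that no mass is carried by the codimension--one boundary divisor (a standard extension result for positive $\del\db$--closed currents). Combined with the companion observation, also from the asymptotics above, that the boundary lies at finite Weil--Petersson distance and that the short lengths $l_i$ serve as proper boundary coordinates, this yields the desired extension of the Weil--Petersson Kähler structure — the Kähler form as a closed positive current, and the metric completion — to $\ol{\mcM}_{g,n}$.

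The main obstacle is the pinching analysis underlying the estimates quoted in the second step: one must control the hyperbolic metric of a nearly degenerate surface — the lengths and twists of its short geodesics together with their derivatives in the plumbing parameters — uniformly in the moduli of the normalized pieces. This is precisely the content of Masur's original argument, carried out via holomorphic families of quadratic differentials, and of Wolpert's subsequent sharpenings; once it is in place, the extension of $\wwp$ is a matter of substitution into Wolpert's formula and the integrability check above.
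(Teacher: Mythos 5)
The paper supplies no proof here; the statement is cited verbatim from Masur's 1976 paper, so there is no argument in the source to compare against. Your sketch is a viable modern route, but it is not Masur's route: Masur worked before Wolpert's 1983 Darboux formula existed and argued via holomorphic families of quadratic differentials and asymptotics of the Weil--Petersson cometric directly in plumbing coordinates. What your Fenchel--Nielsen approach buys is a very short path from the pinching asymptotics to the extension: once $l_i\sim 2\pi^2/|\log|t_i||$ and $\theta_i\sim\frac{l_i}{2\pi}\arg t_i$ are admitted, the singular part of $\dd\theta_i\wedge\dd l_i$ is the elementary $|t_i|^{-2}|\log|t_i||^{-3}\,\tfrac{\ii}{2}\dd t_i\wedge\dd\ol{t_i}$, and both the current extension and the finite-volume statement come down to the integrability of $r^{-1}|\log r|^{-3}$ near $r=0$. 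The cost is that the argument depends logically on Wolpert's theorem stated just above, which postdates Masur.

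Two load-bearing steps in your sketch are deferred rather than carried out. First, the pinching asymptotics must come with uniform derivative control in all the plumbing parameters before you are entitled to differentiate them and substitute into Wolpert's formula; you acknowledge this at the end, but in the body the estimates read as a passing quotation when in fact they are the entire analytic content of the theorem. Second, the globalization paragraph is too quick: to conclude that the trivial extension of $\wwp$ across the boundary divisor is a closed positive current one needs the locally-bounded-mass hypothesis of a Skoda--El Mir type extension theorem (equivalently, that the local Kähler potential extends as a plurisubharmonic function and that $\wwp$ has vanishing Lelong number along each component $\{t_i=0\}$). The singularity $|t_i|^{-2}|\log|t_i||^{-3}$ does satisfy this, being integrable against $r\,\dd r$ and strictly weaker than a pole, but the phrase ``no mass is carried by the codimension-one boundary divisor'' asserts the conclusion rather than verifies it. Neither point is a wrong turn --- the route works and is well documented --- but since you defer both to Masur and Wolpert, your sketch, like the paper's one-line citation, ultimately rests on them for the hard parts.
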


\begin{rmk}
  Note that we can put two complex structures on the Deligne--Mumford space, one using the complex structure on Teichmüller space coming from the complex structure on the underlying surface $\si$ acting on $Q^2(\si)$ and the other coming from the Weil--Petersson Kähler structure. It turns out that these structures are not compatible but isomorphic, even on the differentiable level. This issue is well explained in \cite{hubbard_analytic_2014}, \cite{wolf_real_1992-1} and \cite{wolpert_weil-petersson_1985}. In particular, the map
  \begin{equation*}
    \Phi:\DD\lra\RR_{\geq 0}\times [0,2\pi)
  \end{equation*}
  which takes a gluing parameter $z$ to the Fenchel--Nielsen coordinates corresponding to the corresponding glued-in cylinder is not differentiable at the origin. This is illustrated in \cref{fig:complex-gluing-hyperbolic-geodesic}.
  \label{rmk:different-differentiable-structures-deligne-mumford}
\end{rmk}

\begin{figure}[!ht]
  \centering
  \def\svgwidth{0.7\textwidth}
  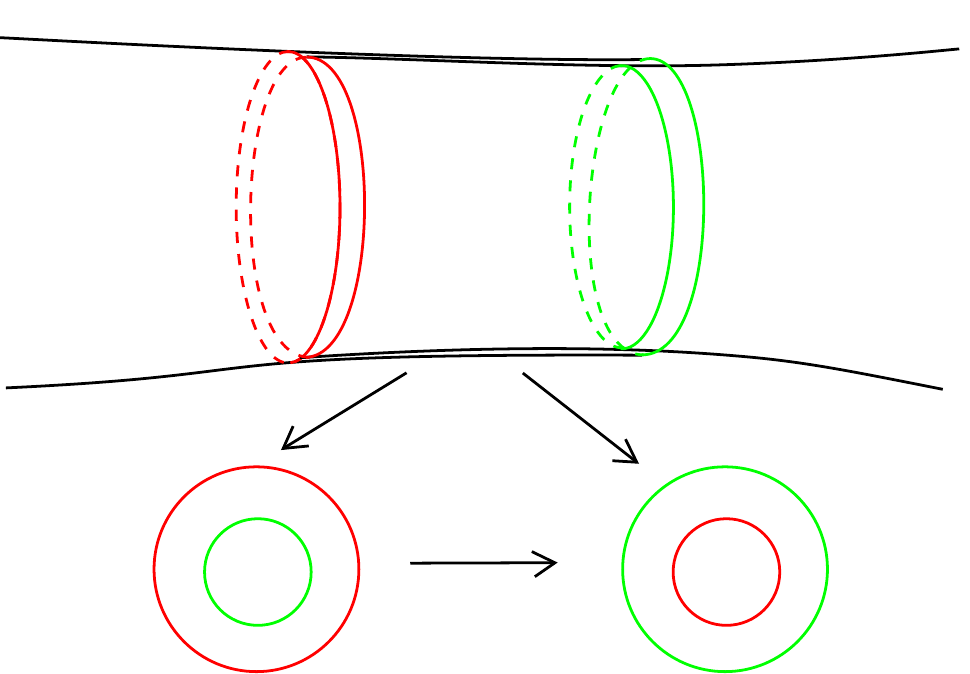
  \caption{This figure illustrates the complex gluing procedure where the colored boundaries correspond to each other. Notice that the hyperbolic structure is determined by a global PDE meaning that it is hard to determine where the corresponding hyperbolic geodesic (drawn in blue) of the new annulus is actually situated. In particular, it is not clear that the geodesic is contained inside of the glued-in annulus. Therefore its length and twist are not easily computable as a function of $a\in\CC$.}
  \label{fig:complex-gluing-hyperbolic-geodesic}
\end{figure}

\section{Collar Neighborhoods and Hyperbolic Gluing}

In this section we will discuss various suitable coordinate neighborhoods of hyperbolic geodesics and cusps. In particular, we will find a local picture of Hurwitz covers in neighborhoods of geodesics and define reference curves which we need later to define marked points close to cusps and boundaries.

\subsection{Collar and Cusp Neighborhoods}

\label{sec:collar-neighborhoods}

As we will do various constructions close to hyperbolic cusps and hyperbolic geodesic boundaries we need a few statements on collar and cusp neighborhoods. In the following, $C$ is a complete hyperbolic surface of finite area.

\index{Collar Neighborhood}

\begin{definition}
  Let $\gamma\subset C$ be a closed simple geodesic with length $l(\gamma)$. For $w\in\RR$ define $A_w(\gamma):=\{p\in C\mid \operatorname{dist}(p,\gamma)\leq w\}$. Then we call $A_w(\gamma)$ a \emph{standard collar neighborhood} of $\gamma$ if $\sinh(w)\sinh\left(\frac{l(\gamma)}{2}\right)= 1$.
\end{definition}

The following statements give various properties and descriptions of standard collar neighborhoods. Note that $A_{w'}(\gamma)$ is contained in $A_{w}(\gamma)$ for $w'<w$.

\begin{lem}
  Any standard collar neighborhood $A_w(\gamma)$ is isometric to
  \begin{enumerate}[label=(\roman*), ref=(\roman*)]
    \item the set $[-w,w]\times \,\faktor{\RR}{\ZZ}$ with the metric $\dd \rho^2+l^2(\gamma)\cosh^2\rho\;\dd t^2$ with $(\rho,t)\in[0,w]\times\, \faktor{\RR}{\ZZ}$ and
    \item the subset of the upper half plane defined as the interior of the following four curves: the circle of radius $1$, the circle of radius $e^{l(\gamma)}$ and the straight lines through the origin with angle $\phi$ to the $y$-axis where $\phi$ satisfies $\tan\phi\sinh\left(\frac{l(\gamma)}{2}\right)=1$.
  \end{enumerate}
  The latter model shows that the boundaries of the collar neighborhoods are curves of constant geodesic curvature\footnote{However, these curves are \emph{not} horocycles which have a geodesic curvature of $1$.} of length $\frac{l(\gamma)}{\tanh\left(\frac{l(\gamma)}{2}\right)}$. Also, the geodesic is the vertical line from $(0,1)$ to $(0,e^{l(\gamma)})$. See \cref{fig:collar-neighborhoods} for an illustration.
  \label{lem:collar-neighborhood}
\end{lem}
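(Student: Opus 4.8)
The plan is to work in the hyperbolic plane $\HH$ with metric $\frac{\dd x^{2}+\dd y^{2}}{y^{2}}$ and to exploit the fact that a closed simple geodesic is covered by a single geodesic line together with one hyperbolic deck transformation. Concretely, I would first lift $\gamma$ to $\HH$ and conjugate the Fuchsian group by a Möbius transformation so that the lift $\wt\gamma$ is the positive imaginary axis and the generator of the cyclic stabilizer of $\wt\gamma$ is $z\mapsto e^{l(\gamma)}z$; this is possible because a hyperbolic isometry whose axis is the imaginary axis and whose translation length is $l(\gamma)$ has exactly this form. Then $\gamma$ itself is the image of the segment of $\wt\gamma$ from $(0,1)$ to $(0,e^{l(\gamma)})$, which already records that $\gamma$ has length $\log e^{l(\gamma)}=l(\gamma)$ and that the geodesic sits as claimed in model (ii).

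For statement (i) I would introduce Fermi (geodesic normal) coordinates around $\wt\gamma$. The geodesics of $\HH$ meeting the imaginary axis orthogonally are the Euclidean semicircles $|z|=e^{s}$ with foot point $\ii e^{s}$; parametrizing such a semicircle by signed hyperbolic arc length $\rho$ measured from its foot point one computes $z(\rho,s)=e^{s}\,\bigl(-\sinh\rho+\ii\bigr)/\cosh\rho$, and a direct computation of the pullback of $\frac{\dd x^{2}+\dd y^{2}}{y^{2}}$ gives the warped–product metric $\dd\rho^{2}+\cosh^{2}\rho\,\dd s^{2}$ with vanishing cross term. The deck transformation $z\mapsto e^{l(\gamma)}z$ is $s\mapsto s+l(\gamma)$, so passing to the quotient and substituting $s=l(\gamma)t$ turns the distance-$\le w$ neighbourhood of $\gamma$ into $\{|\rho|\le w\}\times(\RR/\ZZ)$ with metric $\dd\rho^{2}+l^{2}(\gamma)\cosh^{2}\rho\,\dd t^{2}$, which is (i). From this model I can immediately extract the quantitative assertions: the boundary circle $\rho=w$ has length $\int_{0}^{1}l(\gamma)\cosh w\,\dd t=l(\gamma)\cosh w$, and since the defining relation $\sinh w\sinh\bigl(\tfrac{l(\gamma)}{2}\bigr)=1$ gives $\cosh w=\cosh\bigl(\tfrac{l(\gamma)}{2}\bigr)/\sinh\bigl(\tfrac{l(\gamma)}{2}\bigr)$, this length equals $l(\gamma)/\tanh\bigl(\tfrac{l(\gamma)}{2}\bigr)$; moreover for any warped product $\dd\rho^{2}+f(\rho)^{2}\dd t^{2}$ the curve $\rho=\mathrm{const}$ has geodesic curvature $f'/f$, here $\tanh w$, which is constant and $<1$, so in particular these boundary curves are not horocycles.

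For statement (ii) I would read off directly which region of $\HH$ is the lift of the collar. The rays from the origin making a fixed angle $\phi$ with the imaginary axis are precisely the loci of points at fixed distance from $\wt\gamma$: for $z$ with $\arg z=\tfrac{\pi}{2}-\phi$ one has $\sinh\bigl(\dist(z,\wt\gamma)\bigr)=\tan\phi$, which follows from $\cosh\bigl(\dist(z,\wt\gamma)\bigr)=1/\sin(\arg z)$ (a one–line computation along the perpendicular semicircle $|w|=|z|$). Hence the two rays bounding the distance-$w$ collar are those with $\sinh w=\tan\phi$, i.e.\ by the defining relation $\tan\phi\,\sinh\bigl(\tfrac{l(\gamma)}{2}\bigr)=1$, exactly the angle in the statement; and the two orthogonal semicircles $|z|=1$ and $|z|=e^{l(\gamma)}$ cut out one fundamental domain for $z\mapsto e^{l(\gamma)}z$ along the axis. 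Together these four curves bound precisely the region described, and under the quotient it maps isometrically onto $A_{w}(\gamma)$.

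The only point that is not pure bookkeeping — and the main obstacle — is the hidden claim that the distance-$w$ neighbourhood $A_{w}(\gamma)\subset C$ is genuinely \emph{embedded}, so that the local pictures above assemble into a global isometry rather than just an immersion; this is the classical Collar Lemma, and it is exactly where the specific width $\sinh w\sinh\bigl(\tfrac{l(\gamma)}{2}\bigr)=1$ is used. I would prove it by the standard argument: if the normal exponential map of $\gamma$ failed to be injective on normal vectors of length $\le w$, lifting to $\HH$ would produce either a geodesic bigon based on $\wt\gamma$ with two right angles, or a right-angled geodesic quadrilateral with two of its sides on $\wt\gamma$ and opposite sides of length $\le w$, whose hyperbolic trigonometry (the standard identities for Lambert quadrilaterals / right-angled hexagons, relating the orthogonal sides to $l(\gamma)/2$) is incompatible with $\sinh w\sinh\bigl(\tfrac{l(\gamma)}{2}\bigr)=1$; alternatively one may simply invoke \cite{buser_geometry_2010}. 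Everything else reduces to the explicit coordinate formulas above.
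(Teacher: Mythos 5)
Your proof is correct and uses the same core picture as the paper: lift $\gamma$ to the imaginary axis in $\HH$ with deck transformation $z\mapsto e^{l(\gamma)}z$, identify the equidistant loci with rays through the origin, and derive the angle $\phi$ from the relation $\sinh w\sinh(l(\gamma)/2)=1$. The only differences are that where the paper simply cites \cite{buser_geometry_2010} for model (i) and \cite{hubbard_teichmuller_2006} for constancy of the geodesic curvature, you carry out the Fermi-coordinate and warped-product computations explicitly (and derive the boundary length from model (i) rather than by direct integration in model (ii)), and you additionally flag the embeddedness point --- the Collar Lemma --- which the paper leaves implicit in its definition of a standard collar neighborhood and its reference to Buser.
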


\begin{proof}
  See \cite{buser_geometry_2010} for the well-known first part. For the second statement we can choose a universal covering of the hyperbolic surface such that $\gamma$ lifts to the vertical line through the origin. This geodesic now corresponds to the element $z\mapsto e^{l(\gamma)}\cdot z$ in $\PSL(2,\ZZ)$ so we can pick one representative for the closed geodesic from $1$ to $e^{l(\gamma)}$. Now the points of constant distance from this line are given by hypercircles which are defined as these curves. It is well-known, see e.g.\ \cite{hubbard_teichmuller_2006} that they are straight lines through the same ideal point, i.e.\ the origin in our case. It remains to calculate the angle $\phi$ such that the distance $w$ satisfies $\sinh(w)\sinh\left(\frac{l(\gamma)}{2}\right)=1$. This distance $w$ is the hyperbolic length of any circular arc with the origin as the center between the imaginary axis and the line through the origin with angle $\phi$. Recall the following formula for the distance in the Poincaré upper half plane between two points on a circle of radius $r$ with angle $\phi$ about the origin
  \begin{equation*}
    \dist\left((0,r),(r\sin\phi,r\cos\phi)\right)=\arsinh(\tan\phi).
  \end{equation*}
  We thus have for $\phi$
  \begin{equation}
    \tan\phi=\sinh(w)=\frac{1}{\sinh\left(\frac{l(\gamma)}{2}\right)}.
    \label{eq:phi-l-gamma}
  \end{equation}
  Now we can calculate the length of the boundary hypercircle, i.e.\ the straight line segment from $(\sin\phi,\cos\phi)$ to $e^{l(\gamma)}(\sin\phi,\cos\phi)$ which is parametrized by
  \begin{equation*}
    \eta(t)\coloneqq (te^{l(\gamma)}+(1-t))(\sin\phi,\cos\phi)
  \end{equation*}
  for $t\in[0,1]$. We obtain
  \begin{align*}
    L(\eta) & = \int_{\eta}|\eta'(t)|_{\text{hyp}}\,\dd t =\int_0^1\frac{e^{l(\gamma)}-1}{(te^{l(\gamma)}+1-t)\cos\phi} \\
    & = \frac{e^{l(\gamma)}-1}{\cos\phi}\int_0^1\frac{1}{1+(e^{l(\gamma)}-1)t}\dd t =\frac{l(\gamma)}{\cos\phi} \\
    & = l(\gamma)\sqrt{1+\tan^2(\phi)} = \frac{l(\gamma)}{\tanh\left(\frac{l(\gamma)}{2}\right)},
  \end{align*}
  where we have used \cref{eq:phi-l-gamma} in the last step. Notice that hypercircles are automatically curves of constant geodesic curvature as can be seen in e.g.\ \cite{hubbard_teichmuller_2006}.
\end{proof}

\begin{figure}[!ht]
  \centering
  \def\svgwidth{\textwidth}
  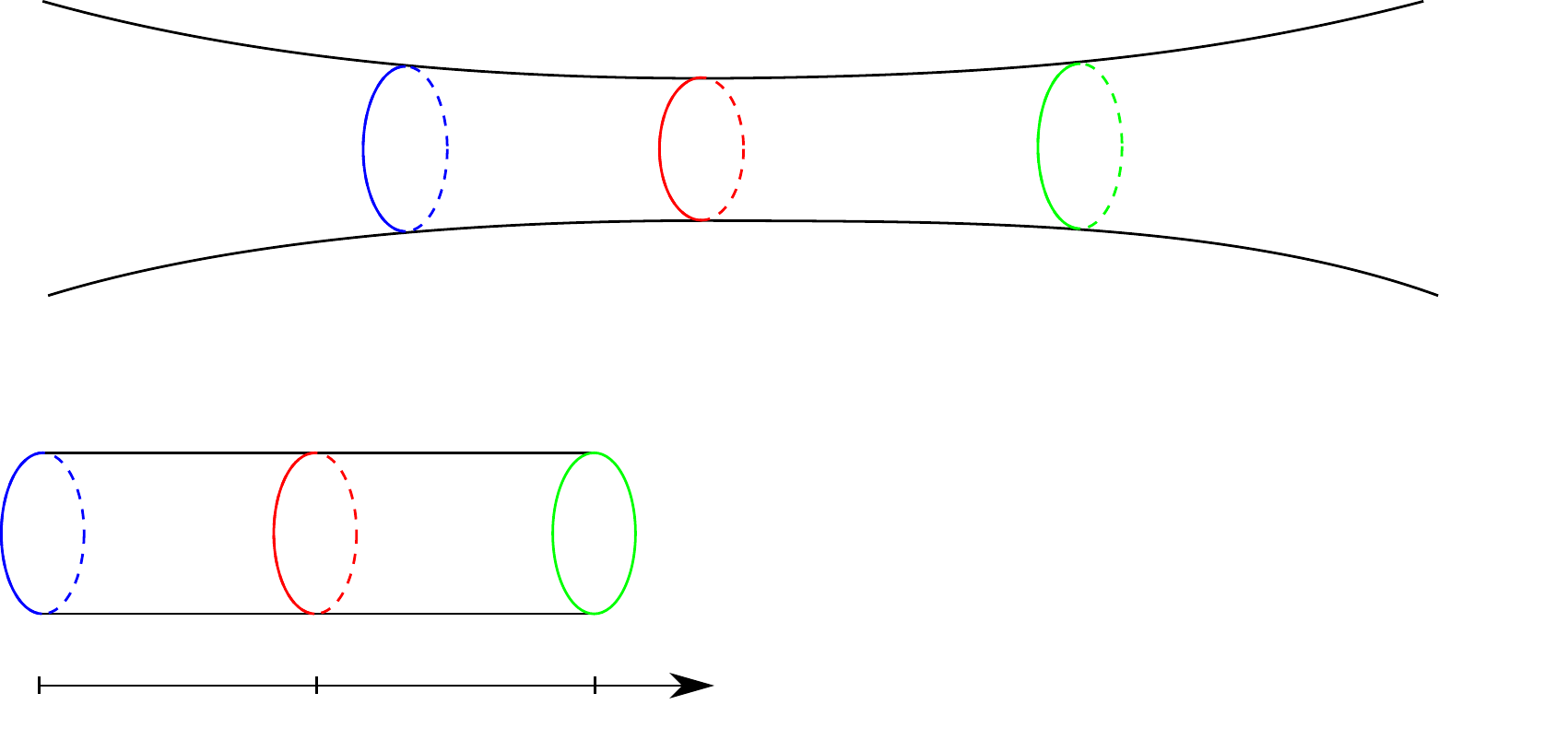
  \caption{Two possible parametrizations of a collar neighborhood. On the left hand side the metric is given by $\dd\rho^2+l^2(\gamma)\cosh^2\rho\;\dd t^2$ where $t\in[0,1]$ and on the right hand side the metric is the one from the upper half plane $\frac{\dd x^2+\dd y^2}{y^2}$. The black line perpendicular to all the colored curves is a geodesic perpendicular to the central geodesic $\gamma$ and the boundary hypercircles of constant geodesic curvature.}
  \label{fig:collar-neighborhoods}
\end{figure}

Recall that a \emph{horocycle} on a hyperbolic surface is a curve with constant geodesic curvature $1$. In the Poincar\'e half-plane model those are the circles tangent to the real axis and the lines parallel to the real axis. On the contrary the curves of arbitrary constant geodesic curvature are the straight lines and the circles intersecting the real axis. Also notice that horocycles have \emph{centers}, i.e.\ all perpendicular geodesics through a horocycle meet in one point called the center of the horocycle. In the circle case this is the ideal (tangent) point on the real axis and for the horizontal lines this is the point at infinity.

\index{Horocycle}
\index{Cusp Neighborhood}

\begin{definition}
  We call a neighborhood of a point $z\in C$ isometric to
  \begin{equation*}
    (-\infty,\log 2)\times\, \faktor{\RR}{\ZZ}
  \end{equation*}
  with the metric $\dd\rho^2+e^{2\rho}\dd t^2$ for $(\rho,t)\in(-\infty,\log 2)\times\, \faktor{\RR}{\ZZ}$ a \emph{cusp neighborhood} of $z$.
\end{definition}

\begin{prop}[See e.g.\ \cite{buser_geometry_2010}]
  A cusp neighborhood is also isometric to
  \begin{equation*}
    \{z\in\HH\mid \Im(z)\geq 1\}/(z \sim z+2)
  \end{equation*}
  via the map $(r,t)\longmapsto 2(t+\ii e^{-r})$.
\end{prop}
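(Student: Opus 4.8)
The plan is to prove the isometry by direct change of variables: write down the candidate map explicitly, check that it descends to the stated quotient and is a diffeomorphism onto the target, and then pull back the hyperbolic metric of $\HH$ and observe that it equals the given cusp metric.

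First I would set $\Psi(r,t)\coloneqq 2(t+\ii e^{-r})$, so that in the standard coordinates on $\HH$ one has $x=\Re\Psi = 2t$ and $y=\Im\Psi = 2e^{-r}$. Since $t$ ranges over $\faktor{\RR}{\ZZ}$, the coordinate $x=2t$ ranges over $\faktor{\RR}{2\ZZ}$, which is exactly the identification $z\sim z+2$ on the target; hence $\Psi$ is well-defined on the quotient $(-\infty,\log 2)\times\faktor{\RR}{\ZZ}$. For the image: $y=2e^{-r}>1$ precisely when $r<\log 2$, and $r\mapsto 2e^{-r}$ is a strictly decreasing bijection onto $(1,\infty)$, so $\Psi$ is a bijection onto $\{z\in\HH\mid \Im z>1\}/(z\sim z+2)$ (the bounding horocycle $\Im z=1$ corresponds to $r=\log 2$, which explains the harmless open/closed mismatch between the interval in the definition of a cusp neighborhood and the condition $\Im z\geq 1$ in the statement). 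Smoothness of $\Psi$ and of its inverse is immediate from the explicit formulas.

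Next I would compute $\Psi^{*}g_{\HH}$, where $g_{\HH}=y^{-2}(\dd x^2+\dd y^2)$. From $x=2t$ and $y=2e^{-r}$ we get $\dd x=2\,\dd t$ and $\dd y=-2e^{-r}\,\dd r$, hence $\dd x^2+\dd y^2=4\,\dd t^2+4e^{-2r}\,\dd r^2$ and $y^2=4e^{-2r}$, so that
\[
\Psi^{*}g_{\HH}=\frac{4\,\dd t^2+4e^{-2r}\,\dd r^2}{4e^{-2r}}=\dd r^2+e^{2r}\,\dd t^2,
\]
which is exactly the cusp metric $\dd\rho^2+e^{2\rho}\,\dd t^2$ under the identification $\rho=r$. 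This finishes the proof.

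There is essentially no obstacle here: the whole argument is a one-line substitution. The only points that deserve a remark are (a) matching the lattice $t\sim t+1$ on the source with the identification $z\sim z+2$ on the target, which is precisely what forces the prefactor $2$ in the formula for $\Psi$, and (b) the mild open-versus-closed discrepancy noted above, which merely records whether the bounding horocycle is included and is irrelevant for the isometry statement.
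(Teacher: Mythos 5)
Your computation is correct and complete. The paper itself gives no proof for this proposition, simply citing \cite{buser_geometry_2010}, so there is no argument in the text to compare against; your direct change-of-variables verification is the natural and standard way to establish the claim. Your observation that the prefactor $2$ is forced by matching $t\sim t+1$ with $z\sim z+2$, and your remark that the open/closed boundary discrepancy (the definition uses $\rho\in(-\infty,\log 2)$ while the target is written with $\Im z\geq 1$) is a harmless imprecision in the statement, are both accurate.
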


\begin{lem}[See e.g.\ \cite{buser_geometry_2010}]
  Any pair of standard collar neighborhoods of simple closed geodesics or cusp neighborhoods are disjoint in the interior.
  \label{lem:collar-and-cusp-neighborhood-disjoint}
\end{lem}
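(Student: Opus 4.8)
The plan is to reduce the statement to an elementary disjointness statement in the hyperbolic plane and then invoke the classical collar lemma together with Shimizu's lemma. Fix a uniformizing covering $\HH\to C$ with Fuchsian deck group $\Gamma<\PSL(2,\RR)$, which exists by \cref{prop:unique-hyperbolic-metric}. A standard collar $A_w(\gamma)$ of a closed simple geodesic $\gamma$ of length $\ell(\gamma)$ lifts to the union $\bigcup N_w(\wt{\gamma})$ of the metric $w$-neighbourhoods of all lifts $\wt{\gamma}$ of $\gamma$, where each $\wt{\gamma}$ is a complete geodesic line and $\sinh w\,\sinh(\ell(\gamma)/2)=1$ by \cref{lem:collar-neighborhood}. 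A cusp neighbourhood lifts, after conjugating the corresponding parabolic to $z\mapsto z+2$, to the $\Gamma$-orbit of the horoball $\{\Im z>1\}$, using the normal form in \cref{prop:unique-hyperbolic-metric} and the description of cusp neighbourhoods recalled just before this lemma. Since the relevant distance functions on $C$ are exactly those read off from these lifted sets, ``disjoint interiors'' is equivalent to the assertion that for any two \emph{distinct}, \emph{disjoint} lifts among the geodesic lines and horoballs above, the interiors of the corresponding neighbourhoods are disjoint in $\HH$.

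\emph{Geodesic--geodesic case.} Let $\gamma_1\neq\gamma_2$ be disjoint closed simple geodesics. Any lift $\wt{\gamma_1}$ and any lift $\wt{\gamma_2}$ are disjoint complete geodesics in $\HH$: they cannot cross, since their images would then cross on $C$, and they cannot be asymptotic, since two hyperbolic elements of a discrete group sharing a single fixed point generate a non-discrete subgroup (consider $g^n p g^{-n}$ for $g,p$ the two elements and $n\to\infty$). Hence $\wt{\gamma_1}$ and $\wt{\gamma_2}$ have a unique common perpendicular of length $d=\operatorname{dist}(\wt{\gamma_1},\wt{\gamma_2})>0$, and it suffices to show $d\geq w_1+w_2$. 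This is the collar lemma in sharp form: writing $h_i\in\Gamma$ for the hyperbolic element with axis $\wt{\gamma_i}$ and translation length $\ell(\gamma_i)$, the pairwise disjoint geodesics $\wt{\gamma_1},\wt{\gamma_2},h_1(\wt{\gamma_2})$ (pairwise disjoint again because $\gamma_2$ is simple and $\gamma_1\cap\gamma_2=\varnothing$) together with their common perpendiculars bound a right-angled hexagon, degenerating to a pentagon if two of the lines share an endpoint at infinity; the right-angled hexagon/pentagon relations express $d$ through $\ell(\gamma_1),\ell(\gamma_2)$, and substituting $\sinh w_i\,\sinh(\ell(\gamma_i)/2)=1$ gives $d\geq w_1+w_2$, as in \cite[Ch.~4]{buser_geometry_2010}. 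The embeddedness of a single standard collar is the special case $\wt{\gamma_2}=g\,\wt{\gamma_1}$ with $g\notin\operatorname{Stab}(\wt{\gamma_1})$, which yields $\operatorname{dist}(\wt{\gamma_1},g\,\wt{\gamma_1})\geq 2w$.

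\emph{Cusp cases.} The same reduction applies with the horoball $H=\{\Im z>1\}$ in place of a neighbourhood of a geodesic line. If $g\in\Gamma$ does not fix $\infty$, then Shimizu's lemma (the parabolic case of J\o rgensen's inequality), applied to the parabolic $z\mapsto z+2\in\Gamma$, shows that $H$ is precisely invariant, hence disjoint from all its $\Gamma$-translates; this settles the cusp--cusp pairings, including embeddedness of a single cusp neighbourhood. For a cusp versus a standard collar, note first that a hyperbolic and a parabolic element of $\Gamma$ cannot share a fixed point (again $g^n p g^{-n}\to\id$), so the axis of the relevant hyperbolic element avoids $\infty$; a short computation bounding the Euclidean height of $N_w(\wt{\gamma})$, using discreteness of $\Gamma$, that the parabolic has translation $2$, and $\sinh w\,\sinh(\ell(\gamma)/2)=1$, then shows $N_w(\wt{\gamma})$ disjoint from $H$. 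The constants $\log 2$ and $\arsinh(1/\sinh(\ell(\gamma)/2))$ are precisely the largest for which these two estimates survive. Assembling the three cases proves the lemma.

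I expect the main obstacle to be purely expository rather than conceptual: the two genuinely analytic inputs — the sharp right-angled hexagon distance estimate and Shimizu's lemma — are the technical core of \cite[Ch.~4]{buser_geometry_2010} and are simply quoted, so the actual work is the bookkeeping that checks that every pair of neighbourhoods occurring in the statement falls under one of the three cases above with the distances matching the half-widths. Since the lemma is stated here only with ``see e.g.\ \cite{buser_geometry_2010}'', in practice I would present just this reduction and cite Buser for the two estimates.
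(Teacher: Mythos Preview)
Your proposal is correct and is exactly the standard argument from \cite{buser_geometry_2010}; the paper itself gives no proof and simply cites Buser, so your sketch is strictly more detailed than what appears there. Your final remark is on point: for this document the appropriate ``proof'' is the citation, and your reduction plus the two cited estimates is precisely what Buser does.
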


\begin{lem}[See \cref{lem:collar-neighborhood} and \cite{buser_geometry_2010}]
  The boundaries of cusp neighborhoods are horocycles of length $2$. The boundaries of collar neighborhoods are curves of constant geodesic curvature.
  \label{lem:unique-cusp-neighborhood}
\end{lem}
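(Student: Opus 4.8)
The plan is to read off both statements from the explicit models that have already been established. For the cusp part I would start from the definition of a cusp neighborhood as $(-\infty,\log 2)\times\faktor{\RR}{\ZZ}$ with metric $\dd\rho^2+e^{2\rho}\dd t^2$. Its boundary is the circle $\rho=\log 2$, and the induced length is $\int_0^1 e^{\log 2}\,\dd t = 2$, which already gives the length claim. To identify this curve as a horocycle I would invoke the isometry $(r,t)\longmapsto 2(t+\ii e^{-r})$ onto $\{z\in\HH\mid\Im(z)\geq 1\}/(z\sim z+2)$ from the preceding proposition: under it the boundary $\rho=\log 2$, where $e^{-r}=\tfrac12$, maps to $\{\,\Im(z)=1\,\}/(z\sim z+2)$, a horizontal line in the upper half-plane, which is one of the standard examples of a horocycle (geodesic curvature $1$). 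One can cross-check the length in this model too: on $\Im z = 1$ the hyperbolic line element is $|\dd z|/\Im z = \dd x$, and $x$ runs over an interval of length $2$ because of the identification $z\sim z+2$.

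For the collar part there is essentially nothing new to do: \cref{lem:collar-neighborhood} already shows that a standard collar neighborhood $A_w(\gamma)$ is isometric to the region in $\HH$ bounded by two concentric circular arcs about the origin and two straight lines through the origin making angle $\phi$ with the imaginary axis, and that these straight-line boundary components are hypercircles, i.e.\ curves of constant geodesic curvature (with the explicit length $\tfrac{l(\gamma)}{\tanh(l(\gamma)/2)}$ computed there). So I would simply quote that lemma, while reminding the reader that hypercircles — unlike horocycles — have geodesic curvature strictly between $0$ and $1$, which is exactly why the two halves of the present lemma are genuinely different statements.

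There is no real obstacle here; the only thing to watch is the bookkeeping with the normalization in the cusp model — the factor $2$ in the map $(r,t)\longmapsto 2(t+\ii e^{-r})$ together with the period $z\sim z+2$ — which is precisely what makes the horocycle length come out to $2$ rather than some other constant. If one wanted a fully self-contained argument one could also verify by a one-line connection computation that a horizontal line $\Im z = c$ in $\HH$ has geodesic curvature $1$, but since the excerpt already takes the classification of constant-geodesic-curvature curves in $\HH$ as known (citing \cite{hubbard_teichmuller_2006}), citing it is enough.
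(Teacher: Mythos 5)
Your proposal is correct, and it fills in exactly what the paper delegates to \cref{lem:collar-neighborhood} and \cite{buser_geometry_2010}: reading the boundary length and horocycle identification off the explicit cusp model $(-\infty,\log 2)\times\RR/\ZZ$ (respectively the upper half-plane model $\{\Im z\geq 1\}/(z\sim z+2)$), and reading the constant-geodesic-curvature claim off the hypercircle description of collar boundaries already established in the collar lemma. The length computation $\int_0^1 e^{\log 2}\,\dd t=2$ and the identification of $\{\Im z=1\}$ as a horocycle (curvature $1$, as opposed to the hypercircle curvature in $(0,1)$) are both right, and nothing further is needed.
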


\subsection{Hyperbolic Description of Hurwitz Covers Close to the Boundary}

Consider a Hurwitz cover on surfaces with boundary, i.e.\ $u:C\lra X$ is holomorphic and $u$ maps the boundaries $\del_jC$ to boundaries $\del_iX$ for $\nu(j)=i$. The map
\begin{equation*}
  u|_{\del_jC}:\del_jC:\del_iX
\end{equation*}
has degree $l_j$ and we have $\sum_{\substack{j=1,\ldots,k \\ \nu(j)=i}}l_j=d$. Now equip both surfaces $C$ and $X$ with their uniformized hyperbolic metric as in \cref{sec:uniformization}. Thus the boundaries $\del_jC$ and $\del_iX$ are geodesics and $u$ is a local isometry. Furthermore, assume that we are given marked points on the boundaries $z_j\in\del_jC$ for all $j=1,\ldots,k$ such that $u(z_j)=u(z_{j'})$ if $\nu(z_j)=\nu(z_{j'})$.

\begin{lem}
  If $\gamma$ is a simple closed geodesic on $X$ of length $l$ with respect to the hyperbolic metric then its preimages under $u$ are again simple closed geodesics with respect to the corresponding hyperbolic metric. They have lengths $m_jl$ where $m_j$ is the degree of $u$ on the $j$-th preimage of $\gamma$. If we parametrize one such preimage $\eta$ by arc length $t\mapsto\eta(t)$ then we have $u(\eta(t))=\gamma(t)$ for all $t\in[0,\ldots,m_jl]$ if we pick corresponding starting points such that $u(\eta(0))=\gamma(0)$. The last statement also holds for geodesic arcs.
\label{lem:hyperbolic-lift-geodesic}
\end{lem}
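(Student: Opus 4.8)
\emph{Proof strategy.} The plan is to reduce everything to the single fact that $u$ is a local isometry together with elementary covering theory. First equip $C$ and $X$ with their uniformized hyperbolic metrics from \cref{prop:unique-hyperbolic-metric}; then $u$ is a local isometry by \cref{lem:hol-map-loc-isom}. Since a closed geodesic can never enter a cusp, the simple closed geodesic $\gamma$ is disjoint from the branch points of $u$ (which are cusps on $X$), and likewise $u^{-1}(\gamma)$ is disjoint from the critical points of $u$ (which are cusps on $C$); on these complements $u$ restricts to an honest covering map of degree $d$, so $u^{-1}(\gamma)$ is a covering space of $\gamma\cong S^1$. Hence $u^{-1}(\gamma)$ is a finite disjoint union of embedded circles $\eta$, each mapping onto $\gamma$ by an $m$-fold cover for some integer $m\geq 1$. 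In particular every component is automatically a \emph{simple} closed curve, since it is an embedded submanifold.

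Next I would verify that each component $\eta$ is a geodesic and compute its length. Geodesity is a local condition: around any point of $\eta$ the map $u$ is an isometry onto an open subset of $X$ that contains a subarc of $\gamma$, and an isometric preimage of a geodesic segment is a geodesic segment, so $\eta$ is a closed geodesic. Its length is $m\cdot l(\gamma)=m\,l$ because $u|_\eta\colon\eta\to\gamma$ is a locally isometric $m$-fold covering; writing $m=m_j$ for the $j$-th preimage gives the length claim.

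Finally, for the parametrization statement, parametrize $\gamma$ by arc length and extend it periodically with period $l$. Fix a point $\eta(0)\in\eta\cap u^{-1}(\gamma(0))$ and orient $\eta$ so that $du_{\eta(0)}\dot\eta(0)=\dot\gamma(0)$; this is possible because $du_{\eta(0)}$ is a linear isometry and, since $\eta$ maps into $\gamma$, it sends the unit vector $\dot\eta(0)$ to $\pm\dot\gamma(0)$. Then $t\mapsto u(\eta(t))$ and $t\mapsto\gamma(t)$ are both unit-speed geodesics in $X$ — the first because $u$ is a local isometry and $\eta$ is a unit-speed geodesic — with the same value and velocity at $t=0$, hence they agree for all $t$ by uniqueness of geodesics; on $[0,m_jl]$ the right-hand side wraps $m_j$ times around $\gamma$. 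The same uniqueness argument, now applied to a geodesic arc $\gamma$ and the geodesic in $C$ issuing from the prescribed initial point with the initial velocity determined by the (local) inverse of $du$, proves the last sentence for geodesic arcs (here $C$ being complete along interior geodesics, or $\eta$ lying on a boundary component, guarantees the lift is defined on all of $[0,m_jl]$).

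The argument is essentially routine; the only point requiring a little care is the bookkeeping at cusps which ensures that $u^{-1}(\gamma)$ never meets a critical point, so that $u$ really is a covering there — after that, the standard "local isometries lift geodesics uniquely" mechanism does all the work.
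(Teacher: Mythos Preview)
Your proof is correct and follows essentially the same approach as the paper: both rely on $u$ being a local isometry (\cref{lem:hol-map-loc-isom}) to deduce that preimages of geodesics are geodesics, use the covering structure over $\gamma$ to get simple closed curves of length $m_j l$, and then argue that arc-length parametrizations correspond. Your version is slightly more careful in explicitly noting that $\gamma$ avoids the cusps so that $u$ is an honest covering there, and you phrase the parametrization step via uniqueness of geodesics rather than the paper's equivalent observation that a local isometry sends unit-speed curves to unit-speed curves; these are cosmetic differences only.
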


\begin{proof}
  Since $u$ is a local isometry with respect to the uniformized hyperbolic metrics, see \cref{lem:hol-map-loc-isom}, the preimage of a geodesic is again a geodesic. As $u$ has finite degree the preimage of $\gamma$ is compact and thus consists of closed geodesics. If the preimage was not simple then the image $u(x)$ of an intersection point $x$ of the preimage would again be an intersection point of $\gamma$ which does not exist by assumption.

  Now pick one connected component of a preimage of a curve $\gamma$. The map $u$ restricted to this curve is an isometric cover of degree $m$ implying that its length is given by $m\cdot l(\gamma)$. Furthermore we can pick two points $x\in C$ and $y\in X$ such that $u(x)=y$ and $y\in \gamma$. Then the parametrization by arc-length $t\mapsto c(t)$ of the preimage has $|c'(t)|=1$ for all $t\in[0,1]$ and therefore $|(u\circ c)'(t)|=1$ as $u$ is a local isometry. Thus the parametrization by arc-length is mapped under $u$ to the parametrization by arc-length. The statement from the lemma follows since we have chosen corresponding starting points.
\end{proof}

The following gives some kind of hyperbolic normal form for Hurwitz covers close to boundaries. However, we postpone the precise definition of bordered Hurwitz covers until \cref{sec:orbifold-structure-moduli-space-borderd-huwritz-covers}.
\index{Hurwitz Cover!Bordered}

\begin{lem}
  Let a bordered Hurwitz cover $u:C\lra X$ and corresponding boundaries $\bigsqcup_{\nu(j)=i}\del_jC\lra\del_iX$ with degrees $l_j\geq 1$ for $j\in\{1,\ldots,k\}$ and $\nu(j)=i$ together with marked points on the boundary as above be given. Then there exist hyperbolic collar neighborhoods $\mcU_j$ and $\mcV_i$ close to $\del_jC$ and $\del_iX$ with charts $\phi_j:\mcU_j\lra(-\epsilon,0]\times S^1$ and $\psi_i:\mcV_i\lra(-\epsilon,0]\times S^1$, respectively, such that $u$ satisfies
  \begin{align*}
    \psi_i\circ u \circ \sqcup_j\phi_{j}^{-1}:  \bigsqcup_{\nu(j)=i}(-\epsilon,0]\times S^1 & \lra (-\epsilon,0]\times S^1 \\
     (r,\theta) & \longmapsto \psi_i(u(\phi_j^{-1}(r,\theta)))=(r,l_j\theta)\qquad (r,\theta)\in\mcU_j
  \end{align*}
  and $\phi_j^{-1}\left(\{0\}\times S^1\right)=\del_jC$ as well as $\phi_j(z_j)=0\in \faktor{\RR}{\ZZ}$ for all $j=1,\ldots,k$. This is illustrated in \cref{fig:u-in-collar-neighborhood}.
  \label{lem:collar-neighborhoods-boundary-hurwitz-cover}
\end{lem}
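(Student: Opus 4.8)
The plan is to build the collar charts on the target first and then lift them along $u$, exploiting that $u$ is a local isometry for the uniformized hyperbolic metrics (\cref{lem:hol-map-loc-isom}) together with the explicit model of collar neighborhoods from \cref{lem:collar-neighborhood}. First I would fix, for each target boundary geodesic $\del_iX$ of length $L_i$, a standard half-collar neighborhood $\mcV_i$ and a diffeomorphism $\psi_i:\mcV_i\lra(-\epsilon,0]\times S^1$ in which the metric takes the product form $\dd\rho^2+L_i^2\cosh^2\rho\,\dd t^2$ restricted to $\rho\in(-\epsilon,0]$, using the parametrization in part (i) of \cref{lem:collar-neighborhood}; here $\{0\}\times S^1$ corresponds to $\del_iX$ and $t\in\RR/\ZZ$ is the arc-length-proportional angular coordinate, normalized so that $\psi_i(u(z_j))=(0,0)$ for the common image of the boundary marked points. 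By shrinking $\epsilon$ I can assume $\mcV_i$ lies inside the standard collar, so that \cref{lem:collar-and-cusp-neighborhood-disjoint} guarantees the various $\mcV_i$ are disjoint in their interiors.

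Next I would take $\mcU_j\coloneqq$ the connected component of $u^{-1}(\mcV_{\nu(j)})$ containing $\del_jC$. Since $u$ is a local isometry and $\del_jC$ is a closed geodesic mapping to $\del_iX$ with degree $l_j$ (this is part of the hypothesis on bordered covers, cf.\ the remark following \cref{lem:disconnected-riemann-hurwitz}), the restriction $u|_{\mcU_j}:\mcU_j\lra\mcV_i$ is, after possibly shrinking $\epsilon$ so that $\mcV_i$ is an honest collar with no interior geodesics, an $l_j$-fold Riemannian covering. Because the target collar is isometric to $(-\epsilon,0]\times(\RR/\ZZ)$ with the product metric above, and $\mcU_j$ is connected with geodesic boundary $\del_jC$, the covering $\mcU_j\lra\mcV_i$ must be the standard $l_j$-fold cover in the $S^1$-direction: pulling back the $\rho$-coordinate is unramified in the $\rho$-direction, and the only connected $l_j$-fold covers of an annulus are the cyclic ones. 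Concretely, define $\phi_j:\mcU_j\lra(-\epsilon,0]\times S^1$ by requiring $\psi_i\circ u\circ\phi_j^{-1}(r,\theta)=(r,l_j\theta)$ and $\phi_j(z_j)=(0,0)$; this determines $\phi_j$ uniquely once one checks it is well-defined, which amounts to the covering being cyclic and the lift of the base-point marking $\psi_i(u(z_j))=(0,0)$ being consistent with $\phi_j(z_j)=0$. The pulled-back metric $\phi_j^{-1\,*}(\text{metric on }\mcU_j)$ then equals $\dd\rho^2+(L_i/l_j)^2\,l_j^2\cosh^2\rho\,\dd t^2=\dd\rho^2+L_i^2\cosh^2\rho\,\dd t^2$ on the $l_j$-fold cover, consistent with $\del_jC$ being a geodesic of length $L_i$ — note this forces $l(\del_jC)=l(\del_iX)$, which matches \cref{lem:hyperbolic-lift-geodesic} applied with $m_j=l_j$ understood on the level of the cover. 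Finally, $\phi_j^{-1}(\{0\}\times S^1)=\del_jC$ holds by construction, and $\phi_j(z_j)=0\in\RR/\ZZ$ is the normalization we imposed; \cref{lem:hyperbolic-lift-geodesic} applied to the boundary geodesic (the arc version) shows the formula $u(\eta(t))=\gamma(t)$ on corresponding arc-length parametrizations, which is exactly the statement $\psi_i\circ u\circ\phi_j^{-1}(r,\theta)=(r,l_j\theta)$ once one records that the $S^1$-coordinate on the source collar is arc-length on $\del_jC$ divided by $L_i$.

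The main obstacle I expect is the well-definedness of $\phi_j$, i.e.\ verifying that the component $\mcU_j$ of $u^{-1}(\mcV_{\nu(j)})$ really is an annulus on which $u$ is the standard cyclic cover, rather than something with extra topology or extra boundary circles. This requires two points: (a) that $\mcU_j$ contains no critical points of $u$ — which follows since a bordered Hurwitz cover has no branch points on the boundary or in a small enough collar, so $u|_{\mcU_j}$ is a genuine covering map; and (b) that $\mcU_j$ deformation-retracts onto $\del_jC$, so that $\pi_1(\mcU_j)\cong\ZZ$ and the covering is classified by a subgroup of $\pi_1(\mcV_i)\cong\ZZ$ of index $l_j$, hence is the cyclic one. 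Point (b) is where one genuinely uses that $\epsilon$ is small: one must choose $\epsilon$ (depending on the lengths $l(\del_iX)$ and the constant from \cref{lem:collar-neighborhood}) small enough that each $\mcV_i$ is an embedded half-collar with no other hyperbolic-geometry features, and then note that each component of the preimage is again an embedded half-collar of a boundary geodesic of $C$ by \cref{lem:hyperbolic-lift-geodesic}, hence itself an annulus. Once this is in hand, matching up the angular coordinates and the marked points is a routine normalization, and the displayed formula drops out.
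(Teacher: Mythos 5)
Your overall plan matches the paper's: fix the collar chart on the target boundary $\del_iX$, take $\mcU_j$ to be the connected component of the preimage containing $\del_jC$, and show that in normalized collar charts the map is $(r,\theta)\mapsto (r,l_j\theta)$. But there are two concrete errors in the execution.

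First, your statement ``this forces $l(\del_jC)=l(\del_iX)$'' is false, and the metric computation $(L_i/l_j)^2 l_j^2 = L_i^2$ that leads to it is misguided. Pulling back $\dd\rho^2+L_i^2\cosh^2\rho\,\dd t^2$ along $(r,\theta)\mapsto(r,l_j\theta)$ gives $\dd r^2 + l_j^2 L_i^2\cosh^2 r\,\dd\theta^2$, so the source boundary has length $l_j L_i$, not $L_i$. This is exactly what \cref{lem:hyperbolic-lift-geodesic} asserts (``lengths $m_j l$''), so your claim that your computation ``matches'' that lemma is backwards: the lemma contradicts your conclusion. The conclusion of the lemma is unaffected (the formula $(r,\theta)\mapsto(r,l_j\theta)$ is still correct), but this shows a real confusion in the middle of your argument about how arc length behaves under the degree-$l_j$ covering of boundary circles.

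Second, your handling of obstacle (b) is circular. You want to conclude that $\mcU_j$ is an annulus — so that the covering is classified by an index-$l_j$ subgroup of $\pi_1(\mcV_i)\cong\ZZ$ — by invoking \cref{lem:hyperbolic-lift-geodesic}, but that lemma only controls the geometry of the preimage \emph{geodesic} $u^{-1}(\del_iX)$, not of a neighborhood of it; it says nothing about collar neighborhoods being preserved. The paper closes this gap with a direct geometric argument: after choosing $w$ with the explicit inequality $\sinh(w)\sinh\!\bigl(\frac{d\cdot l(\del_iX)}{2}\bigr)<1$ (so that preimages of the width-$w$ collar stay inside the standard collar of the longer boundary geodesic on $C$), one checks for each $z\in\ol{\mcU_j}\setminus\del_jC$ that the minimizing perpendicular geodesic from $z$ to $\del_jC$ stays in $\mcU_j$ and maps injectively to $\mcV_i$ (an injectivity failure would produce a geodesic loop perpendicular to $\del_iX$, which cannot exist), giving $l(\gamma)=l(u(\gamma))$ and hence $\mcU_j=A_w(\del_jC)$. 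Your ``shrink $\epsilon$'' gesture would need to be replaced by this quantitative choice of $w$, and your appeal to covering-space theory needs the annulus structure as an input, not an output. If you first prove $\mcU_j=A_w(\del_jC)$ by the distance argument, your covering-space formulation does go through; but as written, it assumes what it is trying to prove.
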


\begin{figure}[!ht]
  \centering
  \def\svgwidth{0.9\textwidth}
  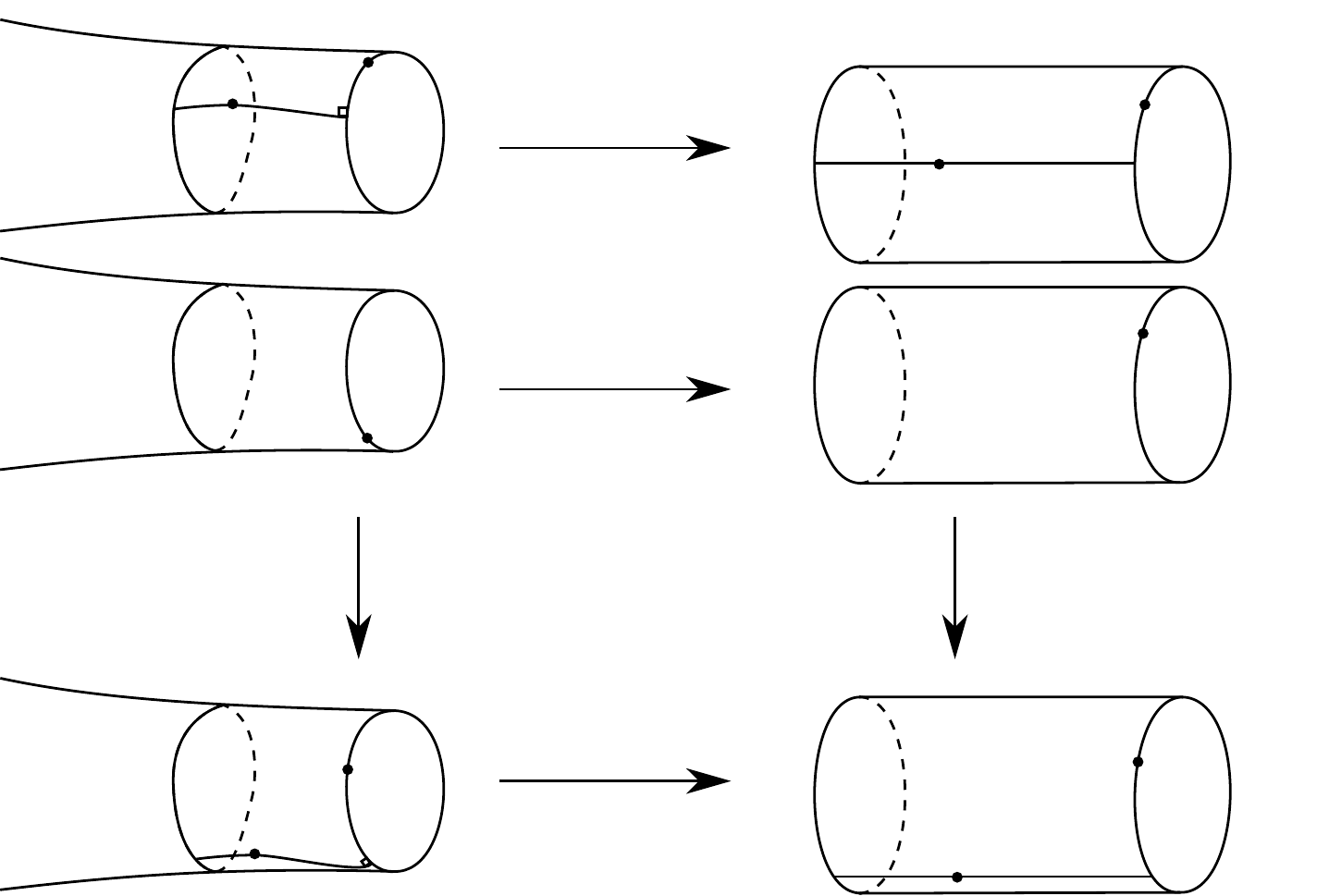
  \caption{This illustrates \cref{lem:collar-neighborhoods-boundary-hurwitz-cover}. Notice that the boundary curve as well as the perpendicular line are geodesics.}
  \label{fig:u-in-collar-neighborhood}
\end{figure}

\begin{proof}

Choose a positive real number $w\in\RR_{>0}$ such that
\begin{equation*}
\sinh(w)\sinh\left(\frac{d\cdot l(\del_iX)}{l(\del_iX)}\right)<1.
\end{equation*}
This will ensure that preimages of collar neighborhoods of width $w$ are within collar neighborhoods of preimages of the geodesic $\del_iX$ as the covering there has degree less than or equal to $d$ and using \cref{lem:hyperbolic-lift-geodesic}.

For $\del_iX$ we define the collar neighborhood $\mcV_i\coloneqq A_{w}(\del_iX)$ together with a standard hyperbolic coordinate chart $\psi_i:\mcV_i\lra(-w,0]\times S^1 $ such that $\psi_i(\del_iX)=\{0\}\times S^1$. Note that such a coordinate chart is unique up to rotation after fixing the collar neighborhood. Fix this rotation by requiring that the point $0\in S^1$ is mapped to the marked point $u(z_j)$ for any $j\in\nu^{-1}(i)$.\footnote{Note that this is well-defined by the requirement $u(z_j)=u(z_{j'})$ for $\nu(j)=\nu(j')$ as stated in the introduction of this section. Furthermore, recall that here $S^1=\faktor{\RR}{\ZZ}$.}

Now define the collar neighborhoods $\mcU_j$ with $\nu(j)=i$ as the connected component of $u^{-1}(\mcU_i)$ containing $\del_jC$. Consider any point $z\in \ol{\mcU_j}\setminus\del_jC$. The distance of $z$ to $\del_jC$ is realized by a geodesic from $z$ perpendicular to $\del_jC$ which is contained in $\mcU_j$. To see this recall that $\mcU_j$ is contained in the standard collar neighborhood of $\del_jC$. Call this geodesic $\gamma$. If $\gamma$ was not mapped injectively to $\mcV_i$ then its image would contain a geodesic loop which is not possible as its image needs to be a geodesic from $u(z)$ and perpendicular to $\del_iX$ because $u$ is a local isometry. Thus $l(\gamma)=l(u(\gamma))$ and we see that $\mcU_j=A_w(\del_jC)$. 

So we define $\phi_j:\mcU_j\lra(-w,0]\times S^1$ as the standard collar neighborhood parametrization such that $\phi_j(z_j)=0$. This is possible since the chart in \cref{lem:collar-neighborhood} is only defined up to a rotation. We can now infer in a similar way as above how the map $u$ looks in these charts.

Is is easy to see that the parametrization $\phi:S^1\lra \del_jC$ by $\phi(t)=\phi_j(0,t)$ is proportional to the parametrization by arc-length and similarly for $\del_iX$. \cref{lem:hyperbolic-lift-geodesic} thus gives $\psi_i\circ u \circ\phi_j^{-1}(0,\theta)=(0,l_j\theta)$. Any other point $(r,\theta)$ with $r\neq 0$ is mapped to the point constructed in the following way: First follow the unique geodesic from $(r,\theta)$ to the boundary at $(0,\theta)$, then $\psi_i\circ u\circ\phi_j^{-1}$ maps this point to $(0,l_j\theta)$ and then follow the geodesic perpendicular to $(0,l_j\theta)$ for the same arc-length. Since the metric on the cylinder has no $l$-dependent prefactor in front of the $\dd r^2$-term we end up with the same $r$ component, i.e.\ $\psi_i\circ u\circ\phi_j^{-1}(r,\theta)=(r,l_j\theta)$.

We thus have
\begin{equation*}
	u^{-1}(\mcU_i)=\bigsqcup_{\substack{1\leq j\leq k\\ \nu(j)=i}}A_w(\del_jC)
\end{equation*}
and the required local picture which finishes the proof.
\end{proof}

\begin{rmk}
  Notice that equivalent charts also exist at inner hyperbolic geodesics as the standard collar neighborhoods exist around those geodesics as well.
\end{rmk}

\subsection{Hyperbolic Gluing}

This section deals with gluing bordered Hurwitz covers along common geodesic boundaries of equal length. This is essentially a hyperbolic version of unique continuation for holomorphic maps.

\begin{lem}
  Let $C_1$ and $C_2$ be two oriented hyperbolic surfaces with geodesic boundary and $\gamma_i\subset C_i$ be two boundary geodesics of the same length. In the same way let $X_1$ and $X_2$ be two hyperbolic surfaces with two given geodesic boundaries $\eta_1$ and $\eta_2$. Furthermore, let points $q_1\in \gamma_1, q_2\in\gamma_2, p_1\in \eta_1$ and $p_2\in\eta_2$ be given. Also suppose we are given coverings $u_1:C_1\lra X_1$ and $u_2:C_2\lra X_2$ of degree $d$ which are local isometries and satisfy $u(q_1)=p_1$ and $u(q_2)=p_2$. Then the hyperbolic surfaces can be glued along the boundary to hyperbolic surfaces $C$ and $X$ and the maps fit together to yield a smooth covering $u:C\lra X$ of degree $d$ which is a local isometry and restricts to $u_i$ on $C_i\subset C$.
\label{lem:local-gluing}
\end{lem}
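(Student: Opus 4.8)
The plan is to glue the two pairs of hyperbolic surfaces first, and then to show that $u_1$ and $u_2$ patch together to a \emph{smooth} map across the newly created interior geodesic; this last point is the ``hyperbolic unique continuation'' that the title of the subsection refers to. Throughout I use that a holomorphic, or more generally a local-isometry, covering maps geodesic boundaries to geodesic boundaries and preserves distance to a geodesic, and I will invoke the explicit collar normal forms from \cref{lem:collar-neighborhood} and \cref{lem:collar-neighborhoods-boundary-hurwitz-cover}.

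\emph{Step 1 (gluing the metrics).} Since $\gamma_1$ and $\gamma_2$ are geodesic boundary circles of the same length, there is an orientation-reversing isometry $\gamma_1\to\gamma_2$, unique up to rotation; fix it by demanding $q_1\mapsto q_2$. Gluing $C_1$ and $C_2$ along this identification produces an oriented surface $C$ containing an embedded circle $\gamma$ (the common image of $\gamma_1,\gamma_2$), with each $C_i$ sitting inside $C$ as the closed subsurface on its side. The two hyperbolic metrics combine to a smooth hyperbolic metric on $C$: by \cref{lem:collar-neighborhood} a one-sided collar of $\gamma_i$ is isometric to $[0,\varepsilon)\times\RR/\ZZ$ with metric $\dd\rho^2+\ell(\gamma_i)^2\cosh^2\!\rho\,\dd t^2$, so a neighbourhood of $\gamma$ in $C$ is isometric to $(-\varepsilon,\varepsilon)\times\RR/\ZZ$ with the very same (in particular $\rho$-even, hence smooth) metric, and $\gamma$ is a simple closed geodesic of $C$; compare \cite{buser_geometry_2010}. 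The identical construction on $X_1,X_2$ — which presupposes $\ell(\eta_1)=\ell(\eta_2)$, a hypothesis that is automatic in the intended application where the boundary degrees are prescribed, and which together with $\ell(\gamma_1)=\ell(\gamma_2)$ forces the two boundary covering degrees $m_i:=\deg(u_i|_{\gamma_i})$ to agree, $m_1=m_2=:m$ — gives an oriented hyperbolic surface $X$ with a simple closed geodesic $\eta$ formed from $\eta_1,\eta_2$, the twist fixed by $p_1\mapsto p_2$, and $X_i\subset X$.

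\emph{Step 2 (patching the maps).} Define $u\colon C\to X$ by $u|_{C_i}:=u_i$. It is well defined and continuous on $\gamma$: by \cref{lem:hyperbolic-lift-geodesic}, in the arc-length parametrisations of $\gamma_i$ and $\eta_i$ starting at $q_i$ and $p_i$ the restriction $u_i|_{\gamma_i}$ is $t\mapsto t$ onto $\eta_i$ up to the arc-length rescaling by $m$, and since $u_i(q_i)=p_i$ while the gluing identifications match $q_1\leftrightarrow q_2$ and $p_1\leftrightarrow p_2$, the two restrictions coincide on $\gamma$. For smoothness across $\gamma$, apply the construction in the proof of \cref{lem:collar-neighborhoods-boundary-hurwitz-cover} to each side (its hypotheses use only that $u_i$ is a local-isometry covering restricting to a degree-$m$ covering on the boundary geodesic): one obtains collar charts in which $u_i$ reads $(r,\theta)\mapsto(r,m\theta)$ on the half-collar of $\gamma_i$, and gluing these charts (with the appropriate reflection, compatibly with Step 1) to charts $(-\varepsilon,\varepsilon)\times S^1$ around $\gamma$ and $\eta$ turns $u$ into the single real-analytic formula $(r,\theta)\mapsto(r,m\theta)$. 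Hence $u$ is $C^\infty$ (indeed $C^\omega$) near $\gamma$, and $C^\infty$ on all of $C$ since it equals $u_i$ off $\gamma$. Finally, $\dd u$ is an isometry off the measure-zero set $\gamma$ (each $u_i$ is, by hypothesis or by \cref{lem:hol-map-loc-isom}), hence everywhere by continuity, so $u$ is a local isometry; it is a covering because it is one away from $\gamma$ and the normal form $(r,\theta)\mapsto(r,m\theta)$ exhibits it as one near $\gamma$; and its degree is $d$, counted on the fibre over an interior point of $X_1\subset X$, which lies in $C_1$ and is a fibre of $u_1$. By construction $u|_{C_i}=u_i$.

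\emph{Expected main obstacle.} Every step above except one is routine once the right picture is in place; the one genuine point is upgrading the \emph{continuity} of $u$ along $\gamma$ to \emph{smoothness}, since a priori gluing two smooth maps along a hypersurface yields only a $C^0$ map. This is exactly where the rigidity of local isometries near a closed geodesic — the model form of \cref{lem:collar-neighborhoods-boundary-hurwitz-cover}, the hyperbolic avatar of unique continuation for holomorphic maps — does the work: both $u_i$ are forced into the identical model $(r,\theta)\mapsto(r,m\theta)$ on a full collar, leaving no room for a non-smooth junction. A secondary, purely bookkeeping, point is to arrange the orientations and the matching $q_i\leftrightarrow q_2$, $p_i\leftrightarrow p_2$ so that the reflected half-collar charts of the two sides actually align; this is where the hypotheses $u_i(q_i)=p_i$ and $\ell(\gamma_1)=\ell(\gamma_2)$ are consumed.
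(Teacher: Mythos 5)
Your proof follows essentially the same route as the paper: glue via collar-neighborhood charts with the metric $\dd\rho^2+\ell^2\cosh^2\rho\,\dd\theta^2$, and use the local normal form $(r,\theta)\mapsto(r,m\theta)$ of a local-isometry covering near the central geodesic to upgrade continuity of $u$ across $\gamma$ to smoothness (and then holomorphicity by conformality). Your observation that the stated hypotheses are incomplete — one needs $\ell(\eta_1)=\ell(\eta_2)$ (whence the boundary covering degrees $m_1,m_2$ agree) for $u$ to even be well-defined on the glued curve $\gamma$, since $u_1(\gamma_1(t))$ and $u_2(\gamma_2(-t))$ must lie on the same $\eta$-orbit under the identification $\eta_1(s)\sim\eta_2(-s)$ — is correct and is tacitly used but never stated in the paper's version of the lemma.
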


\begin{rmk}
Note that as usual when gluing along boundaries we need that the map identifying the boundary components is orientation-reversing. See \cref{fig:gluing-maps-along-hyperbolic-boundary} for the setup.
\end{rmk}

\begin{figure}[ht!]
\centering
\def\svgscale{0.8}
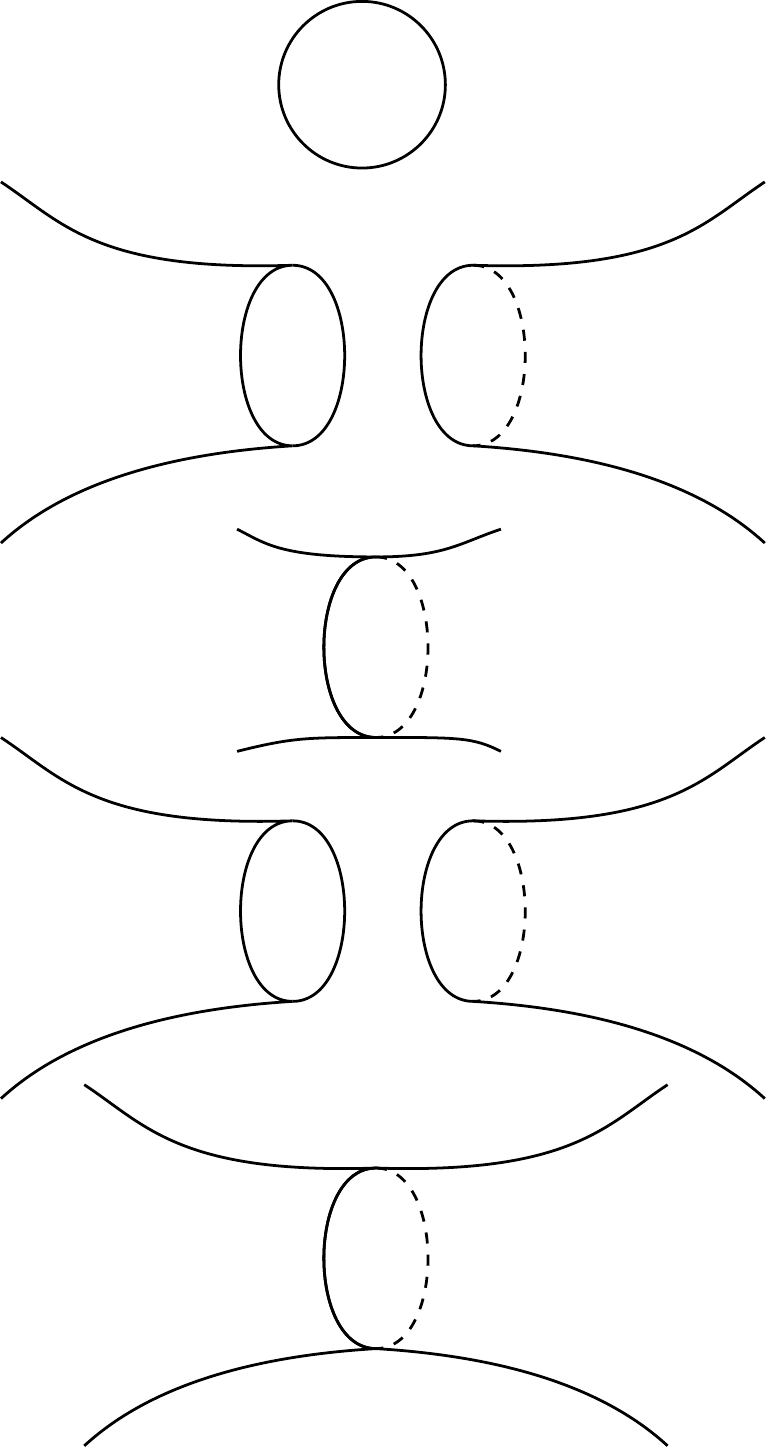
\caption{This figure shows the procedure of gluing surfaces with local isometries along common geodesic boundaries.}
\label{fig:gluing-maps-along-hyperbolic-boundary}
\end{figure}

\begin{proof}
As $\gamma_1$ and $\gamma_2$ are hyperbolic geodesics of the same length we can parametrize them by arc length in the direction of their induced orientation with $q_1$ and $q_2$ as zeros. We denote these parametrizations again by $\gamma_i:[0,1]\lra C_i$. Now we can define the topological manifold
\begin{equation*}
  C\coloneqq\faktor{C_1\sqcup C_2}{\gamma_1(t)\sim\gamma_2(-t)}
\end{equation*}
which has a manifold atlas on $C\setminus\gamma$ where $\gamma$ is the image of $\gamma_1$ and $\gamma_2$ on $C$. Since both curves $\gamma_1$ and $\gamma_2$ are geodesics of the same length we can build charts around a point $z\in C$ as follows. For both geodesics choose collar neighborhood parametrizations $\phi_1:(-w,0]\times S^1\lra C_1$ and $\phi_2:[0,w)\times S^1\lra C_2$. In these charts both hyperbolic metrics are given by $\phi_i^*g=\dd r^2+l^2\cosh^2r \;\dd \theta^2$. Now pick a small positive real number $\epsilon$ and define
\begin{align*}
  \phi_z:B_{\epsilon}(z) & \lra \RR^2 \\
  x & \longmapsto\begin{cases} \phi_1(x) & x\in C_1, \\ \phi_2(x) & x\in C_2,\end{cases}
\end{align*}
where $B_{\epsilon}(z)$ is the union of the two metric (half-)balls in $C_1$ and $C_2$, respectively. As the transition functions for these charts are either contained in one of the atlases of the $C_i$ or are given by the identity map on the collar neighborhoods, we see that this gives a smooth manifold structure. Furthermore, the Riemannian metrics on both sides of $\gamma$ are equal in these charts so we obtain a hyperbolic surface such that $\gamma$ is a geodesic.

We can do the same construction with the target surface $X$ using the points $p_1$ and $p_2$ as reference points for the gluing.

Since $u_1$ and $u_2$ both are local isometries they satisfy $u_1(\gamma_1(t))=u_2(\gamma_2(t))$ as $\gamma_1$ and $\gamma_2$ are both parametrized by arc length and use the same point $q_1=q_2=q$ as base point. Therefore the map $u$ given by $u_i$ on $C_i$ is continuous on $C$. 

Next we show that $u$ is in fact $C^1$ and holomorphic. To this end, choose coordinates at $z\in\gamma$ using collar neighborhoods on $C$ and on $X$. As $u$ is a local isometry it needs to map geodesics to geodesics and preserve angles between geodesics. Thus it looks like $(r,\theta)\longmapsto (r,d\theta)$ in the standard collar neighborhood charts where $d$ is the degree of $u$ on $\gamma$. This map is obviously smooth and a local isometry as the lengths of $\gamma$ and $u(\gamma)$ differ by a factor of $d$ which cancels. Thus it is also conformal and therefore holomorphic. This proves \cref{lem:local-gluing}.
\end{proof}

\subsection{Reference Curves}

\label{sec:reference-curve}
\index{Reference Curve}

In this section we define reference curves which will be used later for marking points close to a boundary component in such a way that the marking persists in the limit of geodesic boundary lengths going to zero. This was introduced by Mirzakhani in \cite{mirzakhani_weil-petersson_2007}.

First we choose a continuous strictly decreasing function $F:\RR_{\geq 0}\lra\RR_{>0}$ such that
\begin{equation*}
  l< F(l)\leq \frac{l}{\tanh\left(\frac{l}{2}\right)}
\end{equation*}
and $0<\lim_{l\to 0}F(l)<2$.

\begin{definition}
  Let $C$ be an admissible Riemann surface with its hyperbolic structure. A curve $\gamma\subset C$ is called a \emph{reference curve} at the boundary $\del_jC$ or the cusp $q_j$ if
  \begin{enumerate}[label=(\roman*), ref=(\roman*)]
    \item at a boundary component $\gamma$ is given as the hypercircle\footnote{\emph{Hypercircles} are curves equidistant from a geodesic.} of length $F(l(\del_jC))$ or
    \item at a cusp $\gamma$ is given as a simple closed horocycle with length $F(0)$ with the cusp as a center and which is contained in its cusp neighborhood.
  \end{enumerate}
\end{definition}

\begin{lem}
  Reference curves exist and are unique.
  \label{lem:existence-reference-curves}
\end{lem}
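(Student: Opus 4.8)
The plan is to treat boundary components and cusps separately; in each case the statement reduces to a one-variable intermediate-value argument carried out in the explicit model of a standard collar, respectively cusp, neighborhood. These neighborhoods exist: marked points are cusps and hence have cusp neighborhoods by \cref{prop:unique-hyperbolic-metric}, and every boundary geodesic carries its (one-sided) standard collar by \cref{lem:collar-neighborhood}.

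For a boundary component $\del_jC$ write $l\coloneqq l(\del_jC)$. By \cref{lem:collar-neighborhood} the one-sided standard collar $A_w(\del_jC)$, with $\sinh(w)\sinh(l/2)=1$, is isometric to $[0,w]\times\faktor{\RR}{\ZZ}$ carrying the metric $\dd\rho^2+l^2\cosh^2\rho\,\dd t^2$ with $\del_jC=\{\rho=0\}$, and inside this collar the curves equidistant from $\del_jC$ are exactly the level sets $\{\rho=\rho_0\}$, which are simple closed curves of length $l\cosh\rho_0$. The function $\rho_0\mapsto l\cosh\rho_0$ is continuous and strictly increasing on $[0,w]$ with image $[l,\,l/\tanh(l/2)]$, using $\cosh w=\coth(l/2)$ — exactly the computation made in the proof of \cref{lem:collar-neighborhood}. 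Since the hypotheses on $F$ give $l<F(l)\le l/\tanh(l/2)$, there is a unique $\rho_0\in(0,w]$ with $l\cosh\rho_0=F(l)$, and $\{\rho=\rho_0\}$ is a reference curve at $\del_jC$. For uniqueness, I would note that in the Fermi coordinates around $\del_jC$ every simple closed curve equidistant from $\del_jC$ is such a level set $\{\rho=\rho_0\}$ of length $l\cosh\rho_0$, and $\rho_0\mapsto l\cosh\rho_0$ is injective, so the length $F(l)$ determines the curve.

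For a cusp $q_j$, \cref{prop:unique-hyperbolic-metric} together with the definition of a cusp neighborhood provides a neighborhood of $q_j$ isometric to $(-\infty,\log 2)\times\faktor{\RR}{\ZZ}$ with metric $\dd\rho^2+e^{2\rho}\dd t^2$, in which the cusp corresponds to $\rho\to-\infty$ and the simple closed horocycles centered at $q_j$ contained in this neighborhood are exactly the level sets $\{\rho=\rho_0\}$ with $\rho_0<\log 2$, of length $e^{\rho_0}$. As $\rho_0\mapsto e^{\rho_0}$ is a continuous strictly increasing bijection from $(-\infty,\log 2)$ onto $(0,2)$ and $0<F(0)<2$, there is a unique $\rho_0=\log F(0)<\log 2$ with $e^{\rho_0}=F(0)$; this gives both existence and uniqueness of the reference curve at $q_j$.

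The argument is elementary, and the only point calling for care is the uniqueness clause in each case: it rests on the classification of the relevant curves — hypercircles equidistant from a boundary geodesic, respectively horocycles with a prescribed center and contained in a cusp neighborhood — as the coordinate level sets in the standard models, which is precisely the content recalled in \cref{lem:collar-neighborhood}, \cref{lem:unique-cusp-neighborhood}, and the accompanying discussion of horocycles and their centers. Once that classification is available, strict monotonicity of the length in the radial coordinate finishes everything, and the inequalities imposed on $F$ are exactly what is needed to place the resulting curve inside the standard collar, respectively cusp, neighborhood.
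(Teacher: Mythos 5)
Your proof is correct and takes essentially the same approach as the paper's: reduce each case to a monotonicity argument in an explicit model of the standard collar (respectively cusp) neighborhood, with the bounds imposed on $F$ placing the target length inside the range realized there. The only cosmetic difference is that for the boundary case you work in the Fermi coordinate model $[0,w]\times\faktor{\RR}{\ZZ}$ from \cref{lem:collar-neighborhood}(i) and compute the hypercircle lengths $l\cosh\rho_0$ directly, whereas the paper lifts to the upper half-plane model of \cref{lem:collar-neighborhood}(ii) and appeals to the classification of hypercircles there; both rest on the same classification of equidistant curves, and your version is if anything a bit more explicit about the monotonicity that delivers uniqueness.
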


\begin{proof}
  Consider a boundary component $\del_jC$ with a standard collar neighborhood $A_w$. By lifting to the hyperbolic plane in such a way that $\del_jC$ lifts to the vertical line through the origin we can use the second model from \cref{lem:collar-neighborhood} to describe the situation.

Hypercircles in the hyperbolic plane are given by circular arcs intersecting the $x$-axis non-perpendicularly or straight lines intersecting the $x$-axis non-perpendicularly. In the former case their axis is the geodesic with the same intersection points as ideal end points and in the latter case the axis is the vertical geodesic through the same ideal point. Notice that these are precisely curves of constant curvature.

Using these results on hypercircles in the hyperbolic plane it is clear that there exists precisely one such reference curve. Notice that the function $F$ was chosen such that the length stays between the length of the geodesic and the boundary of the collar neighborhood.

In the case of a cusp we can use the model in the upper half plane, i.e.\ $\{z\in\HH\mid \Im(z)\geq 1\}$ with $z$ and $z+2$ identified for all $z\in\HH$. Now the horocycles are the horizontal lines which are of length $\frac{2}{h}$ and thus there exists exactly one such horocycle of length $F(0)$.
\end{proof}

\begin{rmk}
  Note that there exists a bijection between points on the boundary component $\del_jC$ and a reference curve located at this boundary component by following the geodesics perpendicular to the boundary component. This bijection extends to a bijection of a reference curve at a cusp and the circle of directions in the tangent space at the cusp. Again, see \cite{mirzakhani_weil-petersson_2007} for details.

  Also note that the reference curves of geodesic boundaries converge to the reference curve of the cusp when the boundary length goes to zero. This is because away from the boundary the hyperbolic metrics converge in $\cin_{\text{loc}}$ and thus the lengths converge and the curves stay curves of constant geodesic curvature converging to one.
  
  Although we don't need this precise value notice that the hypercircle of length $F(l)$ has distance $\arsinh\sqrt{\frac{F(l)^2}{l^2}-1}$ from the boundary geodesic of length $l$. This can be seen by the same calculation as in the proof of \cref{lem:collar-neighborhood}. In particular this length goes to infinity for $l\to 0$ as is expected because horocycles are ``infinitely far away'' from the cusp.
\end{rmk}

\chapter{Orbifold Structures}

\label{sec:orbifolds}

\section{Orbifold Groupoids}

\subsection{Groupoids}

\index{Groupoid}

In this chapter we will describe the type of structure we will put on the moduli space of Hurwitz covers. As this will not be precisely an orbifold structure due to the local description as products of discs glued at exactly one point we will denote this structure as $s$-orbifolds.

\begin{definition}
  A (small) \emph{groupoid} is a (small) category $\mcC=(\Ob\mcC,\Mor\mcC)$ such that all morphisms are invertible. This means that there exist structure maps
  \begin{itemize}
    \item $i:\Mor\mcC\lra\Mor\mcC$ which maps a morphism to its inverse,
    \item $s:\Mor\mcC\lra\Ob\mcC$ which maps a morphism to its source object,
    \item $t:\Mor\mcC\lra\Ob\mcC$ which maps a morphism to its target object,
    \item $u:\Ob\mcC\lra\Mor\mcC$ which maps an object to its identity morphism and
    \item $m:\Mor\mcC {_s\times_t}\Mor\mcC\lra\Mor$ which maps two composable morphisms to their composition.\footnote{By $\Mor\mcC {_s\times_t}\Mor\mcC$ we mean the set $\{(f,g)\in\Mor\mcC\times\Mor\mcC\mid s(f)=t(g)\}$.}
  \end{itemize}
\end{definition}

\index{Groupoid!Structure Maps}

\begin{definition}
  A groupoid $\mcC$ is called
  \begin{itemize}
    \item \emph{Lie} if $\Mor\mcC$ and $\Ob\mcC$ are smooth manifolds, $s$ and $t$ are submersions (such that in particular $\Mor\mcC{_s\times_t}\Mor\mcC$ is a smooth manifold) and all structure maps are smooth,
    \item \emph{étale} if $\mcC$ is Lie and in addition all structure maps are local diffeomorphisms,
    \item \emph{proper} if the map $s\times t:\Mor\mcC\lra\Ob\mcC\times\Ob\mcC$ is proper,
    \item \emph{ep Lie} or \emph{orbifold groupoid} if it is étale, proper and Lie,
    \item \emph{non-singular} if all stabilizers $G_x:=\Mor(x,x)$ for $x\in\Ob\mcC$ are trivial,
    \item \emph{effective} if for each $x\in\Ob\mcC$ and $g\in G_x$ every neighborhood $V\subset\Mor\mcC$ of $g$ contains a morphism $h\in V$ such that $s(h)\neq t(h)$,
    \item \emph{stable} if all stabilizers $G_x$ are finite and
    \item \emph{connected} if $|\mcC|$ is path connected.
  \end{itemize}
\end{definition}

\index{Groupoid!Orbifold|(}
\index{Groupoid!Lie}
\index{Groupoid!Proper}
\index{Groupoid!\'Etale}
\index{Groupoid!Effective}
\index{Groupoid!Stable}
\index{Groupoid!Connected}

We denote the \emph{orbit space} by $|\mcC|:=\faktor{\Ob\mcC}{(x\sim y\Longleftrightarrow \Mor(x,y)\neq\emptyset )}$.

\begin{example}
  An important example is the Lie groupoid associated to a smooth action of a Lie group $G$ on a smooth manifold $M$. Is is called the \emph{translation groupoid} and is denoted by $G\ltimes M$. Its objects are $\Ob G\ltimes M:= M$, its morphisms $\Mor G\ltimes M:= G\times M$ and the structure maps $s(g,x)=x$, $t(g,x)=g\cdot x$ and $m((g_1,x_1),(g_2,x_2))=(g_1\circ g_2,x_2)$ if $x_1=g_2\cdot x_2$.
\end{example}

The following are a few statements about ep-Lie groupoids and implications for their orbit spaces taken from \cite{mcduff_groupoids_2005}.

\begin{definition}
  An ep-Lie groupoid is called symplectic or complex if the object and morphism sets are in fact symplectic or complex manifolds and the structure maps are symplectic or holomorphic. 
\end{definition}

An element $g\in G_x$ is an arrow $g:x\lra x$ but if it is a morphism in a stable étale Lie groupoid $\mcC$ it actually ``acts'' on a neighborhood $U\subset \Ob\mcC$ in the following sense.

\begin{lem}
  Given a stable étale Lie-groupoid $\mcC$ and an element $x\in\Ob\mcC$ there exist a neighborhood $U\subset \Ob\mcC$ of $x$ and pairwise disjoint neighborhoods $N_g\subset\Mor\mcC$ with $g\in N_g$ for $g\in G_x$ such that $s$ and $t$ map each $N_g$ diffeomorphically onto $U$.
\end{lem}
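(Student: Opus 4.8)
The plan is to use the étale hypothesis to produce, for each $g \in G_x$, a local diffeomorphism-section of the source-target pair near $g$, and then to shrink all these sections to a common domain using properness and finiteness of the stabilizer. First I would record what étale gives: since $s$ is a local diffeomorphism, there is an open neighborhood $\wt{N}_g \subset \Mor\mcC$ of $g$ on which $s$ restricts to a diffeomorphism onto an open set $U_g \subset \Ob\mcC$ containing $x = s(g)$; likewise, after shrinking $\wt{N}_g$ if necessary, $t$ restricts to a diffeomorphism onto an open set $V_g \subset \Ob\mcC$ containing $t(g) = x$. Replacing $U_g$ by $U_g \cap V_g$ and pulling back along $s|_{\wt{N}_g}$, I get a neighborhood on which \emph{both} $s$ and $t$ are diffeomorphisms onto one common open set $W_g \ni x$. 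Intersecting finitely many such sets, set $W := \bigcap_{g \in G_x} W_g$ — here I use that $G_x$ is finite, which holds because $\mcC$ is stable. Then for each $g$, the preimage $N_g^{(0)} := (s|_{\wt{N}_g})^{-1}(W)$ is an open neighborhood of $g$ mapped diffeomorphically onto $W$ by $s$, and I must still check $t$ maps it diffeomorphically onto $W$: it maps it diffeomorphically onto an open subset of $W_g$, and by further shrinking $W$ (iterating the construction, or taking $W := \bigcap_g W_g \cap \bigcap_g t(N_g^{(0)})$ and re-restricting) one arranges $t(N_g) = W$ as well. This bootstrapping terminates because there are finitely many $g$.

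Next I would arrange the neighborhoods $N_g$ to be pairwise disjoint. The obstacle here is that a priori different sheets $N_g, N_h$ could overlap; this is exactly where properness enters. The map $s \times t : \Mor\mcC \lra \Ob\mcC \times \Ob\mcC$ is proper, hence closed with compact fibers, so the fiber $(s\times t)^{-1}(x,x) = G_x$ is a discrete (indeed finite) subset of $\Mor\mcC$, and by properness it is in fact a \emph{closed} discrete set with a neighborhood basis of the form $(s\times t)^{-1}(W' \times W')$. Concretely: since $G_x$ is finite and $\Mor\mcC$ is Hausdorff, choose pairwise disjoint open sets $P_g \ni g$. Then shrink each $N_g$ from the previous paragraph to $N_g \cap P_g$; properness guarantees that after shrinking the common base $W$ once more, $(s\times t)^{-1}(W\times W)$ is contained in $\bigsqcup_{g\in G_x} P_g$, so that $(s\times t)^{-1}(W\times W) = \bigsqcup_{g \in G_x} N_g$ with the $N_g$ now automatically pairwise disjoint, each still mapped diffeomorphically onto $W$ by both $s$ and $t$.

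Finally, I would take $U := W$ (or a slightly smaller open neighborhood of $x$ inside $W$, re-restricting the $N_g$ one last time so that $s(N_g) = t(N_g) = U$ exactly), and this $U$ together with the $N_g$ satisfies all the assertions: $U$ is a neighborhood of $x$, the $N_g \ni g$ are pairwise disjoint open neighborhoods in $\Mor\mcC$, and $s, t$ each restrict to diffeomorphisms $N_g \xrightarrow{\sim} U$. I expect the main obstacle to be the bookkeeping of the successive shrinkings: one must be careful that shrinking the base $W$ to make $t$ surjective onto it does not destroy the property already established for $s$, and that the properness argument pinning down $(s\times t)^{-1}(W\times W)$ is applied \emph{after} the disjoint sets $P_g$ are fixed. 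A clean way to organize this is to do all the shrinking at once: fix disjoint $P_g$, use étale-ness plus properness to find a single $W$ with $(s\times t)^{-1}(W\times W) \subset \bigsqcup_g P_g$ and with $s, t$ local diffeos on each component, then observe each component maps onto $W$ by a proper local diffeomorphism, hence a covering, hence a diffeomorphism since $W$ can be taken connected and simply connected (e.g. a ball). That last reduction — proper local diffeo onto a ball is a diffeo — handles surjectivity and injectivity of $s|_{N_g}$ and $t|_{N_g}$ simultaneously and avoids the iterated-shrinking loop.
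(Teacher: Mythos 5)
The lemma assumes only that $\mcC$ is stable and \'etale — not proper — and the paper cites \cite{robbin_construction_2006} rather than proving it. Your argument invokes properness of $s\times t$ at two essential points (to pin down $(s\times t)^{-1}(W\times W)$, and in the final ``proper local diffeomorphism onto a ball'' reduction), but properness is not available from the hypotheses, so as written the proof does not apply. Moreover properness is unnecessary for the disjointness you want: since $G_x$ is finite and $\Mor\mcC$ is a Hausdorff manifold, you can simply choose the initial shrunken neighborhoods $\wt{N}_g\ni g$ to be pairwise disjoint at the very start, and every later $N_g\subset\wt{N}_g$ inherits disjointness for free.

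The second gap is the termination claim for the bootstrap. The fact that $G_x$ is finite bounds the number of sets being intersected at each stage, but it does not bound the number of shrinking rounds — you could in principle shrink $W$ infinitely often without the $s$- and $t$-images ever equalizing. What actually makes the construction close up after a single round is the cocycle identity $\phi_g\circ\phi_h=\phi_{gh}$ for the local diffeomorphisms $\phi_g:=t\circ(s|_{\wt{N}_g})^{-1}$: this holds by continuity of the composition map $m$ together with the \'etale local uniqueness of sections through $gh$, checking the two sides agree at $x$. Once you have this, the maps $\phi_g$ constitute a local action of the finite group $G_x$ fixing $x$, and $U:=\bigcap_{g\in G_x}\phi_g(D)$ (for any sufficiently small $D\ni x$ chosen so that the $\phi_g(D)$ stay in the common domain) satisfies $\phi_h(U)=\bigcap_g\phi_{hg}(D)=U$ for every $h$. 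Setting $N_g:=(s|_{\wt{N}_g})^{-1}(U)$ then gives $s(N_g)=U$ and $t(N_g)=\phi_g(U)=U$ on the nose, with the $N_g$ disjoint because the $\wt{N}_g$ were. Your iterated-shrinking construction is in fact the same thing in disguise, but without identifying the group law you have not justified that it terminates, and the reason you give for termination is not correct.
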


\begin{proof}
  See \cite{robbin_construction_2006}.
\end{proof}

\begin{rmk}
Given $g\in G_x$ we thus obtain a diffeomorphism $\phi_g:U\lra U$ by mapping $s(h)\longmapsto t(h)$ for $h\in N_g$, i.e.\ $\phi_g:=t\circ s^{-1}$. This means that for each $x\in\Ob\mcC$ there exists a neighborhood $U$ of $x$ such that the stabilizer group $G_x$ acts on that neighborhood via diffeomorphisms. Note that if $g\in\Mor(x,y)$ then there still exist neighborhoods $U,V\subset\Ob\mcC$ with $x\in U$ and $y\in V$ such that the above construction gives a diffeomorphism $\phi_g:U\lra V$.

Note that such a neighborhood is not yet small enough to give some kind of orbifold chart around $[x]\in|\mcG|$ because there might be morphisms identifying two elements in $U$ which do not come from $g\in G_x$ in this manner. The following lemma from \cite{adem_orbifolds_2007} shows that it is possible to choose an even smaller neighborhood good enough for this task.
\label{rmk:action-groupoid-neighborhood}
\end{rmk}

\begin{lem}
  Let $\mcG$ be an effective ep-Lie groupoid. Then for any $x\in\Ob\mcG$ there exist neighborhoods $U_x\subset\Ob\mcG$ such that
  \begin{enumerate}[label=(\roman*), ref=(\roman*))]
    \item $G_x$ acts on $U_x$ as above,
    \item any $h\in\Mor\mcG$ such that $s(h),t(h)\in U_x$ comes from the action of some $g\in G_x$ on $U$ in the sense of \cref{rmk:action-groupoid-neighborhood} and
    \item $\faktor{U_x}{G_x}\subset|\mcG|$ is an open embedding.
  \end{enumerate}
  \label{lem-existence-good-neighborhoods}
\end{lem}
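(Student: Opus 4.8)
The plan is to build the neighborhoods $U_x$ by successively shrinking, starting from the neighborhood $U$ and the disjoint morphism neighborhoods $N_g$ ($g\in G_x$) provided by the previous lemma (and \cref{rmk:action-groupoid-neighborhood}), and using properness plus effectiveness to rule out unwanted morphisms. First I would recall the setup: for $g\in G_x$ we have $N_g\subset\Mor\mcG$ with $g\in N_g$, pairwise disjoint, and $s,t$ mapping each $N_g$ diffeomorphically onto $U$, giving diffeomorphisms $\phi_g=t\circ s^{-1}:U\lra U$. Shrinking $U$ if necessary we may assume each $\phi_g(U)=U$ and that $g\mapsto\phi_g$ is a homomorphism $G_x\lra\operatorname{Diff}(U)$ (this uses that the $N_g$ can be chosen compatibly with composition, again from \cite{robbin_construction_2006}); this gives item (i) for any sufficiently small $U$.

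The heart of the argument is item (ii): after further shrinking, every arrow $h$ with $s(h),t(h)\in U_x$ lies in some $N_g$. Suppose not; then there is a sequence of points $x_n\to x$ and arrows $h_n$ with $s(h_n)=x_n$, $t(h_n)\in U$, but $h_n\notin\bigcup_{g\in G_x}N_g$. By properness of $s\times t:\Mor\mcG\lra\Ob\mcG\times\Ob\mcG$, the preimage of a compact neighborhood of $(x,x)$ is compact, so after passing to a subsequence $h_n\to h_\infty$ in $\Mor\mcG$ with $s(h_\infty)=t(h_\infty)=x$, i.e.\ $h_\infty\in G_x$. Since $\mcG$ is étale, $h_\infty$ has a neighborhood on which $s$ is a local diffeomorphism; as $N_{h_\infty}$ is one such neighborhood and $h_n\to h_\infty$, eventually $h_n\in N_{h_\infty}$, a contradiction. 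Hence some fixed shrinking $U_x\subset U$ works: take $W$ a relatively compact open neighborhood of $x$ with $\overline W\subset U$, apply the compactness argument to conclude that all arrows between points of $W$ that are close enough to $x$ lie in $\bigcup_g N_g$, and then shrink once more so that $U_x$ is contained in that region. The effectiveness hypothesis enters to guarantee that the $\phi_g$ for distinct $g\in G_x$ are genuinely distinct as germs (so the $N_g$ really are disjoint and the action of $G_x$ on $U_x$ is faithful), which is what lets us identify $\faktor{U_x}{G_x}$ with an honest orbifold chart rather than something with extra hidden identifications.

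For item (iii), that $\faktor{U_x}{G_x}\hookrightarrow|\mcG|$ is an open embedding: openness is immediate since the quotient map $\Ob\mcG\lra|\mcG|$ is open (the orbit of an open set is open, being a union of images under the local diffeomorphisms $s,t$), so the image of $U_x$ is open in $|\mcG|$. For injectivity of the induced map $\faktor{U_x}{G_x}\lra|\mcG|$, suppose $y,y'\in U_x$ have the same image, i.e.\ there is an arrow $h\in\Mor\mcG$ with $s(h)=y$, $t(h)=y'$. By item (ii), $h\in N_g$ for some $g\in G_x$, so $y'=\phi_g(y)$, meaning $y$ and $y'$ are in the same $G_x$-orbit; thus the map is injective. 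Continuity of the inverse follows from openness of the quotient map. Finally I would package the three items into the statement, noting that the same shrinking works simultaneously.

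I expect the main obstacle to be item (ii), specifically making the properness/étale compactness argument fully rigorous: one must be careful that the "bad" arrows $h_n$ live over a compact set so that a convergent subsequence exists, and that the limit arrow, lying in $G_x$, forces $h_n$ into one of the finitely many $N_g$ — this is where finiteness of $G_x$ (stability) is used, since it ensures $\bigcup_{g\in G_x}N_g$ is an open neighborhood of the whole fiber $(s\times t)^{-1}(x,x)\cap\{\text{arrows near the }N_g\}$. A secondary subtlety is checking that one can shrink $U$ to $U_x$ while preserving item (i), i.e.\ that $G_x$ still acts; this is handled by choosing $U_x$ to be $G_x$-invariant, e.g.\ $U_x:=\bigcap_{g\in G_x}\phi_g(U')$ for a small enough $U'$, which is open and $G_x$-invariant since $G_x$ is finite. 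I would refer to \cite{adem_orbifolds_2007} and \cite{moerdijk_orbifolds_2002} for the standard version of this argument and reproduce only the steps needed in our setting.
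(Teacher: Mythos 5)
Your proposal is correct and essentially reproduces the standard argument (Adem--Leida--Ruan, Proposition~1.44), which is exactly what the paper cites for this lemma rather than spelling out a proof. One small point worth tightening: in the contradiction setup for item~(ii) you fix $s(h_n)=x_n\to x$ but only require $t(h_n)\in U$, yet later conclude $s(h_\infty)=t(h_\infty)=x$; the correct negation produces arrows with \emph{both} endpoints in a shrinking basis of neighborhoods of $x$, so that $t(h_n)\to x$ as well — your subsequent paragraph with the relatively compact $W$ implicitly fixes this, but the initial sequence should be stated accordingly.
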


\begin{proof}
  See the proof of Proposition~1.44 in \cite{adem_orbifolds_2007}.
\end{proof}

\begin{definition}
  An \emph{orbifold chart} around a point $x\in\Ob\mcG$ for an effective ep-Lie groupoid $\mcG$ is a triple $(U_x,G_x,\pi)$ where $x\in U_x\subset\Ob\mcG$ is an open neighborhood of $x$ such that $G_x=\Mor(x,x)\subset\Mor\mcG$ acts on $U_x$ as in \cref{lem-existence-good-neighborhoods} together with a projection $\pi:U_x\lra \faktor{U_x}{G_x}$ such that $\faktor{U_x}{G_x}$ is homeomorphic to its image in $|\mcG|$.
  \label{def-orb-chart}
\end{definition}

\index{Orbifold!Chart}
\index{Orbifold!Atlas}

\begin{definition}
  An \emph{orbifold atlas} of an orbifold groupoid $\mcG$ is a collection of orbifold charts $\{(U_i,G_i,\pi_i)\}_{i\in I}$ as in \cref{def-orb-chart} such that the open sets $|U_i|=\faktor{U_i}{G_i}$ cover all of $|\mcG|$. Notice that on every chart we have chosen implicitly a central point $x_i\in\Ob\mcG$ with $x_i\in U_i$ and $G_i=\Aut_{\mcG}(x_i)$.
\end{definition}

\begin{rmk}
  Notice that this definition of an orbifold atlas is \emph{not} the standard one and in particular is note enough to recover the whole orbifold. We are missing how to glue coordinate charts, i.e.\ morphisms between different $U_i$'s as well as some information on how the groups $U_i$ are related. However, all this information is contained in the groupoid and as we will not deal with orbifolds defined only via such an atlas this is good enough for us. One way how to formulate such an actual atlas in our language is by choosing the full subcategory of $\mcG$ defined by $\bigcup_{i\in I}U_i$ and taking the fibre product along $\bigsqcup_{i\in I}U_i\lra \bigcup_{i\in I} U_i$. This category is equivalent to $\mcG$ in the sense of \ref{def:homomorphisms_of_groupoids} and it contains everything from the atlas in addition to information on how to glue things together.
\end{rmk}

\begin{prop}[See \cite{mcduff_groupoids_2005}]
  If $\mcC$ is an étale Lie-groupoid then the following holds:
  \begin{enumerate}[label=(\roman*), ref=(\roman*)]
    \item If $\mcC$ is proper then it is stable.
    \item The projection $\pi:\Ob\mcC\lra |\mcC|$ is open.
    \item $|\mcC|$ is Hausdorff if and only if $s\times t$ has closed image. In particular, if $\mcC$ is in addition proper then the topology on the orbit space is Hausdorff.
    \item If $\mcC$ is non-singular and proper, then its orbit space $|\mcC|$ is a manifold.
    \item If $\mcC$ is connected and proper and for some $x\in\Ob\mcC$ the stabilizer group $G_x$ acts effectively on a neighborhood of $x$, then $\mcC$ is effective.
    \item If $\mcC$ is connected and proper, then the isomorphism class of the subgroup $K_x\subset G_x$ which acts trivially on a neighborhood of $x\in\Ob\mcC$ is independent of $x\in\Ob\mcC$.
    \item If $\mcC$ is connected and proper, then there exists an effective groupoid $\mcC_{\text{eff}}$ with the same objects as $\mcC$ and morphisms $\faktor{\Mor_{\mcC}(x,y)}{K_y}$.
  \end{enumerate}
  \label{prop:properties-ep-lie-groupoids}
\end{prop}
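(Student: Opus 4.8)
The plan is to verify the seven assertions more or less independently, exploiting the étale hypothesis to pass to a local model and the proper hypothesis to get finiteness and separation. Throughout I would use the same two elementary remarks: since $s$ and $t$ are local diffeomorphisms, $s^{-1}(U)$ is open and $t(s^{-1}(U))$ is open whenever $U\subset\Ob\mcC$ is, and the fibres of $s\times t\colon\Mor\mcC\lra\Ob\mcC\times\Ob\mcC$ are discrete submanifolds of $\Mor\mcC$; and whenever $\mcC$ is proper, $s\times t$ is a proper map into the locally compact Hausdorff manifold $\Ob\mcC\times\Ob\mcC$, hence a closed map with compact fibres.

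From these, (i)--(iv) are short. For (i), $G_x=(s\times t)^{-1}(x,x)$ is discrete (fibre of an étale map) and compact (fibre of a proper map), hence finite. For (ii), the saturation of an open $U\subset\Ob\mcC$ is $t(s^{-1}(U))$, which is open, so the projection $\pi\colon\Ob\mcC\lra|\mcC|$ is open. For (iii), openness of $\pi$ makes the orbit relation $R:=(s\times t)(\Mor\mcC)\subset\Ob\mcC\times\Ob\mcC$ an open equivalence relation, and a quotient by an open equivalence relation is Hausdorff exactly when $R$ is closed in the product; when $\mcC$ is proper, $R$ is the image of the closed map $s\times t$, hence closed, so $|\mcC|$ is Hausdorff (and second countable, being an open continuous image of a manifold). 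For (iv), if all $G_x$ are trivial then the local-structure lemma for stable étale groupoids recorded above provides, for each $x$, a neighborhood $U$ that $\pi$ maps homeomorphically onto an open subset of $|\mcC|$; these are charts, their transition maps have the form $t\circ s^{-1}$ for morphisms and are therefore local diffeomorphisms, and combined with (iii) this exhibits $|\mcC|$ as a manifold. (Here I would cite \cite{robbin_construction_2006} for the local-structure lemma, which applies since a proper étale Lie groupoid is stable by (i).)

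The statements (v), (vi), (vii) all revolve around the function $x\mapsto K_x$, where $K_x\subset G_x$ is the normal subgroup of arrows $g\in G_x$ whose induced germ $\phi_g$ is the identity near $x$. I would first record the equivalence ``$\mcC$ effective $\iff$ $K_x=\{1\}$ for all $x$'': if some $g\in K_x$ is nontrivial then every $h$ in a small neighborhood $N_g$ of $g$ has $s(h)=t(h)$, violating effectiveness; conversely, if every $K_x$ is trivial, then an arbitrary $g\in\Mor\mcC$ either already satisfies $s(g)\neq t(g)$ or lies in some $G_x$ and acts nontrivially near $x$, so morphisms near $g$ move their sources. Next comes the key input: on a small enough orbifold chart $(U_x,G_x,\pi)$ the subgroup $K_x$ acts as the identity on all of $U_x$, so for each $y\in U_x$ the unique extensions of these arrows give an injection $K_x\hookrightarrow K_y$, while a properness/finiteness argument in the chart around $y$ bounds $|K_y|$ above by $|K_x|$; hence $K_x\cong K_y$ for $y$ near $x$. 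Local constancy of the isomorphism type together with connectedness of $|\mcC|$ is exactly (vi); (v) follows because effectiveness of the $G_{x_0}$-action at one point means $K_{x_0}=\{1\}$, hence $K_x=\{1\}$ everywhere, hence $\mcC$ is effective; and for (vii) one checks $gK_xg^{-1}=K_y$ for $g\in\Mor(x,y)$, so that $\Mor_{\mcC_{\mathrm{eff}}}(x,y):=\faktor{\Mor_\mcC(x,y)}{K_x}$ carries a well-defined composition, and since each $K_x$ is finite the quotient of the morphism manifold is again a manifold with descended structure maps, giving an ep-Lie groupoid that is effective by construction and has the same objects and orbit space.

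The hard part will be the local constancy of $x\mapsto K_x$ used in (vi): the upward inclusion $K_x\hookrightarrow K_y$ for $y$ near $x$ is immediate, but ruling out that \emph{new} germ-trivial arrows appear at nearby points requires the ep structure, namely that the fixed-point germs of the finitely many $\phi_g$, $g\in G_x$, are controlled well enough that an arrow which is germ-trivial near $y$ must already be germ-trivial near $x$. Once that technical point is in place, (v) and (vii) are bookkeeping with the finite-group local model, and (i)--(iv) are as sketched above.
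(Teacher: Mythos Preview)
The paper does not prove this proposition; it is stated with a citation to \cite{mcduff_groupoids_2005} and no argument is given. So there is no in-paper proof to compare against.

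Your outline for (i)--(iv) is correct and standard. For (v)--(vii) you correctly isolate the crux as the local constancy of $x\mapsto K_x$, and you are right that the inclusion $K_x\hookrightarrow K_y$ for $y$ near $x$ is immediate while the reverse bound is the issue. However, you leave this unresolved, only gesturing at ``the ep structure'' and ``fixed-point germs of the finitely many $\phi_g$''. In the smooth category a diffeomorphism that is the identity on an open set need not be globally the identity, so your worry is legitimate, but there is a clean fix you are missing: by Bochner's linearization theorem one can shrink the orbifold chart $(U_x,G_x)$ so that the $G_x$-action is linear. A linear map equal to the identity on an open set is the identity everywhere, hence for $y\in U_x$ any $g\in G_y\subset G_x$ with $\phi_g=\id$ near $y$ satisfies $\phi_g=\id$ on all of $U_x$, giving $K_y=K_x$ on the nose (not just up to isomorphism). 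With this in hand, (v)--(vii) go through as you sketch.
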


\begin{lem}
  Let $\mcG$ be an effective ep-Lie groupoid. There exists a full subcategory $\mcB\subset\mcG$ such that the inclusion is an equivalence of ep-Lie groupoids (see \cref{def:homomorphisms_of_groupoids} in the next section) and such that $\mcB$ is again an ep-Lie groupoid such that for every object $x\in\Ob\mcB$ the preimages $s^{-1}(x)$ and $s^{-1}(t)$ in $\Mor\mcG$ are finite sets.
  \label{lem:existence-nice-orb-chart}
\end{lem}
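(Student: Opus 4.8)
The plan is to realize $\mcB$ as the full subcategory of $\mcG$ on an open object set $V\subset\Ob\mcG$ that, first, meets every $\mcG$-orbit (so that the inclusion is essentially surjective, hence an equivalence) and, second, is covered in a locally finite way by \emph{relatively compact} ``good'' orbifold charts. The first property makes the inclusion an equivalence; the second is exactly what produces finiteness of the morphism fibres, via properness of $s\times t$. Throughout I read the asserted finiteness of $s^{-1}(x)$ and $t^{-1}(x)$ as referring to the morphism fibres of $\mcB$, i.e.\ since $\mcB$ is full, to those morphisms of $\mcG$ with source, resp.\ target, $x$ whose other endpoint also lies in $\Ob\mcB$.

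First I would construct $V$. Since $\mcG$ is effective ep-Lie, $|\mcG|$ is Hausdorff by \cref{prop:properties-ep-lie-groupoids}, and the open surjection $\Ob\mcG\to|\mcG|$ makes $|\mcG|$ second countable and locally compact, hence paracompact. For each $x\in\Ob\mcG$ pick a good chart $(U_x,G_x,\pi_x)$ as in \cref{lem-existence-good-neighborhoods}, and shrink it to a $G_x$-invariant open neighbourhood $V_x\subset U_x$ of $x$ with $\overline{V_x}$ compact; such a $V_x$ exists because $G_x$ is finite (take a relatively compact open neighbourhood $W\ni x$ with $\overline{W}\subset U_x$, and intersect $W$ with its finitely many translates $\phi_g(W)$, $g\in G_x$, under the action of \cref{rmk:action-groupoid-neighborhood}), and $V_x$ inherits all three properties of \cref{lem-existence-good-neighborhoods}. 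The sets $\{|V_x|\}_{x\in\Ob\mcG}$ form an open cover of $|\mcG|$; by paracompactness extract a locally finite subcover $\{|V_{x_i}|\}_{i\in I}$, set $V\coloneqq\bigcup_{i\in I}V_{x_i}$, and let $\mcB\subset\mcG$ be the full subcategory with $\Ob\mcB=V$.

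Next I would dispatch the structural points, which are routine. Since $V$ is open, $\Ob\mcB=V$ and $\Mor\mcB=(s\times t)^{-1}(V\times V)$ are open submanifolds, all structure maps restrict and remain étale, and $s\times t\colon\Mor\mcB\to V\times V$ is proper as the restriction of a proper map to the preimage of a subspace; effectiveness is a condition on neighbourhoods of morphisms and so is inherited. Hence $\mcB$ is an effective ep-Lie groupoid. The inclusion $\mcB\hookrightarrow\mcG$ is fully faithful because $\mcB$ is full, so $\Mor\mcB$ is literally the fibre product $\Mor\mcG\times_{\Ob\mcG\times\Ob\mcG}(V\times V)$, and it is essentially surjective because $V$ is open and meets every orbit, so $t\colon s^{-1}(V)\to\Ob\mcG$ is a surjective submersion; by the criterion recalled in \cref{def:homomorphisms_of_groupoids} the inclusion is thus an equivalence of ep-Lie groupoids.

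The actual content, and the step requiring the most care, is finiteness of the fibres. Fix $x\in V$, let $F\coloneqq\{i\in I\mid [x]\in|V_{x_i}|\}$, finite by local finiteness, and $W\coloneqq\bigcup_{i\in F}V_{x_i}$, so $\overline{W}=\bigcup_{i\in F}\overline{V_{x_i}}$ is compact. If $g\in\Mor\mcB$ has $s(g)=x$, then $t(g)\in V$, hence $t(g)\in V_{x_i}$ for some $i\in I$; but $[t(g)]=[x]$, so $[x]\in|V_{x_i}|$, forcing $i\in F$ and $t(g)\in W$. Therefore the source fibre of $x$ in $\mcB$ is contained in $(s\times t)^{-1}\bigl(\{x\}\times\overline{W}\bigr)$, which is compact, being the preimage of a compact set under the proper map $s\times t$ of $\mcG$, and discrete, because $\mcG$ is étale; a compact discrete set is finite. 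Applying the same argument to the inverse map $i\colon\Mor\mcG\to\Mor\mcG$, which exchanges $s$ and $t$, handles the target fibre. The one subtlety to keep in view throughout is precisely this: properness of $s\times t$ controls fibres only over \emph{compact} sets, which is why $\mcB$ must be assembled from relatively compact charts with closures inside $\Ob\mcG$ rather than from the charts of \cref{lem-existence-good-neighborhoods} directly; checking that $|\mcG|$ is paracompact so that such a locally finite cover exists, and that the shrunk charts remain good, is the bookkeeping that makes the argument close.
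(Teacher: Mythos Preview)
Your proof is correct and follows essentially the same strategy as the paper: take a locally finite cover of $|\mcG|$ by images of orbifold charts and let $\mcB$ be the full subcategory on their union. The only minor difference is in the finiteness step: the paper observes directly that each equivalence class has at most $|G_i|$ representatives in each chart $U_i$ (since $U_i/G_i\hookrightarrow|\mcG|$), hence finitely many in total, and then uses finiteness of $\Hom(x,y)$ for each pair; you instead shrink the charts to be relatively compact and invoke properness of $s\times t$ over the compact set $\{x\}\times\overline{W}$, which is a slightly more hands-on route to the same conclusion.
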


\begin{rmk}
  Note that the finiteness condition is equivalent to the fact that every equivalence class $[x]$ has only finitely many representatives $x\in\Ob\mcB$. Also this does not follow from the general definition of an ep-Lie groupoid since we require $s$ and $t$ to be submersions from which follows that $s^{-1}(x)$ is zero-dimensional but the properness only gives finiteness for $(s\times t)^{-1}(x,y)$, i.e.\ after fixing source \emph{and} target.
\end{rmk}

\begin{proof}
  Cover $\Ob\mcG=\bigcup_{x\in X}U_x$ by orbifold charts as in \cref{def-orb-chart}. Notice that $|\mcG|$ is Hausdorff and second-countable because it is the image of an open quotient map and $\Ob\mcG$ is second-countable and because of \cref{prop:properties-ep-lie-groupoids}. Thus we can pick a subset $I\subset X$ such that $\bigcup_{i\in I}|U_i|$ covers $|\mcG|$ in a locally finite way, i.e.\ for any class $[x]\in\mcG$ there exist only finitely many $i\in I$ such that $[x]\in |U_i|$. Now define $\mcB$ as the full subcategory defined by $\bigcup_{i\in I}U_i\subset\Ob\mcG$. By construction every equivalence class has only finitely many representatives in $\obj\mcB$ with each only finitely many automorphisms and thus source and target map have only finitely many preimages. This $\mcB$ is an étale Lie groupoid as $\Ob\mcB$ is an open subset of $\Ob\mcG$ and all the structure maps are restrictions of smooth maps to open subsets. The properness is also clear as the diagram
  \begin{equation*}
    \xymatrix{
      \Mor\mcB \ar[d] \ar[r]^-{s\times t} & \Ob\mcB\times\Ob\mcB \ar[d] \\
      \Mor\mcG  \ar[r]^-{s\times t} & \Ob\mcG\times\Ob\mcG
      }
  \end{equation*}
commutes where the vertical arrows are homeomorphisms onto their images. The equivalence is also true by construction.
\end{proof}

\subsection{Equivalences}

Next we define a few notions of morphisms between ep-Lie groupoids in order to explain the notion of equivalent orbifold atlases and maps between orbifolds.

\index{Orbifold!Homomorphism}
\index{Orbifold!Equivalence}
\index{Morita Equivalence}
\index{Orbifold!Fibered Product of}

\begin{definition}
  Let $\mcC$, $\mcD$ and $\mcG$ be ep-Lie groupoids.
  \begin{enumerate}[label=(\roman*), ref=(\roman*)]
    \item A \emph{homomorphism} $\phi:\mcC\lra\mcD$ is a functor whose maps on objects and morphisms are smooth and commute with all structure maps.
    \item Two homomorphisms $\phi,\eta:\mcC\lra\mcD$ are identified if there exists a natural transformation $\alpha:\phi\Rightarrow\eta$ which is smooth as a map $\alpha:\Ob\mcC\lra\Mor\mcD$.
    \item If $\phi:\mcC\lra\mcG$ and $\psi:\mcD\lra\mcG$ are homomorphisms then we define the \emph{fibered product of groupoids} $\mcC\times_{\mcG}\mcD$ as the groupoid with objects $\Ob\mcC\times_{\mcG}\mcD=\{(y,g,z)\mid y\in\Ob\mcC,z\in\Ob\mcD,g:\phi(y)\lra\psi(z)\}$ and morphisms
      \begin{equation*}
        \Mor_{\mcC\times_{\mcG}\mcD}((y,g,z),(y',g',z')) = \left\{ \begin{aligned} (h,k)\mid h:y\lra y',k:z\lra z', \\ \text{ s.t. }g'\phi(h)=\psi(k)g \end{aligned} \right\}.
      \end{equation*}
      Composition and structure maps are defined in the obvious ways.
    \item A homomorphism $\phi:\mcC\lra\mcD$ is called an \emph{equivalence} if the map $t\circ\pr_1:\Mor\mcD{}_s\times_{\phi}\Ob\mcC\lra \Ob\mcD$ is a surjective submersion and the square
      \begin{equation*}
        \xymatrix{
          \Mor\mcC \ar[r]^{\phi} \ar[d]_{(s,t)} & \Mor\mcD \ar[d]^{(s,t)} \\
          \Ob\mcC\times\Ob\mcC \ar[r]^{\phi\times\phi} & \Ob\mcD\times\Ob\mcD
          }
      \end{equation*}
      is a fibered product of manifolds. This means that $\Mor\mcC$ is diffeomorphic to the pull-back of $\Mor\mcD$ along $\phi\times\phi$ in such a way that all the maps are smooth and commute.
    \item $\mcC$ and $\mcD$ are called \emph{Morita equivalent} if there exists a third ep-Lie groupoid $\mcG$ together with two equivalences $\mcC\overset{\phi}{\longleftarrow}\mcG\overset{\phi'}{\lra}\mcD$.
  \end{enumerate}
  \label{def:homomorphisms_of_groupoids}
\end{definition}

\begin{rmk}
  A few comments are in order.
  \begin{enumerate}[label=(\roman*), ref=(\roman*)]
    \item The definitions also make sense in the Lie groupoid setting. There are various statements which properties are preserved under Morita equivalence like properness and effectiveness and which ones are not like \'etalness. However, we will only deal with orbifolds and thus restrict everything to the ep-Lie groupoid case.
    \item Fibered products of groupoids are not always Lie groupoids because of transversality issues. However, the submersion condition for an equivalence ensures that you can ``compose'' two equivalences by their ep-Lie groupoid fibered product as in
      \begin{equation*}
        \xymatrix{
          & & \mcG{}_{\epsilon'}\times_{\epsilon''}\mcH \ar[dl]^{\rho} \ar[dr]^{\rho'} & & \\
          & \mcG \ar[dl]^{\epsilon} \ar[dr]^{\epsilon'} & & \mcH \ar[dl]^{\epsilon''} \ar[dr]^{\epsilon'''} & \\
          \mcC & & \mcD & & \mcE
          }
      \end{equation*}
      In \cite{moerdijk_introduction_2003} it is proven that the fibered product always exists if one of the homomorphisms is an equivalence. Also in the square the functor $\rho$ is an equivalence whenever $\epsilon''$ is. Composition of two equivalences is again an equivalence. Also note that the square does commute only up to a (smooth) natural transformation.
      \item The last statements show that Morita equivalence is indeed an equivalence relation where transversality is proved via fibered products and symmetry is achieved by changing the type of the diagrams as you can just ``rotate'' the triangle.
      \item Notice that all natural transformations are automatically natural isomorphisms as all the categories are groupoids.
      \item It is reasonable to consider homomorphisms only up to smooth natural transformations as two such related homomorphisms descend to the same map on the quotient and induce maps on morphism sets that are conjugated. For example if $\alpha:\mcF\Rightarrow\mcG$ for $\mcF,\mcG:\mcC\lra\mcD$, then we have maps $\mcF:\Hom_{\mcC}(x,y)\lra\Hom_{\mcD}(\mcF(x),\mcF(y))$ and $\mcG:\Hom_{\mcC}(x,y)\lra\Hom_{\mcD}(\mcG(x),\mcG(y))$ which have the same kernel and whose images are isomorphic by the map $(f:x\to y)\mapsto \alpha_y\circ f \circ\alpha_x^{-1}$.
      \item The conditions for an equivalence imply in particular that the functor is essentially surjective, i.e.\ all equivalence classes of the target groupoid are hit and that it is fully faithful as the commuting diagram implies $\Hom_{\mcC}(x,y)\simeq\Hom_{\mcD}(\phi(x),\phi(y))$. Thus equivalences are in particular category equivalences.
      \item This whole set of definitions and statements is actually an instance of the calculus of fractions, see \cite{gabriel_calculus_1967}. In this language our equivalences are weak equivalences that we try to invert artificially.
      \item Note that an equivalence $\phi:\mcC\lra\mcD$ induces a homeomorphism on the quotient spaces $|\mcC|\lra|\mcD|$.
    \end{enumerate}
\end{rmk}

\subsection{Orbifolds and their Morphisms}

\index{Orbifold!Groupoid}

From now on we will refer to ep-Lie groupoids as \emph{orbifold groupoids}.

\begin{definition}
  An \emph{orbifold} is a topological space $X$ together with a Morita equivalence class of orbifold groupoids $\mcG$ together with a homeomorphism $g:|\mcG|\lra X$. This means $(\mcG,g)\sim(\mcH,h)$ if and only if there exists an orbifold groupoid $\mcC$ with two equivalences $\epsilon:\mcC\lra\mcG$ and $\epsilon':\mcC\lra\mcH$ inducing a homeomorphism $\phi:|\mcH|\lra|\mcG|$ such that
  \begin{equation*}
    \xymatrix{
      |\mcH| \ar[rr]^{\phi} \ar[rd]^h & & |\mcG| \ar[dl]^g \\
      & X &
      }
  \end{equation*}
  commutes. An orbifold groupoid and the corresponding homeomorphism representing the Morita equivalence class of the orbifold structure is called an \emph{orbifold representation}.
\end{definition}

Now we can define various differential-geometric notions of maps and functions.

\index{Strong map}

\begin{definition}
  Let $(\mcC,c)$ and $(\mcD,d)$ be orbifold representations for orbifolds $X$ and $Y$, respectively.
  \begin{enumerate}[label=(\roman*), ref=(\roman*)]
    \item A \emph{smooth function} $f$ on the orbifold $X$ is a smooth function $f:\Ob\mcC\lra\RR$ such that $f(x)=f(y)$ for all points $x,y\in\Ob\mcC$ such that $\Hom(x,y)\neq\emptyset$.
    \item A \emph{(strong) map between orbifolds $X$ and $Y$} is an equivalence class of a tuple $(\epsilon,\mcG,\phi)$ and a map $f:X\lra Y$, where $\mcC\overset{\epsilon}{\longleftarrow}\mcG\overset{\phi}{\lra}\mcD$ such that $\mcG$ is an orbifold groupoid, $\epsilon$ is an equivalence, $\phi$ is a homomorphism of groupoids and the diagram
      \begin{equation*}
        \xymatrix{
          |\mcC| \ar[r]^{\phi_*\circ\epsilon_*^{-1}} \ar[d]^c & |\mcD| \ar[d]^d \\
          X \ar[r]^f & Y
          }
      \end{equation*}
      commutes. Two such strong maps
      \begin{align*}
        (\epsilon,\mcG,\phi,f) & :(X,\mcC,c)\lra(Y,\mcD,d)\text{ and} \\
        (\epsilon',\mcG',\phi',f') & :(X,\mcC,c)\lra(Y,\mcD,d)
      \end{align*}
      are identified if and only if there exists a homomorphism $\gamma:\mcG\lra\mcG'$ such that
      \begin{equation*}
        \xymatrix{
          & \mcG \ar[dl]^{\epsilon} \ar[dr]^{\phi} \ar[dd]^{\gamma} & \\
          \mcC & & \mcD \\
          & \mcG' \ar[ul]^{\epsilon'} \ar[ur]^{\phi'} &
          }
      \end{equation*}
      commutes up to smooth natural transformations.
  \end{enumerate}
\end{definition}

\begin{rmk}
  Again, a few comments are in order.
  \begin{enumerate}[label=(\roman*), ref=(\roman*)]
    \item Notice that in the definition of equivalence between strong maps we did not say anything about the map $f:X\lra Y$ because the two homomorphisms induce the same map on the quotient space since they are related by a natural transformation. Also it is not trivial to show that this is indeed an equivalence relation.
    \item The obvious definition of a map between orbifolds would be a homomorphism inducing a continuous map on the orbit spaces but this definition depends on the orbifold presentation so we allow for refinements of the structure and allow for homomorphisms that factor through Morita equivalence.
    \item The last point means that we need in fact to extend the definition of equivalence of maps to Morita equivalent groupoids. However, this is easily done as any strong map between orbifold groupoids induces a map on any Morita equivalent one by the fibered product construction. Since we do not need this in this document we will skip the relevant diagrams.
    \item Let us spell out the corresponding notion of isomorphism between two orbifolds $(X,\mcC,c)$ and $(Y,\mcD,d)$. They are isomorphic if and only if there exist two strong maps $(\epsilon,\mcG,\phi):(X,\mcC,c)\lra(Y,\mcD,d)$ and $(\rho,\mcG',\eta):(Y,\mcD,d)\lra(X,\mcC,c)$ such that their compositions are naturally isomorphic to the identity, i.e.
      \begin{equation*}
        \xymatrix{
          & & \mcG{}_{\phi}\times_{\rho}\mcG' \ar[dl]^{\rho'} \ar[dr]^{\phi'} \ar@/^1pc/[ddd]^<<<<<<<<{\epsilon\circ\rho'}  & & \\
          & \mcG \ar[dl]^{\epsilon} \ar[dr]^{\phi} & & \mcG' \ar[dl]^{\rho} \ar[dr]^{\eta} & \\
          \mcC & & \mcD & & \mcC \\
          & & \mcC \ar[ull]^{\id} \ar[urr]^{\id} & &
          }
      \end{equation*}
      commutes up to smooth natural transformation. The smoothness is crucial here to make sure that e.g.\ the dimension is an actual invariant of orbifolds. Additionally one can trace the automorphism groups through the diagram to see that these are indeed invariants as well.
    \item In \cite{moerdijk_orbifolds_2002} it is explained that although the notion of a strong map between orbifolds is somewhat complicated one can in fact show that any strong map $(\epsilon,\mcG,\phi):(X,\mcC,c)\lra(Y,\mcD,d)$ can be represented in the following way. Choose an open cover $\mcU=\{U_i\}_{i\in I}$ of $\Ob\mcC$ and define $\mcC_{\mcU}$ by $\Ob\mcC_{\mcU}\coloneqq \bigsqcup_{i\in I}U_i$ and $\Hom_{\mcC_{\mcU}}(x\in U_i,y\in U_j)\coloneqq\Hom_{\mcC}(x,y)$. Then every strong map has a representative
      \begin{equation*}
        \xymatrix{
          & \mcC_{\mcU} \ar[dl]^{\epsilon} \ar[dr]^{\phi'} & \\
          \mcC & & \mcD
          }
      \end{equation*}
      for some cover $\mcU$ fine enough where $\phi$ is a homomorphism. This means we do not need to pass to arbitrary wild equivalent groupoid categories.
  \end{enumerate}
\end{rmk}

\section{Orbifold bundles and coverings}

\label{sec:orb-bundles-coverings}
\index{$\mcG$-space}

\begin{definition}
  Let $\mcG$ be an ep-Lie groupoid and $E$ be a manifold. We call $E$ a \emph{$\mcG$-space} if there exists an action of $\mcG$ on $E$, i.e.
  \begin{itemize}
    \item an \emph{anchor} $\pi:E\lra\Ob\mcG$ and
    \item an \emph{action} $\mu:\Mor\mcG {}_s\times_{\pi} E\lra E$, written as $\mu(g,e)=g\cdot e$
  \end{itemize}
  which satisfy $\pi(g\cdot e)=t(g)$, $1_x\cdot e=e$ and $g\cdot (h\cdot e)=(gh)\cdot e$ for all suitable $g,h$ and $e$.
  \label{def:g-space}
\end{definition}

\index{Groupoid!Translation}

\begin{rmk}
  Given a $\mcG$-space $E$ we can associate to it the \emph{translation groupoid} $\mcE:=\mcG\ltimes E$ with objects $\Ob\mcE:= E$ and morphisms $\Mor\mcE:=\Mor\mcG{_s\times_{\pi}}E$ with the obvious structure maps. There exists a homomorphism of groupoids $\pi_{\mcE}:\mcE\lra\mcG$ and the fibre of $\Ob \mcE\lra|\mcE|$ is given by $\faktor{\pi_{\mcE}^{-1}(x)}{G_x}$.
\end{rmk}

\begin{definition}
  Let $\mcG$ be an ep-Lie groupoid and $E$ a $\mcG$-space. The we call $E$
  \begin{enumerate}[label=(\roman*), ref=(\roman*)]
    \item an \emph{orbifold cover} over $\mcG$ if the map $\pi:E\lra\Ob\mcG$ is a covering. The translation groupoid $\mcE$ can be considered as the total space of the covering.
    \item $E$ is called a \emph{(orbi-)vector bundle} over $\mcG$ if $\pi:E\lra\Ob\mcG$ is a vector bundle such that the action of $\mcG$ on $E$ is fibrewise linear. Again, the ep-Lie groupoid $\mcE=\mcG\ltimes E$ associated to this bundle is the total space of the vector bundle.
    \item $E$ is called a \emph{principal $T^n$-(orbi-)bundle} over $\mcG$ if $\pi:E\lra\Ob\mcG$ is a principal torus bundle and the action of $\mcG$ on $E$ is a fibrewise $T^n$-equivariant map.
    \item A \emph{section} $\sigma$ of a vector bundle $\mcE$ over $\mcG$ as above is an invariant section of $E\lra\Ob\mcG$, i.e.\ $g\cdot \sigma(x)=\sigma(y)$ for $g:x\lra y$.
  \end{enumerate}
  \label{def:orbifold-bundles}
\end{definition}

\index{Orbifold!Vector Bundle}
\index{Orbifold!Cover}
\index{Orbifold!Principal Bundle}

\begin{rmk}
 Note that the homomorphism $\pi_{\mcE}:\mcE\lra\mcG$ induces a projection $\pi_{|\mcE|}:|\mcE|\lra|\mcG|$ which is no longer a vector bundle as the type of the fibre might jump. Also, $G_x$ acts linearly on the fibre $E_x$ for $x\in\Ob{\mcG}$.
\end{rmk}

The local picture for an orbifold cover $\pi:E\lra\Ob\mcG$ is as follows. Let $x\in\Ob\mcG$ and $U_x\subset\Ob\mcG$ a neighborhood of $x$ such that $(U_x,G_x,\eta)$ is an orbifold chart as in \cref{def-orb-chart}. Then we have $\pi^{-1}(U_x)\cong\bigsqcup_{i\in I}U_i$ where each $U_i$ is homeomorphic to $U_x$. Now $G_x$ acts on $\pi^{-1}(U_x)$. Fix one component $U_i$ and consider the subgroup $G_i$ of $G_x$ mapping this component to itself. Then the induced map $\pi_{|\mcE|}:|\mcG\ltimes E|\lra|\mcG|$ in a neighborhood of $\pi_{|\mcE|}|_{U_i}^{-1}([x])$ looks like the map $\faktor{U_i}{G_i}\lra\faktor{U_x}{G_x}$ given by $\pi$ on the representatives.

Similarly we can look at vector bundles over orbifolds. So assume $\pi:E\lra\Ob\mcG$ is a vector bundle and consider an orbifold chart $(U_x,G_x,\eta)$ around a point $x\in\Ob\mcG$. Thus locally the map on orbit spaces $\pi_{|\mcE|}:|\mcG\ltimes E|\lra|\mcG|$ looks like $\faktor{\pi^{-1}(U_x)}{G_x}\lra \faktor{U_x}{G_x}$. In particular the fibre over $[x]$ is given by $\faktor{\pi^{-1}(x)}{G_x}$. Note that the fibre over $[x]$ thus do not need to be a vector space and may depend on the point $[x]$.

\index{Orbifold! Vector Bundle Morphism}

\begin{definition}
  An \emph{orbifold vector bundle morphism} $(\Phi,\phi):E\lra F$ between two orbifold vector bundles $E\lra\Ob\mcG$ and $F\lra\Ob\mcH$ is a vector bundle morphism
  \begin{equation*}
    \xymatrix{
      E \ar[d]^{\pi} \ar[r]^{\Phi} & F \ar[d]^{\pi} \\
      \Ob\mcG \ar[r]^{\phi_{\Ob}} & \Ob\mcH
      }
  \end{equation*}
  together with a homomorphism $\phi:\mcG\lra\mcH$ such that
  \begin{equation*}
    \xymatrix{
      \Mor\mcG{_s\times_{\pi}}E \ar[r]^-{\mu_E} \ar[d]_{\phi_{\Mor}\times\Phi} & E \ar[d]^{\Phi} \\
      \Mor\mcH{_s\times_{\pi}}F \ar[r]^-{\mu_F} & F \\
      }
  \end{equation*}
  commutes. An \emph{isomorphism} of orbifold vector bundles is an invertible orbifold vector bundle morphism.
  \label{def:orbibundle-morphism}
\end{definition}

\begin{rmk}
  Of course one can generalize this notion of orbifold vector bundle morphisms. It would be enough to require everything only up to Morita equivalence. However, we will only need this notion for homomorphisms. Also we can modify this definition for e.g.\ principal bundles by requiring the map $\Phi$ to be equivariant instead of linear. Furthermore note that these definitions also make sense for a stable groupoid which has only a topology on the object space.
\end{rmk}

Before we look at examples we will say a few words about bundles on non-effective orbifolds which might be such that they do not admit any local sections.

\begin{definition}
  Let $\mcG$ be an ep-Lie groupoid. Then we call
  \begin{enumerate}[label=(\roman*), ref=(\roman*)]
    \item a morphism $g\in\Mor\mcG$ \emph{ineffective} if there exists a neighborhood $V\subset\Mor\mcG$ with $g\in V$ such that $s(h)=t(h)$ for all $h\in V$. The set of ineffective morphisms is denoted by $\Mor_{\text{ineff}}\mcG$. Also
    \item we call a vector bundle $E\lra\Ob\mcG$ \emph{good} if $G_x\cap\Mor_{\text{ineff}}\mcG$ acts trivially on each fibre $E_x$ for all $x\in\Ob\mcG$.
  \end{enumerate}
  \label{def:good-vector-bundle}
\end{definition}

\index{Good Vector Bundle}

\begin{prop}
  A good vector bundle $E\lra\mcG$ has non-trivial sections in the sense that for any $e\in \pi^{-1}(p)$ there exists a local section $s$ such that $s(p)=e$. Also, any bundle over an effective ep-Lie groupoid is good. Furthermore the space of global sections is a vector space.
\end{prop}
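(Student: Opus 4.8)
The plan is to verify the three assertions in turn — that bundles over effective groupoids are good, that a good bundle has enough local sections, and that the global sections form a vector space — of which the first and last are short and the middle one carries the content. Throughout, recall that a section of $E$ over $\mcG$ is by definition (see \cref{def:orbifold-bundles}) an invariant section $s$ of $E\lra\Ob\mcG$, so that necessarily $s(p)\in E_p^{G_p}$, the subspace fixed by $G_p$; thus the assertion ``$s(p)=e$ for any $e\in\pi^{-1}(p)$'' is to be read for $e$ ranging over $E_p^{G_p}$, which agrees with all of $\pi^{-1}(p)=E_p$ exactly when $G_p$ acts trivially on the fibre (as happens on the principal stratum). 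For effectiveness: if $\mcG$ is effective then no $g\in\Mor\mcG$ admits a neighbourhood on which $s=t$ — immediate if $s(g)\neq t(g)$, and if $g\in G_x$ then effectiveness itself furnishes arrows with $s\neq t$ in every neighbourhood — so $\Mor_{\text{ineff}}\mcG=\emptyset$ and every bundle is good vacuously by \cref{def:good-vector-bundle}.

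For the existence of sections, fix $p\in\Ob\mcG$. Since $\mcG$ is proper and étale it is stable, so $G_p$ is finite; the local-structure lemma for stable étale groupoids (the one cited to \cite{robbin_construction_2006} earlier) gives a neighbourhood $U_p\ni p$ and pairwise disjoint $N_g\ni g$ $(g\in G_p)$ with $s,t\colon N_g\to U_p$ diffeomorphisms, hence $\phi_g\coloneqq t\circ(s|_{N_g})^{-1}$ together with bundle isomorphisms $\Phi_g\colon E|_{U_p}\to E|_{U_p}$ covering $\phi_g$; the assignment $g\mapsto(\phi_g,\Phi_g)$ is a homomorphism of $G_p$ into bundle automorphisms. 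A standard properness argument (shrinking $U_p$, as in the proof of \cref{lem-existence-good-neighborhoods}) lets us arrange that every arrow $h$ with $s(h),t(h)\in U_p$ lies in some $N_g$, so that the $\mcG$-action restricted to $U_p$ is precisely this $G_p$-action. One checks that $G_p\cap\Mor_{\text{ineff}}\mcG=K_p\coloneqq\{\,g\in G_p\mid\phi_g=\id\text{ near }p\,\}$, the subgroup appearing in \cref{prop:properties-ep-lie-groupoids}. For $k\in K_p$ we have $\phi_k=\id$, so $\Phi_k$ is a smooth family $y\mapsto\Phi_k(y)\in GL(E_y)$ with $\Phi_k(y)^m=\id$, where $m=\operatorname{ord}k$; goodness gives $\Phi_k(p)=\id$, and since the eigenvalues lie in the finite set $\mu_m$ and depend continuously on $y$ they remain equal to $1$ near $p$, which together with diagonalisability forces $\Phi_k\equiv\id$ on $E|_{U_p}$ after shrinking. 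Hence the $G_p$-action on $E|_{U_p}$ factors through $\bar G_p\coloneqq G_p/K_p$; this is exactly where goodness enters, and is what prevents the sheaf of invariant sections from collapsing to zero (as it does, e.g., for the bundle over $\ZZ\!/2\ltimes\RR^2$ with trivial base action and fibrewise $-1$).

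Now let $e\in E_p^{G_p}$. Shrinking $U_p$ further to $\bigcap_{g\in G_p}\phi_g(U_p)$ — still a neighbourhood of $p$ since $\phi_g(p)=p$ — choose any smooth section $\tilde s$ of $E\lra\Ob\mcG$ on $U_p$ with $\tilde s(p)=e$, and set
\[
 s\;\coloneqq\;\frac{1}{|G_p|}\sum_{g\in G_p}\Phi_g\circ\tilde s\circ\phi_{g^{-1}}.
\]
This is a smooth section of $E|_{U_p}$; a direct re-indexing of the sum gives $\Phi_h\circ s=s\circ\phi_h$ for all $h\in G_p$, i.e.\ $s$ is $G_p$-invariant, hence $\mcG$-invariant on $U_p$ by the previous paragraph, and $s(p)=\tfrac{1}{|G_p|}\sum_g\Phi_g(e)=e$ because $e$ is $G_p$-fixed. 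Extending $s$ over the $\mcG$-saturation of $U_p$ by $s(t(h))\coloneqq\Phi_h\bigl(s(s(h))\bigr)$ for arrows $h$ with $s(h)\in U_p$ is consistent — any two arrows into a common point out of $U_p$ differ by an arrow inside $U_p$, which comes from $\bar G_p$ and therefore preserves $s$ — so we obtain an invariant section on an invariant open neighbourhood, i.e.\ a local section of the orbifold bundle with $s(p)=e$.

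Finally, if $s_1,s_2$ are global sections of $E$ over $\mcG$ and $a,b\in\RR$, then $as_1+bs_2$ is a section of the vector bundle $E\lra\Ob\mcG$ and, since each $\Phi_g$ is fibrewise linear, it is again $\mcG$-invariant; hence the global sections form a linear subspace of $\Gamma(E\lra\Ob\mcG)$. The main obstacle is the local analysis of the second paragraph: upgrading ``$\Phi_k$ acts trivially on the single fibre $E_p$'' to ``$\Phi_k$ acts trivially on a whole neighbourhood'' via the finite-order/eigenvalue argument, and setting up the chart for a possibly non-effective $\mcG$ so that $G_p$-invariance already yields $\mcG$-invariance. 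The averaging identity and the vector-space statement are routine once this is in place.
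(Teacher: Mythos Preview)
Your proof is correct and takes a genuinely different route from the paper's. For the existence of local sections, the paper passes (via \cref{lem:existence-nice-orb-chart}) to an equivalent full subcategory $\mcB\subset\mcG$ in which every class has only finitely many representatives, chooses an arbitrary section $\sigma$ of $E\to\Ob\mcB$ with $\sigma(q)=e$ for all $q\sim p$, and then averages over \emph{all} arrows out of each point,
\[
\ol{\sigma}(x)\;=\;\frac{1}{|s^{-1}(x)|}\sum_{h:\,s(h)=x}h^{-1}\cdot\sigma(t(h)),
\]
using that precomposition with a fixed arrow $g:x\to y$ gives a bijection $s^{-1}(x)\to s^{-1}(y)$. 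You instead stay entirely inside one orbifold chart and reduce to the finite group $G_p$; your key extra ingredient is the eigenvalue-rigidity argument showing that goodness at the single fibre $E_p$ forces $K_p$ to act trivially on all of $E|_{U_p}$, so that ordinary $G_p$-averaging already produces a $\mcG$-invariant section. Your approach is more self-contained --- it avoids invoking \cref{lem:existence-nice-orb-chart}, which as stated in the paper assumes effectiveness --- and yields the pleasant local picture that a good bundle restricted to a slice is an honest $\bar G_p=G_p/K_p$-equivariant bundle. The paper's approach is more in the groupoid spirit and avoids the linear-algebraic detour, at the cost of the passage to $\mcB$. Your opening remark that the conclusion can only be asked for $e\in E_p^{G_p}$ is also a worthwhile clarification of the statement.
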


\begin{proof}
  If $\mcG$ is effective then by definition $G_x\cap\Mor_{\text{ineff}}\mcG=\{\id_x\}$ which acts trivially on $E_x$ implying that the bundle $E\lra\mcG$ is good. Given two sections $\sigma_1,\sigma_2:\obj\mcG\lra E$ their sum $\sigma_1+\sigma_2$ and scalar multiples $\lambda\sigma_1$ are again invariant sections of $E$ and thus the space of sections forms a vector space.  

  Now use \cref{lem:existence-nice-orb-chart} to define an equivalent ep-Lie groupoid having a subset $\Ob\mcB\subset\Ob\mcG$ as objects. It is clear that it is enough to define a section $\ol{\sigma}:\Ob\mcG\lra E$ on the subset $\Ob\mcB$ as every other object is identified with some object in $\mcB$ by the equivalence and we can define $\ol{\sigma}(y)\coloneqq g\cdot\ol{\sigma}(x)$ for any $g:x\lra y$ with $x\in\Ob\mcB$. This is well defined because for another $z\in\Ob\mcB$ and $h:x\lra z$ we have $g\cdot\ol{\sigma}(x)=h\cdot\ol{\sigma}(z)$ because $h^{-1}g\cdot\ol{\sigma}(x)=\ol{\sigma}(x)$ as $h^{-1}g:x\lra x\in\Mor\mcB$. Also smoothness of the extension is clear as the action of $g:x\lra y$ on a neighborhood of $x\in\Ob\mcG$ is smooth.

  So it remains to construct an invariant section of $E\lra\mcB$ with prescribed value $e$ at $p\in\obj\mcB$. We will do this by averaging some arbitrary section of $E\lra\mcB$, hence why we passed to a sort of ``finite'' full subcategory. First choose a section $\sigma:\obj\mcB\lra E$ such that $\sigma(q)=e$ for all $q$ equivalent to $p$. This is possible as $E\lra\obj\mcB$ is a vector bundle over a manifold. Then define
  \begin{equation*}
    \ol{\sigma}(x)\coloneqq\frac{1}{|s^{-1}(x)|}\sum_{\substack{h\in\Mor\mcG \\ s(h)=x}}h^{-1}\cdot\sigma(t(h))
  \end{equation*}
  for any $x\in\obj\mcB$. As we have seen various times in \cref{sec:hurwitz-numbers} the prefactor $|s^{-1}(x)|$ takes care of the fact that the number of summands can jump. This map is indeed smooth and satisfies $\ol{\sigma}(p)=e$. Furthermore it is invariant as for $g:x\lra y$
  \begin{align*}
    g\cdot\ol{\sigma}(x) & = \frac{1}{|s^{-1}(x)|}\sum_{\substack{h\in\Mor\mcG \\ s(h)=x}}gh^{-1}\cdot\sigma(t(h)) \\
    & = \frac{1}{|s^{-1}(y)|}\sum_{\substack{k\in\Mor\mcG \\ s(k)=y}}k^{-1}\cdot\sigma(t(k)) \\
    & = \ol{\sigma}(y),
  \end{align*}
  where we have used that $g\in G_x\cap\Mor_{\text{ineff}}$ acts as identity and that precomposition with $g^{-1}$ induces a bijection $s^{-1}(x)\lra s^{-1}(y)$.
\end{proof}

If a bundle is not good and we have a morphism $g\in\Mor(x,x)$ which acts non-trivially on the fibre $E_x$ we obtain a condition on the sections, namely $g\cdot\sigma(x)=\sigma(x)$. If $g$ does not act as the identity there is a vector in $E_x$ through which there is no local section.

We will now discuss pull-backs of bundles.

\begin{definition}
  Let $E$ be a orbifold vector bundle over $\mcH$ and $\phi:\mcG\lra \mcH$ a homomorphism. Then we can define the pull-back bundle $\phi^*E$ over $\mcG$ in the following way. Define the vector bundle $\phi^*E$ as the pull-back under $\phi:\Ob\mcG\lra\Ob\mcH$ with the projection as the anchor map. Now define the action of $\mcG$ on $\phi^*E$ by
  \begin{align*}
    \mu: \Mor\mcG{}_s\times_{\pi}\phi^*E & \lra \phi^*E \\
    (g,(x,e)) & \longmapsto (t(g),\phi(g)\cdot e)
  \end{align*}
  where $g\in\Mor\mcG$ such that $s(g)=x$ and $e\in E_{\phi(x)}$ and $\phi(g)\cdot e$ means the action of $\Mor\mcH$ on $E$.
  \label{def:pull-back-orbibundle}
\end{definition}

\begin{rmk}
  Given an equivalence $\epsilon:\mcC\lra\mcD$ one can in fact define a pushforward $\epsilon_*E$ of an orbifold vector bundle or covering $E\lra\Ob\mcC$ to $\mcD$ which in turn allows one to pull-back bundles along strong maps $\mcC\overset{\epsilon}{\longleftarrow}\mcG\overset{\phi}{\lra}\mcD$ and consequently also along Morita equivalences. However, these definitions are somewhat tricky and induce category equivalences instead of category isomorphisms of vector bundles over $\mcC$ and $\mcD$ only. All the maps in later applications will in fact be given as homomorphisms so we will not need these details.
  
  Also note that some sets of morphisms act on local sections of $E$ in the following way. Given some open set $U\subset\Ob\mcG$ we can look at smooth sections $\varphi:U\lra s^{-1}(U)\cap t^{-1}(U)$ and define
  \begin{align*}
    \Gamma(U,E) & \lra \Gamma(\varphi(U),E) \\
    s & \mapsto (x\mapsto\varphi(x)\cdot s(x))
  \end{align*}
  which is nothing but a local version of the pointwise \cref{def:g-space}.
\end{rmk}

\begin{example}
  \begin{enumerate}[label=(\roman*), ref=(\roman*)]
    \item Consider the bundle $\ts\Ob\mcG\lra\Ob\mcG$. This has the action
      \begin{align*}
        \mu:\Mor\mcG {_s\times_{\pi}}\ts\Ob\mcG & \lra\ts\Ob\mcG \\
        (g,X)&\longmapsto \dd_x\varphi_g\cdot X
      \end{align*}
      where $\varphi_g:U\lra U$ is the unique smooth action of $s^{-1}(U)\cap t^{-1}(U)\subset\Mor\mcG$ on a sufficiently small neighborhood $U\subset\Ob\mcG$ of $x=s(g)=\pi(x)$ with $\varphi_g(x)=g\cdot x\in U$. This is clearly a bundle and if $\mcG$ is effective then it is even a good bundle. Sections of this bundle are sections $s:\Ob\mcG\lra\ts\Ob\mcG$, i.e.\ vector fields on the object manifold, that are invariant under the morphism action. This is of course the intuitive definition of a vector field on an orbifold.
    \item Of course we can dualize the last example to obtain the cotangent bundle
      \begin{equation*}
        \ts^*\Ob\mcG\lra\Ob\mcG
      \end{equation*}
      which is an orbibundle via the action
      \begin{align*}
        \mu:\Mor\mcG {_s\times_{\pi}}\ts^*\Ob\mcG & \lra\ts^*\Ob\mcG \\
        (g,\omega)&\longmapsto \left(X\mapsto\omega(\dd_y\varphi_g^{-1}\cdot X)\right)
      \end{align*}
      where $X\in \ts^*_y\Ob\mcG$. Again, a section of such a bundle is a $1$-form on $\Ob\mcG$ which is invariant under the local action of the morphisms via pullback. This can be stated slightly differently as we will see in the next section.
    \end{enumerate}
\end{example}

\section{Differential Forms on Orbifolds}


Differential forms on orbifolds are essentially equivariant differential forms on the local orbifold charts. In the ep-Lie groupoid setting this can be formulated as follows.

\begin{definition}
  Let $\mcG$ be a ep-Lie groupoid. Then we define the de-Rham complex of differential forms on $\mcG$ by $\Omega^*(\mcG):=\{\alpha\in\Omega^*(\Ob\mcG)\mid s^*\alpha=t^*\alpha\}$ with the usual exterior differential.
\end{definition}

In order to define integration of differential forms we need to choose a locally finite covering with some sort of partition of unity in order to ``localize'' the computations.

\begin{definition}
  Let $\mcG$ be an ep-Lie groupoid. Then we call a covering $\{U_i\}_{i\in I}$ \emph{locally finite} if
  \begin{enumerate}[label=(\roman*), ref=(\roman*)]
    \item the sets $U_i\subset \Ob\mcG$ are open,
    \item the union of their images on the orbit space covers the whole space, i.e.\ $|\mcG|=\bigcup_{i\in I}|U_i|$, and
    \item for every $x\in|\mcG|$ there exist only finitely many $i\in I$ such that $x\in|U_i|$.
  \end{enumerate}
  Given a locally finite covering such that the sets $U_i$ are orbifold charts together with some automorphism group $G_i$ we define a \emph{partition of unity} subordinate to $\{U_i\}_{i\in I}$ as a family of smooth real-valued functions $f_i:U_i\lra\RR$ for $i\in I$ such that\footnote{Notice that these partition functions are defined on the object set! It is of course possible to define these function on the orbit space as well.}
  \begin{enumerate}[label=(\roman*), ref=(\roman*)]
    \item $f_i$ is $G_i$-equivariant such that it factors through the open set $\ol{f}_i:\faktor{U_i}{G_i}=|U_i|\lra\RR$ in the quotient space,
    \item the function $f_i:U_i\lra\RR$ has compact support, and
    \item $\sum_{\substack{i\in I\\ [x]\in |U_i|}}\ol{f}_i([x])=1$ for all $[x]\in|\mcG|$.
  \end{enumerate}
\end{definition}

\begin{rmk}
  Let us note a few observations.
  \begin{enumerate}[label=(\roman*), ref=(\roman*))]
    \item A locally finite covering such that the $(U_i,G_i,\pi_i)$ are orbifold charts is in particular an atlas with the local finiteness condition. Recall that for us orbifold charts are always centered around points $x\in \Ob U_i$ such that $G_i=\Aut_{\mcG}(x)$.
    \item There always exists a locally finite covering via orbifold charts on an ep-Lie groupoid. As the object set is locally compact the quotient space is it, too. Thus we can start by choosing a cover via orbifold charts around all points and then choosing a locally finite subcover.
    \item Also there always exists a partition of unity subordinate to such a locally finite cover by orbifold charts as the quotient is also second countable in addition to Hausdorff and thus paracompact. So choose a continuous partition of unity $\{\ol{f}_i\}$ subordinate to the cover of $|\mcG|$ by the $|U_i|$ for $i\in I$ and pull back $\ol{f}_i$ to $U_i$. Now make the pulled-back function smooth on $U_i$ in a $G_i$-equivariant way.
    \item As usual we need the local finiteness condition in order to have only finite sums when defining integrals. Note that an equivalence class in $|U_i|$ can still have more than one representative in $U_i$ as the central point can have automorphisms so we will need to divide by the size of this automorphism group when defining integrals.
  \end{enumerate}
\end{rmk}


\begin{definition}
  Given a locally finite covering of a $n$-dimensional orbifold groupoid $\mcG$ via orbifold charts $(U_i,G_i,\phi_i)_{i\in I}$ together with a partition of unity $(f_i)_{i\in I}$ subordinate to this atlas, define the integral of an $n$-form $\alpha\in\Omega^n(\mcG)$ over $\mcG$ by
  \begin{equation*}
    \int_{\mcG}\alpha:=\sum_{i\in I}\frac{1}{|G_i|}\int_{U_i}f_i\cdot\alpha|_{U_i}.
  \end{equation*}
\end{definition}

Note that this might not be finite as we did not require $|\mcG|$ to be compact. As usual, this is independent of the choices as is explained e.g.\ in \cite{adem_orbifolds_2007}. Furthermore this is independent of the representative in the Morita equivalence class by pulling back the orbifold charts and partition of unity to the common ep-Lie groupoid via the equivalences and comparing the integrals there. For this to make sense we need a notion of pulling back differential forms on $\mcG$ via a homomorphism $\phi:\mcG\lra\mcH$. This is defined as pulling back $\alpha\in\Omega^*\mcH$ via $\phi:\Ob\mcG\lra\mcH$ and noting that $\phi_{\Ob}\circ s = s\circ\phi_{\Mor}$ and similarly for the target map implying that $s^*\phi_{\Ob}^*\alpha=\phi_{\Mor}^* s^*\alpha=\phi_{\Mor}^* t^*\alpha=t^*\phi_{\Ob}^*\alpha$.

\section{Morphism Coverings}

We will need one more notion of a map between orbifolds, so let $\mcG$ and $\mcH$ be orbifold groupoids.

\begin{definition}
  A homomorphism $\phi:\mcG\lra \mcH$ is called a \emph{morphism covering} if it is a covering and local diffeomorphism on objects as well as morphisms (not necessarily of the same degree) such that the \emph{lifting property} holds:

  \begin{center}
    \parbox{0.9\textwidth}{%
      For all $x\in\Ob\mcG$   and all $h\in\Mor\mcH$ such that $\phi(x)=s(h)$ there exists a $g\in\Mor\mcG$ such that $\phi(g)=h$ and $s(g)=x$.}
 \end{center}
\end{definition}

\index{Morphism Covering}
\index{Lifting Property}

\begin{figure}[!ht]
  \centering
  \begin{subfigure}[b]{0.4\textwidth}
    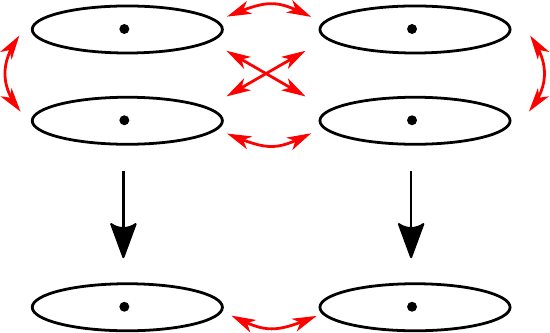
    \caption{}
    \label{fig:examples-morphism-covering-b}
  \end{subfigure}
  \hfill
  \begin{subfigure}[b]{0.4\textwidth}
    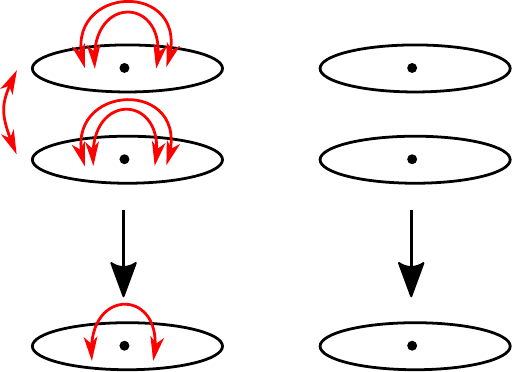
    \caption{}
    \label{fig:examples-morphism-covering-f}
  \end{subfigure}
  \\
  \begin{subfigure}[b]{0.8\textwidth}
    \centering
    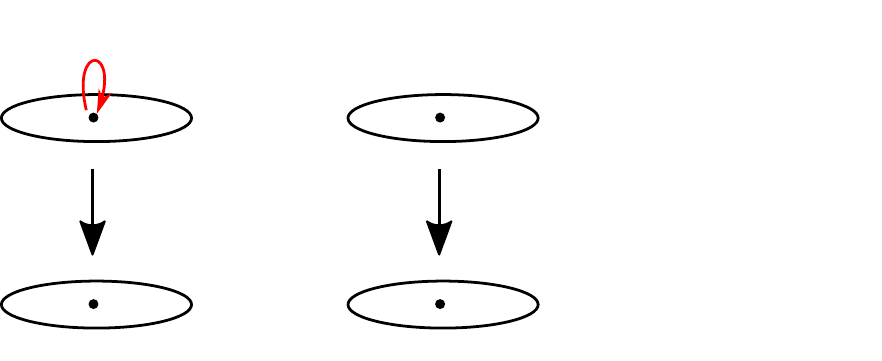
    \caption{}
    \label{fig:examples-morphism-covering-g}
  \end{subfigure}
  \\
  \begin{subfigure}[b]{0.79\textwidth}
    \centering
    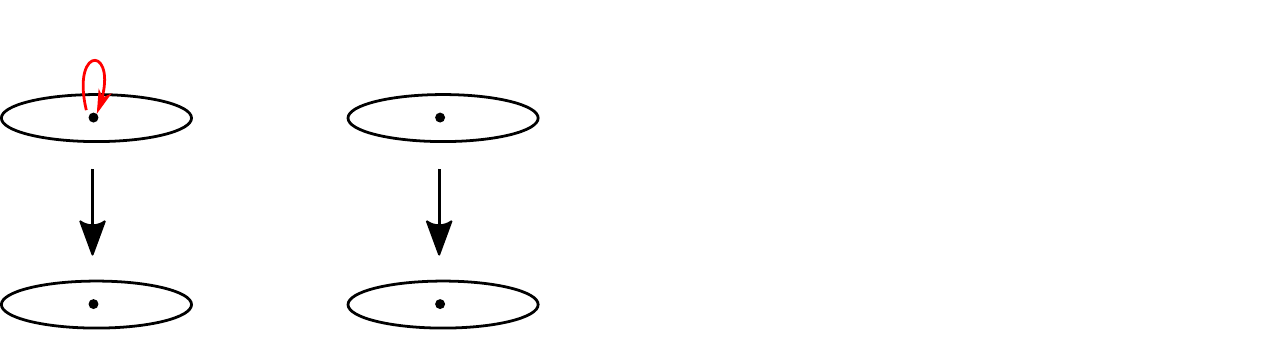
    \caption{}
    \label{fig:examples-morphism-covering-h}
  \end{subfigure}
  \caption{This figure comprises some examples of morphism coverings. Morphisms are drawn in red. Note that we did not draw the identity morphisms always explicitly, also arrows indicate isomorphisms of course. \cref{fig:examples-morphism-covering-b} is in fact an equivalence as well as a morphism covering. In \cref{fig:examples-morphism-covering-f} one sees an example of a morphism covering whose target quotient space is not connected and thus its degree is not well defined -- the left hand side has degree $\frac{1}{2}$ and the right hand side $2$. \cref{fig:examples-morphism-covering-g} and \cref{fig:examples-morphism-covering-h} are both morphism coverings of degree $\frac{1}{2}$. On the left side of the dashed line one sees a ``summary'' of the picture and on the right hand side one can see the objects and morphisms separately. The horizontal arrows refer to morphisms reflecting the disc at the central point giving rise to one automorphism of the central point. Note that in \cref{fig:examples-morphism-covering-h} the degree on the morphism set is two because we can compose the automorphism on the source with the reflection. Also note that the morphism discs already contain the inverse morphisms in form of the opposite point on that same disc.}
  \label{fig:examples-morphism-covering}
\end{figure}

\begin{figure}[!ht]
  \centering
  \begin{subfigure}[b]{0.4\textwidth}
    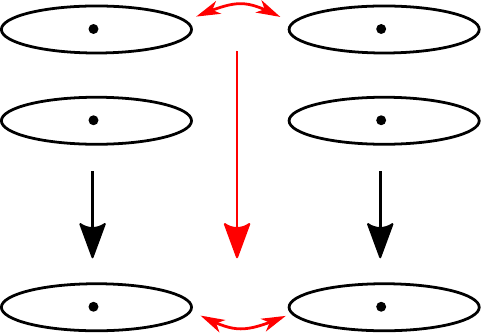
    \caption{}
    \label{fig:examples-morphism-covering-a}
  \end{subfigure}
  \hfill
  \begin{subfigure}[b]{0.4\textwidth}
    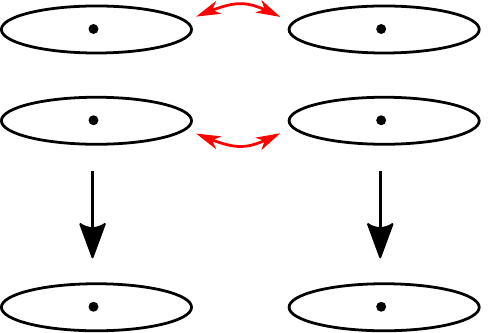
    \caption{}
    \label{fig:examples-morphism-covering-d}
  \end{subfigure}
  \\
  \begin{subfigure}[b]{0.4\textwidth}
    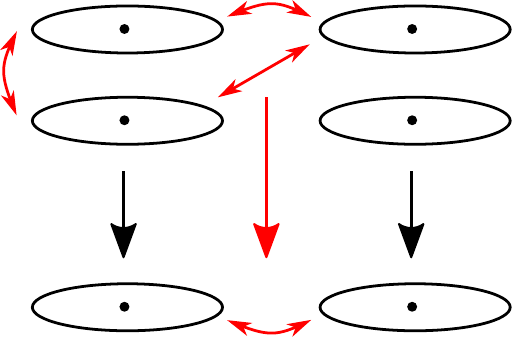
    \caption{}
    \label{fig:examples-morphism-covering-e}
  \end{subfigure}
  \caption{These are non-examples of morphism coverings. Note that we did not draw the identity morphisms explicitly, also arrows indicate isomorphisms. \cref{fig:examples-morphism-covering-a} and \cref{fig:examples-morphism-covering-e} are not morphism coverings because they do not satisfy the lifting property. \cref{fig:examples-morphism-covering-d} is not a functor. 
}
  \label{fig:non-examples-morphism-covering}
\end{figure}

\begin{rmk}
  Note that a morphism covering is in fact different from an orbifold covering because in the orbifold covering case the automorphism groups of the preimages in the ``total'' space are subgroups of the automorphism group of the base point. This means that there are fewer automorphisms on the total space than on the base. In particular, the automorphism group of a point in the fibre over a smooth point (which are dense if the base orbifold is effective) is automatically trivial. In the case of the evaluation map for Hurwitz covers it is clear that a Hurwitz cover can have automorphisms although the target surface has none. Therefore we can not use the notion of an orbifold covering.
\end{rmk}

For morphism coverings there is a well-defined notion of a covering degree of $\phi$. Before defining this notion and proving its well-definedness we will prove a lemma about the local picture of a morphism covering as well as the existence of a particular kind of charts adapted to morphism coverings.

\begin{definition}
  A \emph{compatible pair of atlases and partitions} for the morphism covering $\phi:\mcG\lra\mcH$ consists of a locally finite atlas of orbifold charts $\{(U_i,H_i,\pi_i)\}_{i\in I}$ for $\mcH$, a locally finite atlas of orbifold charts $\{(V_i^j,G_i^j,\rho_i^j)\}_{i\in I,j\in J_i}$ on $\mcG$, a partition of unity subordinate to the atlas on $\mcH$ denoted by $\{f_i\}_{i\in I}$ with $f_i:U_i\lra\faktor{U_i}{H_i}\lra\RR$ such that
  \begin{enumerate}[label=(\roman*), ref=(\roman*)]
    \item $\bigsqcup_{j\in J_i}V_i^j\subset \phi^{-1}(U_i)$ and $\phi|_{V_i^j}:V_i^j\lra U_i$ is a diffeomorphism,
    \item $\bigsqcup_{j\in J_i}G_i^j\subset \phi^{-1}(H_i)$ and $\phi|_{G_i^j}:G_i^j\lra H_i$ is a diffeomorphism and
    \item $\{f_i\circ \phi:V_i^j\lra\RR\}_{i\in I,j\in J_i}$ is a partition of unity subordinate to the atlas on $\mcG$.
  \end{enumerate}
\end{definition}

\begin{rmk}
  Note that this means in particular that the $V_i^j$ cover all equivalence classes in $|\mcG|$ but because of the local finality condition we do not take all of the preimages of $U_i$ under $\phi$ but rather a finite subset and then use as $G_i^j$ all the necessary morphisms of these $V_i^j$.

  The third condition plays a rather subtle role which is merely an artifact of how things are set up. Consider the following situation: We have an actual covering between smooth manifolds $\mcG$ and $\mcH$, such as $\Ob\mcH=\DD$ with $\Mor\mcH=\DD$ representing the identity morphisms. On $\mcG$ we have three discs $\DD_1,\DD_2$ and $\DD_3$ together with three discs representing the identity morphisms and two discs identifying two of the discs. This is illustrated in \cref{fig:example-property-3}. Now every element in $|\mcH|$ has two equivalence classes in $|\mcG|$ as preimage. However, we could choose all three discs in $\Ob\mcG$ as a locally finite atlas of orbifold charts which would satisfy all the conditions. But if we now pull back the constant $1$-function as a partition of unity on $\mcH$ we have two functions with value $1$ on one of the charts in $\mcG$ which doesn't add up to one anymore. In order to make this impossible we require the third condition which forced us to pick only one such orbifold chart on $\mcG$ as we will see in the proof of \cref{lem:existence-comp-atlas-mor-cover}.
\end{rmk}

\begin{figure}[!ht]
  \centering
  \def\svgwidth{0.5\textwidth}
  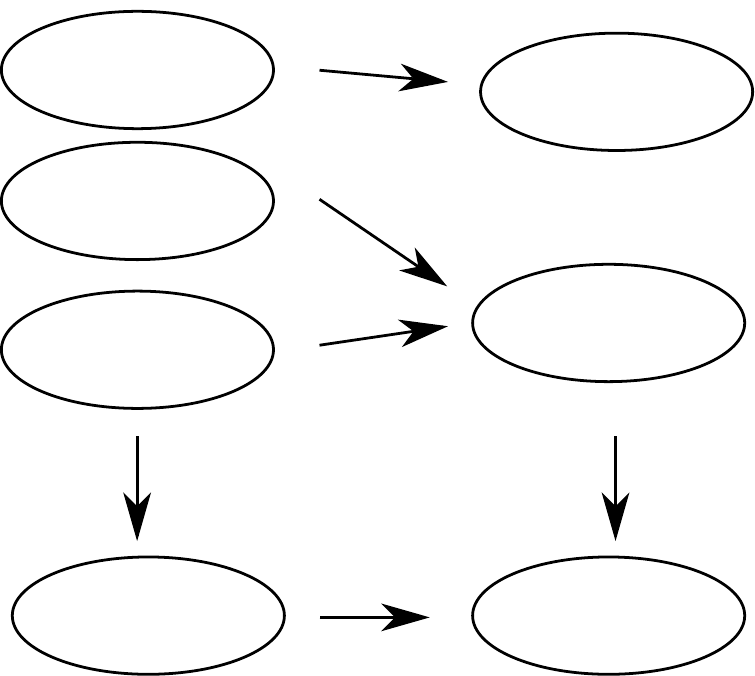
  \caption{On the upper half we see the category $\mcG$ with objects left and orbit space right. The two vertical arrows identify the two discs in $\Ob\mcG$. The horizontal arrows are the projection maps $\Ob\mcG\lra|\mcG|$ and $\Ob\mcH\lra|\mcH|$. Pulling back the constant $1$-function on $\Ob\mcH$ gives us two charts for the same disc in $|\mcG|$ which each are equipped with the constant $1$-function. The pulled-back partition of unity is thus no longer a partition of unity.}
  \label{fig:example-property-3}
\end{figure}

\begin{lem}
  For any morphism covering $\phi:\mcG\lra\mcH$ between compact ep-Lie groupoids there exists a compatible pair of atlases and partitions.
  \label{lem:existence-comp-atlas-mor-cover}
\end{lem}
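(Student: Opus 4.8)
The plan is to build the atlases on $\mcH$ and $\mcG$ in three stages: first fix a suitable locally finite atlas with partition of unity downstairs on $\mcH$, then lift each chart through the covering $\phi$, and finally thin out the collection of lifts upstairs so that condition (iii) — that the pulled-back functions still sum to one — is forced to hold. The first stage is routine: since $\mcH$ is a compact ep-Lie groupoid, by \cref{prop:properties-ep-lie-groupoids} its orbit space $|\mcH|$ is Hausdorff, second countable and compact, so we may cover $\Ob\mcH$ by finitely many orbifold charts $(U_i,H_i,\pi_i)_{i\in I}$ as in \cref{def-orb-chart} (shrinking them if necessary so each $U_i$ is evenly covered by $\phi$, using that $\phi$ is a covering on objects) and choose a partition of unity $\{f_i\}_{i\in I}$ subordinate to $\{|U_i|\}$ by the standard argument.

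For the second stage, fix $i\in I$. Since $U_i$ is connected (it is an orbifold chart centered at a point) and $\phi\colon\Ob\mcG\lra\Ob\mcH$ is a covering, $\phi^{-1}(U_i)$ is a disjoint union of open sets each mapped diffeomorphically onto $U_i$; likewise $\phi^{-1}(H_i)\subset\Mor\mcG$ splits into sheets mapped diffeomorphically onto $H_i$ by $\phi$. The lifting property of the morphism covering guarantees that for each lifted object-sheet $V$ and each $h\in H_i$ there is a morphism in $\Mor\mcG$ over $h$ starting in $V$; together with the fact that $\phi$ is a local diffeomorphism on morphisms, this shows that the morphism-sheets assemble, over each object-sheet $V$, into a group $G$ with $\phi|_G\colon G\to H_i$ a diffeomorphism, so that $(V,G,\rho)$ is again an orbifold chart and $\phi$ restricts to a chart isomorphism $(V,G)\to(U_i,H_i)$. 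The candidate atlas upstairs is then the collection of all these $(V_i^j,G_i^j,\rho_i^j)$ as $i$ ranges over $I$ and $j$ over the sheets $J_i$ of $\phi^{-1}(U_i)$; by construction $\{f_i\circ\phi\}$ is a candidate partition of unity, and $\mcG$ compact gives local finiteness after passing to a finite subcover.

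The one genuine subtlety — and the step I expect to be the main obstacle — is condition (iii), illustrated by \cref{fig:example-property-3}: if two of the sheets $V_i^{j},V_i^{j'}$ over the same $U_i$ happen to represent the \emph{same} equivalence class in $|\mcG|$ (because a morphism of $\mcG$ not lying over any morphism of $H_i$ identifies points of $V_i^j$ with points of $V_i^{j'}$), then $f_i\circ\phi$ contributes twice to the sum over that class and $\{f_i\circ\phi\}$ fails to be a partition of unity. The fix is to pre-select, for each $i$, exactly one sheet of $\phi^{-1}(U_i)$ per equivalence class of $|\mcG|$ meeting it: concretely, shrink each $U_i$ enough that $|U_i|$ embeds in $|\mcH|$ (using \cref{lem-existence-good-neighborhoods}), so that $\phi$ induces a map $|U_i^{(j)}|\to|U_i|$ on orbit pieces, and then among the finitely many sheets $\{V_i^j\}_{j\in J_i}$ retain only one representative from each fibre of the induced map $\bigsqcup_j |V_i^j|\to |U_i|$; discard the rest. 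Since the retained pieces still cover every class of $|\mcG|$ over $|U_i|$ exactly once, $\sum_{i,j:[x]\in|V_i^j|}\overline{f_i\circ\phi}([x])=\sum_{i:[x]\in|U_i|}\overline{f_i}(|\phi|([x]))=1$, which is (iii). Properties (i) and (ii) are preserved under this thinning because it only removes whole sheets, and local finiteness is automatic from compactness. This completes the construction; the remaining verifications (smoothness of the charts, compactness of supports, the group structure of the $G_i^j$) are routine and identical in spirit to the standard existence of partitions of unity on orbifolds as in \cite{adem_orbifolds_2007}.
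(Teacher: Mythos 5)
Your proposal follows essentially the same three-phase strategy as the paper's proof: cover $\Ob\mcH$ by orbifold charts with a subordinate partition of unity; lift each chart through the covering $\phi$, using the lifting property to produce orbifold charts on $\mcG$; and then, for each $i$, thin the collection of object-sheets over $U_i$ so that each class of $|\mcG|$ in the fibre is represented by exactly one sheet, which is precisely what makes condition (iii) go through. The identification of (iii) as the real obstacle and the ``one sheet per equivalence class'' fix are both exactly the paper's argument, as is the final verification that the pulled-back functions sum to one.

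The one place you are too hasty is the second stage: ``the morphism-sheets assemble, over each object-sheet $V$, into a group $G$ with $\phi|_G\colon G\to H_i$ a diffeomorphism, so that $(V,G,\rho)$ is again an orbifold chart and $\phi$ restricts to a chart isomorphism.'' The lifting property only produces, for each $h\in H_i$ and each $y$ over $x$, a morphism $g\in\Mor\mcG$ with $\phi(g)=h$ and $s(g)=y$; it says nothing about $t(g)$ lying in the same sheet $V$, and since $\phi$ on morphisms is a covering of possibly higher degree than on objects, a single $h\in H_i$ can also have several lifts stabilizing $V$. The paper handles this more carefully: it lifts small neighborhoods $W_g\subset\Mor\mcH$ of each $g\in H_i$, examines their connected preimage sheets $\wt{W}_g^k$, discards those whose source and target land in different $V_i^j$'s, and separately checks that the retained sheets account for every morphism with source and target in $V_i^j$. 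Your ``assembly'' claim is therefore not automatic, and you should prove (or at least flag) that the retained morphism-sheets really do exhaust $\Mor(V_i^j,V_i^j)$ before declaring $(V,G)$ an orbifold chart. The thinning step and the partition-of-unity computation are otherwise sound and match the paper.
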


\begin{proof}
  Notice from \cref{lem-existence-good-neighborhoods} and in particular its proof in \cite{adem_orbifolds_2007} that given a neighborhood $U\subset\Ob\mcH$ around a point $x\in\Ob\mcH$ we can find an orbifold chart $(V_x,G_x,\phi_x)$ around the point $[x]\in|\mcH|$  with $V_x\subset U$ by restricting the neighborhoods for the local diffeomorphism $s$ in the construction to $U$. Thus we can find orbifold charts contained in some given neighborhood.

  So first, for any $x\in\Ob\mcH$ choose a neighborhood such that its preimages under $\phi$ are open neighborhoods of $\Ob\mcG$ diffeomorphic to the one around $x$. Then choose an orbifold chart contained in this neighborhood around $x$ and denote it by $(U_x,G_x,\phi_x)$. Now we need to pick appropriate points $x$ in order to ``cover everything''.

  Note that because of the lifting condition in the definition of morphism coverings it is enough to consider $\phi^{-1}(U_x)$ in order to have representatives for all preimages in $\phi^{-1}([x])$. This is because of the following. It is clear that $\bigcup_{x'\sim x}\phi^{-1}(x')$ covers $\phi^{-1}([x])$. But if $h:x\lra x'$ is a morphism in $\mcH$ then for any $y\in\Ob\mcG$ such that $\phi(y)=x$ we have a morphism $g:y\lra y'$ such that $\phi(g)=h$ and thus $\phi(y')=x'$ as well as $y\sim y'$, i.e.\ $[y]=[y']$. This means that any element in $\phi^{-1}([x])$ has a representative $y\in \phi^{-1}(x)$.
  
  \begin{figure}[!ht]
    \centering
    \def\svgwidth{0.7\textwidth}
    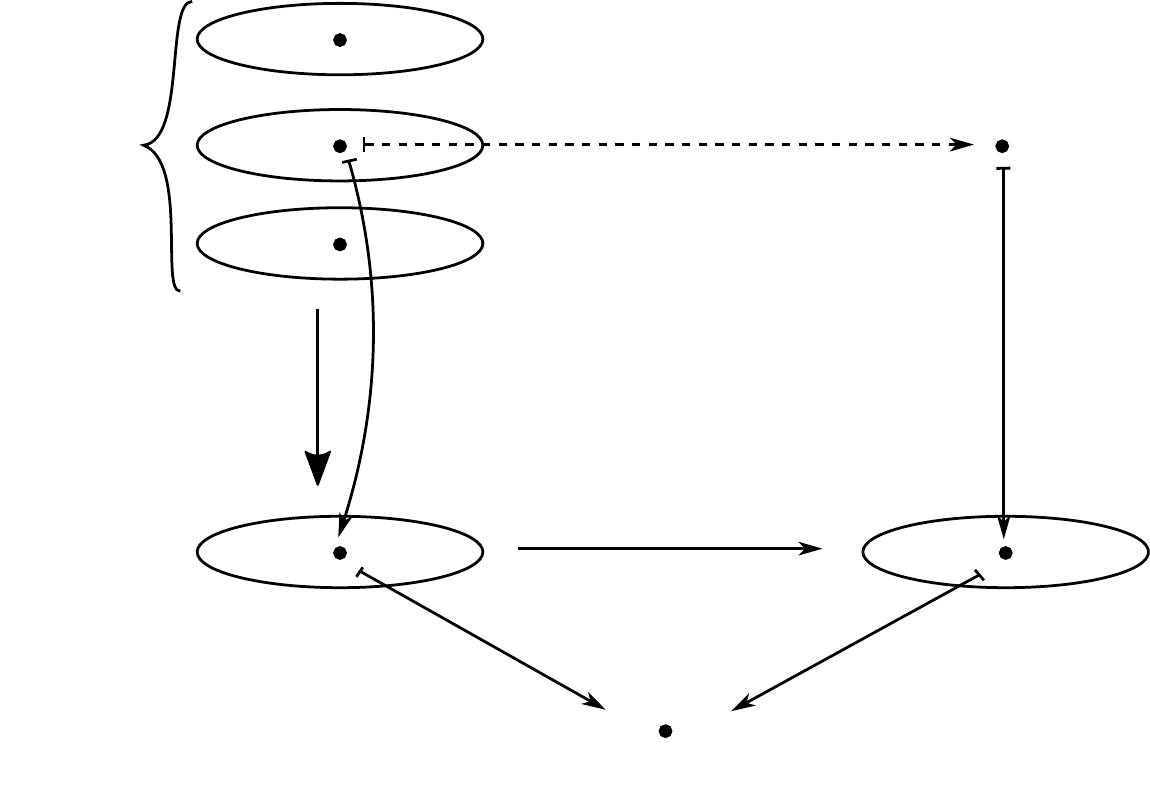
    \caption{This figure explains how the lifting property in the definition of morphism covering is used to find representatives of every class in $\phi^{-1}([x])$ by only looking at the fibre $\phi^{-1}(x)$.}
    \label{fig:lifting-property-morphism-covering}
  \end{figure}

  Now pick a (possibly infinite) subset $I\subset \Ob\mcH$ such that these $\{(U_i,H_i,\pi_i)\}_{i\in I}$ form a locally finite atlas for $\mcH$. Then pick a partition of unity subordinate to this covering. This way we have found a locally finite atlas of $\mcH$ such that preimages of the $U_i\subset\Ob\mcH$ under $\phi$ contain representatives of all classes in $|\mcH|$. Furthermore we can make the orbifold charts $(U_i,H_i,\pi_i)$ smaller such that the source and target map in $\mcG$ are diffeomorphisms on the connected components of $\phi^{-1}(U_i)$. This way we can make sure that we obtain actual orbifold charts from $\phi^{-1}(U_i)$.

  Now we can look at the connected components of the preimages $\phi^{-1}(U_x)\subset\Ob\mcG$ which are by construction diffeomorphic to $U_x$ via $\phi$. Since $\phi$ could have infinite degree on $\Ob\mcG$ we need to pick a finite subset of these such that every equivalence class in $\phi^{-1}([x])\in|\mcG|$ has exactly one representative. This is possible because $|\mcG|$ is compact and thus $\phi:|\mcG|\lra|\mcH|$ is proper meaning that $\phi^{-1}([x])$ is finite. Furthermore as we have seen above every element in $\phi^{-1}([x])$ does have a representative in $\phi^{-1}(U_x)$ by the lifting condition. Therefore we can choose finitely many connected components covering $|\phi^{-1}(U_x)|$.

  Doing this we obtain a finite set of points $y$ in $\phi^{-1}(x)\in\Ob\mcG$ for every $x\in I$ with neighborhoods $V_y\subset\Ob\mcG$ defined as connected components of $\phi^{-1}(U_x)$ such that $y\in V_y$. Their projection to $|\mcG|$ covers the whole space by construction. We denote the index set for the neighborhoods in the fibre over $i\in I$ by $J_i$. So we have now a covering of $|\mcG|$ by the open sets $V_i^j\subset\Ob\mcG$ where the index $j\in J_i$ runs over the preimages of $i$. By construction
  \begin{equation*}
    \bigsqcup_{j\in J_i}V_i^j\subset \phi^{-1}(U_i).
  \end{equation*}
  
  \begin{figure}[!ht]
    \centering
    \def\svgwidth{0.7\textwidth}
    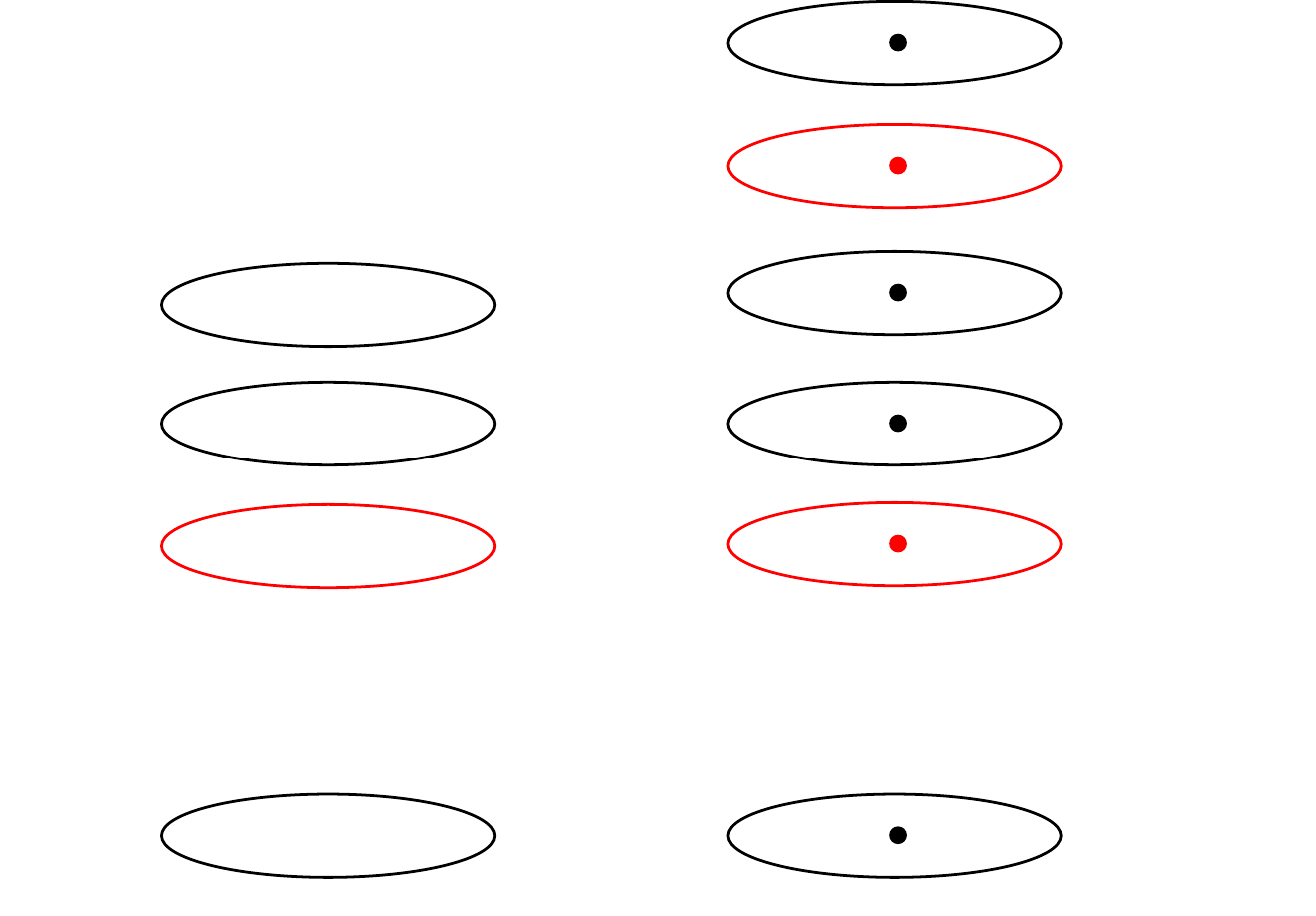
    \caption{On the right hand side one can see the preimage $\phi^{-1}(U_i)\subset\Ob\mcG$ for which we choose a connected component as a representative for every class. On the left hand side we pick one $g\in H_i$ together with its neighborhood $W_g\subset\Mor\mcH$ and its lifts $\wt{W}^k_g$. These sets are morphisms relating the various connected components in $\phi^{-1}(U_i)$ and we pick only those ones having source and target in the chosen $V_i^j$. An example choice is marked in red. Note that this is nothing but the full subcategory generated by the red objects.}
    \label{fig:illustration-choice-compatible-atlas}
  \end{figure}

  Next we need to understand what happens with the morphisms. Let $W_g\subset\Mor\mcH$ be the connected component of $s^{-1}(U_x)\cap t^{-1}(U_x)$ containing $g\in H_x$ for an arbitrary $x\in\Ob\mcH$. Then its preimage under $\phi:\Mor\mcG\lra\Mor\mcH$ consists of a disjoint union of open sets $\bigsqcup_k\wt{W}_g^k\subset\Mor\mcG$. Recall that the $V_i^j, U_i, W_g$ and $\wt{W}_g^k$ were chosen such that $s:\wt{W}_g^k\lra V_i^j$, $\phi_{\Ob}:V_i^j\lra U_i$ and $s:W_g\lra U_i$ are all diffeomorphisms. Because $\phi$ is a homomorphism we have that
  \begin{align*}
    \phi_{\Ob}\circ s & = s\circ\phi_{\Mor},\\
    \phi_{\Ob}\circ t & = t\circ\phi_{\Mor}
  \end{align*}
  which implies that all morphisms in $\wt{W}_g^k$ have source and target in $\phi^{-1}(U_x)$. Furthermore we see that on $\wt{W}_g^k$ the map $\phi_{\Mor}=s^{-1}\circ\phi_{\Ob}\circ s$ restricts to a diffeomorphism on every $\wt{W}_g^k\lra W_g$. By continuity the source of all morphisms in $\wt{W}_g^k$ is contained in the same component of $\phi^{-1}(U_x)$. The same holds for the target but it might be a different component. For every $g\in H_x$ we will only keep those preimages $\wt{W}_g^k$ which have source and target contained in the same $V_i^j$. This means that we throw away all but finitely many connected components. Then we define
  \begin{equation*}
    G_i^j\coloneqq \bigsqcup_{g\in G_i}\bigsqcup_{\substack{k\text{ s.t.}\\ s(\wt{W}_g^k),t(\wt{W}_g^k)\subset V_i^j}}\wt{W}_g^k.
  \end{equation*}
  We know by construction that $G_i^j$ acts on $V_i^j$ but it remains to see that these are all the morphisms acting on this set. But this is clear as $g\in\Mor\mcG$ with $s(g),t(g)\in V_i^j$ satisfies $s(\phi(g)),t(\phi(g))\in U_i$ and thus there exists a $h\in H_i$ such that $\phi(g)=h$ and thus $\phi(g)\in W_g$.

  It remains to verify the statement for the partition functions. For this purpose we need to figure out how many orbifold charts $V_i^j$ we constructed around a given point $[y]\in\Ob\mcG$. So suppose $\phi([y])=[x]$ and the orbifold atlas $(U_i,\phi_i,H_i)_{i\in i}$ contains charts $U_1,\ldots,U_k$ with $x_i\in U_i$ for $i=1,\ldots,k$ such that $[x_1]=\cdots=[x_k]=[x]$ as the only charts around $[x]$. Then every $U_i$ has preimages $V_i^j\subset\Ob\mcG$ for $j\in J_i$. They were chosen such that no $V_i^{j_1}$ and $V_i^{j_2}$ are identified for $j_1\neq j_2$. Thus there can be at most one $V_i^j$ containing $[y]$ in its quotient. However, there also needs to exist at least one neighborhood containing a representative of $[y]$ as we have seen earlier. Thus the only partition functions that have support in $[y]\in|\mcG|$ are these $V_i^j$ for $i=1,\ldots,k$ and $j$ uniquely determined by $i$. Denote the preimages of $x_i$ in $V_i^j$ by $y_i$. On $V_i^j$ we use the function $f_i\circ\phi$. Thus we have
  \begin{equation*}
    \sum_{i=1}^kf_i\circ\phi([y])=\sum_{i=1}^kf_i(\phi(y_i))=\sum_{i=1}^kf_i(x_i)=\sum_{i=1}^kf_i([x])=1
  \end{equation*}
  where we abused notation by denoting the functions on objects as well as on the quotient by the same name $f_i$.
\end{proof}

\begin{rmk}
  Note that we used the assumption of compactness in \cref{lem:existence-comp-atlas-mor-cover} only because we need that there are only finitely many preimages $\phi^{-1}([x])\in|\mcG|$ as it is possible for infinitely many preimages that some neighborhoods that we choose become smaller and we end up with just a point in the intersection. However, we will apply this Lemma to the evaluation functor between moduli spaces of Hurwitz covers and Deligne--Mumford spaces which will only be a morphism covering on a dense open subset. But it is clear that we can still apply the Lemma if we know that every equivalence class has only finitely many preimages which will be obvious for the evaluation functor.
\end{rmk}

\begin{lem}
  Let $\phi:\mcG\lra\mcH$ be a morphism covering between compact ep-Lie groupoids. Furthermore let $[x]\in |\mcH|$ together with representatives $y_1,\ldots,y_k$ of all preimages in $\phi^{-1}([x])$ be given. Then there exist neighborhoods $\mcU$ of $[x]\in|\mcH|$ and $U_1,\ldots,U_k\subset\Ob\mcG$ of $y_1,\ldots,y_k$, respectively, which are small enough such that for any $[x']\in\mcU$ all preimages $[y'_1],\ldots,[y'_m]\in |\mcG|$ of $[x']$ can be represented by elements in $U_1,\ldots,U_k$.
  \label{lem:morphism-coverings-comp-pairs-prop}
\end{lem}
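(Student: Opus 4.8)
The plan is to deduce the lemma from one soft fact about the induced map $|\phi|:|\mcG|\lra|\mcH|$ on orbit spaces: since $|\mcG|$ is compact and $|\mcH|$ is Hausdorff (\cref{prop:properties-ep-lie-groupoids}), $|\phi|$ is a closed map. Hence if one fixes \emph{any} open neighborhoods $U_1,\dots,U_k\subset\Ob\mcG$ of the given representatives $y_1,\dots,y_k$ and lets $\mcU$ be the complement in $|\mcH|$ of the $|\phi|$-image of the ``bad set'' $|\mcG|\setminus\bigcup_i|U_i|$, then $\mcU$ will be an open neighborhood of $[x]$ capturing all nearby preimages inside $\bigcup_i|U_i|$. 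No use of the lifting property or of the covering property of $\phi$ is needed here; the only structural input is that $|\phi|^{-1}([x])$ is finite, which is part of the hypothesis (and is in any case forced by compactness together with the fact that $\phi$ is a local diffeomorphism on objects).

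Concretely, I would proceed as follows. First, note that the quotient projection $\pi:\Ob\mcG\lra|\mcG|$ is open (\cref{prop:properties-ep-lie-groupoids}), so each $|U_i|\coloneqq\pi(U_i)$ is open and contains $[y_i]$; thus $\bigcup_{i=1}^k|U_i|$ is an open neighborhood of $|\phi|^{-1}([x])=\{[y_1],\dots,[y_k]\}$. Second, set $A\coloneqq|\mcG|\setminus\bigcup_{i=1}^k|U_i|$: it is closed in the compact space $|\mcG|$, hence compact, so $|\phi|(A)$ is compact and therefore closed in $|\mcH|$. Third, check that $[x]\notin|\phi|(A)$: any $[z]\in A$ with $|\phi|([z])=[x]$ would lie in $|\phi|^{-1}([x])=\{[y_1],\dots,[y_k]\}\subset\bigcup_i|U_i|$, contradicting $[z]\in A$. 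Therefore $\mcU\coloneqq|\mcH|\setminus|\phi|(A)$ is an open neighborhood of $[x]$. Finally, verify that $\mcU$ works: for $[x']\in\mcU$ and any preimage $[z']$ of $[x']$ one has $[z']\notin A$ (otherwise $[x']=|\phi|([z'])\in|\phi|(A)$), hence $[z']\in|U_i|$ for some $i$, i.e.\ $[z']$ has a representative lying in $U_i$; since $[z']$ was an arbitrary preimage of $[x']$, all preimages of $[x']$ are represented by elements of $U_1,\dots,U_k$. In the applications one will additionally want the $U_i$ to be orbifold charts on which $\phi$ restricts to a diffeomorphism onto a chart of $\mcH$, as in \cref{lem:existence-comp-atlas-mor-cover}; since the argument above works for an arbitrary choice of the $U_i$, this imposes no extra difficulty.

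I do not expect a genuine obstacle: the statement is essentially the familiar ``properness captures nearby fibres'' principle transported to orbit spaces. The only points needing care are bookkeeping — consistently distinguishing a neighborhood $U_i$ of $y_i$ in $\Ob\mcG$ from its image $|U_i|$ in $|\mcG|$, and invoking the correct items of \cref{prop:properties-ep-lie-groupoids} (openness of $\pi$, Hausdorffness of $|\mcH|$) — together with the use of the hypothesis that $|\phi|^{-1}([x])$ is a finite set, so that finitely many $U_i$ already cover a neighborhood of the entire fibre.
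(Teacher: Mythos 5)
Your proof is correct and takes a genuinely different route from the paper. The paper's argument works entirely at the level of $\Ob\mcG$ and $\Ob\mcH$: it uses that $\phi_{\Ob}$ is a covering to write $\phi_{\Ob}^{-1}(\mcU)$ as a disjoint union of sheets, then the lifting property (inherited from \cref{lem:existence-comp-atlas-mor-cover}) to ensure every class in $|\phi|^{-1}([x'])$ has a representative mapping into $\mcU$, and finally extends morphisms along the local diffeomorphisms $s,t$ to relocate a representative into one of the chosen sheets $U_1,\dots,U_k$. You instead work on the orbit spaces and use only a compactness--Hausdorffness argument: $|\mcG|$ compact plus $|\mcH|$ Hausdorff (both from \cref{prop:properties-ep-lie-groupoids}) makes $|\phi|$ closed, so the image of $|\mcG|\setminus\bigcup_i|U_i|$ is a closed set avoiding $[x]$, and its complement is the required $\mcU$. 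Your approach is more elementary and strictly more flexible, since it delivers the conclusion for \emph{any} prescribed open $U_i\ni y_i$, and it does not invoke the covering or lifting properties at all; in fact it would apply verbatim to any homomorphism from a compact orbifold groupoid to an ep-Lie groupoid as long as the fibre $|\phi|^{-1}([x])$ is finite. What the paper's constructive route buys, and what yours deliberately discards, is the extra structural information produced along the way (that each $U_i$ can be taken to be a sheet on which $\phi_{\Ob}$ restricts to a diffeomorphism onto $\mcU$, that no morphisms run between distinct $U_i,U_j$, etc.), which is then reused in the degree computation of \cref{prop:morphism-coverings-deg}. Since the lemma's statement only asserts existence, your shorter argument proves exactly what is claimed; anyone wanting the finer structure can simply apply it with $U_i$ chosen as in \cref{lem:existence-comp-atlas-mor-cover}, as you note.
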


\begin{proof}
  This is because $\phi_{\Ob}:\Ob\mcG\lra\Ob\mcH$ is a covering and $\Ob\mcH\lra|\mcH|$ is open. It was essentially proven in the proof of \cref{lem:existence-comp-atlas-mor-cover}. Choose the open neighborhood $\mcU$ around $x\in\Ob\mcH$ small enough such that $\phi_{\Ob}$ gives a diffeomorphism $U_i\lra \mcU$ for every $i\in I$ and $\phi_{\Ob}^{-1}(\mcU)\cong \bigsqcup_{i\in i}U_i$. Then the sets $U_i$ contain representatives of all preimages of $[x']\in |\mcU|$. Now pick $k$ of these sets containing representatives $y_1,\ldots,y_k$ of $|\phi|^{-1}([x])$. Next we make these open subsets $\mcU,U_1,\ldots,U_k$ smaller such that source and target maps in $\mcG$ and $\mcH$, respectively, define diffeomorphisms on them. Now for some $|x'|\in|\mcU|$ a representative of a preimage $z\in\Ob\mcG$ under $|\phi|$ satisfies $\phi_{\Ob}(z)=x'\in\mcU$. Therefore $z\in U_m$ for some $m\in I$. But since $U_1,\ldots,U_k$ contain representatives of all preimages of $x$ in $\Ob\mcG$ there must exist a morphism $g$ with $s(g)\in U_m$ and $t(g)=y_i$ for some $i\in\{1,\ldots,k\}$. As the source and target maps are diffeomorphisms by construction this morphism $g$ extends to a morphism $g'\in\Mor\mcG$ with $s(g')=z$ and $t(g')\in U_i$ proving the lemma. See also \cref{fig:preimages-morphism-covering}.
\end{proof}

\begin{figure}[!ht]
  \centering
  \def\svgwidth{0.5\textwidth}
  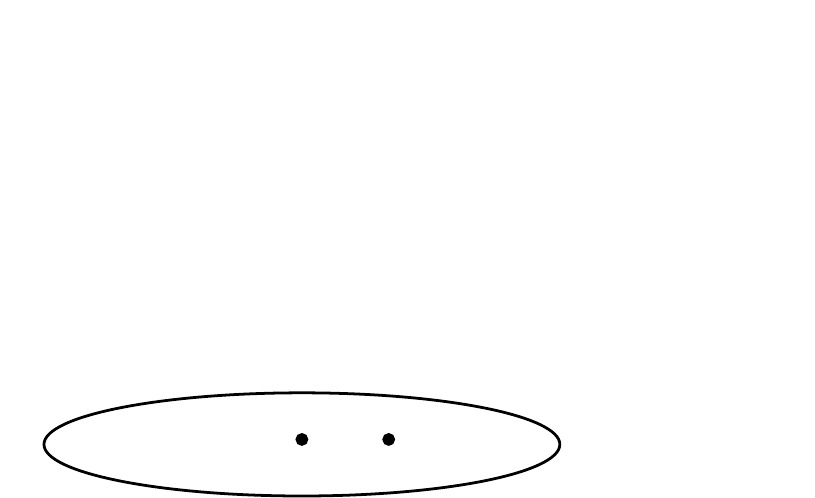
  \caption{In the picture one sees a morphism $g:y\lra z$ between two preimages of $x$. Due to the fact that $s,t$ and $\phi$ are local diffeomorphisms this $g$ extends to a morphism between two preimages of a point $x'$ sufficiently close to $x$.}
  \label{fig:preimages-morphism-covering}
\end{figure}

\begin{prop}
  If $\phi:\mcG\lra\mcH$ is a morphism covering between compact ep-Lie groupoids and $|\mcH|$ is connected then the number
  \begin{equation*}
    \deg \phi := |H_x|\sum_{[y]\in \phi^{-1}([x])}\frac{1}{|G_y|},
  \end{equation*}
  where $|G_y|$ denotes the number of automorphisms of $y$, is independent of the point $x\in\Ob\mcG$. We call this number the \emph{degree} of the morphism covering.
  \label{prop:morphism-coverings-deg}
\end{prop}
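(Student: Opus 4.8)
I plan to prove that the rational number $\deg\phi$ depends continuously (hence locally constantly) on the base point and then invoke connectedness of $|\mcH|$. So fix $[x_0]\in|\mcH|$ with preimages represented by $y_1,\dots,y_k\in\Ob\mcG$ and apply \cref{lem:morphism-coverings-comp-pairs-prop} to get a neighbourhood $\mcU$ of $[x_0]$ and neighbourhoods $U_1,\dots,U_k$ of the $y_i$ through which \emph{all} preimages of every $[x']\in\mcU$ are represented. Shrinking as in the proof of \cref{lem:existence-comp-atlas-mor-cover}, I would arrange that there is a representative $x_0\in\Ob\mcH$ with an orbifold chart $(\mcU',H_{x_0},\pi)$, $|\mcU'|=\mcU$, that each $\phi|_{U_i}\colon U_i\to\mcU'$ is a diffeomorphism, and that every connected component of $\Mor\mcG$ with source and target in $\bigsqcup_iU_i$ is carried by $\phi$ onto a component of $\Mor\mcH$ coming from the $H_{x_0}$-action on $\mcU'$. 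For $[x']\in\mcU$ I pick the representative $x'\in\mcU'$, set $z_i\coloneqq(\phi|_{U_i})^{-1}(x')$, and observe that $z_1,\dots,z_k$ represent $\phi^{-1}([x'])$ and that the full subcategory of $\mcG$ on $\{z_1,\dots,z_k\}$ has the same automorphism groups as $\mcG$, so $\deg\phi$ at $[x']$ may be computed there.

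The next step is to package this into a translation-groupoid model. Let $\mcB_0\coloneqq\mcG|_{\{y_1,\dots,y_k\}}$, a finite groupoid, and note $\phi$ restricts to a functor $p\colon\mcB_0\lra G$ into the one-object groupoid of the group $G\coloneqq H_{x_0}$, since every arrow $y_i\to y_j$ maps into $\Mor_{\mcH}(x_0,x_0)=G$. The diffeomorphisms $\phi|_{U_i}$ and the component structure above identify the restricted groupoid $\mcG|_{\bigsqcup_iU_i}$ with the translation groupoid $\mcB_0\ltimes\mcU'$ (where $\mcB_0$ acts on $\mcU'$ through $p$ and the $G$-action), and $\phi$ with the projection $\mcB_0\ltimes\mcU'\lra G\ltimes\mcU'$. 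Writing $G'\coloneqq\Aut_{\mcH}(x')=\mathrm{Stab}_G(x')\leq G$, the preimages of $[x']$ correspond to the orbits of the subgroupoid $p^{-1}(G')\subseteq\mcB_0$ on $\{y_1,\dots,y_k\}$, with automorphism groups $\Aut_{p^{-1}(G')}(y_i)$. Combining this with the elementary finite-groupoid identity $\sum_{[i]}\frac1{|\Aut(i)|}=\sum_i\frac1{d_i}$ (where $d_i$ is the number of arrows out of $i$) yields
\begin{equation*}
  \deg\phi([x'])=|G'|\sum_{i=1}^{k}\frac{1}{\#\{g\in\Mor\mcB_0\mid s(g)=y_i,\ p(g)\in G'\}}.
\end{equation*}

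The heart of the argument — and the step I expect to be the main obstacle — is to show the right-hand side is independent of the subgroup $G'\leq G$, i.e.\ that for each $i$ the map $p$ restricted to the arrows of $\mcB_0$ with source $y_i$ is equidistributed over $G$ (all fibres of equal cardinality); then the displayed expression collapses to $|G|\sum_i d_i^{-1}$, manifestly independent of $x'$. This is exactly the point at which morphism coverings diverge from orbifold coverings: arrows of $\mcG$ that $p$ sends to $\id\in G$ without being identities — the ones that glue distinct sheets of $\phi_{\Ob}$ — form the identity fibre of $p$ and must be tracked carefully, whereas for an orbifold covering $p$ would simply be injective on $G_{y_i}$. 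I would extract surjectivity of $p$ on arrows out of $y_i$ from the lifting property in the definition of a morphism covering (every $h\in G=H_{x_0}$ lifts to an arrow of $\mcG$ with source $y_i$), and equidistribution of the fibres from the averaging/orbit-counting bookkeeping already developed in \cref{sec:hurwitz-numbers} (in the spirit of the proofs of \cref{prop:rel-hurwitz-1} and \cref{prop:rel-hurwitz-2}), which is precisely designed to deal with a fibre whose size jumps. Once this is in place, local constancy follows, and connectedness of $|\mcH|$ finishes the proof.
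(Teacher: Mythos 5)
Your overall strategy (local constancy of $\deg\phi$ plus connectedness of $|\mcH|$) matches the paper's, but the local step has a genuine gap, and it sits exactly where you flag the ``main obstacle.'' The claim that $z_1,\dots,z_k$ — one $\phi|_{U_i}$-preimage of a \emph{single} representative $x'\in\mcU'$ — represent all of $\phi^{-1}([x'])$, equivalently that the preimages of $[x']$ are the orbits of $p^{-1}(G')$ on $\{y_1,\dots,y_k\}$, is in general false. When $G'=\mathrm{Stab}_G(x')\lneq G$, the orbit $G\cdot x'$ has $n=|G|/|G'|>1$ representatives in $\mcU'$, each $U_i$ contributes $n$ fibre points $y_i^1,\dots,y_i^n$, and distinct ones are identified in $\mcG$ exactly when they lie in the same orbit of the $G_{y_i}$-action, which factors through $p_i\colon G_{y_i}\to G$. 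So $\phi^{-1}([x'])$ has \emph{strictly more} than $k$ elements whenever some $p_i$ fails to be onto. The equidistribution you want is precisely surjectivity of each $p_i$, and the lifting property does not supply it: a lift $g$ of $h\in G$ with $s(g)=y_i$ only satisfies $\phi(t(g))=x_0$, so $t(g)$ may be a different sheet of $\phi_{\Ob}^{-1}(x_0)$ (not equal to any $y_j$, hence outside $\bigsqcup_i U_i$), and $g$ need not lie in $G_{y_i}$ at all. A minimal example: $\mcH=(\ZZ/2)\ltimes\DD$ with $\mcG$ two copies of $\DD$ glued by a single morphism component lying over the non-identity arrow and all stabilizers trivial. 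Then $k=1$, $p_1\colon\{1\}\to\ZZ/2$ is the trivial map, $[0]$ has one preimage class while any nearby $[x']$ has two, and your displayed expression evaluates to $1$ whereas $\deg\phi=2$.

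The paper's proof never assumes surjectivity of $p_i$: it keeps all $n$ fibre points $y_j^1,\dots,y_j^n$ in each $V_j$, partitions them into $G_j$-orbits $I_j^1\sqcup\cdots\sqcup I_j^{n_j}$, and applies orbit--stabilizer ($|G_j|=|G_j^\ell|\cdot|I_j^\ell|$) together with $|H_x|=n\cdot|H_{x'}|$, so the sum telescopes regardless of the image of $p_j$. Your translation-groupoid picture can be repaired in the same spirit, without any equidistribution hypothesis: apply the groupoid-cardinality identity not to $p^{-1}(G')$ acting on $\{y_1,\dots,y_k\}$ but to $\mcB_0$ acting through $p$ on $\{y_1,\dots,y_k\}\times(G/G')$. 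The out-degree of $(y_i,\bar g)$ in $\mcB_0\ltimes(G/G')$ is $|G_{y_i}|$ independently of $\bar g$, and the identity gives $\deg\phi([x'])=|G'|\cdot|G/G'|\sum_i|G_{y_i}|^{-1}=|G|\sum_i|G_{y_i}|^{-1}$ — which is the paper's orbit--stabilizer computation restated in translation-groupoid language.
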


\begin{proof}
  Consider a point $[x]\in|\mcH|$. From \cref{lem:existence-comp-atlas-mor-cover} and \cref{lem:morphism-coverings-comp-pairs-prop} we see that we can choose a neighborhood $U_x\subset \Ob\mcH$ together with a group $H_x=\Aut_{\mcH}(x)$ and disjoint $(V_1,G_1),\ldots,(V_k,G_k)$ in $\mcG$ such that $\phi^{-1}([x])=\{[y_1],\ldots,[y_k]\}$ for $y_i\in V_i$ and $G_i=G_{y_i}$ as part of a compatible pair of atlases. Furthermore we can assume that $\phi$ restricts on objects to a diffeomorphism on each component $V_i\lra U_x$ and that for a point $[x']\in|\mcH|$ all preimages in $|\mcH|$ have representatives in some $V_i$ for $i=1,\ldots,k$. This situation as well as the nomenclature is illustrated in \cref{fig:nbhd-charts-morphism-cover}.

  \begin{figure}[!ht]
    \centering
    \def\svgwidth{0.5\textwidth}
    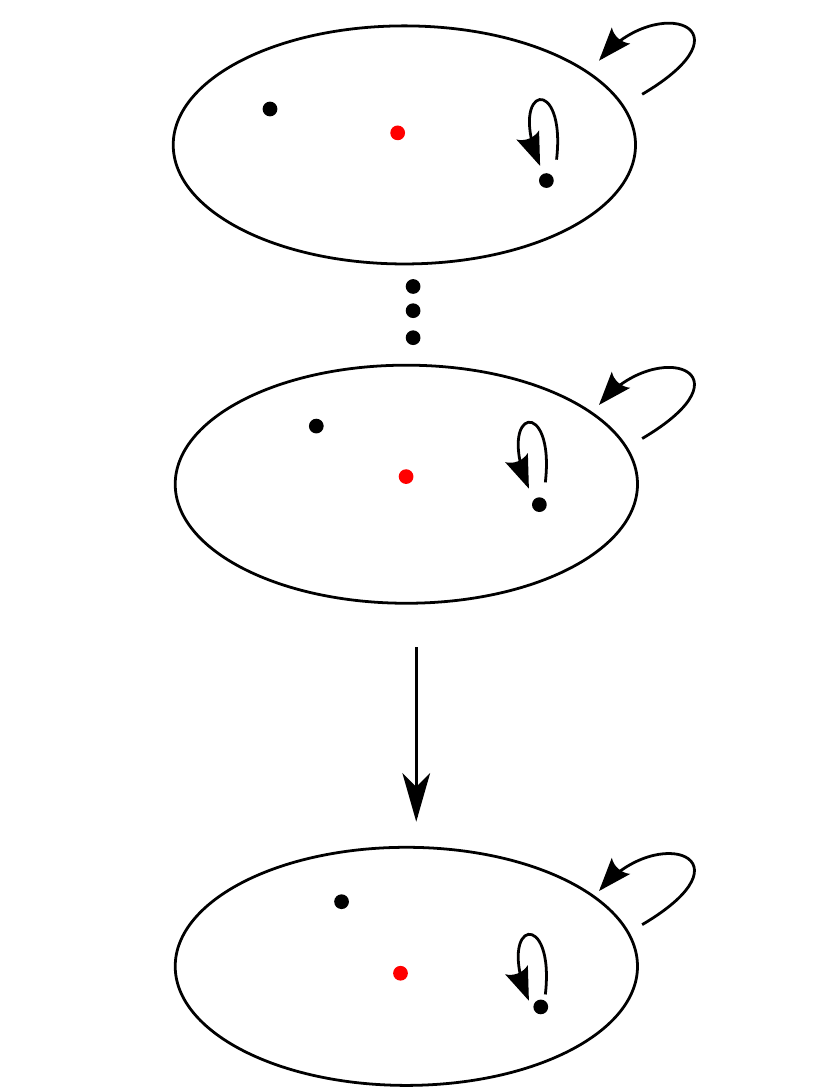
    \caption{The charts $(U_x,G_x)$ and $(V_1,G_1),\ldots,(V_k,G_k)$ are centered around the representatives of the preimages of $[x]$ under $\phi$. We denote the representatives of $[x']$ in $U_x$ by $x_1,\ldots,x_n$ and using the covering property of $\phi$ we denote the $n$ preimages of these points in $V_i$ by $y_i^1,\ldots,y_i^n$ and their respective automorphism groups by $G_{y_i^j}$.}
    \label{fig:nbhd-charts-morphism-cover}
  \end{figure}

  Using \cref{lem:morphism-coverings-comp-pairs-prop} we see that the $y_i^j$ cover all elements in $\phi^{-1}([x'])$. Furthermore by construction of the charts there do not exist any morphisms between the elements of $V_i$ and $V_j$ for $i\neq j$. However, it is possible that various elements from $\{y_i^1,\ldots,y_i^n\}$ are identified. Again by construction of the neighborhoods this identification comes the group action $G_i$. We can therefore write
  
  \begin{equation*}
    \{y_j^1,\ldots,y_j^n\}=I_j^1\sqcup\ldots\sqcup I_j^{n_j}
  \end{equation*}

  for every $j=1,\ldots,k$ where the $\{I_j^i\}_{i=1}^n$ are the $n_j\in\NN$ orbits of the $G_j$-action on $V_j$. We therefore have the equalities

  \begin{align*}
    \sum_{i=1}^{n_j}|I_j^i| & = n \\
    |H_x| & = n\cdot|H_{x'}|,\\
    |G_j| & = |G_j^i| \cdot |I_j^i|\qquad \forall i=1,\ldots,n_j,
  \end{align*}
  
  where $|G_j^i|$ is the number of elements of the automorphism group along the orbit $I_j^i$ of the $G_j$-action. We can now calculate

  \begin{align*}
    \deg_{[x]}\phi & = |H_x|\sum_{j=1}^k\frac{1}{|G_j|}=|H_{x'}|\sum_{j=1}^k\frac{n}{|G_j|} \\
    & = |H_{x'}|\sum_{j=1}^k\sum_{i=1}^{n_j}\frac{|I_j^i|}{|G_j|}=|H_{x'}|\sum_{j=1}^k\sum_{i=1}^{n_j}\frac{1}{|G_j^i|}=\deg_{[x']}\phi,
  \end{align*}
  
  as $G_i^j$ is actually isomorphic to the automorphism group of the elements in $I_j^i$ in $\mcG$ and there are no identifications between the leaves and as was said at the beginning of the proof, all preimages of $[x']$ have representatives in some $V_j$.
\end{proof}

\begin{prop}
  If $\phi:\mcG\lra\mcH$ is a morphism covering of two compact orbifold groupoids of dimension $n$ with degree $\deg \phi$, then we have for any $n$-form $\alpha\in\Omega^n(\mcH)$
  \begin{equation*}
    \int_{|\mcG|}\phi^*\alpha=\deg \phi\cdot\int_{|\mcH|}\alpha.
  \end{equation*}
\end{prop}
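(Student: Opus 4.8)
The plan is to reduce the statement to the local formula for integration on orbifolds, using the compatible pair of atlases and partitions constructed in \cref{lem:existence-comp-atlas-mor-cover}. Fix a locally finite orbifold atlas $\{(U_i,H_i,\pi_i)\}_{i\in I}$ for $\mcH$ together with a subordinate partition of unity $\{f_i\}_{i\in I}$, and a locally finite orbifold atlas $\{(V_i^j,G_i^j,\rho_i^j)\}_{i\in I,j\in J_i}$ for $\mcG$ such that $\phi|_{V_i^j}:V_i^j\lra U_i$ is a diffeomorphism, $\phi|_{G_i^j}:G_i^j\lra H_i$ is a diffeomorphism, and $\{f_i\circ\phi\}$ is a partition of unity subordinate to the atlas on $\mcG$. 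By definition of the orbifold integral we then have
\begin{equation*}
  \int_{|\mcG|}\phi^*\alpha=\sum_{i\in I}\sum_{j\in J_i}\frac{1}{|G_i^j|}\int_{V_i^j}(f_i\circ\phi)\cdot\phi^*\alpha|_{V_i^j}.
\end{equation*}

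The key computational step is that since $\phi|_{V_i^j}:V_i^j\lra U_i$ is an orientation-preserving diffeomorphism, the ordinary change-of-variables formula gives $\int_{V_i^j}(f_i\circ\phi)\cdot\phi^*(\alpha|_{U_i})=\int_{U_i}f_i\cdot\alpha|_{U_i}$ for every $j\in J_i$. Hence the inner sum over $j\in J_i$ becomes $\left(\sum_{j\in J_i}\frac{1}{|G_i^j|}\right)\int_{U_i}f_i\cdot\alpha|_{U_i}$, and the whole expression reduces to
\begin{equation*}
  \int_{|\mcG|}\phi^*\alpha=\sum_{i\in I}\left(\sum_{j\in J_i}\frac{1}{|G_i^j|}\right)\int_{U_i}f_i\cdot\alpha|_{U_i}.
\end{equation*}
So it remains to identify the factor $\sum_{j\in J_i}\frac{1}{|G_i^j|}$ with $\deg\phi$. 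Pick the centre point $x_i\in\Ob\mcH$ of the chart $U_i$, so $H_i=\Aut_{\mcH}(x_i)$. The charts $V_i^j$, $j\in J_i$, are (by the construction in \cref{lem:existence-comp-atlas-mor-cover}) exactly one representative per equivalence class in $\phi^{-1}([x_i])$, and the group $G_i^j$ is isomorphic to $\Aut_{\mcG}(y_i^j)=G_{y_i^j}$ for the centre $y_i^j$ of $V_i^j$. Therefore $\sum_{j\in J_i}\frac{1}{|G_i^j|}=\sum_{[y]\in\phi^{-1}([x_i])}\frac{1}{|G_y|}=\frac{1}{|H_{x_i}|}\deg\phi$ by the definition of the degree in \cref{prop:morphism-coverings-deg}, and crucially this value is independent of $i$ by \cref{prop:morphism-coverings-deg} itself. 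Plugging this in,
\begin{equation*}
  \int_{|\mcG|}\phi^*\alpha=\deg\phi\cdot\sum_{i\in I}\frac{1}{|H_i|}\int_{U_i}f_i\cdot\alpha|_{U_i}=\deg\phi\cdot\int_{|\mcH|}\alpha,
\end{equation*}
which is the claim.

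I expect the main obstacle to be bookkeeping rather than a genuine difficulty: one must be careful that the pulled-back partition of unity really is subordinate to the atlas on $\mcG$ (this is precisely the subtle third condition in the definition of a compatible pair, addressed by \cref{lem:existence-comp-atlas-mor-cover}), and that the identification of $G_i^j$ with $\Aut_{\mcG}(y_i^j)$ is the one used in \cref{prop:morphism-coverings-deg}, so that the index $J_i$ really does enumerate $\phi^{-1}([x_i])$ without repetition. One should also remark that both integrals may be infinite if $|\mcG|$ (equivalently $|\mcH|$) is non-compact, but under the standing compactness hypothesis all sums are finite and everything converges; and that the orientation-preservation of $\phi|_{V_i^j}$ is automatic here since $\phi$ is a local diffeomorphism between complex (hence canonically oriented) manifolds in our application, though for the abstract statement one simply includes orientation-compatibility in the hypothesis of being a morphism covering. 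Finally, independence of the left-hand side from the chosen compatible pair is guaranteed by the general invariance of the orbifold integral under refinement, already recorded after the definition of $\int_{\mcG}\alpha$.
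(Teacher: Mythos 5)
Your proof is correct and follows essentially the same route as the paper: choose a compatible pair of atlases and partitions from \cref{lem:existence-comp-atlas-mor-cover}, apply change of variables along the local diffeomorphisms $\phi|_{V_i^j}$, and identify the resulting factor $\sum_{j\in J_i}\tfrac{1}{|G_i^j|}$ with $\tfrac{1}{|H_i|}\deg\phi$ via \cref{prop:morphism-coverings-deg}. The extra remarks about orientation and independence of the atlas are sensible but not part of the paper's argument.
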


\begin{proof}
Choose any compatible pair of atlases and partitions for the morphism covering $\phi:\mcG\lra\mcH$ and use the earlier notation. We calculate
\begin{align*}
  \int_{|\mcG|}\phi^*\alpha&=\sum_{i\in I}\sum_{j\in J_i}\frac{1}{|G_i^j|}\int_{V_i^j}\phi^*\alpha\cdot f_i\circ \phi\\
  &=\sum_{i\in I}\sum_{j \in J_i}\frac{1}{|G_i^j|}\int_{U_i}\alpha\cdot f_i\\
  &=\sum_{i\in I}\frac{1}{|H_i|}\left(\int_{U_i}\alpha\cdot f_i\right)\left(|H_i|\cdot\sum_{j\in J_i}\frac{1}{|G_i^j|}\right)\\
  &=\deg \phi\cdot\int_{|\mcH|}\alpha,
\end{align*}
where we have used the fact that $\phi:V_i^j\lra U_i$ is a diffeomorphism in the first step and \cref{prop:morphism-coverings-deg} for the last step.
\end{proof}

\begin{lem}
  If a morphism covering $\phi:\mcG\lra\mcH$ is such that $\phi:\Aut_{\mcG}(x)\lra\Aut_{\mcH}(\phi(x))$ is surjective for all $x\in\Ob\mcG$ then it induces an actual topological covering on the orbit spaces.
  \label{lem:morphism-covering-actual-covering}
\end{lem}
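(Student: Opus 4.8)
The plan is to check the defining property of a topological covering \emph{locally} on $|\mcH|$, the crucial input being that on the level of orbifold charts the surjectivity hypothesis upgrades the ``branched cover'' local model of a morphism covering to an honest product chart.

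Fix $[x]\in|\mcH|$, a representative $x\in\Ob\mcH$, and $H_x:=\Aut_{\mcH}(x)$. Since $\phi_{\Ob}$ is a covering (and proper onto image in the situations of interest), $|\phi|^{-1}([x])$ consists of finitely many classes; choose representatives $y_1,\ldots,y_k\in\Ob\mcG$ with $\phi(y_\mu)=x$. Applying \cref{lem:morphism-coverings-comp-pairs-prop} I would obtain a neighbourhood $\mcU$ of $[x]$ in $|\mcH|$ together with pairwise disjoint good orbifold charts $(U_\mu,G_\mu,\rho_\mu)$ of $\mcG$ around the $y_\mu$, with $G_\mu=\Aut_{\mcG}(y_\mu)$, such that $\phi|_{U_\mu}\colon U_\mu\to\phi(U_\mu)$ is a diffeomorphism onto a good chart of $\mcH$ around $x$ and such that every preimage under $|\phi|$ of every $[x']\in\mcU$ is represented in $\bigsqcup_\mu U_\mu$. (When $\mcG,\mcH$ are not compact one instead picks an evenly covered $U_x\ni x$, writes $\phi_{\Ob}^{-1}(U_x)=\bigsqcup_\mu V_\mu$ with each $\phi|_{V_\mu}$ a diffeomorphism, and uses the lifting property exactly as in the proof of \cref{lem:existence-comp-atlas-mor-cover} to see that the $V_\mu$ account for all of $|\phi|^{-1}(|U_x|)$; I would phrase the argument to cover both cases.)

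The heart of the matter is that $|\phi|$ maps each $|U_\mu|$ (the image of $U_\mu$ in $|\mcG|$) homeomorphically onto a neighbourhood of $[x]$. Since $\phi(y_\mu)=x$, restriction gives a homomorphism $\bar\phi_\mu\colon G_\mu\to H_x$, which is surjective by hypothesis; let $K_\mu=\ker\bar\phi_\mu$. For $g\in K_\mu$ and $v\in U_\mu$ we have $\phi(g\cdot v)=\phi(g)\cdot\phi(v)=\id_x\cdot\phi(v)=\phi(v)$, and injectivity of $\phi|_{U_\mu}$ forces $g\cdot v=v$; hence $K_\mu$ acts trivially on $U_\mu$, the $G_\mu$-action factors through $G_\mu/K_\mu\cong H_x$, and $\phi|_{U_\mu}$ is an $H_x$-equivariant diffeomorphism onto its image. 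It therefore descends to a homeomorphism $|U_\mu|=U_\mu/G_\mu=U_\mu/H_x\;\xrightarrow{\ \cong\ }\;\phi(U_\mu)/H_x$; shrinking these common images to their intersection, $|\phi|$ restricts to a homeomorphism $|U_\mu|\cap|\phi|^{-1}(\mcU)\to\mcU$ for every $\mu$.

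Finally I would assemble the sheets: the $|U_\mu|$ are pairwise disjoint by construction, and their union contains $|\phi|^{-1}(\mcU)$ by the property from \cref{lem:morphism-coverings-comp-pairs-prop} (resp.\ the lifting-property argument), so $|\phi|^{-1}(\mcU)=\bigsqcup_\mu\bigl(|U_\mu|\cap|\phi|^{-1}(\mcU)\bigr)$ with each piece mapped homeomorphically onto $\mcU$. As $[x]$ was arbitrary this exhibits $|\phi|\colon|\mcG|\to|\mcH|$ as a topological covering. I expect the only genuinely fiddly point to be the chart compatibility --- ensuring the $U_\mu$ can be shrunk to lie over one and the same $\mcU$ while staying good and disjoint --- which is exactly what \cref{lem:morphism-coverings-comp-pairs-prop} packages; the surjectivity hypothesis itself enters only through the short equivariance computation showing $K_\mu$ acts trivially, and the rest is routine.
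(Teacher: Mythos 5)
Your proof is correct and takes essentially the same route as the paper: both build on the local charts from \cref{lem:morphism-coverings-comp-pairs-prop} (equivalently the setup of \cref{prop:morphism-coverings-deg}) and use the surjectivity hypothesis to see that each chart quotient $|U_\mu|$ maps bijectively onto a neighbourhood of $[x]$. Your reformulation---that the kernel $K_\mu\subset G_\mu$ acts trivially on $U_\mu$, so the action factors through $G_\mu/K_\mu\cong H_x$ and $\phi|_{U_\mu}$ becomes an $H_x$-equivariant homeomorphism---is a tidier group-theoretic packaging of the paper's argument that the preimages $y_j^1,\ldots,y_j^n$ of any nearby class are all $G_j$-equivalent, but the mathematical content is the same.
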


\begin{proof}
  Recall \cref{fig:nbhd-charts-morphism-cover} and the proof of \cref{prop:morphism-coverings-deg}. We know that the image of a neighborhood $U_x\subset\Ob\mcH$ of $x\in U_x$ gives a neighborhood of $[x]\in |\mcH|$ and this is also true for the preimages of $y_1,\ldots,y_k\in\Ob\mcG$. Furthermore we know that there are no morphisms between elements in $V_i$ and $V_j$ for $i\neq j$. Also recall that $x_1,\ldots,x_n$ are the representatives of an arbitrary class $[x']$ close to $[x]\in|\mcH|$. It is thus enough to show that for every $j=1,\ldots,k$ all preimages $y_j^1,\ldots,y_j^n$ are equivalent. For this notice that by assumption $\phi:\Hom_{\mcG}(x,x)\lra\Hom_{\mcH}(\phi(x),\phi(x))$ is surjective. This implies that for a morphism $g\in\Aut_{\mcH}(x)$ whose extension to $U_x$ satisfies $g:x_1\lra x_i$ there exists an automorphism $h\in\Aut_{\mcG}(y_j)$ whose extension to $V_j$ identifies $h:y_j^1\lra y_j^i$. This works for all $i\in\{1,\ldots,n\}$ and therefore $y_j^1\sim\cdots\sim y_j^n$.
\end{proof}

\section{Algebraic Topology and Symplectic Geometry of Orbifolds}

\subsection{Algebraic Topology}

\label{sec:algebraic-topology-orbifolds}

Regarding the algebraic topology of orbifolds we will only need a few statements which are taken from \cite{adem_orbifolds_2007}, \cite{satake_gauss-bonnet_1957} and \cite{satake_generalization_1956}.

\begin{definition}
  Let $\mcG$ be an orbifold groupoid and $R$ a ring. Then we can define
  \begin{enumerate}[label=(\roman*), ref=(\roman*)]
  \item its \emph{(quotient) singular cohomology} $H^*(|\mcG|,R)$ and \emph{(quotient) singular homology} $H_*(|\mcG|,R)$,
    \item its \emph{de Rham cohomology} $H^*(\mcG,\RR)\coloneqq H(\Omega^*(\mcG),\dd)$ defined via invariant differential forms on $\Ob\mcG$,
    \item its \emph{orbifold singular cohomology} $H_{\text{orb}}^*(\mcG,R)\coloneqq H^*(\mcB\mcG,R)$ as well as its \emph{orbifold singular homology} $H^{\text{orb}}_*(\mcG,R)\coloneqq H_*(\mcB\mcG,R)$ and
    \item its \emph{orbifold fundamental group} $\pi_1^{\text{orb}}(\mcG,x_0)\coloneqq \pi_1(\mcB\mcG,x_0)$     
  \end{enumerate}
  where $\mcB\mcG\coloneqq |\mcG_{\bullet}|$ is the geometric realization of the nerve of the groupoid $\mcG$, i.e.\ a model for its classifying space, see e.g.\ \cite{moerdijk_classifying_1995}.
\end{definition}

\begin{thm}[See \cite{satake_generalization_1956} and \cite{adem_orbifolds_2007}]
  The following statements hold for a compact oriented\footnote{An ep-Lie groupoid is called \emph{oriented} if the object and morphism manifolds are oriented and all structure maps are orientation preserving.} orbifold groupoid.
  \begin{enumerate}[label=(\roman*), ref=(\roman*)]
    \item There are isomorphisms of graded algebras $H^*_{\text{orb}}(\mcG,\RR)\cong H^*(\mcG,\RR)\cong H^*(|\mcG|,\RR)$ and $H^*_{\text{orb}}(\mcG,\QQ)\cong H^*(|\mcG|,\QQ)$.
    \item We have $H_n(|\mcG|,\QQ)\cong \QQ$ for $\dim\mcG=n$ with a natural fundamental class $[\mcG]\in H_n(|\mcG|,\QQ)$ as the generator.
    \item $\mcB\mcG$ satisfies Poincar\'e duality over the rational numbers.
  \end{enumerate}
  \label{thm:alg-top-orbifolds}
\end{thm}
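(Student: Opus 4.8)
The statement to prove is Theorem \ref{thm:alg-top-orbifolds}, asserting three facts about compact oriented orbifold groupoids: the comparison of orbifold, de Rham, and quotient (singular) cohomology with real and rational coefficients; the existence of a rational fundamental class in top degree; and rational Poincaré duality for the classifying space $\mcB\mcG$.

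\medskip

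\textbf{Plan of proof.} The whole statement is standard, essentially due to Satake, and my plan is to assemble it from the literature rather than reprove it from scratch — the paper already flags this by citing \cite{satake_generalization_1956} and \cite{adem_orbifolds_2007}. For part (i), I would first recall that for a proper étale groupoid the classifying space $\mcB\mcG$ comes with a map $\mcB\mcG \lra |\mcG|$ whose fibres are classifying spaces of finite groups, hence rationally (and over $\RR$) acyclic; by a spectral sequence / Leray argument this gives $H^*_{\text{orb}}(\mcG,\QQ)\cong H^*(|\mcG|,\QQ)$ and likewise over $\RR$. Next, for the de Rham comparison I would invoke an orbifold version of the de Rham theorem: the complex $\Omega^*(\mcG)=\{\alpha\in\Omega^*(\Ob\mcG)\mid s^*\alpha=t^*\alpha\}$ computes $H^*(|\mcG|,\RR)$, which one proves by choosing a locally finite atlas of orbifold charts $(U_i,G_i,\pi_i)$ as constructed earlier in the excerpt, using a subordinate partition of unity, and running the usual Čech–de Rham double complex argument locally — on each chart one uses that $G_i$-invariant forms on $U_i$ compute $H^*(U_i/G_i,\RR)$ by averaging over the finite group $G_i$. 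This is exactly the tool that makes the finite-isotropy hypothesis do its work.

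\medskip

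For part (ii), I would use part (i) together with the hypothesis that $\mcG$ is compact and oriented: orientability of the object and morphism manifolds with orientation-preserving structure maps means the top-degree forms glue, so $\int_{\mcG}$ from the earlier definition is a well-defined nonzero linear functional on $\Omega^n(\mcG)$ vanishing on exact forms (by Stokes on the oriented compact charts), whence $H^n(\mcG,\RR)\neq 0$; combined with $H^n(|\mcG|,\RR)\cong H^n(|\mcG|,\QQ)\otimes\RR$ and the fact that $|\mcG|$ is a compact $n$-dimensional space this forces $H^n(|\mcG|,\QQ)\cong\QQ$. Dualizing gives the fundamental class $[\mcG]\in H_n(|\mcG|,\QQ)$, normalized so that $\langle \alpha,[\mcG]\rangle=\int_{\mcG}\alpha$ for all $n$-forms. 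For part (iii), Poincaré duality over $\QQ$ for $\mcB\mcG$ follows from Satake's result that a compact oriented orbifold is a rational homology manifold: locally $U_i/G_i$ with $G_i$ finite acting on an oriented $n$-ball is a $\QQ$-homology $n$-ball (again by the averaging/transfer argument), and gluing these via Mayer–Vietoris and the comparison $H^*_{\text{orb}}\cong H^*(|\mcG|)$ yields the duality isomorphism $H^k(\mcB\mcG,\QQ)\cong H_{n-k}(\mcB\mcG,\QQ)$ capping with $[\mcG]$.

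\medskip

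\textbf{Main obstacle.} The genuinely delicate point is the orbifold de Rham theorem and the local transfer argument underlying both (i) and (iii): one must be careful that the finite group $G_i$ acts on $U_i$ possibly non-effectively (the paper explicitly allows ineffective morphisms), so the averaging operator $\frac{1}{|G_i|}\sum_{g\in G_i} g^*$ still works on forms but the quotient $U_i/G_i$ must be understood as a topological quotient with the transfer giving $H^*(U_i/G_i,\QQ)\cong H^*(U_i,\QQ)^{G_i}=H^*(U_i,\QQ)$ since the action on cohomology is trivial for a connected $U_i$. Handling the bookkeeping of these finite-group quotients coherently across the atlas, and checking that the partition-of-unity construction from the excerpt is compatible with the double complex, is where all the real content sits; everything else is formal. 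Since the paper only needs these as black-box inputs, I would present the proof as a citation-plus-sketch rather than carrying out the spectral sequence in detail.

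\begin{proof}[Proof sketch]
  All three statements are due to Satake; see \cite{satake_generalization_1956} and the modern treatment in \cite{adem_orbifolds_2007}. We indicate the main points.

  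For \emph{(i)}, the projection $\mcB\mcG\lra|\mcG|$ has fibres that are classifying spaces of the finite isotropy groups $G_x$, hence $\QQ$- and $\RR$-acyclic, so a Leray spectral sequence argument gives $H^*_{\text{orb}}(\mcG,R)\cong H^*(|\mcG|,R)$ for $R=\QQ,\RR$. The isomorphism $H^*(\mcG,\RR)\cong H^*(|\mcG|,\RR)$ is the orbifold de Rham theorem: pick a locally finite atlas of orbifold charts $(U_i,G_i,\pi_i)$ with a subordinate partition of unity as constructed above, and run the Čech–de Rham double complex; on each chart the averaging operator $\frac{1}{|G_i|}\sum_{g\in G_i}g^*$ identifies $G_i$-invariant forms with forms computing $H^*(U_i/G_i,\RR)$, and these local identifications patch. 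Compatibility of the de Rham and singular cup products is checked on charts.

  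For \emph{(ii)}, since $\mcG$ is compact and oriented, Stokes' theorem on the oriented charts shows that $\alpha\mapsto\int_{\mcG}\alpha$ descends to a nonzero linear functional on $H^n(\mcG,\RR)$, so this group is nonzero; by \emph{(i)} and compactness of the $n$-dimensional space $|\mcG|$ we get $H^n(|\mcG|,\QQ)\cong\QQ$, and dualizing produces $[\mcG]\in H_n(|\mcG|,\QQ)$, normalized by $\langle\alpha,[\mcG]\rangle=\int_{\mcG}\alpha$.

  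For \emph{(iii)}, each local model $U_i/G_i$, with $G_i$ finite acting on an oriented $n$-ball, is a $\QQ$-homology $n$-ball by the transfer isomorphism $H^*(U_i/G_i,\QQ)\cong H^*(U_i,\QQ)^{G_i}=H^*(U_i,\QQ)$; thus $\mcB\mcG$ is a rational homology manifold and Mayer--Vietoris together with \emph{(i)} yields the Poincaré duality isomorphism given by cap product with $[\mcG]$.
\end{proof}
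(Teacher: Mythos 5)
The paper does not supply a proof of this theorem at all — it is stated as a quotation from Satake and Adem--Leida--Ruan, and everything downstream treats it as a black box. So your decision to present a citation-plus-sketch is exactly in the spirit of the text, and your sketch does assemble the standard ingredients (Leray for $\mcB\mcG\to|\mcG|$ with rationally acyclic fibres, \v Cech--de Rham with averaging for the de Rham comparison, Stokes for the nondegeneracy of $\int_{\mcG}$, and the local $\QQ$-homology-ball computation for duality).

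One point in (ii) is stated too quickly, however. You argue that $H^n(|\mcG|,\QQ)\neq 0$ from the integral functional, and then claim that compactness of the $n$-dimensional space $|\mcG|$ ``forces'' $H^n(|\mcG|,\QQ)\cong\QQ$. That implication is false for a general compact $n$-dimensional space (a wedge of two $n$-spheres is a counterexample); what you actually need is that $|\mcG|$ is a \emph{connected rational homology $n$-manifold}, which is precisely the local content you prove in (iii) (each $U_i/G_i$ is a $\QQ$-homology $n$-ball by the transfer). The cleaner logical order is therefore: establish the rational homology manifold property and (iii) first, deduce $H_n(|\mcG|,\QQ)\cong H^0(|\mcG|,\QQ)\cong\QQ$ for connected $|\mcG|$ from duality, and then use the integral functional only to pin down the normalization $\langle\alpha,[\mcG]\rangle=\int_{\mcG}\alpha$ and to check that the class is nonzero. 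You should also say explicitly where connectedness enters: the statement as written implicitly assumes $|\mcG|$ connected (otherwise $H_n$ has rank equal to the number of components). With that reorganization the sketch is sound.
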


\begin{rmk}
  Note that for global quotients of manifolds by finite free group actions the first statement is easy to see as an averaging argument gives you immediately that the de-Rham cohomology of invariant forms calculates de-Rham cohomology of the quotient. Thus all the stabilizers and the orbifold structure can be seen in the torsion part of $H^*_{\text{orb}}(\mcG,\ZZ)$ only.

  Also note that the these statements show that as long as we use rational or real coefficients we can treat the algebraic topology as if $|\mcG|$ was a manifold.
\end{rmk}

\begin{definition}
  The \emph{Chern class} $c_1(E)\in H^2(\mcG,\RR)$ of a good complex orbifold line bundle $E\lra\Ob\mcG$ is defined by first choosing an \emph{invariant connection} $\nabla$ on $E\lra\Ob\mcG$ and
  \begin{equation*}
    c_1(E)\coloneqq \frac{1}{2\pi\ii}[F^{\nabla}]
  \end{equation*}
  where $F^{\nabla}\in\Omega^2(\mcG,\ii\RR)$ is the curvature form of $\nabla$. Here, an invariant connection means a $\CC$-linear map $\nabla:\Gamma(U,E)\lra\Omega^1(U,E)$ for open sets $U\subset\Ob\mcG$ satisfying the Leibniz rule such that the following diagram commutes
  \begin{equation*}
    \xymatrix{
      \Gamma(U,E) \ar[r]^{\nabla} \ar[d]^g & \Omega^1(U,E) \ar[d]^g \\
      \Gamma(U,E) \ar[r]^{\nabla} & \Omega^1(U,E) \\
      }
  \end{equation*}
  for every smooth section $g:U\lra s^{-1}(U)\cap t^{-1}(U)\subset\Mor\mcG$ acting in the obvious way pointwise.
\end{definition}

\begin{rmk}
  A few comments are in order.
  \begin{enumerate}[label=(\roman*), ref=(\roman*)]
    \item Notice that an invariant connection does map invariant vector fields, i.e.\ invariant sections of $\ts\Ob\mcG$, to invariant forms. From this one can easily see that $F^{\nabla}\in\Omega^*(\mcG)$.
    \item Also convex linear combinations of the corresponding invariant connection forms are again invariant and thus the space of invariant connections is convex. This implies that the usual proof of the independence of the Chern class on the choice of connection works in the case of orbifolds, too.
    \item Suppose we are given a homomorphism $\phi:\mcG\lra\mcH$ together with a bundle $E\lra\Ob\mcH$. Then we can pull back the bundle together with its connection and see
      \begin{equation*}
        c_1(\phi^*E)=\phi^*c_1(E).
      \end{equation*}
      Here it is of course essential that the map $\phi$ is indeed smooth.
    \item In the case of a complex line bundle $E$ over a manifold $M$ one can show that $c_1(E)$ lifts to $H^2(M,\ZZ)$. For a good complex line bundle over an orbifold $\mcG$ we have $c_1(E)\in H^2(\mcG,\QQ)$, i.e.\ it lifts to singular cohomology with rational coefficients, see e.g.\ \cite{adem_orbifolds_2007}.
    \item As usual if we talk about $S^1$ or $T^n$-bundles then we can also define Chern classes. The data of a $S^1$-bundle over $\Ob\mcG$ defines a complex line bundle over $\Ob\mcG$ up to isomorphism and we use this Chern class as the one for the $S^1$-bundle. Similarly a $T^n$-bundle $E$ has $n$ sub-$S^1$-bundles by the $n$ injective Lie-group homomorphisms $S^1\lra T^n$ into the components. Then we define the Chern class $c_1(E)$ as the vector of the Chern classes of these subbundles.
    \item Unfortunately we will encounter bad bundles over our later non-reduced orbifold moduli spaces. For these bundles it is not a priori clear how to define the Chern class in terms of Chern--Weil theory as the definition of a connection might be empty if there are no non-trivial sections. However, it urns out that one can always consider a bad orbifold vector bundle as a restriction of a good orbifold vector bundle for which one case use Chern--Weil theory to define characteristic classes. This can be done by recognizing that the vertical tangent bundle $\operatorname{V}E\lra E$ to the bad orbifold vector bundle $E\lra\Ob\mcG$ is in fact a good bundle and one has $E\cong \operatorname{V}E|_{\iota(\Ob\mcG)}$ where $\iota:\Ob\mcG\lra E$ is the zero-section. The Chern--Weil characteristic classes defined in this way by pulling back via $\iota$ do end up in $H^*(\mcG,\RR)$ and satisfy the same properties as the usual characteristic classes on a good bundle since they are restrictions of these classes. This is shown and explained in \cite{seaton_characteristic_2007}. 
  \end{enumerate}
  \label{rmk:chern-classes-orbibundles}
\end{rmk}

\subsection{Symplectic Geometry}

Recall that a symplectic orbifold is an ep-Lie groupoid such that object and morphism manifolds are symplectic and all structure maps are symplectomorphisms together with a homeomorphism of the quotient to the topological space. This section describes the orbifold notion of symplectic reduction and the Duistermaat-Heckman theorem. First, we have of course the usual Darboux theorem.

\begin{thm}[Darboux]
  Let $\mcG$ be a symplectic orbifold groupoid. Then every point $[x]\in|\mcG|$ has an orbifold chart\footnote{Note that we use the word orbifold chart slightly differently from before.} $(U_x,G_x,\pi_x)$ with $U_x\subset\RR^n$ and $\pi_x:U_x\lra \Ob\mcG$ continuous such that $U_x\lra\Ob\mcG\lra|\mcG|$ is a homeomorphism onto its image including $[x]$ such that $\pi_x^*\omega=\omega_0$ and $G_x$ acts on $U_x$ via symplectomorphisms.
\end{thm}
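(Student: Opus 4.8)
The plan is to reduce the orbifold Darboux theorem to the classical one by working chart-by-chart and then equivariantizing. First I would invoke \cref{lem-existence-good-neighborhoods} to obtain, around an arbitrary point $[x]\in|\mcG|$, an honest orbifold chart $(\wt{U}_x, G_x, \pi_x)$ in which the finite group $G_x=\Aut_{\mcG}(x)$ acts smoothly on $\wt{U}_x\subset\Ob\mcG$ by symplectomorphisms (the structure maps $s$, $t$ are symplectic by hypothesis, so the induced local action $\varphi_g = t\circ s^{-1}$ of \cref{rmk:action-groupoid-neighborhood} preserves $\omega$), and such that $\wt{U}_x/G_x \hookrightarrow |\mcG|$ is an open embedding onto a neighborhood of $[x]$. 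The point $x$ is a fixed point of the $G_x$-action. So the whole problem becomes: given a finite group $G$ acting symplectically on a symplectic manifold and fixing a point $p$, find $G$-equivariant Darboux coordinates centered at $p$.

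The key step is the equivariant Darboux lemma, which I would prove by the standard Moser trick made equivariant by averaging. First linearize: by averaging an arbitrary chart over $G$ one gets a $G$-equivariant chart identifying a neighborhood of $p$ with a neighborhood of $0$ in $(\ts_p\wt{U}_x,\omega_p)$ on which $G$ acts linearly. Since $G$ is finite (hence compact) and acts symplectically and linearly on $(\RR^n,\omega_p)$, one can average an $\omega_p$-compatible inner product to make the action unitary, and then a linear symplectic change of coordinates — which can be chosen $G$-equivariant because $G$ acts unitarily — brings $\omega_p$ to the standard form $\omega_0$. Now on a neighborhood of $0$ we have two symplectic forms $\omega$ (the pulled-back form) and $\omega_0$ agreeing at $0$, both $G$-invariant. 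Running Moser's argument: set $\omega_t = \omega_0 + t(\omega-\omega_0)$, write $\omega-\omega_0 = d\beta$ with $\beta$ chosen $G$-invariant (average the primitive produced by the Poincaré lemma over $G$), solve $\iota_{X_t}\omega_t = -\beta$ for the time-dependent vector field $X_t$, which is then automatically $G$-invariant, and let $\psi_t$ be its flow; the flow is $G$-equivariant because it is the flow of an invariant vector field, and $\psi_1^*\omega = \omega_0$. Composing $\pi_x$ with the inverse of this equivariant diffeomorphism yields the desired chart $U_x\subset\RR^n$ with $\pi_x^*\omega = \omega_0$ and $G_x$ acting by (linear, hence symplectic) transformations.

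The main obstacle — really the only nonroutine point — is making sure every step of Moser's argument can be carried out $G$-equivariantly, i.e.\ that averaging over the finite group does not destroy the properties one needs: the averaged primitive $\beta$ still satisfies $d\beta = \omega-\omega_0$ because $\omega$ and $\omega_0$ are both $G$-invariant, and averaging $\beta$ does not reintroduce a nonzero value at $0$ because each $g^*\beta$ vanishes at $0$ up to first order (one should choose the Poincaré-lemma primitive so that $\beta_0 = 0$, using that $(\omega-\omega_0)|_0 = 0$); the averaged $X_t$ still solves the Moser equation because $\omega_t$ is nondegenerate near $0$ for all $t$ by shrinking the neighborhood. One also has to check the linear-algebra normalization is compatible with the group action, which is where unitarizing the inner product is essential. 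All of this is standard equivariant symplectic linear algebra plus the equivariant Moser trick, so the write-up can be kept short by citing that the Moser argument goes through verbatim once all the data are arranged to be $G$-invariant. Finally I would remark, as the footnote in the statement already hints, that the word ``orbifold chart'' here is used in the Darboux sense of a linear model $U_x\subset\RR^n$ rather than the earlier \cref{def-orb-chart} sense, but the two are compatible: the chart produced is exactly an orbifold chart in the sense of \cref{def-orb-chart} with the additional feature that $\omega$ pulls back to $\omega_0$ and $G_x$ acts linearly.
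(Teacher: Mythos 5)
The paper actually gives no proof of this Darboux theorem — it is stated bare (with only a remark on how to recast it globally) and the author is implicitly appealing to the well-known equivariant Darboux lemma for finite groups. Your reconstruction is correct and is precisely the expected argument: reduce via \cref{lem-existence-good-neighborhoods} to a finite symplectic $G_x$-action near a fixed point, Bochner-linearize by averaging a chart, normalize the constant form $\omega_p$ to $\omega_0$ after unitarizing, then run Moser with the primitive and the Moser vector field both averaged over $G_x$. One small phrasing issue: in the linear-algebra step you don't really need the change of basis $A$ taking $\omega_p$ to $\omega_0$ to be $G_x$-equivariant; conjugating the (already linear, symplectic) $G_x$-action by any such $A$ automatically yields a linear symplectic action on $(\RR^{2n},\omega_0)$, so the unitarization is more of a convenience than a necessity for that step. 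Otherwise the proof is sound and fills a gap the paper left to the reader.
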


\begin{rmk}
  Note that we can reformulate this in a global way as follows. There exists a symplectic orbifold groupoid $\mcH$ with an equivalence $\epsilon:\mcH\lra\mcG$ such that $\epsilon$ is symplectic on objects and morphisms and such that every $[x]\in|\mcH|$ has only finitely many representatives in $\Ob\mcH$. This groupoid can be constructed by covering $\Ob\mcG$ with symplectic charts and then choosing a locally finite sub covering $\mcU$ and defining $\mcH\coloneqq \mcG_{\mcU}$.
\end{rmk}

\index{Orbifold!Group Action on}

\begin{definition}
  Let $\mcG$ be a orbifold groupoid. We call a smooth group homomorphism $H\lra\Isom(\mcG)$ a \emph{group action by $H$ on $\mcG$}. Here $\Isom(\mcG)$ means homomorphisms $\phi:\mcG\lra\mcG$ of orbifold groupoids such that there exists an inverse $\eta:\mcG\lra\mcG$ satisfying $\phi\circ\eta=\eta\circ\phi=\id_{\mcG}$. Note that these are equalities and not just identities up to natural transformation. Furthermore, smooth means that the maps
  \begin{align*}
    \Psi:H\times\Ob\mcG & \lra\Ob\mcG \\
    \Phi:H\times\Mor\mcG & \lra \Mor\mcG
  \end{align*}
  are smooth maps.
  \label{def:group-action-on-orbifold}
\end{definition}

\begin{rmk}
  \begin{enumerate}[label=(\roman*), ref=(\roman*))]
    \item Comparing the definition of group actions on manifolds with this one, one sees that we do not just require the group to act on the underlying topological space but rather on the objects. This might not be necessary but it simplifies things a lot if we have actual group actions on the object and morphism manifolds. This is why we require actual invertible homomorphisms.
    \item Note that isomorphisms induce homeomorphisms of the quotient space and that they preserve automorphism groups.
    \item Also note that the group action $H$ on $\mcG$ gives two group actions on the object and morphism sets which are compatible in a very specific way because $(\Psi_h,\Phi_h)$ is a homomorphism of $\mcG$ for every $h\in H$. 
  \end{enumerate}
\end{rmk}

\begin{lem}
  For every $h\in H$ and every $g:x\lra y$ in $\Mor\mcG$ we have
  \begin{equation*}
    \Phi_h(g)(\Psi_h(x))=\Psi_h(g(x)).
  \end{equation*}
  \label{lem:relation-group-action-orbifold}
\end{lem}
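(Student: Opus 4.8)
The plan is to unwind how a morphism of an étale groupoid acts on a neighborhood of its source object, as in Remark~\ref{rmk:action-groupoid-neighborhood}, and then to use that $(\Psi_h,\Phi_h)\colon\mcG\lra\mcG$ is an invertible homomorphism of ep-Lie groupoids. Recall that for $g\colon x\lra y$ one chooses a neighborhood $N_g\subset\Mor\mcG$ of $g$ on which $s$ and $t$ restrict to diffeomorphisms onto neighborhoods $U\ni x$ and $V\ni y$, and sets $\phi_g\coloneqq t\circ(s|_{N_g})^{-1}\colon U\lra V$; the notation $g(x')$ for $x'\in U$ means $\phi_g(x')$, so in particular $g(x)=y$, and $\Phi_h(g)(\Psi_h(x))$ means $\phi_{\Phi_h(g)}(\Psi_h(x))$.

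First I would record the structure-map relations that come for free from $(\Psi_h,\Phi_h)$ being a groupoid homomorphism, namely $s\circ\Phi_h=\Psi_h\circ s$ and $t\circ\Phi_h=\Psi_h\circ t$, together with the fact that $\Psi_h$ and $\Phi_h$ are diffeomorphisms, with inverses $\Psi_{h^{-1}}$ and $\Phi_{h^{-1}}$. From these I would check that $\Phi_h(N_g)$ is a neighborhood of $\Phi_h(g)$ on which $s$ and $t$ restrict to diffeomorphisms, onto $\Psi_h(U)$ and $\Psi_h(V)$ respectively: for instance $s|_{\Phi_h(N_g)}=\Psi_h\circ(s|_{N_g})\circ(\Phi_h|_{N_g})^{-1}$ is a composite of diffeomorphisms, and likewise for $t$. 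Hence $\Phi_h(N_g)$ is a legitimate choice of $N_{\Phi_h(g)}$.

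The computation is then short: substituting $s|_{\Phi_h(N_g)}=\Psi_h\circ(s|_{N_g})\circ(\Phi_h|_{N_g})^{-1}$ and the analogous formula for $t$ into $\phi_{\Phi_h(g)}=t|_{\Phi_h(N_g)}\circ(s|_{\Phi_h(N_g)})^{-1}$ and cancelling $(\Phi_h|_{N_g})^{-1}\circ\Phi_h|_{N_g}$ gives $\phi_{\Phi_h(g)}=\Psi_h\circ\phi_g\circ\Psi_h^{-1}$; evaluating at $\Psi_h(x)$ yields $\Phi_h(g)(\Psi_h(x))=\Psi_h(\phi_g(x))=\Psi_h(g(x))$. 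I expect the only delicate point — the ``main obstacle'', such as it is — to be the verification that $\phi_{\Phi_h(g)}$ does not depend on which admissible neighborhood is used to compute it, so that one may take $N_{\Phi_h(g)}=\Phi_h(N_g)$; this is exactly the well-definedness underlying Remark~\ref{rmk:action-groupoid-neighborhood}, obtained by shrinking $N_g$. Everything else is bookkeeping with the functoriality relations, and in the degenerate case where one tests only the source point $x$ (so $g(x)=t(g)$), the claim collapses to the single relation $t\circ\Phi_h=\Psi_h\circ t$.
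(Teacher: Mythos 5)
Your argument is correct and follows the same route as the paper: both use the homomorphism relations $s\circ\Phi_h=\Psi_h\circ s$ and $t\circ\Phi_h=\Psi_h\circ t$, choose neighborhoods of $g$ and $\Phi_h(g)$ on which $s$ inverts, and deduce $\phi_{\Phi_h(g)}=\Psi_h\circ\phi_g\circ\Psi_h^{-1}$ by composing $t\circ s^{-1}$ with the conjugation. Your explicit check that $\Phi_h(N_g)$ is an admissible choice of $N_{\Phi_h(g)}$, and your remark that well-definedness of $\phi_{\Phi_h(g)}$ up to shrinking is the only delicate point, make the argument slightly more careful than the paper's but do not change the substance.
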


\begin{proof}
  Because $(\Psi_h,\Phi_h)$ is a homomorphism we have
  \begin{align*}
    \Psi_h\circ s & = s\circ\Phi_h, \\
    \Psi_h\circ t & = t\circ\Phi_h.
  \end{align*}
  Now fix neighborhoods $U_x,U_y,U_g$ and $U_{\Phi_h(g)}$ close to $x,y,g:x\lra y$ and $\Phi_h(g):\Psi_h(x)\lra\Psi_h(y)$, respectively, such that we can invert $s:U_g\lra U:x$ and $s:U_{\Phi_h(g)}\lra U_y$. We thus have $s|_{U_{\Phi_h(g)}}^{-1}\circ \Psi_h=\Phi_h\circ s|_{U_g}^{-1}$ and therefore
  \begin{equation*}
    t\circ s|_{U_{\Phi_h(g)}}^{-1}\circ\Psi_h=t\circ\Phi_h\circ s|_{U_g}^{-1}=\Psi_h\circ t \circ s|_{U_g}^{-1}
  \end{equation*}
  which gives the result as $t\circ s|_{U_{\Phi_h(g)}}^{-1}(x)=\Phi_h(g)(x)$ and $t \circ s|_{U_g}^{-1}(x)=g(x)$.
\end{proof}

\begin{lem}
  Given a group action $H\lra\Isom(\mcG)$ by a Lie group $H$ on an orbifold groupoid $\mcG$ one can define a Lie algebra homomorphism $\mfh\lra\mcX(\mcG)$ by sending $X\mapsto\ul{X}$, so in particular infinitesimal vector fields of the action on objects are invariant.
\end{lem}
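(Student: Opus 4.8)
The plan is to produce $\ul{X}$ as the infinitesimal generator of the $H$–action on the object manifold, to verify that it is an invariant vector field in the orbifold sense (so that it genuinely lies in $\mcX(\mcG)$), and finally to deduce the bracket relation from the classical fact for smooth actions on manifolds. Concretely, for $X\in\mfh$ and $x\in\Ob\mcG$ one sets
\[
  \ul{X}_x \coloneqq \frac{\dd}{\dd t}\Big|_{t=0}\Psi(\exp(tX),x).
\]
Since $\Psi:H\times\Ob\mcG\lra\Ob\mcG$ is a smooth and honest action — here it is essential that $\Isom(\mcG)$ consists of genuinely invertible homomorphisms rather than homomorphisms up to natural transformation, so that $\Psi_e=\id$ and $\Psi_{hh'}=\Psi_h\circ\Psi_{h'}$ hold on the nose — this defines a smooth section $\ul{X}$ of $\ts\Ob\mcG\lra\Ob\mcG$, i.e.\ a vector field on the object manifold; the analogous construction with $\Phi$ gives a vector field on $\Mor\mcG$, used only implicitly below.

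The heart of the argument is to show $\ul{X}$ is invariant, i.e.\ that for every $g:x\to y$ in $\Mor\mcG$ one has $\dd_x\varphi_g(\ul{X}_x)=\ul{X}_y$, where $\varphi_g = t\circ(s|_{N_g})^{-1}$ is the local diffeomorphism attached to $g$ and $v\mapsto\dd_x\varphi_g(v)$ is the induced morphism action on $\ts\Ob\mcG$. First I would consider the path $g_t\coloneqq\Phi(\exp(tX),g)$ in $\Mor\mcG$; by continuity $g_t\in N_g$ for $|t|$ small and $g_0=g$, while the homomorphism identity $\Psi_h\circ s=s\circ\Phi_h$ gives $s(g_t)=\Psi(\exp(tX),x)$, so $g_t=(s|_{N_g})^{-1}(\Psi(\exp(tX),x))$. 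Consequently
\[
  \varphi_g\big(\Psi(\exp(tX),x)\big)=t(g_t)=t\big(\Phi(\exp(tX),g)\big)=\Psi(\exp(tX),t(g))=\Psi(\exp(tX),y),
\]
using $\Psi_h\circ t=t\circ\Phi_h$ in the penultimate step; differentiating at $t=0$ yields $\dd_x\varphi_g(\ul{X}_x)=\ul{X}_y$. This is essentially the differentiated, localized form of \cref{lem:relation-group-action-orbifold}, which already records precisely the compatibility between the two actions that makes this computation go through, so I would cite that lemma here. Hence $\ul{X}\in\mcX(\mcG)$, which in particular is the last assertion of the statement.

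It then remains to see that $X\mapsto\ul{X}$ is a Lie algebra homomorphism. The invariant vector fields $\mcX(\mcG)$ form a Lie subalgebra of $\mcX(\Ob\mcG)$, since each $\varphi_g$ is a diffeomorphism and $(\varphi_g)_*[Y,Z]=[(\varphi_g)_*Y,(\varphi_g)_*Z]$, so the bracket of two invariant fields is again invariant on the relevant overlaps. The map $\mfh\lra\mcX(\Ob\mcG)$, $X\mapsto\ul{X}$, is nothing but the infinitesimal generator map of the smooth $H$–action $\Psi$ on the manifold $\Ob\mcG$, hence a Lie algebra homomorphism for the standard sign convention (with the opposite convention it is an anti-homomorphism, and one flips the sign in the definition of $\ul{X}$ accordingly). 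Corestricting its target to the subalgebra $\mcX(\mcG)\subset\mcX(\Ob\mcG)$ identified in the previous step gives the desired Lie algebra homomorphism $\mfh\lra\mcX(\mcG)$.

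The main obstacle is the invariance step: reconciling the locally defined diffeomorphism $\varphi_g$, which a priori only sees a small neighborhood of $x$, with the globally defined one–parameter family $\Psi(\exp(tX),\cdot)$. This is resolved by the continuity argument ensuring $\Phi(\exp(tX),g)\in N_g$ for $t$ near $0$, so that the identification $g_t=(s|_{N_g})^{-1}(\Psi(\exp(tX),x))$ is legitimate; everything else is bookkeeping with the homomorphism relations $\Psi_h\circ s=s\circ\Phi_h$ and $\Psi_h\circ t=t\circ\Phi_h$ and the elementary theory of infinitesimal generators on manifolds.
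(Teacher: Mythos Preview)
Your proof is correct and follows essentially the same approach as the paper. Both arguments establish invariance by showing that $\varphi_g\big(\Psi_{\exp(tX)}(x)\big)=\Psi_{\exp(tX)}(y)$ for small $t$, using that $\Phi_{\exp(tX)}(g)$ remains in the neighborhood $N_g$ where $s$ and $t$ are local diffeomorphisms (the paper phrases this as $\varphi_{\Phi_h(g)}=\varphi_g$ for $h$ near the identity and then invokes \cref{lem:relation-group-action-orbifold}, while you work directly with the path $g_t$ and the relations $\Psi_h\circ s=s\circ\Phi_h$, $\Psi_h\circ t=t\circ\Phi_h$); both then defer the Lie algebra homomorphism statement to the classical result for smooth actions on manifolds.
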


\begin{proof}
  Recall that
  \begin{equation*}
    \ul{X}(x)=\ddt\exp(tX)\cdot x
  \end{equation*}
  and that we need to check $g\cdot \ul{X}=\ul{X}(y)$ for $g:x\lra y$. Denote by $\varphi_g$ the map acting on a neighborhood $U_x\subset \Ob\mcG$ of $x$ defined by $g$. First note that for $h\in H$ sufficiently close  to the identity in $H$ we have
  \begin{equation*}
    \varphi_{\Phi_h(g)}=\varphi_g
  \end{equation*}
  where they are both defined, i.e.\ $U_{\Phi_h(g)}\cap U_g$. This is because $\Phi_h(g)$ and $g$ are very close to each other and the source and target maps are local diffeomorphisms defining $\varphi_{\Phi_h(g)}$ and $\varphi_g$. From this and \cref{lem:relation-group-action-orbifold} follows
  \begin{equation*}
    \Psi_h\circ\varphi_{g}=\varphi_{\Phi_h(g)}\circ\Psi_h=\varphi_g\circ\Psi_h,
  \end{equation*}
  where we have rewritten \cref{lem:relation-group-action-orbifold} as $\varphi_{\Phi_h(g)}\circ\Psi_h=\Psi_h\circ\varphi_g$. Now we can calculate
  \begin{align*}
    g\cdot \ul{X}(x) & = \dd_x\varphi_g\left(\ddt\exp(tX)\cdot x\right) = \ddt\varphi_g(\Psi_{\exp(tX)}(x)) \\
                     & = \ddt\Psi_{\exp(tX)}(\varphi_g(x)) \\
    & = \ul{X}(\varphi_g(x)) = \ul{X}(y).
  \end{align*}
  The fact that $X\longmapsto\ul{X}$ is a Lie algebra homomorphism follows from the usual statement on manifolds.
\end{proof}

\begin{definition}
  Let $H$ be a Lie group and $\mcG$ an orbifold groupoid together with an action $H\lra\Isom(\mcG)$. Then we call
  \begin{enumerate}[label=(\alph*), ref=(\alph*)]
    \item the action \emph{symplectic} if the actions $\Psi$ and $\Phi$ are symplectic, i.e.\ if $\Psi_h^*\omega_{\Ob}=\omega_{\Ob}$ and $\Psi_h^*\omega_{\Mor}=\omega_{\Mor}$ for all $h\in H$.
    \item A map $\mu:\Ob\mcG\lra\mfh^*$ such that $s^*\mu=t^*\mu$ is called a \emph{moment map} if
      \begin{enumerate}[label=(\roman*), ref=(\roman*)]
        \item its quotient map $\mu:|\mcG|\lra\mfh^*$ is proper,
        \item it is Ad-equivariant, i.e.\ $\Psi_h^*\mu=\Ad_{h^{-1}}^*\circ\mu$ and
        \item it satisfies 
          \begin{equation}\dd\langle\mu,X\rangle=-i_{\ul{X}}\omega_{\Ob} \label{eq:moment-map} \end{equation} for all $X\in\mfh$ for the action $\Psi$ on the object manifold.
      \end{enumerate}
    \item A symplectic action is called \emph{Hamiltonian} if there exists a moment map $\mu$ for this action.
  \end{enumerate}
\end{definition}

\begin{rmk}
  \begin{enumerate}[label=(\roman*), ref=(\roman*)]
    \item Note that these definitions are verbatim the ones for manifolds except that everything is required to be invariant under the morphism action.
    \item In the definition of a symplectic action it is enough to require that one of the two actions on objects or morphisms is symplectic.
  \end{enumerate}
\end{rmk}

\begin{lem}
  Let $H$ be a Hamiltonian Lie group action on an orbifold groupoid $\mcG$ with moment map $\mu:\Ob\mcG\lra\mfh^*$. Then the action of $\Phi$ on $\Mor\mcG$ is Hamiltonian, too, with moment map $s^*\mu=t^*\mu$.
\end{lem}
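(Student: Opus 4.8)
The plan is to verify the three defining properties of a moment map for the action $\Phi$ on $\Mor\mcG$, taking $s^*\mu = t^*\mu$ as the candidate. Note first that $s^*\mu = t^*\mu$ holds by the hypothesis that $\mu$ is a moment map for $\Psi$, so the candidate is well-defined and consistent. The symplectic form on $\Mor\mcG$ is $\omega_{\Mor}$, and we already know from the previous lemma that $\Phi$ is symplectic whenever $\Psi$ is. So the real content is to check properness of the quotient map, Ad-equivariance, and the infinitesimal relation $\dd\langle s^*\mu, X\rangle = -i_{\ul{X}_{\Mor}}\omega_{\Mor}$.

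First I would handle properness. Since $\mu:|\mcG|\lra\mfh^*$ is proper by assumption and the map $s$ (or $t$) descends to $|\mcG|$ compatibly — indeed $s,t:\Mor\mcG\lra\Ob\mcG$ both induce the same map on orbit spaces because source and target of a morphism are equivalent — the quotient of $s^*\mu$ on $|\Mor\mcG|$ factors through $|\mcG|$ via this induced map, which is actually a bijection (in fact a homeomorphism) of orbit spaces since every morphism object collapses to the equivalence class of its source. Hence the quotient map of $s^*\mu$ is the composition of $\mu:|\mcG|\lra\mfh^*$ with a homeomorphism, so it is proper. Second, Ad-equivariance: we must show $\Phi_h^*(s^*\mu) = \Ad_{h^{-1}}^*\circ(s^*\mu)$. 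Using the homomorphism identities $\Psi_h\circ s = s\circ\Phi_h$ from the proof of \cref{lem:relation-group-action-orbifold}, we compute $\Phi_h^*s^*\mu = s^*\Psi_h^*\mu = s^*(\Ad_{h^{-1}}^*\circ\mu) = \Ad_{h^{-1}}^*\circ s^*\mu$, which is exactly what is needed.

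Third, and this I expect to be the main obstacle, is the infinitesimal relation $\dd\langle s^*\mu, X\rangle = -i_{\ul{X}_{\Mor}}\omega_{\Mor}$. The infinitesimal generator $\ul{X}_{\Mor}$ of the action $\Phi$ on $\Mor\mcG$ is $\ddt_{|_{t=0}}\Phi_{\exp(tX)}(g)$. Because $s$ and $t$ are submersions intertwining the actions (i.e.\ $s\circ\Phi_h = \Psi_h\circ s$ and likewise for $t$), the generator $\ul{X}_{\Mor}$ is $s$-related and $t$-related to $\ul{X}_{\Ob}$; that is, $\dd s(\ul{X}_{\Mor}) = \ul{X}_{\Ob}\circ s$ and $\dd t(\ul{X}_{\Mor}) = \ul{X}_{\Ob}\circ t$. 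Now I would pull back the relation $\dd\langle\mu,X\rangle = -i_{\ul{X}_{\Ob}}\omega_{\Ob}$ along $s$: using that $s$ is a symplectic map (a structure map of a symplectic groupoid) so $s^*\omega_{\Ob} = \omega_{\Mor}$, and that $\dd$ commutes with $s^*$, we get
\begin{align*}
  \dd\langle s^*\mu, X\rangle = s^*\dd\langle\mu,X\rangle = -s^*(i_{\ul{X}_{\Ob}}\omega_{\Ob}) = -i_{\ul{X}_{\Mor}}(s^*\omega_{\Ob}) = -i_{\ul{X}_{\Mor}}\omega_{\Mor},
\end{align*}
where the penultimate step uses that $\ul{X}_{\Mor}$ is $s$-related to $\ul{X}_{\Ob}$. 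The subtlety to be careful about is precisely this $s$-relatedness of the generating vector fields and the fact that $s$ is symplectic as a structure map of the symplectic orbifold groupoid — both follow from the definitions in the excerpt (the action is a homomorphism on objects and morphisms, and a symplectic orbifold groupoid has all structure maps symplectic), but they need to be invoked cleanly. Finally, I would remark that $s^*\mu = t^*\mu$ is automatically invariant under the morphism action of $\Mor\mcE = \Mor\mcG{}_s\times_\pi\Mor\mcG$ in the translation-groupoid sense, closing the verification.
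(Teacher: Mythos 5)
Your verification of Ad-equivariance and the infinitesimal relation mirrors the paper's proof: both use $\Psi_h\circ s = s\circ\Phi_h$, the $s$-relatedness of the infinitesimal generators $s_*\ul{X}_{\Mor}=\ul{X}_{\Ob}$, and $s^*\omega_{\Ob}=\omega_{\Mor}$ to pull the moment-map equation back from $\Ob\mcG$ to $\Mor\mcG$. The properness discussion you add is extraneous and the one weak spot: the lemma asserts that $\Phi$ is a Hamiltonian action on the symplectic \emph{manifold} $\Mor\mcG$ in the ordinary sense, so there is no "quotient map" whose properness must be verified — in particular the orbit space "$|\Mor\mcG|$" you invoke is not a defined object (orbit spaces are attached to groupoids, not to their morphism manifolds), and the paper's proof correctly omits any such check.
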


\begin{proof}
  As stated in the remark we have for $h\in H$
  \begin{equation*}
    \Phi_h^*\omega_{\Mor}=\Phi_h^*s^*\omega_{\Ob}=s^*\Psi_h^*\omega_{\Ob}=s^*\omega_{\Ob}=\omega_{\Mor}
  \end{equation*}
  showing that $\Phi$ is a symplectic action on the morphism manifold.

  Now define $\eta:\Mor\mcG\lra\mfh^*$ by $\eta\coloneqq \mu\circ s$. This is a smooth map and it is Ad-equivariant as
  \begin{equation*}
    \Phi_h^*\eta=\eta\circ\Phi_h=\mu\circ s\circ\Phi_h=\mu\circ\Psi_h\circ s =\Ad_h^*\circ\mu\circ s=\Ad_h^*\circ\eta.
  \end{equation*}
  It remains to check that it satisfies \cref{eq:moment-map}. For this purpose we prove that the infinitesimal vector fields of the $\Phi$-action map under $s$ and $t$ to the infinitesimal vector fields of the $\Psi$-action. This follows for any $X\in\mfh$ and $g:x\lra y$ from
  \begin{equation*}
    s_*\ul{X}_{\Mor}(s(g))=\ddt s\left(\Phi_{\exp(tX)}(g)\right)=\ddt \Psi_{\exp(tX)}(s(g))=\ul{X}_{\Ob}(x).
  \end{equation*}
  But then $\eta,\ul{X}_{\Mor}$ and $\omega_{\Mor}$ are all pullbacks of the corresponding objects on $\Ob\mcG$ via the local diffeomorphisms $s$ and $t$ and thus \cref{eq:moment-map} holds on $\Mor\mcG$ as well.
\end{proof}

\begin{thm}[Symplectic Reduction]
  Let $H$ be a compact Lie group acting on the orbifold groupoid $\mcG$ in a Hamiltonian way with moment map $\mu:\Ob\mcG\lra\mfh^*$. Furthermore assume that $H$ acts freely on $\mu^{-1}(0)\subset\Ob\mcG$. Then $H$ acts freely on $\eta^{-1}(0)\subset\Ob\mcG$ and the symplectic quotient  manifolds $\faktor{\mu^{-1}(0)}{H}$ and $\faktor{\eta^{-1}(0)}{H}$ form a symplectic orbifold groupoid $\mcG\sslash H$ together with a symplectic homomorphism $\mcG|_{\mu^{-1}(0)}\lra\mcG\sslash H$ where $\mcG|_{\mu^{-1}(0)}$ means the full subcategory orbifold groupoid defined by $\mu^{-1}(0)\subset\Ob\mcG$.
  \label{thm:symplectic-reduction}
\end{thm}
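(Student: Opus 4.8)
The strategy is to reduce everything to the classical symplectic reduction theorem for manifolds (applied to $\Ob\mcG$) together with a check that the morphism structure survives the quotient. First I would fix the compact Lie group $H$ and recall the two data sets: the Hamiltonian action $\Psi$ on $\Ob\mcG$ with moment map $\mu$, and—by the preceding lemma—the induced Hamiltonian action $\Phi$ on $\Mor\mcG$ with moment map $\eta = \mu\circ s = \mu\circ t$. Since $\mu^{-1}(0)\subset\Ob\mcG$ is a submanifold on which $H$ acts freely, the usual Marsden–Weinstein theorem applies: $\faktor{\mu^{-1}(0)}{H}$ is a smooth symplectic manifold, with symplectic form $\omega_{\Ob}^{\mathrm{red}}$ characterized by $\pi^*\omega_{\Ob}^{\mathrm{red}} = \iota^*\omega_{\Ob}$, where $\iota:\mu^{-1}(0)\hookrightarrow\Ob\mcG$ and $\pi:\mu^{-1}(0)\to\faktor{\mu^{-1}(0)}{H}$.

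Next I would verify that $H$ acts freely on $\eta^{-1}(0)\subset\Mor\mcG$. This follows from $\eta = \mu\circ s$: if $g\in\eta^{-1}(0)$ then $s(g)\in\mu^{-1}(0)$, and $h\cdot g = g$ would force (applying $s$ and using that $(\Psi_h,\Phi_h)$ is a homomorphism) $h\cdot s(g) = s(g)$, hence $h = e$ by freeness on $\mu^{-1}(0)$. So again Marsden–Weinstein gives a smooth symplectic manifold $\faktor{\eta^{-1}(0)}{H}$. I would then define $\Ob(\mcG\sslash H) := \faktor{\mu^{-1}(0)}{H}$ and $\Mor(\mcG\sslash H) := \faktor{\eta^{-1}(0)}{H}$, with the five structure maps induced from those of $\mcG$: since $s,t$ map $\eta^{-1}(0)$ into $\mu^{-1}(0)$ equivariantly (using $\eta = \mu\circ s = \mu\circ t$), they descend; similarly $u$, $i$, and $m$ descend because the fibre product $\Mor{}_s\!\times_t\!\Mor$ restricted to $\eta^{-1}(0)$ is $H$-invariant and its quotient is the fibre product of the quotients (here one uses that $s,t$ remain submersions after passing to quotients, which follows from $s,t$ being submersions and $H$ acting freely and properly). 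One checks the groupoid axioms hold on representatives, so they hold on classes.

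Then I would check the ep-Lie and symplectic properties of $\mcG\sslash H$. Étalness: $s$ and $t$ on $\mcG$ are local diffeomorphisms; restricting to $\eta^{-1}(0)\to\mu^{-1}(0)$ they remain local diffeomorphisms (both source and target are submanifolds cut out in a compatible way, and $s$ is transverse to the inclusion because $\mu\circ s$ has $0$ as regular value), and this property is preserved under the free proper quotient, so the descended $s,t$ are local diffeomorphisms; the same argument handles $u,i,m$. Properness: the map $(s,t):\Mor\mcG\to\Ob\mcG\times\Ob\mcG$ is proper, and restricting to the closed invariant subset $\eta^{-1}(0)$ and quotienting by the compact group $H$ preserves properness of the induced map on quotients. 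Hence $\mcG\sslash H$ is an orbifold groupoid. Symplecticity of $\Ob(\mcG\sslash H)$ is Marsden–Weinstein; for $\Mor(\mcG\sslash H)$ one notes $\omega_{\Mor} = s^*\omega_{\Ob}$ is $H$-invariant and descends to $s^*\omega_{\Ob}^{\mathrm{red}}$ on the quotient (using that $s$ descends), so the descended structure maps are symplectomorphisms. Finally, the homomorphism $\mcG|_{\mu^{-1}(0)}\to\mcG\sslash H$ is $(\iota,\pi)$ on objects and the analogous quotient map on morphisms; it is a functor because all structure maps commute with the quotient projections, and it is symplectic on objects and morphisms by the defining property of the reduced forms.

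The main obstacle I anticipate is not any single deep point but the bookkeeping of the \emph{compatibility} of the two reductions: one must be careful that the submanifold $\eta^{-1}(0)\subset\Mor\mcG$ is exactly $(s,t)^{-1}(\mu^{-1}(0)\times\mu^{-1}(0))$ intersected appropriately—in fact $\eta^{-1}(0)=s^{-1}(\mu^{-1}(0))=t^{-1}(\mu^{-1}(0))$ since $\eta=\mu\circ s=\mu\circ t$—and that the fibre product $\Mor{}_s\!\times_t\!\Mor$ behaves well under this restriction and the subsequent quotient, so that the composition map $m$ descends and the descended $s,t$ remain submersions (indeed local diffeomorphisms). All of this is routine but needs the observation that $\eta^{-1}(0)\to\mu^{-1}(0)$ via $s$ is itself a local diffeomorphism onto its image, which follows from $s$ being étale on $\mcG$ together with the regular-value property. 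Once this is in place, everything else is a verbatim transcription of the manifold proof to representatives, exactly as the remark after the statement suggests.
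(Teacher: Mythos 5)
Your proposal is correct and follows essentially the same approach as the paper's proof: apply classical Marsden--Weinstein reduction separately to objects and to morphisms (noting that $\eta^{-1}(0) = s^{-1}(\mu^{-1}(0)) = t^{-1}(\mu^{-1}(0))$ and that freeness on $\mu^{-1}(0)$ transfers to $\eta^{-1}(0)$ via $\eta = \mu\circ s$), let the structure maps descend by equivariance, and verify the \'etale, proper and symplectic properties by the same local-slice and closed-map arguments. The only differences are cosmetic ordering of steps.
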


\begin{proof}
  If the $\Psi$-action on the objects is free then so is the action on morphisms because if there were a fixed point $g:x\lra y$ such that $\Phi_h(g)=g$ then $g\in\Hom_{\mcG}(\Psi_h(x),\Psi_h(y))$ and thus $\Psi_h(x)=x$.

  Since $H$ is compact we can therefore form the two symplectic manifolds
  \begin{equation*}
    \left(\faktor{\mu^{-1}(0)}{H},\ol{\omega}_{\Ob}\right)\text{ and }\left(\faktor{\eta^{-1}(0)}{H},\ol{\omega}_{\Mor}\right)
  \end{equation*}
  satisfying $\pi_{\Ob}^*\ol{\omega}_{\Ob}=\omega_{\Ob}|_{\mu^{-1}(0)}$ and similarly for the morphisms. Notice that 
  \begin{equation*}
    s^{-1}(\mu^{-1}(0))=\eta^{-1}(0)=t^{-1}(\mu^{-1}(0))
  \end{equation*}
  and therefore the morphisms in $\mcG$ acting on $\mu^{-1}(0)$ are precisely those contained in $\eta^{-1}(0)$. Recall that because $h$ acts via homomorphisms all the structure maps of $\mcG$ are equivariant with respect to the actions $\Psi$ and $\Phi$. Therefore they descend to the quotients giving rise to a groupoid $\mcG\sslash H$ with symplectic structures on object and morphism manifold.

  Next we argue that this groupoid is in fact ep-Lie. All the structure maps are smooth because they arise from equivariant smooth maps\footnote{Even the multiplication map $m:\Mor\mcG{_s\times_t}\Mor\mcG\lra\Mor\mcG$ restricts to $m:\eta^{-1}(0){_s\times_t}\eta^{-1}(0)\lra\eta^{-1}(0)$ which is again a manifold on which $H$ acts diagonally.} and so the local slice theorem gives charts for the quotients on which the maps are smooth by assumption. Thus $\mcG\sslash H$ is Lie. For the source map $\ol{s}$ consider the diagram
  \begin{equation*}
    \xymatrix{
      \eta^{-1}(0) \ar[r]^s \ar[d]^{\pi_{\Mor}} & \mu^{-1}(0) \ar[d]^{\pi_{\Ob}} \\
      \faktor{\eta^{-1}(0)}{H} \ar[r]^{\ol{s}} & \faktor{\mu^{-1}(0)}{H}
      }
  \end{equation*}
  Because $s$ is a local diffeomorphism we know that $\ker(\dd_g\pi_{\Ob}\circ s)=\ts(\Phi_H(x))=\ker(\dd_g\pi_{\Mor})$ and $\pi_{\Ob}\circ s$ is a submersion. Therefore $\ol{s}$ is a submersion, too. The same argument works for the other structure maps showing that $\mcG\sslash  H$ is in fact étale. The structure maps are also symplectic as we have $\pi_{\Ob}^*\ol{\omega}_{\Ob}=\omega_{\Ob}|_{\mu^{-1}(0)}$ and $\pi_{\Mor}^*\ol{\omega}_{\Mor}=\omega_{\Mor}|_{\eta^{-1}(0)}$ and therefore their pullbacks are along the projections agree. Next we show that $\mcG\sslash H$ is in fact proper. Consider the diagram
  \begin{equation*}
    \xymatrix{
      \eta^{-1}(0) \ar[r]^{s\times t} \ar[d]^{\pi_{\Mor}} & \mu^{-1}(0)\times\mu^{-1}(0) \ar[d]^{\pi_{\Ob}\times\pi_{\Ob}} \\
      \faktor{\eta^{-1}(0)}{H} \ar[r]^-{\ol{s}\times\ol{t}} & \faktor{\mu^{-1}(0)}{H}\times\faktor{\mu^{-1}(0)}{H}
      }
  \end{equation*}
  of commuting continuous maps. Notice that $\faktor{\mu^{-1}(0)}{H}\times\faktor{\mu^{-1}(0)}{H}$ is locally compact and Hausdorff and $\pi_{\Ob}\times\pi_{\Ob}$ is a closed map by definition whose preimage of a point is $H$ and thus compact. Also, the restriction of $s\times t$ to $\eta^{-1}(0)$ is proper as $\eta^{-1}(0)=(s\times t)^{-1}(\mu^{-1}(0),\mu^{-1}(0))$ and thus preimages of compact subsets of $\mu^{-1}(0)\times\mu^{-1}(0)$ are contained in $\eta^{-1}(0)$. Therefore preimages of compacts subsets of $\faktor{\mu^{-1}(0)}{H}\times\faktor{\mu^{-1}(0)}{H}$ in $\eta^{-1}(0)$ are compact and thus their image under $\pi_{\Mor}$ are also compact. Thus $\mcG\sslash H$ is indeed an orbifold groupoid.

  The symplectic homomorphism $\mcG|_{\mu^{-1}(0)}$ is given by the projection onto the quotients by $H$ on objects and morphisms.  
\end{proof}

\begin{rmk}
  Of course, if $H$ is a compact Abelian Lie group then the equivariance condition on the moment map says $\Psi_h^*\mu=\mu$ and we can do symplectic reduction not just at zero but at any regular value $\xi\in\mfh^*$. So in particular we can try to compare the cohomology classes of the reduced symplectic forms at nearby values $\xi$ and $\xi_0$.
\end{rmk}

\begin{thm}[Duistermaat-Heckman]
  Consider a Hamiltonian torus action $T^n$ on the orbifold groupoid $\mcG$. Let $\xi_0,\xi_1\in\mft^*$ be two points contained in an open connected component $U\subset\mft^*$ such that every $\xi\in U$ is a regular value of the moment map $\mu$ and $T^n$ acts freely on all $\mu^{-1}(\xi)$. Then there is a natural identification of the de Rham cohomologies $H^*(\mcG\sslash H[\xi_0],\RR)$ and $H^*(\mcG\sslash H[\xi_1],\RR)$ such that in $H^*(\mcG\sslash H[\xi_0],\RR)$ we have
  \begin{equation*}
    [\omega_{\xi_1}]=[\omega_{\xi_0}]+2\pi\left\langle\xi_1-\xi_0,c_1\left(\mu^{-1}(\xi_0)\lra\mcG\sslash H[\xi_0]\right)\right\rangle.
  \end{equation*}
  \label{thm:duistermaat-heckman}
\end{thm}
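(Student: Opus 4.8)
The plan is to run Duistermaat and Heckman's original argument — build an explicit connection $1$-form, twist the symplectic form by it, and observe that the twisted form is basic — but carried out throughout with invariant differential forms on the object manifolds, so that everything descends to the orbifold-groupoid level via the de Rham model $\Omega^*(\mcG)$ for $H^*(|\mcG|,\RR)$ recorded in \cref{thm:alg-top-orbifolds}.

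The first step is to organize the reduced spaces into a family. Write $M\coloneqq\Ob\mcG$ and $M_U\coloneqq\mu^{-1}(U)$; since $U$ consists of regular values, $M_U$ is an open $H$-invariant submanifold, and the full subcategory $\mcG|_{M_U}$ is an orbifold groupoid carrying a free Hamiltonian $H$-action. By \cref{thm:symplectic-reduction} and the remark following it (the torus is abelian, so one may reduce at any regular value), each $\mcG\sslash H[\xi]$ with $\xi\in U$ is a symplectic orbifold groupoid, and these assemble into $\mcF\coloneqq\mcG|_{M_U}\sslash H$ together with an induced map $\bar\mu\colon\mcF\lra U$. Applying Ehresmann's theorem to the proper submersion underlying $\mu\colon|M_U|\lra U$ (properness being part of the definition of a moment map) shows $\mcF\lra U$ is a locally trivial family of orbifold groupoids with fibre $\mcG\sslash H[\xi]$; after replacing $U$ by a contractible open neighborhood $U'\subseteq U$ of an arc from $\xi_0$ to $\xi_1$ this family is trivial, so restriction to the fibre over $\xi_i$ gives isomorphisms $H^*(\mcF,\RR)\xrightarrow{\ \sim\ }H^*(\mcG\sslash H[\xi_i],\RR)$. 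These compose to the ``natural identification'' of the theorem, and under it the Chern class $c_1$ of the principal $T^n$-bundle $M_U\lra\mcF$ corresponds to $c_1\bigl(\mu^{-1}(\xi_i)\lra\mcG\sslash H[\xi_i]\bigr)$.

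Next I would pick a connection $1$-form $\theta\in\Omega^1(M_U;\mft)$ for the free $T^n$-action which is in addition invariant under the morphism action, $s^*\theta=t^*\theta$; such a $\theta$ is obtained by averaging an arbitrary connection first over the compact group $T^n$ and then over $\Mor\mcG$, exactly as in the construction of sections of good bundles (passing to a locally finite equivalent subcategory as in \cref{lem:existence-nice-orb-chart} so that the second average is a finite sum). Put $\wt\omega\coloneqq\omega|_{M_U}-\dd\langle\mu,\theta\rangle$, an invariant $2$-form. Using the moment map identity \cref{eq:moment-map}, the invariances $\Psi_h^*\mu=\mu$ and $s^*\mu=t^*\mu$, and the connection identities $\iota_{\ul{X}}\theta=X$ (constant) and $\iota_{\ul{X}}\dd\theta=0$, a short computation gives $\iota_{\ul{X}}\wt\omega=0$ for all $X\in\mft$; together with $H$- and morphism-invariance this makes $\wt\omega$ basic, so it descends to a closed $\bar\omega\in\Omega^2(\mcF)$. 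Restricting to $\mu^{-1}(\xi_i)$, where $\langle\mu,\theta\rangle=\langle\xi_i,\theta\rangle$, yields $\bar\omega|_{\mcG\sslash H[\xi_i]}=\omega_{\xi_i}-\langle\xi_i,F\rangle$ with $F=\dd\theta$ the curvature of the induced connection, hence $[\bar\omega|_{\mcG\sslash H[\xi_i]}]=[\omega_{\xi_i}]-2\pi\langle\xi_i,c_1\rangle$; the $2\pi$ and the sign come from the normalization $c_1=\tfrac{1}{2\pi\ii}[F^\nabla]$ of \cref{rmk:chern-classes-orbibundles}, applied to the $n$ line bundles associated to the $S^1$-subbundles of $M_U\lra\mcF$, and from the sign in \cref{eq:moment-map} — when one of these line bundles is bad, one first embeds it into its (good) vertical tangent bundle as in \cref{rmk:chern-classes-orbibundles}. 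Since $[\bar\omega|_{\mcG\sslash H[\xi_0]}]$ and $[\bar\omega|_{\mcG\sslash H[\xi_1]}]$ are both fibre restrictions of the single class $[\bar\omega]\in H^2(\mcF,\RR)$ and the two $c_1$'s are identified, the identity $[\omega_{\xi_0}]-2\pi\langle\xi_0,c_1\rangle=[\omega_{\xi_1}]-2\pi\langle\xi_1,c_1\rangle$ holds after the natural identification, and rearranging gives the claimed formula.

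The hardest part will be the ``natural identification'' step: upgrading the Ehresmann-type local triviality of the family of reduced spaces to the level of orbifold groupoids — compatibly with the \'etale and proper structure and with the $H$-quotient — and checking that the resulting cohomology isomorphism is independent of the chosen trivialization (equivalently, of the arc in $U$) and transports the Chern classes correctly. By contrast, the differential-form computation and the averaging of the connection are routine given the machinery already set up for good bundles and for forms on ep-Lie groupoids; there the only genuine care needed is bookkeeping of signs and the treatment of the possibly non-effective, hence bad, associated line bundles via their vertical tangent bundles.
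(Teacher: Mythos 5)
Your argument is essentially the paper's intended proof, carried out in detail: the paper's own proof is a single sentence citing Duistermaat and Heckman's original paper and asserting that the argument adapts to the orbifold-groupoid setting, and your proposal supplies exactly that adaptation (averaged invariant connection $\theta$, the basic twisted form $\omega - \dd\langle\mu,\theta\rangle$, and descent via the invariant de~Rham model of \cref{thm:alg-top-orbifolds}). The Ehresmann-trivialization step you flag as the hardest is also the step the paper leaves implicit; within its framework one may take $U$ small enough (a ball around the segment from $\xi_0$ to $\xi_1$) so the identification is unambiguous.
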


\index{Duistermaat-Heckman}

\begin{rmk}
   Note that in \cref{thm:duistermaat-heckman} we assume \emph{free} group actions of $T^n$ on the object manifold. This is probably not necessary as \cite{duistermaat_variation_1982} allows for this by passing to orbifold quotients. In our case, however, we already start with an orbifold and thus have to figure out what the symplectic quotient should be exactly. For example, if $x\in\Ob\mcG$ has automorphisms and is a fixed point of the $T^n$-action, what should the automorphisms of the quotient be? This task seems non-trivial. As we don't need this case, we state everything for the free action on objects.
\end{rmk}

\begin{proof}
  The original proof from \cite{duistermaat_variation_1982} can be adapted to work on the object symplectic quotients which yields the result.
\end{proof}

\index{Groupoid!Orbifold|)}

\chapter{Orbifold Structure on the Moduli Space of Closed Hurwitz Covers}

\label{chap:orbi-structure}

This section deals with the construction of the orbifold structure on the moduli space of closed Hurwitz covers. It will be built from the corresponding orbifold structure on Deligne--Mumford space constructed in \cite{robbin_construction_2006}. All the later results build on this section.

We will need many similar moduli spaces and a somewhat consistent notation to distinguish them. Essentially the main difference is that some spaces contain maps between Riemann surfaces, some contain only Riemann surfaces and some also have marked points on the boundary. Furthermore, some spaces contain marked complex curves (or hyperbolic surfaces with cusps) whereas others contain Riemann surfaces with boundaries (or hyperbolic surfaces with geodesic boundary components of arbitrary length including cusps).

These spaces are summarized in \cref{tab:moduli-spaces-wo-maps} and \cref{tab:moduli-spaces-with-maps} in \cref{sec:moduli-spaces-3}. In this chapter we will deal with the moduli space of closed Hurwitz covers from \cref{def:hurwitz-cover}. For this we first choose and fix combinatorial data as in \cref{sec:indexc-data}, i.e.\ a tuple 
\begin{equation*}
  \left(g,h,k,n,T=\left(d,\nu,\{T_i\}_{i=1}^n\right),\{l_j\}_{j=1}^k\right). 
\end{equation*}
In general we equip moduli spaces with orbifold structures in the following way. We will have a (non-small) groupoid category $\mcC$ with objects $\obj\mcC$ whose orbit space $|\mcC|$ we want to equip with an orbifold structure. This category includes all possible objects and so in particular will not have any manifold structure. However it will have a topology on the orbit space. The orbifold structure for $|\mcC|$ is then given by a second groupoid $\mcG$ with objects and morphisms as in $\mcC$ but fewer of them and which do have a manifold structure such that $\mcG$ forms an orbifold groupoid together with a homeomorphism $|\mcG|\lra|\mcC|$. This homeomorphism will usually be induced by a functor $\mcG\Longrightarrow\mcC$ whose induced map on the orbit spaces is continuous. The following diagram demonstrates the involved spaces.

\begin{equation*}
  \xymatrix{
    \Mor\mcG \ar[r]^{\phi_{\Mor}} & \Mor\mcC \\
    \Ob\mcG \ar[r]^{\phi_{\Ob}} \ar[d] & \Ob\mcC \ar[d] \\
    |\mcG| \ar[r]^{|\phi|} & |\mcC|
    }
\end{equation*}

In this diagram the category $\mcC$ is usually a large category including all considered objects whereas the category $\mcG$ is a small one and carries the geometric structure we want to put on $|\mcC|$ by making sure that the map $|\phi|$ is a homeomorphism. In our case this will usually be an orbifold structure on $|\mcC|$, i.e.\ the category $\mcG$ will be an ep-Lie groupoid.

Unfortunately this ideal picture will not be possible to realize as the moduli space of closed Hurwitz covers will not be of this type. We will see in \cref{sec:constr-an-orbif} that this space does not look locally like an orbifold but rather somewhat branched. Thus we will describe a different orbifold category whose orbit space has a continuous map to the actual moduli space and which is ``branched'' over codimension two subsets. This will not cause problems for our later calculations in \cref{sec:sympl-geom-moduli} and \cref{sec:appl-exampl} because we will be interested in integrals of volume forms and pointwise degrees mainly.

We will adhere to the following convention. Groupoids containing all possible objects will be denoted by a cursive $\mcR$. Note that this is the category, so the orbit space -- which is usually called the moduli space -- will be denoted by $|\mcR|$. By contrast, we will use $\mcM$ for the orbifold categories which will contain far less objects and morphisms. So for example the Deligne--Mumford space for us is $|\mcR_{g,n}|$ for the appropriate category $\mcR_{g,n}$ instead of the more common $\ol{\mcM}_{g,n}$. In order to distinguish various similar moduli spaces we will use the number of indices to denote whether a space contains just surfaces or pairs of surfaces with maps in between. This means that one pair of indices like in $\mcR_{g,n}$ denotes a category of surfaces with genus $g$ and $n$ marked points (or boundaries) and $\mcR_{g,k,h,n}$ implies that its objects are at least triples $(C,X,u)$ with $u:C\lra X$ a map having certain properties. We will use hats and tildes to denote that the objects of a category have more data like additional marked points or free homotopy classes of curves.

Again, for reference, \cref{tab:moduli-spaces-wo-maps} and \cref{tab:moduli-spaces-with-maps} summarize most of the categories used in this thesis. The precise definitions can be found in \cref{def:hurwitz-cover} as well as \cref{sec:orb-structure-mod-space-bordered-hurwitz-covers-definitions}.

\section{Complex Gluing}

\label{sec:complex-gluing}

\index{Gluing!Complex}

We will now describe how to glue Hurwitz covers close to nodes. Recall that a Hurwitz cover $u:c\lra X$ maps nodes to nodes and all preimages of nodes are nodes. Also, the degree of the Hurwitz covers from both sides of the node is equal and non-zero.

\subsection{Setup}

Let $C$ and $X$ be nodal closed stable connected Riemann surfaces of genus $g$ and $h$, respectively. Let $u:C\lra X$ be a branched covering of degree $d$. By $\wt{X}$ and $\wt{C}$ we denote the normalization of the surfaces, i.e.\ possibly disconnected closed stable smooth Riemann surfaces with a holomorphic map to $\wt{C}$ and $\wt{X}$, respectively, such that it is biholomorphic outside of the nodes. Thus every node has two preimages in the normalization.

We will describe a neighborhood in the moduli space of Hurwitz covers $|\mcR_{g,k,h,n}(T)|$ of $(C,u,X,\bq,\bp)$. Now fix a branch point $p\in X$ and its preimages $q_1,\ldots,q_l\in C$ which are nodes in $C$ and the map $u$ has local degree $k_i$ from both sides at $q_i$ for all $i=1,\ldots,l$. We will denote the points corresponding to the node in the normalization by $q_i^{\dagger}$ and $q_i^{\ast}$. Looking at the nodal points in the normalization each component close to $q_i$ is mapped to only one component close to $p$. This means we can distinguish between ``sides'' of a node $q_i$, i.e.\ we will denote all neighborhoods and nodal points in the normalization with $j=\dagger$ if they are mapped under $u$ to the component containing $p^{\dagger}$ in $\wt{X}$ and likewise for $q_i^{\ast}$ and $p^{\ast}$. Please be aware that the notation might be slightly misleading. We are not interested in the marked points of $u:C\lra X$ but we are interested in the fibre over a node. This is why we will use the latter $l$ to denote the number of nodal preimages of a node and $k_1,\ldots,k_l$ for the degrees at the nodes which are in particular \emph{not} fixed by the data $T$.

\subsection{Choice of Coordinate Neighborhoods on the Surfaces}

In order to glue annuli into the surface we need appropriate coordinate neighborhoods on the surfaces. Note that the usual normal-form theorem for holomorphic maps between Riemann surfaces actually only modifies the chart in the domain. Thus the following slight generalization holds.
\begin{lem}
  Suppose $u:C\lra X$ is a holomorphic map between closed \emph{smooth} Riemann surfaces and $u(q_1)=\cdots=u(q_l)=p$. Then there exist coordinate charts $\phi_i:U_i\lra V_i\subset\CC$ and $\psi:U\lra V\subset\CC$ such that $q_i\in U_i$, $p\in U$, $\phi_i(q_i)=\psi(p)=0$ together with unique integers $k_1,\ldots,k_l$ such that for all $i=1,\ldots,l$ we have $\psi\circ u\circ\phi_i^{-1}(z)=z^{k_i}$.
\label{lem1}
\end{lem}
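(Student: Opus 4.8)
The plan is to reduce this to the classical local normal form theorem for holomorphic maps between Riemann surfaces (as in \cite{forster_lectures_1981}), which states that near a point $q$ where $u$ is non-constant there exist coordinate charts in which $u$ looks like $z\mapsto z^k$ for a unique integer $k=\deg_q u\geq 1$; the only extra content here is that a \emph{single} target chart $\psi$ around $p$ can be chosen to work simultaneously for all the preimages $q_1,\ldots,q_l$. First I would choose an arbitrary coordinate chart $\psi:U\lra V\subset\CC$ around $p$ with $\psi(p)=0$, shrinking $U$ if necessary so that $u^{-1}(U)$ is a disjoint union of neighborhoods $U_i'$ of the $q_i$ (possible since $u$ is proper with finite fibres). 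This fixes the target chart once and for all. The point is that the classical proof of the normal form produces the source chart \emph{after} the target chart is given: one writes $\psi\circ u$ in some initial holomorphic coordinate $w$ centered at $q_i$ as $w^{k_i}g(w)$ with $g(0)\neq 0$ and $k_i=\deg_{q_i}u$, then takes a holomorphic $k_i$-th root $h(w)$ of $g$ near $0$ and sets $\phi_i\coloneqq w\cdot h(w)$, which is a biholomorphism near $0$ by the inverse function theorem since $h(0)\neq 0$. In this new coordinate $\phi_i$ one has $\psi\circ u\circ\phi_i^{-1}(z)=z^{k_i}$, and $k_i$ is unique because it equals the vanishing order of $\psi\circ u$ at $q_i$, which is intrinsic (it equals the local mapping degree $\deg_{q_i}u$).

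The remaining bookkeeping is routine: after shrinking $U$ we can arrange $U_i\coloneqq \phi_i^{-1}(V_i)\subset U_i'$ with the $U_i$ pairwise disjoint, and by construction $\phi_i(q_i)=0$ and $\psi(p)=0$. Each $\phi_i$ is a biholomorphism onto its image $V_i\subset\CC$, so it is a genuine coordinate chart, and $\psi$ is a fixed chart independent of $i$.

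I do not expect a serious obstacle here, since the statement is essentially a repackaging of a standard lemma. The only mildly delicate point is making sure the shrinking of the \emph{target} neighborhood $U$ does not interfere with the construction of the source charts --- but it does not, precisely because in the classical argument the target chart is an input and the source chart is an output, so one is free to fix $\psi$ first and then run the normal-form construction at each $q_i$ separately. One should also note that the hypothesis that $C$ and $X$ are smooth (not nodal) is what lets us apply the classical result directly; this is exactly why the lemma is stated in this generality, to be applied to the normalization in the subsequent gluing construction.
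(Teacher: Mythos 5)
Your proposal is correct and follows essentially the same route as the paper's proof: fix the target chart $\psi$ first, write the local representative at each $q_i$ as $\zeta^{k_i}$ times a non-vanishing factor (the paper writes it as $e^{h_i(\zeta)}$, you as $g(w)$ with a chosen $k_i$-th root — the same device), and reparametrize the source by $\zeta\mapsto \zeta\cdot(\text{root})$ to get the monomial form. The observation you stress — that the classical normal-form construction takes the target chart as input and produces the source chart as output, so a single $\psi$ works for all $q_i$ — is exactly the content and is handled the same way in the paper.
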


\begin{proof}
  Uniqueness of the degrees follows as usual. For existence choose first any coordinate charts $\wt{\phi}_i$ and $\wt{\psi}$ such that $\wt{\phi}_i(q_i)=\psi(p)=0$ for all $i=1,\ldots,l$. Now for some integers $k_i$ we have $\psi\circ u\circ(\wt{\phi}_i)^{-1}(\zeta)=\zeta^{k_i}e^{h_i(\zeta)}$. Define $F_i(\zeta)\coloneqq \zeta e^{\frac{h_i(\zeta)}{k_i}}$ which implies $F_i'(0)\neq 0$ meaning that $F_i$ is invertible in the neighborhood of the origin. Then define $\phi_i(x)\coloneqq F_i\circ\wt{\phi}_i(x)$ for all $i=1,\ldots,l$. These maps are local biholomorphisms around the origin and satisfy
  \begin{equation*}
    \psi\circ u\circ (\phi_i)^{-1}(z)=\psi\circ u\circ(\wt{\phi}_i)^{-1}\circ F_i^{-1}(z)= (F_i(F_i^{-1}(z)))^{k_i}=z^{k_i}
  \end{equation*}
  close to zero.
\end{proof}

Now we apply \cref{lem1} to the map $u:\wt{C}\lra\wt{X}$ for the points $q_i^{\dagger}\longmapsto p^{\dagger}$ and $q_i^{\ast}\longmapsto p^{\ast}$ and thus obtain charts $\wt{\phi}_i^j:\wt{U}_i^j\lra\wt{O}_i^j\subset\CC$ and $\wt{\psi}^j:\wt{V}^j\lra\wt{W}_j\subset\CC$ such that
\begin{itemize}
    \item $\wt{\phi}_i^j(q_i^j)=0$, $\wt{\psi}^j(p^j)=0$ and
    \item for all $i=1,\ldots,l$ and $j=\dagger,\ast$ we have $\wt{\psi}^j\circ u\circ (\wt{\phi}_i^j)^{-1}(z)=z^{k_i}$.
\end{itemize}

\begin{lem}
  Given $u:C\lra X$ as above there exist biholomorphic coordinate charts $\phi_i^j:U_i^j\lra\DD$  for $i=1,\ldots,l$ and $j=\dagger,\ast$ as well as $\psi^j:V^j\lra\DD$ such that $\phi_i^j(q_i^j)=0$ and $\psi^j(p^j)=0$ and such that $\psi^j\circ u\circ(\phi_i^j)^{-1}(z)=z^{k_i}$.
\label{lem2}
 \end{lem}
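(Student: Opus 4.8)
The plan is to upgrade the charts produced by \cref{lem1} so that each domain chart is defined on the unit disc $\DD$ and each target chart is \emph{simultaneously} defined on $\DD$ for all the nodal preimages $q_i^j$ over $p^j$. The point is that \cref{lem1} already provides coordinates $\wt\phi_i^j$ and $\wt\psi^j$ in which $u$ has the normal form $z\mapsto z^{k_i}$, but these are only defined on small neighborhoods which need not be round discs, and the domain charts and the single target chart need not have matching radii. So the first step is to pick a common small radius: choose $\epsilon>0$ small enough that the closed disc $\ol{D_\epsilon(0)}$ is contained in every $\wt W_j\subset\CC$ (the image of the target chart $\wt\psi^j$) and such that $\ol{D_{\epsilon^{1/k_i}}(0)}$ is contained in every domain chart image $\wt O_i^j$. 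Since $z\mapsto z^{k_i}$ maps $D_{\epsilon^{1/k_i}}(0)$ onto $D_\epsilon(0)$, the normal-form identity $\wt\psi^j\circ u\circ(\wt\phi_i^j)^{-1}(z)=z^{k_i}$ is then valid on all of $D_{\epsilon^{1/k_i}}(0)$.

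Next I would rescale. Define $\psi^j:V^j\lra\DD$ on $V^j\coloneqq(\wt\psi^j)^{-1}(D_\epsilon(0))$ by $\psi^j\coloneqq \frac{1}{\epsilon}\wt\psi^j$ (identifying $D_\epsilon(0)$ with $\DD$ via $w\mapsto w/\epsilon$). For the domain charts, to preserve the $z\mapsto z^{k_i}$ form under this rescaling one must rescale by the $k_i$-th root: set $U_i^j\coloneqq(\wt\phi_i^j)^{-1}(D_{\epsilon^{1/k_i}}(0))$ and $\phi_i^j\coloneqq \epsilon^{-1/k_i}\wt\phi_i^j:U_i^j\lra\DD$. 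Then for $z\in\DD$ one computes
\begin{equation*}
  \psi^j\circ u\circ(\phi_i^j)^{-1}(z)=\frac{1}{\epsilon}\,\wt\psi^j\circ u\circ(\wt\phi_i^j)^{-1}\!\left(\epsilon^{1/k_i}z\right)=\frac{1}{\epsilon}\left(\epsilon^{1/k_i}z\right)^{k_i}=z^{k_i},
\end{equation*}
which is exactly the asserted formula. All the maps $\phi_i^j$ and $\psi^j$ are biholomorphisms onto $\DD$ since they are compositions of the original biholomorphic charts with linear rescalings, and they send the marked/nodal points to $0$ by construction since $\wt\phi_i^j(q_i^j)=0$ and $\wt\psi^j(p^j)=0$.

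The only genuine subtlety — and the step I would be most careful about — is the order of quantifiers in choosing $\epsilon$: we need a \emph{single} $\epsilon$ that works for all $l$ preimages on \emph{both} sides $j\in\{\dagger,\ast\}$ simultaneously, so that one fixed target chart $\psi^j$ accommodates all the branches mapping to $p^j$. This is fine because there are only finitely many points $q_i^j$, so we just take $\epsilon$ to be the minimum of the finitely many radii that work for each individual chart; since each $\wt W_j$ and each $\wt O_i^j$ is an open neighborhood of $0$ in $\CC$, each such radius is positive, and hence so is their minimum. One small bookkeeping point: the number of preimages $l$ and the degrees $k_i$ depend on the chosen node $p$ and are the ones fixed in the setup of \cref{sec:complex-gluing} (they are not part of the combinatorial data $T$), and the statement of \cref{lem2} should be read with $u:C\lra X$ restricted near the nodal fibre, i.e.\ applied to the normalization $\wt C\lra\wt X$ exactly as in the paragraph preceding the lemma where $\wt\phi_i^j,\wt\psi^j$ were introduced. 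No compactness beyond finiteness of the fibre is needed, and no further input beyond \cref{lem1}.
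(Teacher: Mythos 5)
Your proof is correct and is essentially the same rescaling argument as the paper's: take the normal-form charts from \cref{lem1}, pick a uniformly small radius $\epsilon$ so that the rescaled discs fit into the chart domains, and then compose with linear rescalings $w\mapsto w/\epsilon$ on the target and $w\mapsto w/\epsilon^{1/k_i}$ on the domain.

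One point worth noting is that the paper phrases the smallness condition on $\epsilon$ differently: rather than requiring the Euclidean discs to lie inside the chart images $\wt W_j$ and $\wt O_i^j$, it requires $u^{-1}\bigl((\wt\psi^j)^{-1}(B_\epsilon(0))\bigr)\subset\bigcup_{i=1}^l\wt U_i^j$. Your condition is what is needed to make the rescaled charts well-defined and is sufficient for the statement of \cref{lem2} as written; the paper's condition additionally builds in that the preimage under $u$ of the shrunken target neighborhood stays inside the union of the domain discs, a cleanliness property of the resulting disc structure that the subsequent gluing construction (\cref{sec:gluing}) relies on implicitly. If you wanted your charts to be usable there without further shrinking, you would want to impose that condition on $\epsilon$ as well. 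Also, your inference that the normal-form identity $\wt\psi^j\circ u\circ(\wt\phi_i^j)^{-1}(z)=z^{k_i}$ persists on all of $D_{\epsilon^{1/k_i}}(0)$ is correct but deserves a word: \cref{lem1} only gives the identity near $0$, and the extension to the whole disc is by analytic continuation (an open-closed argument on $D_{\epsilon^{1/k_i}}(0)$, using that $z^{k_i}$ stays in $D_\epsilon(0)\subset\wt W_j$), not merely because $z\mapsto z^{k_i}$ maps $D_{\epsilon^{1/k_i}}$ onto $D_\epsilon$.
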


\begin{proof}
  Since $p$ has only finitely many preimages under $u$ we can find an $\epsilon>0$ such that $u^{-1}\left((\wt{\psi}^j)^{-1}(B_{\epsilon}(0))\right)\subset\bigcup_{i=1}^l\wt{U}_i^j$ for $j=\dagger,\ast$. Now define
  \begin{itemize}
    \item $(\psi^j)^{-1}:\DD\lra\wt{X}$ by $(\psi^j)^{-1}(z)\coloneqq (\wt{\psi}^j)^{-1}(\epsilon z)$ and
    \item $(\phi_i^j)^{-1}:\DD\lra\wt{X}$ by $(\phi_i^j)^{-1}(z)\coloneqq (\wt{\phi}_i^j)^{-1}(\sqrt[k_i]{\epsilon} z)$ as well as
    \item $U_i^j\coloneqq \phi_i^j(\DD)$ and $V^j\coloneqq \psi^j(\DD)$.
  \end{itemize}
Then we have
\begin{align*}
  \psi^j\circ u\circ(\phi_i^j)^{-1}(z)&=\psi^j\left( u\left(\left(\wt{\phi}_i^j\right)^{-1}(\sqrt[k_i]{\epsilon} z)\right)\right)=\psi^j\left(\left(\wt{\psi}_j\right)^{-1}\left((\sqrt[k_i]{\epsilon}z)^{k_i}\right)\right) \\
  &=\frac{(\sqrt[k_i]{\epsilon}z)^{k_i}}{\epsilon}=z^{k_i}.
\end{align*}
\end{proof}

\begin{lem}
  If the $V^j$'s are fixed in \cref{lem2} then the maps $\psi^j$ are unique up to rotation. For fixed $\psi^j$'s and open sets $U_i^j$ the charts $\phi_i^j$ are unique up to rotations by $k_i$-th roots of unity.
\end{lem}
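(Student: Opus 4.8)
The plan is to reduce the statement to the elementary fact that a biholomorphism of the unit disc fixing the origin and commuting with $z \mapsto z^m$ (respectively conjugating $z \mapsto z^{k_i}$ to itself) must be a rotation by an $m$-th root of unity (respectively a $k_i$-th root of unity). First I would handle the target charts $\psi^j$: suppose $\psi^j$ and $(\psi^j)'$ are two charts as in \cref{lem2} with the same image $V^j$ and both sending $p^j$ to $0$. Then $\rho \coloneqq (\psi^j)' \circ (\psi^j)^{-1} : \DD \lra \DD$ is a biholomorphism of the disc with $\rho(0)=0$, hence by the Schwarz lemma (or directly, since $\Aut(\DD)$ fixing $0$ consists of rotations) $\rho(z) = e^{\ii\theta} z$ for some $\theta$. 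This shows $(\psi^j)' = \rho \circ \psi^j$ is $\psi^j$ postcomposed with a rotation, which is exactly the claimed uniqueness up to rotation. Note that no compatibility with $u$ is even needed here beyond what fixes the image and the base point.

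Next I would fix the $\psi^j$ and the open sets $U_i^j$ and compare two admissible charts $\phi_i^j$ and $(\phi_i^j)'$ on $U_i^j$. Set $\tau \coloneqq (\phi_i^j)' \circ (\phi_i^j)^{-1} : \DD \lra \DD$; again $\tau(0)=0$ since both charts send $q_i^j$ to $0$, and $\tau$ is a biholomorphism of $\DD$, so $\tau(z) = e^{\ii\alpha} z$ for some $\alpha$. The extra constraint is that both charts satisfy $\psi^j \circ u \circ (\phi_i^j)^{-1}(z) = z^{k_i}$ and $\psi^j \circ u \circ ((\phi_i^j)')^{-1}(z) = z^{k_i}$. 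Writing the second as $\psi^j \circ u \circ (\phi_i^j)^{-1} \circ \tau^{-1}(z) = z^{k_i}$ and using the first gives $(\tau^{-1}(z))^{k_i} = z^{k_i}$, i.e.\ $e^{-\ii\alpha k_i} z^{k_i} = z^{k_i}$ for all $z \in \DD$, hence $e^{\ii\alpha k_i} = 1$, so $e^{\ii\alpha}$ is a $k_i$-th root of unity. Thus $(\phi_i^j)' = \tau \circ \phi_i^j$ differs from $\phi_i^j$ by a rotation by a $k_i$-th root of unity, and conversely any such rotation preserves the normal form, giving the exact uniqueness statement.

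I do not expect any serious obstacle here: the only subtlety is bookkeeping—being careful that ``same image'' and ``same base point mapped to $0$'' are the hypotheses being used, and that the rotation in the domain must be compatible with the power map, which is what cuts the rotation group $S^1$ down to the finite group $\mu_{k_i}$ of $k_i$-th roots of unity. One should also remark that the whole discussion is componentwise in $i$ and $j$, so there is nothing global to worry about. If one wants to be fully careful, one can note that the biholomorphisms $\rho$ and $\tau$ are a priori defined only near $0$ but extend to all of $\DD$ since the charts are genuine biholomorphisms $U_i^j \lra \DD$ and $V^j \lra \DD$; the identity theorem then propagates the relation $\tau(z) = e^{\ii\alpha}z$ from a neighborhood of $0$ to all of $\DD$.
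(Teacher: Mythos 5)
Your proposal is correct and takes essentially the same approach as the paper: identify the transition function as a biholomorphism of the disc fixing $0$, conclude it is a rotation, and then impose the normal form $z \mapsto z^{k_i}$ to cut the rotation down to a $k_i$-th root of unity. The paper merely sketches this as "the standard argument," while you spell out the computation, but the content is identical.
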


\begin{proof}
  This is just the standard argument. Suppose there are two such charts $\psi_j$, then their transition function would be a biholomorphism from the unit disc to the unit disc fixing the origin and is thus a rotation. Similarly, if for fixed $\psi_j$ and $U_i^j$ we have two such charts $\phi_i^j$ then their transition function must be rotation and because of the standard from of $u$ in these charts the angle $\theta$ must be such that $k_i\theta=2\pi m$ for some $m\in\ZZ$.
\end{proof}

\subsection{Gluing Surfaces}

\label{sec:gluing}

In this section we want to define a parametrized disc in the space of branched covers close to the given $u:C\lra X$ by opening up the node $p\in X$. This map will be denoted by $\Phi:\DD\lra \Ob\mcR_{g,k,h,n}(T)$. It will depend on the following data.

\index{Disc Structure}

\begin{definition}
  A \emph{disc structure} for a Hurwitz cover $u:C\lra X$ and a node $p\in X$ is a collection of
  \begin{itemize}
    \item neighborhoods $V^{\dagger},V^{\ast}$ around the nodal points corresponding to $p$,
    \item neighborhoods $U_1^{\dagger},U_1^{\ast},\ldots,U_l^{\dagger},U_l^{\ast}$ around the preimages of $p$ and
    \item coordinate charts $\phi_i^j:U_i^j\lra\DD$ and $\psi^j:V^j\lra\DD$ such that $\psi^j\circ u \circ \phi_i^j(z)=z^{k_i}$.
  \end{itemize}
  \label{def:disc-structure}
\end{definition}

This is illustrated in \cref{fig:disc-structure}. The name disc structure is taken from \cite{hofer_applications_2011}.

\begin{figure}[!ht]
  \centering
  \def\svgwidth{0.9\textwidth}
  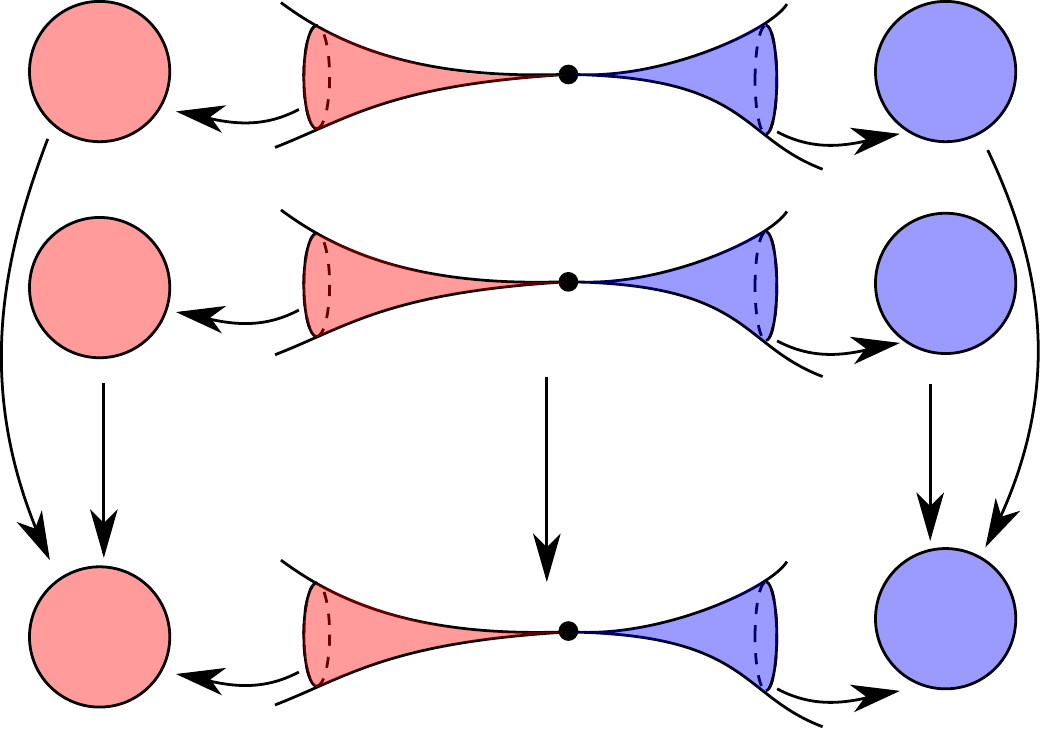
  \caption{The left-hand side corresponds to the $\dagger$-data and the right-hand side to $\ast$-data. The outer discs are actual unit discs in the complex plane.}
  \label{fig:disc-structure}
\end{figure}

First we define for each $a\in \DD$ with $a\neq 0$ a surface $X_a$ by
\begin{equation}
  X_a\coloneqq \frac{\wt{X}\setminus\left((\psi_1)^{-1}(\mathring{B}_{|a|}(0))\sqcup(\psi_2)^{-1}(\mathring{B}_{|a|}(0))\right)}{z\sim z'\qquad\Longleftrightarrow \qquad\parbox{4.1cm}{$z\in V_1\setminus(\psi_1)^{-1}(\mathring{B}_{|a|}(0)),\\ z'\in V_2\setminus(\psi_2)^{-1}(\mathring{B}_{|a|}(0)),\\ \psi_1(z)\cdot\psi_2(z')=a$}}
  \label{eq:glue-complex-cylinder}
\end{equation}
and for $a=0$ we set $X_a\coloneqq X$.

Of course the equivalence relation is meant to also hold if the roles of $z$ and $z'$ are reversed. Note that this corresponds precisely to the \emph{logarithmic gluing profile} in Hofer--Wysocki--Zehnder, \cite{hofer_applications_2011}.

Now we want to define the branched covering $\Phi(a)=(u_a:C_a\lra X_a)$. Denote by $A(|a|,1)=\{z\in\CC\mid |a|\leq |z|\leq 1\}$ the standard annulus of modulus $\frac{1}{2\pi}\ln\left(\frac{1}{|a|}\right)=-\frac{\ln(|a|)}{2\pi}$.

Repeat the gluing construction at a node $q_i$ in $C$ defining a surface $C_b$ for a gluing parameter $b\in\DD$. Using the coordinate charts we obtain injective holomorphic maps $(\phi_i^j)^{-1}|_{A(|b|,1)}:A(|b|,1)\lra C_b$ for $j=\dagger$ and $\ast$. Since the map $u$ is fixed outside this image we have to extend the map on the boundary to the annulus as a holomorphic map onto the target annulus. This will give us conditions on the admissible $b\in\DD$ for fixed parameter $a\in\DD$.

Note that if such an extension to the annulus exists then it is unique. One can see this by representing such a holomorphic map as a convergent Laurent series. Observe that the holomorphic map can be extended slightly over the boundary of the annulus and thus the coefficients are given by a circle integral along any one of the boundaries.

Let us denote the glued in cylinder by
\begin{equation*}
    Z_a\coloneqq \frac{(V^{\dagger}\sqcup V^{\ast})\setminus\left((\psi^{\dagger})^{-1}(\mathring{B}_{|a|}(0))\sqcup(\psi^{\ast})^{-1}(\mathring{B}_{|a|}(0))\right)}{w\sim w'\qquad\Longleftrightarrow \qquad\parbox{4.3cm}{$w\in V^{\dagger}\setminus(\psi^{\dagger})^{-1}(\mathring{B}_{|a|}(0)),\\ w'\in V^{\ast}\setminus(\psi^{\ast})^{-1}(\mathring{B}_{|a|}(0)),\\ \psi^{\dagger}(w)\cdot\psi^{\ast}(w')=a$}}\subset X_a
\end{equation*}
and in the same way
\begin{equation*}
    Y_i(b)\coloneqq \frac{\left(U_i^{\dagger}\sqcup U_i^{\ast}\right)\setminus\left((\phi_i^{\dagger})^{-1}(\mathring{B}_{|b|}(0))\sqcup(\phi_i^{\ast})^{-1}(\mathring{B}_{|b|}(0))\right)}{w\sim w'\qquad\Longleftrightarrow \qquad\parbox{4.3cm}{$w\in U_i^{\dagger}\setminus(\phi_i^{\dagger})^{-1}(\mathring{B}_{|b|}(0)),\\ w'\in U_i^{\ast}\setminus(\phi_i^{\ast})^{-1}(\mathring{B}_{|b|}(0)),\\ \phi_i^{\dagger}(w)\cdot\phi_i^{\ast}(w')=b$}}.
\end{equation*}

\begin{lem}
   There exists a holomorphic map $f:Y_i(b)\lra Z_a$ extending the boundary conditions $u|_{\del U_i^1}$ and $u|_{\del U_i^2}$ if and only if $b^{k_i}=a$.
\label{lem3}
\end{lem}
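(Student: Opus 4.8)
The plan is to do everything in the standard coordinates supplied by the disc structure, in which $u$ is literally the power map $z\mapsto z^{k_i}$, so that $Y_i(b)$ and $Z_a$ become round annuli. Concretely, I would use $z\coloneqq\phi_i^{\dagger}$ as a holomorphic coordinate identifying $Y_i(b)$ with the standard annulus $A(|b|,1)$; in this coordinate a point on the $\ast$-side has $\phi_i^{\ast}$-coordinate $b/z$. Likewise $\zeta\coloneqq\psi^{\dagger}$ identifies $Z_a$ with $A(|a|,1)$, and the $\psi^{\ast}$-coordinate of a point is $a/\zeta$. Reading the two boundary conditions in these coordinates: on the outer circle $|z|=1$ (this is $\del U_i^{\dagger}$) one has $\psi^{\dagger}\circ u\circ(\phi_i^{\dagger})^{-1}(z)=z^{k_i}$, so the required extension must satisfy $f(z)=z^{k_i}$ there; on the inner circle $|z|=|b|$, which corresponds to $\del U_i^{\ast}$ via $\phi_i^{\ast}(w')=b/z$, one has $\psi^{\ast}\circ u\circ(\phi_i^{\ast})^{-1}(b/z)=(b/z)^{k_i}$, and translating this into the $\psi^{\dagger}$-coordinate of $Z_a$ through the relation $\zeta\cdot\zeta'=a$ gives $f(z)=a\,z^{k_i}/b^{k_i}$ there.

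For the ``if'' direction I would observe that when $b^{k_i}=a$ the two boundary prescriptions coincide and are both the restriction of the single entire map $z\mapsto z^{k_i}$. It then remains to check the two routine compatibilities: that $z\mapsto z^{k_i}$ carries $A(|b|,1)$ into $A(|a|,1)$, and that it respects the gluing relations defining $Y_i(b)$ and $Z_a$ (indeed $\zeta\cdot\zeta'=z^{k_i}\cdot(b/z)^{k_i}=b^{k_i}=a$), so it descends to a well-defined holomorphic $f\colon Y_i(b)\lra Z_a$ — which is just $u$ itself, read off in the chosen charts.

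For the ``only if'' direction I would use the extendability remark already made in the text: since $f$ agrees on $\del U_i^{\dagger}$ with $u$, which is holomorphic on the \emph{open} chart $U_i^{\dagger}$, the map obtained by gluing $f$ and $u$ is continuous and holomorphic away from a single smooth circle, hence holomorphic; so $f$ extends holomorphically across the circle $|z|=1$, and symmetrically across $|z|=|b|$ using $u$ on $U_i^{\ast}$. Now $f$ and $z\mapsto z^{k_i}$ are both holomorphic on a connected annular neighbourhood of $|z|=1$ and agree on that circle, a set with accumulation points, so by the identity theorem they agree on that neighbourhood and then, by analytic continuation along the connected annulus, on all of $A(|b|,1)$. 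Evaluating $f(z)=z^{k_i}$ against the inner boundary condition $f(z)=a\,z^{k_i}/b^{k_i}$ on $|z|=|b|$ forces $b^{k_i}=a$. The degenerate cases $a=0$ or $b=0$ force one another and are immediate from the definitions of $Z_a$ and $Y_i(b)$.

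I do not expect a real obstacle here; the one point deserving care is the ``extends slightly over the boundary'' step, i.e.\ producing a genuine open neighbourhood of each boundary circle on which $f$ is holomorphic so that the identity theorem applies. This is the standard removable-singularity argument for a continuous function that is holomorphic off a real-analytic arc, and it works precisely because $u$, being holomorphic on the charts $U_i^{\dagger}$ and $U_i^{\ast}$, supplies the holomorphic continuation of $f$ past both boundary circles.
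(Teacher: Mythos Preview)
Your proposal is correct and follows essentially the same route as the paper: both translate the problem to the standard annuli via the $\dagger$-charts, read off the two boundary conditions as $z\mapsto z^{k_i}$ on $|z|=1$ and $z\mapsto \frac{a}{b^{k_i}}z^{k_i}$ on $|z|=|b|$, and then argue that the unique holomorphic extension matching the outer circle is $z\mapsto z^{k_i}$, forcing $b^{k_i}=a$ at the inner circle. The only cosmetic difference is that the paper phrases the uniqueness step via Laurent coefficients computed by a Cauchy integral around $|z|=1$, whereas you invoke the identity theorem directly; these are interchangeable.
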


\begin{proof}
  We will use the $\phi_i^{\dagger}$ and $\psi^{\ast}$ charts to reformulate the extension problem on the standard annulus. We have biholomorphisms (note that we denote these maps in the same way as we denoted the charts by a slight abuse of notation)
  \begin{align*}
    \phi_i^{\dagger}:Y_i(b)&\lra A(|b|,1),\\
    \phi_i^{\ast}:Y_i(b)&\lra A(|b|,1),\\
    \psi^{\dagger}:Z_a&\lra A(|a|,1),\\
    \psi^{\ast}:Z_a&\lra A(|a|,1).
  \end{align*}
Let us denote the local representative of $f$  in the $\dagger$-charts by $v:A(|b|,1)\lra A(|a|,1)$. The local representation of the boundary condition from $u$ at $|z|=1$ looks like $v(z)=z^{k_i}$ because the coordinate charts were chosen in such a way that $u$ was locally given by $z\longmapsto z^{k_i}$. The same is true at the other boundary component but here the coordinates $\phi_i^{\ast}$ and $\psi^{\ast}$ were used so we have to reformulate the boundary condition via the coordinates coming from $\phi_i^{\dagger}$ and $\psi^{\dagger}$.

We have
\begin{equation*}
  v|_{\{|z|=|b|\}}=\psi^{\dagger}\circ u\circ(\phi_i^{\dagger})^{-1}=\psi^{\dagger}\circ (\psi^{\ast})^{-1}\circ \psi^{\ast}\circ u\circ (\phi_i^{\ast})^{-1}\circ\phi_i^{\ast}\circ(\phi_i^{\dagger})^{-1}
\end{equation*}
and thus for $|z|=|b|$
\begin{equation*}
  v(z)=\psi^{\dagger}\circ (\psi^{\ast})^{-1}\circ \psi^{\ast}\circ u \circ (\phi_i^{\ast})^{-1}\left(\frac{b}{z}\right)=\psi^{\dagger}\circ (\psi^{\ast})^{-1}\left(\left(\frac{b}{z}\right)^{k_i}\right)=\frac{a}{b^{k_i}}z^{k_i}.
\end{equation*}

Now $v:A(|b|,1)\lra A(|a|,1)\subset \CC$ is a holomorphic map which needs to be extendable to a small open neighborhood of the annulus. Thus it can be written as a Laurent series converging on a small open neighborhood of the annulus and therefore its coefficients can be calculated by the usual Cauchy integral around a circle, e.g.\ $|z|=1$. Thus the holomorphic extension is unique. Since $v(z)=z^{k_i}$ extends the boundary condition to a map on any open annulus containing $|z|=1$ we see that $v(z)=z^{k_i}$ holds on the whole cylinder $A(|b|,1)$. Thus the other boundary condition gives $v(z)=z^{k_i}=\frac{a}{b^{k_i}}z^{k_i}$ and therefore $a=b^{k_i}$. This proves necessity of the relation.

The condition $a=b^{k_i}$ is also sufficient for the existence of an extension because we can take the above construction as the definition of $v:A(|a|,1)\lra A(|b|,1)$ and then define $f\coloneqq (\psi^{\dagger})^{-1}\circ v\circ \phi_i^{\dagger}$ which is well-defined and satisfies the requirements by construction .
\end{proof}

Thus we see that the set of $b=(b_1,\ldots,b_l)$ that allow for holomorphic maps $u:C_b\lra X_a$ extending the given map outside of the glued cylinder is given by
\begin{equation*}
  \{(b_1,\ldots,b_l)\in\DD^l\mid b_1^{k_1}=\cdots=b_l^{k_l}\}
\end{equation*}
and the map to the coordinate disc $a\longmapsto X_a$ is given by $(b_1,\ldots,b_k)\longmapsto b_1^{k_1}$.

\begin{rmk}
  Note that the set $\{(b_1,\ldots,b_l)\in\DD^l\mid b_1^{k_1}=\cdots=b_l^{k_l}\}$ is not a manifold in general because e.g.\ for $l=2$ and $k_1=k_2=2$ we have
  \begin{equation*}
    \{(b_1,b_2)\in\DD^2\mid b_1^2=b_2^2\}=\{(b_1,b_2)\in\DD^2\mid b_1=\pm b_2\},
  \end{equation*}
  which is the union of two discs intersecting at the origin only. However, some of the cylinders might be such that gluing them into the surface gives isomorphic covers which might have to be identified when parametrizing inequivalent covers.
\end{rmk}

In any case we can try to parametrize a disc of deformations of $u:C\lra X$ in the following way.

\begin{definition}
   Define the map $\Phi:\DD\lra\Ob\mcR_{g,k,h,n}(T)$ by
   \begin{equation*}
     \Phi(w)\coloneqq (u_w:C_{b(w)}\lra X_{a(w)}),
   \end{equation*}
   where $b(w)=(w^{\frac{K}{k_1}},\ldots,w^{\frac{K}{k_l}})$ and $a(w)=w^K$ and $u$ is defined as above by gluing in the cylinders $Y_i(b(z)_i)$ for $i=1,\ldots,l$ and extending the map as before. Here, $K=\lcm(k_1,\ldots,k_l)$ denotes the least common multiple. Also the marked points are enumerated in the same way as on the original $u$ and we do not have to specify anything new because the nodes are not enumerated.
   \label{def:deformation-of-hurwitz-covers}
\end{definition}

\begin{lem}
  The map $b:\DD\lra\{(b_1,\ldots,b_l)\in\DD^l\mid b_1^{k_1}=\cdots=b_l^{k_l}\}$ given by $b(w)=(w^{\frac{K}{k_1}},\ldots,w^{\frac{K}{k_l}})$ is injective. 
\label{lem5}
\end{lem}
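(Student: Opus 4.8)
The plan is to reduce the statement to the elementary number-theoretic fact that $\gcd(K/k_1,\ldots,K/k_l)=1$ whenever $K=\lcm(k_1,\ldots,k_l)$, and then combine this with a Bézout identity to recover $w$ from the tuple $b(w)$.

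First I would dispose of the origin. Since $K/k_i\geq 1$ for every $i$, we have $|w^{K/k_i}|=|w|^{K/k_i}$, so $b(w)=0$ if and only if $w=0$; moreover each entry of $b(w)$ lies in $\DD$, so $b$ is well defined. Hence it suffices to prove that $b$ is injective on $\DD\setminus\{0\}$.

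Next I would record the coprimality lemma. For a prime $p$, let $p^{a}$ and $p^{a_i}$ be the exact powers of $p$ dividing $K$ and $k_i$ respectively. Because $K=\lcm(k_1,\ldots,k_l)$ we have $a=\max_i a_i$, so the $p$-adic valuation of $K/k_i$ equals $a-a_i$, which vanishes for the index $i$ attaining the maximum. Thus no prime divides all of $K/k_1,\ldots,K/k_l$, i.e.\ $\gcd(K/k_1,\ldots,K/k_l)=1$, and by Bézout there are integers $c_1,\ldots,c_l\in\ZZ$ with $\sum_{i=1}^{l}c_i(K/k_i)=1$. Now for $w\in\DD\setminus\{0\}$ all powers $w^{K/k_i}$ are nonzero, and
\[
  \prod_{i=1}^{l}\bigl(w^{K/k_i}\bigr)^{c_i}=w^{\sum_{i=1}^{l}c_i(K/k_i)}=w,
\]
so the tuple $b(w)=(w^{K/k_1},\ldots,w^{K/k_l})$ determines $w$ uniquely. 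Together with the first step this gives injectivity of $b$ on all of $\DD$.

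I do not expect a genuine obstacle; the only point needing care is the coprimality lemma, which the $p$-adic valuation argument above settles. (If one prefers to avoid negative exponents, one can instead argue with moduli and arguments: $w^{K/k_i}=(w')^{K/k_i}$ forces $|w|=|w'|$ and $(K/k_i)(\arg w-\arg w')\in 2\pi\ZZ$ for all $i$, whence the Bézout combination gives $\arg w-\arg w'\in 2\pi\ZZ$ and thus $w=w'$.)
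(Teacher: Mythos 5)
Your proof is correct and takes essentially the same route as the paper: both reduce the lemma to the coprimality $\gcd(K/k_1,\ldots,K/k_l)=1$ and then apply a Bézout identity. The only cosmetic differences are that the paper proves coprimality by observing that a common divisor $\lambda>1$ would make $K/\lambda$ a smaller common multiple of the $k_i$ (rather than your $p$-adic valuation argument), and that it introduces the ratio $\xi=w_1/w_2$ and shows $\xi^{\gcd(a_1,\ldots,a_l)}=1$ rather than recovering $w$ directly as a Bézout monomial in the coordinates of $b(w)$.
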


\begin{proof}
  Abbreviate $a_i\coloneqq \frac{K}{l_i}$ for $i=1,\ldots,l$. Suppose $b$ was not injective, then there exist two complex numbers $w_1,w_2\in\DD$ different from zero such that $w_1^{a_1}=w_2^{a_1},\ldots,w_1^{a_l}=w_2^{a_l}$. Then there exists $\xi\in\CC$ such that $w_1=\xi w_2$ which satisfies
\begin{equation*}
  \xi^{a_1}=\cdots=\xi^{a_l}=1.
\end{equation*}
Now notice that if the greatest common divisor $\gcd(a_1,\ldots,a_l)=\lambda$ was not equal to one there would exist natural numbers $m_1,\ldots,m_l$ such that for all $i=1,\ldots,l$ we had $a_i=\frac{K}{k_i}=m_i\lambda$ and thus $\frac{K}{\lambda}=m_ik_i$ meaning that $\frac{K}{\lambda}$ was also a common multiple of $k_1,\ldots,k_l$. By assumption $K$ was the least common multiple, so $\lambda=1$ and thus the $a_1,\ldots,a_l$ are coprime.

Since the lemma of B\'ezout holds for $l$ numbers there exist integers $m_1,\ldots,m_l$ such that $1=\gcd (a_1,\ldots,a_l)=m_1a_1+\cdots m_la_l$. Therefore
\begin{equation*}
  \xi=\xi^{\gcd(a_1,\ldots,a_l)}=\xi^{m_1a_1+\cdots +m_la_l}=(\xi^{a_1})^{m_1}\cdots(\xi^{a_l})^{m_l}=1.
\end{equation*}
\end{proof}

\subsection{Gluing Isomorphisms}

If we want to talk about orbifold structures on the moduli space we need to consider automorphisms and more general any biholomorphisms between Hurwitz covers and the relation between biholomorphisms of a nodal cover and biholomorphisms of nearby glued ones. Recall that a morphism between two Hurwitz covers $u:C\lra X$ and $v:D\lra Y$ is given by biholomorphisms $\Phi:C\lra D$ and $\varphi: X\lra Y$ such that $\varphi\circ u=u\circ\Phi$. We denote such a morphism by $(\Phi,\varphi):u\Longrightarrow v$.

\begin{rmk}
  Note that a biholomorphism of a nodal surface is a biholomorphism of the (disconnected) normalization which agrees on the two preimages of nodes and thus induces a homeomorphism of the nodal surfaces. Secondly, note that such a biholomorphism needs to map nodes to nodes and critical points to critical points preserving the degrees.
\end{rmk}

\begin{rmk}
\label{rmk1}
  Since we need twice as many objects as before let us first fix the notation. We will consider two Hurwitz covers between nodal surfaces $u:C\lra X$ and $v:D\lra Y$ with nodal points $p\in X$ and $r\in Y$ together with disc structures around those nodes. So in particular we will describe biholomorphisms at every node locally. We will assume that the nodes have both $l$ preimages, that the kissing numbers of both Hurwitz covers $k_1,\ldots,k_l$ are the degrees of the $u$ at the two sides of the nodes and are equal, respectively, and that the enumerations of the disc structures correspond to each other. The parameters for gluing $u$ will be denoted by $b\in\DD^l$ and $a\in\DD$ and those for gluing $v$ by $b'\in\DD^l$ and $a'\in\DD$. The disc structure of $u$ has charts $\phi_i^{\dagger}$ and $\phi_i^{\ast}$ as well as $\psi^{\dagger}$ and $\psi^{\ast}$. The disc structure charts of $v$ will be denoted by $\rho_i^{\dagger}$, $\rho_i^{\ast}$, $\eta^{\dagger}$ and $\eta^{\ast}$. The glued in cylinders will be denoted by $C_i(b_i)$, $D_i(b_i')$, $\ol{X}(a)$ and $\ol{Y}(a')$. Then we have the following diagram of maps for the normalizations of the nodal surfaces
  \begin{equation}
    \xymatrix{%
      \DD & & & \DD & & \DD & & & \DD \\
      & U_i^{\dagger} \ar[lu]_{\phi_i^{\dagger}} \ar[d]_u \ar@/^1pc/[rrrrr]^{\Phi} & U_i^{\ast} \ar[ru]^{\phi_i^{\ast}} \ar[d]_u \ar@/_1pc/[rrrrr]^{\Phi} & & & & O_i^{\dagger} \ar[lu]_{\rho_i^{\dagger}} \ar[d]_v & O_i^{\ast} \ar[ru]^{\rho_i^{\ast}} \ar[d]_v  & \\
      & V^{\dagger} \ar[ld]_{\psi^{\dagger}} \ar@/^1pc/[rrrrr]^{\varphi} & V^{\ast} \ar[rd]^{\psi^{\ast}} \ar@/_0.8pc/[rrrrr]^{\varphi} & & & & W^{\dagger} \ar[ld]_{\eta^{\dagger}} & W^{\ast} \ar[rd]^{\eta^{\ast}}  & \\
      \DD & & & \DD & & \DD & & & \DD \\
    }
    \label{diag2}
  \end{equation}
and for the glued surfaces
  \begin{equation}
    \xymatrix{%
      A(|b|,1) & & A(|b|,1) & & A(|b'|,1) & & A(|b'|,1) \\
      & C_i(b_i) \ar[lu]_{\phi_i^{\dagger}} \ar@/^1pc/[rrrr]^{\Phi_{bb'}} \ar[ru]^{\phi_i^{\ast}} \ar[d]_{u_b}  & & & & D_i(b_i') \ar[lu]_{\rho_i^{\dagger}} \ar[d]_{v_{b'}} \ar[ru]^{\rho_i^{\ast}} & \\
      & X_a \ar[ld]_{\psi^{\dagger}} \ar@/^1pc/[rrrr]^{\varphi_{bb'}} \ar[rd]^{\psi^{\ast}}  & & & & Y_{a'} \ar[ld]_{\eta^{\dagger}} \ar[rd]^{\eta^{\ast}}  & \\
      A(|a|,1) & & A(|a|,1) & & A(|a'|,1) & & A(|a'|,1) \\
    }
    \label{diag1}
  \end{equation}
Note that we denoted the open sets underlying the disc structures by $U_i^j$, $V^j$, $O_i^j$ and $W^j$, respectively. Also we will denote the morphisms by $(\Phi,\varphi):u\Longrightarrow v$ and $(\Phi_{bb'},\varphi_{bb'}):u_b\Longrightarrow v_{b'}$.
\end{rmk}

\begin{prop}
  Let two nodal Hurwitz covers $u:C\lra X$ and $v:D\lra Y$ be given together with nodes $p\in X$ and $r\in Y$ as well as disc structures around these nodes. Let furthermore a morphism $(\Phi,\varphi):u\Longrightarrow v$ be given such that $\varphi(p)=r$ and which maps all the coordinate neighborhoods of the disc structure of $p\in X$ to the coordinate neighborhoods of the disc structure of $r\in Y$ keeping the enumeration of the preimages.

Then for each $b\in\DD^l$ there exists a unique $b'\in\DD^l$ such that the morphism $(\Phi,\varphi)$ induces a morphism $(\Phi_{bb'},\varphi_{bb'}):u_b\Longrightarrow v_{b'}$ which coincides with $(\Phi,\varphi)$ outside of $\bigsqcup_{i=1}^lC_i(b_i)\subset C_b$.
\label{prop1}
\end{prop}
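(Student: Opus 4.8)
The plan is to carry out everything in the disc-structure charts of \cref{rmk1}, where both $\varphi$ and $\Phi$ become rotations and the statement reduces to a one-line computation with roots of unity. First I would record the hypotheses in coordinates: after possibly interchanging the labels $\dagger$ and $\ast$ in the disc structure of $r$ (this is the only place where the assumption that $(\Phi,\varphi)$ respects the enumeration of the preimages is used), we may assume $\varphi(V^j)=W^j$ and $\Phi(U_i^j)=O_i^j$ for $j\in\{\dagger,\ast\}$ and all $i$. Then the local representatives
\[
  \lambda^j:=\eta^j\circ\varphi\circ(\psi^j)^{-1}\colon\DD\to\DD,\qquad
  \mu_i^j:=\rho_i^j\circ\Phi\circ(\phi_i^j)^{-1}\colon\DD\to\DD
\]
are biholomorphisms fixing $0$, hence honest rotations $\lambda^j(z)=e^{\ii\theta^j}z$ and $\mu_i^j(z)=e^{\ii\alpha_i^j}z$. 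Writing $\varphi\circ u=v\circ\Phi$ in the $j$-charts and using that $u$ and $v$ are locally $z\mapsto z^{k_i}$ gives $e^{\ii\theta^j}z^{k_i}=e^{\ii k_i\alpha_i^j}z^{k_i}$, i.e.\ the key identity
\[
  e^{\ii\theta^j}=e^{\ii k_i\alpha_i^j}\qquad\text{for all }i\text{ and }j\in\{\dagger,\ast\}.
\]

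Given $b=(b_1,\dots,b_l)$ in the admissible locus $\{b_1^{k_1}=\dots=b_l^{k_l}\}$, with gluing parameter $a:=b_i^{k_i}$ for $u_b$, I would then simply declare
\[
  a':=e^{\ii(\theta^\dagger+\theta^\ast)}a,\qquad b_i':=e^{\ii(\alpha_i^\dagger+\alpha_i^\ast)}b_i\quad(i=1,\dots,l).
\]
These lie in $\DD$, and $(b_i')^{k_i}=e^{\ii k_i(\alpha_i^\dagger+\alpha_i^\ast)}b_i^{k_i}=e^{\ii(\theta^\dagger+\theta^\ast)}a=a'$ by the identity above, so $b'$ is admissible for $v$ and $a'$ is its gluing parameter. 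Now define $\varphi_{bb'}\colon X_a\to Y_{a'}$ to equal $\varphi$ on $X_a\setminus Z_a$ and to be $z\mapsto e^{\ii\theta^\dagger}z$ in the $\psi^\dagger/\eta^\dagger$ charts on the glued cylinder $Z_a\cong A(|a|,1)$, and define $\Phi_{bb'}$ to equal $\Phi$ off $\bigsqcup_i C_i(b_i)$ and to be $w\mapsto e^{\ii\alpha_i^\dagger}w$ in the $\phi_i^\dagger/\rho_i^\dagger$ charts on $C_i(b_i)$.

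The verification is then routine. For well-definedness on $Z_a$: a point with $\psi^\dagger$-coordinate $z$ has $\psi^\ast$-coordinate $a/z$, and the formula forces $\varphi_{bb'}$ to have $\eta^\dagger$-coordinate $e^{\ii\theta^\dagger}z$ and $\eta^\ast$-coordinate $e^{\ii\theta^\ast}(a/z)$, whose product is $e^{\ii(\theta^\dagger+\theta^\ast)}a=a'$ — exactly the identification defining $Y_{a'}$ — while at $|z|=1$ the chart formula agrees with $\varphi$ because that boundary lies in $V^\dagger$; the same computation with $\alpha$'s in place of $\theta$'s handles $\Phi_{bb'}$. Holomorphicity is immediate, since in each chart the map is a rotation or is $\varphi$ resp.\ $\Phi$. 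Finally $\varphi_{bb'}\circ u_b=v_{b'}\circ\Phi_{bb'}$ holds off the cylinders because $\varphi\circ u=v\circ\Phi$ there, and on $C_i(b_i)$ both sides equal $w\mapsto e^{\ii\theta^\dagger}w^{k_i}$ in the $\phi_i^\dagger/\eta^\dagger$ charts: the left side is $w\mapsto w^{k_i}$ (the local form of $u_b$) postcomposed with $e^{\ii\theta^\dagger}$, the right side is $e^{\ii\alpha_i^\dagger}$ precomposed with $\zeta\mapsto\zeta^{k_i}$, and $e^{\ii k_i\alpha_i^\dagger}=e^{\ii\theta^\dagger}$. Hence $(\Phi_{bb'},\varphi_{bb'})\colon u_b\Longrightarrow v_{b'}$ is the desired morphism.

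For uniqueness, any morphism $(\Phi',\varphi')\colon u_b\Longrightarrow v_{b''}$ agreeing with $(\Phi,\varphi)$ outside $\bigsqcup_i C_i(b_i)$ is, read in the $\psi^\dagger/\eta^\dagger$ charts, a holomorphic map on $A(|a|,1)$ agreeing with $z\mapsto e^{\ii\theta^\dagger}z$ near $|z|=1$; by the Laurent-series extension argument already used in the proof of \cref{lem3} this determines it uniquely on $Z_a$ and hence everywhere, forcing $a''=a'$, and the analogous argument on each $C_i(b_i)$ gives $\Phi'=\Phi_{bb'}$ and $b_i''=b_i'$. I expect the only genuine friction in writing this up to be notational: keeping the $\dagger/\ast$ branches, the four families of charts and the two gluing relations straight at once; the mathematical content is entirely contained in the identity $e^{\ii\theta^j}=e^{\ii k_i\alpha_i^j}$ and its consequence $(b_i')^{k_i}=a'$.
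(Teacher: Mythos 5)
Your proof is correct and follows essentially the same route as the paper: read the given morphism in the disc-structure charts as rotations via Schwarz, extract the constraint $e^{\ii\alpha^j}=e^{\ii k_i\beta_i^j}$ from $\varphi\circ u=v\circ\Phi$, and use the Laurent/unique-continuation argument on the annuli to pin down $a'=e^{\ii(\alpha^\dagger+\alpha^\ast)}a$ and $b_i'=e^{\ii(\beta_i^\dagger+\beta_i^\ast)}b_i$. The only cosmetic difference is that you posit $b'$ and then verify, whereas the paper derives the same formula by matching the inner boundary condition; the content is identical.
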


\begin{proof}
  Use the notation from \cref{rmk1}. Given parameters $a$ and $b$ such that $b_i^{k_i}=a$ for $i=1,\ldots,l$ we ask what the conditions are that the morphism $(\Phi,\varphi)$ induces boundary conditions on the circles of the glued annuli $C_i(b_i)$ such that these extend to biholomorphisms over the annuli $C_i(b_i)\lra D_i(b_i')$ and such that the center square in \cref{diag1} commutes.

First note that the local representatives $\rho_i^j\circ\Phi\circ(\phi_i^j)^{-1}:\DD\lra\DD$ are biholomorphisms of the unit disc fixing zero, therefore by the Schwarz lemma there exists an angle $\beta_i^j\in[0,2\pi)$ such that $\rho_i^j\circ\Phi\circ(\phi_i^j)^{-1}(z)=e^{\ii\beta_i^j}z$ on the whole disc. Analogously there exists an angle $\alpha^j\in[0,2\pi)$ such that $\eta^j\circ\varphi\circ(\psi^j)^{-1}(z)=e^{\ii\alpha^j}z$. Thus we see that the induced boundary conditions in the respective charts are rotations by these corresponding angles.

Since $(\Phi,\varphi):u\Longrightarrow v$ is a morphism we can write down the relation $v\circ\Phi=\varphi\circ u$ in the $\dagger$- and $\ast$-representatives to obtain
\begin{align}
  \begin{split}
  e^{\ii\alpha^{\dagger}}&=e^{\ii k_i\beta_i^{\dagger}}, \\
  e^{\ii\alpha^{\ast}}&=e^{\ii k_i\beta_i^{\ast}}.
  \end{split}
\label{eq:cond-angles}
\end{align}
To see this also recall that $\psi^j\circ u \circ (\phi_i^j)^{-1}(z)=z^{k_i}$ and correspondingly for $v$. This gives conditions on the rotation angles the morphism can induce in our charts.

Now suppose we glue $u$ and $v$ using the parameters $b$ and $b'$. Then we need $a=b_i^{k_i}$ and $a'=(b_i')^{k_i}$ in order to obtain glued maps. Note also that the maps
\begin{align*}
  \rho_i^{\dagger}\circ\Phi_{bb'}\circ (\phi_i^{\dagger})^{-1}&:A(|b|,1)\lra A(|b'|,1)\\
  \rho_i^{\ast}\circ\Phi_{bb'}\circ (\phi_i^{\ast})^{-1}&:A(|b|,1)\lra A(|b'|,1)\\
  \eta^{\dagger}\circ\varphi_{bb'}\circ (\psi^{\dagger})^{-1}&:A(|a|,1)\lra A(|a'|,1)\\
  \eta^{\ast}\circ\varphi_{bb'}\circ (\psi^{\ast})^{-1}&:A(|a|,1)\lra A(|a'|,1)
\end{align*}
need to satisfy the boundary condition $z\longmapsto e^{\ii\gamma}z$ on the boundary $|z|=1$ for the appropriate angle $\gamma$. As above such a holomorphic function defined on an annulus is actually uniquely defined by its boundary because the holomorphic function is extendable to a small open neighborhood and thus the Laurent coefficients can be calculated by the usual Cauchy integral. Thus the maps do actually coincide with $z\longmapsto e^{\ii\gamma}z$ on all of the annuli. Thus we only need to find necessary and sufficient conditions that the map coincides on the inner boundary as well. E.g.\ reformulating the first identity by inserting the transition functions we obtain
\begin{align*}
  e^{\ii\beta_i^{\dagger}}z&=\rho_i^{\dagger}\circ(\rho_i^{\ast})^{-1}\circ\rho_i^{\ast}\circ\Phi_{bb'}\circ(\phi_i^{\ast})^{-1}\circ\phi_i^{\ast}\circ(\phi_i^{\dagger})^{-1}(z)\\
&=\rho_i^{\dagger}\circ(\rho_i^{\ast})^{-1}\circ\rho_i^{\ast}\circ\Phi_{bb'}\circ(\phi_i^{\ast})^{-1}\left(\frac{b_i}{z}\right)=\rho_i^{\dagger}\circ(\rho_i^{\ast})^{-1}\left(e^{\ii\beta_i^{\ast}}\frac{b_i}{z}\right)=\frac{b_i'}{b_i}e^{-\ii\beta_i^{\ast}}z
\end{align*}
and thus $b_i'=b_ie^{\ii(\beta_i^{\dagger}+\beta_i^{\star})}$. Similarly one obtains $a'=ae^{\ii(\alpha^{\dagger}+\alpha^{\ast})}$ by looking at the local representatives of $\varphi$.

Note that the angles $\alpha^{\dagger}$, $\alpha^{\ast}$, $\beta_i^{\dagger}$ and $\beta_i^{\ast}$ are fixed by the given morphism, so these equations fix unique $b'$ and $a'$. Also note that if these relations are satisfied we do indeed get a morphism $(\Phi_{bb'},\varphi_{bb'}):u_b\Longrightarrow v_{b'}$ because the maps extend over the cylinder to a global biholomorphic map, $a'=b_i'^{l_i}$ holds indeed and $v_{b'}\circ\Phi_{bb'}=\varphi_{bb'}\circ u_b$ is true. The latter holds by definition outside of the cylinders and on the cylinder local calculations show that it is again equivalent to \cref{eq:cond-angles} which holds again by assumption.
\end{proof}

\begin{cor}
  Let $u:C\lra X$ be a given Hurwitz cover with a nodal point $p\in X$ and a disc structure around $p$. Then there exists a morphism between $u_b:C_b\lra X_a$ and $u_{b'}:C_{b'}\lra X_{a'}$ which is equal to the identity outside of $\bigsqcup_{i=1}^kY_i(b_i)\subset C_b$  if and only if $b=b'$.
  \label{cor:hurwitz-deformations-are-not-equivalent}
\end{cor}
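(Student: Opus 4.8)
The plan is to read this off directly from \cref{prop1} by specializing to the case $v=u$, $Y=X$, $D=C$, $r=p$, equipping both copies with the \emph{same} disc structure, and taking for the ambient morphism the identity $(\Phi,\varphi)=(\mathrm{id}_C,\mathrm{id}_X)\colon u\Longrightarrow u$. The ``if'' direction is then immediate: when $b=b'$, the identity $(\mathrm{id}_{C_b},\mathrm{id}_{X_a})\colon u_b\Longrightarrow u_b$ is a morphism which restricts to the identity on $C_b\setminus\bigsqcup_{i=1}^l Y_i(b_i)$, so such a morphism exists.

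For the ``only if'' direction I would argue as follows. Suppose $(\Phi_{bb'},\varphi_{bb'})\colon u_b\Longrightarrow u_{b'}$ is a morphism with $\Phi_{bb'}=\mathrm{id}$ on $C_b\setminus\bigsqcup_{i=1}^l Y_i(b_i)$; here this outer region is canonically a subset of both $C_b$ and $C_{b'}$, so in particular $\Phi_{bb'}$ and $\varphi_{bb'}$ are the identity on the boundary circles of the glued cylinders. Because the two copies carry the same disc structure, the charts coincide, $\rho_i^j=\phi_i^j$ and $\eta^j=\psi^j$ in the notation of \cref{rmk1}, so the local representatives
\[
\phi_i^j\circ\Phi_{bb'}\circ(\phi_i^j)^{-1}\colon A(|b|,1)\to A(|b'|,1),\qquad \psi^j\circ\varphi_{bb'}\circ(\psi^j)^{-1}\colon A(|a|,1)\to A(|a'|,1)
\]
are holomorphic maps of standard annuli which equal $z\mapsto z$ on the outer boundary $|z|=1$. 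By the unique-continuation argument already used repeatedly in \cref{sec:gluing} — a holomorphic map on an annulus extends slightly beyond it and its Laurent coefficients are recovered by a Cauchy integral over $|z|=1$ — these representatives are the identity on the entire annulus. Evaluating the $\dagger$-representative at the inner boundary $|z|=|b_i|$ and inserting the gluing transition functions exactly as in the computation in the proof of \cref{prop1} yields $z=(b_i'/b_i)\,z$, hence $b_i'=b_i$; repeating for $i=1,\dots,l$ gives $b=b'$ (and, consistently, $a'=a$ since all rotation angles vanish).

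I do not expect a real obstacle: the analytic content — existence, uniqueness, and the explicit dependence $b_i'=b_i e^{\ii(\beta_i^{\dagger}+\beta_i^{\ast})}$ — is already contained in \cref{prop1}, and the corollary is simply the remark that the identity morphism forces every rotation angle $\beta_i^{\dagger},\beta_i^{\ast}$ to be zero. The one point deserving a little care is making precise the phrase ``equal to the identity outside $\bigsqcup_i Y_i(b_i)$'' when the two glued surfaces have cylinders of slightly different radii; one reads it as saying that $\Phi_{bb'}$ restricted to $\wt C$ minus the open discs of radius $|b_i|$ about the nodal preimages is the canonical inclusion, which is exactly the boundary datum fed into the argument above. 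Geometrically the statement records that opening the node by distinct gluing parameters produces Hurwitz covers inequivalent relative to the fixed part, in the same spirit as the injectivity in \cref{lem5}.
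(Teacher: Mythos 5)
Your proof is correct and follows exactly the paper's approach: specialize \cref{prop1} to $u=v$ with the same disc structure, so that $(\Phi,\varphi)=(\id,\id)$, and read off from the relation $b_i'=b_ie^{\ii(\beta_i^{\dagger}+\beta_i^{\ast})}$ that all rotation angles vanish, hence $b'=b$. The paper's proof is a two-sentence version of this; your write-up simply makes explicit the unique-continuation step on the glued annuli and the (trivial) ``if'' direction, both of which are implicit in the paper.
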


\begin{proof}
Choose $u=v$ in the last theorem and use the same disc structure everywhere. Then since the morphism is the identity outside of the glued cylinders all angles are zero and we obtain $b=b'$.
\end{proof}

\begin{prop}
  Let two nodal Hurwitz covers $u: C\lra X$ and $v:D\lra Y$ be given together with nodes $p\in X$ and $r\in Y$ together with disc structures around $p$ and $r$. Suppose there exists a morphism $(\Phi_{bb'},\varphi_{bb'}):u_b\Longrightarrow v_{b'}$ such that $\varphi_{bb'}(X_a)=Y_{a'}$ for some $b\in\DD^l$ and $b'\in\DD^l$ and $a=b_i^{k_i}$ as well as $a'=b_i'^{k_i}$. Here $u_b:C_b\lra X_a$ and $v_{b'}:D_{b'}\lra Y_{a'}$ are defined as in \cref{sec:gluing} using the given disc structures and such that $\Phi_{bb'}$ maps the glued cylinders onto each other preserving the enumeration, i.e.\ $\Phi_{bb'}(C_i(b_i))=D_i(b_i')$. Then there exists a unique morphism $(\Phi,\varphi):u\Longrightarrow v$ which coincides with $(\Phi_{bb'},\varphi_{bb'})$ outside of $\bigsqcup_{i=1}^lU_i^{\dagger}\sqcup U_i^{\ast}\subset C$ and $V^{\dagger}\sqcup V^{\ast}\subset D$.
  \label{prop:morphisms-nodal-hurwitz-covers}
\end{prop}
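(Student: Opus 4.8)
The plan is to run the argument of Proposition~\ref{prop1} in reverse: there we started with a morphism of the nodal covers and produced a unique morphism of the glued covers for each gluing parameter, and here we start with a morphism of glued covers and must recover the morphism of the nodal ones. First I would use the notation from Remark~\ref{rmk1} and analyze $(\Phi_{bb'},\varphi_{bb'})$ locally in the disc-structure charts. Since $\Phi_{bb'}$ maps $C_i(b_i)$ biholomorphically onto $D_i(b_i')$ preserving the enumeration of the two sides of each node, the local representatives $\rho_i^{j}\circ\Phi_{bb'}\circ(\phi_i^{j})^{-1}:A(|b_i|,1)\lra A(|b_i'|,1)$ are biholomorphisms between standard annuli. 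A biholomorphism between two standard annuli is either a rotation or a rotation composed with the inversion $z\mapsto c/z$; the inversion is excluded because $\Phi_{bb'}$ preserves which side of the node is $\dagger$ and which is $\ast$ (equivalently, it maps the appropriate boundary circle to the appropriate boundary circle). Hence there are angles $\beta_i^{j}\in[0,2\pi)$ with $\rho_i^{j}\circ\Phi_{bb'}\circ(\phi_i^{j})^{-1}(z)=e^{\ii\beta_i^{j}}z$ on the whole annulus, and similarly angles $\alpha^{j}$ for $\varphi_{bb'}$ on the glued target cylinder.

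Next I would extend these rotations. The maps $z\mapsto e^{\ii\beta_i^{j}}z$ are defined on all of $\DD$, not just on $A(|b_i|,1)$, so they give candidate local models for biholomorphisms $\Phi$ on the neighborhoods $U_i^{j}$ of the nodal points $q_i^{j}\in\wt C$, namely $\Phi|_{U_i^{j}}\coloneqq(\rho_i^{j})^{-1}\circ(e^{\ii\beta_i^{j}}\,\cdot\,)\circ\phi_i^{j}$, and likewise for $\varphi$ on the $V^{j}$. Outside $\bigsqcup_i(U_i^{\dagger}\sqcup U_i^{\ast})$ (resp. outside $V^{\dagger}\sqcup V^{\ast}$) the map $\Phi_{bb'}$ already lives on the part of $C$ that is untouched by the gluing, so I set $\Phi$ (resp. $\varphi$) equal to $\Phi_{bb'}$ (resp. $\varphi_{bb'}$) there. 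The two definitions must be checked to agree on the overlaps $U_i^{j}\setminus\{q_i^{j}\}$: on such an overlap the glued map $\Phi_{bb'}$ and the rotation model both equal $(\rho_i^{j})^{-1}\circ(e^{\ii\beta_i^{j}}\,\cdot\,)\circ\phi_i^{j}$ by the previous paragraph, so they glue to a well-defined holomorphic map $\Phi:\wt C\lra\wt D$ (and $\varphi:\wt X\lra\wt Y$). Because $\Phi$ matches the chart models at the nodal points, it maps $q_i^{\dagger}\mapsto r_i^{\dagger}$ and $q_i^{\ast}\mapsto r_i^{\ast}$, hence descends to a homeomorphism of the nodal surfaces respecting nodes; the same for $\varphi$.

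Then I would verify that $(\Phi,\varphi)$ is a morphism of Hurwitz covers, i.e. $\varphi\circ u=v\circ\Phi$. Away from the glued regions this is immediate since it holds for $(\Phi_{bb'},\varphi_{bb'})$ and $u,v$ agree with $u_b,v_{b'}$ there. On each neighborhood $U_i^{j}$, writing everything in the disc-structure charts where $u$ and $v$ are $z\mapsto z^{k_i}$, the identity $\varphi\circ u=v\circ\Phi$ becomes exactly the relation $e^{\ii\alpha^{j}}=e^{\ii k_i\beta_i^{j}}$ of \cref{eq:cond-angles} --- and this relation holds because it holds for the glued morphism $(\Phi_{bb'},\varphi_{bb'})$ on $C_i(b_i)$, where the same local formulas for $u_b,v_{b'}$ apply (the glued maps restrict to $z\mapsto z^{k_i}$ on the cylinders). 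One also checks that $\Phi$ carries marked points to marked points with the right degrees, which is inherited from $(\Phi_{bb'},\varphi_{bb'})$ since the marked points lie outside the glued cylinders. Finally, uniqueness: any morphism $(\Phi,\varphi):u\Longrightarrow v$ agreeing with $(\Phi_{bb'},\varphi_{bb'})$ outside $\bigsqcup_i(U_i^{\dagger}\sqcup U_i^{\ast})$ and $V^{\dagger}\sqcup V^{\ast}$ is a holomorphic map determined on a nonempty open set, hence by the identity theorem determined on each component of $\wt C$ and $\wt X$; this forces the chart models above, so $(\Phi,\varphi)$ is the map we constructed.

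The main obstacle I expect is the gluing/agreement step: one must be careful that the rotation obtained from the Schwarz-type lemma on the annulus $A(|b_i|,1)$ is genuinely the \emph{same} rotation as the one forced on all of $\DD$, which is where the hypothesis that $\Phi_{bb'}$ respects the enumeration $\Phi_{bb'}(C_i(b_i))=D_i(b_i')$ (ruling out the inversion $z\mapsto b_i'/(b_i z)$) is essential. Everything else is a routine transcription of the computations in Proposition~\ref{prop1} and the uniqueness-by-analytic-continuation argument used repeatedly in this section.
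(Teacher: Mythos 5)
Your proof is correct and takes essentially the same approach as the paper's: write the local representatives of $(\Phi_{bb'},\varphi_{bb'})$ in the disc-structure charts, identify them as rotations of annuli (excluding the inversion $z\mapsto c/z$ by the boundary-preservation hypothesis), extend the rotations to the whole discs $U_i^j$ and $V^j$, and recover the commutation from the angle relations of \cref{eq:cond-angles}. You spell out the overlap-compatibility check and the identity-theorem uniqueness argument more explicitly than the paper does, but these are details the paper leaves implicit rather than a different strategy.
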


\begin{proof}
First note that the only biholomorphisms of an annulus fixing the boundary components setwise are given by rotations, see e.g.\ Farkas and Kra.\footnote{Without the condition on the boundary components there also exist biholomorphisms that switch these boundary components, namely $z\mapsto \frac{r}{z}$ for $r\leq|z|\leq 1$.}

Recall the proof of \cref{prop1} and the coordinate chart diagrams \cref{diag2} and \cref{diag1}. The local representatives
\begin{align*}
    \rho_i^{\dagger}\circ\Phi_{bb'}\circ (\phi_i^{\dagger})^{-1}&:A(|b|,1)\lra A(|b'|,1)\\
  \rho_i^{\ast}\circ\Phi_{bb'}\circ (\phi_i^{\ast})^{-1}&:A(|b|,1)\lra A(|b'|,1)\\
  \eta^{\dagger}\circ\varphi_{bb'}\circ (\psi^{\dagger})^{-1}&:A(|a|,1)\lra A(|a'|,1)\\
  \eta^{\ast}\circ\varphi_{bb'}\circ (\psi^{\ast})^{-1}&:A(|a|,1)\lra A(|a'|,1)
\end{align*}
are biholomorphisms of annuli preserving the boundary at $|z|=1$ and are thus given by rotations by angles $\beta_i^{\dagger}$, $\beta_i^{\ast}$, $\alpha^{\dagger}$ and $\alpha^{\ast}$, respectively. Again, because the diagram has two commute and because the transition maps of the charts are holomorphic we obtain conditions on these angles as in the proof of \cref{prop1}, namely
\begin{align*}
  b_i'&=b_ie^{\ii(\beta_i^{\dagger}+\beta_i^{\ast})},\\
  a'&=ae^{\ii(\alpha^{\dagger}+\alpha^{\ast})},\\
  e^{\ii\alpha^{\dagger}}&=e^{\ii k_i\beta_i^{\dagger}},\\
  e^{\ii\alpha^{\ast}}&=e^{\ii k_i\beta_i^{\ast}},
\end{align*}
which are again the same conditions for the maps as above.

Therefore we can define a morphism $\Phi:C\lra D$ by $\Phi_{bb'}$ outside of $\bigsqcup_{i=1}^lU_i^{\dagger}\sqcup U_i^{\ast}$ and extend it on $U_i^j$ by $z\longmapsto e^{\ii\beta_i^j}z$ in local coordinates $\phi_i^j$ and $\rho_i^j$ and correspondingly for $\varphi$.
\end{proof}

\begin{rmk}
  Note that the condition for $(\Phi,\varphi)$ to map the glued cylinders onto each other is rather restrictive and does not need to be satisfied for general Hurwitz covers and arbitrary disc structures.
\end{rmk}

\subsection{Analyzing Singularities of the Parametrization}

\label{sec:analyze-singularities}

\begin{lem}
  The set $B\coloneqq \{(b_1,\ldots,b_l)\in\DD^l\mid b_1^{k_1}=\cdots=b_l^{k_l}\}$ is a smooth complex curve except at the origin for $k_1,\ldots,k_l\geq 1$ if at least one degree is strictly bigger than one. Otherwise it is smooth everywhere.
  \label{lem:param-set-b}
\end{lem}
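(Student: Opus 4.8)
The plan is to analyze the set $B = \{(b_1,\ldots,b_l)\in\DD^l \mid b_1^{k_1}=\cdots=b_l^{k_l}\}$ by exhibiting it locally as the image of an explicit parametrization away from the origin, and by showing the origin is genuinely singular when some $k_i>1$. First I would dispose of the easy case: if all $k_i=1$, then $B=\{(b_1,\ldots,b_l)\mid b_1=\cdots=b_l\}$ is the diagonal, which is a linearly embedded copy of $\DD$, hence a smooth complex curve everywhere. So assume from now on that $k_{i_0}>1$ for some $i_0$.

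For smoothness away from the origin, fix a point $(b_1,\ldots,b_l)\in B$ with not all $b_i$ zero. Since the common value $c\coloneqq b_1^{k_1}=\cdots=b_l^{k_l}$ satisfies: if $c=0$ then every $b_i=0$, we conclude $c\neq 0$, hence every $b_i\neq 0$. I would then write the defining equations as $F_{i}(b)\coloneqq b_i^{k_i}-b_1^{k_1}=0$ for $i=2,\ldots,l$ and compute the Jacobian of $(F_2,\ldots,F_l)$ with respect to $(b_1,\ldots,b_l)$: the partial derivative of $F_i$ with respect to $b_i$ is $k_ib_i^{k_i-1}$, which is nonzero since $b_i\neq 0$; the only other nonzero entries are the $b_1$-columns, involving $-k_1b_1^{k_1-1}\neq 0$. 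The resulting $(l-1)\times l$ matrix has the $l-1$ columns indexed by $b_2,\ldots,b_l$ forming a diagonal matrix with nonzero diagonal entries, so it has full rank $l-1$. By the holomorphic implicit function theorem, $B$ is a smooth complex curve (i.e.\ a one-dimensional complex submanifold of $\DD^l$) near this point. Alternatively — and perhaps cleaner to write down — I would use the parametrization from \cref{lem5}: setting $K=\lcm(k_1,\ldots,k_l)$, the map $w\mapsto b(w)=(w^{K/k_1},\ldots,w^{K/k_l})$ has image in $B$, is injective by \cref{lem5}, and near any $w_0\neq 0$ at least one exponent $K/k_i$ could still be large, so I should be slightly careful here: the parametrization $b(w)$ need not be an immersion at every nonzero $w$ unless some $K/k_i=1$. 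Therefore the Jacobian computation above is the safer route, and I would present that one.

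For the singularity at the origin, I would argue that $B$ is not locally a manifold near $0$ by a connectedness/tangent-cone argument. The model example in the remark preceding \cref{lem:param-set-b} ($l=2$, $k_1=k_2=2$, where $B$ is a union of two discs meeting only at $0$) is the prototype. In general, pick $i_0$ with $k_{i_0}>1$. Consider the punctured neighborhood $B\setminus\{0\}$: a point near $0$ in $B$ has $b_{i_0}\neq 0$ forced as soon as it is nonzero, and the equation $b_1^{k_1}=b_{i_0}^{k_{i_0}}$ shows that for each fixed small value of the common quantity $c\neq 0$ there are $k_1$ choices of $b_1$ but the relation $b_1^{k_1}=b_{i_0}^{k_{i_0}}$ with $\gcd$ considerations means the fibered structure produces more than one local branch through $0$; concretely, if $k_{i_0}>1$ then the curve $\{b_{i_0}^{k_{i_0}}=b_1^{k_1}\}$ in the $(b_1,b_{i_0})$-plane (forgetting the other coordinates, which are determined up to finite ambiguity) is singular at the origin since it is not locally irreducible / its tangent cone is not a line — one can see this because the local ring $\CC[[b_1,b_{i_0}]]/(b_{i_0}^{k_{i_0}}-b_1^{k_1})$ is not regular when $k_1,k_{i_0}$ are not coprime, or more elementarily because $B$ minus the origin is disconnected near $0$ whenever $d\coloneqq\gcd$ of the relevant exponents exceeds $1$, and $d=1$ forces a unibranch cusp which is still not smooth unless the exponents are $1$. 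I expect this step to be the main obstacle: making the ``not a manifold'' claim rigorous requires either invoking that a smooth complex curve is locally irreducible and has a well-defined tangent line (so a singular/reducible tangent cone obstructs smoothness), or giving a direct argument that removing $0$ either disconnects a neighborhood or leaves a puncture whose link is not a single circle. I would handle it by reducing to the two-variable case $b_{i_0}^{k_{i_0}}=b_1^{k_1}$: the full set $B$ fibers over this plane curve with finite fibers, so $B$ is smooth at $0$ only if that plane curve is, and the plane curve $\{x^{k_{i_0}}=y^{k_1}\}$ is smooth at the origin if and only if $\min(k_{i_0},k_1)=1$ — contradicting $k_{i_0}>1$ unless I separately check $k_1$; iterating over all pairs shows $B$ is smooth at $0$ iff all $k_i=1$, which is exactly the dichotomy claimed.

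Putting these together: $B$ is a smooth complex curve at every nonzero point (Jacobian rank $l-1$), and it is smooth at the origin if and only if every $k_i=1$; hence if at least one degree exceeds one, $B$ is smooth away from the origin and singular there, while otherwise $B$ is smooth everywhere. This is precisely the statement of \cref{lem:param-set-b}.
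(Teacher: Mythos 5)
Your first two paragraphs are correct and take essentially the same route as the paper: compute the complex Jacobian of the defining equations (your choice $F_i = b_i^{k_i} - b_1^{k_1}$ is equivalent to the paper's chain form $b_i^{k_i}-b_{i+1}^{k_{i+1}}$), observe that a point of $B$ with some $b_i = 0$ in fact has all $b_j = 0$, and note that away from the origin all $b_i\neq 0$ so the $(l-1)\times l$ Jacobian has full rank; the diagonal line $\{b_1=\cdots=b_l\}$ handles the case all $k_i=1$. Up to here you have everything the paper actually proves, and everything the lemma actually needs.

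Your third paragraph, which attempts to show that $B$ is genuinely \emph{singular} at the origin whenever some $k_i>1$, contains a real gap and in fact aims at a false statement. The ``fibering'' reduction is not valid: a finite surjection from a smooth curve onto a plane curve does not force the plane curve to be smooth --- the normalization $t\mapsto(t^2,t^3)$ of the cuspidal cubic $\{y^2=x^3\}$ is exactly such a map, with smooth source and singular image. More seriously, the conclusion you end up asserting (``$B$ is smooth at $0$ iff all $k_i=1$'') is wrong: for $l=2$, $k_1=1$, $k_2=2$, the set $B=\{b_1=b_2^2\}$ is the graph of a holomorphic function, hence a smooth embedded curve through the origin, yet $k_2>1$. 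Your own intermediate statement (``the plane curve $\{x^{k_{i_0}}=y^{k_1}\}$ is smooth at the origin iff $\min(k_{i_0},k_1)=1$'') already shows that $k_{i_0}>1$ alone produces no contradiction when $k_1=1$; iterating over pairs gives at best ``every pair has a degree equal to one'', which is strictly weaker than ``all $k_i=1$''. In fact, comparing the branch count $k_1\cdots k_l/K$ of \cref{cor:param-B-hol-discs} with the parametrizations of \cref{def:hol-discs}, one can check that $B$ is smooth at the origin precisely when at most one of the $k_i$ exceeds one, which is strictly weaker than ``all $k_i=1$''. The lemma's phrase ``is a smooth complex curve except at the origin'' should therefore be read as ``except \emph{possibly} at the origin'': the paper's proof only computes the Jacobian rank and makes no affirmative claim of singularity, nor does it need one, since the subsequent discussion parametrizes $B$ by the discs $b_\xi$ and treats the origin as a branch point separately. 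You should simply delete the third paragraph and stop after the Jacobian argument.
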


\begin{proof}
  The set from the lemma is the zero set of the map $\mcF:\DD^l\lra\CC^{l-1}$ given by
  \begin{equation*}
    \DD^l\ni b\longmapsto (b_1^{k_1}-b_2^{k_2},b_2^{k_2}-b_3^{k_3},\ldots,b_{l-1}^{k_{l-1}}-b_l^{k_l})\in\CC^{l-1}.
  \end{equation*}
  Since $\mcF$ is holomorphic its differential is $\ii$-linear and thus $\rank_{\RR}\dd_b\mcF=2\rank_{\CC}\dd_b^{\CC}\mcF$ and is given by the matrix
  \begin{equation*}
    \begin{pmatrix}
      k_1b_1^{k_1-1} & -k_2b_2^{k_2-1} & 0 & \cdots & 0 & 0 & 0 \\
      0 & k_2b_2^{k_2-1} & -k_3b_3^{k_3-1} & \cdots & 0 & 0 & 0\\
      0 & 0 & k_3b_3^{k_3-1} & \cdots & 0 & 0 & 0\\
      \vdots & \vdots & \vdots & \ddots & \vdots & \vdots & \vdots \\
      0 & 0 & 0 & \cdots & k_{l-2}b_{l-2}^{k_{l-2}-1} & -k_{l-1}b_{l-1}^{k_{l-1}-1} & 0\\
      0 & 0 & 0 & \cdots & 0 & k_{l-1}b_{l-1}^{k_{l-1}-1} & -k_lb_l^{k_l-1}
    \end{pmatrix}.
  \end{equation*}
  Thus we see
  \begin{equation*}
    \rank_{\RR}\dd_b\mcF=2(l-1)-2\#\{i\in\{1,\ldots,l-1\}\mid k_i>1\text{ and }b_i=0\}
  \end{equation*}
  which implies that $B$ is a smooth complex submanifold of complex dimension one except when one $b_i$ and therefore all $b_i$ are zero.
\end{proof}

\begin{definition}
 Define $K\coloneqq \lcm(k_1,\ldots,k_l)$ and $b_{\xi}\coloneqq b_{\xi_1,\ldots,\xi_l}:\DD\lra B$ by 
 \begin{equation*}
   b_{\xi_1,\ldots,\xi_l}(w)\coloneqq (\xi_1 w^{\frac{K}{k_1}},\ldots,\xi_l w^{\frac{K}{k_l}})
 \end{equation*}
 where the $\xi_i$ are such that $\xi_i^{k_i}=1$ for $i=1,\ldots,l$.
 \label{def:hol-discs}
\end{definition}

\begin{lem}
  Each map $b_{\xi_1,\ldots,\xi_l}:\DD\setminus\{0\}\lra B$ is an injectively immersed holomorphic (punctured) disc.
\end{lem}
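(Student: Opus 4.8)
The plan is to verify the two claimed properties directly from the explicit formula for $b_{\xi_1,\ldots,\xi_l}$: that the image lies in $B$, that the map is a holomorphic immersion on $\DD\setminus\{0\}$, and that it is injective there.

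First I would check that the image is contained in $B$. For $w\in\DD$ and the $i$-th component $(\xi_i w^{K/k_i})^{k_i}=\xi_i^{k_i}w^{K}=w^{K}$, using $\xi_i^{k_i}=1$ and $K/k_i\cdot k_i=K$. Hence all $l$ components raised to their respective powers $k_i$ equal the common value $w^K$, so $b_{\xi_1,\ldots,\xi_l}(w)\in B$. Each component is a monomial in $w$, hence holomorphic, and $|\xi_i w^{K/k_i}|=|w|^{K/k_i}<1$ so the image really lies in $\DD^l$.

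Next I would show it is an immersion away from the origin. The derivative has $i$-th component $\tfrac{K}{k_i}\xi_i w^{K/k_i-1}$. For $w\neq 0$ every exponent $K/k_i-1\geq 0$ (indeed $K/k_i\geq 1$), and $\tfrac{K}{k_i}\xi_i\neq 0$, so in particular the component corresponding to $k_i=K$ (which exists, since $K=\lcm(k_1,\dots,k_l)$ is attained at least once when all $k_i$ are equal, but in general one can use any index) — more carefully, pick the index $i_0$ with $k_{i_0}$ maximal: then $K/k_{i_0}\ge 1$ and the derivative component is $\tfrac{K}{k_{i_0}}\xi_{i_0}w^{K/k_{i_0}-1}\ne 0$ for $w\ne 0$. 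Thus $\dd_w b_{\xi_1,\ldots,\xi_l}\neq 0$ on $\DD\setminus\{0\}$, i.e.\ the map is an immersion there. (It fails to be an immersion at $w=0$ precisely when all $K/k_i>1$, consistent with the singularity of $B$ found in \cref{lem:param-set-b}.)

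Finally I would prove injectivity on $\DD\setminus\{0\}$. Suppose $b_{\xi_1,\ldots,\xi_l}(w_1)=b_{\xi_1,\ldots,\xi_l}(w_2)$ with $w_1,w_2\ne 0$. Comparing components gives $\xi_i w_1^{K/k_i}=\xi_i w_2^{K/k_i}$, hence $w_1^{K/k_i}=w_2^{K/k_i}$ for all $i$. This is exactly the situation handled in the proof of \cref{lem5}: writing $w_1=\zeta w_2$ we get $\zeta^{K/k_i}=1$ for all $i$, and since $\gcd(K/k_1,\ldots,K/k_l)=1$ (because $K$ is the \emph{least} common multiple, as argued there via B\'ezout), it follows that $\zeta=1$, so $w_1=w_2$. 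Therefore $b_{\xi_1,\ldots,\xi_l}$ is an injective holomorphic immersion on $\DD\setminus\{0\}$, i.e.\ an injectively immersed punctured disc. No step here is a genuine obstacle; the only mild subtlety is making sure one correctly identifies which index yields a nonvanishing derivative component, and reusing the coprimality argument from \cref{lem5} verbatim for injectivity.
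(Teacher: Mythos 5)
Your proof is correct and follows essentially the same route as the paper: holomorphicity from the monomial form, injectivity delegated to the coprimality/B\'ezout argument of \cref{lem5}, and the immersion property from the explicit derivative. One small remark on the immersion step: the digression about selecting an index with $k_{i_0}=K$ (which need not exist; e.g.\ $k_1=2,k_2=3$ gives $K=6$) or with $k_{i_0}$ maximal is unnecessary, since for \emph{every} index $i$ the derivative component $\tfrac{K}{k_i}\xi_i w^{K/k_i-1}$ is a nonzero constant times a power of $w$, hence nonzero for all $w\ne 0$; any single nonvanishing component already suffices to show the differential is injective.
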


\begin{proof}
  Each $b_{\xi}$ is an injective map by \cref{lem5}. As a polynomial in $w$, $b_{\xi}$ is holomorphic as a map from $\DD$ to $\CC^l$. Since the complex structure on $B$ is given by restricting the complex structure on $\CC^l$ we get that $b_{\xi}:\DD\setminus\{0\}\lra B$ is holomorphic. Its differential is given by
  \begin{equation*}
    \dd_wb_{\xi}=\left(\frac{K}{k_1}\xi_1w^{\frac{K}{k_1}-1},\ldots,\frac{K}{k_l}\xi_lw^{\frac{K}{k_l}-1}\right)\dd w
  \end{equation*}
which is injective whenever $w\neq 0$.
\end{proof}

\begin{lem}
  The discs $b_{\xi_1,\ldots,\xi_l}$ cover all of $B$, i.e.\ $\bigcup_{j_1=1,\ldots,j_l=1}^{k_1,\ldots,k_l}b_{\xi_{j_1},\ldots,\xi_{j_l}}(\DD)=B$, where the indices are such that all combinations of roots are covered in \cref{def:hol-discs}.
\end{lem}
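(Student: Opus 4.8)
The plan is to prove both inclusions, the nontrivial one being $B\subseteq\bigcup b_{\xi}(\DD)$. The reverse inclusion $b_{\xi_1,\ldots,\xi_l}(\DD)\subseteq B$ is immediate, since $(\xi_i w^{K/k_i})^{k_i}=\xi_i^{k_i}w^K=w^K$ is independent of $i$ and $|w^K|<1$; this was already used implicitly when the discs were introduced. So I would concentrate on showing that every point of $B$ lies on one of these discs, by solving explicitly for the parameter $w$ and for the roots of unity $\xi_i$.

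First I would fix a point $(b_1,\ldots,b_l)\in B$ and set $c\coloneqq b_1^{k_1}=\cdots=b_l^{k_l}$, noting $|c|=|b_1|^{k_1}<1$. In the generic case $c\neq 0$, I would pick any $K$-th root $w$ of $c$; since $|w|=|c|^{1/K}<1$ we have $w\in\DD$, and $w\neq 0$ forces each $b_i\neq 0$. Then I would define $\xi_i\coloneqq b_i\,w^{-K/k_i}$ and check $\xi_i^{k_i}=b_i^{k_i}w^{-K}=c/c=1$, so $\xi_i$ is a $k_i$-th root of unity, i.e.\ one of the roots ranged over in \cref{def:hol-discs}. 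By construction $b_i=\xi_i w^{K/k_i}$ for every $i$, hence $(b_1,\ldots,b_l)=b_{\xi_1,\ldots,\xi_l}(w)$, which lies in the asserted union.

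Then I would dispose of the remaining case $c=0$: here $b_i^{k_i}=0$ gives $b_i=0$ for all $i$, so $(b_1,\ldots,b_l)=(0,\ldots,0)=b_{\xi}(0)$ for any admissible tuple $\xi$, and the origin is covered as well. Combining the two cases yields $B\subseteq\bigcup b_{\xi}(\DD)$ and hence equality. I do not anticipate a real obstacle here; the only point worth a remark is that $w$ is not unique — there are $K$ choices of $K$-th root of $c$, related by the roots-of-unity ambiguity in the $\xi_i$ — but producing one valid pair $(w,\xi)$ suffices for the covering statement.
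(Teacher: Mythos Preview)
Your proof is correct and follows essentially the same approach as the paper: both construct $w$ as a $K$-th root of the common value $c=b_1^{k_1}=\cdots=b_l^{k_l}$ and then set $\xi_i=b_i\,w^{-K/k_i}$, verifying $\xi_i^{k_i}=1$. Your version is in fact slightly cleaner, since you treat the degenerate case $c=0$ explicitly, whereas the paper's polar-coordinate formula $\xi_i=b_ie^{-i\alpha/k_i}/\sqrt[k_i]{|a|}$ tacitly assumes $a\neq 0$.
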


\begin{proof}
  In order to show the lemma, first notice that $b_{\xi_1,\ldots,\xi_l}(w)\in B$ because 
  \begin{equation*}
    \left(\xi_iw^{\frac{K}{k_i}}\right)^{k_i}=w^K=\left(\xi_jw^{\frac{K}{k_j}}\right)^{k_j}
  \end{equation*}
  for all $i\neq j$ and $|\xi w^{\frac{K}{k_i}}|\leq 1$. For the other inclusion we need to show that for any given $b=(b_1,\ldots,b_l)$ such that $b_1^{k_1}=\cdots=b_l^{k_l}$ there exist $\xi_1,\ldots,\xi_l$ and $w\in\DD$ such that $b_{\xi_1,\ldots,\xi_l}(w)=b$. For this, denote by $a\coloneqq b_1^{k_1}=\cdots=b_l^{k_l}$ and $a=|a|e^{\ii\alpha}$ with $\alpha\in[0,2\pi]$. Then define $w\coloneqq \sqrt[K]{|a|}e^{\ii\frac{\alpha}{K}}$ and $\xi_i\coloneqq \frac{b_ie^{-\ii\frac{\alpha}{k_i}}}{\sqrt[k_i]{|a|}}$ for $i=1,\ldots,l$. Then we have that $|w|\leq 1$ and $\xi_i^{k_i}=\frac{b_i^{k_i}e^{-\ii\alpha}}{|a|}=1$ as well as
\begin{equation*}
  b_{\xi_1,\ldots,\xi_l}(w)=(\xi_1\sqrt[k_1]{|a|}e^{\ii\frac{\alpha}{k_1}},\ldots,\xi_l\sqrt[k_l]{|a|}e^{\ii\frac{\alpha}{k_l}})=(b_1,\ldots,b_l).
\end{equation*}
This proves the statement.
\end{proof}

\begin{lem}
  Let $\xi_1,\ldots,\xi_l,\eta_1,\ldots,\eta_l\in\CC$ be given such that $\xi_i^{k_i}=\eta_i^{k_i}=1$ for all $i=1,\ldots,l$ and consider $b_{\xi}:\DD\lra B$ and $b_{\eta}:\DD\lra B$, where we abbreviate $b_{\xi}\coloneqq b_{\xi_1,\ldots,\xi_l}$ and $b_{\eta}\coloneqq b_{\eta_1,\ldots,\eta_l}$. Then the following are equivalent:
  \begin{enumerate}[label=(\roman*), ref=(\roman*)]
    \item There exist nonzero $v,w\in\DD$ such that $b_{\xi}(v)=b_{\eta}(w)$. \label{lem9-item1}
    \item We have $\Im (b_{\xi})=\Im (b_{\eta})$. \label{lem9-item2}
    \item There exist $\alpha_i\in S^1$ such that $\xi_i=\alpha_i\eta_i$ for all $i=1,\ldots,l$ and a solution $\zeta\in S^1$ of the system of equations $\alpha_i=\zeta^{\frac{K}{k_i}}$ for $i=1,\ldots,l$. \label{lem9-item3}
  \end{enumerate}
\label{lem9}
\end{lem}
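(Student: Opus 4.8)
The plan is to prove a cycle of implications $\ref{lem9-item1}\Rightarrow\ref{lem9-item3}\Rightarrow\ref{lem9-item2}\Rightarrow\ref{lem9-item1}$, reusing the arithmetic already carried out in the proof of \cref{lem5}. The implication $\ref{lem9-item2}\Rightarrow\ref{lem9-item1}$ is essentially formal: if the images of the two injectively immersed discs agree as subsets of $B$, then any nonzero point in the common image is of the form $b_{\xi}(v)=b_{\eta}(w)$ for suitable nonzero $v,w$. Conversely, $\ref{lem9-item3}\Rightarrow\ref{lem9-item2}$ should be a direct computation: given $\zeta\in S^1$ with $\alpha_i=\zeta^{K/k_i}$ and $\xi_i=\alpha_i\eta_i$, I would check that $b_{\xi}(w)=b_{\eta}(\zeta w)$ for all $w\in\DD$, because the $i$-th component of $b_{\xi}(w)$ is $\xi_i w^{K/k_i}=\alpha_i\eta_i w^{K/k_i}=\eta_i(\zeta w)^{K/k_i}$; since $|\zeta|=1$ the map $w\mapsto\zeta w$ is a biholomorphism of $\DD$, hence the two parametrizations have the same image.

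The substantive step is $\ref{lem9-item1}\Rightarrow\ref{lem9-item3}$. Suppose $b_{\xi}(v)=b_{\eta}(w)$ with $v,w\neq 0$; componentwise this reads $\xi_i v^{K/k_i}=\eta_i w^{K/k_i}$ for all $i$. First I would set $\alpha_i\coloneqq\xi_i/\eta_i$; since $\xi_i^{k_i}=\eta_i^{k_i}=1$ we get $\alpha_i^{k_i}=1$, in particular $\alpha_i\in S^1$, and $\xi_i=\alpha_i\eta_i$. The equations become $\alpha_i v^{K/k_i}=w^{K/k_i}$, i.e.\ $\alpha_i=(w/v)^{K/k_i}$. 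Now set $\zeta\coloneqq w/v$. Then $|\zeta|$ need not be $1$ a priori, but taking $k_i$-th powers of $\alpha_i=\zeta^{K/k_i}$ gives $1=\alpha_i^{k_i}=\zeta^{K}$, so $\zeta^K=1$ and hence $|\zeta|=1$ and $\zeta\in S^1$. This produces exactly the $\zeta$ required in \ref{lem9-item3}, with $\alpha_i=\zeta^{K/k_i}$ by construction. I expect this to be the main (and only mildly delicate) point: one must be careful that $v$ and $w$ individually need not lie on the unit circle, so $\zeta=w/v$ is only pinned down to the unit circle \emph{after} using $\zeta^K=1$, which is where the definition $K=\lcm(k_1,\dots,k_l)$ enters — it is what makes $K/k_i$ an integer so that the expressions $\zeta^{K/k_i}$ make sense and the power trick applies.

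As a final remark in the write-up I would note that the case $v=0$ or $w=0$ is excluded in \ref{lem9-item1} precisely because at the origin all parametrizations collapse to the single point $0\in B$ (the singular point of $B$ by \cref{lem:param-set-b}), so the equivalence with genuine coincidence of images must be formulated away from that point; this matches the role of the punctured disc in the preceding lemma. No result beyond \cref{lem5}, \cref{def:hol-discs} and elementary properties of roots of unity is needed, so the proof is short once the cycle of implications is organized as above.
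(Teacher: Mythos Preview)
Your proof is correct. The main difference from the paper is the order of the cycle: the paper proves $\text{(i)}\Rightarrow\text{(ii)}\Rightarrow\text{(iii)}\Rightarrow\text{(i)}$, whereas you prove $\text{(i)}\Rightarrow\text{(iii)}\Rightarrow\text{(ii)}\Rightarrow\text{(i)}$. In the paper's route the step $\text{(ii)}\Rightarrow\text{(iii)}$ is obtained by observing that $b_{\xi}^{-1}\circ b_{\eta}$ is a bounded holomorphic map on $\DD\setminus\{0\}$, extending it over the origin by the removable singularity theorem, and then invoking the Schwarz lemma to conclude it is a rotation by some $\zeta\in S^1$; the equations $\alpha_i=\zeta^{K/k_i}$ fall out from comparing components. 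Your argument bypasses this complex-analytic step entirely: by setting $\zeta=w/v$ and reading off $\alpha_i=\zeta^{K/k_i}$ directly from the componentwise equations, you get $\zeta^K=\alpha_i^{k_i}=1$ and hence $\zeta\in S^1$ by pure arithmetic with roots of unity. This is more elementary and arguably cleaner; the paper's approach has the conceptual advantage of identifying the relationship between the two discs as an honest biholomorphism of $\DD$, which is the geometric content underlying the statement.
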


\begin{proof}
  \cref{lem9-item1}$\Longrightarrow$\cref{lem9-item2}: Suppose there exist nonzero $v,w\in\DD$ such that $b_{\xi}(v)=b_{\eta}(w)$ which implies $\xi_iv^{\frac{K}{k_i}}=\eta_iw^{\frac{K}{k_i}}$ for $i=1,\ldots,l$. If $v'\in\DD$ then there exists $\lambda\in\CC$ such that $v'=\lambda v$. Then we have
  \begin{align*}
    b_{\xi}(v')&=b_{\xi}(\lambda v)=(\xi_1(\lambda v)^{\frac{K}{k_1}},\ldots,\xi_l(\lambda v)^{\frac{K}{k_l}})=(\lambda^{\frac{K}{k_1}}\xi_1 v^{\frac{K}{k_1}},\ldots,\lambda^{\frac{K}{k_l}}\xi_l v^{\frac{K}{k_l}})\\
    &=(\lambda^{\frac{K}{k_1}}\eta_1 w^{\frac{K}{k_1}},\ldots,\lambda^{\frac{K}{k_l}}\eta_l w^{\frac{K}{k_l}})=b_{\eta}(\lambda w)
  \end{align*}
  which shows $\Im(b_{\xi})\subset\Im(b_{\eta})$ and the other inclusion follows in the same way.

  \cref{lem9-item2}$\Longrightarrow$\cref{lem9-item3}: Since $b_{\xi}:\DD\setminus\{0\}\lra B$ is a holomorphic injective immersion between complex curves its inverse (defined on its image) is also holomorphic and thus $b_{\xi}^{-1}\circ b_{\eta}:\DD\setminus\{0\}\lra\DD\setminus\{0\}$ is a bounded holomorphic map defined on the punctured disc and can therefore be extended to a biholomorphism $b_{\xi}^{-1}\circ b_{\eta}:\DD\lra\DD$. Since this map also fixes the origin it must be a rotation by some $\zeta\in S^1$. This means that for all $v\in\DD$ we have $b_{\xi}(\zeta v)= b_{\eta}(v)$ which implies for $i=1,\ldots,l$ that $\xi_i(\zeta v)^{\frac{K}{k_i}}=\eta_i v^{\frac{K}{k_i}}$. Since this holds also for $v\neq 0$ we obtain $\xi_i \zeta^{\frac{K}{k_i}}=\eta_i$. Define $\alpha_i\coloneqq \zeta^{\frac{K}{k_i}}$ which satisfy $\alpha^{k_i}=1$ because $\xi_i^{k_i}=\eta_i^{k_i}=1$ and $\zeta^{K}=\alpha_i^{k_i}=1$.

  \cref{lem9-item3}$\Longrightarrow$\cref{lem9-item1}: We want to show that there exists a point $v\in\DD$ such that $b_{\xi}(1)=b_{\eta}(v)$. This equation is equivalent to $\xi_i=\eta_iv^{\frac{K}{k_i}}$ for $i=1,\ldots,l$. Since $\xi_i=\alpha_i\eta_i$ by assumption we need to find a solution to the equations $\alpha_i=v^{\frac{K}{k_i}}$ for $i=1,\ldots,l$ which is again possible by (iii).
\end{proof}

\begin{rmk}
  Note that condition (iii) implies $\alpha_i^{k_i}=1$ and $\zeta^K=1$ because $\xi_i^{k_i}=\eta_i^{k_i}$. 
\end{rmk}

\begin{definition}
  Define $\mcP\coloneqq \{(\xi_1,\ldots,\xi_l)\in\CC^l\mid \xi_i^{k_i}=1\}$. The group $\ZZ/K\ZZ$ of $K$-th unit roots acts on $\mcP$ by $\zeta\circ\xi\coloneqq \left(\xi_1\zeta^{\frac{K}{k_1}},\ldots,\xi_l\zeta^{\frac{K}{k_i}}\right)$.
\end{definition}

\begin{cor}
  We have $\Im(b_{\zeta\circ\xi})=\Im(b_{\xi})$.
\end{cor}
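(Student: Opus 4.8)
The plan is to deduce the corollary directly from the equivalence \cref{lem9-item2}$\Leftrightarrow$\cref{lem9-item3} in \cref{lem9}, applied to the pair $\xi$ and $\zeta\circ\xi$. I would first unwind the definition: by \cref{def:hol-discs}, for $w\in\DD$ we have
\begin{equation*}
  b_{\zeta\circ\xi}(w)=\left((\zeta^{K/k_1}\xi_1)w^{K/k_1},\ldots,(\zeta^{K/k_l}\xi_l)w^{K/k_l}\right),
\end{equation*}
and the components of $\zeta\circ\xi$ indeed satisfy $(\zeta^{K/k_i}\xi_i)^{k_i}=\zeta^{K}\xi_i^{k_i}=1$ since $\zeta^K=1$ and $\xi_i^{k_i}=1$, so $\zeta\circ\xi\in\mcP$ and $b_{\zeta\circ\xi}$ is one of the admissible discs from \cref{def:hol-discs}.

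Next I would verify condition \cref{lem9-item3} of \cref{lem9} for the pair $(\xi,\eta)$ with $\eta\coloneqq\zeta\circ\xi$. Set $\alpha_i\coloneqq\zeta^{K/k_i}\in S^1$; then by construction $\eta_i=\alpha_i\xi_i$, equivalently $\xi_i=\alpha_i^{-1}\eta_i$, and using $\zeta^{-1}\in S^1$ we get $\xi_i=(\zeta^{-1})^{K/k_i}\eta_i$, so $\zeta^{-1}$ is a solution of the system $\alpha_i^{-1}=(\zeta^{-1})^{K/k_i}$. Hence the hypothesis of \cref{lem9-item3} (with the roles of $\xi$ and $\eta$ as in the statement, and witness $\zeta^{-1}$) is satisfied.

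Applying the implication \cref{lem9-item3}$\Rightarrow$\cref{lem9-item2} of \cref{lem9} then yields $\Im(b_{\xi})=\Im(b_{\zeta\circ\xi})$, which is exactly the claim. Alternatively, and even more directly, one can observe that $b_{\zeta\circ\xi}(w)=b_{\xi}(\zeta w)$ for all $w\in\DD$ (multiplying each component of $b_\xi(\zeta w)$ out gives $\xi_i\zeta^{K/k_i}w^{K/k_i}$), and since $w\mapsto\zeta w$ is a bijection of $\DD$, the two maps have the same image; this also reproves \cref{lem9-item1} for the pair and is essentially the computation already used in the proof of \cref{lem9-item1}$\Rightarrow$\cref{lem9-item2}. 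There is no real obstacle here: the only point requiring a moment of care is bookkeeping the $K$-th versus $k_i$-th roots of unity and making sure the exponents $K/k_i$ are the correct ones, but this is immediate from \cref{def:hol-discs} and the definition of the $\ZZ/K\ZZ$-action.
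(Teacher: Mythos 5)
Your proof is correct and follows the same route as the paper, which simply invokes the equivalence \cref{lem9-item2}$\Leftrightarrow$\cref{lem9-item3} of \cref{lem9}; you just unpack the verification of \cref{lem9-item3} more explicitly. Your alternative observation that $b_{\zeta\circ\xi}(w)=b_{\xi}(\zeta w)$ and that $w\mapsto\zeta w$ is a bijection of $\DD$ is the cleanest version of the argument and actually bypasses \cref{lem9} entirely.
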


\begin{proof}
  By \cref{lem9} the images of two discs $b_{\xi}$ and $b_{\eta}$ are equal if and only if there exists $\zeta\in S^1$ such that $\xi=\zeta\circ\eta$.
\end{proof}

\begin{lem}
  The group $\ZZ/K\ZZ$ acts freely on $\mcP$.
\end{lem}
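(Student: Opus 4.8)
The plan is to reduce the statement to a purely arithmetic fact about $K=\lcm(k_1,\ldots,k_l)$. First I would fix a $K$-th root of unity $\zeta$ and ask when it can fix a point $\xi=(\xi_1,\ldots,\xi_l)\in\mcP$. Since every component satisfies $\xi_i^{k_i}=1$ and is in particular nonzero, the equation $\zeta\circ\xi=\xi$ reads $\xi_i\zeta^{K/k_i}=\xi_i$ for all $i$, which after cancelling $\xi_i$ is equivalent to $\zeta^{K/k_i}=1$ for every $i=1,\ldots,l$ --- a condition independent of $\xi$. Hence $\zeta$ has a fixed point on $\mcP$ if and only if the order of $\zeta$ divides each of the integers $K/k_1,\ldots,K/k_l$, equivalently divides $\gcd(K/k_1,\ldots,K/k_l)$. (Along the way one records the trivial well-definedness check that the action preserves $\mcP$: $(\xi_i\zeta^{K/k_i})^{k_i}=\xi_i^{k_i}\zeta^K=1$.)

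The key step is then the observation that $\gcd(K/k_1,\ldots,K/k_l)=1$. This is precisely the computation already carried out inside the proof of \cref{lem5}: if $\lambda\coloneqq\gcd(K/k_1,\ldots,K/k_l)>1$, then writing $K/k_i=m_i\lambda$ gives $K/\lambda=m_ik_i$ for every $i$, so $K/\lambda$ is a common multiple of $k_1,\ldots,k_l$ strictly smaller than $K$, contradicting the minimality of $K=\lcm(k_1,\ldots,k_l)$. Hence $\lambda=1$.

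Combining the two steps, the only $\zeta\in\ZZ/K\ZZ$ admitting a fixed point is the one of order $1$, namely $\zeta=1$; thus the stabilizer of every point of $\mcP$ is trivial and the action is free. There is essentially no obstacle to overcome here --- the entire content is the $\gcd$--$\lcm$ identity, and the argument is a direct recycling of the coprimality computation from \cref{lem5}.
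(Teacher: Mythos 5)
Your proof is correct and follows essentially the same route as the paper: cancel the nonzero $\xi_i$ to reduce the fixed-point condition to $\zeta^{K/k_i}=1$ for all $i$, and then invoke the coprimality of the integers $K/k_i$ established in the proof of \cref{lem5} to force $\zeta=1$. The only cosmetic difference is that you package the last step as ``the order of $\zeta$ divides $\gcd(K/k_1,\ldots,K/k_l)=1$,'' whereas the paper's \cref{lem5} carries this out via B\'ezout; these are the same fact.
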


\begin{proof}
  Suppose $\zeta\circ\xi=\xi$. Then $\xi_i\zeta^{\frac{K}{k_i}}=\xi_i$ and thus $\zeta^{\frac{K}{k_i}}=1$ for $i=1,\ldots,l$. By the proof of \cref{lem5} this implies $\zeta=1$.
\end{proof}

\begin{cor}
  The set $B$ is parametrized by $\frac{k_1\cdots k_l}{K}$ injective holomorphic discs which are pairwise disjoint if you remove the origin of the disc.
  \label{cor:param-B-hol-discs}
\end{cor}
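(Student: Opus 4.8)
The plan is simply to assemble the pieces established earlier in this subsection. Since $\ZZ/K\ZZ$ acts freely on the finite set $\mcP$, and $|\mcP|=k_1\cdots k_l$, the orbit–stabilizer theorem gives that every orbit has exactly $K$ elements, so the number of orbits is $\frac{k_1\cdots k_l}{K}$. First I would fix a set $R\subset\mcP$ of orbit representatives, so $|R|=\frac{k_1\cdots k_l}{K}$, and claim that the family $\{b_\xi\}_{\xi\in R}$ is the asserted parametrization.

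That each $b_\xi$ with $\xi\in R$ is an injectively immersed holomorphic punctured disc $\DD\setminus\{0\}\lra B$ is exactly the content of the lemma proved above on the maps $b_{\xi_1,\ldots,\xi_l}$. To see that these discs cover all of $B$, I would combine the lemma stating $\bigcup b_{\xi_{j_1},\ldots,\xi_{j_l}}(\DD)=B$, taken over all choices of roots, with the corollary $\Im(b_{\zeta\circ\xi})=\Im(b_\xi)$: every $\xi\in\mcP$ lies in the $\ZZ/K\ZZ$-orbit of some $\xi'\in R$, hence $\Im(b_\xi)=\Im(b_{\xi'})$, and therefore the union $\bigcup_{\xi\in R} b_\xi(\DD)$ already equals $B$.

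For disjointness away from the origin, suppose $\xi,\eta\in R$ and $b_\xi(v)=b_\eta(w)$ for some nonzero $v,w\in\DD$. By \cref{lem9}, condition \cref{lem9-item1} implies condition \cref{lem9-item3}, which says precisely that there is $\zeta\in S^1$ with $\zeta^K=1$ and $\xi=\zeta\circ\eta$; that is, $\xi$ and $\eta$ lie in the same $\ZZ/K\ZZ$-orbit. Since $R$ contains exactly one representative per orbit, $\xi=\eta$. Contrapositively, $b_\xi(\DD\setminus\{0\})\cap b_\eta(\DD\setminus\{0\})=\emptyset$ whenever $\xi\neq\eta$ in $R$, which is the claimed pairwise disjointness.

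There is essentially no hard step remaining here; the real work was done in \cref{lem9} and in the freeness of the $\ZZ/K\ZZ$-action on $\mcP$. The only point deserving care is the bookkeeping at the puncture: all the discs $b_\xi$ pass through $0\in B$, so ``disjoint'' can only mean disjoint after removing the central point, and the equivalence \cref{lem9-item1}$\Leftrightarrow$\cref{lem9-item3} is exactly what makes that phrasing legitimate. The count $\frac{k_1\cdots k_l}{K}$ is then just the orbit count of the free action.
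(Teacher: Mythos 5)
Your proof is correct and follows essentially the same approach as the paper: cover $B$ with the $|\mcP|=k_1\cdots k_l$ discs, use \cref{lem9} to identify coincidence of discs with lying in a common $\ZZ/K\ZZ$-orbit, and invoke freeness of the action to count $\frac{k_1\cdots k_l}{K}$ orbits. You spell out the choice of orbit representatives and the deduction of coverage and disjointness more explicitly than the paper's two-line argument, but the ingredients and their roles are identical.
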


\begin{proof}
  The set $B$ is covered by $|\mcP|=k_1\cdots k_l$ injective holomorphic discs which are equal if and only if they lie on the same $\ZZ/K\ZZ$-orbit which consists of exactly $K$ elements by the last lemma.
\end{proof}

\section{Hyperbolic Description Of Gluing}

\label{sec:hyperbolic-gluing}
\index{Hurwitz Deformation Family}

This section deals with the construction of the \emph{Hurwitz deformation family} which will be used to construct the orbifold version of $\mcR_{g,k,h,n}(T)$.

\subsection{Setup}

\label{sec:choices-hyp}

Now we choose all those objects that we need in order to define a parametrization of a neighborhood of a given nodal branched cover $u:C\lra X$. The last point will depend on a compact set $K$ on the surface $X$ which will be chosen after the first three steps\footnote{The purpose of this set is to make sure that the disc structure on $X$ will be disjoint from all hyperbolic geodesics in the slightly perturbed hyperbolic structures nearby.}.

\index{Hyperbolic Deformation Preparation|(}

\begin{definition}
  Given an admissible cover $u:C\lra X$ we make the following choices.
\begin{enumerate}[label=(\roman*), ref=(\roman*)]
\item A hyperbolic metric $g$ on every connected component of the normalization $\wt{X}$ of $X$ such that all special points\footnote{Recall that special points include marked points and nodes.} are cusps and which is compatible with its complex structure and orientation,
\item a set of decomposing curves on $\wt{X}$, i.e.\ a multicurve $\Gamma$ of closed simple hyperbolic geodesics such that $\wt{X}\setminus\Gamma$ consists of (open) hyperbolic pairs of pants,
\item a completion of the multicurve $u^{-1}(\Gamma)$ to a set of decomposing curves of $\wt{C}$ for its hyperbolic metric. Note that preimages of closed simple hyperbolic geodesics under a local isometry are possibly disconnected simple closed geodesics, so $u^{-1}(\Gamma)$ can indeed be completed in this way. Also note that the preimage of a pair of pants under a degree-$d$ cover has Euler characteristics $-d$ and may be disconnected. Having a pair of pants decomposition we choose
\item a disc structure $(V^j,U_i^j,\phi_i^j,\psi^j)$ at every node $p\in X$ such that the $V^j$ are contained in the interior of the cusp neighborhood of the nodal points $p^{\dagger}$ and $p^{\ast}$.
\end{enumerate}
We call such a choice a \emph{hyperbolic deformation preparation}.
\end{definition}

\index{Hyperbolic Deformation Preparation|)}

\begin{rmk}
  A few comments are in order.
  \begin{enumerate}[label=(\roman*), ref=(\roman*)]
    \item We will refer to a hyperbolic metric on $\wt{X}\setminus\{\text{special pt.}\}$ as a hyperbolic metric on $\wt{X}$ with cusps in the special points in order to avoid introducing yet another symbol. It will always be clear from the context where the hyperbolic metric is actually defined.
    \item Note that the hyperbolic metric on $\wt{X}$ induces a unique hyperbolic metric on $\wt{C}$ such that $u$ is a local isometry compatible with its complex structure and orientation such that the special points on $\wt{C}$ are cusps. This is because we can pull back the hyperbolic metric outside of the special points which include all branched points so it is an actual cover.
    \item The disc structure at a node is chosen to be contained in the interior of a cusp neighborhood. The reason for this is that when we vary the hyperbolic structure the maximal cusp neighborhood will change but we want to keep the fixed disc structure within a cusp region.
  \end{enumerate}
\end{rmk}

\subsection{Local Parametrizations}

\label{sec:local-parametrizations}

Consider a branched covering $u:C\lra X$ in $\obj\mcR_{g,k,h,n}(T)$ of type $T$ including enumerations of the critical and marked points with one node $p\in X$ singled out as in \cref{sec:moduli-spaces-3}. We will define a map $\Psi:\mcU\lra\obj\mcR_{g,k,h,n}(T)$ with $\mcU\subset\RR^{6h-6+2n}$ and $u\in\Psi(\mcU)$ in the following steps:
\begin{enumerate}[label=(\roman*), ref=(\roman*)]
  \item Vary the complex structure on $X$ away from the node.
  \item Pull back the uniformized hyperbolic structure on $X$ via $u$.
  \item Glue in the complex cylinders as in \cref{sec:analyze-singularities}.
  \item Modify the domain of parametrization by using Fenchel--Nielsen coordinates.
\end{enumerate}
This family of Hurwitz covers constructed this way will later be used to define a manifold structure on the sets of objects and morphisms in $\mcR_{g,k,h,n}(T)$. The last step is used to obtain the symplectic Weil--Petersson structure on the resulting orbifold. Note that the word ``local'' means in this context that the construction takes place at one node in $X$ but we will of course later on repeat this construction at all nodes in $X$. So in the following we will assume that the target surface has only one node and then generalize later to more than one node.

\subsubsection{Varying the complex structure on \texorpdfstring{$X$}{X} away from the node}

\label{sec:vary-comp-str-away-from-node}

First do steps one to three in \cref{sec:choices-hyp}, i.e.\ choose a hyperbolic metric $g$ of the normalization $\wt{X}$ and a set of decomposing curves $\Gamma$ on $\wt{X}$ as well as a completion of the multicurve $u^{-1}(\Gamma)$ to a set of decomposing curves on $\wt{C}$.

Now consider the space $\mcQ\coloneqq \mcT_{\wt{X},Z\cup \{p^{\ast},p^{\dagger}\}}$ which is the Teichmüller space of the surface that we obtain if we normalize the node $p$ and consider the two new nodal points as marked points. Note that this space is a manifold as it is either a usual Teichmüller space if the normalization is connected or the product of two Teichmüller spaces if the node was separating. In both cases the corresponding Teichmüller space carries a universal family $\pi:\mcC\lra\mcQ$ and is of dimension $6h-8+2n$. This universal family is again locally trivial as was described in \cref{sec:teichmueller-spaces}. Also it carries a continuous fibre metric $\eta$ which on each fibre is the hyperbolic metric uniformizing the complex structure of the underlying fibre, see \cref{prop:universal-teichmueller-family-vertical-hyp-metric}.

The complex curve $X$ defines a point $[\wt{X}]\in\mcQ$ and so we can choose Fenchel--Nielsen coordinates $\Phi:U\lra V\subset\RR^{6h-8+2n}$ in a small neighborhood around $[X]\in U\subset\mcQ$. For these coordinates we use the earlier chosen set of decomposing curves. Using a trivialization $\Xi:\pi^{-1}(U)\lra U \times \wt{X}$ of the universal Teichmüller curve in a neighborhood of $[\wt{X}]$ we obtain for each $t\in V$ a hyperbolic structure on $\wt{X}$ without special points by defining
\begin{equation*}
  g_t\coloneqq \left(\Xi^{-1}\circ\iota_t\right)^*\eta,
\end{equation*}
where $\iota_t:\wt{X}\lra U\times\wt{X}$ is given by $\iota_t(p)=(t,p)$ and $\eta$ is the fibre metric as above. Thus we can now associate to each $t\in U$ a hyperbolic surface $(\wt{X},g_t)$. This surface has various cusps, two of which are interpreted as a nodal pair.

\subsubsection{Pulling back the hyperbolic structure}

Given the hyperbolic metric $g_t$ and the map
\begin{equation*}
  u:\wt{C}\setminus\{\text{special points}\}\lra X\lra\wt{X}\setminus\{\text{cusps}\}
\end{equation*}
we pull back the hyperbolic metric $g_t$ from $\wt{X}$ to $\wt{C}$ outside of the cusps. This map is by definition a local isometry and thus holomorphic for the corresponding complex structures. Denote the new complex curve by $\wt{C}_{u^*t}$ which is just the surface $\wt{C}$ with a new complex structure. Note that the pulled-back hyperbolic metric is again complete, has finite area and is compatible with the pulled-back complex structure outside the cusps and thus has cups at the punctures.

This construction does of course preserve the local degrees of the map $u$ as we only modified the structures on the surfaces and thus all the properties of a Hurwitz covering are preserved. Switching back to the nodal picture we have thus constructed for each $t\in U$ a nodal Hurwitz covering $u:C_{u^*t}\lra X_t$.

\subsubsection{Gluing of cylinders}

Recall from \cref{sec:analyze-singularities} that a neighborhood of nodal branched covers in Deligne--Mumford space of the source surface has possible gluing parameters at the $l$ nodes in a fibre parametrized by $\frac{k_1\cdots k_l}{K}$ discs which are pairwise disjoint except at the central point. Thus we fix one such disc $b_{\xi}\coloneqq b_{\xi_1,\ldots,\xi_l}:\DD\lra B\subset \CC^l$ and describe one deformation for every such disc. Next we build a ``mixed'' surface $X_{(t,a)}$ for each $(t,a)\in U\times\DD$ as follows.

Recall that we have the universal curve over Teichmüller space $\pi:\mcC\lra\mcQ$ together with a trivialization over $U\subset\mcQ$ and the hyperbolic fibre metric $\eta$. We choose a disc structure in the interior of the cusp neighborhood around the nodes. This way, by restricting to a sufficiently small neighborhood $U$ of $[\wt{X}]\in\mcQ$ we can make sure that the disc structure is still contained in the cusp neighborhood. To see this notice that $\eta$ is continuous and that by \cref{lem:unique-cusp-neighborhood} the cusp neighborhood is bounded by a horocycle of length two. This implies that the boundary of the cusp neighborhood depends continuously on the point in $U$ and therefore if $U$ is small enough this boundary does not intersect a disc structure in the interior of the cusp region.

Next we glue in cylinders with parameter $a$ as in \cref{eq:glue-complex-cylinder} for the various fibres $X_t$ in the family for $t\in U$. This way we obtain complex surfaces $X_{(t,a)}$ for $(t,a)\in U\times\DD$.

At the same time we glue in cylinders using $b_{\xi(z)}$ as a parameter in the surface $C_{u^*t}$ and define the map $u_{z}:C_{u^*t,b_{\xi(z)}}\lra X_{t,a(z)}$ as in \cref{def:deformation-of-hurwitz-covers} with $a(z)\coloneqq b_{\xi}(z)_i^{k_i}$ for any $i=1,\ldots,l$.

\subsubsection{Modifying the domain of parametrization}

\label{sec:modify-doma-param}

On the surfaces $X_{t,a(z)}$ we now have a set of decomposing curves by taking $\Gamma$ and adding a curve wrapping once around the glued-in cylinder. The surfaces fit together in a flat family $Y\lra U\times\DD$ with $Y\coloneqq\bigsqcup_{(t,z)\in U\times\DD}X_{t,a(z)}$ because $Y$ is just the pull back of the plumbing family constructed in \cite{robbin_construction_2006} and \cite{hubbard_analytic_2014} along the holomorphic map $U\times\DD\lra U\times\DD$ given by $(t,z)\mapsto (t,a(z))$. A flat family in our case is a proper holomorphic map between complex manifolds having at most nodal points.

Together with the set of decomposing curves we thus obtain Fenchel--Nielsen coordinates $\Phi:U\times\DD\lra \RR^{6h-6+2n}$, see Proposition~6.1 in \cite{hubbard_analytic_2014}. After shrinking the domain of $\Phi$ if necessary this map is a homeomorphism onto its image by Theorem~9.11 in \cite{hubbard_analytic_2014}. Thus we obtain an open set $\mcU\subset\RR^{6h-6+2n}$ such that the inverse $\Phi^{-1}:\mcU\lra U\times\DD$ is well-defined and thus we can define $\Psi_{\xi,u}$ by parametrizing a family of Hurwitz covers via
\begin{alignat}{5}
  \Psi_{\xi,u}:\mcU & \lra & \quad U\times\DD & \lra && \Ob\mcR_{g,k,h,n}(T) \nonumber\\
  w & \longmapsto & \Phi^{-1}(w) & && \label{eq:def-fn-map} \\
  & & (t,z) & \longmapsto && (u_{z}:C_{u^*t,b_{\xi}(z)}\lra X_{t,a(z)}). \nonumber
\end{alignat}
Note that this parametrization is defined in such a way that the point in $\mcU$ does indeed specify the Fenchel--Nielsen coordinates of the set of decomposing curves of the target surface, although we used the complex formulation as an intermediate step. The definition of the map $u_z$ was stated in \cref{def:deformation-of-hurwitz-covers}.

\begin{figure}[!ht]
  \centering
  \def\svgwidth{\textwidth}
  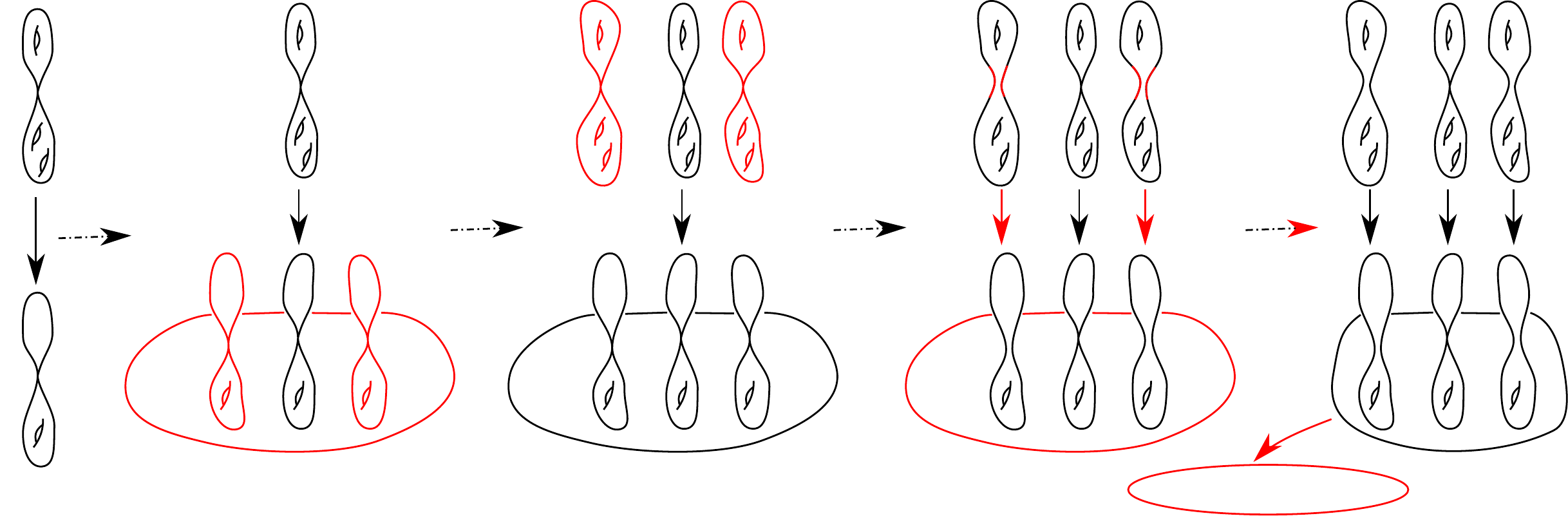
  \caption{This diagram illustrates the process how we constructed the Hurwitz deformation. First we vary the hyperbolic structure on the smooth part of $X$, then we pull this back to $C$, next we glue in the local construction from \cref{sec:complex-gluing} and then we reparametrize by using Fenchel--Nielsen coordinates as in \cite{hubbard_analytic_2014}.}
  \label{fig:construction-hurwitz-family}
\end{figure}

\begin{rmk}
\label{rmk:unfoldings-source-surface}
Let us node that we can of course repeat the construction at every note of the target surface independently as the choices for the hyperbolic deformation preparation were such that all the constructions happen ``far away'' from each other. This means that if the target $X$ has $0\leq l \leq 3h-3+n$ nodes then we obtain a map
\begin{alignat*}{5}
  \Psi_{\xi,u}:\mcU & \lra & \quad U\times\DD^l & \lra && \Ob\mcR_{g,k,h,n}(T) \\
  w & \longmapsto & \Phi^{-1}(w) & && \\
  & & (t,z) & \longmapsto && (u_{z}:C_{u^*t,b_{\xi}(z)}\lra X_{t,a(z)})
\end{alignat*}
where $U$ is a product of the Teichmüller spaces of the smooth components of $X$.
\end{rmk}

\begin{rmk}
  \label{rmk:definition-all-deformations-of-source-surface}
  By constructing this family of Hurwitz covers we also build families of nodal Riemann surfaces, namely
  \begin{align*}
    \bigsqcup_{(t,z)\in U\times\DD}C_{u^*t,b_{\xi}(z)}\lra U\times\DD^l
  \end{align*}
  for the source surface and
  \begin{align*}
    \bigsqcup_{(t,z)\in U\times\DD}X_{t,a(z)}\lra U\times\DD^l
  \end{align*}
  for the target surface. Note that the latter one was built in precisely the same way as the universal unfolding of the target surface, i.e.\ this is a universal unfolding of $X$, see \cref{sec:some-techn-lemm} for more explanations. Anyway, suppose we have fixed a nodal Hurwitz cover $u:C\lra X$ and equipped it with a hyperbolic deformation preparation together with maps $b_{\xi_p}:\DD\lra B_p\subset\CC^{\#\{\text{nodes over } p\}}$ for every node $p$ in $X$. Then we obtain an unfolding of the source surface $C$ and thus it comes with a unique holomorphic map into the universal unfolding $B^C$ of $C$ in a small neighborhood of the central point. We define the set $\mcA(u:C\lra X)\subset B^C$ or $\mcA(u)$ as the union of all these images over all possible choices of discs $b_{\xi_p}$ for all the nodes $p\in X$. Thus $\mcA(u:C\lra X)\subset B^C$ describes the set of complex structures close to $C$ constructed via the above Hurwitz deformations. We will see in \cref{lem:nod-hur-cov-unique} that the germ of this set is indeed independent of the choices.
  \label{rmk:unfoldings-surfaces}
\end{rmk}

\section{Construction of an Orbifold Structure on the Branched Cover of the Moduli Space of Closed Hurwitz Covers}

\sectionmark{Orbifold Structure on Moduli Space of Hurwitz Covers}

\label{sec:constr-an-orbif}

\subsection{Manifold Structure on the Set of Objects}

The purpose of this section is to use the prior gluing constructions to define an orbifold structure for the category $\mcR_{g,k,h,n}(T)$. Thus we will define an orbifold category $\mcM_{g,k,h,n}(T)$ with an homeomorphism from its orbit space to $|\mcR_{g,k,h,n}(T)|$. We will define an ep Lie-groupoid with compact orbit space which will be a morphism covering over the moduli space of complex structures of the target surface and which will have a continuous map to the actual moduli space of Hurwitz covers. This map will be branched in a certain way over isomorphism classes of nodal Hurwitz covers and comes from the local description in \cref{sec:analyze-singularities}. This chapter deals with the manifold structure on the objects.

So suppose we are given the following objects:
\begin{itemize}
  \item A countable set $\Lambda$ of Hurwitz covers.
  \item For any $\lambda\in\Lambda$ a family of Hurwitz covers $\Psi^{\lambda}$ defined on $O^{\lambda}$ which is constructed as in \cref{sec:local-parametrizations} deforming the Hurwitz cover $\lambda$.
\end{itemize}

Here, the index $\lambda$ actually contains a bit of extra data besides the (central) Hurwitz cover $u^{\lambda}:C^{\lambda}\lra X^{\lambda}$ necessary for defining $\Psi^{\lambda}$. This means that $\lambda$ includes
\begin{itemize}
  \item a hyperbolic deformation preparation (see \cref{sec:choices-hyp}),
  \item a choice of $\xi\in \CC^{m_p}$ with $\xi^{k_i}=1$ for $i=1,\ldots,m_p$ for each node $p$ in the target space where $m_p$ is the number of nodal preimages of the node $p$ in $C^{\lambda}$ and\footnote{In particular these values $\xi$ and $m_p$ depend on $\lambda$ and are not fixed in any way by the choice of combinatorics except that there are fewer than $3h-3+n$ nodes in the target and every node has at most $d$ preimages.}
  \item an enumeration of the branched and nodal points as well as 
  \item open neighborhoods in the Teichmüller spaces of the normalization of $X^{\lambda}$.
\end{itemize}

Since $|\mcM_{h,n}|$ is covered by finitely many strata which are manifolds and since every surface $X$ has only finitely many equivalence classes of Hurwitz covers of fixed type $T$ it is possible to choose a finite set $\Lambda$ such that all equivalence classes in $|\mcR_{g,k,h,n}(T)|$ are covered by $\bigcup_{\lambda\in\Lambda}|\Psi^{\lambda}|$. Also note that given such a set we can of course refine the covering by enlarging $\Lambda$ such that the sets $O^{\lambda}$ become smaller. This will be necessary because we need to choose the $O^{\lambda}$ such that a certain set of fibre-isomorphisms from $\Psi^{\lambda}(O^{\lambda})$ to $\Psi^{\lambda'}(O^{\lambda'})$ becomes a manifold, see \cref{sec:mfd-structure-morphisms}.

So, up to possible later refinements of the choice of $\Lambda$ we make the following definition.
\begin{definition}
  The object set of the orbifold category of Hurwitz covers of type $T$, denoted by $\mcM_{g,k,h,n}(T)$, is defined by
  \begin{equation*}
    \Ob\mcM_{g,k,h,n}(T)\coloneqq\bigsqcup_{\lambda\in\Lambda}O^{\lambda}.
  \end{equation*}
  Note that by construction the sets $O^{\lambda}$ already have a topology and even two (non-equivalent) manifold structures: First, $O^{\lambda}\subset\RR^{6h-6+2n}$ are values of Fenchel--Nielsen coordinates and secondly the map $\left(\Phi^{\lambda}\right)^{-1}:O^{\lambda}\lra U^{\lambda}\times\DD^{\ol{k}}$ from the definition of the family $\Psi^{\lambda}$ in \cref{eq:def-fn-map} is a local homeomorphism and $U^{\lambda}\times\DD^{\ol{k}}$ is a manifold because $U^{\lambda}$ is a neighborhood in a Teichmüller space of the normalization of $X^{\lambda}$. The index $\ol{k}$ is just used as a place holder for the number of nodes on $X^{\lambda}$ as we need a disc for describing the opening of every node in the target surface. By \cref{rmk:different-differentiable-structures-deligne-mumford} the map $\Phi^{\lambda}$ is a homeomorphism but it is not differentiable on the locus of nodal Hurwitz covers. We can thus distinguish between the Fenchel--Nielsen differentiable structure and the complex one via gluing discs $\DD^{\ol{k}}$.
\end{definition}

\subsection{Manifold Structure on the Set of Morphisms}

\label{sec:mfd-structure-morphisms}

\index{Fibre Isomorphism}

\begin{definition}
  A triple $(b,(\Phi,\phi),c)$ with $b\in O^{\lambda},c\in O^{\lambda'}$ and a morphism of Hurwitz covers $(\Phi,\phi):u^{\lambda}_b\Rightarrow u^{\lambda'}_c$ is called a \emph{fibre isomorphism}. Given two families $\Psi^{\lambda}:O^{\lambda}\lra\Ob\mcR_{g,k,h,n}(T)$ and $\Psi^{\lambda'}:O^{\lambda'}\lra\Ob\mcR_{g,k,h,n}(T)$ deforming two Hurwitz covers $u^{\lambda}:C^{\lambda}\lra X^{\lambda}$ and $u^{\lambda'}:C^{\lambda'}\lra X^{\lambda'}$ as in \cref{sec:local-parametrizations} we define
  \begin{equation*}
    M(\lambda,\lambda')\coloneqq \{(b,(\Phi,\phi),c)\mid b\in O^{\lambda},c\in O^{\lambda},(\Phi,\phi):u^{\lambda}_b\Rightarrow u^{\lambda'}_c\}.
  \end{equation*}
\end{definition}

\begin{rmk}
  We would like to show that these sets $M(\lambda,\lambda')$ are manifolds and the obvious structure maps from $\mcR_{g,k,h,n}(T)$ are smooth as we could then define
  \begin{equation*}
    \Mor\mcM_{g,k,h,n}(T)\coloneqq\bigsqcup_{(\lambda,\lambda')\in\Lambda\times\Lambda}M(\lambda,\lambda')
  \end{equation*}
  in order to obtain an orbifold structure for $|\mcR_{g,k,h,n}(T)|$. Unfortunately this is not the case because two deformations of the same Hurwitz cover using different maps $b_{\xi}$ (see \cref{sec:analyze-singularities}) have an isomorphism in the middle fibre which cannot be extended to the other fibres. Thus these $M(\lambda,\lambda')$ will sometimes be lower dimensional as they are isolated in the directions of the variation coming from the opening of the node. So in order to get an ep-Lie groupoid we need to restrict to those pairs $(\lambda,\lambda')$ such that $M(\lambda,\lambda')$ has the correct dimension. The resulting orbit space $|\mcM_{g,k,h,n}(T)|$ then has an orbifold structure with a continuous map to the actual moduli space of Hurwitz covers $|\mcR_{g,k,h,n}(T)|$ which will be however not a homeomorphism but ``branched'' over the locus of nodal Hurwitz covers, see \cref{sec:main-results}.
\end{rmk}

\begin{prop}
  Given two Hurwitz deformation families $\Psi^{\lambda}$ and $\Psi^{\lambda'}$ the set $M(\lambda,\lambda')$ is naturally a manifold and its components are either of the same dimension as $\Ob\mcM_{g,k,h,n}(T)$, i.e.\ $6h-6+2n$, or at least two dimensions less.
  \label{prop:manif-struct-set}
\end{prop}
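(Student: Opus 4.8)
\textbf{Proof plan for Proposition~\ref{prop:manif-struct-set}.}

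The plan is to reduce everything to the structure theory already developed in \cref{sec:complex-gluing}, in particular \cref{prop1}, \cref{prop:morphisms-nodal-hurwitz-covers} and \cref{cor:param-B-hol-discs}, together with the fact that the corresponding statement for Deligne--Mumford space is proven in \cite{robbin_construction_2006}. First I would set up the notation: a point of $M(\lambda,\lambda')$ is a triple $(b,(\Phi,\phi),c)$ with $b\in O^\lambda$, $c\in O^{\lambda'}$ and $(\Phi,\phi):u^\lambda_b\Rightarrow u^{\lambda'}_c$. Using the Fenchel--Nielsen parametrizations $(\Phi^\lambda)^{-1}:O^\lambda\lra U^\lambda\times\DD^{\ol{k}}$ and likewise for $\lambda'$, I would first handle the case where $b$ and $c$ both lie in the open dense locus of smooth Hurwitz covers: there the surfaces are smooth, a biholomorphism $\phi$ of the target extends uniquely to nearby target complex structures (by the universal unfolding property recalled in \cref{rmk:unfoldings-surfaces} and \cite{robbin_construction_2006}), the lifted biholomorphism $\Phi$ of the source is then determined by $\phi$ and $u$ via unique continuation, and this identifies a neighbourhood of $(b,(\Phi,\phi),c)$ in $M(\lambda,\lambda')$ smoothly with a neighbourhood of $b$ in $O^\lambda$. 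Hence on this locus $M(\lambda,\lambda')$ is a manifold of dimension $6h-6+2n$.

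Next I would treat the general point $(b,(\Phi,\phi),c)$ where the target surfaces may be nodal. The key input is \cref{prop:morphisms-nodal-hurwitz-covers}: a morphism between glued covers $u^\lambda_b\Rightarrow u^{\lambda'}_c$ which respects the glued cylinders descends to a morphism $(\ul{\Phi},\ul{\phi})$ between the central nodal covers $u^\lambda\Rightarrow u^{\lambda'}$ (after normalization), and conversely by \cref{prop1} such a central morphism, once chosen, induces for every choice of gluing parameter $b$ a \emph{unique} $c=c(b)$ together with the glued morphism. This means that, locally, $M(\lambda,\lambda')$ is a disjoint union over the (finite, by properness) set of central morphisms $(\ul{\Phi},\ul{\phi})$ between the normalizations, and over each such central morphism the fibre of $M(\lambda,\lambda')\to O^\lambda$ is governed by how the parametrizing holomorphic discs $b_\xi^\lambda$ and $b_{\xi'}^{\lambda'}$ in the sets $B$, $B'$ of \cref{cor:param-B-hol-discs} are related under the induced rotation of gluing parameters. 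By \cref{prop1} the relation is $c = (\Phi^{\lambda'})^{-1}\bigl(\text{(FN coords of $\ul X$)},\, a'(b)\bigr)$ where $a'(b)$ is obtained from $a(b)$ by a fixed rotation; I would observe that this is a smooth (indeed real-analytic) map of the appropriate rank wherever the two disc families are ``compatible'' in the sense of \cref{lem9}, so that the component of $M(\lambda,\lambda')$ containing $(b,(\Phi,\phi),c)$ is a manifold and the source-projection is a local diffeomorphism onto its image in $O^\lambda$, yielding dimension $6h-6+2n$.

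The main obstacle — and the source of the dimension drop — is precisely the incompatibility phenomenon: if the chosen disc $b_\xi^\lambda$ deforming $\lambda$ and the chosen disc $b_{\xi'}^{\lambda'}$ deforming $\lambda'$ do not satisfy the equivalent conditions of \cref{lem9} (i.e.\ they are genuinely different holomorphic discs inside the respective sets $B$, $B'$ of \cref{cor:param-B-hol-discs}), then a morphism between the central nodal covers need \emph{not} extend to nearby glued covers, and the only fibre isomorphisms that exist lie over the nodal locus itself. In that case $M(\lambda,\lambda')$, near such a point, is cut out by forcing the gluing parameter $b$ to land in the intersection of (the image of) $b_\xi^\lambda$ with the pullback of the image of $b_{\xi'}^{\lambda'}$ under the rotation; by the last part of \cref{lem9} and \cref{cor:param-B-hol-discs} distinct discs meet only at the origin, so this intersection is just the central point, killing all $\ol{k}$ complex ($=2\ol{k}\geq 2$ real) node-opening directions while the remaining Teichmüller directions of the normalization still vary freely. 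Hence the component has dimension $(6h-6+2n)-2\ol{k}\le 6h-6+2n-2$, i.e.\ at least two dimensions less. I would conclude by noting that these two cases (extension possible vs.\ not) are mutually exclusive on each connected component — determined by whether the induced central morphism extends, which is an open and closed condition on $M(\lambda,\lambda')$ — so every component has either the full dimension $6h-6+2n$ or dimension at most $6h-8+2n$, and the smooth structure on each is the one just described.
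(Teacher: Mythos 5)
Your plan follows essentially the same conceptual route as the paper: extend morphisms by the universal-unfolding property, compare the holomorphic-disc parametrizations of the gluing parameters, and use the structure of the disc bouquets from \cref{cor:param-B-hol-discs} and \cref{lem9} to control the resulting dimension. The key object the paper uses — which you gesture at but do not fully exploit — is the inclusion of the deformation families into the universal unfolding $B^{C^{\lambda'}}$ of the \emph{source} surface via the maps $\rho^{\lambda}_b$ and $\rho^{\lambda'}_c$, together with the product decomposition of $\mcA(u^{\lambda'}_c)$ from \cref{lem:coord-source-hurwitz-unfolding} into $V\times B_1\times\cdots\times B_p$, one factor $B_i$ per target node.

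The genuine gap is in your treatment of the nodal case. You argue that if the discs $b_{\xi}^{\lambda}$ and $b_{\xi'}^{\lambda'}$ ``do not satisfy the conditions of \cref{lem9}'', then the intersection of the two disc families collapses to the central point, killing all $\ol{k}$ complex node-opening directions at once, giving dimension $6h-6+2n-2\ol{k}$. This is an all-or-nothing dichotomy across all $\ol{k}$ target nodes, and it is incorrect as stated. The compatibility in \cref{lem9} and \cref{cor:param-B-hol-discs} is a \emph{per-node} condition: for each $i$ the two families either coincide in the factor $B_i$ or meet only at its origin, independently of what happens at the other nodes. So the intersection $\Im\rho_b^{\lambda}\cap\Im\rho_c^{\lambda'}$ has dimension $6h-6+2n-2\,\#\{i : \text{the two discs disagree in }B_i\}$, and the intermediate cases $1\le\#\{\cdot\}<\ol{k}$ occur. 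Your claim that ``extension possible vs.\ not'' is an open-and-closed binary on each component is therefore wrong as a picture of the stratification; the correct statement (and the one the paper proves in \cref{lem:dimension-calculation-intersection-morphisms}) is that the dimension drop is twice the number of disagreeing factors, which is $\ge 2$ whenever it is nonzero. Your final inequality still happens to hold because any disagreement kills at least one complex dimension, but the argument as written establishes a false intermediate fact and would not survive if one needed the exact dimension. You should replace the global compatibility test with a factor-by-factor analysis of the intersection inside the product $V\times B_1\times\cdots\times B_p$, exactly as in the paper. You should also be more careful to show that the resulting sets are actually smooth submanifolds — the paper gets this by realizing them as intersections of holomorphic images inside the complex manifold $B^{C^{\lambda'}}$ and using that each factor $B_i$ is a one-point union of embedded complex discs, rather than working directly with the rotated gluing parameters.
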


The proof of this proposition is the goal of this \cref{sec:mfd-structure-morphisms}.

\subsubsection{Universal Unfoldings}

\label{sec:some-techn-lemm}

Before we begin with the proof of \cref{prop:manif-struct-set} we will cite two theorems from Robbin--Salamon~\cite{robbin_construction_2006} and then prove some lemmas that we need for the proposition.

\begin{prop}
  Every closed stable marked Riemann surface $C$ has a unique universal marked nodal unfolding $(\pi^C:Q^C\lra B^C,S_*,b)$ which
  \begin{itemize}
    \item consists of connected complex manifolds $Q^C$ and $B^C$ such that $\pi^C$ is a surjective proper holomorphic map and $\dim_{\CC} Q^C=\dim_{\CC}B^C+1$,
    \item every critical point of $\phi^C$ is \emph{nodal},
    \item $b\in B^C$ and $(\pi^C)^{-1}(b)= C$ is the central fibre and
    \item $S_*=(S_1,\ldots,S_n)$ are pairwise disjoint complex submanifolds of $Q^C$ which are mapped by $\pi^C$ diffeomorphically onto $B^C$.\footnote{These submanifolds correspond to marked points of the fibres.}
    \item Such a marked nodal unfolding $(\pi^C:Q^C\lra B^C)$ is called \emph{universal} if for every other nodal unfolding $\pi^D:Q^D\lra B^D$ as above and any fibre isomorphism $f:C\lra D$ there exits a unique germ of a morphism $(\Phi,\phi):(\pi^C,b^C)\lra(\pi^D,b^D)$ such that $\Phi(S^C_i)\subset S^D_i$ for all $i=1,\ldots,n$ extending $f$ on the central fibres. See \cite{robbin_introduction_2014} for details of the definition of fibre isomorphism, morphism and germ in this setting.
  \end{itemize}
  Here, uniqueness of the universal marked nodal unfolding has a particular meaning that we do not need to investigate further for our purposes.
  \label{prop:summary-properties-universal-unfoldings}
\end{prop}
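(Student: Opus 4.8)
The statement in \cref{prop:summary-properties-universal-unfoldings} is essentially a citation from Robbin--Salamon \cite{robbin_construction_2006}, so the ``proof'' is really a matter of pointing to the construction there and checking that the marked and nodal variant is covered. The plan is therefore as follows. First I would recall the construction of the unmarked universal unfolding of a smooth stable nodal Riemann surface, which is obtained by combining the Kuranishi-type deformation theory on each irreducible component (using that the normalization is smooth, so ordinary Teichmüller theory / the Kodaira--Spencer map applies) with the plumbing construction at the nodes as spelled out in \cref{sec:complex-gluing} and in \cite{hubbard_analytic_2014}. The total space $Q^C$ is the plumbed family over the product $B^C$ of the deformation parameters of the components and the gluing parameters $\DD$ at each node; the map $\pi^C$ is proper and holomorphic with only nodal critical points by construction, and the dimension count $\dim_{\CC}Q^C=\dim_{\CC}B^C+1$ is immediate from the fibre dimension. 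The submanifolds $S_i$ are obtained from the (locally, and after a suitable trivialization globally) continuous sections tracking the marked points, which exist because the marked points are away from the nodes.

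Second, I would address the universal property. Given another marked nodal unfolding $(\pi^D\colon Q^D\lra B^D,S^D_*,b^D)$ and a fibre isomorphism $f\colon C\lra D$, one must produce a unique germ of a morphism $(\Phi,\phi)$ at the central points extending $f$ and respecting the marked-point submanifolds. The construction of $\phi$ on the base proceeds stratum by stratum: the Kodaira--Spencer / deformation-theoretic universality on the smooth components determines $\phi$ uniquely on the directions deforming the complex structure away from the nodes, and \cref{prop1} (the gluing-isomorphism proposition already proved in this chapter) shows that an isomorphism between glued surfaces determines and is determined by the rotation data at the nodes, which pins down $\phi$ in the plumbing directions as well. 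Lifting to $\Phi\colon Q^C\lra Q^D$ is then forced fibrewise by the fibre-isomorphism condition together with the rigidity of biholomorphisms of annuli (only rotations, \cref{prop:morphisms-nodal-hurwitz-covers}), and the marked-point condition $\Phi(S^C_i)\subset S^D_i$ is automatic once $f$ respects the markings. Uniqueness of the germ follows from the unique-continuation principle for holomorphic maps used repeatedly in \cref{sec:complex-gluing}: two extensions agreeing on the central fibre agree on an annulus and hence everywhere.

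The main obstacle, and the reason I would lean on the citation rather than reproving everything, is the bookkeeping needed to glue the component-wise deformation theory to the plumbing parameters in a way that manifestly satisfies the \emph{universal} property rather than merely the \emph{versal} one. Robbin--Salamon handle this by carefully identifying the automorphism group of $C$ with the group acting on the unfolding and showing the central fibre has no nontrivial infinitesimal automorphisms fixing the markings (stability), so that the classifying germ is not just existent but unique. For the purposes of this thesis it suffices to cite this, noting only that stability of each smooth component (\cref{def:admissible-riemann-surface}) is exactly the hypothesis that makes the argument go through; the marked case differs from the unmarked case in \cite{robbin_construction_2006} only in that one carries along the sections $S_i$, which changes nothing essential because the markings are disjoint from the nodes. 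Hence I would state the proof as: ``This is proved in \cite{robbin_construction_2006}; the marked variant follows verbatim, using that the marked points lie in the smooth locus,'' and move on.
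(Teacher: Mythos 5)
Your approach is essentially the same as the paper's: the paper's proof is the one-line citation ``These are the main theorems~5.5 and 5.6 in \cite{robbin_introduction_2014},'' so deferring to Robbin--Salamon is exactly right, and your sketch of what goes into their construction is a reasonable gloss. One small correction: the paper cites \cite{robbin_introduction_2014} (not \cite{robbin_construction_2006}) for the precise theorems, and Robbin--Salamon already treat the \emph{marked} nodal case directly, so there is no need to argue that ``the marked variant follows verbatim'' from an unmarked statement.
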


\begin{proof}
  These are the main theorems~5.5 and 5.6  in \cite{robbin_introduction_2014}.
\end{proof}

\begin{prop}
  Let $\pi^C:Q^C\lra B^C$ be a universal unfolding of a closed nodal Riemann surface $C$. There exist coordinates in a small neighborhood of the base point $b^C\in B^C$ in $\mcU\times \DD^m$ where $\mcU$ is an open neighborhood of the product of Teichmüller spaces of the smooth components of $C$ and $m$ is the number of nodes in $C$ by the usual gluing construction. All the necessary data for the definition in Robbin--Salamon is included in the choice of a disc structure. Furthermore there exist local Fenchel--Nielsen coordinates on $B^C$ induced by the choices in a hyperbolic preparation deformation.
  \label{prop:univ-unfold-coord}
\end{prop}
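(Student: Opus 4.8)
\textbf{Proof plan for \cref{prop:univ-unfold-coord}.}

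The plan is to build the desired coordinates by the plumbing/gluing construction and then invoke the universality from \cref{prop:summary-properties-universal-unfoldings} to identify the result with (a germ of) the universal unfolding. First I would decompose the data: let $C$ be the given closed nodal Riemann surface with normalization $\wt C$ having smooth components $C_1,\dots,C_r$ and with $m$ nodes. Choose a hyperbolic deformation preparation as in \cref{sec:choices-hyp}: a hyperbolic metric on each component with cusps at all special points, a pair-of-pants decomposition, and a disc structure at each node (charts $\phi^\dagger_a,\phi^\ast_a:U^j_a\to\DD$ with $\phi^\dagger_a(x^\dagger_a)=\phi^\ast_a(x^\ast_a)=0$ contained in the interiors of the cusp neighborhoods of the two nodal preimages). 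Set $\mcU$ to be a small neighborhood of the point $[\wt C]$ in the product $\prod_{i} \mcT_{C_i}$ of Teichmüller spaces of the smooth components (with the nodal preimages counted as marked points). Over $\mcU$ one has the (locally trivialized) universal Teichmüller curve with its continuous hyperbolic fibre metric from \cref{prop:universal-teichmueller-family-vertical-hyp-metric}, so every $t\in\mcU$ gives a marked smooth nodal surface; by shrinking $\mcU$ the fixed disc structure stays inside the cusp regions. Then for $(t,a)\in\mcU\times\DD^m$ plumb in complex cylinders as in \cref{eq:glue-complex-cylinder} at the $j$-th node with parameter $a_j$, obtaining a flat holomorphic family $Q\to\mcU\times\DD^m$ whose central fibre is $C$ and whose only critical points are nodal; the marked-point sections $S_1,\dots,S_n$ extend across the plumbed region since the disc structures avoid the marked points. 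This is exactly the plumbing family of \cite{robbin_construction_2006} and \cite{hubbard_analytic_2014}.

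Second, I would check that this plumbed family is a marked nodal unfolding of $C$ in the sense of \cref{prop:summary-properties-universal-unfoldings}: the base $\mcU\times\DD^m$ and total space $Q$ are connected complex manifolds, $\dim_\CC Q=\dim_\CC(\mcU\times\DD^m)+1$, the projection is proper holomorphic with only nodal critical points, the central fibre over $(\,[\wt C],0\,)$ is $C$, and the $S_i$ are disjoint complex submanifolds mapped diffeomorphically to the base. Its dimension matches: $\dim_\CC\mcU=3g-3+n-m$ (the count for the normalization with the $2m$ extra marked points) plus $m$ gluing parameters gives $3g-3+n$, the dimension of the universal unfolding $B^C$. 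Now invoke the universal property: applying the defining property of $B^C$ to the fibre isomorphism $\id:C\to C$ and to our plumbed family yields a unique germ of a morphism from the plumbed family to $(\pi^C,b^C)$ and, symmetrically, the universal property run the other way (or the known fact that any two unfoldings of the \emph{same} dimension admitting a morphism in both directions are locally isomorphic, by the uniqueness clause of \cref{prop:summary-properties-universal-unfoldings}) shows this germ is a biholomorphism onto a neighborhood of $b^C$. Hence $\mcU\times\DD^m$ with the coordinates just described \emph{are} coordinates on $B^C$ near $b^C$.

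Third, for the Fenchel--Nielsen coordinates on $B^C$: starting from the set of decomposing curves $\Gamma$ on $\wt C$ chosen in the hyperbolic deformation preparation, complete it by adding one curve wrapping once around each plumbed cylinder, exactly as in \cref{sec:modify-doma-param}. By Proposition~6.1 and Theorem~9.11 of \cite{hubbard_analytic_2014} the resulting length--twist assignment defines a homeomorphism from a neighborhood of $b^C$ in $\mcU\times\DD^m$ onto an open subset of $\RR^{6g-6+2n}$ (with the caveat, recorded in \cref{rmk:different-differentiable-structures-deligne-mumford}, that this homeomorphism is \emph{not} differentiable on the nodal locus, so it gives the ``Fenchel--Nielsen differentiable structure'' rather than the complex one). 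Composing with the identification of the previous paragraph produces the asserted local Fenchel--Nielsen coordinates on $B^C$, and by construction every ingredient came from the hyperbolic deformation preparation, proving the last sentence.

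The main obstacle is the middle step: verifying carefully that the plumbed family really satisfies the universality of \cref{prop:summary-properties-universal-unfoldings}, i.e.\ that the unique germ of a morphism produced by the universal property of $B^C$ is an isomorphism rather than merely a dominant map. This rests on the dimension count together with the uniqueness statement in \cite{robbin_introduction_2014}; once one knows both families have complex dimension $3g-3+n$ and there is a morphism between them extending $\id_C$, the germ is necessarily invertible. The remaining steps (the plumbing construction, propagation of the marked sections across the necks, and the Fenchel--Nielsen homeomorphism) are routine given \cref{sec:local-parametrizations} and the cited results from \cite{hubbard_analytic_2014}.
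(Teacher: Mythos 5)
Your overall strategy matches the paper's: construct the plumbed family over $\mcU\times\DD^m$, identify it (as a germ) with the universal unfolding, and then equip the result with Fenchel--Nielsen coordinates via \cite{hubbard_analytic_2014}. The paper's own proof is little more than a citation: it refers to the proof of Theorem~5.6 in \cite{robbin_introduction_2014}, where Robbin--Salamon construct the universal unfolding by exactly this plumbing, observes that this matches the $Y_\Gamma\to\mcP_\Gamma$ construction of \cite{hubbard_analytic_2014}, invokes Proposition~9.1 there for the length--twist homeomorphism, and warns that it is not smooth in complex coordinates. Your first and third steps line up with this.

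There is, however, a genuine gap in your middle step. Applying the universal property of $(\pi^C,b^C)$ to the plumbed family and the fibre isomorphism $\id_C$ yields a unique germ of a morphism $\Phi\colon\mcU\times\DD^m\to B^C$, but the claim that ``once one knows both families have complex dimension $3g-3+n$ and there is a morphism between them extending $\id_C$, the germ is necessarily invertible'' is false as stated. The universal property points only one way, so ``running it the other way'' presupposes what you are trying to prove; and dimension alone does not force invertibility. For example, the trivial unfolding $C\times\DD^{3g-3+n}\to\DD^{3g-3+n}$ has the correct base dimension and admits a unique germ into $B^C$ extending $\id_C$ (the constant one), yet that germ is nowhere near an isomorphism. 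What is actually required is to verify that the plumbed family is \emph{infinitesimally universal}, i.e.\ that its Kodaira--Spencer map at the central fibre is a bijection onto $H^{0,1}$ of the appropriate sheaf; that is the substantive content of Robbin--Salamon's Theorem~5.6, which the paper simply delegates to the citation. Once infinitesimal universality of the plumbed family is established, universality follows by the openness argument of \cite{robbin_introduction_2014}, and then the uniqueness clause of \cref{prop:summary-properties-universal-unfoldings} delivers the desired coordinates on $B^C$. So the repair is not more dimension counting but the Kodaira--Spencer step.
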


\begin{proof}
  See the proof of Theorem~5.6 in \cite{robbin_introduction_2014} and notice that the construction of $Y_{\Gamma}\lra\mcP_{\Gamma}$ in \cite{hubbard_analytic_2014} corresponds to the one in Robbin--Salamon so we can use Proposition~9.1 in \cite{hubbard_analytic_2014}. However, the Fenchel--Nielsen coordinates give a non-equivalent smooth structure which is ``only'' diffeomorphic to the other one, so in particular the Fenchel--Nielsen coordinates are not smooth in the complex coordinates.
\end{proof}

\subsubsection{Comparing Hurwitz Deformations in Universal Unfolding of the Source } 

\begin{lem}
  Let $u:C\lra X$ be a nodal Hurwitz cover with $m$ nodes $p_1,\ldots,p_m\in X$ and a hyperbolic deformation preparation around these nodes. Furthermore denote the degrees of $u$ at the nodes over $p_i$ by $k_i^j\in\NN$ for $j=1,\ldots,m_i$. Here, a node $p_i$ has $m_i$ nodal preimages. There exists a neighborhood $\mcU^X\subset B^X$ and a neighborhood $\mcU^C\subset B^C$ such that for every smooth surface $Y\in B^X$ there exist at most 
  \begin{equation*}
    \prod_{i=1}^mk_i^1\cdots k_i^{m_i}
  \end{equation*}
  surfaces $D\in\mcU^C$ with the property that there exists a Hurwitz cover $v:D\lra Y$.
  \label{lem:nod-hur-cov-unique}
\end{lem}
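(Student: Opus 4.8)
The strategy is to reduce the statement about Hurwitz covers over a \emph{deformed} target $Y$ to the local picture at the nodes already established in \cref{sec:analyze-singularities}, together with the uniqueness of the universal unfolding from \cref{prop:summary-properties-universal-unfoldings}. The key observation is that a Hurwitz cover $v:D\lra Y$ with $Y$ close to $X$ and $D$ close to $C$ is, after forgetting the glued cylinders, a deformation of the normalized cover $u:\wt C\lra\wt X$; the only genuine freedom lies in how the nodes of $Y$ are opened and how this lifts to openings of the nodes of $D$. First I would choose a hyperbolic deformation preparation for $u:C\lra X$ as in \cref{sec:choices-hyp}, obtaining disc structures at each node $p_i\in X$ contained in cusp neighborhoods, and I would use \cref{prop:univ-unfold-coord} to get coordinates $\mcU^X\cong U^X\times\DD^m$ on $B^X$ and $\mcU^C\cong U^C\times\DD^N$ on $B^C$ (where $N=\sum_i m_i$ is the total number of nodes of $C$), compatible with the plumbing construction.

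Next I would argue that for a smooth $Y$ corresponding to a point $(t,a)\in U^X\times(\DD\setminus\{0\})^m$, any Hurwitz cover $v:D\lra Y$ close to $u$ must arise as follows. Restricting $v$ to the complement of the glued-in cylinders in $Y$ and their preimages in $D$, we get a branched cover of the surface $X_t$ with its node still open; by \cref{lem:hol-map-loc-isom} and the uniformization/local-isometry rigidity (and the fact that $u$ restricted away from nodes is an honest covering), this restricted cover is forced to be, up to biholomorphism, the pullback cover $u:C_{u^*t}\lra X_t$ — i.e.\ the complex structure $t\in U^X$ on the normalized target \emph{uniquely} determines, via $u$, the complex structure on the normalized source, leaving no freedom there. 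The remaining data is the opening parameters: over the $i$-th node $p_i$ of $Y$ with gluing parameter $a_i\in\DD\setminus\{0\}$, each of the $m_i$ nodal preimages must be opened with a parameter $b_i^j\in\DD$ satisfying $(b_i^j)^{k_i^j}=a_i$ by \cref{lem3}. For fixed $a_i\neq 0$ this equation has exactly $k_i^j$ solutions for each $j$, hence $\prod_{j=1}^{m_i}k_i^j$ choices of $(b_i^1,\dots,b_i^{m_i})$ over node $p_i$, and therefore at most $\prod_{i=1}^m\prod_{j=1}^{m_i}k_i^j=\prod_{i=1}^m k_i^1\cdots k_i^{m_i}$ choices of opening data in total, giving at most that many surfaces $D\in\mcU^C$. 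I would invoke \cref{prop:morphisms-nodal-hurwitz-covers} to confirm that distinct opening parameters indeed yield distinct points in $B^C$ (they cannot be matched by a fibre isomorphism preserving the glued cylinders and the node structure, since such an isomorphism would force equality of the parameters by the angle computations there), and \cref{lem:nod-hur-cov-unique}'s companion \cref{rmk:definition-all-deformations-of-source-surface} to identify these with the images $\mcA(u)$ of the Hurwitz deformation families.

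The main obstacle I anticipate is the rigidity claim in the middle step: one must rule out Hurwitz covers $v:D\lra Y$ that are \emph{not} small perturbations of the pullback cover — a priori $D$ could be a nearby nodal surface whose cover has a different combinatorial structure, or the cover could fail to be the pullback of the hyperbolic metric. To handle this I would shrink $\mcU^X$ and $\mcU^C$ enough that the topological type of $v$ (the branching profile $T$, the kissing numbers at the nodes) is locally constant, use that $Y$ determines $v$ away from its nodes uniquely by the covering-space/Riemann-existence rigidity once we fix which cover of $\pi_1(Y\setminus\{\text{branch pts}\})$ we are deforming (this is where finiteness of the set of Hurwitz covers of type $T$, from \cref{sec:comb-desciption-hurwitz-numbers}, is used), and then note that the remaining freedom is purely the plumbing data at the $m$ nodes of $Y$, controlled node-by-node by \cref{lem3}. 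A minor technical point is ensuring the disc structures stay inside cusp neighborhoods for all nearby hyperbolic structures, but this is exactly the continuity argument already made in \cref{sec:local-parametrizations} using \cref{lem:unique-cusp-neighborhood}, so I would simply cite it.
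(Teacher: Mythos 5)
Your proof is correct and arrives at the same count, but by a genuinely different route. The paper's proof works entirely in the hyperbolic Fenchel--Nielsen picture: the length of the collapsing geodesic $\gamma$ on $Y$ forces the lengths of its $m_i$ preimage geodesics on $D$ to be $k_i^j\cdot l(\gamma)$, and the twist parameter at each preimage geodesic is pinned down up to the choice of one of the $k_i^j$ lifts of the target reference point $b^*$; reassembly of the cover is then justified by \cref{lem:local-gluing}. You instead work in the complex plumbing coordinates of the universal unfolding and invoke \cref{lem3} to obtain the algebraic constraint $(b_i^j)^{k_i^j}=a_i$, whose $k_i^j$ roots give the count. Under the Fenchel--Nielsen-to-plumbing coordinate change these are of course the same $k_i^j$ choices, so both arguments are sound. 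The hyperbolic route has the advantage that the ``rigidity away from the nodes'' step is transparent --- the FN lengths and twists of $D$ away from the collapsing curves are read off directly from those of $Y$ via the local isometry $v$, so no auxiliary covering-space argument is needed. You correctly identified this as the main delicate point; your treatment works, though you should make explicit that the monodromy of $v$ is locally constant once $\mcU^X,\mcU^C$ are shrunk, so the cover away from the nodes really is (a conjugate of) $u$ rather than some other combinatorial type with the same branching profile $T$. One step is unnecessary: invoking \cref{prop:morphisms-nodal-hurwitz-covers} to prove the $D$'s are pairwise distinct --- the lemma asserts only an upper bound, so distinctness need not be established.
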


\begin{proof}
  Use the disc structure to obtain Fenchel--Nielsen coordinates on a neighborhood $\mcU^X$ of the target surface. This includes in particular a choice of a set of decomposing curves as well as some specified geodesics perpendicular to the boundary geodesics on the pairs of pants. Once these objects are chosen we lift the geodesics bounding the pairs of pants to $C$ and complete this set of simple closed curves to a decomposing one and choose geodesic representatives in the additional free homotopy classes. Also we choose lifts of the specified perpendicular line segments to the pairs of pants containing the preimages of the new geodesics arising from the nodes in $C$. Denote all curves as in the \cref{fig:pairs-of-pants-gluing}. 

Note that we do this at every node in $X$ but in order to simplify notation we deal with every node separately. Thus we drop the index $i$ for the $m$ nodes and instead enumerate the preimages of one arbitrary but fixed node with index $j=1,\ldots,l$. We denote the degrees at the $j$-th preimage node by $k^j$.

\begin{figure}[H]
    \centering
    \def\svgwidth{0.6\textwidth}
    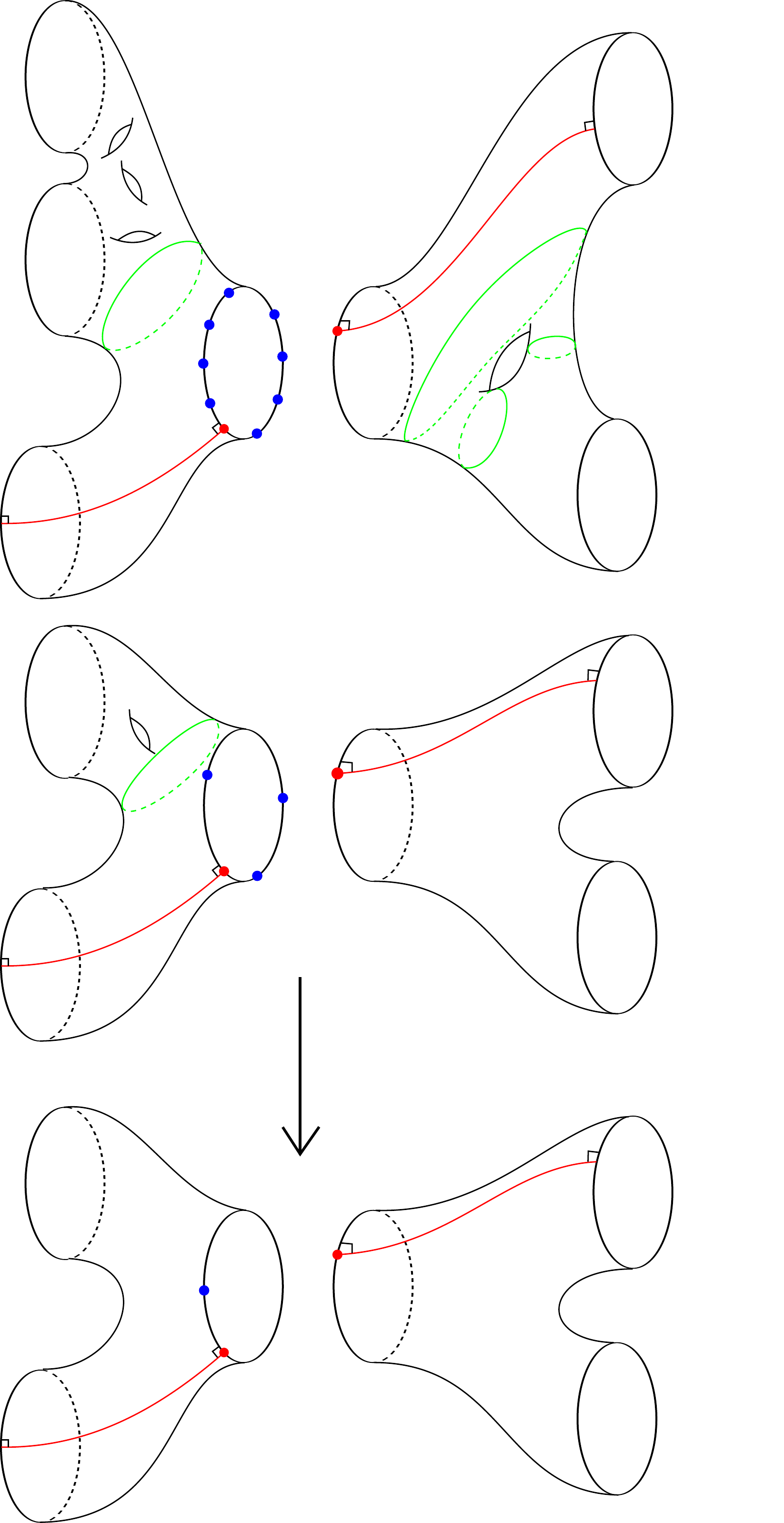
    \caption{Chosen curves and points for the proof of \cref{lem:nod-hur-cov-unique}. Depicted are only the pairs of pants bordering the node and the hyperbolic geodesic in the free homotopy class of the node and its preimages. Objects with a tilde on top come from lifts from the target. In particular the point $b^{\ast}$ is the point which is identified with $a^{\dagger}$ in the chosen target surface and the other blue points $\wt{b}_j^{\ast i}$ are all lifts of $b^{\ast}$. Here the lower index refers to the number of the geodesic and the upper index runs from $1$ to $k^j$ for $\wt{\gamma}_j^{\ast}$, where $k^j$ is the degree of $u:C\lra X$ at the $j$-th preimage of the node under investigation.}
    \label{fig:pairs-of-pants-gluing}
\end{figure}

Now consider a Hurwitz covering $v:D\lra Y$ such that $D$ and $Y$ are contained in small neighborhoods of the universal unfoldings of $C$ and $X$, respectively and such that the node $p\in X$ was smoothened to a curve in $Y$ and thus also its preimages are smooth. This means that the length of the geodesic representative $\gamma$ of the free homotopy class corresponding to $p$ is close to zero. As the covering $v$ is an actual covering close to this curve and thus isometric in the uniformized hyperbolic metrics, the length of $\gamma$ specifies the lengths of the preimages. So by the way we have chosen our Fenchel--Nielsen coordinates this fixes the lengths of the curves $\wt{\gamma}_j^{\ast}$ and $\wt{\gamma}_j^{\dagger}$ to be $k^jl(\gamma)$ for each $j=1,\ldots,l$. This means that in order for a surface $D$ to be close to $C$ in Teichmüller space and admit a covering over $Y$ we can only vary the twist parameters as the lengths of the geodesics are fixed by $Y$.

As the surface $Y$ is fixed we see that the point $a^{\dagger}$, which is one of the reference points used for measuring the twist coordinates, is identified with a unique point $b^{\ast}$ on $\gamma^{\ast}$. This means that in order for the glued map on $D$ to be well defined we need that the lifts $\wt{a}_j^{\dagger}$ are identified with lifts of $b^{\ast}$. But for each $j=1,\ldots,l$ there are only $k^j$ possibilities. Thus in total there are at most $k^1\cdots k^l$ possibilities for combinations of twist parameters at the opened preimages of the node $p\in X$. Since we can choose these independently for all nodes we obtain the result in the lemma.

Note that all these glued surfaces do indeed give smooth coverings as by \cref{lem:local-gluing} isometric coverings of surfaces with boundary geodesics of the same length can always be uniquely glued together if the maps coincide on one boundary point.
\end{proof}

\begin{rmk}
  We will need a few conclusions from this lemma.
  \begin{enumerate}[label=(\roman*), ref=(\roman*)]
    \item First notice that for every node with nodal degrees $k_1,\ldots,k_l$ we have constructed $\frac{k_1\cdots k_l}{K}$ families $\Psi_u:\mcU^X\lra\Ob\mcR_{g,k,h,n}(T)$ in \cref{eq:def-fn-map} that are each a $K$-fold cover of the constructed target family by \cref{cor:param-B-hol-discs}. Thus we actually have constructed all the possible complex structures $D$ having a Hurwitz cover $D\lra Y$ close to $D$ in $B^{C}$.
    \item Note that a hyperbolic deformation preparation for the central fibre $u^{\lambda}:C^{\lambda}\lra X^{\lambda}$ induces a hyperbolic deformation preparation for all the deformed Hurwitz covers $u^{\lambda}_b:C^{\lambda}_b\lra X^{\lambda}_b$ in a straight forward way because we only need the data at the not-yet-opened nodes. This way we can define $\Psi^{u^{\lambda}_b}$ using this data and all the corresponding objects are naturally elements of the family $\Psi^{\lambda}$.
    \item Also notice that the set $\mcA(u:C\lra X)$ does not depend on the choices for the hyperbolic deformation preparation in a small enough neighborhood of $C\in B^C$ as this set contains all complex structures close to $C$ admitting a Hurwitz cover to a surface close to $X$ and any other choice of data parametrizes locally the same complex structures close to $X$.
  \end{enumerate}
  \label{rmk:number-constructed-deformations}
\end{rmk}

Next we want to further describe the relation between the complex structures on the source surface constructed by two deformation families $\Psi^{\lambda}$ and $\Psi^{\lambda'}$ for which there exists a fibre isomorphism $(b,(\Phi,\phi),c)$ with $b\in O^{\lambda}$ and $c\in O^{\lambda'}$. Recall from \cref{rmk:unfoldings-source-surface} that both Hurwitz deformation families give rise to corresponding unfoldings parametrized by $O^{\lambda}$ and $O^{\lambda'}$ of the source surfaces $C^{\lambda}_b,C^{\lambda'}_c$ and $C^{\lambda'}$, where $C^{\lambda'}$ is the central source surface of the Hurwitz deformation family $\Psi^{\lambda'}$ and the indices $b$ and $c$ refer to the constructed surfaces at points $b\in O^{\lambda}$ and $c\in O^{\lambda'}$, respectively. Notice that all three surfaces have inclusions into the universal unfolding $Q^{C^{\lambda'}}$, $C^{\lambda'}$ as the central fibre, $C^{\lambda'}_c$ as an inclusion because of the way we constructed the deformation family and $C^{\lambda}_b$ via the morphism $\Phi$ and the inclusion into the universal unfolding. Now use the universal property of a universal unfolding from \cref{prop:summary-properties-universal-unfoldings} to extend these inclusions to holomorphic maps from small neighborhoods of $b\in O^{\lambda}$ and $c\in O^{\lambda'}$ to $B^{C^{\lambda'}}$, i.e.\

\begin{align*}
  \rho_b^{\lambda}:O^{\lambda}\supset\mcU_b & \xhookrightarrow{} B^{C^{\lambda'}} \\
  \rho_c^{\lambda'}:O^{\lambda'}\supset\mcU_c & \xhookrightarrow{} B^{C^{\lambda'}} \\
  \rho^{\lambda'}:O^{\lambda'} & \xhookrightarrow{} B^{C^{\lambda'}}.
\end{align*}

\begin{figure}[!h]
  \centering
  \def\svgwidth{0.9\textwidth}
  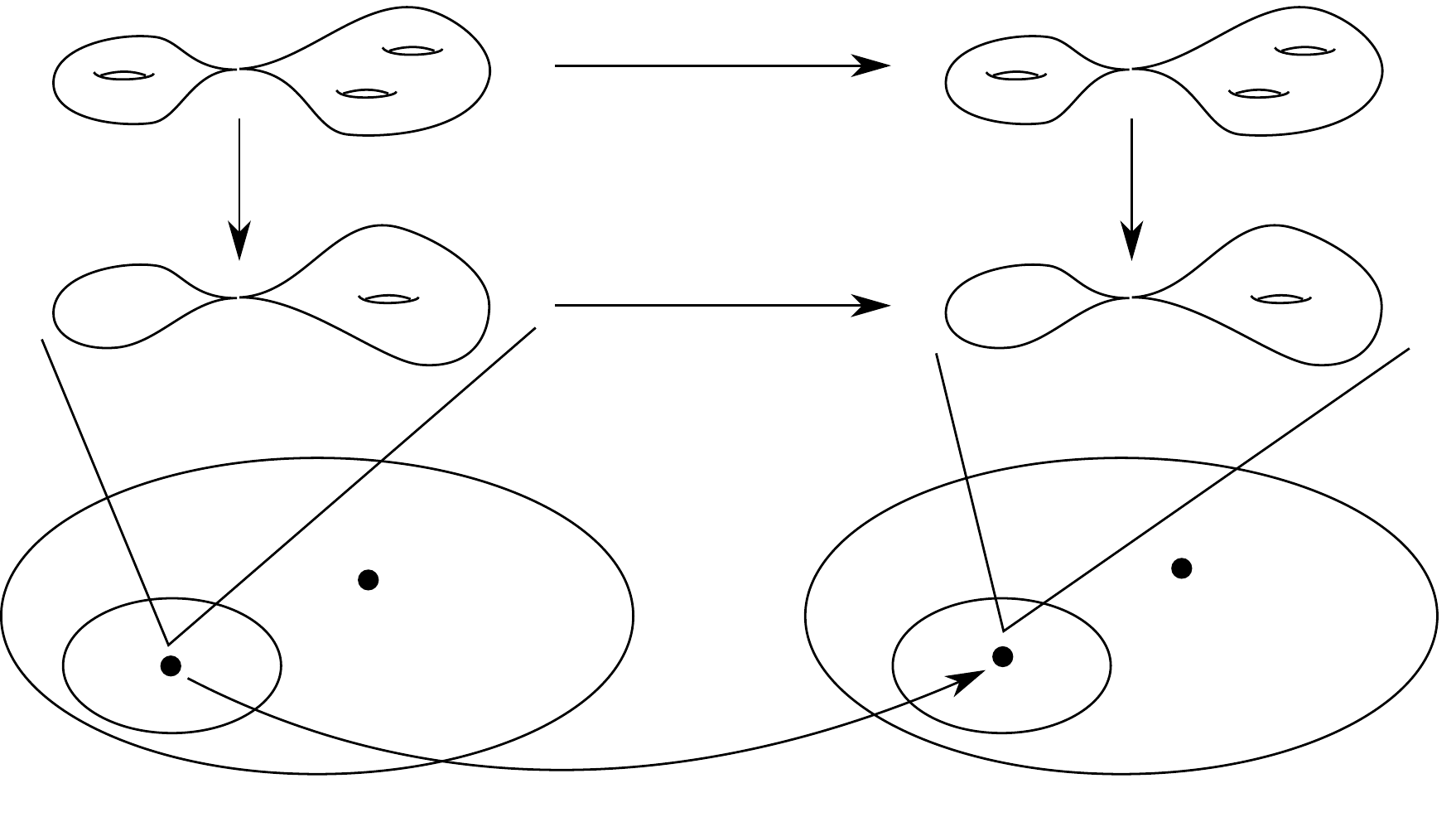
  \caption{These are the objects that we start with: A fibre isomorphism $(b,(\Phi,\phi),c)\in M(\lambda,\lambda')$ whose neighborhood we want to construct.}
  \label{fig:situation-mfd-structure-morphism-set}
\end{figure}

We will denote the corresponding maps on the families themselves by $\wt{\rho}$. Furthermore notice that the morphism $\phi:X^{\lambda}_b\lra X^{\lambda'}_c$ extends to a locally unique morphism between the unfoldings of $X^{\lambda}_b$ and $X^{\lambda'}_c$ as was shown in \cite{robbin_construction_2006}. Denote the extension between the total spaces of the universal unfoldings by $\wt{\eta}_{b,c}$. The situation is summarized in \cref{fig:situation-mfd-structure-morphism-set} and \cref{fig:universal-unfoldings-morphisms}.

\begin{figure}[!h]
  \centering
  \def\svgwidth{0.9\textwidth}
  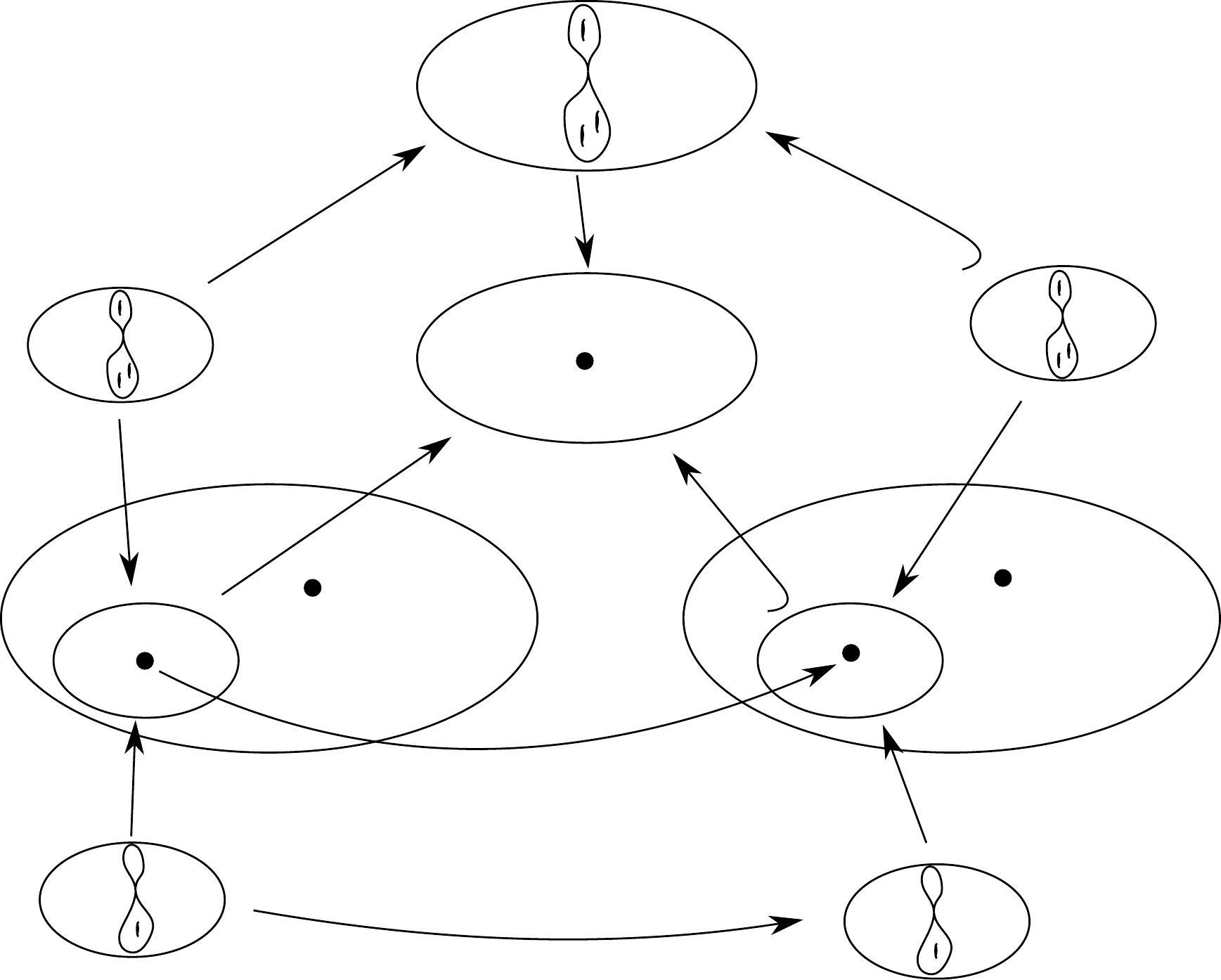
  \caption{The two central sets are those from \cref{fig:situation-mfd-structure-morphism-set}. Note that both smaller neighborhoods $\mcU_b$ and $\mcU_b$ define nodal unfoldings of the source surface (upper half) as well as of the target surface (lower half). These families then induce locally unique morphisms into the corresponding universal unfoldings.}
  \label{fig:universal-unfoldings-morphisms}
\end{figure}

\begin{lem}
  Consider the set $\mcA(u^{\lambda'}_c)\subset B^{C^{\lambda'}_c}$, defined in \cref{rmk:unfoldings-surfaces} as well as $B^{X^{\lambda'}_c}$.\footnote{The set $\mcA(u^{\lambda'}_c)$ describes the set of complex structures on the source surface close to $C^{\lambda'}_c$ admitting Hurwitz covers close to $u_c^{\lambda'}$ as constructed in the earlier sections.} Then there exist open neighborhoods $\mcV_c$ of $c\in\mcA(u^{\lambda'}_c)$ and $\mcW_c$ of $c\in B^{X^{\lambda'}_c}$ with a homeomorphism
  \begin{equation*}
    \mcV_c\lra V\times B_1\times\cdots\times B_p\subset V\times\DD^{m_1}\times\cdots\times\DD^{m_p}
  \end{equation*}
  and biholomorphism
  \begin{equation*}
    \mcW_c\lra V' \times \overbrace{\DD\times\cdots\times \DD}^{p}
  \end{equation*}
  such that in these coordinates the map $C_x^{\lambda'}\mapsto X_x^{\lambda'}$ is given by
  \begin{align*}
    V\times B_1\times\cdots\times B_p & \lra V' \times \DD\times\cdots\times \DD \\
    (t,(b_1^1,\ldots,b^1_{m_1}),\ldots,(b^p_1,\ldots,b^p_{m_p}) & \longmapsto (t',(b_1^1)^{l^1_1},\ldots,(b_1^p)^{l^p_1}).
  \end{align*}
  Here we use the convention that the target surface $X^{\lambda'}_c$ has $p$ nodes and the $i$-th node has $m_i$ nodal preimages in $C_c^{\lambda'}$ with degree $l^i_j$ with $j=1,\ldots,m_i$. Also the sets $V$ and $V'$ are open sets in Teichmüller space around the point corresponding to the smooth components of the source an target surface, respectively. Furthermore the set $B_i$ is the set of admissible gluing parameters defined in \cref{lem:param-set-b} for the nodes in $C_c^{\lambda'}$ over the $i$-th node in $X_c^{\lambda'}$. 
  \label{lem:coord-source-hurwitz-unfolding}
\end{lem}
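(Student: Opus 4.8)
\textbf{Proof proposal for \cref{lem:coord-source-hurwitz-unfolding}.}

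The plan is to build the coordinates on $\mcW_c$ directly from \cref{prop:univ-unfold-coord} applied to the target surface $X^{\lambda'}_c$, and then to construct the coordinates on $\mcV_c$ by lifting these through the Hurwitz-deformation machinery of \cref{sec:local-parametrizations} and the singularity analysis of \cref{sec:analyze-singularities}. First I would fix a hyperbolic deformation preparation for $u^{\lambda'}_c:C^{\lambda'}_c\lra X^{\lambda'}_c$ (which exists by the discussion in \cref{sec:choices-hyp}); by \cref{rmk:number-constructed-deformations}(iii) the germ of $\mcA(u^{\lambda'}_c)$ does not depend on this choice, so this is harmless. The preparation fixes disc structures at each of the $p$ nodes of $X^{\lambda'}_c$, and \cref{prop:univ-unfold-coord} then yields a biholomorphism $\mcW_c\lra V'\times\DD^p$ in which the $i$-th $\DD$-factor is the plumbing parameter opening the $i$-th target node, while $V'$ is a neighbourhood of the Teichm\"uller point of the normalization's smooth components. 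This is exactly the coordinate statement for $\mcW_c$.

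Next I would handle the source side. For each node $p_i$ of $X^{\lambda'}_c$ with nodal preimages carrying degrees $l^i_1,\dots,l^i_{m_i}$, \cref{lem:nod-hur-cov-unique} together with its proof and \cref{cor:param-B-hol-discs} tell us that the complex structures on $C^{\lambda'}_c$ admitting a Hurwitz cover to a nearby target are parametrized, over the $i$-th target-node direction, by the set $B_i=\{(b^i_1,\dots,b^i_{m_i})\in\DD^{m_i}\mid (b^i_1)^{l^i_1}=\cdots=(b^i_{m_i})^{l^i_{m_i}}\}$ of \cref{lem:param-set-b}, and that the induced target gluing parameter is $(b^i_1)^{l^i_1}$ (independently of the chosen root, by the very definition of $B_i$). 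Running this simultaneously at all $p$ nodes, and combining with the variation of the smooth components — which is pulled back isometrically from $V'$ via $u$, giving a neighbourhood $V$ in the source Teichm\"uller space — produces a map $V\times B_1\times\cdots\times B_p\lra\mcA(u^{\lambda'}_c)$. That this is a homeomorphism onto a neighbourhood $\mcV_c$ of $c$ follows from the Fenchel--Nielsen reparametrization argument already used in \cref{sec:modify-doma-param} (Theorems~9.11 and~6.1 of \cite{hubbard_analytic_2014}): the source lengths are rigidly determined by the target lengths through the isometric covering property of \cref{lem:hyperbolic-lift-geodesic}, and only the finitely-many discrete root choices plus the continuous twist/length data survive, which is precisely what $B_1\times\cdots\times B_p$ records. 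With these two coordinate systems in place, the formula for $C^{\lambda'}_x\mapsto X^{\lambda'}_x$ reads off immediately: on the smooth part it is the base map $V\to V'$ of the universal unfolding, and in the $i$-th node direction \cref{lem3} gives that the glued target parameter equals $(b^i_1)^{l^i_1}$, which is the displayed expression.

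The main obstacle I expect is not the algebra of the plumbing parameters — that is essentially bookkeeping already done in \cref{lem3}, \cref{lem5}, and \cref{lem9} — but rather verifying cleanly that the map $V\times B_1\times\cdots\times B_p\lra B^{C^{\lambda'}_c}$ is an \emph{open embedding} onto the germ of $\mcA(u^{\lambda'}_c)$, i.e.\ that we have found \emph{all} nearby complex structures admitting a Hurwitz cover and that no two parameter values give isomorphic ones except as forced. The count in \cref{lem:nod-hur-cov-unique} gives an upper bound $\prod_i l^i_1\cdots l^i_{m_i}$ on the number of source structures over a fixed smooth target, and \cref{rmk:number-constructed-deformations}(i) says we have constructed exactly $\prod_i (l^i_1\cdots l^i_{m_i})/K_i$ families, each a $K_i$-fold cover; matching these two counts (via \cref{cor:param-B-hol-discs}) is what pins down that the parametrization is exhaustive and that $B_i$ — rather than some quotient of it — is the right local model. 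I would therefore spend the bulk of the argument carefully identifying $\mcA(u^{\lambda'}_c)$ with $V\times B_1\times\cdots\times B_p$ as a \emph{set} before invoking the Fenchel--Nielsen homeomorphism to upgrade this to the topological statement, and only then transporting the complex coordinates on $\mcW_c$ back through the (holomorphic, by \cref{prop:summary-properties-universal-unfoldings}) base map to obtain the stated normal form.
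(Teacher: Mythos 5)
Your proposal is correct and takes essentially the same route as the paper, which disposes of this lemma in a single sentence ("This follows immediately from \cref{def:deformation-of-hurwitz-covers} and the construction in \cref{sec:local-parametrizations}"). You have spelled out the chain of references (the plumbing coordinates from \cref{prop:univ-unfold-coord} for $\mcW_c$, the source parametrization via \cref{lem:param-set-b}, \cref{cor:param-B-hol-discs}, and \cref{lem3} for $\mcV_c$, and the formula $b\mapsto b_1^{l_1}$ from \cref{def:deformation-of-hurwitz-covers} for the normal form of the map) that the paper treats as self-evident. One minor remark: the worry you raise in the final paragraph — that one must check the map $V\times B_1\times\cdots\times B_p\to B^{C^{\lambda'}_c}$ is an open embedding onto $\mcA(u^{\lambda'}_c)$ — is slightly more than the lemma demands. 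The lemma asks only for a homeomorphism from $\mcV_c$ (an open neighborhood \emph{inside} $\mcA(u^{\lambda'}_c)$, which is defined in \cref{rmk:unfoldings-surfaces} precisely as the union of the disc images) onto $V\times B_1\times\cdots\times B_p$; surjectivity and injectivity onto that set are therefore already packaged into the definitions via \cref{lem5} and \cref{cor:param-B-hol-discs}, and there is no separate exhaustiveness claim to establish beyond what \cref{lem:nod-hur-cov-unique} already records. Your caution is not wrong, but in this instance the bookkeeping really is as immediate as the paper asserts.
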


\begin{proof}
  This follows immediately from \cref{def:deformation-of-hurwitz-covers} and the construction in \cref{sec:local-parametrizations}.
\end{proof}

\begin{lem}
  Both images of $\rho_b^{\lambda}$ and $\rho_c^{\lambda'}$ are contained in $\mcA(u_c^{\lambda'})\subset\mcA(u^{\lambda'})$, see \cref{rmk:definition-all-deformations-of-source-surface} for a definition of these sets. The intersection of these two sets is a complex submanifold of $B^{C^{\lambda'}}$ of real dimension $6h-6+2n$ or at least two dimensions less. The germ of this intersection and in particular its dimension is independent of all the choices except the maps $b_{\xi}$ for every node of the target surface.
  \label{lem:dimension-calculation-intersection-morphisms}
\end{lem}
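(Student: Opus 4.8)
The plan is to reduce everything to the explicit coordinate description of the Hurwitz deformation families obtained in \cref{lem:coord-source-hurwitz-unfolding} and then to compare two such coordinate patches inside the single universal unfolding $B^{C^{\lambda'}}$. First I would use the universal property from \cref{prop:summary-properties-universal-unfoldings} to recognize that $\rho^{\lambda'}_c$ is, by construction, the inclusion of the Hurwitz-deformation family of $u^{\lambda'}$ into its universal unfolding, so that its image is exactly (the germ at $c$ of) the set $\mcA(u^{\lambda'}_c)$ as defined in \cref{rmk:unfoldings-surfaces}; since $C^{\lambda'}_c$ is close to $C^{\lambda'}$ we also have $\mcA(u^{\lambda'}_c)\subset\mcA(u^{\lambda'})$ by the last point of \cref{rmk:number-constructed-deformations}. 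For $\rho^{\lambda}_b$ I would argue similarly: the morphism $(\Phi,\phi)$ identifies $u^{\lambda}_b$ with $u^{\lambda'}_c$ on the central fibre, and \cref{prop1} (together with its source-side analogue and \cref{rmk:number-constructed-deformations}(ii)) shows that the whole Hurwitz deformation family $\Psi^{\lambda}$ near $b$ maps, via the locally unique morphism into $B^{C^{\lambda'}}$, into the family of complex structures admitting a Hurwitz cover onto a surface near $X^{\lambda'}_c$; hence $\image\rho^{\lambda}_b\subset\mcA(u^{\lambda'}_c)$ as well.

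Next I would describe both images concretely using \cref{lem:coord-source-hurwitz-unfolding}. Fix the coordinates $\mcV_c\cong V\times B_1\times\cdots\times B_p$ there. Each of $\rho^{\lambda}_b$ and $\rho^{\lambda'}_c$ is, by \cref{def:deformation-of-hurwitz-covers} and \cref{sec:local-parametrizations}, built by choosing for each node $i$ of the target a single holomorphic disc $b_{\xi^{(i)}}:\DD\lra B_i$ (one of the $\frac{k^i_1\cdots k^i_{m_i}}{K_i}$ discs of \cref{cor:param-B-hol-discs}) and a point in $V$; so $\image\rho^{\lambda'}_c$ is locally $V\times\image b_{\xi^{(1)}}\times\cdots\times\image b_{\xi^{(p)}}$ and $\image\rho^{\lambda}_b$ is $V\times\image b_{\zeta^{(1)}}\times\cdots\times\image b_{\zeta^{(p)}}$ for possibly different root choices $\zeta^{(i)}$. (Here I use that the $V$-factor — deformation of the smooth components — is shared because the morphism $(\Phi,\phi)$ restricts to a biholomorphism of smooth parts, matching the two Teichmüller neighborhoods.) By \cref{lem9} and its corollary, for each $i$ the two discs $\image b_{\xi^{(i)}}$ and $\image b_{\zeta^{(i)}}$ either coincide (when $\xi^{(i)}$ and $\zeta^{(i)}$ lie on the same $\ZZ/K_i\ZZ$-orbit in $\mcP_i$) or meet only at the origin of $B_i$. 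In the first case the $i$-th factor of the intersection is the full disc $\image b_{\xi^{(i)}}$, which is a smooth complex curve of real dimension $2$; in the second case it is the single point $0$, of dimension $0$ — i.e.\ two real dimensions less. Taking the product over all $p$ nodes and multiplying by the shared $V\cong\RR^{6h-6+2n-2p}$ factor, the intersection is a complex submanifold of $B^{C^{\lambda'}}$ whose real dimension is $6h-6+2n$ exactly when all $p$ discs agree, and drops by a multiple of $2$ (hence by at least $2$) otherwise. Since by \cref{rmk:number-constructed-deformations}(iii) the germ of $\mcA(u^{\lambda'}_c)$ and the finitely many discs $b_{\xi^{(i)}}$ parametrizing it are independent of the hyperbolic deformation preparation, the germ of the intersection — and in particular its dimension — depends only on which discs $b_{\xi^{(i)}}$, $b_{\zeta^{(i)}}$ were selected, as claimed.

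The main obstacle I anticipate is the careful bookkeeping needed to justify that $\image\rho^{\lambda}_b$ really lands in the \emph{same} coordinate chart $\mcV_c$ and is described by discs of the form $b_{\zeta^{(i)}}$ in \emph{those} coordinates — i.e.\ that the locally unique extension $\rho^{\lambda}_b$ guaranteed by universality coincides with the explicit plumbing/gluing map of \cref{sec:local-parametrizations} up to the reparametrization $b_\zeta$. This is where one must use that the morphism $(\Phi,\phi)$ sends the chosen disc structure of $u^{\lambda}_b$ (induced from that of $\lambda$ at the not-yet-opened nodes, \cref{rmk:number-constructed-deformations}(ii)) to a disc structure on $u^{\lambda'}_c$ compatible with the one used to build $\mcV_c$, invoking \cref{prop:morphisms-nodal-hurwitz-covers} and \cref{prop1} to control how angles/roots transform; once this matching is set up, the dimension count is immediate from \cref{lem9}. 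A secondary, purely technical point is checking that the shared $V$-factor genuinely matches on the nose (not just up to biholomorphism), which follows from the fact that away from the nodes $\Phi$ and $\phi$ are honest biholomorphisms pulling back the uniformized hyperbolic metrics, so the two Teichmüller-space neighborhoods are canonically identified.
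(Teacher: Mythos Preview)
Your approach is essentially the same as the paper's: reduce to the coordinates of \cref{lem:coord-source-hurwitz-unfolding} and compare the two images factor by factor over the nodes, using \cref{lem9}/\cref{cor:param-B-hol-discs} to decide whether the disc factors agree or meet only at the origin. The containment $\image\rho_c^{\lambda'},\image\rho_b^{\lambda}\subset\mcA(u_c^{\lambda'})\subset\mcA(u^{\lambda'})$ and the independence statement are also handled the same way.

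The one place where the paper is slicker is exactly the obstacle you flag. Rather than matching disc structures and invoking \cref{prop1}/\cref{prop:morphisms-nodal-hurwitz-covers} to show that $\image\rho_b^{\lambda}$ has the product form $V\times\image b_{\zeta^{(1)}}\times\cdots$ in the coordinates at $c$, the paper simply observes that $\rho_b^{\lambda}$ is a holomorphic \emph{submersion} into $\mcA(u_c^{\lambda'})$ (this follows from a commutative triangle in which $\eta_{b,c}$ is a submersion between universal unfoldings and the vertical map $(t,y)\mapsto(t',y^l)$ has the expected rank), and then uses the structure of $B_i$ as a one-point union of holomorphic discs: a holomorphic submersion from a disc into such a space, sending $0$ to the origin, must land in a single branch. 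This avoids all the bookkeeping of transported disc structures and gives the factorwise dichotomy immediately. Your proposed route through explicit disc-structure matching would also work, but is more laborious.
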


\begin{proof}
  Notice that for $x\in \mcU_b\subset O^{\lambda}$ the curve $Q_{\rho^{\lambda}_b(x)}^{C^{\lambda'}}$ is biholomorphic to $C^{\lambda}_x$ via $\wt{\rho}^{\lambda}_b$. This curve does indeed possess a Hurwitz cover map of type $T$, namely
  \begin{equation*}
    \wt{\eta}_{b,c}|_{X^{\lambda}_x}\circ u^{\lambda}_x\circ\left(\wt{\rho}^{\lambda}_b|_{C_x^{\lambda}}\right)^{-1}:Q_{\rho^{\lambda}_b(x)}^{C^{\lambda'}}\lra X^{\lambda'}_{\eta_{b,c}(x)}.
  \end{equation*}
  If we choose $x$ sufficiently close to $b$ the target and source surface of this Hurwitz cover are contained in small neighborhoods of the fibre over $\rho^{\lambda}_b(b)=\rho^{\lambda'}_c(c)$ such that by \cref{lem:nod-hur-cov-unique} and \cref{rmk:number-constructed-deformations} the source surface must be contained in $\mcA(u^{\lambda}_c)$. Since this argument works for all points $x\in \mcU_b$ we have that $\Im\rho^{\lambda}_b\subset\mcA(u^{\lambda'}_c)$.

  The fact that $\Im\rho_c^{\lambda'}\subset\mcA(u_c^{\lambda'})$ follows in a similar but easier way directly from \cref{lem:nod-hur-cov-unique} and \cref{rmk:number-constructed-deformations}.

  We have that $\mcA(u_c^{\lambda'})\subset\mcA(u^{\lambda'})$ if we choose the induced hyperbolic deformation retractions as was explained in \cref{rmk:number-constructed-deformations}. As the families $x\mapsto C^{\lambda}_x$ and $y\mapsto C^{\lambda'}_y$ for $x\in O^{\lambda}$ and $y\in O^{\lambda'}$ factor through a parametrization via an open set in Teichmüller space and glued-in discs in the same way as the complex structure on universal unfoldings is defined it is easy to see that their images in $Q^{C^{\lambda'}}$ are indeed complex submanifolds. In order to compute their dimensions choose coordinates as in \cref{lem:coord-source-hurwitz-unfolding}, i.e. we have the diagram
  \begin{equation*}
    \xymatrix{
      V\times \DD\times\cdots\times\DD \ar[rrr]^{(t,z)\mapsto C^{\lambda}_{(u^{\lambda}_b)^*t,b_{\xi}(z)}} \ar[rrrd]_{(t,z)\mapsto X^{\lambda}_{t,a(z)}} & & & V''\times B_1\times\cdots\times B_p \ar[d]^{(t,y)\mapsto(t',y^l)}\\
      & & & V' \times \DD\times\cdots\times \DD
      }.
  \end{equation*}
  Here, the horizontal map is the local representative of the unfolding of the source surface constructed from $C^{\lambda}_b$, i.e.\ $\rho_b^{\lambda}$, the vertical map is the local representative of the map which maps the source surface to the corresponding target surface and the diagonal map is the local representative of the map $\eta_{bc}$. Since $\eta_{bc}$ is a submersion because both source and target are coordinate neighborhoods in universal unfoldings we see that the horizontal map needs to be a submersion for dimensional reasons. Thus we have $\dim_{\RR}\Im\rho_b^{\lambda}=6h-6+2n=\dim_{\RR}\Im\rho_c^{\lambda'}$.

  It remains to analyze the intersection of $\Im\rho_b^{\lambda}$ and $\Im\rho_c^{\lambda'}$ in $\mcA(u_c^{\lambda'})\subset\mcA(u^{\lambda'})\subset Q^{C^{\lambda'}}$. By construction the intersection is not empty as the central fibre corresponding to $\rho_b^{\lambda}(b)=\rho_c^{\lambda'}(c)$ is contained in both images. Also we know that it maps holomorphically into $\mcA(u^{\lambda'}_c)$ and is a submersion. Thus it needs to map holomorphically into every set $B_i$ and $V$. But since every $B_i$ is a one-point union of complex discs we can determine the intersection of $\Im\rho_b^{\lambda}$ and $\Im\rho_c^{\lambda'}$ quite easily. In every factor $B_i$ the two sets either agree or they consist of the central point only. From this the dimension count
  \begin{align*}
    \dim_{\RR}\Im\rho_b^{\lambda}\cap\Im\rho_c^{\lambda'} = 6h-6+2n- & 2\#\{i\text{ s.t. the intersection of } \\
    & \Im\rho_b^{\lambda}\text{ and } \Im\rho_c^{\lambda'}\text{ in }B_i\text{ is just a point}\}
  \end{align*}
  immediately follows. This proves the dimensional part of \cref{prop:manif-struct-set}.

  It is clear that the dimension of $\Im\rho_b^{\lambda}\cap\Im\rho_c^{\lambda'}$ does not depend on the choices for the coordinates except for the discs $b_{\xi_i}$ for the $p$ nodes. Also we get that the dimension is locally constant in $b$ and as $O^{\lambda}$ is connected we see that this dimension really only depends on the choice of the discs.
\end{proof}

The last lemma allows us to define the morphism set of our category $\mcM_{g,k,h,n}(T)$. Note that we can talk of the \emph{order} of a fibre isomorphism $(b,(\Phi,\phi),c)$ meaning the dimension of this intersection.

\begin{definition}
  We define
  \begin{equation*}
    \Mor\mcM_{g,k,h,n}(T)\coloneqq\bigsqcup_{\lambda,\lambda'\in\Lambda} M(\lambda,\lambda')
  \end{equation*}
  with
  \begin{align*}
    M(\lambda,\lambda')\coloneqq \{(b,(\Phi,\phi),c) \mid & \text{ fibre  isomorphisms with }b\in O^{\lambda}, c\in O^{\lambda'} \\
    & \quad \text{s.t.\ their order is } 6h-6+2n\}.
  \end{align*}
\end{definition}

It remains to define manifold charts on this set.
\begin{prop}
  The manifold structure on $\Mor\mcM_{g,k,h,n}(T)$ can be defined by the following charts on a neighborhood of $(b,(\Phi,\varphi),c)\in M(\lambda,\lambda')$:
  \begin{align*}
    \mcU_{b} & \lra M(\lambda,\lambda')\\
    x & \longmapsto \left(x,\left(\left(\wt{\rho}^{\lambda'}_{c}|_{C^{\lambda'}_x}\right)^{-1}\circ\wt{\rho}^{\lambda}_b|_{C^{\lambda}_x},\wt{\eta}_{b,c}|_{X^{\lambda}_x} \right),\left(\rho^{\lambda'}_c\right)^{-1}(\rho^{\lambda}_b(x))\right), \\
    & \phantom{\longmapsto} \eqqcolon (x,(\Phi_x,\varphi_x),y)
  \end{align*}
  where we vary the points $(b,(\Phi,\phi),c)$ and the neighborhoods $\mcU_b$ as long as they are small enough such that the unique extensions of the corresponding morphisms into the universal unfoldings of all involved curves exist. Also $y$ denotes the base point of $\varphi_x(x)\in B^{C^{\lambda'}}$ which is given by $y=\left(\rho^{\lambda'}_c\right)^{-1}(\rho^{\lambda}_b(x))$.
\end{prop}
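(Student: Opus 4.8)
The plan is to verify, in sequence, that the stated formula indeed defines a well-formed chart for $\Mor\mcM_{g,k,h,n}(T)$ near a fibre isomorphism of maximal order, and that the transition maps to the already-constructed manifold structures on $\Ob\mcM_{g,k,h,n}(T)$ are smooth. First I would check that the map is \emph{well-defined}, i.e. that for $x\in\mcU_b$ sufficiently close to $b$ the triple $(x,(\Phi_x,\varphi_x),y)$ is genuinely a fibre isomorphism lying in $M(\lambda,\lambda')$: the pair $(\Phi_x,\varphi_x)$ is a morphism $u^{\lambda}_x\Rightarrow u^{\lambda'}_y$ because it is built from compositions and restrictions of the germs of morphisms into the universal unfoldings $Q^{C^{\lambda'}}$ and $Q^{X^{\lambda'}}$ supplied by \cref{prop:summary-properties-universal-unfoldings}, and $\wt\eta_{b,c}$ by construction intertwines $u^{\lambda}$ and $u^{\lambda'}$ on the central fibre, hence on a neighbourhood by uniqueness of the extension. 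The crucial point is that $y=(\rho^{\lambda'}_c)^{-1}(\rho^{\lambda}_b(x))$ makes sense: this is exactly where \cref{lem:dimension-calculation-intersection-morphisms} enters, since the order being $6h-6+2n$ means $\Im\rho^{\lambda}_b$ and $\Im\rho^{\lambda'}_c$ agree (as germs near the central point) as complex submanifolds of $B^{C^{\lambda'}}$, so $\rho^{\lambda}_b(x)$ lies in $\Im\rho^{\lambda'}_c$ and $(\rho^{\lambda'}_c)^{-1}$ can be applied.

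Next I would check that this map is a \emph{homeomorphism onto an open subset}. Injectivity follows because $\rho^{\lambda}_b$ is an embedding (it is the classifying map of a nodal unfolding into a universal unfolding, hence an immersion, and injective after shrinking), so $x$ is recovered from the first component; then $y$ and $(\Phi_x,\varphi_x)$ are determined. Surjectivity onto a neighbourhood of $(b,(\Phi,\varphi),c)$ in $M(\lambda,\lambda')$ uses the universal property once more: any nearby fibre isomorphism $(x',(\Phi',\varphi'),y')$ induces, via the classifying maps of the two Hurwitz deformation families, a germ of morphism between universal unfoldings that must coincide with the extension of $(\Phi,\varphi)$ by the uniqueness clause of \cref{prop:summary-properties-universal-unfoldings}; so $\Phi'=\Phi_{x'}$ and the triple is in the image. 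Continuity in both directions follows from continuity of the universal-unfolding extensions and of $(\rho^{\lambda'}_c)^{-1}\circ\rho^{\lambda}_b$, which is a biholomorphism between the relevant germs of submanifolds.

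Then I would verify \emph{compatibility of charts}: given a second fibre isomorphism $(b_1,(\Phi_1,\varphi_1),c_1)\in M(\lambda,\lambda')$ whose chart domain $\mcU_{b_1}$ overlaps $\mcU_b$, the transition map on the overlap is again of the form $x\mapsto x$ in the Teichmüller-plus-gluing-disc coordinates of \cref{prop:univ-unfold-coord}, because both charts read off the same point $x\in O^{\lambda}$ in the first slot; the morphism and $\lambda'$-slot data are then determined holomorphically. I would also check that the source and target maps $s,t:\Mor\mcM_{g,k,h,n}(T)\lra\Ob\mcM_{g,k,h,n}(T)$, in these charts, are $x\mapsto x\in O^{\lambda}$ and $x\mapsto y=(\rho^{\lambda'}_c)^{-1}(\rho^{\lambda}_b(x))\in O^{\lambda'}$ respectively; the former is visibly a local diffeomorphism onto an open subset of $O^{\lambda}$, and the latter is a composition of biholomorphisms of germs of submanifolds (using the order hypothesis), hence also a local diffeomorphism. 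This is what is needed for $\mcM_{g,k,h,n}(T)$ to be étale; properness and the remaining structure-map smoothness are then routine (inversion swaps $\lambda\leftrightarrow\lambda'$ and $(\Phi,\varphi)\leftrightarrow(\Phi^{-1},\varphi^{-1})$; composition is handled by extending the composite germ).

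The main obstacle I expect is the verification of surjectivity of the chart onto a full neighbourhood in $M(\lambda,\lambda')$ together with the simultaneous control of the $\lambda'$-slot: one must argue that every nearby fibre isomorphism factors through the \emph{same} germ of morphism of universal unfoldings and that its target point really is $(\rho^{\lambda'}_c)^{-1}(\rho^{\lambda}_b(x))$ and not some other preimage — this is precisely where the restriction to order $6h-6+2n$ is indispensable, since at lower order $\Im\rho^{\lambda}_b\cap\Im\rho^{\lambda'}_c$ is a proper submanifold and the "inverse'' $(\rho^{\lambda'}_c)^{-1}\circ\rho^{\lambda}_b$ is not defined on all of $\mcU_b$. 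I would isolate this as a lemma: for a fibre isomorphism of maximal order, shrinking $\mcU_b$ if necessary, the germs $\Im\rho^{\lambda}_b$ and $\Im\rho^{\lambda'}_c$ at the central point coincide, and the induced germ-biholomorphism agrees with the one coming from $(\Phi,\varphi)$ on the central fibre via \cref{prop:summary-properties-universal-unfoldings}; everything else then follows formally.
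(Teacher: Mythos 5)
Your proposal follows essentially the same route as the paper for the parts that matter, but there is one genuine gap and one piece of unnecessary work.

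The gap is in your well-definedness step. You assert that the pair $(\Phi_x,\varphi_x)$ is a morphism of Hurwitz covers because ``$\wt\eta_{b,c}$ by construction intertwines $u^{\lambda}$ and $u^{\lambda'}$ on the central fibre, hence on a neighbourhood by uniqueness of the extension.'' This conflates two different uniqueness statements. The universal property of a universal unfolding gives you a \emph{unique extension of a biholomorphism of central fibres to a morphism of unfoldings}; applied separately to the source and the target this produces $\Phi_x$ and $\varphi_x$ as biholomorphisms. But it does \emph{not} automatically produce the commuting square $u^{\lambda'}_y\circ\Phi_x=\varphi_x\circ u^{\lambda}_x$ away from the central fibre: the source extension lives in $B^{C^{\lambda'}}$, the target extension in $B^{X^{\lambda'}}$, and nothing in Robbin--Salamon forces them to intertwine $u^{\lambda}_x$ and $u^{\lambda'}_y$ for $x\neq b$. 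The paper closes this gap by invoking \cref{rmk:number-constructed-deformations} together with \cref{cor:hurwitz-deformations-are-not-equivalent}: there is \emph{at most one} Hurwitz cover between $C^{\lambda'}_y$ and $X^{\lambda'}_y$ extending the fixed data on the smooth part, so once the biholomorphisms agree with the morphism on the common smooth locus, the square must commute on the glued-in cylinders as well. That uniqueness of the Hurwitz cover (not uniqueness of the germ of the unfolding morphism) is the missing ingredient, and you should cite it explicitly.

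The unnecessary work is your surjectivity/openness argument and the ``main obstacle'' you isolate at the end. You are treating the chart as if it must be a homeomorphism onto a neighbourhood of $(b,(\Phi,\varphi),c)$ in some pre-existing topology on $M(\lambda,\lambda')$. But the proposition only asserts that the charts \emph{can be used to define} the manifold structure; the paper then literally declares the topology to be the coarsest one making all chart images open, and shows the transition functions (restrictions of the Deligne--Mumford transition functions to the submanifolds $\Im\rho^{\lambda}_b\cap\Im\rho^{\lambda'}_c$, using uniqueness once more) are smooth. Coverage of $M(\lambda,\lambda')$ is automatic since one takes a chart around every triple. So the worry about whether every nearby fibre isomorphism factors through the same germ, and whether its $\lambda'$-slot really is $(\rho^{\lambda'}_c)^{-1}(\rho^{\lambda}_b(x))$ rather than ``some other preimage,'' does not arise: a fibre isomorphism that would land elsewhere simply gets its own chart. (The maximal-order hypothesis you invoke is indeed needed, but it is used to make $(\rho^{\lambda'}_c)^{-1}\circ\rho^{\lambda}_b$ well-defined and a local diffeomorphism, not to rule out competing preimages.)

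Finally, your injectivity argument is fine but slightly over-engineered: you need not appeal to $\rho^{\lambda}_b$ being an embedding; the chart is injective already because its first component is the identity $x\mapsto x$, which is the paper's one-line observation.
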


\begin{proof}
  It remains to check that
  \begin{enumerate}[label=(\roman*), ref=(\roman*)]
    \item the image consists indeed of fibre isomorphisms, \label{enum:fibre-isom}
    \item these maps are injective and \label{enum:maps-injective}
    \item transition functions are smooth. \label{enum:transition-functions-smooth}
  \end{enumerate}
  For this purpose we use again the conventions from the last lemmas. Notice first that the universal property of a universal unfolding gives uniqueness of the extended morphisms and thus for any $x\in \mcU_b$ we have $(\eta^{\lambda'}_c)^{-1}(\eta_b^{\lambda}(x))=(\rho_c^{\lambda'})^{-1}(\rho_b^{\lambda}(x))$ and denote this point by $y\in O^{\lambda'}$. Then we have the diagram
  \begin{equation*}
    \xymatrix{
      C_x^{\lambda} \ar[rr]^{\Phi_x} \ar[d]^{u^{\lambda}_x} &  &C^{\lambda'}_y \ar[d]^{u^{\lambda'}_y} \\
      X_x^{\lambda} \ar[rr]^{\varphi_x} & & X^{\lambda'}_y
      }
  \end{equation*}
  where the horizontal maps $\Phi_x=\left(\wt{\rho}^{\lambda'}_{c}|_{C^{\lambda'}_x}\right)^{-1}\circ\wt{\rho}^{\lambda}_b|_{C^{\lambda}_x}$ and $\varphi_x=\wt{\eta}_{b,c}|_{X_x^{\lambda}}$ are biholomorphisms by construction. The diagram commutes because by \cref{rmk:number-constructed-deformations} and \cref{cor:hurwitz-deformations-are-not-equivalent} we have at most one Hurwitz cover between $C^{\lambda'}_y$ and $X_y^{\lambda'}$ if we fix the cover on the smooth part. Thus the image is indeed a fibre isomorphism.

  The maps are injective as they are injective on the first component. It remains to show that the transition functions are smooth. So let
  \begin{equation*}
    \mcU_b\lra M(\lambda,\lambda') \longleftarrow \mcU_c
  \end{equation*}
  be two such coordinate charts with a common intersection. Note that now $b$ and $c$ are points in $O^{\lambda}$. Then by definition of the charts we have that $\Im\rho_b^{\lambda}$ and $\Im\rho_c^{\lambda}$ are submanifolds of $B^{C^{\lambda'}}$. Furthermore the maps $\rho_b^{\lambda}$ and $\rho_c^{\lambda}$ coincide wherever both are defined by the uniqueness property of universal unfoldings. Thus the transition function for our charts are the restrictions of the transition functions of $\Mor\mcM_{g,k}$ to smooth submanifolds which are again diffeomorphisms.
\end{proof}

Note that this defines a manifold structure on the set $\Mor\mcM_{g,k,h,n}(T)$ because by \cref{enum:fibre-isom} and \cref{enum:maps-injective} we obtain subsets bijective to open subsets of $O^{\lambda}$, so we can define a topology on $M(\lambda,\lambda')$ by taking the coarsest topology such that all these sets are open. This way by \cref{enum:transition-functions-smooth} we obtain a smooth atlas on $\Mor\mcM_{g,k,h,n}(T)$.

\begin{thm}
  With the above definitions and the obvious structure maps the groupoid $\mcM_{g,k,h,n}(T)$ is an orbifold groupoid.
\end{thm}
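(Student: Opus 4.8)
The plan is to verify the five defining properties of an ep-Lie (orbifold) groupoid for $\mcM_{g,k,h,n}(T)$, namely that $\Ob\mcM_{g,k,h,n}(T)$ and $\Mor\mcM_{g,k,h,n}(T)$ are smooth manifolds, that all structure maps are smooth, that $s$ and $t$ are local diffeomorphisms (étale), that $s\times t$ is proper, and that the stabilizers are finite (which follows once étale and proper are established, by \cref{prop:properties-ep-lie-groupoids}). The manifold structures have already been constructed: $\Ob\mcM_{g,k,h,n}(T)=\bigsqcup_{\lambda\in\Lambda}O^{\lambda}$ with its Fenchel--Nielsen charts, and $\Mor\mcM_{g,k,h,n}(T)=\bigsqcup_{\lambda,\lambda'}M(\lambda,\lambda')$ with the charts $\mcU_b\lra M(\lambda,\lambda')$, $x\mapsto(x,(\Phi_x,\varphi_x),y)$ from the preceding proposition. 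So the real content is to check the structure maps and étalé/proper conditions.

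First I would identify the structure maps in the chosen charts. The source map $s(b,(\Phi,\phi),c)=b$ reads, in the chart $\mcU_b\lra M(\lambda,\lambda')$, simply as $x\mapsto x$, hence is the identity onto an open subset of $O^{\lambda}$ — in particular a local diffeomorphism and smooth. The target map $t(b,(\Phi,\phi),c)=c$ reads as $x\mapsto\left(\rho^{\lambda'}_c\right)^{-1}(\rho^{\lambda}_b(x))$; since $\rho^{\lambda}_b$ and $\rho^{\lambda'}_c$ are holomorphic embeddings of $\mcU_b$ and $\mcU_c$ into $B^{C^{\lambda'}}$ with the \emph{same} image germ (they both parametrize $\Im\rho^{\lambda}_b\cap\Im\rho^{\lambda'}_c$, which by the definition of $M(\lambda,\lambda')$ has full dimension $6h-6+2n$), the composite $\left(\rho^{\lambda'}_c\right)^{-1}\circ\rho^{\lambda}_b$ is a biholomorphism between open subsets of $O^{\lambda}$ and $O^{\lambda'}$; composing with the Fenchel--Nielsen homeomorphisms it is a homeomorphism, and one needs the diffeomorphism statement with respect to the Fenchel--Nielsen differentiable structure — this is exactly the same point as in \cite{robbin_construction_2006} for $\mcM_{g,k}$, namely that the transition maps between universal-unfolding charts restrict to diffeomorphisms of the smooth submanifolds they cut out, which was already invoked in the proof of the chart proposition. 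The inverse map $i$, the unit $u$, and the composition $m$ are all expressed through composition of the biholomorphisms $\Phi,\varphi$ and of the base-point reparametrizations; smoothness follows because each ingredient is the restriction of a (bi)holomorphic universal-unfolding morphism, and composition of such is again one, depending smoothly on parameters by the uniqueness part of \cref{prop:summary-properties-universal-unfoldings}. This establishes that $\mcM_{g,k,h,n}(T)$ is étale Lie; the key technical input throughout is that every fibre isomorphism in $M(\lambda,\lambda')$ extends uniquely to a morphism of universal unfoldings, so there is no freedom once the central fibre isomorphism is fixed, and the extension varies holomorphically.

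Next I would establish properness of $s\times t:\Mor\mcM_{g,k,h,n}(T)\lra\Ob\mcM_{g,k,h,n}(T)\times\Ob\mcM_{g,k,h,n}(T)$. Here I would use that $\Lambda$ is finite, so it suffices to show that each $M(\lambda,\lambda')$ maps properly; the image of $s\times t$ restricted to $M(\lambda,\lambda')$ lands in $O^{\lambda}\times O^{\lambda'}$, and the fibre over $(b,c)$ is the set of fibre isomorphisms $u^{\lambda}_b\Rightarrow u^{\lambda'}_c$ of full order. This fibre is finite: a Hurwitz cover between stable Riemann surfaces has only finitely many automorphisms, and more generally $\Mor_{\mcR_{g,k,h,n}(T)}\!\left(u^{\lambda}_b,u^{\lambda'}_c\right)$ is finite because each morphism is determined by a biholomorphism of the (stable, hence finite-automorphism) source surface together with a compatible one of the target. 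Combined with the fact that $s$ is a local homeomorphism and that over a compact set the total number of $\lambda$'s with $O^{\lambda}$ meeting any given fibre is finite, a standard sequential-compactness argument — take a sequence in $(s\times t)^{-1}(K)$ for $K$ compact, pass to a subsequence staying in one $M(\lambda,\lambda')$, use that $s$ is a homeomorphism onto an open subset of $O^{\lambda}$ to extract a convergent subsequence of first components, and use the finiteness and discreteness of fibre isomorphisms to conclude the middle and last components stabilize — gives properness. An alternative, cleaner route: note $s\times t$ factors through the inclusion $\Mor\mcM_{g,k,h,n}(T)\hookrightarrow\Mor\mcM_{g,k}\times_{?}\cdots$ induced by $\fgt$ and $\ev$ and invoke properness of the Deligne--Mumford groupoids from \cite{robbin_construction_2006}, but I expect the direct argument to be shorter.

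The main obstacle I anticipate is \emph{not} any single hard estimate but rather the bookkeeping needed to see that the charts $\mcU_b\lra M(\lambda,\lambda')$ are mutually compatible well enough that $M(\lambda,\lambda')$ is genuinely a (Hausdorff, second-countable) manifold and that the structure maps are globally well defined on it — in particular that the order-$(6h-6+2n)$ condition in the definition of $M(\lambda,\lambda')$ picks out a union of connected components of the larger set of all fibre isomorphisms (this is where \cref{lem:dimension-calculation-intersection-morphisms} is essential, since it says the order is locally constant). Once that is in hand, étalé-ness is formal from the shape of $s$ and $t$ in the charts, and properness is the sequential argument above; finiteness of stabilizers then follows from \cref{prop:properties-ep-lie-groupoids}(i). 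I would therefore organize the proof as: (1) recall the charts and check $M(\lambda,\lambda')$ is a manifold, invoking \cref{lem:dimension-calculation-intersection-morphisms}; (2) write $s,t,i,u,m$ in charts and read off smoothness and the étalé property; (3) prove properness via finiteness of $\Lambda$ and of fibre-isomorphism sets; (4) conclude via \cref{prop:properties-ep-lie-groupoids}.
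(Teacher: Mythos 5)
Your overall plan — manifold structures, smoothness of structure maps, étale, proper, finite stabilizers — matches the paper's, and your reading of $s$ and $t$ in the charts is correct. But there are two genuine gaps.

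First, you never verify the closure axiom: that if $(b,(\Phi,\phi),c)\in M(\lambda,\lambda')$ then $(c,(\Phi^{-1},\phi^{-1}),b)$ also has order $6h-6+2n$. This is not automatic from "the order is locally constant," because the order of $(b,(\Phi,\phi),c)$ is computed as the dimension of $\Im\rho^{\lambda}_b\cap\Im\rho^{\lambda'}_c$ inside $B^{C^{\lambda'}}$, while the order of the inverse is computed in a different universal unfolding, namely $B^{C^{\lambda}_b}$. The paper addresses this explicitly: the uniqueness property of universal unfoldings gives a locally unique biholomorphism between the two universal-unfolding bases that carries one intersection germ to the other, hence preserves its dimension. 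Without this argument, $\mcM_{g,k,h,n}(T)$ is not even known to be a groupoid, and the rest of the verification is moot.

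Second, your properness argument is not tight. You propose to "use the finiteness and discreteness of fibre isomorphisms to conclude the middle and last components stabilize," but neither finiteness of a fixed fibre $(s\times t)^{-1}(b,c)$ nor discreteness gives stabilization of a sequence $(b_k,(\Phi_k,\phi_k),c_k)$ with $b_k,c_k$ varying; you need convergence of the biholomorphisms $(\Phi_k,\phi_k)$, and for that you should actually use (not merely mention and dismiss) the properness of $s\times t$ on $\mcM_{g,k}$ and $\mcM_{h,n}$, as the paper does: after passing to a subsequence where $b_k\to b$, $c_k\to c$, the maps $\Phi_k,\phi_k$ lie in preimages under the DM $s\times t$ of a convergent sequence, hence subconverge. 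Then there is a final point you do not mention at all: the limit fibre isomorphism must again have order $6h-6+2n$. This requires observing that the full-order locus is closed, which the paper proves by noting that $M(\lambda,\lambda')$ is described by the agreement of two holomorphic submanifolds and agreement persists in the limit. Your locally-constant-order observation from \cref{lem:dimension-calculation-intersection-morphisms} gives openness within a connected family, but closedness of the order condition in a convergent sequence still needs to be said; without it the proof of properness is incomplete.
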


\begin{proof}
  We need to check the following properties:
  \begin{enumerate}[label=(\roman*), ref=(\roman*)]
    \item $\mcM_{g,k,h,n}(T)$ is a groupoid.
    \item All structure maps are smooth submersions.
    \item The map $s\times t:\Mor\mcM_{g,k,h,n}(T)\lra \Ob\mcM_{g,k,h,n}(T)\times\Ob\mcM_{g,k,h,n}(T)$ is proper.
  \end{enumerate}
  For the first statement we need to check that if $(b,(\Phi,\phi),c)\in \Mor\mcM_{g,k,h,n}(T)$ then $(c,(\Phi^{-1},\phi^{-1}),b)\in\Mor\mcM_{g,k,h,n}(T)$. But for this we only need to check the dimension of the intersection of the images of the extended morphisms into the universal unfolding of the source surface of the target Hurwitz cover. By the uniqueness property of the extended morphism into the universal unfolding  we have that $\Im\rho_c^{\lambda'}=\Im\rho_v^{\lambda}$ implies the same for the image of this set in $B^{C^{\lambda}_b}$ which is the dimension in question for the inverse morphism. Thus $\mcM_{g,k,h,n}(T)$ is indeed a groupoid.

  Next we verify that the structure maps are smooth submersions. Let us spell out how the source maps looks in coordinates close to a point $(b,(\Phi,\phi),c)\in\Mor\mcM_{g,k,h,n}(T)$
  \begin{align*}
    \mcU_b \lra & M(\lambda,\lambda') \lra O^{\lambda} \\
    x \longmapsto & \left(x,\left(\Phi_x,\varphi_x\right),\left(\rho^{\lambda'}_c\right)^{-1}(\rho^{\lambda}_b(x))\right) \longmapsto x
  \end{align*}
  which is obviously a smooth submersion. Similarly the target map is given by $b\longmapsto \left(\rho^{\lambda'}_c\right)^{-1}(\rho^{\lambda}_b(x))$ which is a smooth submersion as this is precisely the transition function from the Deligne--Mumford orbifold groupoid $\mcM_{h,n}$ as was defined in \cite{robbin_construction_2006}. The inverse and identity maps are smooth submersions by the same argument as the target and source map, respectively. For the multiplication map we can choose coordinates close to $(b,(\Phi,\phi),c), (c,(\Phi',\phi'),d)$ and $(b,(\Phi'\circ \Phi,\phi'\circ\phi),d)$ to obtain charts in a neighborhood in $\Mor\mcM_{g,k,h,n}(T){_s\times_t}\Mor\mcM_{g,k,h,n}(T)$ such that
  \begin{align*}
    \mcU_b\lra & \Mor\mcM_{g,k,h,n}(T){_s\times_t}\Mor\mcM_{g,k,h,n}(T) \lra \Mor\mcM_{g,k,h,n}(T) \lra \mcU_b \\
    x \longmapsto & \left((x,(\Phi_x,\phi_x),y),(y,(\Phi'_y,\phi'_y),z)\right) \longmapsto (x,(\Phi'_y\circ\Phi_x,\phi'_y\circ\phi_x),z) \longmapsto x
  \end{align*}
  which is of course a smooth submersion on the manifold $O^{\lambda}$.

  It remains to prove the properness of the map $s\times t$. Notice that the map is given by
  \begin{equation*}
    s\times t(b,(\Phi,\phi),c)=(b,c)
  \end{equation*}
  and recall that the corresponding maps on the Deligne--Mumford orbifold groupoids $\mcM_{g,k}$ and $\mcM_{h,n}$ are proper. We will prove sequential compactness of the preimage of a compact set as $\Mor\mcM_{g,k,h,n}(T)$ is clearly second-countable. So consider a sequence $\{(b_k,(\Phi_k,\phi_k),c_k)\}_{k\in\NN}\subset\Mor\mcM_{g,k,h,n}$. Then we obtain a subsequence
  \begin{equation*}
    \{(b_k,(\Phi_k,\phi_k),c_k)\}_{k\in\NN}\subset\Mor\mcM_{g,k,h,n}
  \end{equation*}
  such that all four sequences converge individually because the $b_k$ and $c_k$ are contained in a compact subset of $\Ob\mcM_{g,k,h,n}(T)$ and $\Phi_k,\phi_k$ are contained in preimages of $s\times t$ on $\mcM_{g,k}$ and $\mcM_{h,n}$ of a convergent subsequence. Thus all the surfaces and maps do indeed converge in the appropriate spaces. It is clear that the property of being a morphism of Hurwitz covers is closed in our topology as all the surfaces and maps converge in $C^{\infty}_{\text{loc}}$ away from the nodes and thus preserve the diagram in the limit. Therefore this subsequence converges to a fibre isomorphism. The order of the limit fibre isomorphism is still the maximal one as the sets $M(\lambda,\lambda')$ are given by points where two manifolds agree and thus limit points of sequences contained in both manifolds are still contained in the same manifolds implying that the limit has the same order.
\end{proof}

\section{Properties of Maps between Moduli Spaces of Hurwitz Covers}

\label{sec:main-results}

Recall that until now we have constructed the following groupoids as well as defined a few more:
\begin{enumerate}[label=(\roman*), ref=(\roman*)]
  \item the target and source Deligne--Mumford groupoids $\mcR_{g,k}$ and $\mcR_{h,n}$ and their orbifold groupoids $\mcM_{g,k}$ and $\mcM_{h,n}$,
  \item the orbifold groupoid of nodal Hurwitz covers $\mcM_{g,k,h,n}(T)$ and
  \item the groupoid of all nodal Hurwitz covers $\mcR_{g,k,h,n}(T)$.
\end{enumerate}

In this section we will introduce various obvious maps between these spaces and prove some of their properties. In particular we need to talk about the difference between the orbifold groupoid $\mcM_{g,k,h,n}(T)$ and the actual moduli space of nodal Hurwitz covers $\mcR_{g,k,h,n}(T)$. We will assume that the same families of source and target surfaces that we used for constructing the orbifold structure on $\mcM_{g,k,h,n}(T)$ are also used for the orbifold structure of $\mcM_{g,k}$ and $\mcM_{h,n}$. We can assume this because it is known from \cite{robbin_construction_2006} that different choices of universal unfoldings give rise to Morita equivalent ep-Lie groupoids for the Deligne--Mumford moduli spaces.

\index{Forgetful Functor}

\begin{definition}
  We define the \emph{forgetful functors} $\fgt:\mcR_{g,k,h,n}(T)\lra\mcR_{g,k}$ by
  \begin{align*}
    \fgt_{\Ob}(C,u,X,\bq,\bp) & \coloneqq (C,\bq) \\
    \fgt_{\Mor}(\Phi,\phi) & \coloneqq \Phi
  \end{align*}
  and its ``restriction'' $\fgt:\mcM_{g,k,h,n}(T)\lra\mcM_{g,k}$ by
  \begin{align*}
    \fgt_{\Ob}(b) & \coloneqq (C^{\lambda}_b,\bq) \qquad\text{for }b\in O^{\lambda}\\
    \fgt_{\Mor}(b,(\Phi,\phi),c) & \coloneqq \Phi.
  \end{align*}
  These functors descend to maps on the corresponding orbit spaces.
\end{definition}

Additionally we have the following evaluation functors.

\index{Evaluation Functor}

\begin{definition}
  We define the \emph{evaluation functors} $\ev:\mcR_{g,k,h,n}(T)\lra\mcR_{h,n}$ by
  \begin{align*}
    \ev_{\Ob}(C,u,X,\bq,\bp) & \coloneqq (X,\bp) \\
    \ev_{\Mor}(\Phi,\phi) & \coloneqq \phi
  \end{align*}
  and $\ev:\mcM_{g,k,h,n}(T)\lra\mcM_{h,n}$ by
  \begin{align*}
    \ev_{\Ob}(b) & \coloneqq (X^{\lambda}_b,\bp) \qquad\text{for }b\in O^{\lambda}\\
    \ev_{\Mor}(b,(\Phi,\phi),c) & \coloneqq \phi.
  \end{align*}
  These functors descend to maps on the corresponding orbit spaces.
\end{definition}

Furthermore we have the inclusion from our moduli space into the actual moduli space of Hurwitz covers.

\index{Inclusion Map}

\begin{definition}
  We define the \emph{inclusion functor} $\iota:\mcM_{g,k,h,n}(T)\lra\mcR_{g,k,h,n}(T)$ as the obvious inclusion of categories
  \begin{align*}
    \iota_{\Ob}(b) & \coloneqq (C^{\lambda}_b,u^{\lambda}_b,X^{\lambda}_b,\bq,\bp) \qquad\text{for }b\in O^{\lambda}\\
    \iota_{\Mor}(b,(\Phi,\phi),c) & \coloneqq (\Phi,\phi).
  \end{align*}
\end{definition}

\begin{rmk}
  This functor descends to a not necessarily injective map
  \begin{equation*}
    \iota:|\mcM_{g,k,h,n}(T)|\lra|\mcR_{g,k,h,n}(T)|.
  \end{equation*}
  Notice that there are also inclusion functors for the Deligne--Mumford orbifolds which descend to actual homeomorphisms on their orbit spaces.
\end{rmk}

These functors obviously satisfy the following commuting diagram.

\begin{equation}
  \xymatrix{
    \mcM_{g,k,h,n}(T) \ar[r]^{\iota} \ar[d]^{\ev} \ar@/_2pc/[dd]_{\fgt} &  \mcR_{g,k,h,n}(T) \ar[d]^{\ev} \ar@/^2pc/[dd]^{\fgt} \\
    \mcM_{h,n} \ar[r]^{\iota} &  \mcR_{h,n} \\
    \mcM_{g,k} \ar[r]^{\iota} &  \mcR_{g,k}
}
\label{eq:diagram-moduli-spaces}
\end{equation}

The following proposition states the main properties of these functors.

\begin{prop}
  In the situation as above we have that
  \begin{enumerate}[label=(\roman*), ref=(\roman*)]
    \item all functors in \cref{eq:diagram-moduli-spaces} induce continuous maps on orbit spaces, \label{prop-functors-case-1}
    \item the functor $\fgt:\mcM_{g,k,h,n}(T)\lra\mcM_{g,k}$ is a homomorphism and \label{prop-functors-case-2}
    \item the functor $\iota:\mcM_{g,k,h,n}(T)\lra\mcR_{g,k,h,n}(T)$ is essentially surjective\footnote{Recall that a functor $f:\mcC\lra\mcD$ is called \emph{essentially surjective} if every object in $\mcD$ is isomorphic to an object in the image of $\Ob\mcC$ under $f$, it is called \emph{faithful}, if for every $x,y\in\Ob\mcC$ the map $f:\Hom_{\mcC}(x,y)\lra\Hom_{\mcD}(f(x),f(y))$ is injective and it is called \emph{full}, if this map is surjective for every $x,y\in\Ob\mcC$.} and faithful everywhere and its restriction to the full subcategory of smooth Hurwitz covers is additionally full. \label{prop-functors-case-3}
  \end{enumerate}
  \label{prop:properties-functors-between-groupoids}
\end{prop}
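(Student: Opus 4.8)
\textbf{Proof plan for Proposition~\ref{prop:properties-functors-between-groupoids}.}

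The plan is to treat the three statements in order, reusing the local-coordinate descriptions of the gluing families established in \cref{sec:local-parametrizations} and the fibre-isomorphism analysis of \cref{sec:mfd-structure-morphisms}. For \cref{prop-functors-case-1}, continuity on orbit spaces is a statement about the topologies on $|\mcR_{g,k,h,n}(T)|$, $|\mcR_{g,k}|$ and $|\mcR_{h,n}|$ (defined in \cref{sec:SFT-compactness}), so the argument is: a sequence of Hurwitz covers converging in the moduli topology has its source surfaces, target surfaces and maps all converging in $\cin_{\text{loc}}$ away from the nodes together with the appropriate control near the nodes; restricting attention to only the source data (for $\fgt$) or only the target data (for $\ev$) clearly preserves this convergence, and the inclusion $\iota$ is continuous essentially by definition since the charts $O^\lambda$ carry the subspace topology pulled back via the Fenchel--Nielsen homeomorphisms of \cite{hubbard_analytic_2014}. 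So this part is a routine unravelling of definitions once the topology from \cref{sec:SFT-compactness} is taken as given.

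For \cref{prop-functors-case-2}, I would check that $\fgt:\mcM_{g,k,h,n}(T)\lra\mcM_{g,k}$ is smooth on objects and morphisms and commutes with the structure maps. On objects, over a chart $O^\lambda$ the map sends $b\mapsto (C^\lambda_b,\bq)$, which by \cref{rmk:definition-all-deformations-of-source-surface} is exactly the composition of the parametrization $O^\lambda\lra U^\lambda\times\DD^{\ol k}$ followed by the classifying map into $B^{C^\lambda}$ --- this is holomorphic in the complex coordinates and smooth in the Fenchel--Nielsen coordinates because it coincides with the restriction of a chart transition of the Deligne--Mumford groupoid $\mcM_{g,k}$ from \cite{robbin_construction_2006}. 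On morphisms, $\fgt(b,(\Phi,\phi),c)=\Phi$, and in the charts from \cref{prop:manif-struct-set} this is precisely $\left(\wt\rho^{\lambda'}_c|_{C^{\lambda'}_x}\right)^{-1}\circ\wt\rho^\lambda_b|_{C^\lambda_x}$, which is again a transition function of $\mcM_{g,k}$ restricted to the submanifold $\Im\rho^\lambda_b$ and hence smooth. Compatibility with $s,t,i,u,m$ is immediate from the formulas. Here one should note, as the excerpt already does implicitly, that $\ev:\mcM_{g,k,h,n}(T)\lra\mcM_{h,n}$ need \emph{not} be a homomorphism in the strict sense, since the branched local form $(x,z)\mapsto(x,z^{K_i})$ is not a local diffeomorphism --- this is why only $\fgt$ is claimed to be a homomorphism.

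For \cref{prop-functors-case-3}, essential surjectivity of $\iota$ means every object of $\mcR_{g,k,h,n}(T)$ is isomorphic to one in the image of $\Ob\mcM_{g,k,h,n}(T)=\bigsqcup_\lambda O^\lambda$; this holds because $\Lambda$ was chosen precisely so that $\bigcup_\lambda |\Psi^\lambda|$ covers $|\mcR_{g,k,h,n}(T)|$, using that $|\mcM_{h,n}|$ has finitely many strata and each target surface admits only finitely many Hurwitz covers of type $T$. Faithfulness everywhere: given $b\in O^\lambda$, $c\in O^{\lambda'}$, I claim $\iota$ is injective on $\Hom_{\mcM}(b,c)\lra\Hom_{\mcR}(\iota(b),\iota(c))$ --- but a fibre isomorphism is the triple $(b,(\Phi,\phi),c)$ and $\iota$ forgets only the first and last entries, which are already determined by the source and target of the morphism in $\mcM$, so the map is literally injective. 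Fullness on the smooth locus is the real content: given smooth $\iota(b),\iota(c)$ and a Hurwitz-cover morphism $(\Phi,\phi):u^\lambda_b\Rightarrow u^{\lambda'}_c$, I must produce a fibre isomorphism in $\mcM$, i.e.\ show the triple $(b,(\Phi,\phi),c)$ actually lies in $M(\lambda,\lambda')$, which requires its \emph{order} to be maximal, $6h-6+2n$. This is where the main obstacle lies. The point is that over smooth $b$ there are no nodes in the target, so in \cref{lem:dimension-calculation-intersection-morphisms} the product of discs $B_i$ is empty and the intersection $\Im\rho^\lambda_b\cap\Im\rho^{\lambda'}_c$ automatically has full dimension $6h-6+2n$; equivalently, the branching phenomenon that forces low-order fibre isomorphisms only occurs at nodal covers. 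I would therefore argue: when $\iota(b)$ is smooth, the extended morphisms $\rho^\lambda_b$ and $\rho^{\lambda'}_c$ into $B^{C^{\lambda'}}$ are both local diffeomorphisms onto open sets, hence their images agree on a neighborhood, so the order is maximal and the triple defines a morphism of $\mcM_{g,k,h,n}(T)$ mapping under $\iota$ to $(\Phi,\phi)$. The hard part is making sure that the ``at least two dimensions less'' alternative in \cref{lem:dimension-calculation-intersection-morphisms} is genuinely excluded in the smooth case --- this needs a careful inspection that for smooth central fibres the Teichmüller factor $V$ is the whole coordinate neighborhood and the disc factors do not appear, so there is no room for the images to meet in a proper submanifold.
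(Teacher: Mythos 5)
Your overall plan is correct and follows the same route as the paper's proof: for~\cref{prop-functors-case-1} one argues continuity via local descriptions and the quotient structure, for~\cref{prop-functors-case-2} one uses the universal property of the source universal unfolding, and for~\cref{prop-functors-case-3} one reduces fullness on the smooth locus to the observation that the order-drop phenomenon of \cref{lem:dimension-calculation-intersection-morphisms} requires a disc factor $B_i$, which is absent precisely when the target surface has no nodes. Two corrections are needed, one of which is a genuine error.

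First, and most importantly, your parenthetical remark that ``$\ev$ need not be a homomorphism in the strict sense, since the branched local form $(x,z)\mapsto(x,z^{K_i})$ is not a local diffeomorphism'' is wrong. Consult \cref{def:homomorphisms_of_groupoids}: a homomorphism of ep-Lie groupoids is only required to be a \emph{smooth} functor commuting with the structure maps; it is an \emph{equivalence} that additionally demands the fibered-product (i.e.\ local-diffeomorphism) condition. The map $z\mapsto z^{K_i}$ is perfectly smooth, so the branched local form poses no obstruction. In fact the paper establishes elsewhere (in the bordered setting around \cref{def:ev-fgt-functors-moduli-spaces-with-boundary}) that the evaluation functors are homomorphisms; the present proposition simply does not bother to state this because, given that the object charts $O^\lambda$ were manufactured from the Teichm\"uller data of the target, $\ev$ is a homomorphism essentially by construction, while $\fgt$ is the case that requires an argument via the universal property of $B^{C^\lambda}$.

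Second, a more minor point: in the fullness argument you describe $\rho^\lambda_b$ and $\rho^{\lambda'}_c$ as ``local diffeomorphisms onto open sets'' in $B^{C^{\lambda'}}$. This cannot be literally true, since by Riemann--Hurwitz $\dim_{\RR} B^{C^{\lambda'}}=6g-6+2k\geq 6h-6+2n=\dim_{\RR}O^\lambda$ with strict inequality in general; the maps are immersions with image inside the proper subset $\mcA(u^{\lambda'}_c)$. The correct statement is that when the target of $\iota(c)$ is smooth, the disc factors $B_i$ in \cref{lem:coord-source-hurwitz-unfolding} are absent, both images are open in the Teichm\"uller factor $V\subset\mcA(u^{\lambda'}_c)$, and therefore their intersection has full dimension $6h-6+2n$. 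With this adjustment your argument for fullness matches the paper's. Finally, on~\cref{prop-functors-case-1}, you give a sequential-convergence argument; the paper instead emphasises that the topology on $|\mcR_{g,k,h,n}(T)|$ is \emph{defined} as the quotient topology making $\iota$ continuous, so that continuity of $\fgt$ and $\ev$ on the $|\mcR|$ side is automatic from the universal property of quotient maps once one has smoothness (hence continuity) on the $\mcM$ side. Both viewpoints work, but the paper's is cleaner because it does not presuppose the characterization of the topology proved later in \cref{prop:equivalent-formulation-topology}.
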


\begin{proof}
  Note that \cref{prop-functors-case-1} does not make sense so far as we have not defined any topology on $\mcR_{g,k,h,n}(T)$ yet. We will define a topology on its orbit space in \autoref{sec:topology-moduli-spaces}. It will be such that the induced map $\iota$ on orbit spaces is a quotient map and thus continuous by definition. Furthermore the functors $\iota$ induce homeomorphisms on Deligne-Mumford spaces which is the content of Theorem~13.6 in \cite{robbin_construction_2006}.

  The functor $\ev$ is continuous on objects and morphisms because in local coordinates it is given as product of a homeomorphism on an appropriate Teichmüller space times maps $a:z\mapsto z^l$ for certain powers $l\in\NN$ for every node in the target surface. In the next paragraph we will show that $\fgt$ is smooth on objects and morphisms. Using the universal property of quotient topologies we thus see that the corresponding maps on $|\mcR_{g,k,h,n}(T)|$ are continuous as well.
  
  Recall that the functor $\fgt$ on objects can be described as an inclusion of $O^{\lambda}\in B^{C^{\lambda}}$ as $O^{\lambda}$ defined a nodal family of source surfaces. This inclusion comes from the universal property of $B^{C^{\lambda}}$ and is thus smooth and in particular continuous. The same argument works for the morphisms. Thus $\fgt$ is a homomorphism.

  It remains to prove \cref{prop-functors-case-3}. By choice of the index set $\Lambda$ the functor $\iota$ is essentially surjective. It is obviously injective on morphisms as it is given by
  \begin{equation*}
    \iota_{\Mor}(b,(\Phi,\phi),c)=(\Phi,\phi).
  \end{equation*}
  Now consider the restriction of the functor $\iota$ to the full subcategory generated by the open submanifold $\mathring{\mcM}_{g,k,h,n}(T)$ of smooth Hurwitz covers. Then $\iota$ obviously restricts to a functor to the full subcategory $\mathring{\mcR}_{g,k,h,n}(T)$ generated by smooth Hurwitz covers. Now recall from the proof of \cref{lem:dimension-calculation-intersection-morphisms} that the order of a morphism 
  \begin{equation*}
    (\Phi,\phi)\in\Hom_{\mcR_{g,k,h,n}(T)}\left((C_b^{\lambda},u_b^{\lambda},X_b^{\lambda}\bq_b,\bp_b),(C_c^{\lambda'},u_c^{\lambda'},X_c^{\lambda'}\bq_c,\bp_c)\right)
  \end{equation*}
  is determined by the number of nodes in the target surface such that the discs $b_{\xi}$ and $b_{\xi'}$ in $\mcA(u^{\lambda'}_c)$ intersect in single points. But since there are no nodes in the target surface every morphism has full order and is thus included in the morphisms of $\Hom_{\mathring{\mcM}_{g,k,h,n}(T)}(b,c)$ already. Thus $\iota$ is full on the full subcategory of smooth Hurwitz covers.
\end{proof}

\begin{rmk}
  Note that \cref{prop:properties-functors-between-groupoids} shows in particular that the restriction of $\iota$ to the full subcategory of smooth Hurwitz covers is a bijection on orbit spaces. Thus the moduli space $|\mcR^{\circ}_{g,k,h,n}(T)|$ of \emph{smooth} Hurwitz covers carries an actual orbifold structure.
  \label{rmk:moduli-space-smooth-hurwitz-covers-orbifold}
\end{rmk}

Before continuing with the main theorem on the evaluation functor let us modify the definition of $\mcM_{g,k,h,n}(T)$ a little bit by possibly adding more elements to our index set $\Lambda$. We will choose $\Lambda$ by first covering the target moduli space by small enough neighborhoods $\{\mcU_i\}_{i\in I}$ such that the universal unfoldings of their central surfaces $X_i\in\mcU_i$ cover the target moduli space. Then we add representatives for \emph{all} equivalence classes of Hurwitz covers with these $X_i$ as target. Furthermore if $X_i$ has nodes we add \emph{all} choices of discs $b_{\xi}$ for the resolutions of the source surface. This gives a finite set $\lambda$ together with deformation spaces $O^{\lambda}$ for our Hurwitz deformations.

\begin{thm}
  The functor $\ev:\mcM_{g,k,h,n}(T)\lra\mcM_{h,n}$ is a morphism covering of degree $H_{g,k,h,n}(T)$ on the full subcategory of smooth Hurwitz covers. Additionally at a nodal point $(C,u,X,\bq,\bp)\in\Ob\mcM_{g,k,h,n}(T)$ there exist smooth coordinates such that the map $\ev_{\Ob}$ looks like
  \begin{align*}
    \DD^{3k-3+n-N}\times\DD^N & \lra\DD^{3k-3+n-N}\times\DD^N \\
    (x,z_1,\ldots,z_N) & \longmapsto \left(x,z_1^{K_1},\ldots,z_N^{K_N}\right),
  \end{align*}
  where $\DD\subset\CC$ is the unit disc, $N\geq 0$ corresponds to the number of nodes of $X$ and $K_i$ is the least common multiple of the degrees of $u$ at the nodes above the $i$-th node of $X$. Furthermore the set where $z_i\neq 0$ for all $i=1,\ldots,N$ corresponds exactly to the subset of smooth Hurwitz covers. We will refer to such a map as a \emph{branched morphism covering} between the orbifolds.
  \label{thm:ev-local-structure}
\end{thm}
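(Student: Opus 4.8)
The plan is to assemble \cref{thm:ev-local-structure} from the machinery already developed, treating the local-structure claim and the morphism-covering claim separately and then combining them. First I would establish the local normal form for $\ev_{\Ob}$. Fix a point $b\in O^\lambda$ corresponding to a (possibly nodal) Hurwitz cover $u:C\lra X$ with $N$ nodes $p_1,\ldots,p_N$ in $X$, where over $p_i$ there are $m_i$ nodal preimages in $C$ with degrees $k_i^1,\ldots,k_i^{m_i}$ and $K_i\coloneqq\lcm(k_i^1,\ldots,k_i^{m_i})$. By construction of the Hurwitz deformation family in \cref{sec:local-parametrizations} and the definition of $\Psi_{\xi,u}$ in \cref{eq:def-fn-map}, together with \cref{def:deformation-of-hurwitz-covers}, the chart $O^\lambda$ is identified (via $(\Phi^\lambda)^{-1}$ and the choice of discs $b_{\xi}$) with an open subset of $U^\lambda\times\DD^N$, where $U^\lambda$ is a neighbourhood in the Teichmüller space of the smooth components of $X$. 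Correspondingly the target chart on $\mcM_{h,n}$ around $(X,\bp)$ is an open subset of $U^\lambda\times\DD^N$ via the usual plumbing coordinates of \cref{prop:univ-unfold-coord}. In these coordinates the source gluing parameter at the $i$-th node is $w_i\in\DD$ (the parameter of the chosen disc $b_{\xi_i}$) and the target gluing parameter is $a_i=w_i^{K_i}$, exactly as in \cref{def:deformation-of-hurwitz-covers}; the Teichmüller-space factor is untouched because varying the complex structure of $X$ away from the nodes is pulled back isometrically (see \cref{sec:vary-comp-str-away-from-node} and \cref{lem:hol-map-loc-isom}). Writing $3k-3+n-N$ for the dimension of the remaining (Teichmüller) factor — note $\dim_\CC O^\lambda = 3h-3+n$, but here I would phrase the splitting with $z_i$ the gluing parameters and $x$ the rest, matching the statement — this gives precisely $(x,z_1,\ldots,z_N)\mapsto(x,z_1^{K_1},\ldots,z_N^{K_N})$, and the smooth locus is $\{z_i\neq 0\ \forall i\}$ since a node in $X$ is resolved iff its gluing parameter is nonzero, and then all its preimages in $C$ are resolved as well (the degrees at the two sides agree, by the definition of a Hurwitz cover). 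The only subtlety is that $O^\lambda$ carries two differentiable structures (Fenchel--Nielsen versus complex gluing, see \cref{rmk:different-differentiable-structures-deligne-mumford}); I would state the normal form in the complex-gluing coordinates, which is the one in which it is holomorphic, and remark that this is what "smooth coordinates" refers to.

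Next I would treat the morphism-covering claim over the open full subcategory $\mathring{\mcM}_{g,k,h,n}(T)$ of smooth Hurwitz covers. That $\ev$ restricted there is a local diffeomorphism on objects and on morphisms is immediate from the local normal form above with $N=0$ (so $\ev_{\Ob}$ is the identity in the chosen coordinates), together with the fact that morphisms of smooth Hurwitz covers have full order by \cref{prop:properties-functors-between-groupoids}\,\cref{prop-functors-case-3} and hence $\Mor\mathring{\mcM}_{g,k,h,n}(T)$ is locally modelled on the same coordinate charts. The covering property on objects follows because $\ev_{\Ob}$ is a proper local diffeomorphism onto an open subset of a connected manifold — properness uses that $s\times t$ is proper and, more directly, that $|\mcM_{g,k,h,n}(T)|$ is compact (\cref{thm:main-result-2}); alternatively one invokes that every smooth surface $(X,\bp)$ has only finitely many equivalence classes of Hurwitz covers of type $T$, by the combinatorial description in \cref{sec:comb-desciption-hurwitz-numbers}. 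For the lifting property I would use the extension of morphisms of the target into its universal unfolding (\cref{prop:summary-properties-universal-unfoldings}): given $b\in\Ob\mathring{\mcM}_{g,k,h,n}(T)$ and $h\in\Mor\mathring{\mcM}_{h,n}$ with $\ev(b)=s(h)$, the biholomorphism $\phi$ underlying $h$ extends over a neighbourhood of the target surface, and pulling back along $u^\lambda_b$ (using \cref{lem:hol-map-loc-isom} and the fact that the pullback complex structure is one of those constructed by a Hurwitz deformation) produces the required lift $g$ with $\ev(g)=h$, $s(g)=b$; here I would lean on the fact that $\Lambda$ was chosen (in the paragraph preceding the theorem) to contain representatives of \emph{all} equivalence classes of Hurwitz covers over the chosen target surfaces, so the lifted object actually lies in $\Ob\mcM_{g,k,h,n}(T)$ and not merely in $\mcR_{g,k,h,n}(T)$. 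Thus $\ev|_{\mathring{\mcM}_{g,k,h,n}(T)}$ is a morphism covering in the sense of the definition in \cref{sec:orb-bundles-coverings}.

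Finally I would identify the degree. By \cref{prop:morphism-coverings-deg} the number $\deg\ev=|H_x|\sum_{[y]\in\ev^{-1}([x])}\tfrac1{|G_y|}$ is independent of the smooth target surface $[x]=[X,\bp]\in|\mathring{\mcM}_{h,n}|$; choosing a representative $(X,\bp)$ with $|\Aut(X,\bp)|=|H_x|$, the fibre $\ev^{-1}([X,\bp])$ is, by \cref{prop:properties-functors-between-groupoids}, in bijection (via $\iota$) with the set of equivalence classes $[C,u,X,\bq,\bp]\in|\mcR_{g,k,h,n}(T)|$ mapping to $[X,\bp]$, with matching automorphism groups. Hence $\deg\ev = |\Aut(X,\bp)|\sum_{[C,u,X,\bq,\bp]\mapsto[X,\bp]} \tfrac1{|\Aut(C,u,X,\bq,\bp)|}$, which is exactly the definition of $H_{g,k,h,n}(T)$ in \cref{eq:hurwitz-number-definition}. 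The last sentence of the theorem, that such a map is called a branched morphism covering, is a definition and needs no proof. I expect the main obstacle to be bookkeeping rather than conceptual: verifying that the two coordinate systems on $O^\lambda$ match the two on the target chart so that $\ev_{\Ob}$ really is $(x,z_i)\mapsto(x,z_i^{K_i})$ on the nose — in particular that the Teichmüller-space factor transforms trivially and that the chosen disc $b_{\xi}$ realizes the exponent $K_i=\lcm(k_i^1,\ldots,k_i^{m_i})$ via \cref{cor:param-B-hol-discs} and \cref{def:deformation-of-hurwitz-covers} — and checking that the lifting argument lands inside $\mcM_{g,k,h,n}(T)$ for the enlarged $\Lambda$ rather than only inside $\mcR_{g,k,h,n}(T)$.
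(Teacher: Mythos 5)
Your proposal matches the paper's proof in its essential steps: read off the local normal form from the Hurwitz deformation family $\Psi^\lambda$ together with \cref{def:deformation-of-hurwitz-covers}; restrict to the smooth locus where the normal form degenerates to the identity and all morphisms have full order; verify the lifting property by composing with the target biholomorphism; and identify $\deg\ev$ with $H_{g,k,h,n}(T)$ by comparing \cref{prop:morphism-coverings-deg} with \cref{eq:hurwitz-number-definition}.

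One caution on the covering-property step: your primary argument invokes compactness of $|\mcM_{g,k,h,n}(T)|$ (\cref{thm:main-result-2}), but that theorem is proven only in \cref{sec:SFT-compactness}, after the present result, so using it here would be circular. Your alternative --- that a fixed smooth target surface has only finitely many equivalence classes of Hurwitz covers of a given type, by the combinatorial description of \cref{sec:comb-desciption-hurwitz-numbers} --- is the non-circular route and is what the paper actually leans on: the preimage of a point $x\in U^\lambda$ lies in the finite union $\bigsqcup_{\lambda'\in\Lambda(\lambda)}O^{\lambda'}$ and each piece maps diffeomorphically near $x$ because all gluing parameters are nonzero, which directly exhibits an evenly covered neighborhood without appealing to a proper-local-diffeomorphism criterion. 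For the lifting property your invocation of universal unfoldings is more machinery than needed: as in the paper, given $b$ and $\phi$, the pair $(\id,\phi)$ already gives the morphism $b\to(C,\phi\circ u,X',\bq,\bp')$ in $\mcR_{g,k,h,n}(T)$, and the enlarged $\Lambda$ plus the full-order observation on the smooth locus place both the object and the morphism inside the $\mcM$-category; your version arrives at the same morphism, just with extra words. Finally, you correctly flag the dimension bookkeeping ($3h-3+n$ versus the $3k-3+n$ appearing in the statement) and the choice of complex-gluing versus Fenchel--Nielsen coordinates, both of which are consistent with the paper.
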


\index{Morphism Covering!Branched}
\index{Morphism Covering}

\begin{proof}
  Before proving the morphism covering property let us first show the local statement on object spaces. Choose coordinates around a point 
  \begin{equation*}
    (C,u,X,\bq,\bp)\in\Ob\mcM_{g,k,h,n}(T)
  \end{equation*}
  as in \cref{eq:def-fn-map}, i.e.\ a disc in Teichmüller space of the normalization of the target surface and discs $b_{\xi_i}$ for smoothing all nodes with indices $i=1,\ldots,N$ of the target. This means that $\xi_i$ is a vector with roots of unity as entries, one for each node over the $i$-th node of corresponding order and the gluing parameters for the target surface are described by maps $a_i$ for $i=1,\ldots,N$. Then we can use the same disc structure for describing a neighborhood of the target surface in its universal curve. For these choices the map $\ev$ is just given as
  \begin{align}
    \ev: \DD^{3k-3+n-N}\times\DD^N & \lra\DD^{3k-3+n-N}\times\DD^N \label{eq:ev-map-loc} \\
    (t,z_1,\ldots,z_N) & \longmapsto (t,a_1(z_1),\ldots,a_N(z_N)) \nonumber
  \end{align}
  where the maps $a_i:\DD\lra\DD$ are given by \cref{def:deformation-of-hurwitz-covers}, i.e.\ $z\mapsto z^{K_i}$ if we denote by $K_i$ the least common multiple of the degrees of $u$ at the nodes over the $i$-th node in $X^{\lambda}$. This shows the local statement.

  Now let us consider the functor $\ev:\mathring{\mcM}_{g,k,h,n}(T)\lra\mathring{\mcM}_{h,n}$ on the full subcategory of smooth Hurwitz covers denoted by $\mathring{\mcM}_{g,k,h,n}(T)\subset\mcM_{g,k,h,n}(T)$. Consider a point $x\in\Ob\mathring{\mcM}_{h,n}$. This point is contained in the deformation space of a universal unfolding of some $X^{\lambda}$, possibly with nodes. By construction all preimages of $x$ in $\Ob\mathring{\mcM}_{g,k,h,n}(T)$ are contained in $\bigsqcup_{\lambda'\in\Lambda(\lambda)}O^{\lambda'}$ where $\Lambda(\lambda)$ is the subset of $\lambda'\in\Lambda$ such that $X^{\lambda'}=X^{\lambda}$.\footnote{Recall that $\lambda\in\Lambda$ contains more information than just the target surface.} Since $x$ corresponds to a smooth target surface it has all gluing parameters unequal to zero meaning that it is contained in the regular part of the map
  \begin{equation*}
    (t,z_1,\ldots,z_n)\mapsto (t,z_1^{l_1},\ldots,z_n^{l_n})
  \end{equation*}
  where it is obviously a covering. Thus we can restrict to a small neighborhood around $x\in\Ob\mathring{\mcM}_{h,n}$ and see that its preimage is given by the disjoint union of various ``roots'' of this set over all $\lambda$ which contain $x$ in a neighborhood. Thus $\ev$ is a covering on objects. As the morphisms are also parametrized by the same coordinates we can easily see that $\ev$ is also a covering on morphisms.

  It remains to show the lifting property for a morphism covering. To see this let $b=(C,u,X,\bq,\bp)\in\Ob\mathring{\mcM}_{g,k,h,n}(T)$ and $\phi\in\Mor\mathring{\mcM}_{h,n}$ with $\phi(X,\bp)=(X',\bp')$ be given. Then there is a morphism $(\id,\phi)$ between $(C,u,X,\bq,\bp)$ and $(C,\phi\circ u,X',\bq,\bp')$ in the category $\mcR_{g,k,h,n}(T)$. By construction of the set $\lambda$ this Hurwitz cover has a an equivalent cover in $\Ob\mathring{\mcM}_{g,k,h,n}(T)$ and the corresponding morphism is included in $\Mor\mathring{\mcM}_{g,k,h,n}(T)$ because the target surface is smooth and thus all morphisms have maximal order. 

  The degree of the morphism covering is clearly given by $H_{g,k,h,n}(T)$ by comparing \cref{prop:morphism-coverings-deg} and \cref{eq:hurwitz-number-definition}.
\end{proof}

We will later show that the category $\mcM_{g,k,h,n}(T)$ is a compact orbifold category and thus carries a rational fundamental class as was mentioned in \cref{thm:alg-top-orbifolds}. We define

\index{Hurwitz Class}

\begin{definition}
  The rational singular cohomology class
  \begin{equation*}
    D_{g,k,h,n,}(T)\in H^{6h-6+2n}(|\mcM_{g,k}|,\QQ)
  \end{equation*}
  is called the \emph{Hurwitz class} and is defined as
  \begin{equation*}
    D_{g,k,h,n}(T)\coloneqq \fgt_*[\mcM_{g,k,h,n}(T)].
  \end{equation*}
\end{definition}

\chapter{Orbifold Structure on the Moduli Space of Bordered Hurwitz Covers}

\label{sec:orbifold-structure-moduli-space-borderd-huwritz-covers}

\section{Definitions}

\label{sec:orb-structure-mod-space-bordered-hurwitz-covers-definitions}

In this section $C$ and $X$ denote admissible Riemann surfaces, see \cref{def:admissible-riemann-surface}. This means that they can have actual boundary components and punctures. We will interpret the punctures as marked points. Both types of objects will be referred to as boundary components. If the component is a circle we denote it by $\partial_j C$ for $j\in\{1,\ldots,k\}$ and if it is a marked point we denote it by $q_j$ for $j\in\{1,\ldots,k\}$. So in particular we will fix the total number of boundary components.

The surface $X$ will as usual be of genus $h$ and have $n$ such boundary components which might be either actual circles or marked points. Again we denote actual boundary components by $\partial_i X$ for $i\in\{1,\ldots,n\}$ and marked points by $p_i$ for $i\in\{1,\ldots,n\}$. In both cases we require that all boundary components are enumerated with indices $j\in\{1,\ldots,k\}$ and $i\in\{1,\ldots,n\}$.

Again, we fix a surjective map $\nu:\{1,\ldots,k\}\lra\{1,\ldots,n\}$ and partitions $T_1,\ldots,T_n$ of a fixed natural number $d$, called the degree of the Hurwitz covering. We require that the length of the partition $T_i$ is equal to $|\nu^{-1}(i)|$ and that $T_i=\{l_j\}_{j\in\nu^{-1}(i)}$ satisfies
\begin{equation*}
T_i=\sum_{j\in\nu^{-1}(i)}l_j\qquad\forall i=1,\ldots,n.
\end{equation*}

Also we will now need reference curves close to the boundary of the target surface. So recall from \cref{sec:reference-curve} that a reference curve at a boundary $\del_iX$ or $\bp_i$ is a closed simple curve of constant curvature $1$ and a fixed length $F(L(\del_jC))$ or $F(0)$, respectively.\footnote{To avoid confusion with the branch degrees we will temporarily denote the length of a curve by $L$.} Here we fixed a suitable function $F:\RR_{\geq 0}\lra\RR_{\geq 0}$ beforehand for which we require a slightly stronger inequality than in \cref{sec:reference-curve}. This is because a $d$-fold cover of a reference curve of length $F(L)$ has length $dF(L)$ and we want this curve to be still contained in the collar neighborhood of the boundary geodesic of length $dL$. As $dL$ is larger than $L$ the corresponding collar neighborhood on $C$ is actually thinner than the one on $X$. So if we want to ensure that preimages of the reference curves on $X$ are also reference curves on $C$, we require
\begin{equation*}
  L< F(L)< \frac{L}{\tanh\left(\frac{dL}{2}\right)}
\end{equation*}
and $0<d\lim_{L\to 0}F(L)<2$. Notice that close to a boundary $\del_jC$ with degree $l_j$ we then have
\begin{equation*}
  l_jL< l_jF(L) <\frac{L}{\tanh\left(\frac{dL}{2}\right)} < \frac{L}{\tanh\left(\frac{l_jL}{2}\right)}
\end{equation*}
which are precisely the inequalities we need to ensure that a reference curve of length $l_jF(L)$ exists close to the boundary of length $l_jL$. In principle the definition of our spaces will then depend on this function $F$ but since it won't matter for our considerations we will drop it.

Given such a choice of a function $F$ we can define the reference curves $\Gamma_i(X)$ close to $\del_iX$ or $p_i\in X$. Since $u:C\lra X$ is a hyperbolic local isometry the preimages of $\Gamma_i(X)$ are contained in the collar or cusp neighborhoods of $C$ and have also constant geodesic curvature and length $l_jF(L(\del_iX))\leq dF(L(\del_iX))$ implying that this is also a reference curve on $C$ close to $\del_jC$ or $p_j$. Denote this reference curve by $\Gamma_j(C)$. Note that this follows from \cref{lem:collar-neighborhoods-boundary-hurwitz-cover} as well as the uniqueness and existence of reference curves in \cref{lem:existence-reference-curves}.

\index{Hurwitz Cover!Bordered}

\begin{definition}
  \label{def:whmcm}
  We define the category $\wh{\mcR}_{g,k,h,n}(T)$ of \emph{bordered Hurwitz covers} with $g,k,h,n,d\in\NN$ and $T=(T_1,\ldots,T_n,d,\nu,\{l_j\}_{1}^k)$ as above to consist of 
  \begin{equation*}
    \Ob\wh{\mcR}_{g,k,h,n}(T)\coloneqq\{(C,u,X,\bq,\bp,\bz)\}
  \end{equation*}
  where
  \begin{itemize}
    \item $C$ is an admissible Riemann surface of genus $g$ with $k$ boundary components,
    \item $X$ is an admissible Riemann surface of genus $h$ with $n$ boundary components,
    \item the tuples $\bq$ denote the set of boundary components, i.e.\ for any $j\in \{1,\ldots,k\}$ the element $\bq_j$ is either a marked point $q_j$ or a boundary component $\partial_j C$ and similarly for any $i\in\{1,\ldots,n\}$ the element $\bq_i$ is either a marked point $p_i$ or a boundary component $\partial_i X$,
    \item $u:C\lra X$ is holomorphic such that all critical points and branch points are special (i.e.\ marked or nodal, not boundary) points on $C$, respectively on $X$, and at every node $u$ satisfies all the conditions of a Hurwitz cover,
    \item for all $j=1,\ldots,k$ we have $u(\bq_j)=\bp_{\nu(j)}$,
    \item $\textbf{z}$ is a tuple of marked points $z_j\in\Gamma_j(C)$ on the reference curve $\Gamma_j(C)$ defined above satisfying the condition $u(z_j)=u(z_l)\;\forall j,l=1,\ldots,k$ s.t. $\nu(j)=\nu(l)$ and
\item the branching profile over $p_i$ is given by $T_i$, i.e.\ the degree of $u$ at $\bq_j$ (either the degree $z\mapsto z^{l_j}$ or the degree $u:\del_jC\lra\del_iX$) with $\nu(j)=i$ is given by $l_j$.
  \end{itemize}
  Its morphisms are defined as
  \begin{equation*}
    \Hom_{\wh{\mcR}_{g,k,h,n}(T)}((C_1,X_1,u_1,\bz_1),(C_2,X_2,u_2,\bz_2))\coloneqq\{(\Phi,\phi)\}
  \end{equation*}
  where
  \begin{itemize}
    \item $\Phi:C_1\lra C_2$ and $\phi:X_1\lra X_2$ are biholomorphisms,
    \item the diagram 
      \begin{equation}
        \label{eq:diag_equivalence_whmcm}
        \xymatrix{%
          C_1 \ar[r]^{\Phi} \ar[d]_{u_1} & C_2 \ar[d]^{u_2} \\
          X_1 \ar[r]^{\phi} & X_2
        }
      \end{equation}
      commutes and
    \item the chosen enumerations agree, i.e.\ $\Phi(\bq_j(C_1))=\bq_j(C_2)$ and $\phi(\bp_i(X_1))=\bp_i(X_2)$ as well as $\Phi(z_i(X_1))=z_i(X_2)\;\forall i=1,\ldots,n$.
  \end{itemize}
  Note that such a map necessarily preserves types of boundary conditions as well as their degrees.
\end{definition}

\begin{rmk}
  Note that the condition $u(z_j)=u(z_l)$ with $\nu(j)=\nu(l)$ ensures that we actually have well-defined marked points close to the boundary $\del_{\nu(j)}X$ as well. We will use these marked points later on, too. Also note that we require that all critical points are marked, i.e.\ contained in $\bq$, which in turn means that if all boundary components are circles then the map $u$ is an actual cover.
\end{rmk}

We also need a modified version of the orbifold category of Hurwitz covers. This category will be denoted by $\wt{\mcR}_{g,k,h,n}(T)$ where again $g,k,h,n,d\in\NN$, $\nu:\{1,\ldots,k\}\lra\{1,\ldots,n\}$ and $T=(T_1,\ldots,T_n,d,\nu,\{l_j\}_{1}^k)$ are the combinatorial data.

\index{Moduli Space!Of Closed Hurwitz Covers With Multicurve}

\begin{definition}
  Define the category $\wt{\mcR}_{g,k,h,n}(T)$ with even $n$ as follows. Its objects are tuples
  \begin{equation*}
    \Ob\wt{\mcR}_{g,k,h,n}(T)\coloneqq \{(C,u,X,\bq,\bp,\boldsymbol{\Gamma})\}
  \end{equation*}
  where
\begin{itemize}
  \item $C$ and $X$ are closed stable nodal Riemann surfaces of genus $g$ and $h$, respectively,
  \item $\bq$ is a $k$-tuple of marked points on $C$ and $\bp$ is a $n$-tuple of marked points on $X$,
  \item $u:C\lra X$ is a holomorphic map of degree $d$ which satisfies $u(\bq_j)=\bp_{\nu(j)}$ for all $j=1,\ldots,k$,
  \item all critical and branched points of $u$ are special points, i.e. they are contained in the tuples $\bq$ and $\bp$, respectively, or are nodal,
  \item the branching profile over $\bp_i$ is given by $T_i$, i.e.\ the degree of $u$ at $\bq_j$ is given by $l_j$ which is contained in the partition $T_{\nu(j)}$,
  \item $u$ maps nodes to nodes and all preimages of nodes are nodes, the degrees of $u$ from both sides agree and $u$ is locally surjective at nodes and
  \item $\bG$ is a multicurve (see \cref{sec:multicurves}) on $X$, such that every curve in $\bG$ is simple and bounds either
    \begin{itemize}
      \item a disc with exactly two marked points or
      \item a disc with a node whose other smooth component is a sphere with exactly two marked points in addition to the node.
    \end{itemize}
    Furthermore we require that the marked points $\bp$ (i.e.\ the branch points of $u$) are either contained in such a pair of pants or a completely nodal spherical component. Also we require that the labels of the branch points contained in one such disc from above differ by exactly one.
  \end{itemize}
  Its morphisms are given by pairs of maps
  \begin{equation*}
    (\Phi,\varphi):(C,u,X,\bq,\bp,\bG)\Longrightarrow(C',u',X',\bq',\bp',\bG')
  \end{equation*}
  such that
  \begin{itemize}
    \item $\Phi:C\lra C'$ and $\varphi:X\lra X'$ are biholomorphisms which commute, i.e.\ $\varphi\circ u=u \circ\Phi$,
    \item $\Phi(\bq_j(C))=\bq_j(C')$, $\varphi(\bp_i(X))=\bp_i(X')$ for all $i=1,\ldots,n$ and $j=1,\ldots,k$ and
    \item $\varphi_*(\bG_r)=\bG'_r$ for all $r=1\ldots,|\bG|=|\bG'|$.
  \end{itemize}
\end{definition}

\index{Multicurve!Lift of}

\begin{rmk}
  Note that a simple free essential homotopy class on $X$ can be lifted to a set of simple free essential homotopy classes on $C$ such that every element covers a given representative curve and the sum over the degrees is the total degree of $u$. This can be seen e.g. by passing to the unique geodesic representative in the homotopy class. Given the multicurve $\bG$ on $X$ we thus obtain an associated multicurve on $C$ whose elements are simple free homotopy classes and the preimages of essential ones on $\bG$ will also be essential.
  \label{rmk:lift-simple-homotopy-classes}
\end{rmk}

\section{An Embedding of the Moduli Space of Admissible Hurwitz Covers}

\label{sec:def-glue-map}

\subsection{Definition of the Gluing Map on Objects}

\label{sec:embedding-moduli-space-boundary}

Given combinatorial data $g,k,h,n,T,\{l_j\}_{j=1}^k,d,\nu$ we want to associate new combinatorial data $\wt{g},\wt{k},\wt{h},\wt{n},\wt{T},\{\wt{l}_j\}_{j=1}^{\wt{k}},\wt{d},\wt{\nu}$ such that we can define a functor
\begin{equation}
  \label{eq:glue-map}
  \glue:\wh{\mcR}_{g,k,h,n}(T)\lra\wt{\mcR}_{\wt{g},\wt{k},\wt{h},\wt{n}}(\wt{T})
\end{equation}
which we can use to pull back the symplectic orbifold structure of the latter space. We will construct this last structure in \cref{sec:pull-back-orbifold-structure-via-glue}.

The associated combinatorial data will be the following:
\begin{alignat}{2}
  \wt{g}&=g && \wt{k}=\sum_{j=1}^{k}(l_j+1)=nd+k \nonumber \\
  \wt{h}&=h && \wt{n}=2n \label{eq:def-modified-comb-data}\\
  \wt{d}&=d && \hspace{-2mm} \wt{\nu}=j\mapsto\begin{cases}2\nu(i)-1 & j=\sum_{m=1}^{i-1}(l_m+1)+1\\ 2\nu(i)\phantom{-1} & \sum_{m=1}^{i-1}(l_m+1)+1<j<\sum_{m=1}^i(l_m+1)\end{cases} \nonumber \\[1em]
  \wt{T}_i&=\begin{cases} \overbrace{1+\cdots+1}^d & 2\mid i\\ T_{\frac{i+1}{2}}  & 2\nmid i\end{cases} \qquad && \wt{l}_j=\begin{cases}l_i\phantom{1} & j=\sum_{m=1}^{i-1}(l_m+1)+1\\ 1\phantom{l_j} & \sum_{m=1}^{i-1}(l_m+1)+1<j<\sum_{m=1}^i(l_m+1)\end{cases} \nonumber
\end{alignat}
Because this might look strange at the first glance, \cref{fig:comb-data-bounded-hurwitz-cover} summarizes the way this data is chosen.

\begin{rmk}
  Note that with this choice of $\wt{T}$ the objects in $\wt{\mcR}_{g,nd+k,h,2n}(\wt{T})$ have the property that the preimages of the ``bounded'' objects of the multicurve $\Gamma$ under $u$ consist of discs or spheres, respectively. This is because every connected component of the preimage might have some degree $d'$ and genus $g'$ and we obtain
  \begin{equation*}
    2-2g'=d'(2-2\cdot 0)-2(d'-1)=2.
  \end{equation*}
  The count for the degrees can be easily seen in \cref{fig:comb-data-bounded-hurwitz-cover}.
  \label{rmk:genus-preimage-adjoined-hurwitz-cover}
\end{rmk}

\begin{figure}[H]
    \centering
    \def\svgwidth{\textwidth}
    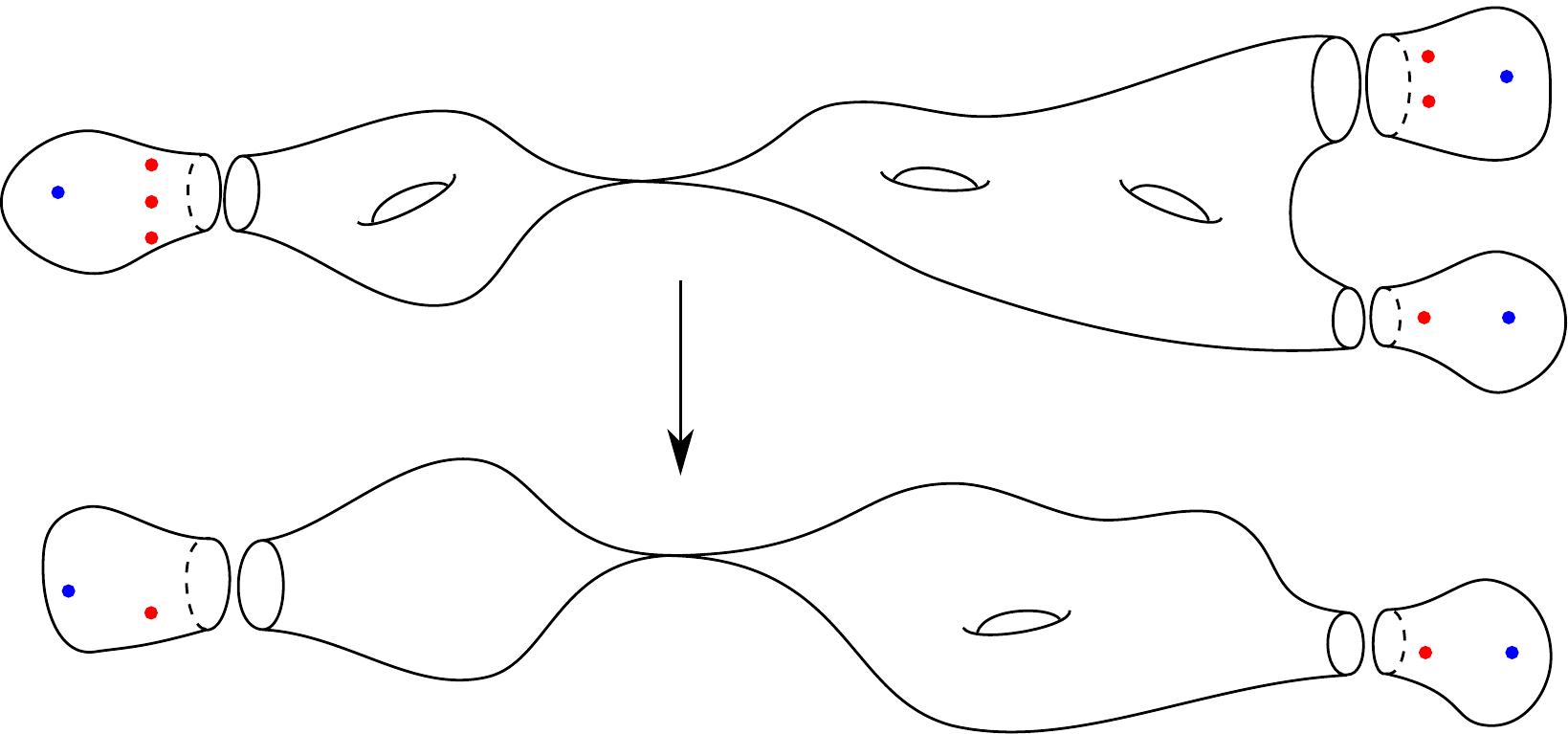
    \caption{Given a bordered Hurwitz cover $u:C\lra X$ we will glue discs (or rather hyperbolic punctures spheres with one geodesic boundary) to the source and target surface. These will always contain one fully branched point and one regular point which is marked nevertheless in order to make the target disc stable. The degree in the picture are $\wt{l}_1=3,\wt{l}_2=\wt{l}_3=\wt{l}_4=1,\wt{l}_5=2,\wt{l}_6=\wt{l}_7=1,\wt{l}_8=1,\wt{l}_9=1$.}
    \label{fig:comb-data-bounded-hurwitz-cover}
\end{figure}

Now suppose we are given an object $(C,u,X,\bq,\bp,\bz)\in\wh{\mcR}_{g,k,h,n}(T)$. We will do the following modifications:

\begin{enumerate}
  \item Glue in hyperbolic pairs of pants to the boundary components of the target surface $X$ and choose biholomorphic charts for these.
  \item Fix a particular covering of such a pair of pants and uniformize it.
  \item Glue together these covers along the boundary components.
  \item Add specific branched covers at the branched points.
  \item Enumerate everything in an appropriate way.
\end{enumerate}

\begin{rmk}
  There are various ways how we can modify a Hurwitz cover with boundary to obtain an actual branched cover. In particular we could choose collar neighborhoods of the boundary components, glue in a standard disc and extend $u$ over that disc as the map $z\mapsto z^{l_j}$. This is a standard technique which can for example also be used to prove the Riemann existence theorem. However, in our case it causes difficulty because it is hard to control how the complex structure depends on the choice of the collar neighborhood. In particular we get problems when trying to prove injectivity of the gluing map because we would need to restrict a biholomorphism to the interior but we can not ``refind'' the boundary as the uniformization might give us a different hyperbolic geodesic representative. In order to avoid this problem we will instead build the new Hurwitz cover by gluing together hyperbolic surfaces.
\end{rmk}

\subsubsection{Glue in Hyperbolic Pairs of Pants to Target Surface \texorpdfstring{$\boldsymbol{X}$}{X}.}

First we do the same thing as Mirzakhani, see~\cite{mirzakhani_weil-petersson_2007}, and glue appropriate hyperbolic pairs of pants to $X$.

To this end uniformize the surface $X$ such that all special points are cusps and the boundary components are geodesics. For any such boundary geodesic $\del_iX$ we can now build the corresponding pair of pants $\Sigma(i)$ with one geodesic boundary component of the same length as $\del_iX$ and two punctures. Such a pair of pants is unique up to unique isometry if we distinguish the two punctures. We do this by enumerating one puncture with $\wt{\bp}_{2i-1}$ and the other $\wt{\bp}_{2i}$.

Mark one point $y_i$ on the boundary of $\Sigma(i)$ which is the endpoint of the unique geodesic that is perpendicular to the boundary of the hyperbolic pair of pants and goes up the cusp $\wt{\bp}_{2i-1}$. We build a new hyperbolic surface by gluing all these pairs of pants to $X$ along their common boundaries and requiring that the marked point $u(z_j)\in\del_iX$ coincides with $y_i$, see \cref{fig:comb-data-bounded-hurwitz-cover}. However, the final surface $\wt{X}$ will also be modified along branched points.

Now we fix a biholomorphic chart $\phi_i:\Sigma(i)\lra\DD\subset\CC$ for this pair of pants. Such a chart exists by the Riemann mapping theorem and it is unique up to rotation if we require that $\phi_i(\wt{\bp}_{2i-1})=0$. Using this rotation we can assume that $\phi_i(\wt{p}_{2i})\in(-1,0)$ is on the negative real axis. Since reflection by the real axis is an anti-holomorphic map and thus an isometry for the uniformized pair of pants $\DD$ with nodes $0$ and $\phi_i(\wt{p}_{2i})$ we see that the positive real axis $[0,1]\subset\DD$ is a geodesic perpendicular to the boundary $S^1$ and going up the cusp $0$. Thus $\phi_i(y_i)=1$, see \cref{fig:glued-in-cover}.

\subsubsection{Fixing a cover of glued hyperbolic pairs of pants}

We will glue the standard degree-$l_j$ cover to the boundary $\del_jC\lra \del_iX$. For this purpose consider the map $z\longmapsto z^{l_j}$ from $\DD$ to $\DD$. On its image we have the marked points or cusps $0,\phi_i(\wt{\bp}_{2i})\in\DD$ as well as the marked point $1=\phi_i(y_i)$ which will be glued to $u(z_j)\in\del_iX$. Thus we mark the $l_j$ preimages of $\phi_i(\wt{\bp}_{2i})$ under this map which will be $l_j$ points contained in $\wt{\bq}$. However, their precise enumeration will be explained at the end. Furthermore we mark $0$ as a critical point of degree $l_j$ and for gluing purposes we mark $1\in \DD$. However, the latter point will not be included in the list of marked points $\wt{\bq}$ later on.

Now uniformize this surface such that the unit circle becomes a geodesic and such that all the $l_j+1$ interior marked points become cusps. This way we obtain hyperbolic surfaces $\wt{\Sigma}(j)$ for every $j$ such that $\nu(j)=i$ together with conformal coverings $f_j:\wt{\Sigma}(j)\lra\Sigma(i)$ mapping cusps to cusps and boundaries to boundaries which are thus local isometries in the hyperbolic metrics and coverings of degree $l_j$. Also this means that the length of the boundary of $\wt{\Sigma}(j)$ is $l_j\cdot l(\del_iX)=l(\del_jC)$.

Using \cref{lem:local-gluing} we see that we can glue the $f_j:\wt{\Sigma}(j)\lra\Sigma(i)$ to $u:\del_jC\lra\del_iX$ such that we obtain hyperbolic surfaces containing $C$ and $X$ together with a holomorphic extension of the map $u$. This extension will have new critical and branch points and cusps corresponding to the earlier marked points.

\begin{figure}[H]
  \centering
  \def\svgwidth{\textwidth}
  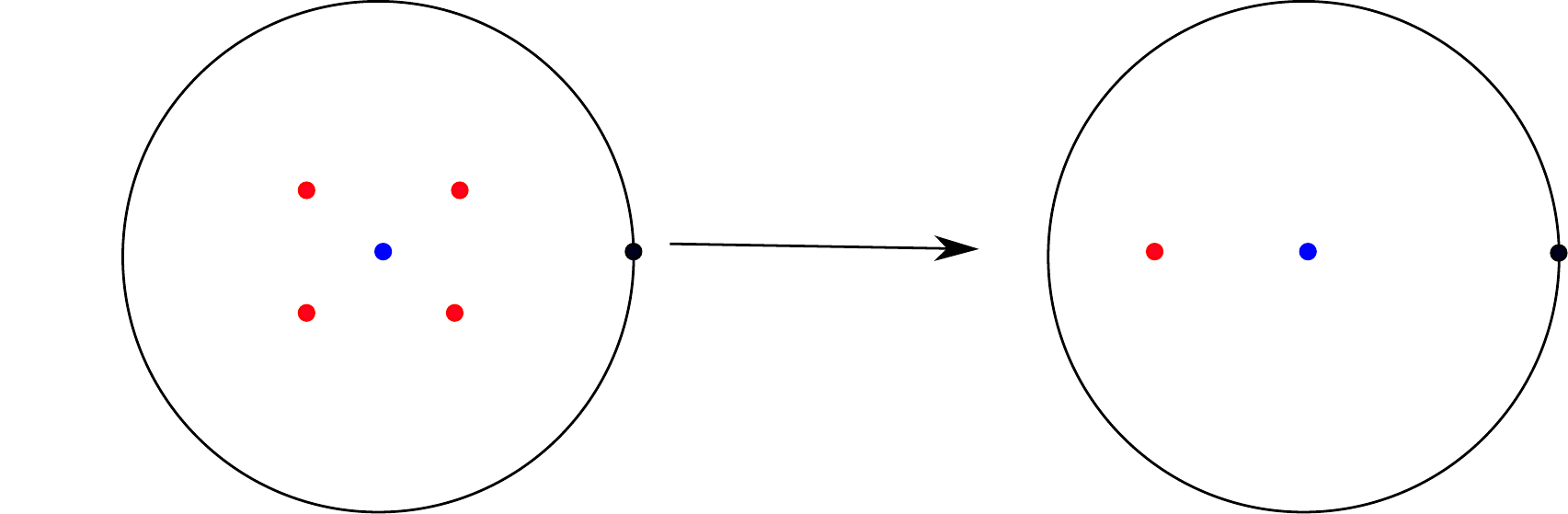
  \caption{These are the standard disc covers that we glue in. On the right side you can see the target disc with three marked points and on the left you see the source disc where one point is completely branched and the others are regular. Notice that in both parts we fix the rotation of the boundary geodesic by identifying $1$ with the corresponding marked point on the boundary which comes from the reference curve.}
  \label{fig:glued-in-cover}
\end{figure}

\begin{rmk}
Note that we don't mark the points $z_j$ as they can actually be recovered from a pair of pants decomposition and the usual Fenchel--Nielsen coordinate construction. However, we will remember the free homotopy classes of the former boundaries on $X$ and so by Remark~\ref{rmk:lift-simple-homotopy-classes} we obtain lifted free homotopy classes on the modified $C$. This way we will be able to refind these boundaries by looking at the unique geodesic representative in this class.
\label{rmk:difference-hyperbolic-complex-gluing}
\end{rmk}

Note that this cover is actually unique in the following sense.

\begin{prop}
  There exists only one equivalence class of branched degree-$k$ covers $f:U\lra\DD$ such that $0$ is the only branched point and is fully branched. Here, equivalence means that there exists a biholomorphism $\phi:U\lra U'$ such that $f'\circ\phi=f$ for $f:U\lra\DD$ and $f':U'\lra\DD$. Also every two such equivalent branched covers are biholomorphic in a unique way given by multiplication by a $k$-th root of unity.
  \label{prop:eq-class-hurwitz-disc}
\end{prop}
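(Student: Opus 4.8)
The plan is to prove \cref{prop:eq-class-hurwitz-disc} in two stages: first show that a branched degree-$k$ cover $f:U\lra\DD$ with a single totally-branched point over $0$ is equivalent to the standard model $z\mapsto z^k$, and then pin down the automorphism group of this model.

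First I would analyze the topology of $f$. Since $0$ is the only branch point and $f$ is totally branched there, the restriction $f:U\setminus f^{-1}(0)\lra\DD\setminus\{0\}$ is an honest covering of degree $k$ of the once-punctured disc, which is connected; the condition that $0$ has a single preimage forces this covering to be connected, hence it is the connected $k$-fold cover $A(0,1)\setminus\{0\}\lra\DD\setminus\{0\}$, $z\mapsto z^k$, up to deck transformation. Therefore there is a biholomorphism $\psi$ from $U\setminus f^{-1}(0)$ to the punctured disc such that $f=(\cdot)^k\circ\psi$ on that set. Next I would extend $\psi$ across the puncture: $\psi$ is bounded near $f^{-1}(0)$, so by the Riemann removable singularity theorem (applied in a local holomorphic coordinate at the branch point of $U$) it extends to a holomorphic map $U\lra\DD$ sending $f^{-1}(0)$ to $0$; since it is a biholomorphism away from one point and the branch orders on both sides match (both equal $k$), the extension is a biholomorphism $\psi:U\lra\DD$ with $f'\circ\psi=f$ where $f'(z)=z^k$. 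This proves existence and uniqueness of the equivalence class.

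For the automorphism statement, I would take $U=U'=\DD$ with $f=f'=(\cdot)^k$ and let $\phi:\DD\lra\DD$ be a biholomorphism with $\phi(z)^k=z^k$ for all $z\in\DD$. Then $\phi(z)/z$ is a holomorphic map from $\DD\setminus\{0\}$ into the finite set of $k$-th roots of unity (it is continuous with values in $\mu_k$), hence constant, equal to some $\zeta$ with $\zeta^k=1$; so $\phi(z)=\zeta z$. Conversely every such $\phi$ is an automorphism, so $\Aut\cong\ZZ/k\ZZ$ acting by multiplication by $k$-th roots of unity, and the map realizing an equivalence of two covers is unique up to this group — but in fact, once we fix $\psi$ as constructed from a chosen deck transformation identification, any two such $\psi$ differ by an element of $\mu_k$, which is exactly the content claimed.

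I do not expect a serious obstacle here; the only point requiring a little care is the extension of $\psi$ across the branch point, where one must choose local holomorphic coordinates at the point of $U$ over $0$ and verify that $\psi$ is not merely continuous but holomorphic there (removable singularity plus matching ramification index), and that the resulting map is genuinely a biholomorphism rather than a proper map of higher degree — this follows because $\psi$ is injective off a single point and open. The argument is essentially the standard uniqueness-of-cyclic-cover lemma specialized to the disc, so the write-up should be short.
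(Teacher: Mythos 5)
Your proposal is correct and follows essentially the same route as the paper: identify $f|_{U\setminus f^{-1}(0)}$ with the connected $k$-fold cover $z\mapsto z^k$ of the punctured disc, extend the resulting conformal identification across the puncture by the removable-singularity theorem, and then observe that any two such identifications differ by a $k$-th root of unity. The paper phrases the first step as a lifting criterion (computing that $\image f_*=k\ZZ\subset\pi_1(\DD\setminus\{0\})$) while you phrase it as the classification of connected covers of the punctured disc, and your argument that $\phi(z)/z$ is a constant in $\mu_k$ bypasses the paper's appeal to the Schwarz lemma, but these are cosmetic differences in an otherwise identical argument.
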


\begin{proof}
  Suppose we are given a branched cover $f:U\lra\DD$ of degree $k$. Denote $p\coloneqq f^{-1}(0)$. Since $f$ is branched only around zero it induces an actual covering $f:U\setminus \{p\}\lra\DD\setminus\{0\}$. Using the local form for the branched cover $f$ close to $p$ we see that a small curve in $U\setminus \{p\}$ around $p$ is mapped to the homotopy class of $t\mapsto e^{2\pi\ii kt}$, i.e.\ $k$ times the generator of $\pi_1(\DD\setminus\{0\},1)$ where we have chosen $1\in\DD\setminus\{0\}$ and $q\in U\setminus\{p\}$ with $f(q)=1$ as the base points for fundamental groups as well as for the covering arguments. Thus the image of $f_*$ in $\pi_1(U\setminus\{p\},q)$ is the same as the image of $g:\DD\setminus\{0\}\lra \DD\setminus\{0\}$ given by $g(z)=z^k$ in $\pi_1(\DD\setminus\{0\},1)$. Therefore the map $f$ lifts in the following diagram to a map $h:U\lra\DD\setminus\{0\}$.
  \begin{equation*}
    \xymatrix{
      & \DD\setminus\{0\} \ar[d]^g \\
      U\setminus\{p\} \ar@{-->}[ur]^h \ar[r]^f & \DD\setminus\{0\}
      }
  \end{equation*}
  The map $h:U\setminus\{p\}\lra\DD\setminus\{0\}$ is smooth and holomorphic as it is locally given by $f^{-1}\circ g$. By the removable singularity theorem it extends to a holomorphic map $h:U\lra\DD$. It is injective as $f$ and $g$ are both of degree $k$ and surjective by openness. Thus its differential is everywhere nonzero and it is therefore a biholomorphism. The lift $h$ was unique after choosing a preimage of a regular base point in $\DD\setminus\{0\}$ under $g$, i.e.\ there are $k$ different lifts. However this means that any branched cover $f:U\lra\DD$ of this type is equivalent to the standard branched cover $z\longmapsto z^k$.

  It remains to show that two such biholomorphisms differ only by a $k$-th root of unity. Suppose we have constructed biholomorphisms $h:U\lra\DD$ and $h':U\lra\DD$ in this way. Then we obtain a biholomorphism $h'\circ h^{-1}:\DD\lra\DD$ preserving the origin which is thus given by a rotation. Since this map needs to preserve fibres of $z\longmapsto z^k$ the rotation needs to be a $k$-th root of unity.
\end{proof}

\begin{rmk}
  \cref{prop:eq-class-hurwitz-disc} says in particular that after choosing the hyperbolic pair of pants $\Sigma(i)$ there exists only one way of gluing a branched covering of genus zero with type $(1+\cdots+1,l_j)$ and one boundary component to $\del_jC$ if we require that a marked point on $\del_jC$ is identified with $1\in\DD$. The $l_j$ different isomorphisms in the proposition correspond to the choice of the $l_j$ lifts which in turn correspond to the $l_j$ possible identifications of the boundary $\del_jC$ with $S^1$. This identification is then fixed by the choice of $z_j$ in the fibre $u^{-1}(u(z_j))$.
\end{rmk}

\subsubsection{Modifying the Cover at Critical and Branched Points}

\label{sec:glue-at-branch-point}

Now we modify the surfaces and the map at the critical and branch points. Let $\bp_i=p_i$ be a branch point and $\bq_j=q_j\in C$ a critical preimage.

Add a complex sphere $\CC P^1(i)$ to $X$ in such a way that $[0:1]\in\CC P^1(i)$ and $p_i$ are a nodal pair. Furthermore glue a complex sphere $\CC P^1(j)$ to $C$ such that $[0:1]\in\CC P^1(j)$ and $q_j$ form a nodal pair. On $\CC P^1(j)$ define the map $f_j:\CC P^1(j)\lra \CC P^1(i)$ by
\begin{equation*}
  f_j([x:y])=[x^{l_j}:y^{l_j}].
\end{equation*}
This map has two fully ramified critical points $[0:1]$ and $[1:0]$, one of which will be a nodal point in $\wt{X}$ and $[1:0]$ which will be marked as will be its image $[1:0]\in\CC P^1(i)$.  In order to make the image component stable (at the moment it is a sphere with two special points) we also mark the point $[1:1]\in\CC P^1(i)$ as well as its preimages $[\zeta_{l_j}^k:1]\in\CC P^1(j)$, where $\zeta_{l_j}$ is a primitive root of unity. Note that these preimages all have degree $1$ but we mark them anyway.

We do this for all $i$ such that $\bp_i$ is a branched point. The maps fit together as they coincide on the new nodal points by construction.

This way we obtain surfaces $\wt{X}$ and $\wt{C}$ as well as a map $u$ between them which can be either seen as a branched cover of closed nodal complex curves or as a locally-isometric covering between complete nodal hyperbolic surfaces with cusps and finite area.

\begin{figure}[H]
  \centering
  \def\svgwidth{\textwidth}
  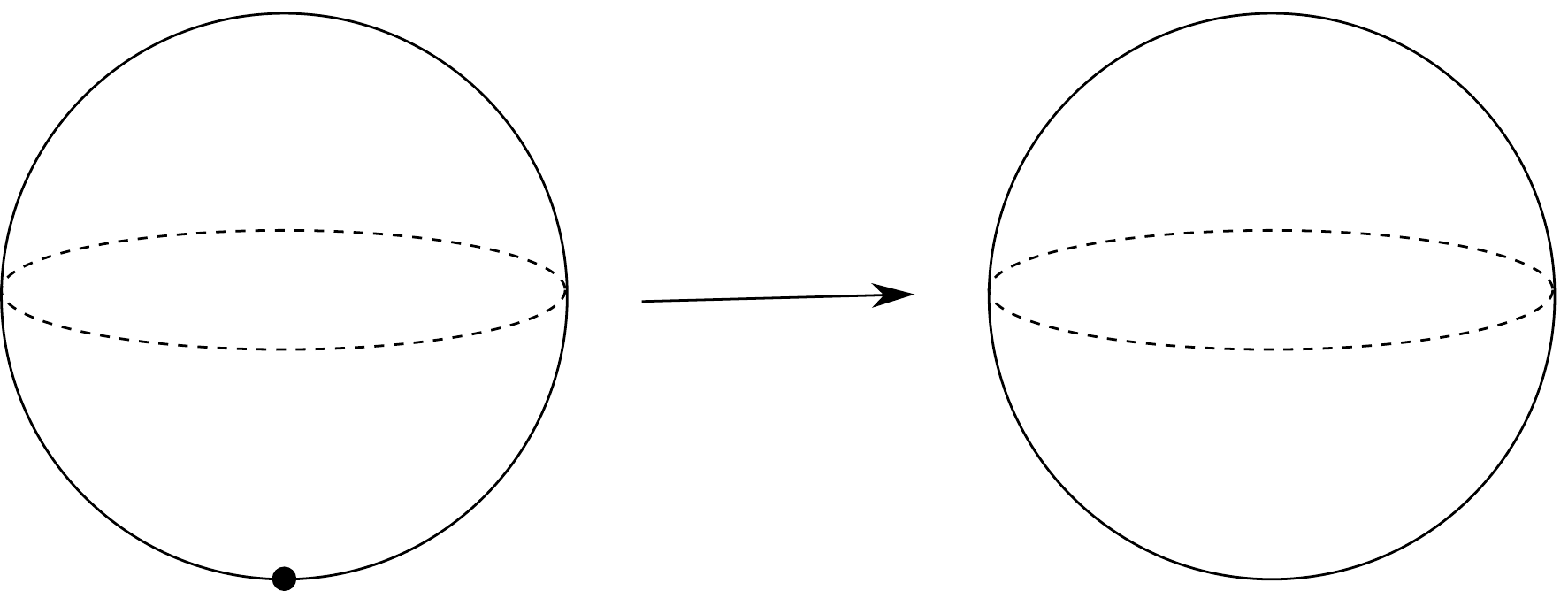
  \caption{This is an illustration of the spheres that we glue in at punctures.}
  \label{fig:glued-in-spheres}
\end{figure}

\subsubsection{Enumeration and Homotopy Classes of Curves}

Now that we have defined the map $\wt{u}:\wt{C}\lra\wt{X}$ we need to specify the remaining data.

As was described above on the surface $\wt{X}$ we mark the following points:
\begin{itemize}
  \item If $\bp_i=\del_iX$ is an actual boundary component then we mark the cusps of the glued-in hyperbolic pair of pants.
  \item If $\bp_i=p_i$ is a branched point then we mark $[1:0],[1:1]\in \CC P^1(i)$.
\end{itemize}
Here, one point is marked because we need to mark all branched points. The second one is marked in order to make the glued surface stable. Note that in both cases these additional points are not branched and thus we have $d$ preimages -- on each connected component $l_j$-many.

As we need to mark every preimage of a branched or marked point we mark the following points on the surface $\wt{C}$ as seen above:
\begin{itemize}
  \item If $\bq_j=\del_jC$ is an actual boundary component then we mark $0\in \DD(j)$ and the $l_j$ preimages of the image of the other cusp under the biholomorphic chart $\phi_i$.
  \item If $\bq_j=q_j$ is a critical point then we mark $[1:0]\in\CC P^1$ and all $[\zeta_{l_j}^k:1]\in\CC P^1$.
\end{itemize}
Note that every marked point is in one of the new glued components. Furthermore every such component in the target has one branch point and one marked non-branch point but the spheres also have a nodal point that is branched but not marked. Also the components in the source have one point of (maximal) degree $l_j$ and $l_j$ simple marked points.

Regarding the enumeration of the marked points we work as follows. First we order everything according to the index $j$ or $i$ that was used to glue. Then we start at the lowest index and assign the first not-yet-used index for the branched point or the completely branched preimage of that point, respectively. Then we denote the remaining point on the target component by that index plus one. On the preimage we mark again first the point of degree $l_j$ and then continue with the $l_j$ preimages of the other cusp in the positive direction starting from the direction of $1$. This ordering is also well-defined on $\CC P^1$ if we start with $[1:1]$ and then increase the power of the $l_j$-th root of unity in the first component. See the next picture as an illustration.

\begin{figure}[H]
  \centering
  \def\svgwidth{\textwidth}
  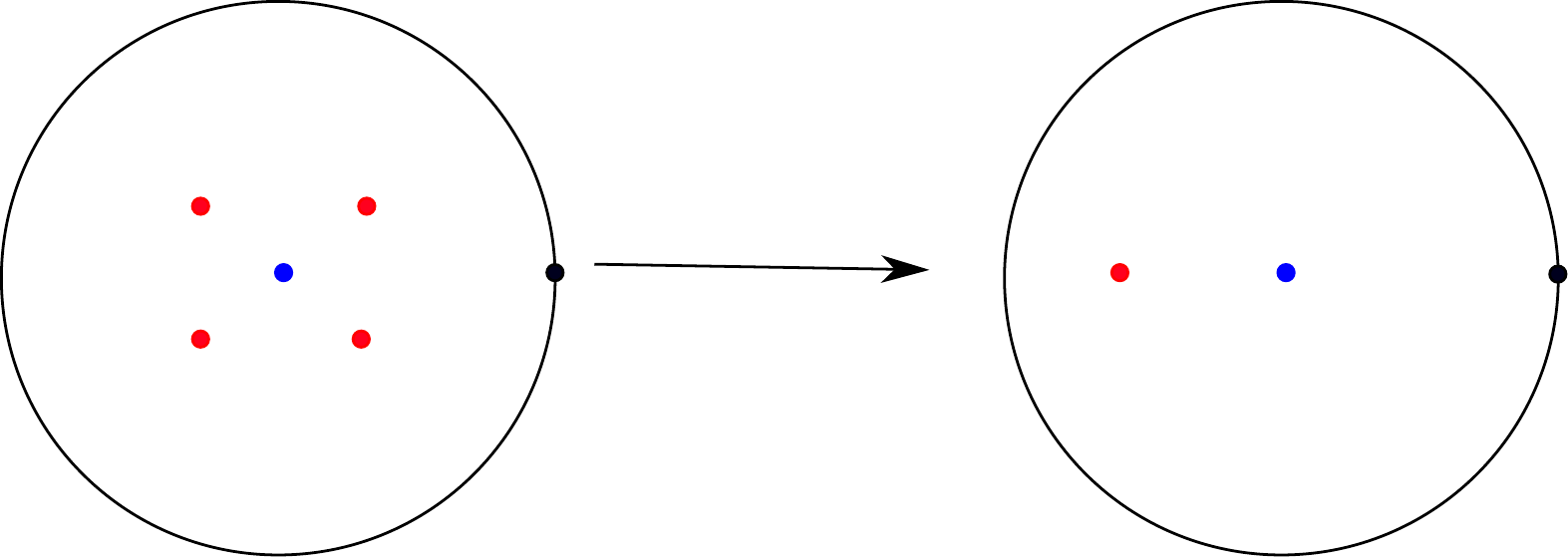
  \caption{This figure illustrates the enumeration scheme we are using for the example of a disc. Note that $M$ and $N$ are the number of marked points already assigned on the target and source surface, respectively, before arriving at this particular boundary component.}
  \label{fig:glued-in-cover-enumeration}
\end{figure}

It remains to define the free homotopy classes of simple closed curves on $\wt{X}$ denoted by $\bG$. Of course we choose all the images of the free homotopy classes of the boundaries $\del_iX$ as well as the reference curves on $X$ close to nodes under the inclusion $X\xhookrightarrow{}\wt{X}$. Recall that $\Gamma_j(C)$ denotes a reference curve on $C$ close to $\del_jC$ or $q_j$ and we chose the lengths such that they correspond to one reference curve close to $\del_{\nu(j)}X$ or $p_{\nu(j)}$, respectively. Note that the preimages of $\bG$ on $C$ then correspond to the simple free homotopy class of the curves $\Gamma_j(C)$, i.e.\ the boundary curves or the reference curves close to a node.

The enumeration and various definitions are illustrated in Figure~\ref{fig:glue-def-choices-enumeration}. Note that this figure draws the hyperbolic picture, i.e.\ after making all these choices we equip the surfaces including the spheres with the unique hyperbolic metric such that all marked points (i.e.\ \emph{not} the points $z_j$) and nodes are cusps. Here we stress once more that the construction was done in such a way that the former boundary components $\del_jC$ were geodesic in the hyperbolic structure on $C$ but are now still geodesic in the uniformized hyperbolic structure in $\wt{C}$ because we built the surfaces from gluing along geodesic boundaries instead of gluing along conformal cylinders.

To sum up we have defined a map
\begin{equation*}
  \glue:\Ob\wh{\mcR}_{g,k,h,n}(T)\lra\Ob\wt{\mcR}_{g,nd+k,h,2n}(\wt{T}).
\end{equation*}

\begin{figure}[H]
  \centering
  \def\svgwidth{\textwidth}
  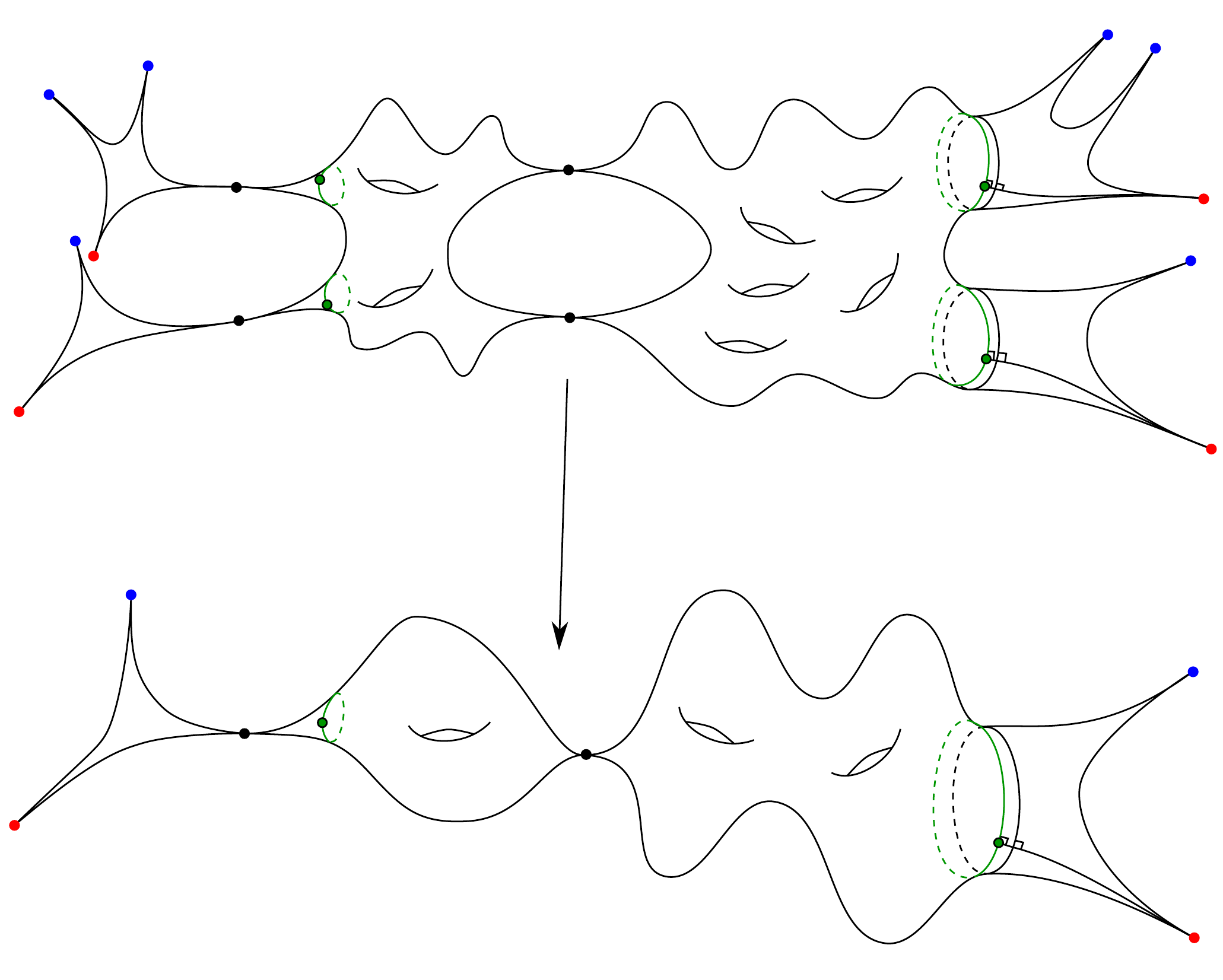
  \caption{This figure illustrates the various enumeration conventions. Note that the surfaces are drawn hyperbolically so the enumeration order of e.g.\ $\wt{q}_6$ and $\wt{q}_7$ is not clear from the picture. Also the colors are there just for visibility. Note that we do neither mark nor modify the interior nodes. However, there are new nodes appearing due to the existence of branched points on the left hand side. The small numbers show the local degree of the corresponding node/critical point/boundary component. Note that the reference curves $\Gamma_i(X)$ and $\Gamma_j(C)$ are not explicitly part of the data in $(C,u,X,\bq,\bp,\bz)$ but are uniquely fixed by the choice of a function $F$ beforehand, see \cref{sec:orb-structure-mod-space-bordered-hurwitz-covers-definitions}.}
  \label{fig:glue-def-choices-enumeration}
\end{figure}

\subsection{The Gluing Map on Morphisms}

Next we show how to extend a morphism
\begin{equation*}
  (\Phi,\varphi):(C,u,X,\bq,\bp,\bz)\lra (C',u',X',\bq',\bp',\bz')
\end{equation*}
in $\wt{\mcR}_{g,k,h,n}(T)$ to a morphism
\begin{equation*}
  (\wt{\Phi},\wt{\varphi}):(\wt{C},\wt{X},\wt{u},\wt{\bq},\wt{\bp},\wt{\bz})\lra (\wt{C'},\wt{X'},\wt{u'},\wt{\bq'},\wt{\bp'},\wt{\bz'})
\end{equation*}
in $\wt{\mcR}_{g,nd+k,h,2n}(\wt{T})$. This is a local problem. Since we modified the surfaces and their data only at boundaries and critical points we only need to extend the maps in those neighborhoods.

\subsubsection{Boundary Components}

Consider a boundary component $\del_iX$ together with its preimages $\del_jC$ where $u$ has degree $l_j$ and $\nu(j)=i$ on $(C,u,X,\bq,\bp,\bz)$ as well as the corresponding objects in $(C',u',X',\bq',\bp',\bz')$, denoted by $\del_jC'$ and $\del_iX'$. This means that $\Phi(\del_jC)=\del_jC'$ and $\varphi(\del_iX)=\del_iX'$.

In order to prove that the isomorphisms $\Phi$ and $\varphi$ extend over the glued in pairs of pants we need to show that the holomorphicity of these maps implies that on the boundary of the pairs of pants they are given by the identity map in the glued-in disc charts.

We will start with the target surfaces $\wt{X}$ and $\wt{X'}$. Consider $\varphi:\del_iX\lra\del_iX'$. Since $\varphi$ is an isometry it preserves the length of the boundary geodesic. Thus the glued-in pairs of pants are identical. Since they are glued in such a way that $y_i$ and $y_i'$ are identified with $z_i$ and $z_i'$ we can extend the map $\varphi$ on these pairs of pants by the identity map and use \cref{lem:local-gluing} to see that this gives a well-defined extension.

In the same way we can extend $\Phi$ as the identity map in charts over the pairs of pants $\Sigma(j)$. Recalling how we defined the enumeration and that orientations agree one sees easily that this extension of $\Phi$ to $\wt{\Phi}$ preserves enumerations of marked points.

\subsubsection{Punctures and Free Homotopy Classes}

There is nothing to show at the marked points $q_j$ and $p_i$ because we glued in some standard spheres with a fixed map. Thus we can extend $\Phi$ and $\varphi$ by the identity on these spheres. Thus we obtain maps $\wt{\Phi}:\wt{C}\lra\wt{C'}$ and $\wt{\varphi}:\wt{X}\lra\wt{X'}$ which intertwine $\wt{u}$ and $\wt{u'}$ and preserve all the marked points and their enumerations. Also, the free homotopy classes $\bG$ and $\bG'$ are mapped to each other via $\wt{\Phi}$ and $\wt{\varphi}$ because the original maps $\Phi$ and $\varphi$ preserved the boundary components of the bordered surfaces.

All in all we see that we obtain a map
\begin{equation*}
	\glue_{\Mor}:\Mor\wh{\mcR}_{g,k,h,n}(T)\lra\Mor\wt{\mcR}_{g,nd+k,h,2n}(\wt{T}).
\end{equation*}
It is easy to see that this gives indeed a functor $\glue:\wh{\mcR}_{g,k,h,n}(T)\lra\wt{\mcR}_{g,nd+k,h,2n}(\wt{T})$.

\begin{rmk}
  Notice that this extension of the morphisms is in fact unique under our assumptions. At a boundary the holomorphic map on the glued punctured disc is determined by its boundary condition and this boundary condition is determined by requiring that certain points agree, in our case the (arbitrarily fixed) point $y_i$ corresponding to $1\in\DD$ and the unique endpoint of the lift of the positive real line segment on the disc to the fully branched marked point and $z_i$. On the other hand at a pair of corresponding punctures we added spheres and required the marked points to be mapped to a specific marked point. But since these are at least three points this also fixes the biholomorphism.
\end{rmk}

\subsection{Properties of the \texorpdfstring{$\boldsymbol{\glue}$}{glue}-Functor}

Next we want to analyze the functor $\glue$ a bit further. First we show that $\glue$ is essentially surjective and afterwards we determine the fibre of $\glue|_{\Ob}$, it will consist of a single point or a product of circles if there are degenerate boundary components. At the end we will see that the same property holds for $\glue|_{\Mor}$.

\begin{lem}
  The functor $\glue:\wh{\mcR}_{g,k,h,n}(T)\lra\wt{\mcR}_{g,nd+k,h,2n}(\wt{T})$ is essentially surjective.
  \label{lem:glue-functor-surjectivity}
\end{lem}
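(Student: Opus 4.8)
�The plan is to show that every object $(\wt{C},\wt{u},\wt{X},\wt{\bq},\wt{\bp},\bG)\in\Ob\wt{\mcR}_{g,nd+k,h,2n}(\wt{T})$ is isomorphic to one in the image of $\glue$ by reversing the gluing construction: cut off the glued-in pieces along the geodesic representatives of the curves in $\bG$ and recover a bordered Hurwitz cover. First I would pass to the hyperbolic picture, uniformizing $\wt{X}$ and $\wt{C}$ so that all marked points and nodes are cusps and $\wt{u}$ is a local isometry (using \cref{lem:hol-map-loc-isom}). By the definition of $\wt{\mcR}_{g,nd+k,h,2n}(\wt{T})$, each curve in $\bG$ is essential and bounds either a disc with exactly two marked points whose labels differ by one, or a disc containing a node whose far component is a twice-marked sphere; moreover all branch points of $\wt{u}$ lie in such regions. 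I would take the unique geodesic representative $\gamma_r$ of each $[\bG_r]$ and cut $\wt{X}$ along $\bigsqcup_r\gamma_r$, discarding the components on the ``capped'' side. By \cref{rmk:genus-preimage-adjoined-hurwitz-cover} and the branching conditions in $\wt{T}$, the preimage of each such capped region under $\wt{u}$ is a disjoint union of discs (or completely-nodal spheres), each mapping with the standard local model $z\mapsto z^{l_j}$; so cutting $\wt{C}$ along the lifted geodesics $u^{-1}(\gamma_r)$ (cf.\ \cref{rmk:lift-simple-homotopy-classes}) yields a bordered surface $C$ with the right number of boundary components and the restriction $u\coloneqq\wt{u}|_C:C\lra X$ is a bordered Hurwitz cover of type $T$.

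The next step is to recover the boundary marked points $\bz$. After cutting, each boundary geodesic $\gamma_r$ on $X$ is (by construction of $\bG$, which consists precisely of images of former boundary curves and reference curves at nodes) the geodesic whose length equals that of the original $\del_iX$; the reference curve $\Gamma_i(X)$ close to it then carries a distinguished point, namely the endpoint of the perpendicular geodesic running up the cusp $\wt{\bp}_{2i-1}$ (this is exactly the point $y_i$ of the gluing construction). I would define $z_j\in\Gamma_j(C)$ as the corresponding endpoint on the lifted reference curve on $C$; the compatibility condition $u(z_j)=u(z_l)$ whenever $\nu(j)=\nu(l)$ holds automatically because all these points are lifts of the single point $y_i$ under the covering $u$. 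This produces an object $(C,u,X,\bq,\bp,\bz)\in\Ob\wh{\mcR}_{g,k,h,n}(T)$, and one checks directly from \cref{prop:eq-class-hurwitz-disc} and the normal form of $\wt{u}$ on the glued pieces that $\glue(C,u,X,\bq,\bp,\bz)$ is isomorphic to the original object: the two objects agree on the cut-away core and the glued-in caps are determined up to unique biholomorphism by the pair of pants $\Sigma(i)$ and the choice of $z_j$, so one builds the isomorphism componentwise and glues it via \cref{lem:local-gluing}.

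The main obstacle I expect is the bookkeeping needed to match the combinatorial data $\wt{T}$ from \cref{eq:def-modified-comb-data} on the nose: one must verify that the cut surface has exactly $k$ boundary components with the degrees $l_j$ in the right positions, that the enumeration of $\bq$ restricts consistently, and that the capped regions really do have the genus-zero, one-fully-branched-point structure forced by $\wt{T}$ and $\bG$ — this last point is where \cref{rmk:genus-preimage-adjoined-hurwitz-cover} does the real work, since a priori a component of the preimage of a twice-marked disc could be disconnected or have positive genus, and the Riemann--Hurwitz count there rules that out. A secondary subtlety is that the geodesic representative of a curve in $\bG$ might, for a very short boundary, wander outside the naive ``collar'' region, so I would invoke the collar lemma (\cref{lem:collar-neighborhood}, \cref{lem:collar-and-cusp-neighborhood-disjoint}) to guarantee the cutting curves are disjoint and separate the capped pieces cleanly; but since we only need essential surjectivity up to isomorphism, no quantitative control is required and these are purely qualitative topological statements.
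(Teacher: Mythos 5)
Your proposal follows the same route as the paper's own proof: uniformize, cut along the unique geodesic representatives of the essential classes in $\bG$ (and remove the attached nodal spheres in the degenerate case), use \cref{rmk:genus-preimage-adjoined-hurwitz-cover} and the Riemann--Hurwitz count to conclude the capped preimages are discs, invert the combinatorial assignment $T\mapsto\wt{T}$ to re-enumerate, and then reconstruct the boundary marked points $\bz$. So the core is right and matches the paper.

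Two points deserve repair. First, you treat all curves in $\bG$ as essential and recover $z_j$ as a lift of a distinguished point on the reference curve of $X$, but the definition of $\wt{\mcR}_{g,nd+k,h,2n}(\wt{T})$ also allows elements of $\bG$ that are contractible to a node (the case where the attached piece is a nodal sphere with two marked points, i.e.\ where the original boundary $\del_jC$ was degenerate). There is no boundary geodesic and no lift of $y_i$ in that case; instead one simply removes the nodal sphere, and the point $z_j$ on the cusp reference curve must be chosen arbitrarily — it cannot be recovered from the glued object. The paper says this explicitly and it is exactly why $\glue$ is not injective on objects (\cref{lem:glue-functor-objects-not-injective}). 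Your proposal is silent on this case, so as written the construction does not produce a well-defined object of $\wh{\mcR}_{g,k,h,n}(T)$ whenever $X$ has cuspidal boundary components.

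Second, a minor imprecision: in the $\glue$ construction the distinguished point $y_i$ lives on the boundary geodesic $\del_iX=\del\Sigma(i)$, not on the reference curve $\Gamma_i(X)$, and on the source each $\Gamma_j(C)$ carries $l_j$ lifts of $u(z_j)$, not one. Since you only need essential surjectivity, any choice of lift does produce an object mapping under $\glue$ to something isomorphic to the given one (by \cref{prop:eq-class-hurwitz-disc}); but if you want the stronger statement $\glue(C,u,X,\bq,\bp,\bz)=(\wt{C},\wt{u},\wt{X},\wt{\bq},\wt{\bp},\bG)$ on the nose — as the paper asserts — you must single out the lift compatible with the given enumeration of $\wt{\bq}$, e.g.\ by using the perpendicular geodesic going up the lowest-indexed cusp on the glued disc, which is what the paper does.
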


\begin{proof}
  Let a Hurwitz cover $(\wt{C},\wt{X},\wt{u},\wt{\bq},\wt{\bp},\wt{\bG})\in\Ob\wt{\mcR}_{g,nd+k,h,2n}(\wt{T})$ be given. First uniformize $X$ to obtain a hyperbolic metric. We will cut the Hurwitz cover along the curves in the essential classes in $\wt{\bG}$ and the cusps corresponding to spheres with two marked points in the target.

  In every essential class in $\wt{\bG}$ we can find a unique closed simple geodesic representative whose preimages under $\wt{u}$ are closed simple geodesics and separate the Hurwitz cover into a covering of a pair of pants with boundary with two marked points, one of which is regular and the other one completely branched. Recall that by \cref{rmk:genus-preimage-adjoined-hurwitz-cover} the preimage of this pair of pants consists of a disjoint union of discs with marked points. Now cut the Hurwitz cover along these geodesics.

  At every nodal sphere with two branch points in $X$ we have again by \cref{rmk:genus-preimage-adjoined-hurwitz-cover} that its preimage consists of nodal spherical components. Remove these nodal spheres.

  As the map $T\lra\wt{T}$ defined in \cref{eq:def-modified-comb-data} is invertible we can define an enumeration of the new boundary components and cusps left over from the cutting procedure. This way we obtain a bordered Hurwitz cover $u:C\lra X$ of type $T$ with an enumeration of the boundary components and cusps. It remains to define the points on the reference curves. At cusps  we can use any point we want because they came from nodes where we do not have any kind of information that we can recover from the nodal sphere. At a essential curves giving rise to geodesic boundaries we can pick the end point of the unique hyperbolic geodesic perpendicular to this curve and going up the cusp with the lower index. This defines $\bz$.

  It is now easy to see that for these choices we have
  \begin{equation*}
    \glue(C,u,X,\bq,\bp,\bz)=(\wt{C},\wt{X},\wt{u},\wt{\bq},\wt{\bp},\wt{\bG})
  \end{equation*}
  and that $(C,u,X,\bq,\bp,\bz)\in\Ob\wh{\mcR}_{g,k,h,n}(T)$. This is due to the fact that by uniformizing and gluing hyperbolic surfaces along their geodesic boundary as well as cutting along geodesics we effectively realize $C$ as a subset of $\wt{C}$ and similarly for $X$. Also by \cref{prop:eq-class-hurwitz-disc} there exists only one equivalence class of Hurwitz covers that we can glue to the boundary component of the fixed type and using the given enumeration.
\end{proof}

Notice that $\glue$ is not injective on objects.

\begin{lem}
  The fibre of the $\glue$-functor on objects is given by
  \begin{equation*}
    \glue_{\Ob}^{-1}(\wt{C},\wt{X},\wt{u},\wt{\bq},\wt{\bp},\wt{\bG})=\{(C,u,X,\bq,\bp,\bz)\mid \bz_j\text{ arbitrary at punctures }q_j\}
  \end{equation*}
  where $C,u,X,\bq,\bp$ and $\bz_j$ at non-degenerate boundary components are fixed by the image $(\wt{C},\wt{X},\wt{u},\wt{\bq},\wt{\bp},\wt{\bG})$.
  \label{lem:glue-functor-objects-not-injective}
\end{lem}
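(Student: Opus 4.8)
The plan is to analyze the construction of $\glue$ step by step and check at which points data is genuinely lost versus merely reorganized. The key observation is that the entire construction of $\glue$ on objects consists of gluing \emph{hyperbolic} surfaces along geodesic boundaries (and adding fixed model spheres at punctures), and by \cref{lem:local-gluing} such a gluing is determined by the boundary length together with the identification of a single pair of marked points on the boundary geodesics. Thus the only freedom in recovering $(C,u,X,\bq,\bp,\bz)$ from its image lies in those ingredients of the original datum that were \emph{not} used to determine the glued surface.

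First I would show the ``$\supseteq$'' inclusion: any $(C,u,X,\bq,\bp,\bz)$ which agrees with a fixed representative in $C,u,X,\bq,\bp$ and in the boundary marked points $\bz_j$ at honest (non-degenerate, i.e.\ circle) boundary components, but has arbitrary $\bz_j$ at the punctures $q_j$, maps to the same object under $\glue_{\Ob}$. This is immediate from the construction: at a puncture $q_j$ we glue in a fixed model sphere $\CC P^1(j)$ with its fixed cover $f_j$ and fixed marked points, independently of $\bz_j$ (indeed $z_j$ never enters the construction at a puncture — it is only used at circle boundaries to fix the rotation identifying $\del_jC$ with $S^1$ via \cref{prop:eq-class-hurwitz-disc}). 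Hence all these objects have the same image.

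Next I would show ``$\subseteq$'', i.e.\ that $C,u,X,\bq,\bp$ and the $\bz_j$ at circle boundaries are uniquely determined by the image. For this I would essentially invoke the reconstruction procedure already carried out in the proof of \cref{lem:glue-functor-surjectivity}: uniformize $\wt X$, find the unique closed simple geodesic representatives in the essential classes of $\wt{\bG}$, cut along them and remove the completely nodal spherical components, and use the invertibility of the combinatorial map $T\mapsto\wt T$ from \cref{eq:def-modified-comb-data} to re-enumerate. Because cutting along geodesics and uniformizing realize $C$ (resp.\ $X$) as a canonical sub-hyperbolic-surface of $\wt C$ (resp.\ $\wt X$), this recovers $(C,u,X,\bq,\bp)$ up to the appropriate equivalence. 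For $\bz_j$ at a circle boundary $\del_jC$: the corresponding essential class of $\wt{\bG}$ gives a geodesic on $\wt X$ whose perpendicular geodesic running up the cusp with lower index has a well-defined endpoint, which is precisely $u(z_j)=y_i$; lifting this geodesic to $\wt C$ recovers $z_j$ on the reference curve $\Gamma_j(C)$. At a puncture $q_j$, however, the reference curve $\Gamma_j(C)$ has shrunk to a cusp and the model sphere $\CC P^1(j)$ carries no memory of the original position of $z_j$ on the (now collapsed) boundary; so $\bz_j$ there is genuinely free. Combining the two inclusions gives the claimed description of the fibre.

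The main obstacle I anticipate is making the uniqueness statement precise about \emph{what} ``fixed by the image'' means — i.e.\ being careful that $(C,u,X,\bq,\bp)$ is only determined up to isomorphism in $\wh{\mcR}_{g,k,h,n}(T)$, not literally as a set-theoretic object, so that the fibre description should really be read modulo the obvious identifications (or, equivalently, one fixes one representative of each cutting and then the fibre over it is exactly the stated set). I would handle this by phrasing the argument as: the cut-and-uniformize operation is a well-defined inverse to $\glue_{\Ob}$ on the complement of the puncture-twist freedom, so the fibre is a torsor over $\prod_{j:\,\bq_j\text{ a puncture}} \Gamma_j(C)$, i.e.\ a product of circles, with a canonical base point given by the choice made in \cref{lem:glue-functor-surjectivity}. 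The remaining checks (that varying $\bz_j$ at a puncture stays within $\Ob\wh{\mcR}_{g,k,h,n}(T)$ and that the constraint $u(z_j)=u(z_l)$ for $\nu(j)=\nu(l)$ imposes nothing new at a puncture, since both sides are the collapsed image point) are routine.
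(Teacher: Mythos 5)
Your approach is the same as the paper's, which simply defers to \cref{lem:glue-functor-surjectivity}: the cut-and-uniformize reconstruction recovers everything except the $\bz_j$ at punctures, because the model sphere $\CC P^1(j)$ glued at a cusp is built without reference to $z_j$, whereas at a circle boundary the point $z_j$ pins down the rotation in \cref{prop:eq-class-hurwitz-disc} and can be recovered from the perpendicular geodesic. That is exactly the right picture, and your two-inclusion write-up is a reasonable expansion of the paper's one-line proof.

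There is, however, a real inaccuracy in the closing remarks that is worth correcting, since it distorts the geometry of the fibre. You say that at a puncture ``the reference curve $\Gamma_j(C)$ has shrunk to a cusp'' and that ``the constraint $u(z_j)=u(z_l)$ for $\nu(j)=\nu(l)$ imposes nothing new at a puncture, since both sides are the collapsed image point''. Neither is true. The reference curve $\Gamma_j(C)$ never degenerates: by construction it is a horocycle of length $l_j F(0)>0$ around the cusp $q_j$, and $z_j$ lives on this curve, not at the cusp. Likewise, $u(z_j)$ and $u(z_l)$ lie on the target reference curve $\Gamma_{\nu(j)}(X)$, which is a horocycle of length $F(0)>0$ around $p_{\nu(j)}$ — it is not a point — and the condition $u(z_j)=u(z_l)$ remains a genuine one-dimensional constraint that couples the $z_j$ with the same $\nu(j)$. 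Consequently the fibre is \emph{not} a torsor over $\prod_{j:\,\bq_j\text{ a puncture}}\Gamma_j(C)$; it is cut down to the subset of that product where all $u(z_j)$ with equal $\nu(j)$ agree. This does not affect the correctness of the lemma as stated — the fibre is by definition a subset of $\Ob\wh{\mcR}_{g,k,h,n}(T)$, so the constraint is silently enforced by ``$\bz_j$ arbitrary'' being read as ``arbitrary subject to the defining conditions of the object set'' — but your parenthetical justification and the torsor description are wrong, and the argument is cleaner if you simply note that the constraint is enforced by membership in $\Ob\wh{\mcR}_{g,k,h,n}(T)$ rather than claiming it is vacuous.
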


\begin{proof}
  This is clear from the proof of \cref{lem:glue-functor-surjectivity}.
\end{proof}

\begin{rmk}
  Notice that the $T^n$-action on $\Ob\wt{\mcR}_{g,nd+k,h,2n}(\wt{T})$ given by Fenchel--Nielsen twisting along the curves in $\bG$ has a fixed point when their lengths are zero. On $\wh{\mcR}_{g,k,h,n}(T)$, however, the corresponding action by rotating the points $\bz$ is actually free. This was one of the major ideas of Mirzakhani in \cite{mirzakhani_weil-petersson_2007}.
\end{rmk}

 Now suppose we are given two bordered Hurwitz covers
 \begin{equation*}
   (C,u,X,\bq,\bp,\bz)\text{ and }(C',u',X',\bq',\bp',\bz')
 \end{equation*}
 which are mapped under $\glue$ to $(\wt{C},\wt{u},\wt{X},\wt{\bq},\wt{\bp},\wt{\boldsymbol{\Gamma}})$ and $(\wt{C'},\wt{u'},\wt{X'},\wt{\bq'},\wt{\bp'},\wt{\boldsymbol{\Gamma'}})$ and also a morphism $(\wt{\Phi},\wt{\varphi}):(\wt{C},\wt{u},\wt{X},\wt{\bq},\wt{\bp},\wt{\boldsymbol{\Gamma}})\lra(\wt{C'},\wt{u'},\wt{X'},\wt{\bq'},\wt{\bp'},\wt{\boldsymbol{\Gamma'}})$. Then we would like to show that we can construct from this morphism of Hurwitz covers a unique morphism of the corresponding Hurwitz covers with boundary, i.e.
\begin{equation*}
  (\Phi,\varphi):(C,u,X,\bq,\bp,\bz)\lra (C',u',X',\bq',\bp',\bz').
\end{equation*}
However, this statement can not be true because in the gluing construction at the nodes we forget the marked point $z$ so this map can only be injective up to the position of the points. We prove

\begin{prop}
  Consider the functor $\glue:\wh{\mcR}_{g,k,h,n}(T)\lra\wt{\mcR}_{g,nd+k,h,2n}(\wt{T})$ and a morphism $(\wt{\Phi},\wt{\varphi})\in\wt{\mcR}_{g,nd+k,h,2n}(\wt{T})$ with
  \begin{equation*}
    (\wt{\Phi},\wt{\varphi}):(\wt{C},\wt{u},\wt{X},\wt{\bq},\wt{\bp},\wt{\boldsymbol{\Gamma}})\lra(\wt{C'},\wt{u'},\wt{X'},\wt{\bq'},\wt{\bp'},\wt{\boldsymbol{\Gamma'}}).
  \end{equation*}
  Then for every $(C,u,X,\bq,\bp,\bz)\in\glue_{\Ob}^{-1}((\wt{C},\wt{u},\wt{X},\wt{\bq},\wt{\bp},\wt{\boldsymbol{\Gamma}}))$ there exists a unique morphism
  \begin{equation*}
    (\Phi,\varphi):(C,u,X,\bq,\bp,\bz)\lra (C',u',X',\bq',\bp',\bz'')
  \end{equation*}
  which agrees with a given $(\wt{\Phi},\wt{\varphi})$ everywhere except on the points $z_j\in\Gamma_j(C)$ at punctures $q_j$ and is thus mapped to $(\wt{\Phi},\wt{\varphi})$ under $\glue$.
  \label{prop:glue-morphisms-unique}
\end{prop}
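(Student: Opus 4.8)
The plan is to prove this by reversing the gluing construction, working locally at each region where surfaces were modified, exactly as in the proof of \cref{lem:glue-functor-surjectivity} but now carrying along the morphism $(\wt{\Phi},\wt{\varphi})$. The key observation is that the gluing construction realizes $C$ and $X$ as \emph{subsets} of $\wt{C}$ and $\wt{X}$ (the complement of the glued-in hyperbolic pairs of pants $\Sigma(i)$, the glued-in covering pairs of pants $\wt{\Sigma}(j)$, and the glued-in spheres $\CC P^1(i)$ and $\CC P^1(j)$), up to cutting along the geodesic representatives of the essential classes in $\wt{\bG}$ and removing nodal spheres. So first I would recover $(C',u',X',\bq',\bp',\bz'')$ from $(\wt{C'},\wt{u'},\wt{X'},\wt{\bq'},\wt{\bp'},\wt{\boldsymbol{\Gamma'}})$ via the cutting procedure of \cref{lem:glue-functor-surjectivity} — this is forced up to the choice of $\bz''$ at the degenerate boundaries, and I would simply \emph{define} $\bz''$ to be the image under $\wt{\Phi}$ of the points $z_j$ (which is legitimate precisely because we are allowed to change $\bz'$ there, and it is the only choice that will make the square commute).

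Then the main step is to show $(\wt{\Phi},\wt{\varphi})$ restricts to biholomorphisms $\Phi:C\lra C'$ and $\varphi:X\lra X'$. Since $\wt{\varphi}$ is an isometry for the uniformized hyperbolic metrics (by \cref{lem:hol-map-loc-isom}), it maps the geodesic representative of each essential class in $\wt{\bG}$ to the geodesic representative of the corresponding class in $\wt{\boldsymbol{\Gamma'}}$, because $\wt{\varphi}_*(\bG_r) = \bG'_r$ by the definition of a morphism in $\wt{\mcR}$; hence it maps the region $X\subset\wt X$ (the side of these geodesics not containing the glued-in pants) diffeomorphically onto $X'\subset\wt{X'}$. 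The same argument applies to $\wt{\Phi}$ using the lifted free homotopy classes (\cref{rmk:lift-simple-homotopy-classes}): their geodesic representatives in $\wt C$ map to those in $\wt{C'}$ because $\wt u'\circ\wt\Phi = \wt\varphi\circ\wt u$ and $\wt u, \wt u'$ are local isometries, so $\wt\Phi$ carries $C\subset\wt C$ onto $C'\subset\wt{C'}$. Restricting the commuting square $\wt u'\circ\wt\Phi = \wt\varphi\circ\wt u$ to $C$ gives $u'\circ\Phi = \varphi\circ u$, and $\Phi,\varphi$ preserve the marked points $\bq,\bp$ (those are the ones not in the glued components, hence unaffected) and the enumerations. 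By construction of $\bz''$, we have $\Phi(z_j) = z_j''$, so $(\Phi,\varphi)$ is a morphism in $\wh{\mcR}_{g,k,h,n}(T)$.

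Next I would check that $\glue(\Phi,\varphi) = (\wt\Phi,\wt\varphi)$. This follows from the uniqueness of the extension of a morphism over the glued regions, which was established in the remark following the construction of $\glue_{\Mor}$: at a non-degenerate boundary the extension is forced by its boundary values, which in turn are pinned down by requiring that $1\in\DD$ (i.e.\ $y_i$, resp.\ $z_i$) be preserved; at the glued-in spheres it is forced by the three fixed marked points. Since $(\wt\Phi,\wt\varphi)$ restricts to $(\Phi,\varphi)$ on $C$ and $X$ and already satisfies these normalizations, it must coincide with $\glue(\Phi,\varphi)$ on the glued regions as well. Finally uniqueness of $(\Phi,\varphi)$ itself: any morphism agreeing with $(\wt\Phi,\wt\varphi)$ away from the punctures $z_j$ has its restriction to $C$ and $X$ determined, hence equals the one constructed.

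\textbf{Main obstacle.} The delicate point I expect is making precise that the restriction of $\wt\varphi$ (and $\wt\Phi$) to the cut-out subsurface is honestly a biholomorphism \emph{onto} $X'$ (resp.\ $C'$), rather than onto some other region of $\wt{X'}$ — this requires knowing that the cutting procedure of \cref{lem:glue-functor-surjectivity} applied to $(\wt{C'},\wt{X'},\ldots)$ genuinely returns the given $(C',u',X',\bq',\bp')$ and not merely an isomorphic copy, i.e.\ that $\glue_{\Ob}^{-1}$ is described exactly as in \cref{lem:glue-functor-objects-not-injective}. One must also handle the degenerate (cusp) boundary components carefully, since there the point $\bz_j$ is genuinely lost and $\wt\Phi$ need not be constrained there; this is exactly why the statement only asserts agreement away from those $z_j$ and allows $\bz'$ to be replaced by $\bz''$.
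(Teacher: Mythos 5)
Your approach is essentially the same as the paper's: uniformize, use that $\wt{\Phi},\wt{\varphi}$ are isometries preserving the multicurve to show they map the boundary geodesics to the boundary geodesics, restrict to get $\Phi,\varphi$, and then handle the reference-curve marked points $\bz$. The "Main obstacle" you flag, however, slightly misses where the real work lies.

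The genuine issue is the following. You define $\bz''_j\coloneqq\wt\Phi(z_j)$ for all $j$ and then want to conclude $\glue_{\Mor}(\Phi,\varphi)=(\wt\Phi,\wt\varphi)$ by uniqueness of extensions. But $\glue_{\Mor}(\Phi,\varphi)$ is a morphism whose target object is $\glue_{\Ob}(C',u',X',\bq',\bp',\bz'')$, while $(\wt\Phi,\wt\varphi)$ has target $(\wt{C'},\wt{u'},\wt{X'},\wt{\bq'},\wt{\bp'},\wt{\bG'})$. By \cref{lem:glue-functor-objects-not-injective}, these coincide only if $\bz''_j=z'_j$ at every \emph{non-degenerate} boundary component (at cusps the choice is indeed free). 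This equality $\wt\Phi(z_j)=z'_j$ is precisely what the paper spends its effort proving and what your appeal to "uniqueness of extensions" silently assumes: the argument is a local one on the pairs of pants $\wt\Sigma(j)\subset\wt{C}$ and $\wt\Sigma'(j)\subset\wt{C'}$, which come equipped with charts to $\DD$ (unique up to rotation, sending the fully branched cusp to $0$). Because $\wt\Phi$ preserves the enumeration of the other $l_j$ simple marked points, this rotation is pinned down and $\wt\Phi$ reads as the identity in these charts; since the gluing identifies $1\in\DD$ with the boundary points corresponding to $z_j$ and $z'_j$, this gives $\wt\Phi(z_j)=z'_j$. Without this step the target of $\glue_{\Mor}(\Phi,\varphi)$ might be a rotated copy of $\wt{C'}$ rather than $\wt{C'}$ itself, and the "uniqueness of extensions" comparison would be between morphisms with different codomains, hence meaningless. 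If you insert this local chart argument before the equality $\glue_{\Mor}(\Phi,\varphi)=(\wt\Phi,\wt\varphi)$, your proof coincides with the paper's.
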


\begin{proof}
To this purpose uniformize first the surfaces $\wt{C},\wt{X},\wt{C'}$ and $\wt{X'}$ to get hyperbolic metrics such that all the marked points and nodes are punctures. For these metrics the maps $\wt{\Phi}$ and $\wt{\varphi}$ are isometries and $\wt{u}$ and $\wt{u'}$ are local isometries.

Now consider the unique geodesic representatives in $\boldsymbol{\Gamma}$ and $\boldsymbol{\Gamma'}$ which are essential curves. By construction of the closed surfaces we know that these geodesics are given by the boundary curves $\del_jC\subset \wt{C}$, $\del_jC'\subset \wt{C'}$, $\del_iX\subset \wt{X}$ and $\del_iX'\subset \wt{X'}$. Since $\wt{\Phi}$ and $\wt{\varphi}$ are isometries and preserve the free homotopy classes they map these boundary components onto each other.

Thus we can restrict $\wt{\Phi}$ and $\wt{\varphi}$ to the interior surfaces $C, C', X$ and $X'$ by cutting at the boundaries and the nodes having one component with marked points. These maps $\Phi:C\lra C'$ and $\varphi:X\lra X'$ obviously intertwine $u$ and $u'$ and also preserve the enumeration of $\bq$ and $\bz$. However we need to check whether they preserve the marked points $\bz$ and $\bz'$ as well.

Consider first $z_j\in\Gamma_j(C)$ with $\bq_j=q_j$ a critical point. Then we can not say anything about the relation of $z_j'$ and $\Phi(z_j)$ as we do not use these points in the construction of the closed Hurwitz cover. Thus the preimage of the map $\glue$ always consists of all possible choices for such marked points $z_j$ close to critical points.

Now consider $z_j\in\Gamma_j(C)\simeq \del_jC$ close to an actual boundary component $\bq_j=\del_jC$. For this purpose switch to a local description of the pairs of pants $\Sigma(j)\subset\wt{C}$ and $\Sigma'(j)\subset\wt{C}$. We defined the maps $\wt{u}$ and $\wt{u'}$ on them by first choosing biholomorphisms with the unit disc. They were required to map the lowest-indexed cusp to $0\in\DD$ and the boundary to the unit circle meaning they are unique up to a rotation. However, since $\Phi$ also preserves the enumeration of the other marked points this means that the map $\Phi$ is given by the identity on these discs. Thus $1$ is mapped to $1$ and we can recover $z_j$ and $z_j'$ as the preimages of $1$ under their respective charts. Thus $\Phi(z_j)=z_j'$.
\end{proof}

\section{Pulling Back the Orbifold Structure}

\label{sec:pull-back-orbifold-structure-via-glue}

\subsection{Orbifold Structure on \texorpdfstring{$\boldsymbol{\wt{\mcR}_{g,k,h,n}(T)}$}{R(T)}}

\label{sec:orbifold-structure-wtmcr}

In this section we will define an orbifold structure for $\wt{\mcR}_{g,k,h,n}(T)$ for which we will use essentially that the object and morphism spaces are locally homeomorphic to those of $\mcM_{g,k,h,n}(T)$.\footnote{Note that we temporarily use $g,k,h,n$ and $T$ in this section for readability. This section will of course be applied to $\wt{g},\wt{k},\wt{h},\wt{n}$ and $\wt{T}$.}

Consider an object $\lambda=(C,u,X,\bq,\bp,\boldsymbol{\Gamma})$ of a class in $\wt{\mcR}_{g,k,h,n}(T)$. First we can build the family $\Psi^{\lambda}:O^{\lambda}\lra\Ob\mcR_{g,k,h,n}(T)$ as in \cref{sec:local-parametrizations} after choosing the objects from \cref{sec:choices-hyp}, i.e.\ hyperbolic metrics, sets of decomposing curves, compact sets including the boundary horocycles away from the nodes, disc structures as well as some roots of unity $\xi$ for all the nodes. We require that the sets of decomposing curves \emph{include} the free homotopy classes of curves $\bG$. Regarding notation these choices are again included in the symbol $\lambda$.

This family describes a variation of $C, X$ and $u$ as well as $\bq$ and $\bp$, so it remains to describe the free homotopy classes of curves at the neighboring Hurwitz covers. Suppose the marked points $\bp_i$ and $\bp_{i+1}$ are on the same pair of pants bounded by either a node or a curve in $\boldsymbol{\Gamma}$. If it is bounded by a curve then this curve still exists in the glued surface $\Psi^{\lambda}(b)$ for $b$ close to the original surface $\lambda$ and we can use this one. If it is bounded by a node then there are again two possibilities. Either on $\Phi^{\lambda}(b)$ this node was not modified and we can keep the same class of curves or it was opened. If it was replaced by a cylinder we define the corresponding new element in $\bG$ to be the curve wrapping once around the glued in cylinder. This means that as objects we can still use
\begin{equation*}
	\obj\wt{\mcM}_{g,k,h,n}(T)\coloneqq \bigsqcup_{\lambda\in\Lambda}O^{\lambda},
\end{equation*}
where the $\lambda$ now include a choice of free homotopy classes of curves $\bG$, the maps $\Psi^{\lambda}$ include the construction as outlined above and the set $\Lambda$ ranges over a large enough set of possible choices to cover all equivalence classes in $|\wt{\mcR}_{g,k,h,n}(T)|$. As in \cref{sec:constr-an-orbif} this defines a manifold $\Ob\wt{\mcM}_{g,k,h,n}(T)$.

Now we need to look at the morphism set. Again, as in \cref{sec:mfd-structure-morphisms} we define $M(\lambda,\lambda')$ as the set of all fibre isomorphisms of correct order between $\Psi^{\lambda}(O^{\lambda})$ and $\Psi^{\lambda'}(O^{\lambda'})$ where we require now that each fibre isomorphism also respects the homotopy classes of curves $\bG$. Also we define
\begin{equation*}
  \Mor\wt{\mcM}_{g,k,h,n}(T)\coloneqq \bigsqcup_{\lambda,\lambda'\in\Lambda}M(\lambda,\lambda').
\end{equation*}
 Now we need to show that these sets $M(\lambda,\lambda')$ are still manifolds. Recall that we obtained charts for these sets by continuing the morphism between the central fibres $\lambda$ and $\lambda'$ in a unique way by looking at the universal unfoldings of the source and target surfaces. Notice that the free homotopy classes of curves defined above in neighborhoods of $\lambda$ and $\lambda'$ in the family are clearly preserved by the extended morphisms from \cref{sec:mfd-structure-morphisms}. Thus we get again manifold charts for $M(\lambda,\lambda')$. All the other properties of an orbifold groupoid follow immediately from the fact that $\mcM_{g,k,h,n}(T)$ is an orbifold groupoid.

This equips $|\wt{\mcM}_{g,k,h,n}(T)|$ with an orbifold structure and we have the inclusion functor
\begin{equation}
  \iota:\wt{\mcM}_{g,k,h,n}(T)\lra\wt{\mcR}_{g,k,h,n}(T)
  \label{eq:inclusion-bordered-moduli-spaces}
\end{equation}
which still has the same properties as in \cref{prop:properties-functors-between-groupoids}, i.e.\ it is not surjective on morphisms as we dropped all those that had a lower order in $\wt{\mcM}_{g,k,h,n}(T)$.

\subsection{Orbifold Structure on \texorpdfstring{$\boldsymbol{\wh{\mcR}_{g,k,h,n}(T)}$}{R(T)}}

We will see later that there is a symplectic Hamiltonian $T^n$-action on $\wt{\mcM}_{g,nd+k,h,2n}(\wt{T})$ by rotating the glued-in pairs of pants in the target surface. Unfortunately this action is not free in the nodal case which is why we look at the space $\wh{\mcM}_{g,k,h,n}(T)$ instead. This space will have a free $T^n$-action by rotating the points on the reference curves close to the boundaries and cusps. Unfortunately this space is \emph{not} an orbifold as by \cref{lem:glue-functor-objects-not-injective} an inner point in the orbifold $\wt{\mcM}_{g,nd+k,h,2n}(\wt{T})$ gets replaced by a circle. Thus it is rather something like an orbifold with corners, a more difficult concept that would lead us to far away. Instead we will equip the ``inner'' part consisting of actual bordered Hurwitz covers with an orbifold structure and the whole orbit space with a topology only. This will be enough to describe the limit Chern class at the fixed locus of the torus action.

Define $\wh{\mcR}^{\square}_{g,k,h,n}(T)$ as the full subcategory of $\wh{\mcR}_{g,k,h,n}(T)$ such that the Hurwitz covers have actual boundary components and no cusps. Notice that they can still have \emph{interior} nodes, i.e.\ the full subcategory of smooth Hurwitz covers will still be denoted by $\wh{\mcR}^{\circ}_{g,k,h,n}(T)$. Also define $\wt{\mcR}^{\square}_{g,nd+k,h,2n}(\wt{T})$ as the full subcategory of
\begin{equation*}
  \glue_{\Ob}\left(\Ob\wh{\mcR}^{\square}_{g,k,h,n}(T)\right)
\end{equation*}
in $\wt{\mcR}_{g,nd+k,h,2n}(\wt{T})$. This category consists of those Hurwitz covers $(C,u,X,\bq,\bp,\bG)$ where every element in the multicurve $\bG$ bounds a disc with two marked points, i.e.\ the curves are not contractible to a node. Note that this is an open condition\footnote{Although we will define the topology only in \cref{sec:topology-moduli-spaces}.} and thus $\Ob\wt{\mcR}^{\square}_{g,nd+k,h,2n}(\wt{T})\subset\Ob\wt{\mcR}_{g,nd+k,h,2n}(\wt{T})$ is an open subset, i.e.\ we can restrict the orbifold structure $\wt{\mcM}_{g,nd+k,h,2n}(\wt{T})$ to this subcategory.

By \cref{lem:glue-functor-surjectivity}, \cref{lem:glue-functor-objects-not-injective} and \cref{prop:glue-morphisms-unique} we have that

\begin{equation*}
  \glue:\wh{\mcR}^{\square}_{g,k,h,n}(T)\lra\wt{\mcR}^{\square}_{g,nd+k,h,2n}(\wt{T})
\end{equation*}

is a category equivalence which induces a bijection on orbit spaces and thus we can pull back the orbifold structure from $\wt{\mcM}^{\square}_{g,nd+k,h,2n}(\wt{T})$ in an obvious way. Therefore we have

\begin{lem}
  The groupoid $\wh{\mcR}^{\square}_{g,k,h,n}(T)$ carries a natural orbifold structure given by
  \begin{equation*}
    \wt{\mcM}^{\square}_{g,nd+k,h,2n}(\wt{T}) \lra \wt{\mcR}^{\square}_{g,nd+k,h,2n}(\wt{T}) \xrightarrow{\glue^{-1}} \wh{\mcR}^{\square}_{g,k,h,n}(T).
  \end{equation*}
  \label{lem:orbifold-structure-rtildesquare}
\end{lem}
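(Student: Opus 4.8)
The plan is to transport the orbifold-groupoid structure of $\wt{\mcM}^{\square}_{g,nd+k,h,2n}(\wt{T})$ to $\wh{\mcR}^{\square}_{g,k,h,n}(T)$ along the functor $\glue$, exploiting that $\glue$ restricts to a category equivalence between the two $\square$-subcategories. First I would record that $\wt{\mcR}^{\square}_{g,nd+k,h,2n}(\wt{T})$ is a full subcategory of $\wt{\mcR}_{g,nd+k,h,2n}(\wt{T})$ whose objects form an open subset, since requiring each component of the multicurve $\bG$ to bound a disc with two marked points rather than degenerate to a node is an open condition (once the topology of \cref{sec:topology-moduli-spaces} is available). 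Hence $\wt{\mcM}^{\square}_{g,nd+k,h,2n}(\wt{T})$, defined as the full subgroupoid of $\wt{\mcM}_{g,nd+k,h,2n}(\wt{T})$ on the open preimage of $\wt{\mcR}^{\square}$ in $\Ob\wt{\mcM}_{g,nd+k,h,2n}(\wt{T})$, is again an ep-Lie groupoid — étaleness, properness and the Lie structure all restrict to open full subcategories — and it carries the restriction $\iota$ of the inclusion functor \cref{eq:inclusion-bordered-moduli-spaces}, which retains the properties listed in \cref{prop:properties-functors-between-groupoids}.

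Next I would verify that $\glue\colon\wh{\mcR}^{\square}_{g,k,h,n}(T)\lra\wt{\mcR}^{\square}_{g,nd+k,h,2n}(\wt{T})$ is an equivalence of categories which descends to a bijection of orbit spaces. Essential surjectivity is precisely \cref{lem:glue-functor-surjectivity}. Full faithfulness is where the $\square$-hypothesis is used: on the $\square$-subcategory every $\bq_j$ is a genuine geodesic boundary and there are no cusps $q_j$ coming from the bordered source surface, so the torus of choices of $\bz_j$ at punctures described in \cref{lem:glue-functor-objects-not-injective} collapses to a single point (making $\glue$ injective on objects), and likewise the only ambiguity in the lifted morphism of \cref{prop:glue-morphisms-unique} is at the marked points $z_j$ on reference curves near such punctures, which do not occur here, so the morphism lift is unique. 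Thus $\glue$ is fully faithful and essentially surjective, hence admits a quasi-inverse $\glue^{-1}\colon\wt{\mcR}^{\square}_{g,nd+k,h,2n}(\wt{T})\lra\wh{\mcR}^{\square}_{g,k,h,n}(T)$ — a strict inverse on its image, extended over all of $\wt{\mcR}^{\square}$ by essential surjectivity — with $\glue$ and $\glue^{-1}$ descending to mutually inverse bijections of orbit spaces.

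Then I would define the orbifold structure on $\wh{\mcR}^{\square}_{g,k,h,n}(T)$ to be the orbifold groupoid $\wt{\mcM}^{\square}_{g,nd+k,h,2n}(\wt{T})$ together with the composite functor
\begin{equation*}
  \wt{\mcM}^{\square}_{g,nd+k,h,2n}(\wt{T}) \xrightarrow{\ \iota\ } \wt{\mcR}^{\square}_{g,nd+k,h,2n}(\wt{T}) \xrightarrow{\ \glue^{-1}\ } \wh{\mcR}^{\square}_{g,k,h,n}(T),
\end{equation*}
and check that it induces on orbit spaces the composition of $|\iota|$ with the bijection $|\glue^{-1}|$. Consequently this composite stands to $|\wh{\mcR}^{\square}_{g,k,h,n}(T)|$ exactly as $\iota$ stands to $|\wt{\mcR}^{\square}|$: it is the same kind of structure used throughout the thesis — possibly branched over the nodal locus, and honest on smooth Hurwitz covers by \cref{rmk:moduli-space-smooth-hurwitz-covers-orbifold}. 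Independence of the auxiliary choices entering $\wt{\mcM}^{\square}$ (hyperbolic metrics, decomposing curves, roots of unity) holds up to Morita equivalence by the corresponding statement for $\mcM_{g,k,h,n}(T)$, which justifies calling the structure ``natural''.

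The step I expect to be the main obstacle is the middle one: checking cleanly that the two failures of uniqueness in the $\glue$-construction — the free $\bz$-parameter at cusps in \cref{lem:glue-functor-objects-not-injective} and its morphism analogue in \cref{prop:glue-morphisms-unique} — are exactly the contributions that disappear once every marked boundary point is an honest geodesic boundary, so that $\glue^{\square}$ genuinely upgrades from an orbit-space bijection to a fully faithful functor admitting a well-defined inverse usable for the pull-back.
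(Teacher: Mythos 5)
Your proposal is correct and follows the same route the paper takes: restrict $\wt{\mcM}_{g,nd+k,h,2n}(\wt{T})$ to the open full subgroupoid over $\wt{\mcR}^{\square}$, observe via \cref{lem:glue-functor-surjectivity}, \cref{lem:glue-functor-objects-not-injective} and \cref{prop:glue-morphisms-unique} that the $\square$-condition kills both the $\bz$-freedom at punctures and the corresponding ambiguity in morphism lifts so that $\glue^{\square}$ is an isomorphism of categories, and transport the structure along $\glue^{-1}\circ\iota$. Your expansion of the middle step — pinpointing that the $\square$-hypothesis is precisely what promotes $\glue$ from an orbit-space bijection to a fully faithful (in fact strictly invertible) functor — is exactly the justification the paper leaves implicit behind its phrase ``is a category equivalence which induces a bijection on orbit spaces.''
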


\begin{definition}
  We can thus define the orbifold groupoid
  \begin{equation*}
    \wh{\mcM}^{\square}_{g,k,h,n}(T)\coloneqq \glue^{-1}(\iota(\wt{\mcM}^{\square}_{g,nd+k,h,2n}(\wt{T}))).
  \end{equation*}
\end{definition}

Furthermore suppose we are given a topology on $\wt{\mcR}_{g,nd+k,h,2n}(\wt{T})$:\footnote{This topology will be defined in \cref{sec:topology-moduli-spaces}.} Then we can define a topology on $\wh{\mcR}_{g,k,h,n}(T)$ by taking the coarsest topology on $\wt{\mcR}_{g,nd+k,h,2n}(\wt{T})$ such that the glue functor is continuous. This makes the map $\glue:\wh{\mcR}_{g,k,h,n}(T)\lra\wt{\mcR}_{g,nd+k,h,2n}(\wt{T})$ an open quotient map.

\subsection{Maps to Moduli Spaces of Admissible Riemann Surfaces}

\label{sec:moduli-spaces-admissible-riemann-surfaces}

In addition to the moduli spaces of bordered Hurwitz covers we will also need moduli spaces of bordered Riemann surfaces. To this end we make the same definitions as \cite{mirzakhani_weil-petersson_2007}. Also note that these are the obvious definitions for the target surfaces from $\wh{\mcR}_{g,k,h,n}(T)$ and $\wt{\mcR}_{g,dk+n,h,2n}(\wt{T})$. We need to choose a function $F:\RR_{\geq 0}\lra\RR_{\geq 0}$ as in \cref{sec:orb-structure-mod-space-bordered-hurwitz-covers-definitions}.

\index{Moduli Space!Of Admissible Riemann Surfaces}

\begin{definition}
  The category of \emph{admissible Riemann surfaces with marked points on the boundary} $\wh{\mcR}_{h,n}$ is defined as follows. Objects are tuples $(X,\bp,\bz)$ where
  \begin{itemize}
    \item $X$ is an admissible Riemann surface with genus $h$ and $n$ possibly degenerate boundary components $\bp$,
    \item $\bz\in X^n$ is a tuple of marked points $z_i\in\Gamma_i(X)$ at the reference curves $\Gamma_i(X)$ defined with respect to the function $F$ on the uniformized hyperbolic surface $X$ close to the possible degenerate boundary component $\bp_i$
  \end{itemize}

  and morphisms between $(X,\bp,\bz)$ and $(X',\bp',\bz')$ are given by biholomorphisms $\phi:X\lra X'$ such that $\phi(\bp_i)=\bp_i'$ and $\phi(z_i)=z_i'$ for all $i=1,\ldots,n$.
  \label{def:moduli-space-admissible-surfaces}
\end{definition}

Similarly we define the corresponding moduli space of glued Riemann surfaces.

\index{Moduli Space!Of Closed Riemann Surfaces With Multicurve}

\begin{definition}
  The category of \emph{closed Riemann surfaces with a multicurve} $\wt{\mcR}_{h,2n}$ is defined as follows. Objects are tuples $(X,\bp,\bG)$ where
  \begin{itemize}
    \item $X$ is an closed Riemann surface with genus $h$ and $2n$ marked points $\bp$ and
    \item a multicurve $\bG$ on $X$ such that every curve in $\bG$ is simple and bounds either
      \begin{itemize}
        \item a disc with exactly two marked points or
        \item a disc with a node whose other smooth component is a sphere with exactly two marked points in addition to the node.
      \end{itemize}
      Furthermore we require that all the marked points $\bp$ are either contained in such a pair of pants or a completely nodal spherical component. Also we require that the labels of the branch points contained in one such disc from above differ by exactly one. 
  \end{itemize}
  The morphisms between $(X,\bp,\bG)$ and $(X',\bp',\bG')$ are defined to be biholomorphisms $\phi:X\lra X'$ such that $\phi(p_i)=p_i'$ and $\phi(\Gamma_i)=\Gamma_i'$, where the last equality means that the multicurves are freely homotopic.
  \label{def:moduli-space-riemann-surfaces-with-multicurve}
\end{definition}

In the same way as in \cref{sec:orbifold-structure-wtmcr} we obtain an orbifold structure $\wt{\mcM}_{h,n}$ on $\wt{\mcR}_{h,n}$ by incorporating the choice of the $\bG$ into the universal unfolding. This is the same orbifold structure as Mirzakhani defines in \cite{mirzakhani_weil-petersson_2007}. Also we can repeat the constructions in \cref{sec:def-glue-map} to obtain a functor

\begin{equation*}
  \sglue:\wh{\mcR}_{h,n}\lra\wt{\mcR}_{h,2n}
\end{equation*}

which also allows us to pull back the orbifold structure from $\wt{\mcR}^{\square}_{h,n}$ to $\wh{\mcR}_{h,n}^{\square}$, where the symbol $\square$ stands for the full subcategory of Riemann surface whose boundaries are non-degenerate and all elements in the multicurve are non-contractible, respectively. Again, the symbol $\circ$ refers to the full subcategory of completely smooth curves. The topology on $|\wt{\mcR}_{h,n}|$ is induced by the bijection $\iota:|\wt{\mcM}_{h,n}|\lra|\wt{\mcR}_{h,n}|$ and the topology on the moduli space $|\wh{\mcR}_{h,n}|$ will be defined in \cref{sec:topology-bordered-hurwitz-covers}.

It remains to say a few words about the obvious evaluation and forgetful functors.

\index{Forgetful Functor}
\index{Evaluation Functor}

\begin{definition}
  We define the obvious forgetful and evaluation functors
  \begin{align*}
    \wh{\fgt}: \wh{\mcR}_{g,k,h,n}(T) & \lra \wh{\mcR}_{g,k} \\
    \wh{\fgt}_{\Ob}(C,u,X,\bq,\bp,\bz) & \coloneqq (C,\bq,\bz) \\
    \wh{\fgt}_{\Mor}(\Phi,\phi) & \coloneqq \Phi
  \end{align*}
  and 
  \begin{align*}
    \wh{\ev}: \wh{\mcR}_{g,k,h,n}(T) & \lra \wh{\mcR}_{h,n} \\
    \wh{\ev}_{\Ob}(C,u,X,\bq,\bp,\bz) & \coloneqq (X,\bp,u(\bz)) \\
    \wh{\ev}_{\Mor}(\Phi,\phi) & \coloneqq \phi
  \end{align*}
  and
  \begin{align*}
    \wt{\ev}: \wt{\mcR}_{g,nd+k,h,2n}(T) & \lra \wt{\mcR}_{h,2n} \\
    \wt{\ev}_{\Ob}(C,u,X,\bq,\bp,\bG) & \coloneqq (X,\bp,\bG) \\
    \wt{\ev}_{\Mor}(\Phi,\phi) & \coloneqq \phi
  \end{align*}
  which clearly restrict to the corresponding functors on the $\mcM$-categories if we take care of using corresponding universal unfoldings in the definitions of the orbifold categories.
  \label{def:ev-fgt-functors-moduli-spaces-with-boundary}
\end{definition}

\begin{rmk}
  Notice that there is no $\wt{\fgt}$ as $\bG$ is a multicurve on $X$ bounding a pair of pants with two marked points or a node with a twice-punctured sphere attached. Although $u^{-1}(\bG)$ consists of a simple multicurve, the individual curves do not necessarily bound just two marked points each.

  In contrast on $\wh{\mcR}_{g,k,h,n}(T)$ we have the condition that $u(z_i)=u(z_j)$ if $\nu(i)=\nu(j)$ and thus $u(\bz)\coloneqq(u(z_j)\text{ for some }j\text{ s.t. }\nu(j)=i)_{i=1}^n$ is well-defined.
\end{rmk}

\index{Morphism Covering!Branched}

\begin{lem}
  The functors defined in \cref{def:ev-fgt-functors-moduli-spaces-with-boundary} are all homomorphisms on the corresponding orbifold categories $\wh{\mcM}_{g,k,h,n}^{\square}(T)$ and $\wt{\mcM}_{g,nd+k,h,2n}(\wt{T})$. Furthermore the following diagram commutes for the $\mcR$-categories as well as the corresponding $\mcM$-categories.
  \begin{equation}
    \xymatrix{
      \wh{\mcR}_{g,k,h,n}(T) \ar[d]_{\wh{\ev}} \ar[r]^-{\glue} & \wt{\mcR}_{g,nd+k,h,2n}(\wt{T}) \ar[d]_{\wt{\ev}} \ar[r] & \mcR_{g,nd+k,h,2n}(\wt{T}) \ar[d]^{\ev} \\
      \wh{\mcR}_{h,n} \ar[r]^{\sglue} & \wt{\mcR}_{h,2n} \ar[r] & \mcR_{h,2n}
      }
    \label{eq:diagram-glue-sglue}
  \end{equation}
  where the two right functors just forget the multicurve. Additionally we have that
  \begin{equation*}
    \wt{\ev}:\wt{\mcM}_{g,nd+k,h,2n}(\wt{T}) \lra \wt{\mcM}_{h,2n}
  \end{equation*}
  is a branched morphism covering. This implies that
  \begin{equation*}
    \wh{\ev}:\wh{\mcM}^{\square}_{g,nd+k,h,2n}(\wt{T}) \lra \wh{\mcM}^{\square}_{h,2n}
  \end{equation*}
  is a branched morphism covering.
\end{lem}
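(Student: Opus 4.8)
The plan is to verify the statement in three parts, essentially by unwinding the definitions and then invoking results already established in the excerpt. First I would check that the functors in \cref{def:ev-fgt-functors-moduli-spaces-with-boundary} are homomorphisms on the $\mcM$-categories. For $\wt{\ev}$ this is already part of the orbifold construction in \cref{sec:orbifold-structure-wtmcr}: in the local charts $O^{\lambda}$ the map $\wt{\ev}$ is, up to forgetting the multicurve data (which is respected by construction), the same as the evaluation functor $\ev:\mcM_{g,nd+k,h,2n}(\wt{T})\lra\mcM_{h,2n}$, whose smoothness on objects and morphisms was verified in \cref{thm:ev-local-structure} (it is locally $(t,z_1,\ldots,z_N)\mapsto(t,z_1^{K_1},\ldots,z_N^{K_N})$ times a homeomorphism of Teichmüller space). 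For $\wh{\ev}$ and $\wh{\fgt}$ on $\wh{\mcM}^{\square}$ one uses that by \cref{lem:orbifold-structure-rtildesquare} the orbifold structure on $\wh{\mcR}^{\square}$ is literally pulled back through the category equivalence $\glue$, so $\wh{\ev}=\wt{\ev}\circ\glue$ restricted to charts (using the commuting square, which I discuss next) is a composition of a homeomorphism of charts with a homomorphism, hence a homomorphism.

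Second I would establish the commuting diagram \cref{eq:diagram-glue-sglue}. On the level of $\mcR$-categories this is a direct check on objects and morphisms: the left square commutes because $\sglue$ and $\glue$ were constructed by gluing the \emph{same} hyperbolic pairs of pants and standard disc covers to the target surface (the target of $\glue(C,u,X,\bq,\bp,\bz)$ is exactly $\sglue(X,\bp,u(\bz))$ — note the marked points $u(z_j)$ on the reference curves are where the pants are glued, cf.\ \cref{fig:glue-def-choices-enumeration}), and the right square commutes trivially since the right horizontal functors just forget $\bG$ and $\ev,\wt{\ev}$ both record the target surface. For the $\mcM$-categories the same diagram commutes provided one uses corresponding universal unfoldings in all the orbifold structures, which is the standing assumption (justified by the Morita-equivalence remark from \cite{robbin_construction_2006} cited at the start of \cref{sec:main-results}); in charts every functor in the diagram is then either a homeomorphism or the chart representative of the $\mcR$-level functor, and commutativity descends.

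Third, the branched-morphism-covering claim for $\wt{\ev}$ follows from \cref{thm:ev-local-structure} applied to $\mcM_{g,nd+k,h,2n}(\wt{T})\lra\mcM_{h,2n}$: one must check that the extra multicurve data does not spoil the covering or lifting properties. On objects and morphisms the multicurve near a pair of pants is determined by the surface and its nodal/curve structure (curve if the boundary is a non-degenerate curve or an opened node, reference curve at a cusp), so the local model from \cref{eq:ev-map-loc}, namely $(t,z_1,\ldots,z_N)\mapsto(t,a_1(z_1),\ldots,a_N(z_N))$ with $a_i(z)=z^{K_i}$, is unchanged; the lifting property for morphisms was proved in \cref{thm:ev-local-structure} and the multicurve is carried along uniquely by the extended morphisms (as noted in \cref{sec:orbifold-structure-wtmcr}). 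Finally, the statement for $\wh{\ev}:\wh{\mcM}^{\square}_{g,nd+k,h,2n}(\wt{T})\lra\wh{\mcM}^{\square}_{h,2n}$ is deduced by transporting this branched morphism covering through the square \cref{eq:diagram-glue-sglue} restricted to the $\square$-subcategories, where by \cref{lem:glue-functor-surjectivity}, \cref{lem:glue-functor-objects-not-injective} and \cref{prop:glue-morphisms-unique} the maps $\glue$ and $\sglue$ are category equivalences inducing bijections on orbit spaces and, by the definition of the pulled-back orbifold structures, local homeomorphisms on object and morphism manifolds; the covering, local-diffeomorphism and lifting properties are all preserved under precomposition and postcomposition with such local homeomorphisms.

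The main obstacle I expect is the bookkeeping in the second step: making precise that $\glue$ and $\sglue$ are compatible on the orbifold \emph{chart} level, i.e.\ that the universal unfoldings used for $\wt{\mcM}_{g,nd+k,h,2n}(\wt{T})$, $\wt{\mcM}_{h,2n}$, $\wh{\mcM}^{\square}$ and the source Deligne--Mumford groupoids can be chosen simultaneously compatibly, and that the glued families $\Psi^{\lambda}$ for the bordered moduli space indeed map into the corresponding families for the closed one under $\glue$. This is not deep but requires carefully matching the hyperbolic deformation preparations on both sides (the sets of decomposing curves must include $\bG$, and the disc structures near the non-touched nodes must agree), so that $\glue$ restricted to a chart $O^{\lambda}$ is literally the identity on the Fenchel--Nielsen parameters. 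Everything else is a routine consequence of \cref{thm:ev-local-structure} and the already-proven equivalence properties of $\glue$.
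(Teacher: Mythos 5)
Your proposal is correct and follows essentially the same route as the paper's proof: use the fact that $\wt{\ev}$ in the $O^{\lambda}$ charts is the same local model as $\ev$ from \cref{thm:ev-local-structure} (since the added multicurve data is carried along unchanged in the family $\Psi^{\lambda}$), check the diagram \cref{eq:diagram-glue-sglue} directly from the definitions, and transport the branched-morphism-covering property for $\wh{\ev}$ through the $\glue$/$\sglue$ square. One small sharpening worth making: when you transport the covering property you invoke that $\glue$ and $\sglue$ are ``local homeomorphisms on object and morphism manifolds'' --- the paper instead observes that, because the orbifold structure on $\wh{\mcM}^{\square}_{g,k,h,n}(T)$ is literally pulled back via $\glue$, these are \emph{isomorphisms} of orbifold groupoids (global diffeomorphisms on objects and morphisms), which is what you actually need, since precomposing a covering with a mere local homeomorphism of the domain need not again be a covering.
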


\begin{proof}
  In the same way as in the proof of \cref{prop:properties-functors-between-groupoids} we can easily see that all the functors are homomorphisms because we constructed the differentiable structure from the universal curve of the target surface.

  Also the fact that the diagram in \cref{eq:diagram-glue-sglue} commutes is directly clear from the definitions of the categories and the maps.

  Recall from \cref{sec:orbifold-structure-wtmcr} that the orbifold structure on $\wt{\mcM}_{g,nd+k,h,2n}(\wt{T})$ is defined via Hurwitz families $\Psi^{\lambda}:O^{\lambda}\lra\Ob\wt{\mcR}_{g,nd+k,h,2n}(\wt{T})$ that are constructed from the usual Hurwitz family from \cref{sec:constr-an-orbif} by equipping every fibre with the multicurve corresponding to the one of the central fibre that is part of the data in $\lambda$. This means that both horizontal functors in \cref{eq:diagram-glue-sglue} which forget the multicurves are covers on objects. Locally the functor $\wt{\ev}$ on objects thus looks like $\ev$ which has the required form of a branched morphism covering. The morphism covering condition for $\wt{\ev}:\wt{\mcM}_{g,nd+k,h,2n}^{\circ}(\wt{T}) \lra \wt{\mcM}^{\circ}_{h,2n}$ is also easily verified, again in the same way as in the proof of \cref{prop:properties-functors-between-groupoids}.

  Notice that for the right choices of parameter sets $\Lambda$ we can see that the category isomorphisms $\glue:\wh{\mcR}_{g,k,h,n}^{\square}(T)\lra\wt{\mcR}_{g,nd+k,h,2n}^{\square}(\wt{T})$ and $\sglue:\wh{\mcR}_{h,n}\lra\wt{\mcR}_{h,2n}$ give rise to isomorphisms of orbifold groupoids
  \begin{align*}
    \glue:\wh{\mcM}_{g,k,h,n}^{\square}(T) & \lra \wt{\mcM}_{g,nd+k,h,2n}^{\square}(\wt{T}), \\
    \sglue:\wh{\mcM}_{h,n} & \lra\wt{\mcM}_{h,2n}
  \end{align*}
  in the sense of compatible diffeomorphisms on object and morphism manifolds. But this implies that $\wh{ev}$ is a morphism covering, too.
\end{proof}

\begin{rmk}
  It is interesting to think about the degree of $\wt{\ev}$ and $\wt{\ev}$ in contrast to $\ev$. One can see that the horizontal maps forgetting multicurves in \cref{eq:diagram-glue-sglue} are coverings on orbit spaces of the same degree. Thus $\deg\wt{\ev}=\deg\ev$. However, the smooth Hurwitz covers in $\mcR_{g,nd+k,h,2n}(\wt{T})$ have a very particular structure, namely they are Hurwitz covers in $\mcR_{g,k,h,n}(T)$ with additionally marked $nd$ non-critical points. It is clear that this in fact a bijection. The corresponding Hurwitz numbers should hence differ by a factor coming from the possible additional enumerations of the trivial fibres which is $K=\prod_{i=1}^nK_i$ with $K_i=\lcm\{l_j\mid j=1,\ldots,k\text{ with }\nu(j)=i\}$. We will see this again in \cref{lem:degrees-evaluation-map-hurwitz}.
\end{rmk}

\chapter{SFT-Compactness and Topology}

\label{sec:SFT-compactness}

\section{Topology on Moduli Spaces}

\label{sec:topology-moduli-spaces}

\subsection{Closed Hurwitz Covers}

\label{sec:topology-moduli-spaces-closed-hurwitz-covers}

By defining an orbifold structure on $\mcM_{g,k,h,n}(T)$ and $\wh{\mcM}^{\circ}_{g,k,h,n}(T)$ we have equipped their orbit spaces with a topology. However, we still need to 
\begin{enumerate}[label=(\roman*), ref=(\roman*)]
  \item define a topology on $|\mcR_{g,k,h,n}(T)|$ and $|\wh{\mcR}_{g,k,h,n}(T)|$,
  \item prove that the orbit spaces $|\mcM_{g,k,h,n}(T)|$ and $|\mcR_{g,k,h,n}(T)|$ are compact and
  \item that the boundary length functions on $|\wh{\mcR}_{g,k,h,n}(T)|$ and $|\wt{\mcR}_{g,nd+k,h,2n}(\wt{T})$ are proper.
\end{enumerate}
   Before stating this in detail, let us recall the notion of convergence of surfaces in the Deligne--Mumford space.

\begin{definition}
  A sequence $[X_k,\bp_k]\in|\mcR_{h,n}|$ converges to $[X,\bp]\in|\mcR_{h,n}|$ if after removing finitely many elements in the sequence there exist maps $\phi_k:X_k\lra X$ together with curves $\gamma_k^i\subset X_k$ for $i=1,\ldots,m$ such that
  \begin{itemize}
    \item the maps $\phi_k$ map each $\gamma_k^i$ to a node in $X$ independent of $k$,
    \item the maps $\phi_k|_{X_k\setminus\bigcup_{i=1}^m\gamma_k^i}:X_k\setminus\bigcup_{i=1}^m\gamma_k^i\lra X\setminus \bigcup_{i=1}^m\phi_k(\gamma_k^i)$ are diffeomorphisms of nodal surfaces and we denote their inverses by $\psi_k$,
    \item the pull-back complex structures $\psi_k^*J_k$ converge in $\cin_{\text{loc}}$ to $J$ on $X\setminus\bigcup_{i=1}^m\phi_k(\gamma_k^i)$ and
    \item the marked points $\phi_k(\bp_k^i)$ converge to $\bp^i$ for $i=1,\ldots,n$.
  \end{itemize}
  \label{def:topology-dm}
\end{definition}

\begin{rmk}
  Let us make a few remarks for this section.
  \begin{enumerate}[label=(\roman*), ref=(\roman*)]
    \item Note that the convergence of the complex structures implies convergence of the corresponding hyperbolic structures to a hyperbolic metric on $(X,\bp)$. So if we denote by $g_k$ the unique complete hyperbolic metric with finite area induced by $J_k$ on $X_k$ we see that $\psi_k^*g_k\to g$ in $\cin_{\text{loc}}$ on $X\setminus \bigcup_{i=1}^m\phi_k(\gamma_k^i)$. Also we can choose the curves $\gamma_k^i$ to be closed simple geodesics of $g_k$.
    \item Also note that by the words ``diffeomorphism'' and ``$\cin_{\text{loc}}$-convergence'' we actually mean the corresponding version for nodal surfaces as we allow for already existing nodes. These notions are summarized in \cref{fig:convergence-deligne-mumford}.
    \item By Theorem~13.6 from \cite{robbin_construction_2006} we know that a sequence $[X_k,\bp_k]\in|\Ob\mcR_{h,n}|$ converges to $[X,\bp]\in|\Ob\mcR_{h,n}|$ in the sense as above if and only if the sequence converges in the topology induced by the orbifold structure $\mcM_{h,n}$ after possibly removing finitely many points in the sequence.
  \end{enumerate}
\end{rmk}

\begin{figure}[H]
    \centering
    \def\svgwidth{0.8\textwidth}
    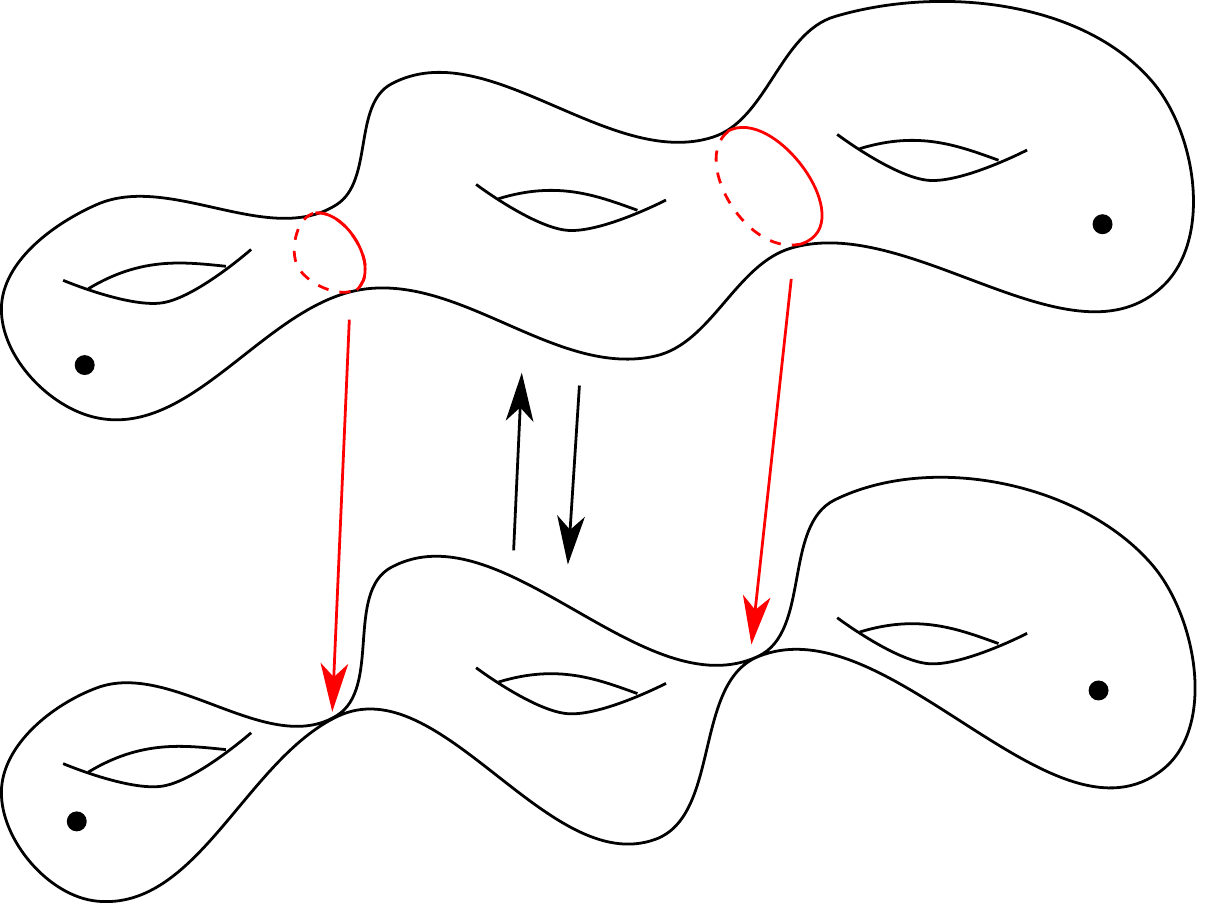
    \caption{This illustrates the objects needed in the definition of Deligne--Mumford convergence.}
    \label{fig:convergence-deligne-mumford}
\end{figure}

Next we define the topology on $|\mcR_{g,k,h,n}(T)|$.

\begin{definition}
  Define the topology on $|\mcR_{g,k,h,n}(T)|$ as the finest topology such that the surjective map $\iota:|\mcM_{g,k,h,n}(T)|\lra|\mcR_{g,k,h,n}(T)|$ is continuous. Thus the map $\iota$ is a quotient map and $|\mcR_{g,k,h,n}(T)|$ is equipped with the quotient topology.
  \label{def:topology-moduli-space-hurwitz-covers}
\end{definition}

We will need to relate this topology to a notion of convergence to broken holomorphic curves for which we have various compactness theorems available.

Note that we have the following basic statement.

\begin{lem}
  The map $\iota:|\mcM_{g,k,h,n}(T)|\lra|\mcR_{g,k,h,n}(T)|$ is proper.
  \label{lem:inclusion-proper}
\end{lem}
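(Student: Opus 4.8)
\textbf{Proof plan for \cref{lem:inclusion-proper}.}
The plan is to show that the preimage under $\iota$ of a compact subset $K\subseteq |\mcR_{g,k,h,n}(T)|$ is compact. Since $|\mcM_{g,k,h,n}(T)|$ is the orbit space of a second-countable ep-Lie groupoid (the object manifold is $\bigsqcup_{\lambda\in\Lambda}O^{\lambda}$ with $\Lambda$ finite), its topology is metrizable, so it suffices to prove sequential compactness: given a sequence $[x_m]\in\iota^{-1}(K)$, I would extract a convergent subsequence. First I would pass to a subsequence so that $\iota([x_m])$ converges in $K$ to some $[C,u,X,\bq,\bp]$; this is where compactness of $K$ enters.

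Next I would use the explicit local structure of $\iota$. Each $[x_m]$ lies in some $|O^{\lambda_m}|$ with $\lambda_m\in\Lambda$, and since $\Lambda$ is finite we may pass to a further subsequence with $\lambda_m=\lambda$ constant. Now $x_m\in O^{\lambda}$, and $O^{\lambda}$ is (via the Fenchel--Nielsen/complex coordinates from \cref{eq:def-fn-map}) an open subset of a finite-dimensional manifold; I would choose the $x_m$ to be representatives lying in a fixed relatively compact coordinate patch. The key point is that the image points $\iota(x_m)=(C^{\lambda}_{x_m},u^{\lambda}_{x_m},X^{\lambda}_{x_m},\bq,\bp)$ converge in $|\mcR_{g,k,h,n}(T)|$, which by \cref{def:topology-moduli-space-hurwitz-covers} means (after removing finitely many terms) convergence in the quotient topology coming from $\mcM_{g,k,h,n}(T)$ itself; combined with the fact that the families $\Psi^{\lambda}$ are defined on the closure of the $O^{\lambda}$ by the gluing construction — more precisely, that the coordinates $(\Phi^{\lambda})^{-1}(x_m)\in U^{\lambda}\times\DD^{\ol k}$ stay in a compact subset because the target surfaces $X^{\lambda}_{x_m}$ stay in a compact part of Deligne--Mumford space (using that $\ev$ is continuous and that $|\mcM_{h,n}|$-convergence of the targets pins down the Teichmüller coordinates and the gluing parameters) — I would conclude that a subsequence of the $x_m$ converges to some $x_\infty\in O^{\lambda}$, hence $[x_m]\to[x_\infty]$ in $|\mcM_{g,k,h,n}(T)|$.

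A cleaner way to package the middle step, which I would actually carry out, is to reduce properness of $\iota$ to properness of the already-known maps on Deligne--Mumford space. Indeed $\fgt\colon\mcM_{g,k,h,n}(T)\to\mcM_{g,k}$ and $\ev\colon\mcM_{g,k,h,n}(T)\to\mcM_{h,n}$ are homomorphisms (\cref{prop:properties-functors-between-groupoids}), the source and target Deligne--Mumford orbit spaces are compact, and the combined map $|\mcM_{g,k,h,n}(T)|\to|\mcM_{g,k}|\times|\mcM_{h,n}|$ has the property that over a point $([C,\bq],[X,\bp])$ there are only finitely many fibres and each is cut out inside the (compact) product of gluing discs $\DD^{\ol k}$ by the closed conditions defining a Hurwitz cover; so this map is proper. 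Since a compact $K\subseteq|\mcR_{g,k,h,n}(T)|$ has image in the compact product $|\mcM_{g,k}|\times|\mcM_{h,n}|$ under the induced $\fgt\times\ev$ (these descend to continuous maps on $|\mcR|$ by \cref{prop:properties-functors-between-groupoids}), $\iota^{-1}(K)$ sits inside a set of the form $(\fgt\times\ev)^{-1}(\text{compact})$, which is compact; being closed in it, $\iota^{-1}(K)$ is compact.

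The main obstacle is the middle step: controlling the gluing parameters. One must be sure that a sequence of smooth (or mildly nodal) Hurwitz covers whose images converge in $|\mcR_{g,k,h,n}(T)|$ does not ``escape to the boundary'' of the coordinate patches $O^{\lambda}$ — i.e. that no extra node forms that is not already recorded by some $\lambda$. This is exactly what the careful choice of the finite index set $\Lambda$ (covering all equivalence classes together with \emph{all} choices of resolution discs $b_{\xi}$, as arranged before \cref{thm:ev-local-structure}) is designed to handle, together with \cref{lem:nod-hur-cov-unique} which bounds the number of source surfaces over a given target; I would invoke these to close the argument. (Strictly speaking this lemma is an easy formal consequence of the later compactness statements, but giving the self-contained argument above avoids circularity, since \cref{lem:inclusion-proper} will be used en route to proving compactness of $|\mcR_{g,k,h,n}(T)|$ itself.)
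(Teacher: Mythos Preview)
The paper's argument is a two-line observation that you have missed. By \cref{def:topology-moduli-space-hurwitz-covers} the map $\iota$ is a quotient map; the paper asserts it is therefore closed, so properness reduces to checking that point-fibres are compact. But $\iota^{-1}([C,u,X,\bq,\bp])$ is finite: the only way two points of $|\mcM_{g,k,h,n}(T)|$ can be identified in $|\mcR_{g,k,h,n}(T)|$ without already coinciding is via a lower-order fibre isomorphism at a nodal cover, and these are indexed by the finitely many choices of discs $b_{\xi}$ at each node (cf.\ \cref{cor:param-B-hol-discs}). That is the entire proof.

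Your second route is effectively circular: properness of $\fgt\times\ev$ into the compact product $|\mcM_{g,k}|\times|\mcM_{h,n}|$ is \emph{equivalent} to compactness of $|\mcM_{g,k,h,n}(T)|$ itself, which is precisely what this lemma is meant to help deduce (from compactness of $|\mcR_{g,k,h,n}(T)|$, proved later by SFT methods in \cref{thm:compactness-moduli-space-hurwitz-covers}). Your first route --- controlling Fenchel--Nielsen and gluing coordinates along a convergent sequence --- could be made to work, but it replicates in miniature the convergence analysis of \cref{sec:relate-broken-holomorphic-curves-hurwitz-covers}; the finite-fibre observation renders all of that unnecessary.
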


\begin{proof}
  Quotient maps are closed and thus it is enough to check that preimages of points are compact. But this is clear because for every Hurwitz cover there are only finitely many choices for the discs in the definition of the Hurwitz deformations at nodes.
\end{proof}

Next we show how to interpret this topology on $|\mcR_{g,k,h,n}(T)|$.

\begin{prop}
  A sequence $[C_k,u_k,X_k,\bq_k,\bp_k]\in |\mcR_{g,k,h,n}(T)|$ converges to
  \begin{equation*}
    [C,u,X,\bq,\bp]\in|\mcR_{g,k,h,n}(T)|
  \end{equation*}
  if and only if after removing finitely many members of the sequence
  \begin{itemize}
    \item there exist tuples of simple curves $\Gamma_k\subset C_k$ and $\Theta_k\subset X_k$ such that \label{item:collaping-curves}
      \begin{equation*}
        u_k^{-1}(\Theta_k^i)=\bigsqcup_{j\in I^i}\Gamma_k^j\text{ and }u_k(\Gamma_k^i)\in\Theta_k\quad\forall i,
      \end{equation*}
    \item there exist maps $\phi_k:C_k\lra C$ and $\rho_k:X_k\lra X$ which map $\Gamma_k^i$ and $\Theta_k^i$ for every $i$ to one fixed node in $C$ and $X$ depending on $i$, respectively,
    \item such that $\phi_k$ and $\rho_k$ are diffeomorphisms of nodal surfaces outside the curves and their images,
    \item the marked points $\phi_k(\bq_k^j)$ and $\rho_k^i(\bp_k^i)$ converge to $\bp$ and $\bq$, respectively,
    \item the complex structures $(\phi_k)_*j_k$ and $(\rho_k)_*J_k$ converge to $j$ and $J$, respectively, in $\cin_{\text{loc}}$ outside the images of the curves above and
    \item the maps $\rho_k\circ u_k\circ \phi_k^{-1}:C\setminus\bigcup_i\phi_k(\Gamma_k^i)\lra X\setminus\bigcup_i\rho_k(\Theta_k^i)$ converge in $\cin_{\text{loc}}$ to $u$ away from the images of the curves and uniformly on $C$.
  \end{itemize}
  \label{prop:equivalent-formulation-topology}
\end{prop}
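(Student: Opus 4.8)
The plan is to transfer the question from the quotient space $|\mcR_{g,k,h,n}(T)|$ to the orbifold $\mcM_{g,k,h,n}(T)$, where convergence is controlled by the explicit Hurwitz deformation charts $\Psi^\lambda\colon O^\lambda\to\Ob\mcR_{g,k,h,n}(T)$ of \cref{sec:local-parametrizations}, and there to read the geometric data off the plumbing construction. The two external inputs I would use are \cref{lem:inclusion-proper} together with \cref{def:topology-moduli-space-hurwitz-covers} (so that $\iota$ is a proper quotient map, and surjective on orbit spaces by \cref{prop:properties-functors-between-groupoids}), and the Deligne--Mumford analogue of the statement, i.e.\ that convergence in the orbifold topologies of $\mcM_{h,n}$ and $\mcM_{g,k}$ is exactly \cref{def:topology-dm} (Theorem~13.6 of \cite{robbin_construction_2006}).

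\textbf{Quotient convergence implies geometric convergence.} Assume $y_k=[C_k,u_k,X_k,\bq_k,\bp_k]\to y=[C,u,X,\bq,\bp]$ in $|\mcR_{g,k,h,n}(T)|$. First I would lift: the set $\{y_k\}\cup\{y\}$ is compact and $\iota$ is proper, so picking $[x_k]\in\iota^{-1}(y_k)$ and passing to a subsequence gives $[x_k]\to[x]$ in $|\mcM_{g,k,h,n}(T)|$ with $\iota([x])=y$ by continuity. In a deformation chart around $x$ the point $[x_k]$ is encoded by a Teichmüller/Fenchel--Nielsen parameter for the normalization of the target plus gluing parameters $a_i$ for the nodes of $X$ (and the subordinate parameters $b_\xi$ for the nodes of $C$), and convergence in $|\mcM|$ means these parameters converge, the $a_i$ of non-opening nodes going to $0$. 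From this the required objects arise essentially by construction: $\Theta_k$ are the core curves of the cylinders glued into $X_k$ (plus the already present collapsing curves coming from the DM data for $X_k$), $\Gamma_k=u_k^{-1}(\Theta_k)$ since the maps are built as in \cref{def:deformation-of-hurwitz-covers}, the diffeomorphisms $\rho_k,\phi_k$ come from the trivialisations of the universal curves away from the plumbing regions, and the $\cin_{\text{loc}}$-convergence of $(\rho_k)_*J_k$, $(\phi_k)_*j_k$ and of $\rho_k\circ u_k\circ\phi_k^{-1}$ is the continuity of the plumbing construction: away from the cylinders the cover is the fixed $u^\lambda$ carried by a converging complex structure, on the cylinders it is the explicit power map, which also yields the uniform $C^0$-convergence on all of $C$. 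Convergence of the marked points is the corresponding DM statement. Since the data passes to subsequences and every subsequence carries such data converging to $y$, the whole sequence satisfies the stated condition.

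\textbf{Geometric convergence implies quotient convergence.} Conversely, given the data $\Gamma_k,\Theta_k,\phi_k,\rho_k,\dots$, the pairs $[C_k,\bq_k]$ and $[X_k,\bp_k]$ converge in the Deligne--Mumford spaces by \cref{def:topology-dm}. Using the standard collar neighbourhoods of the geodesic representatives of $\Theta_k$ (\cref{lem:collar-neighborhood}) and the hyperbolic normal form of a Hurwitz cover near such geodesics (\cref{lem:hyperbolic-lift-geodesic}, \cref{lem:collar-neighborhoods-boundary-hurwitz-cover}), I would show that for large $k$ the Hurwitz cover $u_k$ is isomorphic to a member of a deformation family $\Psi^\lambda$ whose central fibre is isomorphic to $(C,u,X,\bq,\bp)$: the limiting complex structures fix the Teichmüller parameter, while the way each $\Theta_k^i$ sits in the glued target, together with the twist of $u_k$ read off from its $\cin_{\text{loc}}$-convergence, singles out a gluing disc $b_\xi$ as in \cref{sec:analyze-singularities} and a parameter $a_i\to0$. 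Then $[x_k]\coloneqq(\Psi^\lambda)^{-1}(y_k)\to x$ in $|\mcM_{g,k,h,n}(T)|$, and applying $\iota$ gives $y_k\to y$; a subsequence may be needed to fix which disc $b_\xi$ is used, after which the conclusion for the full sequence follows because the limit is independent of the subsequence.

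\textbf{Main obstacle.} I expect the hard part to be the reconstruction in the second half: checking that the collapsing curves $\Theta_k$ on the target are freely homotopic to core curves of glued cylinders, that $\Gamma_k=u_k^{-1}(\Theta_k)$ fits the local model $b_1^{k_1}=\dots=b_l^{k_l}$ of \cref{sec:analyze-singularities}, and that the twist information carried by $\cin_{\text{loc}}$-convergence of the maps pins down a single gluing parameter rather than a positive-dimensional family. This is exactly where the equality of the two nodal degrees and the local-surjectivity-at-nodes condition of \cref{def:hurwitz-cover} are needed, so that the limit is again an object of $\mcR_{g,k,h,n}(T)$; and most of the analytic content coincides with the SFT-compactness analysis of \cref{sec:SFT-compactness}, so in practice I would prove this proposition alongside, or as a byproduct of, that analysis.
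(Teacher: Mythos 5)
Your overall strategy matches the paper's: transfer to the orbifold $\mcM_{g,k,h,n}(T)$, exploit the explicit Hurwitz deformation families $\Psi^\lambda$, and invoke the Robbin--Salamon characterisation of Deligne--Mumford convergence. Two technical differences are worth flagging. In the forward direction, the paper works directly with the fact that $\Ob\mcM_{g,k,h,n}(T)\to|\mcR_{g,k,h,n}(T)|$ is a quotient map, so that after removing finitely many terms one has representatives living in a \emph{single} neighbourhood in the object manifold, and then constructs the collapsing curves once and for all as the circles of radius $\sqrt{|a_k|}$ (resp.\ $\sqrt{|b_k^j|}$) with explicit radial scaling diffeomorphisms $g_s$. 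Your lift-and-pass-to-a-subsequence argument via properness of $\iota$ delivers the same geometric data but requires a closing step to upgrade to the full (tail of the) sequence; as written, the ``every subsequence carries such data'' remark is not a clean application of the usual subsequence criterion, because the conclusion you need is the \emph{existence} of a coherent choice of $\Gamma_k,\Theta_k,\phi_k,\rho_k$, not convergence of a sequence in a Hausdorff space. The paper sidesteps this by directly placing the tail in one chart. Your remark that the disc $b_\xi$ may vary with $k$ (``jump between the branches'') agrees with the paper, and is harmless because $\Gamma_k,\Theta_k$ and the diffeomorphisms depend only on the gluing moduli, not on which branch carries the parameter.

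In the backward direction you propose to reconstruct the chart membership via hyperbolic collar neighbourhoods and the local normal form of a Hurwitz cover near geodesics; the paper instead applies Theorem~13.16 of \cite{robbin_construction_2006} to realise $(X_k,\bp_k)$ as fibres of the universal unfolding, pulls the disc structure back through $u$, and then uses uniform convergence of $\rho_k\circ u_k\circ\phi_k^{-1}$ plus a covering-space argument (equality of induced maps on $\pi_1$ for $k$ large) to produce biholomorphisms $\psi_k$ correcting the source diffeomorphism. Your hyperbolic route is viable and closer in spirit to the SFT-compactness analysis of \cref{sec:SFT-compactness}, while the paper's covering-theoretic route is shorter because it inherits the heavy lifting from Robbin--Salamon; in particular the step you flagged as the ``hard part''---pinning down a single gluing parameter from the $\cin_{\text{loc}}$-convergence of the map---is handled in the paper precisely by this $\pi_1$ argument rather than by direct twist bookkeeping.
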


\begin{rmk}
  We reformulate the topology on $|\mcR_{g,k,h,n}(T)|$ in this way because from SFT-compactness we will extract a subsequence having these properties and thus giving us sequential compactness. Note that this description is merely a consequence of the fact that we defined the orbifold structures from the data in the universal unfoldings of the target and source surface and thus can use the reformulation of the topology from \cite{robbin_construction_2006}.
\end{rmk}

\begin{proof}
  First assume that we are given a sequence $[C_k,u_k,X_k,\bq_k,\bp_k]\in |\mcR_{g,k,h,n}(T)|$ converging to $[C,u,X,\bq,\bp]\in|\mcR_{g,k,h,n}(T)|$ in the topology defined in \cref{def:topology-moduli-space-hurwitz-covers}. As $\Ob\mcM_{g,k,h,n}(T)\lra|\mcM_{g,k,h,n}(T)|\lra|\mcR_{g,k,h,n}(T)|$ is a quotient map there exists a sequence of representatives of $[C_k,u_k,X_k,\bq_k,\bp_k]$ in a neighborhood of $(C,u,X,\bq,\bp)\in\Ob\mcM_{g,k,h,n}(T)$ after removing finitely many members of the sequence. As the object set was built from the coordinate charts on the target universal curve we can apply Theorem~13.6 from \cite{robbin_construction_2006} to conclude that the sequence of target surfaces $(X_k,\bp_k)$ converges to $(X,\bp)$ in the sense of \cref{def:topology-dm}. Recalling that we built the source surfaces in the Hurwitz deformation as a subset of the universal unfolding of the source surface we get the same type of convergence on the source surface. 

  It remains to verify the convergence of the maps and to fix a choice of ``collapsing curves'' that satisfies \cref{item:collaping-curves}. We will choose the curves in the complex gluing description as it is not easy to show the uniform convergence of the maps over the nodes in the hyperbolic description. So choose a disc structure \index{Disc Structure} around the collapsed nodes in the limit Hurwitz cover $(C,u,X,\bq,\bp)\in\Ob\mcM_{g,k,h,n}(T)$ and associate gluing parameters for all the nodes in the sequence $(C_k,u_k,X_k,\bq_k,\bp_k)$ such that $\Psi(t_k,z_k)=(C_k,u_k,X_k,\bq_k,\bp_k)$ for some choices of discs in the definition of $\Psi$ as the sequence might ``jump'' between the branches. See \cref{fig:disc-structure-topology} for an illustration and recall that $\Psi:Q\times\DD^N\lra\Ob\mcR_{g,k,h,n}(T)$ was defined for an open subset $Q$ of Teichmüller space and one disc for every one of the $N$ nodes in the central target $X$. The gluing parameters for the $M$ source nodes of one node in $X$ were then specified by the choice of one disc $b:\DD\lra \DD^M$ for every node in $X$. See \cref{sec:local-parametrizations} for more details. We will denote the constructed Hurwitz covers by $\Psi(t_k,z_k)=(u_{b(z_k)}:C_{u^*t_k,b(z)}\lra X_{t_k,a(z)}$-

  Note that we need to investigate the convergence of the maps in $\cin_{\text{loc}}$ away from the nodes and in $C^0$ at the nodes. The first part is trivial as $\Psi$ was built such that $u_k$ is \emph{not} modified outside the disc structure and thus converges there in $\cin_{\text{loc}}$ trivially. Now we need to look more closely at the nodes but it is enough to do this separately for every new limit node $p\in X$. Its preimages will be enumerated by $j=1,\ldots,m$ with degrees $l_j$ which we should not confuse with the parameters specifying the degrees of $u$ at $q_j$. The gluing parameters will be denoted by $a_k$ for the one of the target surface in the $k$-th element in the sequence. Correspondingly the gluing parameters for the preimages of the node $p$ will be denoted by $b^j_k$. Recall from \cref{fig:disc-structure-topology} that we used the charts from the disc structure to glue in modified discs specified by the gluing parameters.

  \begin{figure}[H]
    \centering
    \def\svgwidth{\textwidth}
    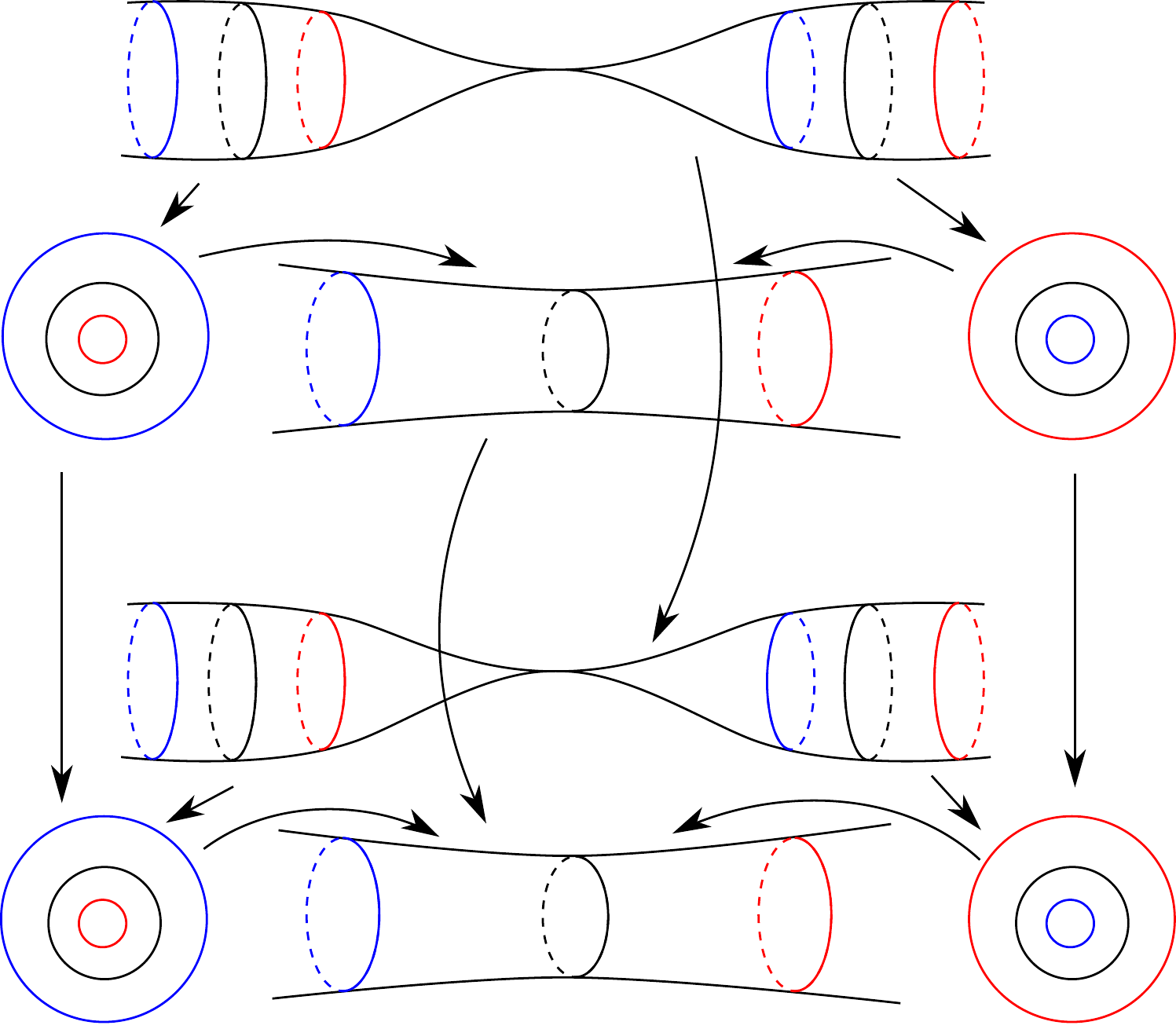
    \caption{Recall the gluing procedure from \cref{sec:complex-gluing}. We define the curves $\Gamma_k^j$ and $\Theta_k$ as those circles with radius the square root of the modulus. This way we can define the diffeomorphisms outside the curves onto the nodal surfaces in a straight-forward way.}
    \label{fig:disc-structure-topology}
  \end{figure}

  We will now define the curves that we remove from the surfaces together with diffeomorphisms to the target nodal surface with nodes removed. For every fixed index $k$ in the sequence we define a curve $\Gamma_k^j$ as the image of the circle of radius $\sqrt{|b_k^j|}$ on $C_{u^*t_k,b_k}$ and the curve $\Theta_k$ as the image of the circle of radius $\sqrt{|a_k|}$. Since
  \begin{equation*}
    \sqrt{|b_k^j|}^{l_j}=\sqrt{|a_k|}\text{ and }\left|\frac{x}{\sqrt{|x|}}\right|=\sqrt{|x|}
  \end{equation*}
  it does not matter on which side of the node we define the curve and $\Gamma_k^j$ gets indeed mapped to $\Theta_k$ under $u_{b_k}$. Now choose a family of diffeomorphisms $g_s:[\sqrt{s},1]\lra[0,1]$ for $s\in[0,1)$ such that
  \begin{enumerate}[label=(\roman*), ref=(\roman*)]
    \item $g_s(x)=x-\sqrt{s}$ in a small neighborhood of $\sqrt{s}$ and $g_s(x)=x$ in a small neighborhood of $1$ and the sizes of these neighborhoods are independent of $s$  for small $s$ and
    \item for every $S\in(0,1]$, $g_s|_{[\sqrt{S},1]}$ converges to the identity function in $\cin([\sqrt{S},1])$ as $s\lra 0$.
  \end{enumerate}
  This is illustrated in \cref{fig:function-choices}.

  \begin{figure}[H]
    \centering
    \def\svgwidth{0.5\textwidth}
    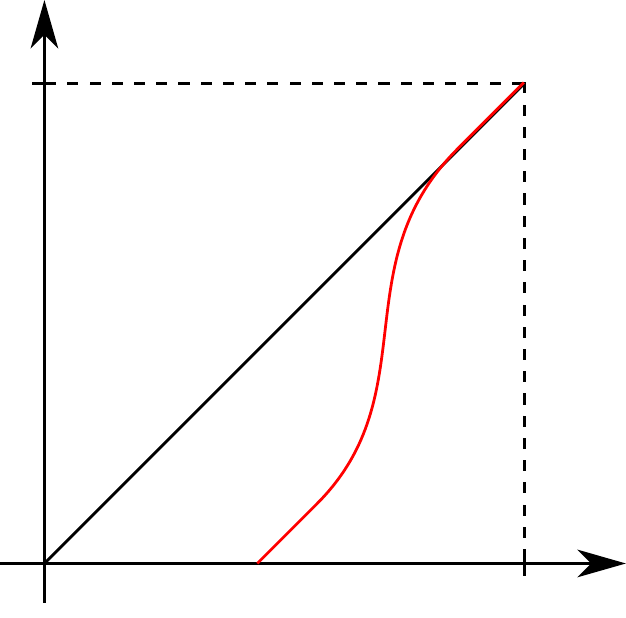
    \caption{This figure illustrates the kind of radial scaling function that we use for the diffeomorphisms outside the curves on the glued-in cylinders. Note that $s'< s$ and for every $s$ the smooth function has slope one next to the interval boundaries. Also they clearly converge in $\cin$ on compact sets in $(0,1)$ to the identity function.}
    \label{fig:function-choices}
  \end{figure}
  
  With this choice we can define maps on the glued-in cylinders by

  \begin{align*}
    \{z\in\CC\mid\sqrt{|b_k^j|}<|z|\leq 1\} & \lra \DD\setminus\{0\} \\
    re^{\ii\theta} & \longmapsto g_{|b_k^j|}(r)e^{\ii\theta}
  \end{align*}
  and correspondingly for $a_k$. Note that we define these maps on the \emph{outer} parts of the annuli and thus by defining them on both coordinate charts we obtain maps defined everywhere except at the curves. Furthermore these maps are given by the identity near radius $1$ and thus extend to well-defined diffeomorphisms
  \begin{align*}
    \phi_k: C_k\setminus\Gamma_k & \lra C\setminus\{\text{collapsed nodes}\} \\
    \rho_k: X_k\setminus\Theta_k & \lra X\setminus\{\text{collapsed nodes}\}
  \end{align*}
  by using the corresponding parameters $a_k$ and $b^j_k$ everywhere. Note that the different choices of the discs $b$ in the definition of $\Psi$ for the gluing parameters result in different twists for the complex structure on $C_k$ but the maps are the same. So we can calculate in a neighborhood of the $j$-th preimage of $p$
  \begin{align*}
    \rho_k\circ u_k\circ\phi_k^{-1}(re^{\ii\theta}) & = \rho_k\circ u_k(g_{|b_k^j|}^{-1}(r)e^{\ii\theta}) \\
    & = \rho_k((g_{|b_k^j|}^{-1}(r))^{l_j}e^{\ii l_j\theta}) = g_{|a_k|}((g_{|b_k^j|}^{-1}(r))^{l_j})e^{\ii l_j\theta}
  \end{align*}
  where near $r=0$ the radial component is given by
  \begin{equation*}
    g_{|a_k|}((g_{|b_k^j|}^{-1}(r))^{l_j})=(\sqrt{|b_k^j|}+r)^{l_j}-\sqrt{|a_k|}=rP_{|b_k^j|}(r)
  \end{equation*}
  where $P_{|b_k^j|}(r)$ is a polynomial in $r$ whose coefficients depend polynomially on $\sqrt{|b_k^j|}$. Thus the map $\rho_k\circ u_k\circ\phi_k^{-1}$ converges uniformly to $z\mapsto z^{l_j}$ on the discs for $|a_k|\to 0$. This shows one direction of the proposition.

  For the other direction consider a sequence $[C_k,u_k,X_k,\bq_k,\bp_k]\in |\mcR_{g,k,h,n}(T)|$ converging to $[C,u,X,\bq,\bp]\in|\mcR_{g,k,h,n}(T)|$ in the sense of the proposition. Then we can use Theorem~13.16 from \cite{robbin_construction_2006} to see that after removing finitely many elements in the sequence the surfaces $[X_k,\bp_k]$ are biholomorphic to fibres in the universal unfolding of $(X,\bp)$. So choose discs on $X$ as in \cref{sec:complex-gluing} such that $(X_k,\bp_k)$ is biholomorphic to $X_{t_k,a_k}$, where $a_k$ is a tuple of gluing parameters for the additional nodes of $X$ and $a_k\lra 0$ as $k\to\infty$. We can use $u:C\lra X$ to pull back these discs to $C$ and by choosing the initial discs sufficiently small we can assume that the system forms a disc structure. Denote by $\rho_k$ and $\phi_k$ the diffeomorphisms outside the collapsing curves. As $\rho_k\circ u_k\circ \phi_k^{-1}$ converges uniformly to $u$ on $C$ they induce the same map on the fundamental group of the connected components of $C$ with the marked points removed. Thus there exists a family of diffeomorphisms $\psi_k:C\setminus\{\text{critical points}\}\lra C\setminus\{\text{critical points}\}$ such that $\rho_k\circ u_k\circ\phi_k^{-1}\circ\psi_k=u$. As these maps $\psi_k$ are holomorphic since locally all the ingredient maps are invertible and holomorphic, they extend to a biholomorphism $C\lra C$. In particular it follows that $u$ has the same degree at a node as $u_k$ has on the corresponding curve. Thus $(C_k,u_k,X_k,\bq_k,\bp_k)$ is isomorphic to a Hurwitz cover in some family $\Psi^{\lambda}$ centered around $(C,u,X,\bq,\bp)$ and built from this disc structure and appropriate gluing parameters $b^j_k$ and $a_k$ for source and target surface, respectively, again due to Theorem~13.16 in \cite{robbin_construction_2006}.\footnote{At this point we cheat a little bit as it is possible that $(C,u,X,\bq,\bp)$ was not contained in the set $\Lambda$ which we used for defining $\mcM_{g,k,h,n}(T)$. However, one can just choose a central Hurwitz cover in that set and replace the gluing parameters zero by some appropriate values $a$ and $b$, respectively.} As the sequence $a_k$ goes to zero we have that $b_k$ goes to zero as well and therefore the sequence converges in $O^{\lambda}\subset\Ob\mcM_{g,k,h,n}(T)$. But since the map $\Ob\mcM_{g,k,h,n}(T)\lra|\mcR_{g,k,h,n}(T)|$ is a quotient map this means that the sequence converges in this induced topology as well.
\end{proof}

\subsection{Bordered Hurwitz Covers}

\label{sec:topology-bordered-hurwitz-covers}

Next we will define the topology on the moduli spaces of bordered Hurwitz covers and their ``closed'' version, $|\wh{\mcR}_{g,k,h,n}(T)|$ and $|\wt{\mcR}_{g,nd+k,h,2n}(\wt{T})|$, respectively. Note that we have already defined topologies on the corresponding orbifolds $\wh{\mcM}^{\circ}_{g,k,h,n}(T)$ and $\wt{\mcM}_{g,nd+k,h,2n}(\wt{T})$. First, we will define a topology on $|\wt{\mcR}_{g,nd+k,h,2n}(\wt{T})|$ and then afterwards on $|\wh{\mcR}_{g,k,h,n}(T)|$ such that the map $\glue:\wh{\mcR}_{g,k,h,n}(T)\lra\wt{\mcR}_{g,nd+k,h,2n}(\wt{T})$ is continuous.

\begin{definition}
In the same way as in \cref{def:topology-moduli-space-hurwitz-covers} the topology on $|\wt{\mcR}_{g,nd+k,h,2n}(\wt{T})|$ will be the finest one such that the map
\begin{equation*}
  \iota:|\wt{\mcM}_{g,nd+k,h,2n}(\wt{T})|\lra|\wt{\mcR}_{g,nd+k,h,2n}(\wt{T})|
\end{equation*}
is continuous.
\end{definition}

\begin{rmk}
  As for the closed case this makes the map $\iota$ an open quotient map. Since $|\wt{\mcM}_{g,nd+k,h,2n}(\wt{T})|$ is second-countable the space $|\wt{\mcR}_{g,nd+k,h,2n}(\wt{T})|$ is second-countable, too, and therefore compactness is equivalent to sequential compactness. Furthermore $|\wt{\mcM}_{g,nd+k,h,2n}(\wt{T})|$ is locally compact and thus $|\wt{\mcR}_{g,nd+k,h,2n}(\wt{T})|$ is locally compact, too. Again we use that the map $\iota$ is open for this statement. Notice that this means that in order to prove that a map $|\wt{\mcR}_{g,nd+k,h,2n}(\wt{T})|\lra Z$ is proper it is enough to show that it is closed and preimages of points are compact.
\end{rmk}

Next we define a topology on $|\wh{\mcR}_{g,k,h,n}(T)|$. Note that we can not just require the map $\glue:|\wh{\mcR}_{g,k,h,n}(T)|\lra|\wt{\mcR}_{g,nd+k,h,2n}(\wt{T})|$ to be continuous and then take the coarsest topology as this would make the open sets contain the whole circle of marked points $z_i$ at a node $q_i$. The situation is similar to polar coordinates $\RR_{\geq 0}\times S^1\lra\CC$ where it is not immediately clear how to abstractly define the topology on $\RR_{\geq 0}\times S^1$ without knowing anything about the topology on $\RR^2$ or the cylinder. Notice that this toy map is not open as an open strip $\RR_{\geq 0}\times(-\epsilon,\epsilon)$ touching the boundary is mapped to a sector in $\CC$ including the origin which is not open.

Recall the maps from the following diagram.
\begin{equation}
  \xymatrix{
    \Ob\wh{\mcR}_{g,k,h,n}(T) \ar[rr] \ar[dr]^{\glue} & & |\wh{\mcR}_{g,k,h,n}(T)| \ar[dd]^{\glue} \\
    & \Ob\wt{\mcR}_{g,nd+k,h,2n}(\wt{T}) \ar[rd] & \\
    \Ob\wt{\mcM}_{g,nd+k,h,2n}(\wt{T}) \ar[r] \ar[ur]^{\iota} & |\wt{\mcM}_{g,nd+k,h,2n}(\wt{T})| \ar[r]^{\iota} & |\wt{\mcR}_{g,nd+k,h,2n}(\wt{T})|    
    }
  \label{eq:diag-maps-topology-bordered-hurwitz-covers}
\end{equation}
Observe that the lower horizontal line consists of spaces already equipped with topologies and those maps are open continuous quotient maps. Now let an element
\begin{equation*}
  [C,u,X,\bq,\bp,\bz]\in|\wh{\mcR}_{g,k,h,n}(T)|
\end{equation*}
be given. We will index a subbasis for the topology on this space by choosing central points of the neighborhoods as well as open sets in $\Ob\wt{\mcM}_{g,nd+k,h,2n}(\wt{T})$ and $(-\epsilon,\epsilon)\in\RR$.

Choose a sufficiently small open neighborhood $U\subset\Ob\wt{\mcM}_{g,nd+k,h,2n}(\wt{T})$ such that $\glue([C,u,X,\bq,\bp,\bz])\in \iota(|U|)$ and an $\epsilon>0$. By sufficiently small we mean that it is contained in one chart $O^{\lambda}$. Note that the data from $\lambda$ contains a set of simple essential free homotopy classes of $C$ decomposing the surface into pairs of pants. This allows us to fix a point on $\Gamma_j(C)$ by intersecting the unique geodesic going up the cusp which is perpendicular to one chosen boundary of the corresponding pair of pants (or going up this cusp if $C$ happens to have a node there) in the hyperbolic uniformization of $C$. Call this point $w_j\in\Gamma_j(C)$. Notice that we can do this for any $[C',u',X',\bq',\bp',\bz']\in U$ and we will denote the corresponding point by $w_j'$. Now we define
\begin{align*}
  \mcU_{[C,u,X,\bq,\bp,\bz]}(U,\epsilon)\coloneqq\{[C',u',X',\bq', & \bp',\bz']\mid \glue(C',u',X',\bq',\bp',\bz')\in\iota(U) \\
  & \text{and }|d_{\text{hyp}}(z_j,w_j)-d_{\text{hyp}}(z'_j,w'_j)|<\epsilon\},
\end{align*}
where by $d_{\text{hyp}}(z_j,w_j)$ we mean the hyperbolic distance between the two points \emph{along} the curve $\Gamma_j(C)$.

\begin{rmk}
  The point $w_j\in\Gamma_j(C)$ is used to make sense of the idea that the marked points $z_j$ can vary slightly on their reference curve in an open neighborhood for which we need a reference point to define parametrizations of the curves.
\end{rmk}

\begin{lem}
  The sets $\mcU_{[C,u,X,\bq,\bp,\bz]}(U,\epsilon)$ define a topology on $|\wh{\mcR}_{g,k,h,n}(T)|$ such that the space is locally compact and second-countable. Furthermore the map $\glue$ is continuous, closed and proper.
  \label{lem:prop-top-moduli-space-bordered-hurwitz-covers}
\end{lem}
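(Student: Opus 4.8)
The plan is to verify the claimed properties in the order they are stated, building everything on top of the already-established topology on $|\wt{\mcR}_{g,nd+k,h,2n}(\wt{T})|$ and the orbifold structure on $\wh{\mcM}^{\square}_{g,k,h,n}(T)$. First I would check that the sets $\mcU_{[C,u,X,\bq,\bp,\bz]}(U,\epsilon)$ actually form the basis of a topology: given a point $[C'',\ldots]$ in the intersection of two such sets, one produces a smaller chart $U''\subset U\cap U'$ in $\Ob\wt{\mcM}_{g,nd+k,h,2n}(\wt{T})$ containing $\glue$ of that point (using that the lower-row maps in \cref{eq:diag-maps-topology-bordered-hurwitz-covers} are open) and an $\epsilon''$ small enough that the distance constraints at $[C'',\ldots]$ propagate by the triangle inequality along each reference curve $\Gamma_j(C'')$. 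The only subtlety here is that the reference point $w_j''$ is determined by the pair-of-pants data in $\lambda$, so one must shrink $U''$ so that the decomposing curves and the chosen perpendicular geodesics vary continuously — this is where $\cin_{\text{loc}}$-convergence of hyperbolic metrics (see the remark after \cref{def:topology-dm}) enters, guaranteeing continuity of $d_{\text{hyp}}(z_j',w_j')$ in the chart coordinates.

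Next I would establish second-countability and local compactness. Second-countability follows because $\Ob\wt{\mcM}_{g,nd+k,h,2n}(\wt{T})$ is second-countable, hence we may restrict the parameters $U$ to a countable basis and $\epsilon$ to rational values, and because at a node the extra circle-direction contributes only a separable factor (the distance-to-$w_j$ function has separable range). For local compactness, the key observation is that $\mcU_{[C,u,X,\bq,\bp,\bz]}(U,\epsilon)$ is, via $\glue$ and the parametrization $\Psi^\lambda$, homeomorphic to an open subset of a product of a slice of $\Ob\wt{\mcM}_{g,nd+k,h,2n}(\wt{T})$ with an interval (the $z_j$-directions at degenerate boundaries) — this is precisely \cref{lem:glue-functor-objects-not-injective}, which says $\glue_{\Ob}^{-1}$ of a point replaces inner points by products of circles and keeps everything else fixed. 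Shrinking $U$ to a chart with compact closure and $\epsilon$ small then gives precompact neighborhoods.

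Finally, for the properties of $\glue$: continuity is immediate from the definition of the basis, since $\glue(\mcU_{[\cdots]}(U,\epsilon))\subset\iota(|U|)$ and the sets $\iota(|U|)$ generate the topology on the target. Properness I would prove by the criterion noted in the remark after \cref{def:topology-dm}: it suffices to show $\glue$ is closed with compact point-preimages. Compactness of $\glue_{\Ob}^{-1}([C,u,X,\bq,\bp,\bG])$ is \cref{lem:glue-functor-objects-not-injective} again — it is a product of circles $\prod_{q_j\text{ a cusp}}\Gamma_j(C)$, which is compact. For closedness, take a sequence in a closed set whose $\glue$-images converge; lift the limit to some $[C,u,X,\bq,\bp,\bG]$, pull back the hyperbolic structure and the decomposing curves, and use compactness of the circle fibre to extract a convergent subsequence of the $\bz$-coordinates, whose limit lies in the closed set and maps to the given limit point. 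The main obstacle I expect is bookkeeping the reference points $w_j$ coherently across the sequence: one must argue that the perpendicular geodesics defining $w_j^k$ converge, which again reduces to $\cin_{\text{loc}}$-convergence of the uniformized metrics away from nodes together with the stability of collar neighborhoods from \cref{lem:collar-neighborhood,lem:unique-cusp-neighborhood}. Everything else is routine once that continuity statement is in hand.
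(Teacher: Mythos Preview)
Your proposal is correct and follows essentially the same route as the paper. The one minor divergence is that you work to verify the basis axioms (triangle-inequality bookkeeping on the $\epsilon$-conditions, continuity of the $w_j$ under variation of the hyperbolic structure), whereas the paper simply declares the sets $\mcU_{[C,u,X,\bq,\bp,\bz]}(U,\epsilon)$ to be a \emph{subbasis} and takes the generated topology---this sidesteps the intersection argument entirely. Everything else (second-countability from countable bases on $\Ob\wt{\mcM}_{g,nd+k,h,2n}(\wt{T})$ and $\RR$, local compactness from the same, continuity of $\glue$ from the definition, properness via closedness plus compact $S^1$-fibres from \cref{lem:glue-functor-objects-not-injective}, and the closedness argument by extracting a convergent subsequence of the $\bz$-coordinates on the compact circle fibre) matches the paper's proof.
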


\begin{proof}
  The sets $\mcU_{[C,u,X,\bq,\bp,\bz]}(U,\epsilon)$ clearly generate a topology as a subbasis. The space is second-countable as $\Ob\wt{\mcM}_{g,nd+k,h,2n}(\wt{T})$ and $\RR$ are second-countable and so we can find a countable basis by choosing appropriate basis open sets. The local compactness also follows immediately from the local compactness of $\Ob\wt{\mcM}_{g,nd+k,h,2n}(\wt{T})$ and $\RR$.

  Continuity of $\glue:|\wh{\mcR}_{g,k,h,n}(T)|\lra|\wt{\mcR}_{g,nd+k,h,2n}(\wt{T})|$ follows by looking at the commuting diagram in \cref{eq:diag-maps-topology-bordered-hurwitz-covers}: Suppose $\mcU\subset|\wt{\mcR}_{g,nd+k,h,2n}(\wt{T})|$ is an open set. Then for any point $[C,u,X,\bq,\bp,\bz]\in \glue^{-1}(\mcU)$ we can find an open set $[C,u,X,\bq,\bp,\bz]\in\mcV\subset\glue^{-1}(\mcU)$ by considering a sufficiently small subset in $\Ob\wt{\mcM}_{g,nd+k,h,2n}(\wt{T})$ containing $\glue([C,u,X,\bq,\bp,\bz])$ and taking the corresponding set in $|\wh{\mcR}_{g,k,h,n}(T)|$ as in the construction of the subbasis.

  Next we show that $\glue:|\wh{\mcR}_{g,k,h,n}(T)|\lra|\wt{\mcR}_{g,nd+k,h,2n}(\wt{T})|$ is a closed map. First notice that both spaces are second-countable and thus we can describe closed subsets as subsets such that every accumulation point is contained in the set and a point is an accumulation point if and only if there is a sequence converging to this point. Now consider a closed subset $\mcV\subset|\wh{\mcR}_{g,k,h,n}(T)|$. We need to show that $\glue(\mcV)\subset|\wh{\mcR}_{g,k,h,n}(T)|$ is closed, too. So consider a sequence $\{[\wt{C}_k,\wt{u}_k,\wt{X}_k,\wt{\bq}_k,\wt{\bp}_k,\wt{\Gamma}_k]\}_{k\in\NN}$ with
  \begin{equation*}
    [\wt{C}_k,\wt{u}_k,\wt{X}_k,\wt{\bq}_k,\wt{\bp}_k,\wt{\Gamma}_k]=\glue([C_k,u_k,X_k,\bq_k,\bp_k,\bz_k])\qquad\forall k\in\NN,
  \end{equation*}
  such that
  \begin{equation*}
    \lim_{k\to\infty}[\wt{C}_k,\wt{u}_k,\wt{X}_k,\wt{\bq}_k,\wt{\bp}_k,\wt{\Gamma}_k]=[\wt{C},\wt{u},\wt{X},\wt{\bq},\wt{\bp},\wt{\Gamma}]\in|\wt{\mcR}_{g,nd+k,h,2n}(\wt{T})|.
  \end{equation*}
  We need to show that there exists $[C,u,X,\bq,\bp,\bz]\in|\wh{\mcR}_{g,k,h,n}(T)|$ such that 
  \begin{equation*}
    \glue([C,u,X,\bq,\bp,\bz])=[\wt{C},\wt{u},\wt{X},\wt{\bq},\wt{\bp},\wt{\Gamma}].
  \end{equation*}
  For this purpose choose a sufficiently small open neighborhood $V$ of $(\wt{C},\wt{u},\wt{X},\wt{\bq},\wt{\bp},\wt{\Gamma})$ (or rather an equivalent Hurwitz cover) in $\Ob\wt{\mcM}_{g,nd+k,h,2n}(\wt{T})$ such that except for finitely many members all elements in the sequence $[\wt{C}_k,\wt{u}_k,\wt{X}_k,\wt{\bq}_k,\wt{\bp}_k,\wt{\Gamma}_k]$ have representatives $(\wt{C}_k,\wt{u}_k,\wt{X}_k,\wt{\bq}_k,\wt{\bp}_k,\wt{\Gamma}_k)\in \iota(V)$ in the groupoid $\wt{\mcM}_{g,nd+k,h,2n}(\wt{T})$. Now the map glue is injective on objects except regarding the marked points on reference curves close to punctures. In any case we can cut the Hurwitz cover $(\wt{C},\wt{u},\wt{X},\wt{\bq},\wt{\bp},\wt{\Gamma})$ along the hyperbolic geodesics in the free homotopy classes of $\wt{\Gamma}$ or at the corresponding node, respectively. We can do the same thing for all the elements in the sequence to obtain elements $(C_k,u_k,X_k,\bq_k,\bp_k,\bz_k)\in \Ob\wh{\mcR}_{g,k,h,n}(T)$ with $\glue(C_k,u_k,X_k,\bq_k,\bp_k,\bz_k)=C_k,u_k,X_k,\bq_k,\bp_k,\bz_k$. This is the same construction as in the proof of \cref{lem:glue-functor-surjectivity} and this procedure does not yet fix the points $(z_k)_j$ at punctures $q_j$. However, after choosing an arbitrary sequence of reference points $(z_k)_j$'s at these punctures such that $[C_k,u_k,X_k,\bq_k,\bp_k,\bz_k]\in\mcV$ we can pick a convergent subsequence because the corresponding $S^1$'s are compact and we can parametrize the $\Gamma_j(C_k)$ by choosing a geodesic as in the definition of the topology on $\wh{\mcR}_{g,k,h,n}(T)$. Now this subsequence obviously converges in $|\wh{\mcR}_{g,k,h,n}(T)|$ and is contained in $\mcV$ and therefore its limit is contained in $\mcV$. By continuity of $\glue$ this implies that the accumulation point of $\glue(\mcV)$ is contained in this image set and it is thus closed.

  In order to prove that $\glue$ is proper we thus only need to show preimages of points are compact. But preimages of points are homeomorphic to products of $S^1$'s corresponding to the nodal $\Gamma_j(C)$ and are thus compact.  
\end{proof}

\begin{rmk}
  Indeed the map $\glue:|\wh{\mcR}_{g,k,h,n}(T)|\lra|\wt{\mcR}_{g,nd+k,h,2n}(\wt{T})|$ is not open. To see this notice that a neighborhood of $[\wt{C},\wt{u},\wt{X},\wt{\bq},\wt{\bp},\wt{\Gamma}]$ with a node $\bp_j$ contains all possible marked points $u(z_j)$ as they correspond to possible endpoints of the unique hyperbolic geodesic going up the cusp $\wt{\bp}_{2j-1}$ and perpendicular to the geodesic corresponding to the collapsed node. But an open set in $|\wh{\mcR}_{g,k,h,n}(T)|$ might only contain a proper subset of this ``circle'' of possible marked points on the reference curve.
\end{rmk}

We have now defined two topologies on $|\wh{\mcM}^{\circ}_{g,k,h,n}(T)|$, one by pulling back the orbifold structure from $|\wt{\mcM}_{g,nd+k,h,2n}(\wt{T})|$ and the second one as a subspace $|\wh{\mcM}^{\circ}_{g,k,h,n}(T)|\subset |\wh{\mcR}_{g,k,h,n}(T)|$. These two topologies agree because the map $\glue:|\wh{\mcM}^{\circ}_{g,k,h,n}(T)|\lra|\wt{\mcM}^{\circ}_{g,nd+k,h,2n}(\wt{T})|$ is a homeomorphism by definition and for smooth Hurwitz covers the functor $\glue:\wh{\mcR}^{\circ}_{g,k,h,n}(T)\lra\wt{\mcR}^{\circ}_{g,nd+k,h,2n}(\wt{T})$ is injective on objects. This implies that the sets  $\mcU_{[C,u,X,\bq,\bp,\bz]}(U,\epsilon)$ are open in both topologies. Furthermore all forgetful and evaluation maps to corresponding Deligne--Mumford spaces are continuous by the same arguments as in \cref{sec:main-results}.

\section{Compactness of \texorpdfstring{$\boldsymbol{|\mcR_{g,k,h,n}(T)|}$}{|R\_gkhn(T)|}}

Now it remains to show that the obtained orbit spaces of our various orbifold categories are indeed compact. We will show this first for $|\mcR_{g,k,h,n}(T)|$ which implies compactness for $|\mcM_{g,k,h,n}(T)|$ by \cref{lem:inclusion-proper}. It is enough to show that they are sequentially compact as these spaces are first countable. The strategy is as follows. First we look at the sequence of complex structures of the target surface and pick a converging subsequence. Then we use SFT-compactness at collapsing curves in the target to extract a subsequence of Hurwitz covers converging to a holomorphic building in the SFT-sense. We then reformulate this in terms of Hurwitz covers and exclude various problematic cases that can happen in general. We are then left with a subsequence converging to a Hurwitz cover already considered in our moduli spaces thus proving its compactness.

So let us be given a sequence $[C_k,u_k,X_k,\bq_k,\bp_k]\in|\mcR_{g,k,h,n}(T)|$ with the usual combinatorial data denoted by $g,k,h,n\in\NN$ and $T=(T_1,\ldots,T_n,d,\nu,\{l_j\}_{1}^k)$. We will usually denote subsequences again by using the same index.

\subsection{Picking a Converging Subsequence of Complex Structures on the Target Surface}

First, note that the sequence $(X_k,\bp_k)$ is a sequence of stable nodal Riemann surfaces $X_k$ together with $n$ marked points $\bp_k$. Since the Deligne--Mumford space $|\mcR_{h,n}|$ is compact we can pass to a subsequence $(X_k,\bp_k)$ which converges to a nodal surface $(X,\bp)\in\obj\mcR_{h,n}$ in the sense of \cref{def:topology-dm}.

Recall from \cref{sec:collar-neighborhoods} that every closed simple geodesic and every cusp has a standard neighborhood and all these neighborhoods are pairwise disjoint if the geodesics and cusps do not meet. This implies that by passing to a subsequence we can assume that all the curves $\gamma_k^i$ and all cusps stay at least some fixed minimum hyperbolic distance away from each other. So wherever the hyperbolic metric $\psi_k^*g_k$ does not degenerate we can investigate the behavior of the maps $u_k$, the geodesics of $\psi_k^*g_k$, its nodes and its cusps individually. Rephrasing this means that on the target surface $X$ the nodes as well as the cusps cannot meet and the only degeneration possible is the one coming from collapsing hyperbolic geodesics. Away from these curves the hyperbolic structure converge, see e.g.\ \cite{hummel_gromovs_1997}. You can see this in \cref{fig:convergence-DM-nodes-collapsing-geodesics}.

\begin{figure}[H]
    \centering
    \def\svgwidth{0.6\textwidth}
    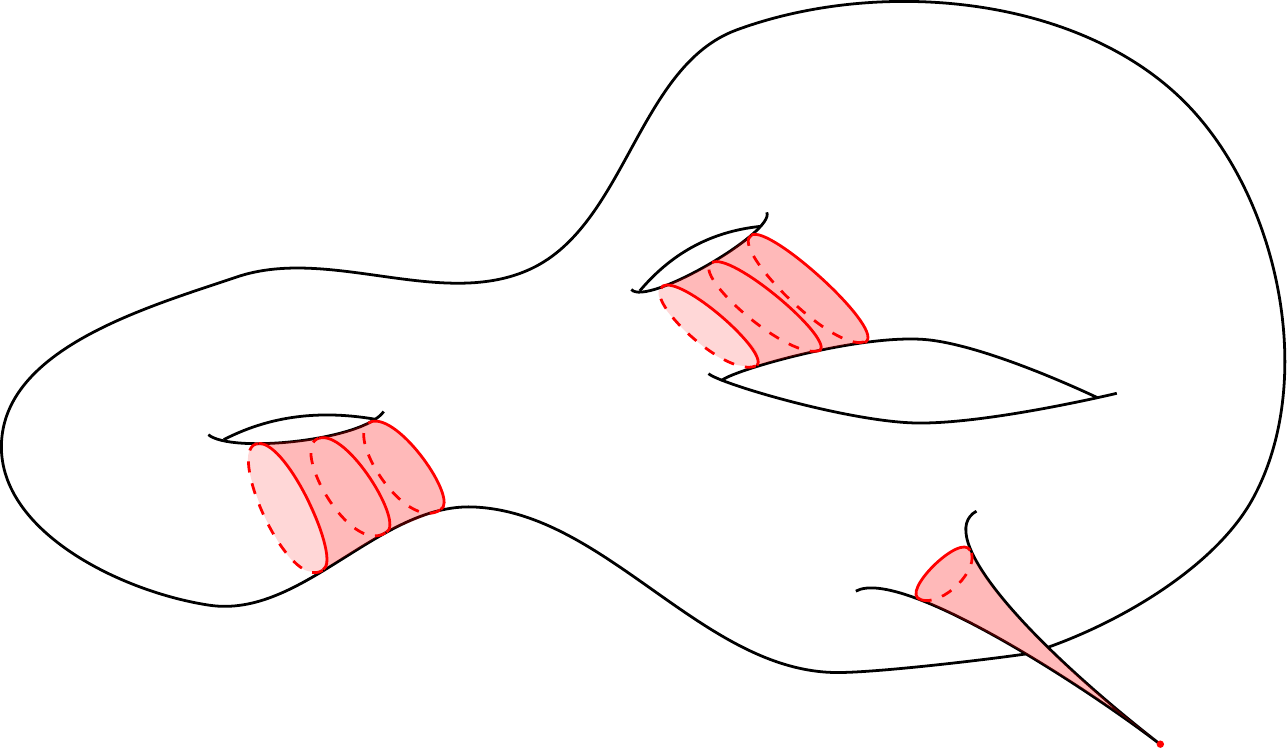
    \caption{The surface $X_k$ has a two collapsing geodesics $\gamma_k^1$ and $\gamma_k^2$ as well as a cusp $p_1$. Each of them has their collar or cusp neighborhood, respectively. By passing to a subsequence we assume that their lengths are uniformly bounded from above such that their collar and cusp neighborhoods, respectively, are pairwise disjoint. This means that the images of these neighborhoods around the cusps and nodes in $X$ are also disjoint for all $k\in\NN$.}
    \label{fig:convergence-DM-nodes-collapsing-geodesics}
\end{figure}

In total we have the following \cref{prop:target-j-subsequence}.

\begin{prop}
  Given a sequence $[C_k,u_k,X_k,\bq_k,\bp_k]\in|\mcR_{g,k,h,n}(T)|$ there is a subsequence denoted again by $[C_k,u_k,X_k,\bq_k,\bp_k]$ and a nodal surface $(X,\bp)$ together with a finite set of closed simple curves $\gamma_k^i\subset X_k$ for $i=1,\ldots,m$ and maps $\phi_k:X_k\lra X$ such that
  \begin{itemize}
    \item the maps $\phi_k$ map each $\gamma_k^i$ to a node in $X$,
    \item the maps $\phi_k|_{X_k\setminus\bigcup_{i=1}^m\gamma_k^i}:X_k\setminus\bigcup_{i=1}^m\gamma_k^i\lra X\setminus \bigcup_{i=1}^m\phi_k(\gamma_k^i)$ are diffeomorphisms of nodal surfaces and we denote their inverses by $\psi_k$,
    \item the unique complete hyperbolic structures of finite area $g_k$ on the punctured Riemann surface $X_k\setminus\{\text{nodes, marked points}\}$ induced by $J_k$ are such that $\psi_k^*g_k$ converge in $\cin_{\text{loc}}$ to $g$ on $X\setminus \left(\bigcup_{i=1}^m\phi_k(\gamma_k^i)\cup\{\text{nodes, marked points}\}\right)$ and
    \item the curves $\gamma_k^i$ are geodesics of $g_k$.
  \end{itemize}
  \label{prop:target-j-subsequence}
\end{prop}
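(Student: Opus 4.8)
The plan is to obtain everything from compactness of the Deligne--Mumford space of the target surface together with the continuity of uniformization, since all four assertions are essentially a restatement of \cref{def:topology-dm} transported from the complex to the hyperbolic picture.

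First I would forget the maps $u_k$ and all of the source data and regard $(X_k,\bp_k)$ merely as a sequence of points in the Deligne--Mumford space $|\mcR_{h,n}|$. That space is compact (this is classical; it also follows from the orbifold structure $\mcM_{h,n}$ of \cite{robbin_construction_2006} together with the hyperbolic compactness of \cite{hummel_gromovs_1997}), so there is a subsequence, which I again denote $(X_k,\bp_k)$, converging to some $(X,\bp)\in\Ob\mcR_{h,n}$ in the sense of \cref{def:topology-dm}. Unpacking that definition directly --- after discarding finitely many members --- produces the simple closed curves $\gamma_k^i\subset X_k$ for $i=1,\ldots,m$, the maps $\phi_k:X_k\lra X$ collapsing each $\gamma_k^i$ to a node of $X$ that does not depend on $k$, the diffeomorphisms of nodal surfaces $\phi_k|_{X_k\setminus\bigcup_i\gamma_k^i}$ with inverses $\psi_k$, the $\cinl$-convergence $\psi_k^*J_k\to J$ on $X$ away from the images of the collapsing curves, and $\phi_k(\bp_k^i)\to\bp^i$. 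This already yields the first two bullet points verbatim.

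Next I would pass from complex structures to hyperbolic metrics. By \cref{prop:unique-hyperbolic-metric} each punctured surface $X_k\setminus\{\text{nodes, marked points}\}$ carries its unique complete finite-area hyperbolic metric $g_k$ compatible with $J_k$, and uniformization depends continuously on the complex structure --- via the Kobayashi-metric description used in \cite{hubbard_analytic_2014}, or directly from hyperbolic Deligne--Mumford compactness in \cite{hummel_gromovs_1997} --- so $\cinl$-convergence $\psi_k^*J_k\to J$ upgrades to $\cinl$-convergence $\psi_k^*g_k\to g$ on $X\setminus\bigl(\bigcup_i\phi_k(\gamma_k^i)\cup\{\text{nodes, marked points}\}\bigr)$, where $g$ is the uniformizing metric of $(X,\bp)$. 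Moreover the curves $\gamma_k^i$ may be taken to be the unique closed simple $g_k$-geodesics in their free homotopy classes: either replace the curves produced above by their geodesic representatives, which still pinch to the same nodes, or choose them geodesic from the outset, as noted in the remark following \cref{def:topology-dm}.

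It then remains to make the degeneration uniform, which I would arrange by one further subsequence. The $g_k$-length of each $\gamma_k^i$ tends to $0$ because $\gamma_k^i$ is pinched to a node of $X$; in particular these lengths are bounded above, so for all large $k$ the standard collar neighborhoods of \cref{lem:collar-neighborhood} about the $\gamma_k^i$ together with the standard cusp neighborhoods of the marked points and of the already-present nodes of $X_k$ are pairwise disjoint by \cref{lem:collar-and-cusp-neighborhood-disjoint}; thinning the subsequence once more arranges that their $\phi_k$-images in $X$ are disjoint and independent of $k$. Consequently on $X$ the nodes and cusps cannot collide and the only possible degeneration is the pinching of the $\gamma_k^i$, which is the remaining content of the statement. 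I do not expect a genuine obstacle here: the only thing needing care is performing the three successive subsequence extractions --- convergence in $|\mcR_{h,n}|$, geodesic representatives, uniform collar bounds --- in an order that preserves the earlier conclusions, since all of the substantive input (Deligne--Mumford compactness, continuity of uniformization, collar and cusp geometry) is already available from the cited results.
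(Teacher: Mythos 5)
Your proposal is correct and takes essentially the same approach as the paper: compactness of Deligne--Mumford space yields the convergent subsequence, the remark following \cref{def:topology-dm} supplies the passage to the hyperbolic picture and the freedom to take the $\gamma_k^i$ geodesic, and the collar/cusp neighborhood estimates give the uniform separation after one more subsequence. The paper simply presents this as two paragraphs of discussion rather than a formal proof environment, but the content and order of the steps match what you wrote.
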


\begin{rmk}
  So, in the hyperbolic picture we see curves degenerating to nodes on the target surface. Next we will describe this as stretching-the-neck and thus apply SFT-compactness. All other compactness phenomena then have to be excluded on the domain $C$.
\end{rmk}

\subsection{General SFT Compactness in Neck-Stretching Sequences}

\subsubsection{Neck-Stretching}

\label{sec:neck-stretching}

In this chapter we will discuss the SFT compactness theorem and necessary changes as well as definitions from Cieliebak--Mohnke~\cite{cieliebak_compactness_2005}. There are two generalizations from that paper that we will need in our case: First, we will allow to glue in arbitrary sequences of increasingly thinner cylinders instead of their fixed sequence and secondly we will allow for changing almost complex structures away from the neck-stretching region.

\begin{definition}
  A closed \emph{stable} hypersurface $M$ of a closed connected symplectic manifold $(X^{2n},\omega)$ is a closed submanifold of $X$ of codimension one such that $\omega_M\coloneqq\omega|_M$ is a stable Hamiltonian structure, i.e.\ $\omega_M$ is a closed $2$-form of maximal rank $n-1$ and there exists a $1$-form $\lambda$ on $M$ with $\lambda|_{\ker\omega_M}\neq 0$ and $\ker\omega_M\subset\ker\dd\lambda$ everywhere.
  \label{def:stable-hypersurface}
\end{definition}

Now let $(X_0^{2n},\omega)$ be a closed connected symplectic manifold, $M\subset X_0$ a closed stable hypersurface and $\lambda$ the corresponding $1$-form on $M$. We fix a parametrization of a bi-collar neighborhood of $M$ by $[-\epsilon,\epsilon]\times M$ for some $\epsilon >0$ such that $\omega=\omega_M+\dd (r\lambda)$, were $r\in[-\epsilon,\epsilon]$. Now we pick a sequence of tamed almost complex structures $\{\ol{J}_k\}_{k\in\NN}$ on $(X_0,\omega)$ such that their restriction $\ol{J}_k|_M$ to $[-\epsilon,\epsilon]\times M$ is the restriction of an SFT-like almost complex structure on $\RR\times M$, defined in \cref{def:tamed-complex-structure}. Furthermore we require that the sequence $\ol{J}_k$ converges in $\cin$ outside $[-\epsilon,\epsilon]\times M$ to an almost complex structure $J$ on $X_0\setminus[-\epsilon,\epsilon]\times M$.\footnote{This will later force us to include all collapsing curves on the target surfaces $X_k$ in the hypersurface $M$.}

\begin{definition}
  The \emph{Reeb vector field} $R$ on $M$ is defined by $\omega_M(R,\cdot)=0$ and $\lambda(R)=1$. Furthermore we define the CR-structure $\xi\coloneqq\ker\lambda$.
\end{definition}

\index{Reeb Vector Field}
\index{SFT-like Almost Complex Structure}

\begin{definition}
  A \emph{SFT-like almost complex structure} $J$ on $(X_0,\omega)$ for a closed stable hypersurface $M$ with a chosen stabilizing $1$-form $\lambda$ on $M$ and a fixed bi-collar $[-\epsilon,\epsilon]\times M\subset X_0$ is an almost complex structure $J$ on $X_0$ which tames $\omega$ and its restriction $J_M\coloneqq J|_{[-\epsilon,\epsilon]\times M}$ satisfies the following properties:
  \begin{enumerate}[label=(\roman*), ref=(\roman*)]
    \item $J_M\left(\frac{\del}{\del r}\right)=R$ is the Reeb vector field on $M$ and
    \item $\omega_M(v,J_Mv)>0$ for all $v\in\xi\setminus\{0\}$,
  \end{enumerate}
  where $\xi\coloneqq\ker\lambda$.
  \label{def:tamed-complex-structure}
\end{definition}

\begin{rmk}
  As is explained in \cite{cieliebak_compactness_2005} such tamed complex structures exist and the set of tamed complex structures for a given symplectic manifold and closed stable hypersurface is contractible. Note that we pick in fact a \emph{sequence} of such tamed complex structures as we want to allow for varying complex structures away from the neck-stretching. So we require that on the bi-collar neighborhood the $\ol{J}_k$ are independent of $k$ and are equal to one fixed SFT-like almost complex structure on $\RR\times M$.
\end{rmk}

At this point Cieliebak and Mohnke introduce a sequence of modified symplectic manifolds $X_k$ which consist of $X_0$ and replace the bi-collar neighborhood by $[-\epsilon-k,\epsilon]\times M$ with $k\in\NN$. However we will need a more general sequence and will replace $k$ by a sequence $\{w_k\}_{k\in\NN}\subset\RR_+$ with the property $\lim_{k\to\infty}w_k=+\infty$ and $w_0=0$ allowing for more general widths of collar neighborhoods. Furthermore, if $M=\bigsqcup M^{\nu}$ has several connected components we might choose different sequences $w_k^{\nu}$ for every $\nu$. As this complicates the notation further we will drop the index and treat every connected component separately.

Now we choose a sequence of diffeomorphisms $\phi_k:[-w_k-\epsilon,\epsilon]\lra[-\epsilon,\epsilon]$ with $\phi_k'=1$ near the end points of the intervals (again, for every connected component $M^{\nu}$ of $M$) and define

\begin{enumerate}[label=(\roman*), ref=(\roman*)]
  \item $\mathring{X}_0\coloneqq X_0\setminus[-\epsilon,\epsilon]\times M$,
  \item $M_{\pm}\coloneqq \{\pm\epsilon\}\times M$,
  \item $X_k\coloneqq \mathring{X}_0\cup_{M_-\sqcup M_+}[-w_k-\epsilon,\epsilon]\times M$,
  \item $J_k\coloneqq
    \begin{cases}
      \ol{J}_k & \text{on }\mathring{X}_0 \\
      J_M & \text{on }[-w_k-\epsilon,\epsilon]\times M,
    \end{cases}$
  \item $\omega_k\coloneqq
    \begin{cases}
      \omega & \text{on }\mathring{X}_0 \\
      \omega_M+\dd(\phi_k\lambda) & \text{on }[-w_k-\epsilon,\epsilon]\times M.
    \end{cases}$
\end{enumerate}

\begin{definition}
  To recap, given a stable hypersurface $M$ in a closed connected symplectic manifold $(X_0,\omega)$ together with a sequence $\{w_k\}_{k\in\NN}\subset\RR_+$ as above for every connected component of $M$, a sequence $\ol{J}_k$ of tamed almost complex structures on $X_0$ as well as a bi-collar neighborhood $[-\epsilon,\epsilon]\times M\subset X_0$ the above procedure defines a \emph{neck-stretching sequence} $(X_k,\omega_k,J_k)$.
  \label{def:neck-stretching-sequence}
\end{definition}

\index{Neck-Stretching Sequence}

The ``limit'' object of such a neck-stretching sequence will be of the following kind. For an integer $N\geq 1$  define
\begin{equation*}
  (X^{\nu},J^{\nu})\coloneqq
  \begin{cases}
    (\mathring{X}_0\sqcup[-\epsilon,\infty)\times M\sqcup(-\infty,\epsilon)\times M,J) & \text{for }\nu=0,N+1 \\
    (\RR\times M,J_M) & \text{for }\nu=1,\ldots,N
  \end{cases}
\end{equation*}
as well as
\begin{equation*}
  X^*\coloneqq X^0\sqcup\bigsqcup_{\nu=1}^NX^{\nu}
\end{equation*}
equipped with the almost complex structure $J^*$ induced by the $J^{\nu}$. Also one can glue the positive boundary component of the compactification $\ol{X^{\nu}}$ by adding copies of $M$ to the negative boundary component of $\ol{X^{\nu+1}}$. Call this compact topological space $\ol{X}$ which is homeomorphic to $X_0$ and is illustrated in the upper part of \cref{fig:map-phi}.

\begin{rmk}
  Note that the definition of $X^*$ does not use a ($N+1$)-st level. This is just used for simplifying the notation for the broken ends as the positive ends of $X^N$ are glued to the negative ends of $X^{N+1}=X^0$.

  Also notice that it is not immediately clear that this notion of convergence of the neck-stretching sequence $(X_k,J_k)$ to $(X^*,J^*)$ is the same as Deligne--Mumford convergence. In our case, however, we will see that the final object does indeed arise as an actual limit of degenerating hyperbolic surfaces in the Deligne--Mumford sense.
\end{rmk}

Furthermore we need to choose certain homeomorphisms to translate between $X_0$ and $\ol{X}$ and to obtain more structures on the ``limits'' $\ol{X}$ and $X^*$. To this end choose a homeomorphism
\begin{equation*}
  \Phi:\ol{X}\lra X_0
\end{equation*}
given by
\begin{equation*}
  \Phi(x)\coloneqq
  \begin{cases}
    x & x\in X_0\setminus (-\epsilon,\epsilon)\times M \\
    (\phi(r),y) & x=(r,y)\in \ol{(\RR\times M)^{\nu}}
  \end{cases}
\end{equation*}
with a homeomorphism
\begin{equation}
  \phi:[-\epsilon,+\infty]\cup_{\pm\infty}[-\infty,+\infty]\cup_{\pm\infty}\cdots \cup_{\pm\infty}[-\infty,\epsilon]\lra [-\epsilon,\epsilon]
  \label{eq:def-phi-shift}
\end{equation}
that is a diffeomorphism outside $\pm\infty$ and $\phi(r)=r$ near $\pm\epsilon$. Note that the domain of $\phi$ has $N+2$ intervals because in the definition of $X^*$ and $\ol{X}$ we used $X_0$ twice. Denote by $\phi^0$ the restriction of $\phi$ to the first and last interval and by $\phi^{\nu}$ the restriction to the $(\nu+1)$-st interval for $\nu=1,\ldots,N$.

We can use the map $\Phi$ to pull back $\omega$ to obtain a symplectic form $\omega_{\Phi}$ on $X^*$.

\begin{figure}[H]
    \centering
    \def\svgwidth{0.8\textwidth}
    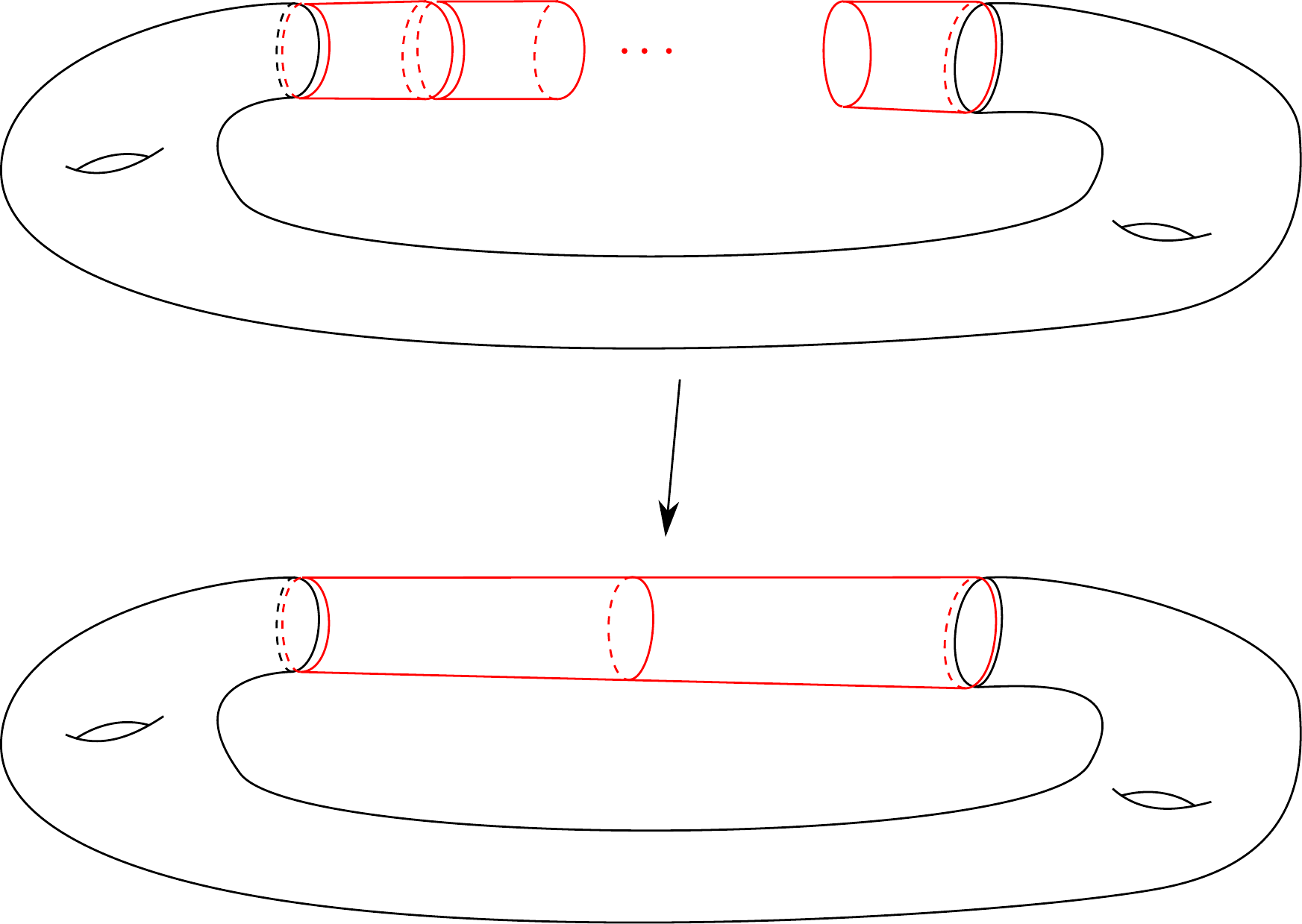
    \caption{This is an illustration of the map $\Phi:\ol{X}\lra X_0$ which will be used to relate various constructed objects on the limit broken curve to the surface $X_0$. Note that the ends of the tubes in the upper figure are given by $[-\infty,\infty]\times M$ so the cylindrical ends are open neighborhoods of the glued-in copy of $M$ at the boundaries between these tubes.}
    \label{fig:map-phi}
\end{figure}

\subsubsection{Broken Holomorphic Curves}

Before stating the definition of the convergence of a sequence of $J_k$-holomorphic curves in a neck-stretching sequence to a broken holomorphic curve we need to introduce various notions on surfaces.

First, suppose we are given a stable Hamiltonian structure $\omega_M$ on $\RR\times M$ stabilized by a $1$-form $\lambda$. This form defines a Reeb vector field $R$ on $M$ for which we are given a $T$-periodic closed orbit $\gamma:[0,T]\lra M$. Then we define

\begin{definition}
  A $J$-holomorphic map $f=(a,u):\DD\setminus\{0\}\lra\RR\times M$ is called \emph{positively asymptotic} to $\gamma$ if $\lim_{\rho\to 0}a(\rho e^{\ii\theta})=+\infty$ and $\lim_{\rho\to 0 }u(\rho e^{\ii\theta})=\gamma\left(\frac{T\theta}{2\pi}\right)$ uniformly in $\theta$. The map $f$ is negatively asymptotic to $\gamma$ if the same holds with $-\infty$ for $a$ and $\gamma\left(-\frac{T\theta}{2\pi}\right)$ for $u$ as the limits.
\end{definition}

\begin{figure}[H]
    \centering
    \def\svgwidth{0.8\textwidth}
    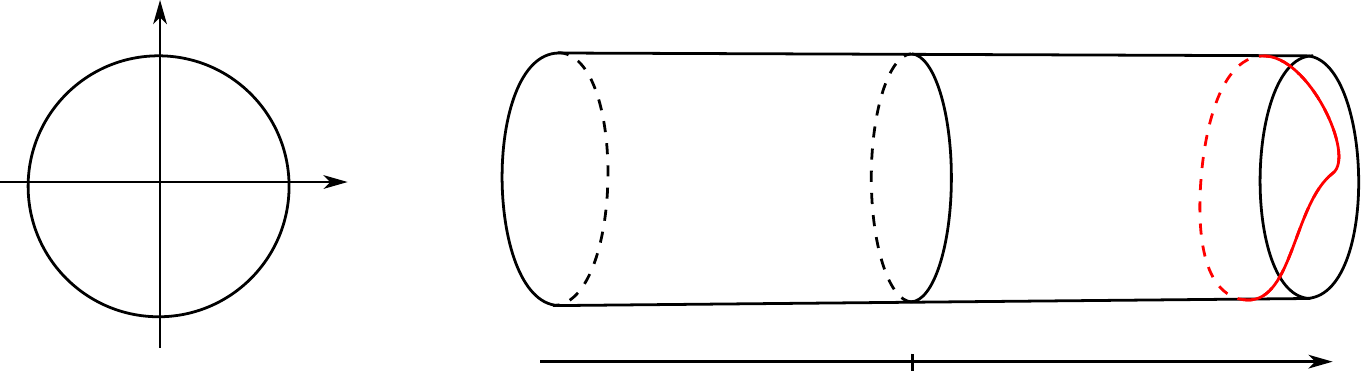
    \caption{A holomorphic map can be positively asymptotic at a puncture in its domain if the images of small circles around the puncture converge to $+\infty$ in the $\RR$-coordinate and converge uniformly to a parametrized Reeb orbit of $M$. Note that we have drawn the case $M=S^1$ which is indeed what we need in our considerations.}
    \label{fig:asymptotic-puncture}
\end{figure}

Now we can define the notion of a punctured holomorphic curve in $(X_0,J)$.

\begin{definition}
  A \emph{punctured holomorphic curve} is a tuple $(C,u,\ol{\bz},\ul{\bz},\ol{\Gamma},\ul{\Gamma})$ where
  \begin{itemize}
    \item $C$ is a nodal Riemann surface with pairwise disjoint marked points 
      \begin{equation*}
        \ol{\bz}=(\ol{z}_1,\ldots,\ol{z}_{\ol{p}})\text{ and }\ul{\bz}=(\ul{z}_1,\ldots,\ul{z}_{\ul{p}}),
      \end{equation*}
    \item corresponding vectors $\ol{\Gamma}=(\ol{\gamma}_1,\ldots,\ol{\gamma}_{\ol{p}})$ and $\ul{\Gamma}=(\ul{\gamma}_1,\ldots,\ul{\gamma}_{\ul{p}})$ of corresponding closed Reeb orbits in $M$ and
    \item a $j$-$J$-holomorphic map $u:C\setminus\ol{\bz}\cup\ul{\bz}\lra X_0$ which is positively asymptotic to $\ol{\gamma}_i$ at $\ol{z}_i$ for any $i=1,\ldots,\ol{p}$ and negatively asymptotic to $\ul{\gamma}_i$ at $\ul{z}_i$, respectively.
  \end{itemize}
  \label{def:punctured-holomorphic-curve}
\end{definition}

\begin{figure}[H]
    \centering
    \def\svgwidth{0.8\textwidth}
    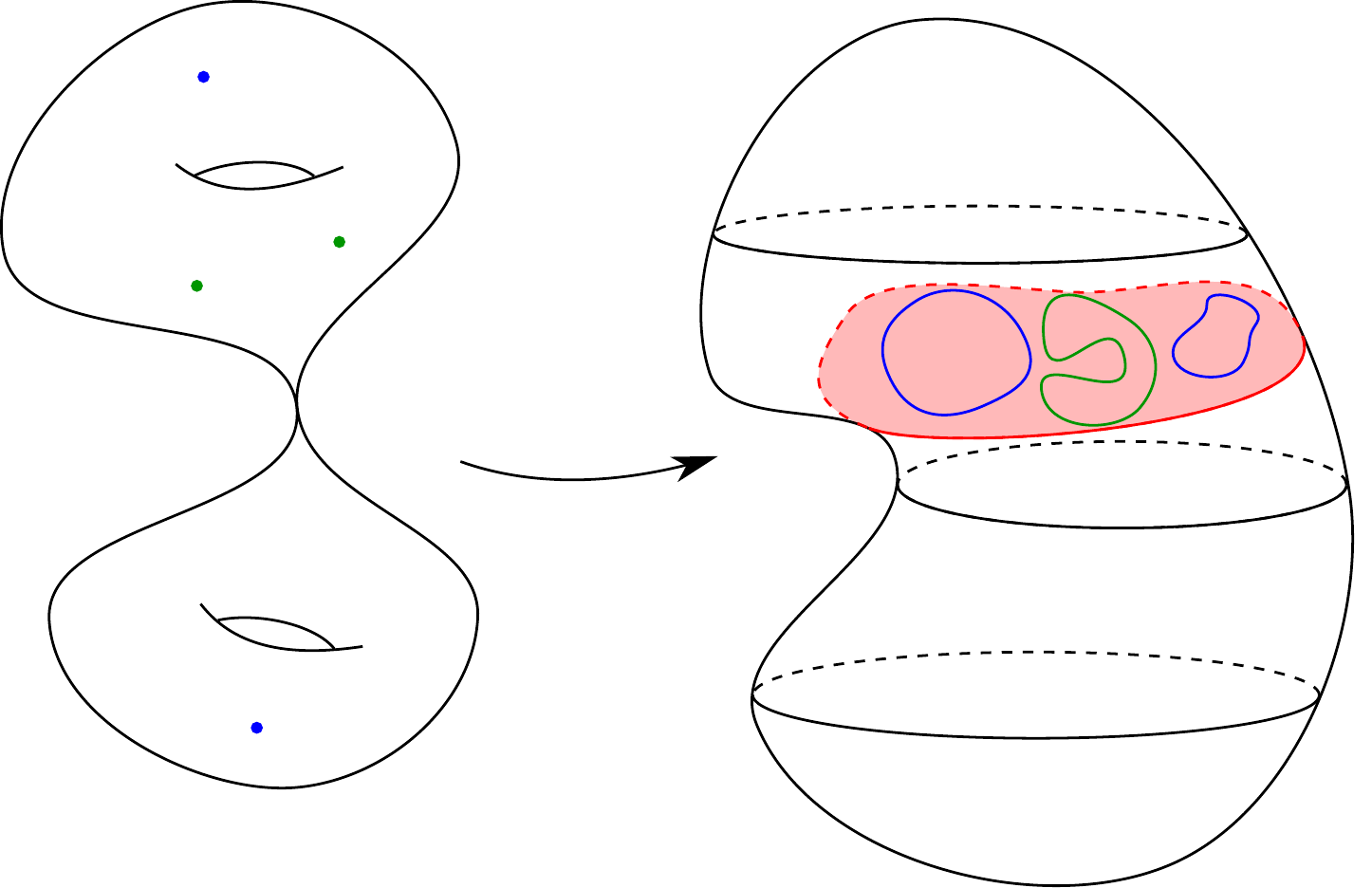
    \caption{Here one can see a punctured holomorphic curve (i.e.\ \emph{one} level of a broken holomorphic curve) as in \cref{def:punctured-holomorphic-curve} in a symplectic manifold $X$ of dimension $\dim X\geq 4$ with a stable hypersurface $M$. The curves in $M$ are Reeb orbits which are the limits of $u$ at the punctures $z_i$. Notice that nodes are allowed as they can be present in a sequence of (nodal) Hurwitz covers. There is no condition on the node as $X$ is not a surface.}
    \label{fig:punctured-holomorphic-curve}
\end{figure}

We will now define the notion of a broken holomorphic curve for the neck-stretching manifold $(X^*,J^*)$. Let $\ol{C}$ be a closed oriented connected surface with a tuple $\bz=(z_1,\ldots,z_k)\in \ol{C}^k$ of distinct marked points and two collections $\Delta_{\text{p}}\coloneqq \{\delta_i\}_{i=1}^p$ and $\Delta_{\text{n}}$ of finitely many disjoint and pairwise non-homotopic simple loops in $\ol{C}\setminus\bz$. We set $C\coloneqq \faktor{\ol{C}}{\Delta_n}$ where each loop in $\Delta_n$ is collapsed to a nodal point. Then we equip $C\setminus\Delta_{\text{p}}$ with a complex structure such that the loops in $\Delta_{\text{p}}$ become punctures from both sides and the collapsed curves $\Delta_{\text{n}}$ become nodes. This way $C\setminus\Delta_{\text{p}}$ is a closed oriented connected nodal Riemann surface with marked points $\bz$ and punctures $\delta_i$ for $i=1,\ldots,p$. Furthermore we choose a decomposition of $C\setminus\Delta_{\text{p}}$ into disjoint surfaces

\begin{equation*}
  C^*\coloneqq C\setminus\Delta_{\text{p}}\eqqcolon C^0\sqcup\bigsqcup_{\nu=1}^NC^{\nu}
\end{equation*}

such that all $C^{\nu}$ are pairwise disjoint not necessarily connected nodal Riemann surfaces with punctures. Note that the definition of the individual $C^{\nu}$'s is part of a choice. Also note that the curves $\delta_i$ give us an identification of pairs of punctures on $C^*$.

\begin{definition}
  A \emph{broken holomorphic curve} with $N+1$ levels
  \begin{equation*}
    F=(F^0,F^1,\ldots,F^N):C^*\lra(X^*,J^*)
  \end{equation*}
is a collection of punctured holomorphic curves $F^{\nu}:C^{\nu}\lra(X^{\nu},J^{\nu})$ such that $F: C^*\lra X^*$ extends to a continuous map $\ol{F}:C\lra \ol{X}$. It is called \emph{stable} if
  \begin{enumerate}[label=(\roman*), ref=(\roman*)]
    \item for $1\leq\nu\leq N$ the map $F^{\nu}$ is not the union of trivial cylinders over closed Reeb orbits without any marked points on them,
    \item no component of $C^*$ is a sphere with less than three special points\footnote{By special points we mean as usual either punctures, nodes or marked points.} on which $F$ is constant and
    \item no component of $C^*$ is a torus without any special points on which $F$ is constant.
  \end{enumerate}
  \label{def:broken-holomorphic-curve}
\end{definition}

\begin{rmk}
  Note that the continuity of $F$ implies that the number of positive and negative punctures as well as their corresponding Reeb orbits of consecutive punctured holomorphic curves coincide. Also, the map $\Phi\circ\ol{F}: C\lra X_0$ represents a well-defined homology class $A\in H_2(X_0;\ZZ)$. See \cref{fig:broken-holomorphic-curve} for an illustration. Note also that a ``trivial'' double cover of a trivial Reeb cylinder is in fact a trivial Reeb cylinder itself, namely over the Reeb orbit run through twice.
\end{rmk}

\begin{figure}[H]
    \centering
    \def\svgwidth{0.8\textwidth}
    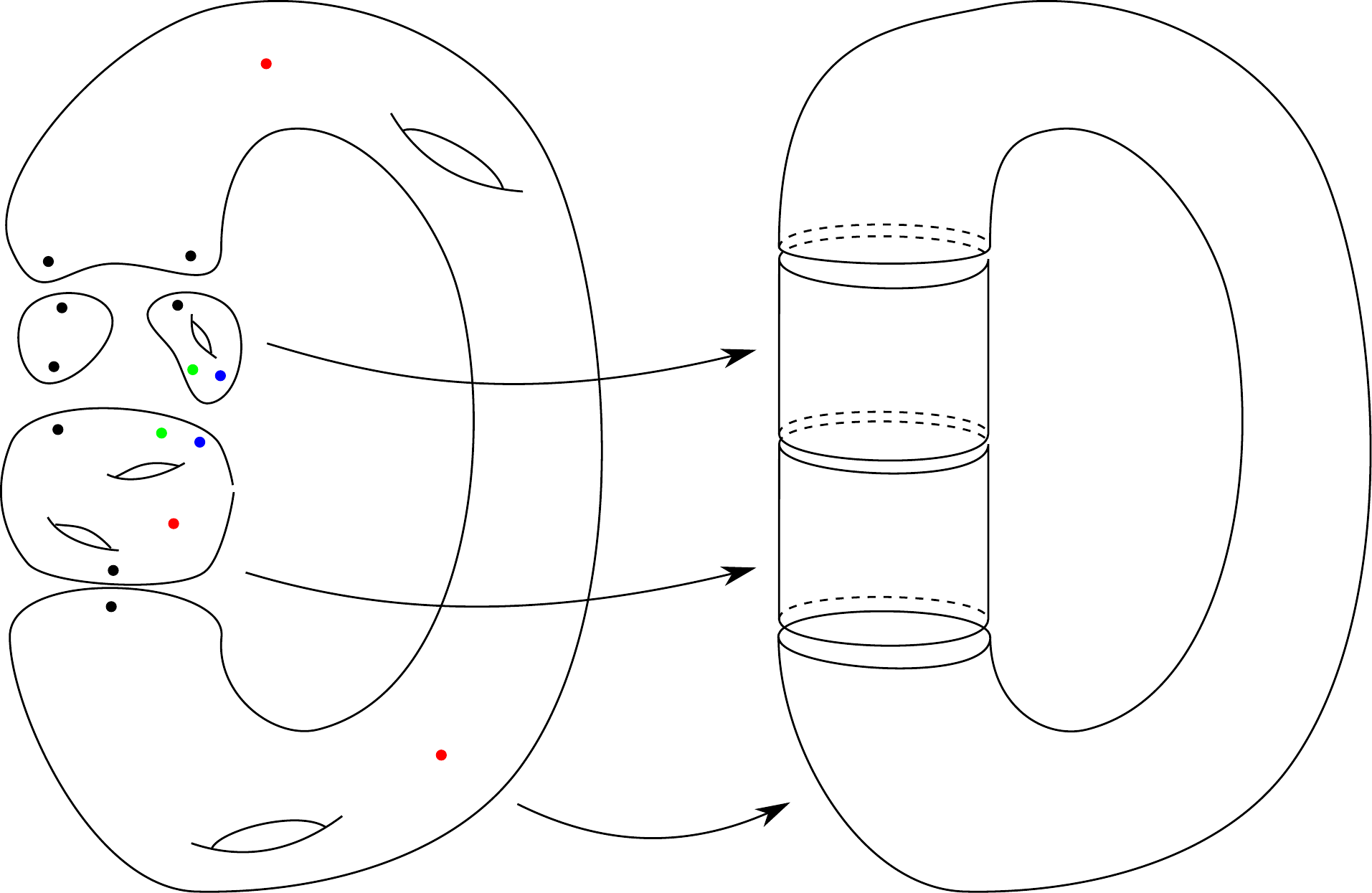
    \caption{This figure shows a broken holomorphic curve with $N=2$ levels. The black dots are asymptotic punctures whereas the red dots correspond to marked points. Notice that the broken holomorphic curves at each level can be disconnected and nodal. The pair of blue dots converge positively and negatively, respectively, to the blue Reeb orbit whereas the green ones might converge to the double of this Reeb orbit.}
    \label{fig:broken-holomorphic-curve}
\end{figure}

\subsubsection{Convergence of Broken Holomorphic Curves}

We will now state the definition of when a sequence of holomorphic curves in a neck-stretching sequence converges to a broken holomorphic curve. Unfortunately this involves quite a lot of additional notation that we will not need later on. This is why we put the definition of the various shift maps and diffeomorphisms into \cref{appendix-convergence-broken-holomorphic-curves}. Define the set $I^{\nu}$ as the set of indices $i\in\{1,\ldots,p\}$ such that $\delta_i\in\Delta_{\text{p}}$ is adjacent to $C^{\nu}$ and $C^{\nu+1}$. Also fix cylinders $A^i\subset C\setminus\Delta_{\text{n}}$ with a parametrization by $[-1,1]\times\faktor{\RR}{\ZZ}$ such that $\{-1\}\times\faktor{\RR}{\ZZ}\in C^{\nu}$ for $i\in I^{\nu}$ and $\{0\}\times\faktor{\RR}{\ZZ}=\delta^i$.

\begin{definition}
  A sequence of punctured holomorphic curves with $q$ marked points\footnote{To reduce confusion with the index $k$ of the member of the sequence we will temporarily use $q$ for the number of marked points.} $f_k:(C_k,j_k,\bz_k)\lra(X_k,J_k)$ in the neck-stretching sequence $(X_k,J_k)$ converges to a broken holomorphic curve with $q$ marked points $F:(C^*,j,\bz)\lra (X^*,J^*)$ if there exist orientation preserving diffeomorphisms $\phi_k:C_k\lra \ol{C}$ together with sequences of numbers
  \begin{equation*}
    -w_k=r_k^0<r_k^1<\cdots <r_k^{N+1}=0
  \end{equation*}
with $\lim_{k\to\infty}r_k^{\nu+1}-r_k^{\nu}=\infty$ such that the following hold:
\begin{enumerate}[label=(\roman*), ref=(\roman*)]
  \item $(\phi_k)_*j_k\to j$ in $\cin_{\text{loc}}$ on $C^*\setminus\{\text{nodes}\}$ and $\phi_k(\bz_k^l)\to\bz^l$ for $l=1,\ldots,q$,
  \item for every $i\in I^{\nu}$ the annulus $(A_i,(\phi_k)_*j_k)$ is conformally equivalent to a standard annulus $[-L_k^i,L_k^i]\times\faktor{\RR}{\ZZ}$ by a diffeomorphism of the form $(s,t)\longmapsto(\sigma(s),t)$ with $L_k^i\to\infty$ for $k\to\infty$,
  \item $f_k^{\nu}\circ\phi_k^{-1}\to F^{\nu}$  in $\cin_{\text{loc}}$ on $C^{\nu}\setminus\{\text{nodes}\}$ and in $C^0_{\text{loc}}$ on $C^{\nu}$,
  \item for every $i\in I^{\nu}$ the maps $\pi_M\circ f_k\circ\phi_k^{-1}\to\pi_M\circ\ol{F}$ converge uniformly on $A_i$, \label{item:def-convergence-broken-hol-curve}
  \item for every $R>0$ there exists $\rho>0$ and $K\in\NN$ such that $\pi_{\RR}\circ f_k\circ^{-1}(s,t)\in[r_k^{\nu}+R,r_k^{\nu+1}-R]$ for all $k\geq K$ and all $(s,t)\in A^i$ with $|s|\leq\rho$ and
  \item $\int_{C_k}f_k^*\omega_{\phi_k}\to\int_{C^*}F^*\omega_{\phi}=\omega([\ol{F}])$.
\end{enumerate}
\label{def:convergence-broken-hol-curve}
\end{definition}

\begin{figure}[h!]
    \centering
    \def\svgwidth{\textwidth}
    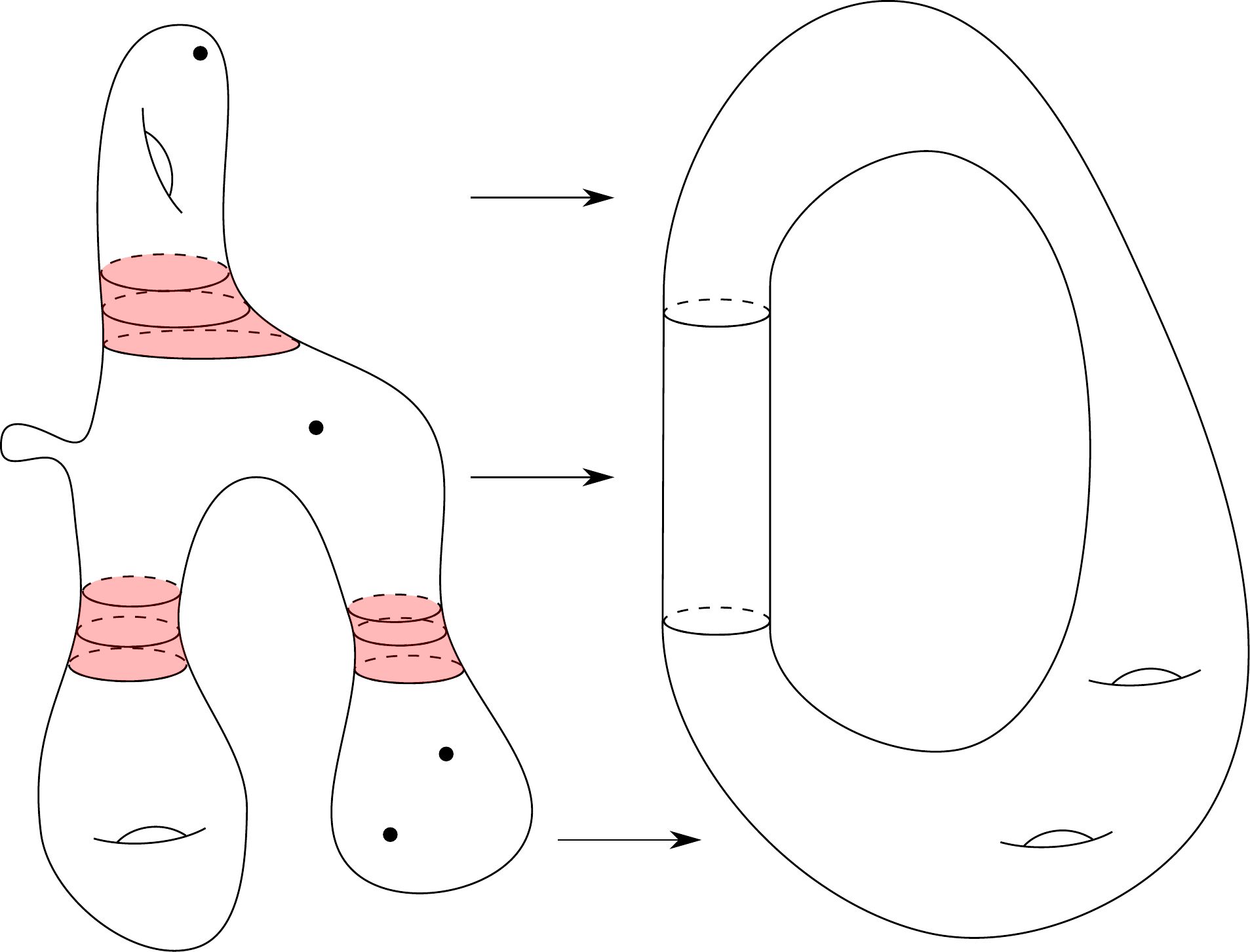
    \caption{This shows most of the objects appearing in the definition of convergence on the surface $\ol{C}$ meaning that everything is pulled back by the diffeomorphisms $\phi_k$ outside the breaking curves $\delta^i$. In particular the maps defined on the $C^{\nu}$ then converge in $\cin_{\text{loc}}$ outside these curves as well as those corresponding to nodes denoted by $\Delta_{\text{n}}$. Furthermore the definition includes statements on the behavior of the maps on the annuli $A^i$ as well as convergence of the symplectic area and the homology class of the continuous map $\ol{C}\lra\ol{X}\lra X_0$.}
    \label{fig:convergence-to-broken-curve}
\end{figure}

\subsubsection{SFT-Compactness}

\begin{thm}[Gromov--Hofer compactness]
  Let $(X_0,\omega)$ be a closed symplectic manifold and $M\subset X_0$ a stable closed hypersurface. Assume that all closed Reeb orbits on $(M,\lambda)$ are Bott non-degenerate. Let $(X_k,J_k)$ be a neck-stretching sequence as above. Let furthermore be $f_k:(C_k,\bz_k)\lra X_k$ a sequence of $J_k$-holomorphic curves of the same genus with $q$ marked points and with uniformly bounded area $\int_{C_k}f_k^*\omega\leq E_0$ where we identify $X_k\cong X$. Then a subsequence of $f_k$ converges to a broken holomorphic curve with $q$ marked points $F:(C^*,\bz)\lra(X^*,J^*)$ in the sense of \cref{def:convergence-broken-hol-curve}.
  \label{thm:sft-compactness}
\end{thm}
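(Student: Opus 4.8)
The plan is to reduce \cref{thm:sft-compactness} to the compactness theorem of Cieliebak and Mohnke in \cite{cieliebak_compactness_2005} (itself resting on \cite{bourgeois_compactness_2003}), which treats precisely the present setup except for the two modifications made in the paragraph before \cref{def:neck-stretching-sequence}: there the width sequence is fixed to be $w_k=k$, and the ambient almost complex structure is fixed rather than allowed to vary away from the neck. So first I would invoke their result verbatim in the case $\bar J_k\equiv\bar J$, $w_k=k$, obtaining convergence of a subsequence to a stable broken holomorphic curve, and then verify that neither of the two generalisations affects the argument. After that, the conclusion—convergence in the sense of \cref{def:convergence-broken-hol-curve} to an object as in \cref{def:broken-holomorphic-curve}—follows line by line.

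For the width sequence: after passing to a subsequence we may assume $w_k$ is strictly increasing with $w_k\to\infty$. The proof in \cite{cieliebak_compactness_2005} never uses the concrete value $w_k=k$; the only feature entering the a priori estimates, the bubbling-off analysis, and the construction of the limit building is that the collars $[-w_k-\epsilon,\epsilon]\times M$ exhaust the half-infinite cylinder, i.e.\ $w_k\to\infty$. In particular the shift numbers $r_k^\nu$ and the rescalings appearing in \cref{def:convergence-broken-hol-curve} are produced intrinsically from the maps $f_k$ via the long-cylinder/bad-points decomposition, so replacing $k$ by $w_k$ (and, for disconnected $M$, by the component-wise sequences $w_k^\nu$) changes nothing beyond notation.

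For the varying almost complex structures: since $\bar J_k\to J$ in $\cin_{\text{loc}}$ on $\mathring X_0$ and $\bar J_k$ is $k$-independent and SFT-like on the bi-collar, all elliptic estimates used in \cite{cieliebak_compactness_2005}—monotonicity and isoperimetric inequalities, gradient bounds, removal of singularities, the long-cylinder lemma—hold with constants uniform in $k$, because such estimates are uniform over $\cin$-convergent families of $\omega$-tame almost complex structures on a fixed compact manifold; this is the same robustness that makes ordinary Gromov compactness work for a convergent sequence of almost complex structures. The energy bound is unaffected: $\omega_k$ and $\omega$ agree outside the neck, on the neck $\int f_k^*\,\dd(\phi_k\lambda)\ge 0$, and the fixed neck contribution is bounded, so the total $\omega_k$-area remains controlled by $E_0$ up to an additive constant. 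With these two observations the proof reduces to that of \cite{cieliebak_compactness_2005}: extract a subsequence, perform the diagonal bubbling argument level by level, use Bott non-degeneracy of the Reeb orbits of $(M,\lambda)$ to control the asymptotics at the punctures, and check that the resulting collection of punctured holomorphic curves glues to a continuous map $\ol C\to\ol X$. The only genuine work—and the step I expect to be the main obstacle—is certifying that the uniform estimates of \cite{cieliebak_compactness_2005} survive the passage to a merely $\cin_{\text{loc}}$-convergent sequence $\bar J_k$; everything else is bookkeeping.
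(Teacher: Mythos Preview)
Your proposal is correct and mirrors the paper's proof almost exactly: the paper likewise cites \cite{cieliebak_compactness_2005} (and \cite{bourgeois_compactness_2003}) and then addresses precisely the same two modifications---the general width sequence $w_k$ and the varying $\bar J_k$ away from the neck---arguing that the former is purely notational since only $w_k\to\infty$ is used, and that the latter is handled by the standard Gromov compactness for $\cin_{\text{loc}}$-convergent almost complex structures (citing \cite{hummel_gromovs_1997}). Your write-up is in fact more detailed than the paper's, which dispatches both points in a couple of sentences each.
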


\begin{proof}
  This theorem is proven in Cieliebak--Mohnke~\cite{cieliebak_compactness_2005}. Also see the original version in \cite{bourgeois_compactness_2003}. However, the way we set up things we need two adaptations from the Cieliebak--Mohnke proof. Firstly, in our definition of the neck-stretching sequence $(X_k,J_k)$ the complex structure is allowed to vary away from the neck-stretching region and secondly the ``width'' of the necks is not fixed to be $k$ but we allow more general divergent sequences $w_k$. However, the proof of Cieliebak and Mohnke still works with the following comments:
  \begin{enumerate}[label=(\roman*), ref=(\roman*)]
    \item Outside the neck-stretching area $[-\epsilon,\epsilon]\times M$ we can just apply the usual Gromov convergence for varying target complex structures if they converge in $\cin_{\text{loc}}$ as described e.g.\ in \cite{hummel_gromovs_1997} instead of the one used in \cite{cieliebak_compactness_2005} for this region.
    \item In \cite{cieliebak_compactness_2005} the choice $w_k=k$ does not actually enter the calculations and was merely chosen to simplify the notation. Notice that most of the constructions only need that $\lim_{k\to\infty}w_k=+\infty$. 
  \end{enumerate}
\end{proof}

\subsection{Applying SFT-Compactness to the Case of Hurwitz Covers}

\label{sec:apply-sft-compactness-hurwitz-covers}

Now suppose that we are given the subsequence from \cref{prop:target-j-subsequence}, i.e.\ the sequence $\{[C_k,u_k,X_k,\bq_k,\bp_k]\}_{k\in\NN}\subset|\mcM_{g,k,h,n}(T)|$ is such that the target surfaces converge to a nodal hyperbolic surface $X$. We can add the punctures coming from the cusps corresponding to the marked points back to the surface $X$ and can thus regard $X$ also as a closed nodal Riemann surface $X$ with marked points $\bp\in X^n$. Our goal in this section is to find a description of this convergence process as a neck-stretching sequence in the sense of Cieliebak--Mohnke as described in \cref{sec:neck-stretching} and then apply SFT-compactness as in \cref{thm:sft-compactness}.

First, uniformize all punctured nodal Riemann surfaces $X_k$ to obtain hyperbolic metrics $g_k$ on $X_k$. By \cref{prop:target-j-subsequence} we have geodesic curves $\gamma_k^i$ with $i=1,\ldots,m$ on $X_k$ such that their hyperbolic lengths $l(\gamma_k^i)$ with respect to $g_k$ converge to zero. By assumption, all other lengths of simple closed geodesics stay away from zero.

By passing to a subsequence we can assume that we can construct all surfaces $X_k$  from $X$ by the usual gluing instruction in a neighborhood of the Deligne--Mumford-space of the limit curve $X$. This means that on $X$ we choose a disc structure
\begin{equation*}
  \{(V^j_i,\psi_i^j)\}_{j\in\{\ast,\dagger\},i=1,\ldots,m}
\end{equation*}
where $\psi_i^j:V_i^j\lra \DD$ is a biholomorphism and $\bz_i=V_i^{\ast}\cap V_i^{\dagger}$ are the nodal points coming from the sequence $\{\gamma_k^i\}_{k\in\NN}$ of collapsing curves. We choose the disc neighborhoods of the nodes such that they are isometric to $(-\infty,\ln \frac{3}{2})\times S^1$ with the hyperbolic metric $\dd\rho^2+e^{2\rho}\dd t^2$, i.e.\ such that the boundary of the disc is a horocycle of length $\frac{3}{2}$ in the uniformized hyperbolic metric. The reason for the strange number of $\frac{3}{2}$ is that we need some space between the discs used to glue in annuli in order to interpolate the given symplectic structure with a standard one on the annuli used in the Cieliebak--Mohnke \cref{thm:sft-compactness}. Thus note that there exists a larger disc neighborhood extending the given one isometric to $(-\infty,\ln 2)\times S^1$. Also note that here $S^1$ is parametrized by $[0,1)$.

We thus obtain a sequence of parameters in corresponding Teichmüller spaces as well as complex gluing parameters denoted by $(a^k_1,\ldots,a^k_m,\tau^k)$. Here $a_i^k\in\DD$ is the complex gluing parameter at node $\bz_i$ and $\tau^k\in\mcQ$ is a coordinate on the Teichmüller space from \cref{sec:vary-comp-str-away-from-node} which describes the variation of the complex structure on the nodal surface away from the node.

Furthermore we denote again the closed annulus by $A(v,w)=\{z\in\CC\mid v\leq|z|\leq w\}$ and the glued in cylinders by
\begin{equation*}
   Z_i^k:=\faktor{(V_i^{\dagger}\sqcup V_i^{\ast})\setminus\left((\psi_i^{\dagger})^{-1}(\mathring{B}_{|a^k_i|}(0))\sqcup(\psi_i^{\ast})^{-1}(\mathring{B}_{|a_i^k|}(0))\right)}{\sim}\subset X_{(a_1^k,\ldots,a_m^k,\tau^k)},
\end{equation*}
where 
\begin{align*}
  w\sim w' & \Longleftrightarrow w\in V_i^{\dagger}\setminus(\psi_i^{\dagger})^{-1}(\mathring{B}_{|a_i^k|}(0))\text{ and }w'\in V_i^{\ast}\setminus(\psi_i^{\ast})^{-1}(\mathring{B}_{|a_i^k|}(0)) \\
  & \phantom{\Longleftrightarrow} \text{ s.t. } \psi_i^{\dagger}(w)\cdot\psi_i^{\ast}(w')=a_i^k
\end{align*}
and the parameters were chosen such that $X_k$ is biholomorphic to $X_{(a_1^k,\ldots,a_m^k,\tau^k)}$. In the following we will always identify these surfaces and only talk about $X_k$.

Our goal is now to identify the suitable objects as needed in the SFT-compactness result by Cieliebak and Mohnke, including a symplectic surface $X_0$ together with a closed stable hypersurface $M\subset (X_0,\omega_0)$ with a bi-collar neighborhood $[-\epsilon,\epsilon]\times M\subset X_0$ where $\omega_0$ has a standard form.\footnote{Note that we are only interested in finding a convergent subsequence so we are free to modify the given sequence in finitely many members. In particular we  can just add a new first Hurwitz cover with target $X_0$.} Also we need a complex structure on $X_0$ compatible with $\omega_0$ such that its restriction to the bi-collar can be seen as the restriction of a translation invariant complex structure on $\RR\times M$. Since the sequence of surfaces will be constructed by replacing the bi-collar neighborhood by larger neighborhoods we can not use the standard hyperbolic neighborhoods of hyperbolic geodesics and will have to construct something different.

On the limit target surface $X$ we first choose $\omega$ to be the unique hyperbolic volume form compatible with its complex structure such that all marked points and nodes are cusps.\footnote{Note that $(X,\omega)$ is the (nodal) limit target surface and not the ``beginning'' symplectic manifold used in Cieliebak--Mohnke to construct the neck-stretching sequence. This will be constructed in the following.}

Secondly, any complex structure $J_k$ is compatible with $\omega$ as $X$ is a Riemann surface. To see this, notice that for any $v\in\ts_xX$ the vectors $(v,J_kv)$ form a positively oriented basis and thus $\omega(v,J_kv)>0$ for $v\neq 0$ and $\omega(J_kv,J_kJ_kv)=-\omega(J_kv,v)=\omega(v,J_kv)$ shows $\omega(J_k\cdot,J_k\cdot)=\omega(\cdot,\cdot)$.\footnote{Of course the $J_k$ are in fact the pulled-back complex structures from $X_k$ via maps $\phi_k$ which are diffeomorphisms outside the nodal points but we choose to simplify the notation a little bit.} Furthermore the sequences $a_i^k$ converge to zero for $k\to\infty$. By passing to yet another subsequence we can assume to start the sequence with some $X_1$ that is isomorphic to a surface constructed from $X$ by using variations $\tau^1\in\mcQ$ and parameters $a_1^1,\ldots,a_m^1$ such that $|a_i^k|< e^{-4\pi\ln\frac{3}{2}}$ for all $i=1,\ldots,m$ and $k\geq 1$. The reason for the last number lies in the fact that we will build the sequence $X_k$ from $X_0$ using a $\left[-\ln\frac{3}{2},\ln\frac{3}{2}\right]\times S^1$ neighborhood in the hyperbolic parametrization. Furthermore we restrict to a subsequence such that the twisting parameters $\arg a_i^k$ converge for $k\to\infty$, which is possible as $(S^1)^m$ is compact.

Next, we define the surface $X_0$ as constructed from $X$ using parameters $\tau^0,a_1^0,\ldots,a_m^0$ where $\tau^0\in\mcQ$ is arbitrary and $a_i^0=e^{-4\pi\ln\frac{3}{2}}$ for all $i=1,\ldots, m$.\footnote{We will denote the neck-stretching sequence by the same letters $X_k$. This sequence will be built such that the surfaces are biholomorphic to the given ones and thus we can simplify the notation a little bit.} This corresponds to no twisting as $\arg a_i^0=0$ and a cylinder isometric to $\left[-\ln\frac{3}{2},\ln \frac{3}{2}\right]\times S^1$. Recall that there are two parametrizations
\begin{align*}
  \psi_i^{\ast}: Z_i^0 & \lra A(|a_i^0|,1) \text{ and}\\
  \psi_i^{\dagger}:Z_i^0 &\lra A(|a_i^0|,1)
\end{align*}
of each cylinder. However, we chose disc neighborhoods such that it is also possible to parametrize the punctured discs $V_i^j\setminus\{z_i\}$ by hyperbolic cusp neighborhoods, i.e.\ by isometric maps
\begin{align*}
  \phi^j_i:V_i^j\setminus\{z_i\}\lra \left(-\infty,\ln \frac{3}{2}\right]\times S^1
\end{align*}
with the hyperbolic metric $g=\dd\rho^2+e^{2\rho}\dd t^2$ where $\rho\in \left(-\infty,\ln \frac{3}{2}\right]$ and $t\in S^1=\faktor{\RR}{\ZZ}$. The conformal coordinate change $\left(-\infty,\ln \frac{3}{2}\right]\times S^1\lra \DD\setminus\{0\}$ is given by
\begin{equation}
  (\rho,\theta)\longmapsto e^{2\pi(\rho-\ln\frac{3}{2}+\ii\theta)}
  \label{eq:conf-coord-change-bi-collar}
\end{equation}
which can be easily seen as there is only one biholomorphic map from the punctured disc on the punctured disc up to rotations.

Since the coordinates $\phi^j_i$ are isometries we know by a small calculation that the volume form $\omega$ on $X$ is given by $e^{\rho}\dd\rho\wedge\dd\theta$ in the local coordinates on $\left(-\infty,\ln \frac{3}{2}\right]\times S^1$.

These choices and definitions are summarized in \cref{fig:compactness-def-hyperbolic-cylinders} and \cref{prop:local-objects}.

\begin{figure}[h]
    \centering
    \def\svgwidth{\textwidth}
    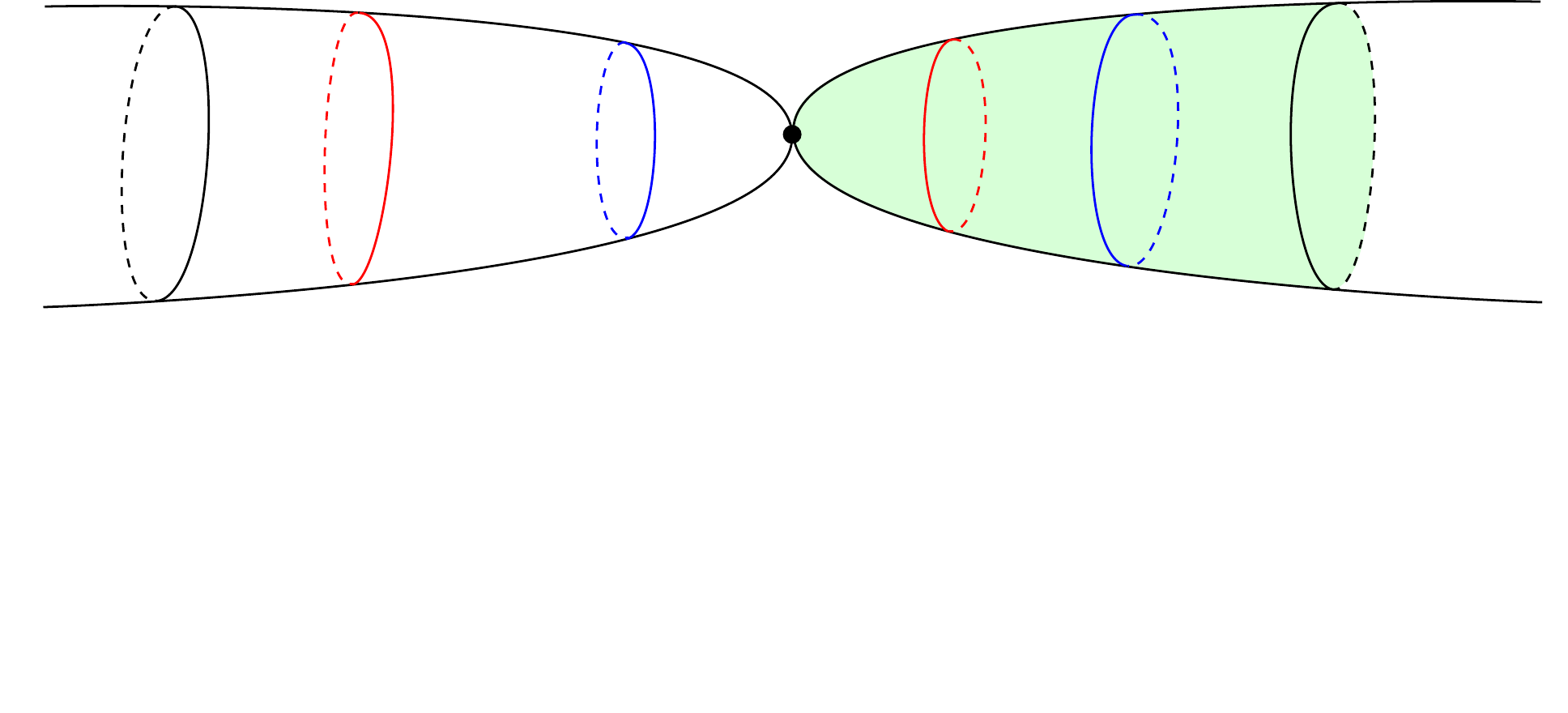
    \caption{When gluing with parameter $a_0$ we identify the two cylindrical regions according to the colors and via the map $z\mapsto \frac{a_0}{z}$. In order to simplify the notation a little bit we did not give names to all the individual coordinate charts and transition maps. However, notice that we do not modify the region between the black and green/red circles as we need these annuli in order to interpolate the symplectic forms. As these are the maximal disjoint cusp regions, our charts $\phi_i^{\ast}$ extend to these annuli and map them to $\left[\frac{3}{2},\ln 2\right]$. Also, the transition maps obviously only make sense on the glued surface. The gray set on the left hand side is the parametrized disc $V_i^{\ast}$ around the cusp $z_i$ but there exists a corresponding set $V_i^{\dagger}$ on the right hand side. However, we drew instead the set $\wt{V}_i^{\dagger}$ which includes the whole cusp neighborhood, i.e.\ $(\phi_i^{\dagger})^{-1}((-\infty,\ln2]\times S^1)$.}
    \label{fig:compactness-def-hyperbolic-cylinders}
\end{figure}

\begin{prop}
  In the hyperbolic charts $(\rho,\theta)\in\left(-\infty,\ln 2\right]\times S^1$ on the $\ast$-side as above we have
  \begin{itemize}
    \item $g=\dd\rho^2+e^{2\rho}\dd \theta^2$,
    \item $J=\begin{pmatrix} 0 & -1 \\ 1 & 0\end{pmatrix}$ and
    \item $\omega=e^{\rho}\dd\rho\wedge\dd\theta$.
  \end{itemize}
  Furthermore the transition maps on the glued surface where they are defined are given by
  \begin{align}
    \psi_i^{\ast}\circ(\phi_i^{\ast})^{-1}(\rho,\theta)&=e^{2\pi(\rho-\ln\frac{3}{2}+\ii\theta)}, \label{eq:hyp-complex-coord-change}\\
    F(\rho,\theta)&=\left(-\rho,\frac{\arg a_0}{2\pi}-\theta\right). \nonumber
  \end{align}
  \label{prop:local-objects}
\end{prop}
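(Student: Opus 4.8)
The statement collects three pieces of local data on the punctured disc neighborhood of a node, together with the transition maps between the hyperbolic cusp chart $\phi_i^{\ast}$ and the complex gluing chart $\psi_i^{\ast}$. I would prove it essentially by explicit coordinate computation, so the plan is to set up the charts carefully and then verify each formula in turn; the only genuine content is keeping track of the two different parametrizations of the same annulus.

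First I would fix the hyperbolic cusp chart. By construction (see the paragraph preceding \cref{prop:local-objects} and \cref{lem:collar-neighborhood}, \cref{lem:unique-cusp-neighborhood}) the disc neighborhood $V_i^{\ast}$ around the node $z_i$, slightly enlarged, is isometric to $(-\infty,\ln 2]\times S^1$ with the standard cusp metric $\dd\rho^2+e^{2\rho}\dd\theta^2$, where $S^1=\faktor{\RR}{\ZZ}$; this is the definition of $\phi_i^{\ast}$. So the first bullet, $g=\dd\rho^2+e^{2\rho}\dd\theta^2$, is just the definition of the chart and requires nothing. For the second bullet I would recall that a cusp chart is holomorphic: the conformal coordinate change to the punctured disc is $w=e^{2\pi(\rho-\ln\frac32+\ii\theta)}$, which is the unique (up to rotation) biholomorphism of the punctured disc onto itself and is already recorded as \cref{eq:conf-coord-change-bi-collar}. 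Since $(\rho,\theta)\mapsto w$ is holomorphic, pulling back the standard complex structure $\ii$ on $\DD$ gives, in the real basis $(\partial_\rho,\partial_\theta)$ (which has the correct orientation because $w$ is orientation preserving and $\ln\frac32$ is a constant), exactly $J(\partial_\rho)=\partial_\theta$ and $J(\partial_\theta)=-\partial_\rho$, i.e.\ the matrix $\left(\begin{smallmatrix}0&-1\\1&0\end{smallmatrix}\right)$. For the third bullet I would simply compute the hyperbolic volume form of the metric $\dd\rho^2+e^{2\rho}\dd\theta^2$: its area element is $\sqrt{\det}=e^{\rho}$, so $\omega=e^{\rho}\,\dd\rho\wedge\dd\theta$, consistent with orientation. (Alternatively one can observe that $\omega$ is the hyperbolic volume form and that in these coordinates it must be the area element of the displayed metric.)

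Next I would verify the two transition formulas on the glued surface. The identity $\psi_i^{\ast}\circ(\phi_i^{\ast})^{-1}(\rho,\theta)=e^{2\pi(\rho-\ln\frac32+\ii\theta)}$ is just \cref{eq:conf-coord-change-bi-collar} again, restated with the names of the charts: $\psi_i^{\ast}$ is the biholomorphic disc chart and $(\phi_i^{\ast})^{-1}$ the inverse cusp chart, and the composite is the standard conformal change, so there is nothing to prove beyond matching notation. For the gluing map $F$, recall from \cref{sec:complex-gluing} that gluing with parameter $a_0$ identifies $w\in V_i^{\ast}$ with $w'\in V_i^{\dagger}$ via $\psi_i^{\ast}(w)\cdot\psi_i^{\dagger}(w')=a_0$, i.e.\ $w'=a_0/w$ in the disc charts. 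I would transport this back through the cusp charts $\phi_i^{\ast}$ and $\phi_i^{\dagger}$ using \cref{eq:hyp-complex-coord-change}: writing $w=e^{2\pi(\rho-\ln\frac32+\ii\theta)}$, the equation $\psi_i^{\dagger}(w')=a_0 e^{-2\pi(\rho-\ln\frac32+\ii\theta)}$ together with $|a_0|=e^{-4\pi\ln\frac32}$ (the chosen gluing parameter for $X_0$) and $a_0=|a_0|e^{\ii\arg a_0}$ gives $\psi_i^{\dagger}(w')=e^{2\pi((-\rho)-\ln\frac32)}e^{\ii(\arg a_0-2\pi\theta)}$, which in the $\dagger$ cusp coordinates reads $(\rho',\theta')=(-\rho,\frac{\arg a_0}{2\pi}-\theta)$. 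This is exactly the claimed $F$.

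The computation is entirely elementary; the one place that needs care — and which I would treat as the ``main obstacle'' only in the bookkeeping sense — is making sure the domains match up, namely that the region $\left[-\ln\frac32,\ln\frac32\right]\times S^1$ glued in for $X_0$ corresponds under $a_0=e^{-4\pi\ln\frac32}$ to the overlap on which $F$ is defined, and that the extra annulus out to $\ln 2$ (needed later to interpolate the symplectic forms) is genuinely inside the maximal cusp neighborhood so that $\phi_i^{\ast}$ extends there; both of these follow from \cref{lem:collar-and-cusp-neighborhood-disjoint} and the explicit choice of $\frac32$ as the boundary horocycle length, as already discussed in the text preceding the proposition. I would close by remarking that the $\dagger$-side formulas are obtained verbatim by symmetry.
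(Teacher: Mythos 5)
Your proposal is correct and follows essentially the same route as the paper's proof: identify the metric from the standard cusp-chart parametrization, observe $J$ is standard because the chart is conformal/holomorphic, read off $\omega$ from $\sqrt{\det g}$, and verify both transition formulas by direct computation using $w'=a_0/w$ and the explicit choice $|a_0|=e^{-4\pi\ln\frac{3}{2}}$. The only difference is cosmetic — the paper cites Buser for the cusp metric and argues uniqueness-up-to-rotation of $\psi_i^{\ast}\circ(\phi_i^{\ast})^{-1}$ rather than invoking \cref{eq:conf-coord-change-bi-collar} directly — and your added remark on checking that the domains overlap is a reasonable piece of bookkeeping that the paper handles in the surrounding text rather than inside the proof.
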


\begin{proof}
First, the conformal description together with its metric $g$ can be found in \cite{buser_geometry_2010}. The complex structure $J$ is just the standard complex structure on the cylinder as the chart $\phi_i^*$ is conformal and thus holomorphic. The transition function $\psi_i^{\ast}\circ(\phi_i^{\ast})^{-1}$ to the holomorphic disc chart used for gluing needs to map $\rho=-\infty$ to $0\in\DD$ and the boundary $\rho=\ln\frac{3}{2}$ to the boundary $\del\DD=\{z\in\CC\mid |z|=1\}$. This map is automatically unique up to a rotation which can be chosen arbitrarily. It is given by
\begin{equation*}
  (\rho,\theta)\longmapsto e^{2\pi(\rho-\ln\frac{3}{2}+\ii\theta)}
\end{equation*}
which is holomorphic and satisfies the boundary conditions. Thus the transition map $F$ is given by
\begin{align*}
  (\rho,\theta)\longmapsto e^{2\pi(\rho-\ln\frac{3}{2}+\ii\theta)} & \longmapsto a_i^0 e^{2\pi(-\rho+\ln\frac{3}{2}-\ii\theta)}=e^{2\pi\left(-\rho+\frac{1}{2\pi}\ln|a_i^0|+\ln\frac{3}{2}+\ii\left(-\theta+\frac{\arg a_i^0}{2\pi}\right)\right)} \\
  & \mapsto \left(-\rho+2\ln\frac{3}{2}+\frac{1}{2\pi}\ln|a^0_i|,\frac{\arg a^0_i}{2\pi}-\theta\right).
\end{align*}
Since we chose $a_i^0=e^{-4\pi\ln\frac{3}{2}}$ we get $F(\rho,\theta)=(-\rho,-\theta)$ or $F(\rho,\theta)=\left(-\rho,\frac{\arg a_0}{2\pi}-\theta\right)$ if we include the twisting.

It remains to calculate the hyperbolic volume form. Since for any positively oriented coordinates $(x_1,x_2)$ we have $\omega=\sqrt{\det g}\dd x_1\wedge \dd x_2$ we obtain $\omega=e^{\rho}\dd\rho\wedge\dd\theta$. Notice that under the coordinate change $F$ it is mapped to $F^*\omega = e^{-\rho}\dd\rho\wedge\dd\theta$.
\end{proof}

Before constructing $\omega_0$ on $X_0$ we define the stable hypersurface $M$ as
\begin{equation*}
  M\coloneqq \bigsqcup_{i=1}^m\left(\psi^{\ast}_i\right)^{-1}\left(\left\{z\in\DD\mid |z|=e^{-2\pi\ln(\frac{3}{2})}\right\}\right)
\end{equation*}
In the hyperbolic chart on the $\ast$-side this corresponds to the circle $\{0\}\times S^1$ as well as on the $\dagger$-side by choice of $|a_i^0|$. However, the gluing map identifying the two cylinders rotates the circles by $\theta\mapsto \frac{\arg a_i^0}{2\pi}-\theta$. The $1$-form $\lambda$ on this union of circles is chosen as $\dd \theta$ in the hyperbolic $\ast$-chart.

\begin{lem}
  The Reeb orbits of $\lambda$ are given by $t\mapsto t+c\in\faktor{\RR}{\ZZ}$ for some $c\in[0,1)$ corresponding to the base point. Thus for every degree or winding number there is an $S^1$-family of Reeb orbits.
  \label{lem:reeb-orbits}
\end{lem}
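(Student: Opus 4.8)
The plan is to compute the Reeb vector field $R$ of $(M,\lambda)$ directly in the hyperbolic $\ast$-charts $\phi_i^{\ast}$ and then read off its closed orbits. First I would observe that $M$ is a disjoint union of $m$ circles, hence a one-dimensional manifold, so that $\omega_M\coloneqq\omega|_M$ is a $2$-form on a $1$-manifold and therefore vanishes identically. Consequently the two conditions defining $R$, namely $\omega_M(R,\cdot)=0$ and $\lambda(R)=1$, reduce to the single condition $\lambda(R)=1$; here the stability conditions for $\lambda$ are automatic, since $\ker\omega_M=TM$, the form $\lambda=\dd\theta$ is nowhere vanishing on each circle, and $\dd\lambda=0$, so that $\lambda|_{\ker\omega_M}\neq 0$ and $\ker\omega_M\subset\ker\dd\lambda$ hold trivially.

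Next I would use that in the chart $\phi_i^{\ast}$ the $i$-th component of $M$ is $\{0\}\times S^1$ with $S^1=\faktor{\RR}{\ZZ}$ and $\lambda=\dd\theta$, so that $\lambda(R)=1$ forces $R=\del_\theta$ on this component. One should check consistency with the $\dagger$-side: there the same circle is described by $\theta'=\frac{\arg a_i^0}{2\pi}-\theta$, hence $\dd\theta$ pulls back to $-\dd\theta'$, which is still nowhere vanishing and changes nothing up to orientation; since we declared $\lambda$ via the $\ast$-chart there is nothing further to reconcile. The flow of $R$ on each component is therefore the rotation $\theta\longmapsto\theta+t$ for $t\in\RR$.

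Then I would classify the closed orbits. A closed orbit through a base point $c\in S^1$ with period $T>0$ must satisfy $c+T\equiv c\pmod 1$, i.e.\ $T\in\NN$; for a fixed period $k\in\NN$ the corresponding closed orbit is $\gamma_c\colon[0,k]\lra M$, $\gamma_c(t)=c+t\bmod 1$, which wraps $k$ times around the circle and is completely determined by $c\in[0,1)$. As $c$ ranges over $S^1$ this produces exactly an $S^1$-family of closed Reeb orbits of winding number $k$, which is precisely the assertion of the lemma.

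There is no genuine obstacle here: the argument is a one-line computation once one notices that $\omega_M$ vanishes for dimensional reasons. The only point that deserves a moment's care — and the one that matters for later applying \cref{thm:sft-compactness} — is that these $S^1$-families are Bott non-degenerate: the symplectic normal bundle $\xi=\ker\lambda$ is zero-dimensional, so the linearized return map along each orbit is (trivially) the identity on $\xi$, and the set of closed orbits of each fixed winding number is a circle, i.e.\ a manifold of the expected dimension. I would record this as a remark immediately after the lemma, since it is exactly the hypothesis needed to invoke SFT-compactness in the next section.
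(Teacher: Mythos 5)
Your argument is correct and is essentially the paper's own proof, expanded: the paper simply notes that $\dd\lambda=0$ and $\lambda(R)=\dd\theta(R)=1$, which is exactly your observation that $\omega_M$ vanishes for dimensional reasons so $R=\partial_\theta$. Your closing remark on Bott non-degeneracy is a sensible addition, and the paper does implicitly rely on it when invoking SFT-compactness.
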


\begin{proof}
  This is clear as $\dd\lambda=0$ and $\lambda(R)=\dd\theta (R)=1$.
\end{proof}

In order to obtain a suitable symplectic form on $X_0$ we need to extend the required symplectic form on the cylinders $Z_i$ by a compatible symplectic form on the slightly larger discs on the cusp neighborhoods $(-\infty,\ln 2)\times S^1$ such that they fit with the two hyperbolic volume forms on both boundaries. The required form on the cylinder $\left[-\ln\frac{3}{2},\ln\frac{3}{2}\right]\times S^1$ from Cieliebak--Mohnke is $\dd(\rho\lambda)$. This will be done in the $\ast$-chart. By taking care of the volume form at the boundary $|a_i^0|$ we make sure that this gives a well-defined symplectic structure $\omega_0$ on $X_0$.

The local calculations above show that we are thus looking for symplectic forms $\eta_i^j\in\Omega^2\left(\left[\ln\frac{3}{2},\ln 2\right)\times S^1,\RR\right)$ for $i=1,\ldots,m$ and $j\in\{\ast,\dagger\}$ such that
\begin{enumerate}[label=(\roman*), ref=(\roman*)]
  \item close to $\rho=\ln 2$ we have $\eta_i=e^{\rho}\dd\rho\wedge\dd\theta$, \label{item:prop-symp-form-1}
  \item close to $\rho=\ln\frac{3}{2}$ we have $\eta_i=\dd(\rho\lambda)=\dd\rho\wedge\dd \theta$ and \label{item:prop-symp-form-2}
  \item $\eta_i$ is compatible with the complex structure on the cylinder. \label{item:prop-symp-form-3}
\end{enumerate}

Note that although we need to extend the symplectic forms on both sides of the glued in cylinder, the local calculations for the extension are actually the same in the two charts because the coordinate change between the conformal parametrizations are given by affine linear maps such that the pullback of $\dd\rho\wedge\dd\theta$ is again $\dd\rho\wedge\dd\theta$.

\begin{lem}
  There exists a symplectic form $\eta_i^j\in\Omega^2\left(\left[\ln\frac{3}{2},\ln 2\right)\times S^1,\RR\right)$ satisfying \cref{item:prop-symp-form-1}, \cref{item:prop-symp-form-2} and \cref{item:prop-symp-form-3} from above. In particular using the chart $\phi^j_i$ we find a bi-collar neighborhood $\left[-\ln\frac{3}{2},\ln\frac{3}{2}\right]\times \bigsqcup_{i=1}^mS^1$ of $M$ as required in \cref{sec:neck-stretching}.
  \label{lem:const-symp-form-bi-collar}
\end{lem}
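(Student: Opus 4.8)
The idea is to construct the interpolating symplectic form on the annulus $\left[\ln\frac{3}{2},\ln 2\right)\times S^1$ by hand using a suitable radial interpolation between the two boundary normal forms, and then to check that the result is still compatible with the standard complex structure on the cylinder. The key observation is that a $2$-form on the cylinder which is a positive multiple of $\dd\rho\wedge\dd\theta$ is automatically a symplectic form (it is closed for dimensional reasons, and nondegenerate as long as the coefficient is nowhere zero) and is automatically compatible with the standard complex structure $J\left(\del_\rho\right)=\del_\theta$, since for such a form $\eta=f(\rho,\theta)\,\dd\rho\wedge\dd\theta$ with $f>0$ one has $\eta(v,Jv)>0$ for all $v\neq 0$ and $\eta(J\cdot,J\cdot)=\eta(\cdot,\cdot)$ by the same computation as the one showing that any complex structure on a Riemann surface is compatible with any volume form (this was already observed in \cref{sec:apply-sft-compactness-hurwitz-covers}). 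So all that is really needed is a positive smooth function on the annulus that equals $e^{\rho}$ near $\rho=\ln 2$ and equals $1$ near $\rho=\ln\frac{3}{2}$.

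\textbf{Main steps.} First I would choose a smooth cutoff function $\chi:\left[\ln\frac{3}{2},\ln 2\right]\lra[0,1]$ which is identically $0$ on a neighborhood of $\ln\frac{3}{2}$ and identically $1$ on a neighborhood of $\ln 2$, and set
\begin{equation*}
  f(\rho)\coloneqq (1-\chi(\rho))\cdot 1 + \chi(\rho)\cdot e^{\rho},
\end{equation*}
which is smooth, positive (a convex combination of two positive numbers), depends only on $\rho$, and has the required behavior near the two boundary circles. Then I define $\eta_i^j\coloneqq f(\rho)\,\dd\rho\wedge\dd\theta$ in the conformal chart $\phi_i^j$. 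By the observation above this is a symplectic form compatible with $J$; near $\rho=\ln 2$ it equals $e^{\rho}\dd\rho\wedge\dd\theta$, which by \cref{prop:local-objects} is exactly the hyperbolic volume form $\omega$, and near $\rho=\ln\frac{3}{2}$ it equals $\dd\rho\wedge\dd\theta=\dd(\rho\,\dd\theta)=\dd(\rho\lambda)$. Next I would note, as already remarked before the statement, that the transition between the $\ast$- and $\dagger$-conformal parametrizations of the glued surface is affine linear in $(\rho,\theta)$ and pulls $\dd\rho\wedge\dd\theta$ back to $\pm\dd\rho\wedge\dd\theta$; combined with the agreement of the chosen form with the hyperbolic volume form $\omega$ near the outer boundary $\{\rho=\ln2\}$ (where the charts $\phi_i^j$ are defined on the maximal disjoint cusp regions and $\omega$ is globally defined on $X$), this shows that the pieces glue to a globally well-defined $2$-form $\omega_0$ on $X_0$: on $X_0\setminus\bigsqcup Z_i^0$ it is the hyperbolic form $\omega$, on the bi-collar $\left[-\ln\frac{3}{2},\ln\frac{3}{2}\right]\times M$ it is $\dd(\rho\lambda)=\omega_M+\dd(r\lambda)$ as required, and on the two ``buffer'' annuli it is the interpolating form $\eta_i^j$. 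Finally I would check the orientation signs so that the gluing map $F$ from \cref{prop:local-objects} identifies the forms correctly — here one uses that $F$ is orientation-reversing in the $(\rho,\theta)$-variables but also reverses the roles of the two boundary circles, so the two copies of $\dd(\rho\lambda)$ match up.

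\textbf{Expected obstacle.} None of the steps is deep; the only thing requiring a little care is bookkeeping the orientations and the precise identifications at the node, i.e.\ making sure that the form $\dd(\rho\lambda)$ written in the $\ast$-chart agrees, after applying the transition map $F(\rho,\theta)=\left(-\rho,\frac{\arg a_i^0}{2\pi}-\theta\right)$, with the form $\dd(\rho\lambda)$ one would write in the $\dagger$-chart, and that both buffer annuli $\eta_i^{\ast},\eta_i^{\dagger}$ are compatible with the global complex structure and patch continuously onto $\omega$ on $X$. This is purely a matter of tracking signs via \cref{eq:hyp-complex-coord-change} and the fact that $F$ switches the two ideal boundary circles of the node; I expect it to be the main (but still routine) point of the argument. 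The compatibility with $J$ and the symplectic (closedness, nondegeneracy) properties are then immediate from the fact that everything in sight is a strictly positive function times $\dd\rho\wedge\dd\theta$.
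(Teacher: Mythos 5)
Your construction is essentially the same as the paper's: both take $\eta_i^j = f(\rho)\,\dd\rho\wedge\dd\theta$ for a smooth positive $f$ interpolating between $1$ and $e^{\rho}$, and observe that any positive multiple of $\dd\rho\wedge\dd\theta$ on a surface is automatically closed, nondegenerate, and compatible with the standard $J$. One small factual slip in your ``expected obstacle'' paragraph: the transition map $F(\rho,\theta)=\left(-\rho,\frac{\arg a_i^0}{2\pi}-\theta\right)$ is orientation-\emph{preserving} (its Jacobian is $\operatorname{diag}(-1,-1)$ with determinant $+1$, as it must be since it is a biholomorphism), not orientation-reversing; the forms still match because $F^*(\dd\rho\wedge\dd\theta)=\dd\rho\wedge\dd\theta$, so the conclusion you draw is unaffected.
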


\begin{proof}
  Choose a smooth function $f:\left[\ln\frac{3}{2},\ln 2\right]\lra \RR$ which is constant one in a neighborhood of $\ln\frac{3}{2}$ and equal to $e^{\rho}$ in a neighborhood of $\ln 2$ and is monotonely increasing. Define $\eta_i\coloneqq  f(\rho)\dd\rho\wedge\dd\theta$. This form satisfies \cref{item:prop-symp-form-1} and \cref{item:prop-symp-form-2} by construction and defines a symplectic form on the cylinder as $f(\rho)\neq 0$. Also it is compatible with the standard complex structure since this is a surface.
\end{proof}

Now we have defined $X_0, \omega_0, M$ and $\lambda$. Note that $\omega_M=0$ as $M$ is one-dimensional. Furthermore we already have fixed $J_0$ on $X_0$ which is compatible with the symplectic structure. Now we need to check that the parametrized bi-collar neighborhood constructed in \cref{lem:const-symp-form-bi-collar} is indeed of the correct form.

The bi-collar neighborhood was constructed such that $\omega_0=\dd (\rho\lambda)$. Furthermore its complex structure is given by $\frac{\del}{\del \rho}\mapsto \frac{\del}{\del \theta}$ and $\frac{\del}{\del \theta} \mapsto -\frac{\del}{\del \rho}$ as \cref{eq:conf-coord-change-bi-collar} is holomorphic with this structure. Thus it is translation invariant and in particular the restriction of the same complex structure on $\RR\times\bigsqcup_{i=1}^mS^1$. This complex structure is also of SFT-type, see \cref{def:tamed-complex-structure}, as
\begin{itemize}
  \item the Reeb vector field $R$ on $\bigsqcup_{i=1}^mS^1$ is given by $\frac{\del}{\del \theta}$ and $\dd\lambda=\omega_M=0$,
  \item $J\frac{\del}{\del \rho}=\frac{\del}{\del \theta}=R$ and
  \item the CR-structure $\xi=\{0\}$ implying that the last condition is vacuous.
\end{itemize}

All in all we see that we can apply the neck-stretching procedure to this data after choosing the following:

\begin{itemize}
  \item A sequence of positive real numbers $\{w_i^k\}_{k\in\NN}$ with $\lim_{k\to\infty}w_k=+\infty$ for every connected component $M^i$ for $i=1,\ldots,m$ and
  \item a sequence of diffeomorphisms $\phi_k:[-w_k-\epsilon,\epsilon]\lra[-\epsilon,\epsilon]$ such that $\phi'_k\equiv 1$ close to the boundaries.
\end{itemize}

Now we want to choose this data such that the resulting neck-stretching sequence $X_k$ is biholomorphic to the given sequence of target surfaces. By choosing the sequence of moduli $w_k$ of the annuli as
\begin{equation}
  w_i^k\coloneqq -\frac{|a_i^k|}{2\pi}-2\ln\frac{3}{2}
  \label{eq:choice-moduli-annuli}
\end{equation}
we recover the glued in cylinders $Z_i$ in $X_k$ in the neck-stretching sequence. However, as we cannot twist the cylinders in the neck-stretching procedure we need to include these twists given by $\arg a_i^k$ differently.

Recall from \cref{fig:compactness-def-hyperbolic-cylinders} that we used the $\ast$-side to glue in longer cylinders in the negative real direction. Thus we do not modify the complex structure in the region $\left[\ln\frac{3}{2},\ln 2\right]\times S^1$ when gluing in the longer cylinders, as we used this region only to interpolate between the given symplectic structure away from the nodes and the standard one on the cylinders. Thus we can use this region for implementing the twist coming from $\arg a_i^k$ and put the twisting in the variation within $J_k$.

For this purpose note that the holomorphic chart $\psi_i^{\dagger}:V_i^{\dagger}\lra\DD$ extends to a holomorphic disc $\psi_i^{\dagger}:\wt{V}_i^{\dagger}\lra B_{e^{2\pi\ln\frac{4}{3}}}(0)$ as the hyperbolic chart $\phi_i^{\dagger}$ extends to real coordinates up to $\ln 2$ corresponding to $|z|=e^{2\pi\ln\frac{4}{3}}$ under \cref{eq:hyp-complex-coord-change}. We will denote this larger set $(\phi_i^{\dagger})^{-1}((-\infty,\ln 2]\times S^2)$ by $\wt{V}_i^{\dagger}$. This is illustrated in \cref{fig:move-twist-outside}.

\begin{lem}
  Using the disc structure from \cref{fig:compactness-def-hyperbolic-cylinders} at one node $z_i$ and a gluing parameter $a\in\DD$ the identity map outside the gluing region extends to a well-defined biholomorphism between $X_a$ and
  \begin{equation*}
    \faktor{\wt{X}\setminus\left((\psi_i^{\ast})^{-1}(\mathring{B}_{|a|}(0))\sqcup(\psi_i^{\dagger})^{-1}(\mathring{B}_{1}(0))\right)\sqcup A(|a|,e^{2\pi\ln\frac{4}{3}})}{\sim},
  \end{equation*}
  where we identify
  \begin{equation*}
    z\in V_i^{\ast}\setminus (\psi_i^{\ast})^{-1}(\mathring{B}_{|a|}(0)) \sim w\in A(|a|,e^{2\pi\ln\frac{4}{3}}) \Longleftrightarrow \psi_i^{\ast}(z)\cdot w=|a|
  \end{equation*}
  and
  \begin{equation*}
    z\in \wt{V}_i^{\dagger}\setminus (\psi^{\dagger})^{-1}(\mathring{B}_{1}(0)) \sim w\in A(|a|,e^{2\pi\ln\frac{4}{3}}) \Longleftrightarrow \psi_i^{\dagger}(z)=\frac{a}{|a|}w.
  \end{equation*}
  \label{lem:move-twist-outside}
\end{lem}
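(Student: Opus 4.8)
The plan is to produce an explicit biholomorphism by decomposing both surfaces into the part away from the node and the glued-in cylinder, and identifying the cylinders via a single affine-linear coordinate change in the conformal parametrization. First I would recall the setup from \cref{fig:compactness-def-hyperbolic-cylinders} and \cref{prop:local-objects}: on the $\ast$-side we have a hyperbolic cusp chart $\phi_i^{\ast}$ with conformal coordinate $(\rho,\theta)\in(-\infty,\ln 2]\times S^1$, related to the holomorphic disc chart $\psi_i^{\ast}$ by \cref{eq:hyp-complex-coord-change}, and similarly on the $\dagger$-side the chart $\psi_i^{\dagger}$ extends over the larger disc $\wt{V}_i^{\dagger}$ of radius $e^{2\pi\ln\frac{4}{3}}$ because the hyperbolic coordinate $\rho$ runs up to $\ln 2$. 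The surface $X_a$ by definition glues $\wt{X}$ minus the two $|a|$-discs along $\psi_i^{\dagger}(w)\psi_i^{\ast}(w')=a$, whereas the target surface in the lemma glues $\wt{X}$ minus an $|a|$-disc on the $\ast$-side and a unit disc on the $\dagger$-side to the annulus $A(|a|,e^{2\pi\ln\frac{4}{3}})$ via the stated relations.

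The key step is to write down the map. On the region of $X_a$ lying outside $\wt{V}_i^{\dagger}\cup V_i^{\ast}$ I define the map to be the identity. On the part of $V_i^{\ast}$ that survives the gluing I send a point $z$ with $\psi_i^{\ast}(z)=\zeta$ to the point of the annulus $A(|a|,e^{2\pi\ln\frac{4}{3}})$ with coordinate $w=|a|/\zeta$; this is consistent with the first identification in the lemma. On the part of $\wt{V}_i^{\dagger}$ surviving the gluing I send $z$ with $\psi_i^{\dagger}(z)=\omega$ to the annulus point with $w=(|a|/a)\omega$; this realizes a rotation by $-\arg a$, moves $\omega$ of modulus at most $e^{2\pi\ln\frac{4}{3}}$ to the annulus, and is compatible with the second identification. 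I would then check that these partial definitions agree on the overlap inside the glued cylinder of $X_a$: on $X_a$ a point in $V_i^{\dagger}\cap V_i^{\ast}$ satisfies $\psi_i^{\dagger}(z)\psi_i^{\ast}(z)=a$, and substituting into the two formulas for $w$ gives $|a|/\psi_i^{\ast}(z) = (|a|/a)\psi_i^{\dagger}(z)$, i.e. $\psi_i^{\dagger}(z)\psi_i^{\ast}(z)=a$, which holds. Hence the pieces glue to a well-defined holomorphic map. Bijectivity and holomorphicity of the inverse follow because on each piece the map is either the identity or a composition of the biholomorphic charts $\psi_i^{\ast},\psi_i^{\dagger}$ with the biholomorphisms $\zeta\mapsto |a|/\zeta$ and $\omega\mapsto (|a|/a)\omega$ of the relevant annuli/discs, and these fit together continuously, so by the standard argument (holomorphic, bijective, hence biholomorphic) we are done.

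I would also note that away from the node both surfaces literally contain the same open piece of $\wt{X}$, so the claim that the identity ``extends'' makes sense, and that the only subtlety is the bookkeeping of which radius bounds which disc on each side — this is exactly why the $\dagger$-side uses the unit disc and the enlarged annulus of outer radius $e^{2\pi\ln\frac{4}{3}}$ rather than the symmetric description. The main obstacle, though a mild one, is keeping the three coordinate conventions straight (the hyperbolic $(\rho,\theta)$ coordinate, the $\ast$-disc coordinate, and the $\dagger$-disc coordinate) and verifying that the affine change of variables in the $\left[\ln\frac{3}{2},\ln 2\right]\times S^1$ interpolation region does not disturb anything — but since that change is affine linear it pulls back $\dd\rho\wedge\dd\theta$ to itself and is holomorphic, so no genuine difficulty arises. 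Everything else is the routine ``biholomorphic because locally built from biholomorphisms'' argument used repeatedly in \cref{sec:complex-gluing}.
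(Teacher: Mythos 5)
Your proposal is correct and is exactly the verification that the paper has in mind: the paper's own proof consists of the single sentence ``We need to check that the identity map gives well-defined holomorphic maps in all the charts,'' with the actual bookkeeping delegated to \cref{fig:move-twist-outside}, and what you wrote out is precisely the content of that figure. The map away from the node is the literal identity, and on the surviving parts of $V_i^{\ast}$ and $\wt{V}_i^{\dagger}$ the two transition formulas $w=|a|/\psi_i^{\ast}(z)$ and $w=(|a|/a)\,\psi_i^{\dagger}(z)$ are compositions of the chart biholomorphisms with the affine maps of the annulus, and your consistency check on the overlap correctly reduces to the defining relation $\psi_i^{\ast}\cdot\psi_i^{\dagger}=a$ of $X_a$. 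One small notational point: at the overlap you write $\psi_i^{\dagger}(z)\psi_i^{\ast}(z)=a$ using the same symbol $z$ for the two distinct points of $\wt{X}$ representing a single point of $X_a$; it would be cleaner to write $z^{\ast}\in V_i^{\ast}$ and $z^{\dagger}\in V_i^{\dagger}$ with $\psi_i^{\ast}(z^{\ast})\psi_i^{\dagger}(z^{\dagger})=a$, but the argument is clearly correct. The remark about the $\left[\ln\frac{3}{2},\ln 2\right]\times S^1$ interpolation region and $\dd\rho\wedge\dd\theta$ is not really needed here — the lemma only concerns the holomorphic structure, and the symplectic interpolation plays no role in the biholomorphism — but it does no harm.
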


\begin{proof}
  We need to check that the identity map gives well-defined holomorphic maps in all the charts. The calculations are illustrated in \cref{fig:move-twist-outside}.
\end{proof}

\begin{figure}[H]
    \centering
    \def\svgwidth{\textwidth}
    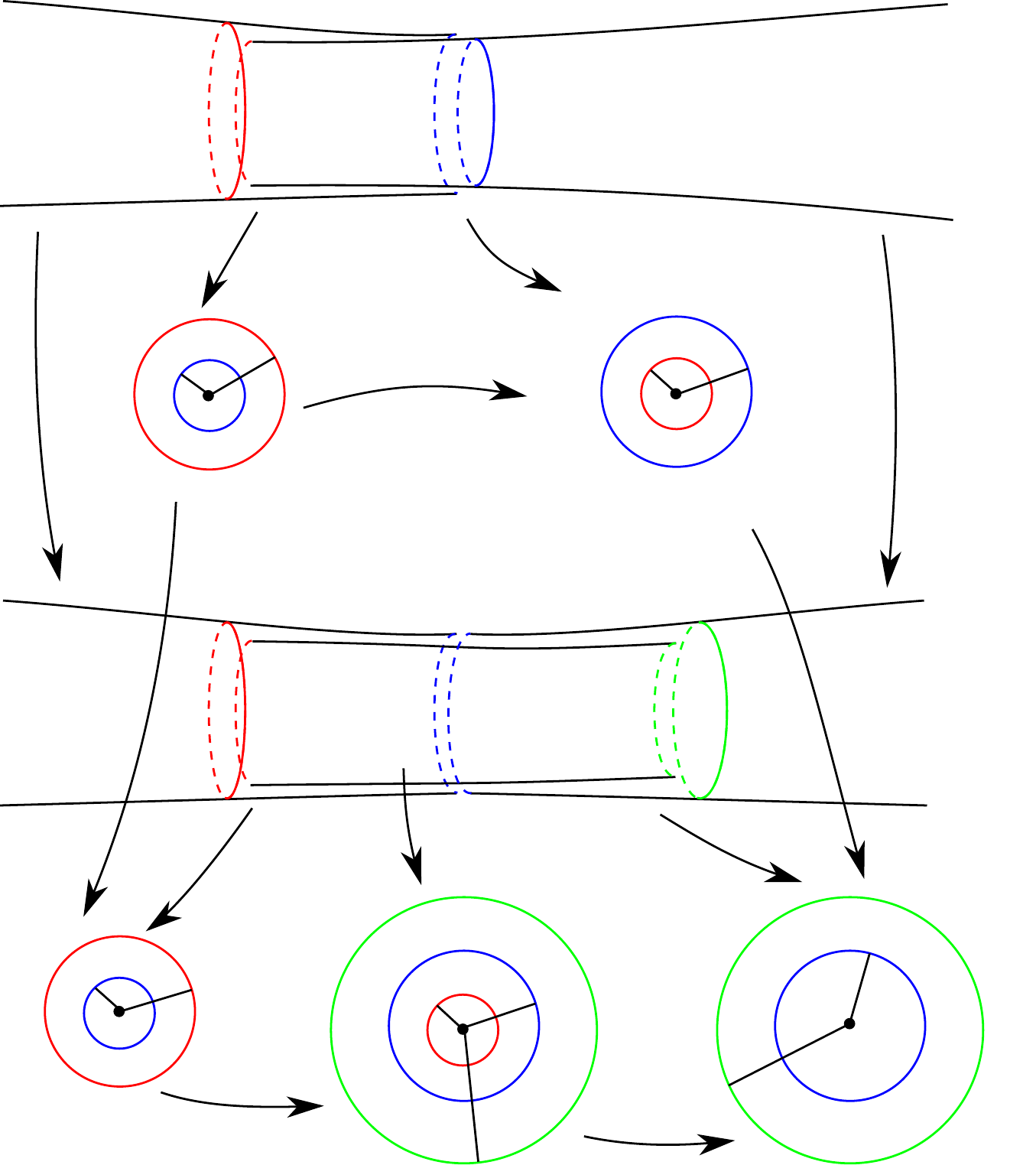
    \caption{This figure illustrates the construction of the surface in \cref{lem:move-twist-outside} as well as the identifications which need to be satisfied. Note that the construction is actually asymmetric as we glue in the longer annuli on the $\ast$-side whereas the twisting takes place on the $\dagger$-side in the blue--green area.}
    \label{fig:move-twist-outside}
\end{figure}

Using \cref{lem:move-twist-outside} we see that we can glue in the annuli as in the neck-stretching sequence with the moduli given by \cref{eq:choice-moduli-annuli} and the twisting contained in $\arg a_i^k$ can be captured in $J_k$ defined outside the bi-collar neighborhood by choosing appropriate diffeomorphisms in the setup of \cref{fig:move-twist-outside} outside the annulus and pulling back the complex structure. Now we can apply the SFT-compactness theorem \cref{thm:sft-compactness} to the sequence $u_k:C_k\lra X_k$.

\begin{prop}
  The sequence $u_k:C_k\lra X_k$ with the marked points $\bq_k$ satisfies the conditions of \cref{thm:sft-compactness}. Thus we obtain a subsequence $u_k:C_k\lra X_k$ which converges to a broken holomorphic curve $u^*:(C^*,\bq)\lra(X^*,J^*)$ with $k$ marked points in the sense of \cref{def:convergence-broken-hol-curve}.
\end{prop}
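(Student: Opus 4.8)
The statement to prove is that the sequence of Hurwitz covers $u_k:C_k\lra X_k$ (after passing to the subsequence from \cref{prop:target-j-subsequence}) fits the hypotheses of the SFT-compactness theorem \cref{thm:sft-compactness}, so that a further subsequence converges to a broken holomorphic curve. By the work carried out in \cref{sec:apply-sft-compactness-hurwitz-covers} up to this point, we have already constructed the ambient data: the symplectic surface $(X_0,\omega_0)$, the stable hypersurface $M=\bigsqcup_{i=1}^m M^i$ with its stabilizing $1$-form $\lambda=\dd\theta$, the SFT-like complex structures $J_k$, and — via \cref{lem:move-twist-outside} together with the choice of moduli \cref{eq:choice-moduli-annuli} — an identification of the given target surfaces $X_k$ with the members of a neck-stretching sequence in the sense of \cref{def:neck-stretching-sequence}, with the twists $\arg a_i^k$ absorbed into the variation of $J_k$ away from the bi-collar. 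So the plan is simply to verify, item by item, that the remaining hypotheses of \cref{thm:sft-compactness} hold for $u_k:C_k\lra X_k$.

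\textbf{Key steps.} First I would check that $(X_0,\omega_0)$ is a closed symplectic manifold and $M\subset X_0$ is a stable closed hypersurface: closedness of $X_0$ is clear since it is a closed nodal Riemann surface with a volume form, and the stability of $M$ was established when we constructed the bi-collar neighborhood in \cref{lem:const-symp-form-bi-collar} (indeed $\omega_M=0$ since $M$ is one-dimensional, and $\lambda=\dd\theta$ satisfies the stability conditions trivially). Second, I would verify the Bott non-degeneracy of the closed Reeb orbits: by \cref{lem:reeb-orbits} the Reeb orbits of $\lambda$ on each $S^1$-component come in clean $S^1$-families (one family per winding number), which is precisely Bott non-degeneracy. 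Third, I would confirm that the $J_k$-holomorphic curves $u_k$ all have the same genus $g$ (true by definition of $\mcR_{g,k,h,n}(T)$), carry a fixed number $k$ of marked points $\bq_k$, and — crucially — have uniformly bounded symplectic area. For the area bound, note that $[u_k(C_k)]$ represents a fixed homology class in $H_2(X_0;\ZZ)$: the degree of $u_k$ is the fixed integer $d$, and the surfaces $X_k$ are all deformation-equivalent to $X_0$ through the neck-stretching, so $\int_{C_k}u_k^*\omega_k = d\cdot\omega_0([X_0])$ is independent of $k$. (Here one uses that the glued-in cylinders contribute a bounded amount to the area, which is built into the construction of $\omega_k$ on the necks.) Having checked these hypotheses, \cref{thm:sft-compactness} applies directly and yields the claimed convergent subsequence to a broken holomorphic curve $u^*:(C^*,\bq)\lra(X^*,J^*)$ with $k$ marked points.

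\textbf{Main obstacle.} The only non-formal point is the uniform area bound, and more precisely the identification of the symplectic area with a topological quantity that does not drift as $k\to\infty$. The subtlety is that the symplectic forms $\omega_k$ genuinely change from member to member of the sequence — they differ on the lengthening necks — so one cannot literally speak of ``the same homology class'' without care. The resolution is that $\omega_k$ is cohomologous to $\omega_0$ under the canonical identification $X_k\cong X_0$ coming from the neck-stretching construction (the modification $\dd(\phi_k\lambda)$ on the collar is exact, and off the collar $\omega_k=\omega_0$), so $\int_{C_k}u_k^*\omega_k$ depends only on the homology class $[u_k(C_k)]\in H_2(X_0;\ZZ)$, which is determined by the fixed degree $d$. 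I would spell this out explicitly, as it is the hinge of the whole compactness argument. Everything else is bookkeeping: matching our setup to the precise list of hypotheses in \cref{thm:sft-compactness}, and recalling that the two modifications of the Cieliebak--Mohnke theorem we need (varying $J_k$ away from the neck, and general neck widths $w_k$) were already justified in the proof of \cref{thm:sft-compactness}.
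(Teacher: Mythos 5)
Your proposal is correct and follows essentially the same route as the paper: both identify that the main point to verify is the uniform area bound, and both obtain it by first using that $u_k$ is a degree-$d$ branched cover to reduce to $d\int_{X_k}\omega_k$. You then observe that this is $k$-independent because the neck contribution $\int \dd(\phi_k\lambda)=2\epsilon$ is fixed by Stokes (equivalently, $\omega_k$ pulls back to $\omega_0$ under the identification $X_k\cong X_0$ given by $\phi_k$), while the paper bounds each term (hyperbolic volume, interpolation annuli, neck) numerically to get the explicit estimate $d\left(\vol_{\text{hyp}}(X)+2(3h-3+n)\ln\frac{8}{3}\right)$; these are two phrasings of the same fact, and either suffices. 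The remaining checks — closedness, stability of $M$, Bott non-degeneracy via \cref{lem:reeb-orbits}, fixed genus and number of marked points — are the same in both, so the proposal is a faithful reconstruction of the argument.
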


\begin{proof}
  We have seen that the sequence $X_k$ is a neck-stretching sequence starting at $(X_0,\omega)$ with $M$ given as above. The Reeb orbits are clearly Bott non-degenerate as they come in a one-parameter family as we have seen in \cref{lem:reeb-orbits}. The maps $u_k:C_k\lra X_k$ are $J$-holomorphic curves with $q$ marked points. As $u_k$ is a branched covering of Riemann surfaces and thus an actual covering of degree $d$ outside of a set of measure zero we have 
  \begin{equation}
    \int_{C_k}u_k^*\omega_k=d\int_{X_k}\omega_k\leq d\left(\vol_{\text{hyp}}(X)+2(3h-3+n)\ln \frac{8}{3}\right)
    \label{eq:uniform-bound-symp-area}
  \end{equation}
  which is independent of $k$. This can be seen as follows. The symplectic form on $X_0$ was built from the hyperbolic volume form on $X$ away from the collapsed nodes and the $\omega_k$ defined in \cref{def:neck-stretching-sequence}. In our construction this means that we have for every collapsed node an additional
  \begin{equation*}
    \int_{[-w_k-\epsilon,\epsilon]\times S^1}\dd(\phi_k\lambda)=\epsilon\int_{S^1}\lambda-(-\epsilon)\int_{S^1}\lambda=2\ln\frac{3}{2}
  \end{equation*}
  because of our choice $\epsilon=\ln\frac{3}{2}$ and $\phi_k:[-w_k-\epsilon,\epsilon]\lra[-\epsilon,\epsilon]$ with $\phi_k'=1$ near the end points of the intervals. Also we have the two interpolated regions from \cref{lem:const-symp-form-bi-collar} for each node giving an additional
  \begin{equation*}
    \int_{[\ln\frac{3}{2},\ln 2]\times S^1}\eta_i^j=\int_{[\ln\frac{3}{2},\ln 2]\times S^1}f(\rho)\dd\rho\wedge\dd\theta\leq 2\int_{\ln\frac{3}{2}}^{\ln 2}\dd\rho\int_0^1\dd\theta =2\ln\frac{4}{3}.
  \end{equation*}
  As we have at most $3h-3+n$ nodes on $X$ we obtain in total an additional area of $(3h-3+n)\left(2\ln\frac{3}{2}+2\cdot 2\ln\frac{4}{3}\right)$ which gives the upper bound in \cref{eq:uniform-bound-symp-area}.
\end{proof}

\begin{rmk}
  In the next section we will show that this limit curve is indeed a Hurwitz cover of type $T$ and that the sequence converges to this Hurwitz cover in the sense of \cref{prop:equivalent-formulation-topology}.

  Notice that as in Cieliebak--Mohnke the sequence $\omega_k$ does not converge to the symplectic structure $\omega$ on the Deligne--Mumford limit $(X,\omega)$. However, this is not important to us as we only need a neck-stretching sequence which is biholomorphic (and not symplectomorphic) to our given sequence in order to apply the SFT-compactness theorem.
\end{rmk}

\subsection{Relating Broken Holomorphic Curves with Hurwitz Covers}

\label{sec:relate-broken-holomorphic-curves-hurwitz-covers}

In the last section we have shown that there exists a subsequence of
\begin{equation*}
  \{[C_k,u_k,X_k,\bq_k,\bp_k]\}_{k\in\NN}\subset|\mcR_{g,k,h,n}(T)|
\end{equation*}
which converges to a broken holomorphic curve $u:(C^*,\bq)\lra(X^*,J^*)$ in the sense of \cref{def:convergence-broken-hol-curve}.

Recall that the stable hypersurface $M$ is a disjoint union of $S^1$'s and thus every connected component of $X^{\ast}$ is a non-compact Riemann surface with non-compact ends diffeomorphic to $(-\infty,\epsilon]\times S^1$ with the standard holomorphic structure as the $J$ from \cref{def:neck-stretching-sequence} is in our case induced by \cref{prop:local-objects}. Therefore every such cylindrical end is in fact biholomorphic to a punctured disc and we can compactify $X^*$ to obtain a nodal Riemann surface $X'$. This means that we glue at each of the two corresponding positive and negative cylindrical ends a holomorphic disc and identify the two additional points. At the moment we do not know much about this surface $X'$ but we will show that it is in fact biholomorphic to the DM-limit $X$ of the sequence $[X_k,\bp_k]$. Note that the intermediate levels $X^{\nu}$ for $\nu=1,\ldots,N$ compactify to a disjoint union of spheres with two nodal points and possibly marked points. This is illustrated in \cref{fig:compactify-annuli-to-spheres}.

\begin{figure}[h!]
    \centering
    \def\svgwidth{0.5\textwidth}
    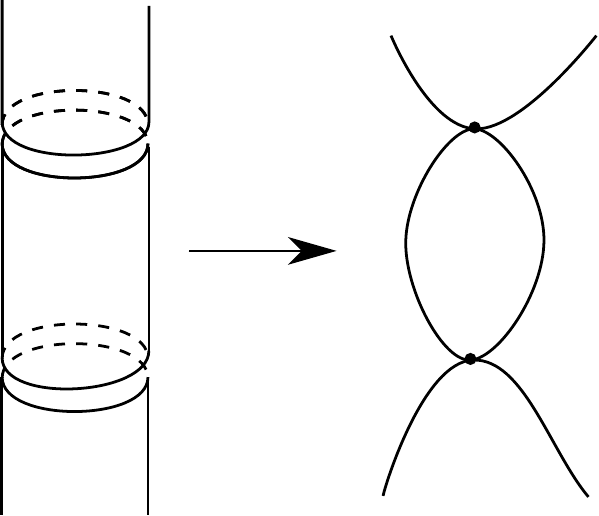
    \caption{By doing the same construction as in the proof of \cref{lem:nodal-extension-broken-holomorphic-curve} we glue in discs at the cylindrical ends of $X^*$ to obtain a nodal surface $X'$.}
    \label{fig:compactify-annuli-to-spheres}
\end{figure}

Next we do the same construction with the surface $C^*$ to obtain a nodal Riemann surface $C'$ such that $C^*\subset C'$ by adding these punctured discs at the cylindrical ends. We will show that the map $u:C^*\lra X^*$ extends to a holomorphic map between nodal Riemann surfaces $u:C'\lra X'$.

\begin{lem}
  The broken holomorphic curve $u:C^*\lra X^*$ extends to a holomorphic map between nodal Riemann surfaces $u:C'\lra X'$ with marked points $\bq\in C'$ and $u(\bq)\in X'$. Also all preimages of nodes are nodes, it is locally surjective at nodes\footnote{See \cref{def:hurwitz-cover} for an explanation of the term.} and the degree of the holomorphic map on both sides at a new node agrees with the multiplicity of the corresponding Reeb orbit.
  \label{lem:nodal-extension-broken-holomorphic-curve}
\end{lem}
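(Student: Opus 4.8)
\textbf{Proof proposal for \cref{lem:nodal-extension-broken-holomorphic-curve}.}

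The plan is to work one cylindrical end of $C^*$ at a time and use the asymptotic behavior of the broken curve at the corresponding puncture to produce a holomorphic extension, then paste these local extensions together with the unmodified map on $C^*$. Recall that by construction each positive (resp.\ negative) puncture $\ol{z}_i$ (resp.\ $\ul{z}_i$) of a level $F^{\nu}:C^{\nu}\lra X^{\nu}$ is asymptotic to a closed Reeb orbit $\ol{\gamma}_i$ (resp.\ $\ul{\gamma}_i$) of $(M,\lambda)$, and by \cref{lem:reeb-orbits} every Reeb orbit on one of the $S^1$-components is of the form $t\mapsto t+c$, so the only data attached to $\ol{\gamma}_i$ is a winding number $m_i\in\NN$ (its multiplicity as a cover of $S^1$) and a base point $c\in S^1$. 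First I would fix one pair of matched punctures $\ol{z}_i\in C^{\nu}$, $\ul{z}_i\in C^{\nu+1}$, which under the compactification gets glued to a single node $q\in C'$; the target punctures $u(\ol{z}_i),u(\ul{z}_i)$ get glued to a node $p\in X'$. Write $f=(a,w):\DD\setminus\{0\}\lra\RR\times M\cong\RR\times S^1$ for the restriction of $u$ near $\ol{z}_i$; the asymptotic condition says $a(\rho e^{\ii\theta})\to+\infty$ and $w(\rho e^{\ii\theta})\to m_i\theta + c$ uniformly as $\rho\to 0$. Using the biholomorphism $\RR\times S^1\cong\DD^*$ of \cref{prop:local-objects} (namely $(\rho,\theta)\mapsto e^{2\pi(\rho+\ii\theta)}$, possibly after a shift), the composed map $\DD^*\lra\DD^*$ is holomorphic, bounded, and its limit at $0$ is $0$, so by the removable singularity theorem it extends holomorphically over $0$; the asymptotic winding number $m_i$ forces the extension to vanish to order exactly $m_i$ at the origin, i.e.\ locally it looks like $z\mapsto z^{m_i}\cdot(\text{unit})$. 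Doing this on both sides of the node and on both the source and the target shows: $u$ extends across $q$ as a holomorphic map sending $q$ to the node $p$, with local degree $m_i$ on each of the two branches, and these two degrees agree because they both equal the multiplicity of the common asymptotic Reeb orbit.

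Next I would glue these local extensions. The map $u$ on $C^*$ together with the finitely many local holomorphic extensions across the new nodes defines a continuous map $u:C'\lra X'$ (continuity is exactly the statement that the broken curve extended to a continuous $\ol{F}:C\lra\ol{X}$ in \cref{def:broken-holomorphic-curve}, and the compactification of the cylindrical ends is compatible with that), and it is holomorphic because it is holomorphic away from the discrete set of new nodes and continuous everywhere, hence holomorphic by removable singularities. The marked points $\bq$ of the broken curve are the marked points on $C^*$; since $u$ on $C^*$ already carries $\bq$ to the tuple $u(\bq)\subset X^*\subset X'$, nothing new is needed here. It remains to verify the three structural claims: (i) all preimages of nodes of $X'$ are nodes of $C'$; (ii) local surjectivity at nodes; (iii) the degree agreement, which was already obtained above. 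For (i): the nodes of $X'$ are either old nodes already present in the sequence $X_k$ (whose preimages are nodes because each $u_k$ was a Hurwitz cover and this property is closed under the $C^\infty_{\text{loc}}$ convergence away from the breaking curves, exactly as in the properness argument in the construction of $\mcM_{g,k,h,n}(T)$) or new nodes coming from collapsed Reeb cylinders, and for those the analysis of the previous paragraph shows any point of $C'$ mapping to such a node lies at the end of a cylindrical piece of $C^*$, hence is itself a new node. For (ii): local surjectivity at a new node $p$ of $X'$ follows because near $p$ the two target branches are glued cylinders whose $\RR$-coordinates tend to $+\infty$ and $-\infty$ respectively, and the asymptotic formula $a\to\pm\infty$ for the source cylinders together with the explicit local form $z\mapsto z^{m_i}$ shows that a punctured neighborhood of $q$ surjects onto a punctured neighborhood of $p$ hitting both sides; equivalently $u$ cannot send both sides of $q$ into only one side of $p$, since that would violate the continuity of $\ol F$ or the matching of positive and negative punctures.

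The main obstacle I anticipate is controlling the \emph{non-constant} behavior and the exact vanishing order at the new nodes — i.e.\ ruling out that a cylindrical piece of the broken curve degenerates to a constant (which would give an ill-defined ``degree $0$'' branch) and pinning the local degree to the Reeb multiplicity $m_i$ rather than merely some unknown positive integer. This is where the stability condition in \cref{def:broken-holomorphic-curve} and the $C^0_{\text{loc}}$-convergence clause \cref{item:def-convergence-broken-hol-curve} of \cref{def:convergence-broken-hol-curve} must be used carefully: the uniform convergence $\pi_M\circ u_k\circ\phi_k^{-1}\to\pi_M\circ\ol F$ on the breaking annuli $A^i$ shows that the winding number of $u_k$ around $\gamma_k^i$ — which is the local degree of the Hurwitz cover $u_k$ at that collapsing curve, a positive integer since $u_k$ is non-constant on every component — passes to the limit and equals $m_i$; and non-constancy of each level follows from the stability clauses (a sphere or torus with too few special points on which $u$ is constant is forbidden, and a trivial Reeb cylinder with no marked points is forbidden, so every surviving component carries non-trivial map data). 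Once this is in hand the degree on each of the two branches at $q$ is literally $m_i$, and everything else is the routine removable-singularity and gluing bookkeeping sketched above. I would also remark that this lemma sets up the subsequent identification of the limit with an honest Hurwitz cover of type $T$: the degrees $m_i$ at the new nodes are precisely the ``kissing numbers,'' and the nodal Riemann--Hurwitz formula of \cref{cor:nodal-riemann-hurwitz} will then be the tool to show no area, hence no genus, has been lost and that $X'\cong X$.
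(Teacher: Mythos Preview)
Your proposal is correct and follows essentially the same route as the paper: removable singularities on the punctured discs obtained from the cylindrical ends, identification of the local degree with the winding number of $\pi_M\circ u$ via the uniform convergence in \cref{item:def-convergence-broken-hol-curve}, and the old-versus-new node dichotomy for the preimage statement. The one difference is scope: your ``main obstacle'' paragraph about excluding constant components and pinning down the vanishing order via the stability clauses is not needed for this lemma as stated --- the paper defers those exclusions to the subsequent string of lemmas (in particular the one showing no constant components and no interior bubbling), and here only records that the degree equals the Reeb multiplicity without yet asserting it is positive.
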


\begin{proof}
  As we have explained above we obtain the nodal Riemann surfaces $C'$ and $X'$ by gluing in discs such that the negative cylindrical ends $(-\infty,0]\times S^1$ and positive cylindrical ends $[0,+\infty)\times S^1$ become punctured holomorphic discs. Note that the coordinate chart at a negative cylindrical end is given by
  \begin{align*}
    (-\infty,0]\times S^1 & \lra \DD \\
    (\rho,\theta) & \longmapsto e^{2\pi(\rho+\ii\theta)}
  \end{align*}
  and similarly for a positive end. From \cref{def:broken-holomorphic-curve} we know that the map $u:C^*\lra X^*$ is holomorphic everywhere and thus $u:C'\lra X'$ is holomorphic except at the added nodes. As the broken holomorphic curve $u:C^*\lra X^*$ respects the levels we see that $u$ is locally surjective at the nodes. Also if we restrict it to one disc in $C'$ and use the corresponding such glued-in disc in $X'$ the map $u$ is a bounded holomorphic function on the punctured disc and thus extends to a holomorphic one over the node. This works at all nodes and on every side. Thus $u:C'\lra X'$ is indeed a holomorphic map between nodal Riemann surfaces.

  Recall that $X$ was the DM-limit of $[X_k,\bp_k]$. Preimages of nodes $X'$ can come either from nodes that were already present in $X$ and in the sequence $X_k$ or from the procedure by gluing in discs at the cylindrical ends. Preimages of the nodes in $X_k$ under $u_k$ are nodes as this was a sequence of Hurwitz covers and preimages of the latter nodes are nodes as we glued in the discs at the cylindrical ends of $C^*$ as well.\footnote{We will see later that also nodes are mapped to nodes. However for this we need to exclude bubbling.}

  It remains to verify the statements about the degrees of $u:C'\lra X'$ at the nodes. If we write $z=e^{2\pi(\rho+\ii\theta)}$ and $f(z)=e^{2\pi(f_1(\rho,\theta)+\ii f_2(\rho,\theta))}$ then it is clear that the degree of $f$ at $0$ is the same as the winding number of $\theta\mapsto f_2(\rho,\theta)$ for sufficiently small $\rho$. This can be seen for example by writing $f(z)=z^dg(z)$ with $g(0)\neq 0$ for holomorphic $g$ and noting that by making $|z|$ small enough we have $0\not\in\Im g$ and therefore we can homotope the curve $g(\epsilon e^{2\pi\ii t})$ to a constant curve on $\CC\setminus\{0\}$ which does not change the winding number. Now recall from \cref{def:convergence-broken-hol-curve} and in particular \cref{item:def-convergence-broken-hol-curve} that the maps $\pi_M\circ u_k\circ\phi_k^{-1}$ converge uniformly on the cylinders $A_i\subset \ol{C}$ to $\pi_M\circ u$. In our case $M=S^1$ and thus the degrees of these maps converge to the degree of $\pi_M\circ u$. This means that the degree of $u$ at the glued-in node is equal to the winding number of $\epsilon e^{2\pi \ii t}$ for small enough $\epsilon$ which corresponds to $\rho$ very negative and thus this winding number is the order of the Reeb orbit. Since both sides of a node come from positive and negative cylindrical ends converging to the same Reeb orbit the corresponding nodal extensions have the same degree on both sides.
\end{proof}

Now we will show the following facts in this order:
\begin{enumerate}[label=(\roman*), ref=(\roman*)]
\item The sequence of source Riemann surfaces $(C_k,\bq_k)$ converges in DM to $(C',\bq)$.
  \item Any marked points $\bq$ of the limit curve are such that $u(q_j)=p_{\nu(j)}$ for the limit marked points $\bp$ in the Deligne--Mumford sequence $[X_k,\bp_k]$. Also all $q_j$ are contained in $C^0$ and all $p_i$ are contained in $X^0$, i.e.\ the non-cylindrical components.
  \item The degrees $\deg_{q_j}u_k$ cannot decrease in the limit $k\to\infty$.
  \item The homology class represented by $u$ is $d[X']$.
  \item There do not exist any constant components of $u'$ and there exist no bubbled-off holomorphic spheres in $X'$ or interior nodes in $C^1,\ldots, C^N$.
  \item The map $u:C'\lra X'$ is a Hurwitz cover except that some nodal components of the target might be unstable.
  \item All critical points of $u$ are contained in $\bq$.
  \item There cannot exist any cylindrical components in $X'$, i.e.\ $X^*$ consists only of $X^0$.
  \item Every component of $C'$ is stable as a nodal surface.
  \item The sequence $[C_k,u_k,X_k,\bq_k,\bp_k]$ converges to $[C',u',X',\bq',\bp']$ in the sense of \cref{prop:equivalent-formulation-topology} and $[X',\bp']=[X,\bp]$.
\end{enumerate}

\begin{lem}
  The subsequence of source surfaces $C_k$ converges to $C'$ in the topology of Deligne--Mumford space including the marked points $\bq_k\lra\bq$.
  \label{lem:DM-convergence-C-k}
\end{lem}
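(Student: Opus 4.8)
The plan is to read off Deligne--Mumford convergence of the source surfaces directly from the SFT-convergence data established in the previous subsection, after compactifying the cylindrical ends. Recall from \cref{def:convergence-broken-hol-curve} that the convergence $u_k\to u$ comes equipped with orientation preserving diffeomorphisms $\phi_k:C_k\lra\ol{C}$, a splitting $C^*=C^0\sqcup\bigsqcup_{\nu=1}^NC^{\nu}$ of $C\coloneqq\ol{C}/\Delta_{\text{n}}$ with the breaking loops $\Delta_{\text{p}}=\{\delta^i\}$ removed, cylinders $A^i\subset\ol{C}$ with core $\delta^i$, and divergent sequences $r_k^0<\cdots<r_k^{N+1}$. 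The compactified source $C'$ is obtained from $C^*$ by gluing a holomorphic disc onto each cylindrical end; equivalently $C'=\ol{C}/(\Delta_{\text{n}}\cup\Delta_{\text{p}})$ with the induced nodal complex structure, each loop in $\Delta_{\text{n}}\cup\Delta_{\text{p}}$ being collapsed to a node. Let $\pi:\ol{C}\lra C'$ denote this collapsing map and set $\psi_k\coloneqq\pi\circ\phi_k:C_k\lra C'$ and $\Gamma_k^i\coloneqq\phi_k^{-1}(\delta^i)$, together with the vanishing cycles of the already present nodes of $C_k$, which $\phi_k$ sends to $\Delta_{\text{n}}$.

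First I would check the combinatorial part of \cref{def:topology-dm}: by construction of $\pi$, each $\psi_k$ maps every $\Gamma_k^i$ and every pre-existing vanishing cycle to a fixed node of $C'$ independent of $k$, and $\psi_k$ restricts to a diffeomorphism of nodal surfaces $C_k\setminus\bigcup_i\Gamma_k^i\lra C'\setminus\{\text{nodes}\}$ since $\phi_k$ is a diffeomorphism of nodal surfaces onto $\ol{C}$ and $\pi$ is a diffeomorphism away from the collapsed loops. The convergence of the marked points is immediate: the $\bq_k$ are the marked points $\bz_k$ appearing in \cref{thm:sft-compactness} (there are $q=k$ of them), and condition~(i) of \cref{def:convergence-broken-hol-curve} gives $\phi_k(\bq_k^j)\to\bq^j$ in $C^*$, hence $\psi_k(\bq_k^j)=\pi(\phi_k(\bq_k^j))\to\pi(\bq^j)$, which is a genuine marked point of $C'$ because the limit broken curve carries $k$ marked points by construction.

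The analytic content is the $\cin_{\text{loc}}$-convergence of the complex structures $(\psi_k)_*j_k$ to $j$ on $C'\setminus\{\text{nodes}\}$. Away from the new nodes this is exactly condition~(i) of \cref{def:convergence-broken-hol-curve}, which asserts $(\phi_k)_*j_k\to j$ in $\cin_{\text{loc}}$ on $C^*\setminus\{\text{nodes}\}$, transported along the diffeomorphism $\pi$. Near a new node I would invoke condition~(ii): the annulus $(A^i,(\phi_k)_*j_k)$ is biholomorphic to a standard straight annulus $[-L_k^i,L_k^i]\times\faktor{\RR}{\ZZ}$ with $L_k^i\to\infty$. This is precisely the plumbing model of a degenerating node; in coordinates on the two glued-in discs in which $A^i$ becomes the standard annulus, the family $(A^i,(\phi_k)_*j_k)$ is the standard pinching family whose gluing parameter tends to $0$, so the pulled-back complex structures converge in $\cin_{\text{loc}}$ on the two punctured discs to the nodal model. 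Combining the two regions yields the required $\cin_{\text{loc}}$-convergence on all of $C'\setminus\{\text{nodes}\}$, so that \cref{def:topology-dm} holds; by Theorem~13.6 in \cite{robbin_construction_2006} this is equivalent to convergence of $[C_k,\bq_k]$ to $[C',\bq]$ in the orbifold topology of $|\mcR_{g,k}|$.

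The main obstacle will be the bookkeeping needed to match the level-by-level, shift-dependent SFT picture (the maps $f_k^{\nu}\circ\phi_k^{-1}\to F^{\nu}$ on each $C^{\nu}$, together with the divergent sequences $r_k^{\nu}$ and the associated reparametrizations) against the single global nodal surface $C'$ demanded by \cref{def:topology-dm}. One has to verify that the shift reparametrizations occurring in the SFT definition only move points within the annular regions that collapse to nodes, so that composing with $\pi$ renders them irrelevant, and that the normalizations of the $A^i$ to straight annuli in condition~(ii) can be chosen, over all $i$ simultaneously, to be compatible with a single system of plumbing coordinates around the nodes of $C'$. I expect this to be routine once the dictionary between the two notions of degeneration is written out, but it is where the care is required.
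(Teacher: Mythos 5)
Your proposal follows the same route as the paper's proof, which is extremely terse: the paper simply observes that the data provided by \cref{def:convergence-broken-hol-curve} (the diffeomorphisms $\phi_k$, the $\cin_{\text{loc}}$-convergence of pulled-back complex structures away from the breaking loops and nodes, and the convergence of marked points from condition~(i)) is \emph{verbatim} the data demanded by \cref{def:topology-dm}, once one recalls how $C'$ is built from $\ol{C}$ by collapsing $\Delta_{\text{n}}\cup\Delta_{\text{p}}$. You spell this dictionary out carefully — introducing the collapsing map $\pi$, defining $\psi_k=\pi\circ\phi_k$, and separately checking the combinatorial conditions, the marked points, and the complex structures — which is a legitimate and useful elaboration of what the paper compresses into two sentences. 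Your additional appeal to condition~(ii) (the annuli $A^i$ becoming conformally long) to control the complex structure near the \emph{new} nodes, and your concluding worry about reconciling the level-by-level shift reparametrizations with a single plumbing coordinate system, are both sound points of care; the paper silently subsumes them under ``recalling the definitions,'' but nothing you add contradicts or deviates from its argument. In short: same approach, with the details that the paper leaves implicit made explicit.
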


\begin{proof}
  By \cref{def:convergence-broken-hol-curve} we have orientation preserving diffeomorphisms $\phi_k:C_k\lra \ol{C}$ such that $(\phi_k)_*j_k\lra j$ in $C^{\infty}_{\text{loc}}$ on $C^*\setminus\Delta_n$.
 Recalling the definitions of $C,\ol{C}$ and $C^*$ we see that this is precisely the definition of Deligne--Mumford convergence.
\end{proof}

\begin{lem}
  Any marked points $\bq$ of the limit curve $C'$ satisfy $u(q_j)=p_{\nu(j)}$ for the limit marked points $\bp$ in the Deligne--Mumford sequence $[X_k,\bp_k]$. Also all $q_j$ are contained in $C^0$ and all $p_i$ are contained in $X_0$.
\end{lem}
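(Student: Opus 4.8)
The plan is to exploit the uniform convergence of the maps $u_k$ established by SFT-compactness together with the fact that, by \cref{prop:target-j-subsequence}, the marked points $\bp_k$ stay inside fixed cusp neighborhoods on the target which are \emph{disjoint} from all collapsing geodesics. Concretely, recall that each cusp neighborhood of a limit marked point $p_i \in X$ is isometric to a half-infinite hyperbolic cylinder whose closed geodesics around the cusp have length going to zero — but this cusp is \emph{not} one of the collapsing curves $\gamma_k^i$, so near $p_i$ the hyperbolic metrics $\psi_k^*g_k$ converge in $\cin_{\text{loc}}$ and no neck-stretching happens. In particular the cusp point $p_i$ lies in the non-cylindrical component $X^0$ of $X^*$; this is essentially tautological from the construction in \cref{sec:apply-sft-compactness-hurwitz-covers}, where only the collapsing curves were replaced by long necks and all cusps of the marked points were left untouched inside $X^0$.

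First I would argue that $u(q_j) = p_{\nu(j)}$. On the sequence we have $u_k(\bq_k^j) = \bp_k^{\nu(j)}$ for all $k$ by definition of a Hurwitz cover. By \cref{def:convergence-broken-hol-curve}\ (more precisely items (i) and (iii)) the diffeomorphisms $\phi_k$ satisfy $\phi_k(\bq_k^j) \to \bq^j$ in $C^*$ and $u_k \circ \phi_k^{-1} \to \ol{u}$ in $C^0_{\text{loc}}$, while by \cref{prop:target-j-subsequence} the maps $\rho_k$ (the target diffeomorphisms, i.e.\ the $\psi_k$ away from collapsed curves composed with $\Phi$) send $\bp_k^{\nu(j)} \to \bp^{\nu(j)}$. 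Since $\bq^j$ lies in a component on which the target limit map $\ol{u}$ is the actual $\cin_{\text{loc}}$-limit of $\rho_k \circ u_k \circ \phi_k^{-1}$, continuity gives $u(q_j) = \lim_k \rho_k(u_k(\phi_k^{-1}(\phi_k(\bq_k^j)))) = \lim_k \rho_k(\bp_k^{\nu(j)}) = p_{\nu(j)}$. One has to be slightly careful that $\bq^j$ does not a priori lie in a cylindrical level $C^\nu$ with $\nu \geq 1$; but if it did, then $u$ would map a neighborhood of $\bq^j$ into a cylindrical level $X^\nu$, $\nu\geq 1$, forcing $p_{\nu(j)} \in X^\nu$, contradicting that $p_{\nu(j)} \in X^0$ as above. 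So $q_j \in C^0$ and consequently $u(q_j) = p_{\nu(j)} \in X^0$.

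The key step — and the only one that needs real care — is showing that $p_i \in X^0$ rigorously rather than by appeal to intuition. I would do this by tracking the cusp neighborhoods explicitly: each $p_i$ has, for all $k$, a standard cusp neighborhood $U_i^k \subset X_k$ bounded by a horocycle of fixed length $2$, and by \cref{lem:collar-and-cusp-neighborhood-disjoint} these are disjoint from the standard collar neighborhoods $A_{w}(\gamma_k^i)$ of the collapsing geodesics. In the neck-stretching construction of \cref{sec:apply-sft-compactness-hurwitz-covers} the necks are glued in precisely inside (slightly enlarged) neighborhoods of the collapsing curves, so the cusp neighborhoods $U_i^k$ are contained in $\mathring{X}_0$ and map under $\phi_k$ into the level $X^0$. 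Hence $p_i = \lim_k \rho_k(\bp_k^i) \in X^0$, closing the argument. The remaining assertions are then immediate consequences, and I expect no further obstacle beyond being precise about the identification of the enlarged collar regions with the neck-stretching region $[-\epsilon,\epsilon]\times M$.
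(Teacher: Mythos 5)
Your proof is correct and takes essentially the same route as the paper's: both use the $\cin_{\text{loc}}$ convergence of the maps away from the nodes to get $u(q_j)=p_{\nu(j)}$, and both invoke \cref{lem:collar-and-cusp-neighborhood-disjoint} to keep the cusp neighborhoods of the $p_i$ a bounded distance from the collar necks, so that $p_i\in X^0$ and hence $q_j\in C^0$. Your extra care about the potential circularity (ruling out $q_j$ in a cylindrical level before applying the convergence at $q_j$) is a reasonable filled-in version of what the paper leaves implicit.
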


\begin{proof}
  As $u$ converges in $\cin_{\text{loc}}$ away from the nodes we have $u(q_j)=\lim_{k\to\infty}u(q_j^k)=\lim_{k\to\infty}p_{\nu(j)}^k=p_{\nu(j)}$. Recall that we chose the bi-collar neighborhoods for the neck-stretching construction as hyperbolic neighborhoods of a geodesic. By \cref{lem:collar-and-cusp-neighborhood-disjoint} such a neighborhood is disjoint from the cusp neighborhoods around the marked points $p_i$. Therefore the marked points $\bp$ stay a distance bounded from below away from the bi-collar neighborhoods for all $k$. Again by $\cin$ convergence of all the objects in the sequence away from the nodes and the bi-collar neighborhoods the limit marked points $\bp$ are outside the cylindrical components, i.e.\ in $X^0$. Therefore the preimages $\bq$ are contained in $C^0$. 
\end{proof}

\begin{lem}
  We have for all $j=1,\ldots,n$ that $\deg_{q_j}u\geq l_j$ where $\bq=\{q_j\}_{j=1}^n$.
  \label{lem:degrees-limit-map-u}
\end{lem}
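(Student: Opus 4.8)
\textbf{Proof proposal for \cref{lem:degrees-limit-map-u}.}

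The plan is to argue that the degree of $u_k$ at $q_j^k$ cannot be lost in the limit, because the branching at a marked point is detected by a winding number of $u_k$ around a small circle, and this winding number is preserved under $\cin_{\text{loc}}$-convergence away from the nodes. First I would fix an index $j$ and choose, on the limit surface $C^0$, a small holomorphic coordinate disc $D\subset C^0$ centered at $q_j$ which contains no other special points (no other marked points, no nodes). Since $q_j\in C^0$ by the previous lemma, and since the $\phi_k$ are diffeomorphisms on a neighborhood of $C^0\setminus\Delta_n$ by \cref{def:convergence-broken-hol-curve}, for $k$ large enough the pulled-back discs $\phi_k^{-1}(D)\subset C_k$ are honest coordinate discs containing $q_j^k$ and no other special point of $C_k$; similarly I choose a coordinate disc $E\subset X^0$ around $p_{\nu(j)}$ so that $u(D)\subset E$ and $u_k(\phi_k^{-1}(D))\subset E$ for $k$ large. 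The degree $\deg_{q_j^k}u_k = l_j$ equals the winding number of the loop $t\mapsto \psi_E(u_k(\phi_k^{-1}(\gamma_\rho(t))))$ around $0$, where $\gamma_\rho$ is the circle of radius $\rho$ around $q_j$ in the $D$-coordinate and $\psi_E$ is the coordinate on $E$ centered at $p_{\nu(j)}$; this is the same statement as the degree–winding-number identification used in the proof of \cref{lem:nodal-extension-broken-holomorphic-curve}.

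Next I would pass to the limit. By \cref{def:convergence-broken-hol-curve}(iii), the maps $u_k^0\circ\phi_k^{-1}$ converge to $u^0$ in $\cin_{\text{loc}}$ on $C^0\setminus\{\text{nodes}\}$, hence uniformly on the compact circle $\gamma_\rho$ once $\rho$ is chosen small enough that $\gamma_\rho$ avoids the nodes. Therefore the loops $t\mapsto \psi_E(u_k(\phi_k^{-1}(\gamma_\rho(t))))$ converge uniformly to $t\mapsto \psi_E(u(\gamma_\rho(t)))$. If the limit loop avoids $0$ — equivalently, if $u$ is non-constant near $q_j$ and $\rho$ is small enough that $u(\overline{D_\rho}\setminus\{q_j\})\not\ni p_{\nu(j)}$, which holds because holomorphic maps have isolated preimages — then uniform convergence of loops on $\CC\setminus\{0\}$ implies the winding numbers eventually agree, so $\deg_{q_j}u = l_j$. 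If instead $u$ is constant on the component of $C^0$ containing $q_j$, then a priori the winding argument degenerates; in that case the inequality $\deg_{q_j}u\geq l_j$ is to be interpreted via the convention that a marked point on a constant (ghost) component still ``carries'' the prescribed ramification, or — more cleanly — I would defer this case, noting that constant components of $u'$ are excluded later in the list of facts being proved, and the final conclusion $\deg_{q_j}u=l_j$ will follow once ghost components are ruled out. For the purposes of the present lemma it suffices to record $\deg_{q_j}u\geq l_j$, reading a constant component as contributing ramification $+\infty$ formally, or restricting to the (generic) components on which $u$ is non-constant where equality holds.

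The main obstacle I anticipate is the bookkeeping at the boundary between $C^0$ and the cylindrical levels: I must be sure that the circle $\gamma_\rho$ around $q_j$ stays entirely inside the region where $\phi_k$ is a genuine diffeomorphism and where $\cin_{\text{loc}}$-convergence of $u_k^0\circ\phi_k^{-1}$ to $u^0$ holds, i.e.\ uniformly away from the nodes $\Delta_n$ and away from the necks. This is fine because $q_j$ lies in the interior of $C^0$ at a definite distance from all nodes (as in the previous lemma, using \cref{lem:collar-and-cusp-neighborhood-disjoint} on the target and pulling back), so a sufficiently small $\rho$ works uniformly in $k$. A secondary subtlety is that $u_k$ might a priori have \emph{additional} ramification accumulating at $q_j$ from nearby critical points of $u_k$ colliding with $q_j^k$; but such collisions would only \emph{increase} the local winding number, which is consistent with the asserted inequality, and in any case collisions of critical points into $q_j$ are controlled because all critical points of $u_k$ are marked points $\bq_k$ and the $\bq_k$ stay pairwise distinct by the definition of $\mcR_{g,k,h,n}(T)$. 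Hence the only mechanism that could \emph{decrease} the degree is loss of winding into a neck or a ghost component, both of which are ruled out by the location of $q_j$ in $C^0$, giving $\deg_{q_j}u\geq l_j$.
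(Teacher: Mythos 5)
Your proposal is correct and follows essentially the same route as the paper, which simply cites $\cin_{\text{loc}}$-convergence of $u_k\circ\phi_k^{-1}$ to $u$ away from the nodes and concludes $\deg_{q_j}u\geq l_j$ in one line; you have supplied the standard winding-number justification that the paper leaves implicit, and you correctly flag the (otherwise unaddressed) ghost-component case and defer it to the later lemma where constant components are excluded.
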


\begin{proof}
  This follows from the $\cinl$-convergence of $u_k\circ \phi_k^{-1}$ to $u\circ\phi^{-1}$ away from the nodes. As $\deg_{q_j^k}u_k=l_j$ for all $k\in\NN$ we get that $\deg_{q_j}u\geq l_j$.
\end{proof}

\begin{rmk}
  In principle it is possible to obtain new critical points or increasing the order of the critical points in the limit as the derivative could converge to zero. However, we will see shortly that this can in fact not happen.
\end{rmk}

\begin{lem}
  The homology class represented by $u$ is $d[X']$.
  \label{lem:homology-class-limit-u}
\end{lem}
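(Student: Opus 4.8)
The plan is to show that the continuous map $\Phi\circ\ol{u}:C'\lra X'$ (the compactified version of the broken holomorphic curve, composed with the homeomorphism $\Phi:\ol{X}\lra X_0$) represents the homology class $d[X']$ in $H_2(X',\ZZ)\cong H_2(X_0,\ZZ)\cong\ZZ$. Since we are working with surfaces, the homology class of a map into a closed oriented surface is detected entirely by its degree, i.e.\ by the integral of the pullback of any volume form divided by the total volume. So the key identity to establish is
\begin{equation*}
  \int_{C'}u^*\omega_{\Phi}=d\int_{X'}\omega_{\Phi},
\end{equation*}
where $\omega_{\Phi}=\Phi^*\omega_0$ is the pulled-back symplectic form on $X^*\cong X'\setminus\{\text{nodes}\}$ from the neck-stretching construction.

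First I would invoke the last clause of \cref{def:convergence-broken-hol-curve}, namely that $\int_{C_k}u_k^*\omega_{\phi_k}\to\int_{C^*}u^*\omega_{\phi}=\omega_0([\ol{u}])$, where the $\phi_k$ are the shift diffeomorphisms and $\omega_{\phi_k}$ is the symplectic form on $X_k$. This already identifies the limit of the symplectic areas with the symplectic area of the broken curve, which is the pairing $\langle[\omega_0],[\ol{u}]\rangle$. Next I would compute the left-hand side for each fixed $k$: since $u_k:C_k\lra X_k$ is a branched cover of degree $d$ between Riemann surfaces, it is an honest $d$-to-one covering away from the finitely many critical points, a set of measure zero, so $\int_{C_k}u_k^*\omega_{\phi_k}=d\int_{X_k}\omega_{\phi_k}$ — exactly as was used already in \cref{eq:uniform-bound-symp-area} when verifying the hypotheses of SFT-compactness. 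Therefore $\int_{C_k}u_k^*\omega_{\phi_k}=d\int_{X_k}\omega_{\phi_k}$ for all $k$.

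It then remains to check that $\int_{X_k}\omega_{\phi_k}\to\int_{X'}\omega_{\Phi}$, i.e.\ that the total symplectic volume of the target neck-stretching sequence converges to the symplectic volume of the compactified limit $X'$. This follows from the explicit description of the neck-stretching construction in \cref{sec:apply-sft-compactness-hurwitz-covers}: away from the bi-collar the form $\omega_{\phi_k}$ is fixed and equals the pulled-back hyperbolic form on $X$, and inside each neck region of width $w_i^k$ the contribution is $\int_{[-w_k-\epsilon,\epsilon]\times S^1}\dd(\phi_k\lambda)=\epsilon\int_{S^1}\lambda-(-\epsilon)\int_{S^1}\lambda=2\ln\tfrac32$, which is \emph{independent of $k$} because $\phi_k'=1$ near the endpoints; the two interpolation collars contribute the fixed amount $2\ln\tfrac43$ each. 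Hence $\int_{X_k}\omega_{\phi_k}$ is actually constant in $k$ and equals the corresponding integral over $X^*$, which in turn equals $\int_{X'}\omega_{\Phi}$ since the glued-in discs at the punctures have zero measure. Combining the three displayed facts gives $d\int_{X'}\omega_{\Phi}=\lim_k\int_{C_k}u_k^*\omega_{\phi_k}=\int_{C'}u^*\omega_{\Phi}$, and dividing by $\int_{X'}\omega_{\Phi}>0$ shows the degree of $u:C'\lra X'$ is $d$, i.e.\ $[\ol{u}]=d[X']$.

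The only subtlety — and the step I expect to be the main (though minor) obstacle — is bookkeeping: making sure that all the integrals above are genuinely comparing the \emph{same} form on nested or homeomorphic spaces. The spaces $X_k$, $X_0\cong\ol{X}$, $X^*$ and $X'$ are identified via $\Phi$ and via the compactification of cylindrical ends, and one must track that $\omega_{\phi_k}$, $\omega_{\phi}=\omega_{\Phi}$ and their restrictions to the non-collapsed part all agree under these identifications. Since $\Phi$ is a homeomorphism that is a diffeomorphism away from the finitely many copies of $M$, and the measure of $M$ (a union of circles) together with the glued-in discs is zero, no measure is lost or gained, so the identifications are harmless at the level of integration. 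Once that is pinned down the argument is purely a matter of assembling the pieces already established.
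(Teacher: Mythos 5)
Your proof is correct and takes a genuinely different route from the paper. The paper invokes the $C^0_{\text{loc}}$ and uniform convergence of the maps (items (iii) and (iv) of \cref{def:convergence-broken-hol-curve}), deduces that the discrete homology classes $[u_k\circ\phi_k^{-1}]\in H_2(X',\ZZ)$ eventually stabilize and equal $[u]$, and then uses that each $u_k$ is a degree-$d$ branched cover. You instead use the symplectic area clause (item (vi)), the degree-$d$ identity $\int_{C_k}u_k^*\omega_{\phi_k}=d\int_{X_k}\omega_{\phi_k}$, and the observation (which you verify correctly via Stokes, using $\phi_k(\epsilon)=-\phi_k(-w_k-\epsilon)=\epsilon$) that $\int_{X_k}\omega_{\phi_k}$ is independent of $k$. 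This is more quantitative and avoids relying on the discreteness of $H_2$ to translate $C^0$-closeness into equality of classes.

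One point worth flagging: the identity $\int_{C'}u^*\omega_\Phi=d\int_{X'}\omega_\Phi$ is a single scalar equation, and it pins down $[\ol u]=d[X']$ outright only when $H_2(X',\ZZ)$ is cyclic, i.e.\ when $X'$ is irreducible. If $X'$ is a nodal surface with several irreducible components, $H_2(X',\ZZ)$ has rank equal to the number of components, and your pairing argument a priori only determines the $\omega_\Phi$-weighted sum of the component degrees. To get degree $d$ on every component one can either combine your identity with the equal-degrees-at-nodes condition already established in \cref{lem:nodal-extension-broken-holomorphic-curve} (together with connectedness of $C'$, this forces all component degrees to coincide), or note that the same argument may be run component by component. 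The paper's $C^0$-convergence route gives the component-wise statement directly, since each $u_k$ is degree $d$ over every part of the target. This is a small gap --- the missing ingredient is already on the table --- but it should be stated.
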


\begin{proof}
  From the uniform convergence statement in \cref{def:convergence-broken-hol-curve} we can deduce that $[u_k\circ\phi_k^{-1}]$ converges to $[u]$ in $H_2(X',\ZZ)$. Since the maps $u_k$ were branched coverings of degree $d$, we have $[u]=d[X']$.
\end{proof}

\begin{lem}
  There do not exist components of $C$ where $u$ is constant. Also any $C^{\nu}$ with $\nu=1,\ldots,N$ has no interior nodes by which we mean nodes that do not come from cylindrical ends. Furthermore there do not exist any bubbled-off spheres.
\end{lem}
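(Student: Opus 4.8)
The plan is to exploit the uniform bound on symplectic area established in the previous proposition together with the structure of the neck‑stretching sequence. First I would argue that a constant component cannot occur: on such a component the map $u$ carries zero area, but by the stability condition in \cref{def:broken-holomorphic-curve} every sphere component carrying a constant map has at least three special points and every torus component at least one, so no \emph{unstable} constant component can appear; for a \emph{stable} constant component one uses that $C_k$ was a sequence of Hurwitz covers which are non‑constant on every irreducible component of the normalization, and passing to the limit via $\cin_{\text{loc}}$‑convergence away from the nodes shows that a positive lower bound on the local degree (hence on the area) persists. Concretely, each component of $C^*$ arises as a $\cin_{\text{loc}}$‑limit of pieces of $C_k$ on which $u_k$ is non‑constant of some degree $\geq 1$; the area of the image, which equals that degree times the area of a piece of $X_k$, is bounded below, so the limit area is strictly positive and $u$ is non‑constant there.

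\textbf{Excluding bubbling and interior nodes on the middle levels.} The next step is to rule out bubbled‑off spheres in $X'$ and interior nodes on $C^1,\ldots,C^N$. A bubbled‑off holomorphic sphere in $X'=X$ would represent a nonzero multiple of a class in $H_2(X,\ZZ)$; but $X$ is a nodal Riemann \emph{surface}, so $H_2$ of each irreducible component is generated by its fundamental class, and a nonconstant holomorphic sphere mapping into a stable Riemann surface of genus $\geq 1$ (or with enough marked points) is impossible on dimension/area grounds — there is simply no nonconstant holomorphic sphere in a hyperbolic surface. More precisely, a holomorphic sphere in $X^\nu \cong \RR\times M$ with $M=S^1$ would project to a holomorphic map $\CC P^1 \lra \RR$, which is constant, forcing the sphere into a single Reeb cylinder where it must be a branched cover of a trivial cylinder; stability (\cref{def:broken-holomorphic-curve}(i) and (ii)) then forbids it unless it carries marked points, and the count of marked points is fixed by $T$ and all lie in $C^0$ by the previous lemma. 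For interior nodes on $C^\nu$, $1\leq\nu\leq N$: such a node would have to map to a node of $X^\nu$, but $X^\nu\cong\RR\times S^1$ is smooth, so $u$ would map both branches of the node into $X^\nu$ non‑constantly, contradicting the structure of punctured holomorphic curves there — each connected non‑cylindrical piece of $C^\nu$ must be a sphere or disc (by \cref{rmk:genus-preimage-adjoined-hurwitz-cover}‑type Euler characteristic bookkeeping), and a node would raise the genus/special‑point count incompatibly with the area bound.

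\textbf{Order of steps and the main obstacle.} I would carry out the argument in the order: (1) area positivity of every component via $\cin_{\text{loc}}$‑limits of non‑constant pieces, giving no constant components; (2) rule out spheres mapping into cylindrical levels $X^\nu$ by projecting to $\RR$; (3) rule out spheres mapping into $X^0$ using that $X$ is hyperbolic away from finitely many spheres attached at branch points, and that those attached spheres already carry the maximal number of marked points dictated by $\wt T$; (4) rule out interior nodes on $C^1,\ldots,C^N$ using smoothness of $X^1,\ldots,X^N$ together with the Euler‑characteristic computation forcing the middle pieces to be unions of discs. The main obstacle I expect is step (3): one must carefully distinguish genuine bubbling in $X^0$ from the spheres that are \emph{supposed} to be there (the $\CC P^1(i)$ components glued in at branch points in the construction of $\wt{\mcR}_{g,nd+k,h,2n}(\wt T)$), and show that the limit $X'$ acquires no \emph{extra} such components beyond those already present in the Deligne–Mumford limit $X$; this requires combining the homology‑class identity $[u]=d[X']$ from \cref{lem:homology-class-limit-u} with the uniform area bound to see that no symplectic area "escapes" into spurious sphere bubbles, and then invoking the absence of nonconstant holomorphic spheres in hyperbolic pieces to conclude.
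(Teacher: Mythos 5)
Your argument for ruling out interior nodes on the middle levels $C^1,\ldots,C^N$ is circular. You write that ``such a node would have to map to a node of $X^\nu$,'' but that is the Hurwitz-cover condition — precisely the structure you are in the process of \emph{establishing} for the SFT limit. At this stage $u:C^\nu\lra X^\nu\cong\RR\times S^1$ is only a punctured $J$-holomorphic map into a smooth manifold, and there is no a priori reason a node of the domain must sit over anything special in the target. The paper's argument is different and is the one that actually closes the gap: any interior node in $C^\nu$ (or any \emph{new} node in $C^0$) would arise from a simple closed curve $\phi_k^{-1}(\gamma)\subset C_k$ whose hyperbolic geodesic representative collapses. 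But $u_k$ is a \emph{local isometry} for the uniformized hyperbolic metrics, so the image geodesic in $X_k$ also collapses; by construction every collapsing geodesic of $X_k$ was included in the stable hypersurface $M\subset X_0$. Hence the degenerating curve in $C_k$ is one of the breaking circles $\delta_i\in\Delta_{\text{p}}$, not an interior node. This hyperbolic-covering observation is the key missing idea in your proposal, and it is also what makes the bubbling statement immediate: a bubbled-off sphere would have to attach through an interior node, and there are none.

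Your exclusion of constant components via area positivity also has a gap. Claiming that each component of $C^*$ ``arises as a $\cin_{\text{loc}}$-limit of pieces on which $u_k$ has degree $\geq 1$, so the limit area is strictly positive'' does not follow: $\cin_{\text{loc}}$-convergence away from a shrinking neck lets area escape, which is exactly the bubbling phenomenon you are trying to rule out, so you cannot use the conclusion (no area escapes) to prove the premise. The paper avoids this by going through local degree at special points: by stability (\cref{def:broken-holomorphic-curve}) any constant component must carry special points, and by \cref{lem:degrees-limit-map-u} and \cref{lem:nodal-extension-broken-holomorphic-curve} the degree of $u$ at every marked point or node is $\geq 1$, which a constant map cannot achieve. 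Your projection-to-$\RR$ argument for spheres in the cylindrical levels $X^\nu$ is a reasonable observation, but it becomes unnecessary once interior nodes are eliminated by the hyperbolic argument, and your step (3) about spheres in $X^0$ (invoking absence of nonconstant holomorphic spheres in hyperbolic surfaces plus the homology identity) is not what the paper does and would need substantially more care to distinguish the pre-existing nodal components of $X$ from new bubbles.
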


\begin{proof}
  Recall from the proof of \cref{lem:nodal-extension-broken-holomorphic-curve} that the degree of $u'$ at a node in $C'$ is given by the degree of the Reeb orbit that $u'$ converges to in the corresponding cylindrical end in $C^{\ast}$. As the curves $u_k$ are Hurwitz covers and therefore do not contain constant components these degrees of the Reeb orbits are all non-zero. By uniform convergence they stay non-zero in the limit and thus $u'$ cannot be constant on any component that has a cylindrical end.

  Next we show that there can not be any interior nodes in the levels $C^1,\ldots,C^N$. Suppose there is a node $z\in C^{\nu}$. As this node could not have been present in the sequence $C_k$ already as it would be contained in $C^0$ in that case, there exist curves $\phi_k^{-1}(\gamma)$ for $\gamma\in\Delta_n$ on $C_k$ which are collapsed to nodes in the limit $k\to\infty$. However, the nodal Riemann surfaces $C_k$ and  $X_k$ are hyperbolic and $u_k$ is a local isometry. If the curves $\phi_k^{-1}(\gamma)$ are collapsing then the hyperbolic length of the unique geodesic in this free homotopy class converges to zero. But then the length of the image geodesic in $X_k$ also converges to zero which means that we included it in the choice of $M\subset X_0$. But then $z$ is not contained in the interior of $C^1,\ldots,C^N$.

  The same argument also works on $C^0$ except of course for nodes that are already present in the sequence $C_k$. If there was a constant component then by \cref{def:broken-holomorphic-curve} it would need to contain special points where by \cref{lem:degrees-limit-map-u} and \cref{lem:nodal-extension-broken-holomorphic-curve} the degree of $u$ is greater than or equal to one. This means there can not be a constant component. This also shows there can not be any holomorphic spheres which are attached to $C$ via interior nodes.
\end{proof}

\begin{lem}
  The limit curve $u$ is a branched nodal cover of degree $d$ over $X'$ (which might not be $X$ and could still contain non-stable components in form of twice-punctured spheres) and such that the degrees of $u$ at the two sides of each node agree. The Riemann--Hurwitz formula in form of \cref{cor:nodal-riemann-hurwitz} thus applies.
  \label{lem:limit-u-hurwitz}
\end{lem}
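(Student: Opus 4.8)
The plan is to collect everything that has been established in the preceding lemmas and to assemble it into the assertion that $u\colon C'\lra X'$ is a genuine branched cover of degree $d$ between nodal Riemann surfaces with matching local degrees at nodes, so that \cref{cor:nodal-riemann-hurwitz} becomes applicable. First I would recall from \cref{lem:nodal-extension-broken-holomorphic-curve} that $u$ is holomorphic on all of $C'$, that it is locally surjective at nodes, that all preimages of nodes are nodes, and that the local degrees on the two sides of every newly created node coincide (they equal the multiplicity of the limiting Reeb orbit); at the nodes already present in the sequence $C_k\lra X_k$ the matching of degrees is inherited from the fact that each $u_k$ is a Hurwitz cover. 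Combining this with the previous lemma, which rules out constant components, bubbled-off spheres, and interior nodes in the intermediate levels, shows that $u$ is non-constant on every component of $C'$. Hence $u$ restricted to the normalization is a non-constant holomorphic map between smooth compact surfaces, i.e.\ an honest branched cover on each component.

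Second I would pin down the degree. By \cref{lem:homology-class-limit-u} the continuous map underlying $u$ represents $d[X']$ in $H_2(X',\ZZ)$; since $u$ is non-constant on every component and holomorphic, its topological degree on each connected component of the normalization equals the local mapping degree, and summing these over the components lying above a given component of $X'$ recovers $d$ by the homology computation. Therefore $u\colon C'\lra X'$ is a branched holomorphic cover of degree $d$ in the sense required: a non-constant holomorphic map of nodal surfaces, mapping nodes to nodes, with equal local degrees on both sides of each node. This is exactly the hypothesis of \cref{cor:nodal-riemann-hurwitz} — note that the corollary, and the nodal Riemann--Hurwitz \cref{prop:riemann-hurwitz} it rests on, only used the covering property together with the degree-matching at nodes and made no use of the marked points or of stability of $X'$, so the possible presence of unstable twice-punctured spherical components of $X'$ is harmless.

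Finally I would simply quote \cref{cor:nodal-riemann-hurwitz} for $u\colon C'\lra X'$, obtaining
\begin{equation*}
  2-2\ga(C')=d\bigl(2-2\ga(X')\bigr)-\sum_{j}(l_j-1),
\end{equation*}
where the sum runs over the critical points of $u$ with their local degrees; this is the content of the claim. Strictly speaking one should remark that the arithmetic genera appearing here are $\ga(C')=g$ and $\ga(X')=h$: the arithmetic genus is preserved under the degeneration since $C_k\lra C'$ and $X_k\lra X'$ are Deligne--Mumford-type limits (\cref{lem:DM-convergence-C-k} for the source, and $X'\cong X$ once we have shown there are no cylindrical components, which is one of the items still to be proved after this lemma), and arithmetic genus is locally constant in such families. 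I expect the only real subtlety to be bookkeeping: making sure that the degree-matching at nodes holds simultaneously at the "old" nodes (inherited from the $u_k$ being Hurwitz covers, hence from the definition of $\mcR_{g,k,h,n}(T)$) and at the "new" nodes (from the Reeb-orbit multiplicity argument of \cref{lem:nodal-extension-broken-holomorphic-curve}), and that the degree count via the homology class is not corrupted by any component on which $u$ could a priori fail to be proper — but properness is automatic here because every surface in sight is compact. So the main obstacle is not a deep one; it is the careful invocation of the right earlier statements in the right order.
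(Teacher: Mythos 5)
Your proof is correct and follows exactly the route the paper takes: the paper's own proof is the single sentence ``This is clear from the last lemmata,'' and your write-up simply makes explicit which preceding statements are being assembled (holomorphicity and node-matching from \cref{lem:nodal-extension-broken-holomorphic-curve}, non-constancy from the exclusion of constant components and bubbles, degree $d$ from \cref{lem:homology-class-limit-u}). You also rightly observe that \cref{prop:riemann-hurwitz} and \cref{cor:nodal-riemann-hurwitz} make no use of stability or of the marked points, so the possible unstable twice-punctured spherical components of $X'$ cause no trouble — the one place where a careless reader might worry.
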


\begin{proof}
  This is clear from the last lemmata.
\end{proof}

\begin{lem}
  All critical points of $u$ are contained in the set $\bq$ and the degrees for points $q_j\in\bq$ satisfy $\deg_{q_j}u=l_j$.
  \label{lem:critical-points-u-limit}
\end{lem}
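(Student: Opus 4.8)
The plan is to compare the total branching of $u:C'\lra X'$ with that of the Hurwitz covers $u_k$ in the sequence via the Riemann--Hurwitz formula, and to use the fact that branching can only concentrate (never disperse) in a $C^\infty_{\text{loc}}$ limit. First I would invoke \cref{lem:limit-u-hurwitz} so that $u:C'\lra X'$ is a branched nodal cover of degree $d$ with matching degrees on the two sides of every node, hence \cref{cor:nodal-riemann-hurwitz} applies: $2-2\ga(C')=d(2-2\ga(X'))-\sum_{j=1}^k(l_j-1)-\sum_{\text{new branch points and extra node-branching}}(\deg-1)$. Here the ``extra'' terms collect (a) any branching of $u$ at points of $C'$ that are not among the $q_j$, (b) any excess $\deg_{q_j}u - l_j > 0$ at a marked point, and (c) any branching along the newly created nodes of $X'$ beyond what is forced by the combinatorics. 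By \cref{lem:degrees-limit-map-u} each of these contributions is $\geq 0$.

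Next I would pin down $\ga(C')$ and $\ga(X')$. By \cref{lem:DM-convergence-C-k} the surfaces $C_k$ converge to $C'$ in the Deligne--Mumford sense, and Deligne--Mumford convergence preserves the arithmetic genus, so $\ga(C')=g$; similarly $\ga(X')=h$, since $X'$ was built by compactifying $X^*$ and the neck-stretching construction only redistributes the same collapsing curves that already appear in the Deligne--Mumford limit $X$ of $[X_k,\bp_k]$ (so $\ga(X')=\ga(X)=h$, even if $X'\neq X$ as a complex curve — that equality will be proved later). Plugging these into the nodal Riemann--Hurwitz formula for $u:C'\lra X'$ and comparing with the same formula applied to each $u_k$ (which has $\ga(C_k)=g$, $\ga(X_k)=h$, branch points exactly $\bp_k$ with the partitions $T_i$, and hence $\chi$-data dictated entirely by $T$), one reads off that the two Euler-characteristic balances agree \emph{exactly}. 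Therefore the sum of all nonnegative ``extra'' branching contributions (a), (b), (c) must vanish identically. In particular every $\deg_{q_j}u=l_j$, and $u$ has no critical point outside $\bq$ (modulo nodes, which are accounted for separately and are forced by local surjectivity at nodes together with the matching-degree condition). The statement $\bq=\{q_j\}_{j=1}^n$ in the lemma is then just the assertion that $u(q_j)=p_{\nu(j)}$, already established, so $\{q_j\}$ is precisely the set of marked points.

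The main obstacle I anticipate is bookkeeping the branching carried by the \emph{newly created nodes} of $X'$: a cylindrical level $X^\nu$ or a glued-in disc can in principle absorb some branching, and one must be sure that the corresponding $(\deg-1)$ terms on the source side are exactly balanced by $d\cdot$(the $\chi$-change on the target side). This is where I would use \cref{lem:nodal-extension-broken-holomorphic-curve}: the degree of $u$ at a new node equals the multiplicity of the limiting Reeb orbit, and is the \emph{same} on both sides of that node; moreover \cref{cor:nodal-riemann-hurwitz} is precisely engineered so that the node contributions on source and target cancel in the arithmetic-genus version, leaving only the honest branch points. So the Riemann--Hurwitz comparison is clean once one is careful that ``branching along nodes'' is the quantity $\sum_{p\in\text{nodes}(C')}(\deg_p u - 1)$ appearing in \cref{prop:riemann-hurwitz}, and that this same quantity appears for $u_k$ with the \emph{same} value (because $C_k\to C'$ and $X_k\to X'$ in Deligne--Mumford with the node-degrees of $u_k$ converging, by the winding-number argument in the proof of \cref{lem:nodal-extension-broken-holomorphic-curve}). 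Granting that, the nonnegativity of each remaining term forces it to be zero, which is exactly the claim.
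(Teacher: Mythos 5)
Your proposal is correct and follows essentially the same route as the paper: apply the nodal Riemann--Hurwitz formula in its arithmetic-genus form (\cref{cor:nodal-riemann-hurwitz}), where the node contributions cancel, use Deligne--Mumford convergence to fix $\ga(C')=g$ and $\ga(X')=h$, compare with the formula forced on each $u_k$ by the combinatorial data $T$, and then use \cref{lem:degrees-limit-map-u} to see each residual term is nonnegative and hence zero. The only cosmetic difference is that the paper writes two Riemann--Hurwitz balances explicitly and subtracts them, whereas you fold the slack directly into one formula and set it to zero; your observation that potential extra target spheres from cylindrical levels do not affect $\ga(X')$ is exactly what makes the arithmetic-genus version the right tool here, and matches what the paper is implicitly relying on.
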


\begin{proof}
  Since every Hurwitz cover $u_k:C_k\lra X_k$ satisfies Riemann--Hurwitz we have
  \begin{equation*}
    2-2g_{\text{a}}(C_k) = d_k(2-2g_{\text{a}}(X_k) - \sum_{j=1}^k(l_j-1),
  \end{equation*}
  abusing notation by using $k$ twice for different things.  By \cref{lem:DM-convergence-C-k} and \cref{lem:homology-class-limit-u} we know that the arithmetic genera converge in the limit and that the degree of the limit map is still $d$. We therefore obtain
  \begin{equation*}
    2-2g_{\text{a}}(C') = d(2-2g_{\text{a}}(X')) - \sum_{j=1}^k(l_j-1)
  \end{equation*}
  as well as
  \begin{equation*}
    2-2g_{\text{a}}(C') = d(2-2g_{\text{a}}(X')) - \sum_{\substack{w\in C \\ \dd_wu=0}}(\deg_wu-1)
  \end{equation*}
  since by \cref{lem:limit-u-hurwitz} the limit map is indeed a branched nodal cover. Thus we obtain
  \begin{equation*}
    \sum_{j=1}^k(l_j-1) = \sum_{\substack{w\in C \\ \dd_wu=0}}(\deg_wu-1) = \sum_{q_j\in\bq}(\deg_{q_j}u -1) + \sum_{w\in C\setminus\bq}(\deg_wu-1).
  \end{equation*}
  By \cref{lem:degrees-limit-map-u} we see that in
  \begin{equation*}
    0 = \sum_{q_j\in\bq}(\deg_{q_j}u -l_j -1) + \sum_{w\in C\setminus\bq}(\deg_wu-1)
  \end{equation*}
  every summand is non-negative and thus zero, concluding the proof for the degrees of the critical points. Thus $\bq$ contains all critical points of $u'$.
\end{proof}

\begin{lem}
  The broken holomorphic curve $u^*:C^*\lra X^*$ does not contain any cylindrical components.
\end{lem}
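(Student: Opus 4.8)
The plan is to derive a contradiction from the stability of the limiting broken holomorphic curve in \cref{def:broken-holomorphic-curve}, using the fact — just established in \cref{lem:critical-points-u-limit} — that every critical point of the limit map $u$ lies in $\bq\subset C^0$. So suppose toward a contradiction that $N\geq 1$, i.e.\ that there is a nontrivial intermediate level $X^\nu$ with $1\leq\nu\leq N$. Recall that $X^\nu\cong\RR\times M$ with $M$ a disjoint union of circles, so its compactification inside $X'$ is a disjoint union of spheres each carrying two nodal points; by the earlier lemmata there are no marked points on $C^\nu$, no interior nodes in $C^1,\dots,C^N$, and no constant components of $u$, so $C^\nu$ is likewise a disjoint union of twice-punctured spheres.

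First I would analyze $u$ on $u^{-1}(\overline{X^\nu})$, where $\overline{X^\nu}$ denotes the compactification of the level $X^\nu$ inside $X'$. Since $u:C'\lra X'$ is a branched nodal cover of degree $d$ (\cref{lem:limit-u-hurwitz}) whose critical points all lie in $\bq\subset C^0$ (\cref{lem:critical-points-u-limit}), the restriction of $u$ to $u^{-1}(\overline{X^\nu})$ is an \emph{unbranched} proper holomorphic map of closed nodal surfaces, hence a covering away from the nodes. Consequently each component of $C^\nu$ is a twice-punctured sphere mapping to a twice-punctured sphere by a cyclic covering $w\mapsto w^{d'}$ for some $d'\geq 1$. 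Passing to the cylindrical model $X^\nu\cong\RR\times M$ and absorbing the remaining $\RR$-translation, which is an automorphism of the target level, each component of $F^\nu$ is a trivial cylinder over the $d'$-fold cover of a closed Reeb orbit of $M$. I would justify this monomial normal form, together with the asymptotic behavior at the two punctures (which is forced to be the covered Reeb orbit by the continuity of $\ol F$ between adjacent levels in \cref{def:broken-holomorphic-curve}), either directly from a Laurent expansion or by citing the asymptotic analysis of \cite{cieliebak_compactness_2005,bourgeois_compactness_2003}.

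Next I would invoke stability. Since $C^\nu$ carries no marked points and $F^\nu$ is a union of trivial cylinders over closed Reeb orbits, condition (i) in \cref{def:broken-holomorphic-curve} for a stable broken holomorphic curve fails — unless $C^\nu=\emptyset$. But $u$ has degree $d\geq 1$ and hence is surjective onto $X'$, and it respects the levels of the broken curve, so $u^{-1}(X^\nu)\subseteq C^\nu$ is nonempty whenever $X^\nu\neq\emptyset$. This contradiction shows $N=0$, i.e.\ $X^\ast=X^0$ and there are no cylindrical components; in particular this prepares the identification $X'=X$ needed to finish the compactness proof.

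The step I expect to be the main obstacle is the normal-form claim: making rigorous that \emph{every} component of $F^\nu$ is exactly a trivial cylinder. One must be careful that the holomorphic map from the (finite) cylinder is genuinely a finite covering of the target cylinder — ruling out, for instance, an infinite-sheeted cover by the strip $\CC$, which is excluded here by properness of $u:C'\lra X'$ — and that its asymptotics at both ends are the covered Reeb orbits. This is precisely where the previously established structural facts are all consumed at once: no branching outside $\bq$, no constant components, no interior nodes in the intermediate levels, and the matching of Reeb orbits across levels, together with the asymptotic estimates of \cite{cieliebak_compactness_2005}.
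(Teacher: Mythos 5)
Your proposal takes essentially the same route as the paper: cylindrical target components carry no marked points since the bi-collar is disjoint from cusp neighborhoods; by \cref{lem:critical-points-u-limit} the map is unbranched over them, so each preimage component has Euler characteristic zero and at least two punctures, hence is a twice-punctured sphere; passing to the nodal picture the restricted map is $z\mapsto z^{d'}$, a trivial Reeb cylinder; and the Cieliebak--Mohnke stability condition excludes levels consisting only of unmarked trivial cylinders. One small slip: you assert early on that the earlier lemmata (no interior nodes, no constant components, no marked points) already make $C^\nu$ a disjoint union of twice-punctured spheres, but those facts alone do not control the genus or the number of punctures — that conclusion requires the unbranched covering/Euler characteristic argument, which you do supply in the next sentence, so the proof is fine but the order of deduction is slightly misleading.
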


\begin{proof}
  Again, since the chosen neighborhood $[-\epsilon,\epsilon]\times M$ is disjoint from cusp neighborhoods around marked points $\bp$ we see that cylindrical components of $X^*$ do not contain any marked points. Thus by \cref{lem:critical-points-u-limit} the restriction of $u^*$ to a connected component $D$ of the preimage of a cylindrical component $Y$ is an unbranched covering. Therefore its Euler characteristics is given by $\chi(D)=d(u|_D)\chi(Y)=0$ and it has at least two punctures corresponding to the ends converging to the Reeb orbits or the nodal points, respectively. Thus $D$ is a cylinder.

  If we switch to the nodal picture by \cref{lem:nodal-extension-broken-holomorphic-curve} we see that $u|_D$ is given by a degree-$d(u|_D)$ holomorphic map from the sphere to the sphere which has exactly two branched points which are completely branched. From this it follows that it is a trivial Reeb cylinder which can we assume does not exist by the Cieliebak--Mohnke compactness theorem \cref{thm:sft-compactness}. Thus $u^*$ does not contain any cylindrical component.
\end{proof}

\begin{lem}
  Every component of the Riemann surface $C$ is stable as a nodal surface.
\end{lem}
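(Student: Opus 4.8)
The plan is to deduce the stability of each irreducible component of the limit curve $C'$ from the Riemann--Hurwitz formula, applied one component at a time to the normalization. First I would collect the facts already proved about $u\colon C'\lra X'$: it is a branched nodal cover of degree $d$ with no constant components, all of its critical points lie in $\bq$ or are nodes, it sends nodes to nodes, and all preimages of special points (marked points and nodes) are again special. The last assertion holds because for each $i$ the marked preimages $q_j$ with $\nu(j)=i$ already account for total degree $\sum_{j\in\nu^{-1}(i)}l_j=d$, hence they exhaust the fibre of $u$ over $p_i$. I would also use that every component of the normalization $\wt{X'}$ is stable: by the previous lemma $X^{*}$ has no cylindrical components, so $X^{*}=X^{0}$, and compactifying the cylindrical ends of $X^{0}$ re-forms exactly the nodes of the Deligne--Mumford limit $X$, which is a stable nodal surface; equivalently, $X'$ is obtained from $X$ by reopening some nodes, an operation that only adds punctures and never removes special points.

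Next I would pass to normalizations, obtaining a holomorphic map $\wt{u}\colon\wt{C'}\lra\wt{X'}$, and fix a connected component $C'_{\beta}$ of $\wt{C'}$. It maps to some component $X'_{\alpha}$ of $\wt{X'}$ by a nonconstant, hence surjective, branched cover of some degree $d_{\beta}\geq 1$. Write $S_{\alpha}$ for the set of special points of $X'_{\alpha}$. By the observations above every branch point of $\wt{u}|_{C'_{\beta}}$ lies over $S_{\alpha}$, and the special points of $C'_{\beta}$ are precisely the points of $\wt{u}^{-1}(S_{\alpha})\cap C'_{\beta}$. Since for a branched cover of connected compact Riemann surfaces each fibre has total multiplicity $d_{\beta}$, the number of preimages in $C'_{\beta}$ of a point $s\in S_{\alpha}$ equals $d_{\beta}$ minus the ramification over $s$, and summing over $S_{\alpha}$ gives
\[
  \#\{\text{special points of }C'_{\beta}\}=|S_{\alpha}|\,d_{\beta}-\sum_{w\in C'_{\beta}}\bigl(\deg_{w}\wt{u}-1\bigr),
\]
the sum effectively ranging over the branch points of $\wt{u}|_{C'_{\beta}}$, all of which lie over $S_{\alpha}$. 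Substituting this into the classical Riemann--Hurwitz identity $2-2g_{\beta}=d_{\beta}(2-2h_{\alpha})-\sum_{w\in C'_{\beta}}(\deg_{w}\wt{u}-1)$ for the connected cover $\wt{u}|_{C'_{\beta}}$ cancels the ramification term and leaves
\[
  2-2g_{\beta}-\#\{\text{special points of }C'_{\beta}\}=-d_{\beta}\bigl(2h_{\alpha}-2+|S_{\alpha}|\bigr).
\]
Because $X'_{\alpha}$ is stable we have $2h_{\alpha}-2+|S_{\alpha}|>0$, and $d_{\beta}\geq 1$, so the right-hand side is strictly negative, which is exactly the stability inequality for $C'_{\beta}$. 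As $C'_{\beta}$ was arbitrary, this proves the lemma.

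I do not expect a serious obstacle. The only point requiring care is the bookkeeping of special points on a normalized component: one must be sure that a nodal preimage of a node is counted once on each side, that every preimage of a point of $S_{\alpha}$ is genuinely a special point of $C'_{\beta}$, and that there are no other special points — all of which is exactly the content of the ``preimages of special points are special, and conversely'' observation recorded in the first paragraph. A secondary, purely cosmetic, choice is whether to phrase the stability of the target components via $X'\cong X$ or via the ``reopening nodes'' description; either works.
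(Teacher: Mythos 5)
Your proof is correct and is essentially the same argument as the paper's: the paper punctures both $D$ and $Y$ at the special points, observes that the restriction of $u$ is then an honest covering, and concludes from $\chi(\mathring{D})=d(u|_D)\chi(\mathring{Y})<0$; your computation via branched Riemann--Hurwitz plus the explicit fibre count over $S_\alpha$ cancels the ramification sum and produces exactly the same identity $\chi(\mathring{D})=d_\beta\chi(\mathring{Y})$. The only stylistic difference is that the paper shortcuts the cancellation by passing to the unbranched cover directly.
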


\begin{proof}
  Restrict $u$ to a given smooth component $D\lra Y$ with $D\subset C'$ and $Y\subset X'$ which has thus various special points. We have marked all critical points (actually all fibres of branched points) and the target component $Y$ is stable. Remove all marked and nodal points from $Y$ as well as all their preimages on $D$ and denote these punctured Riemann surfaces $\mathring{D}$ and $\mathring{Y}$. This way the restriction of $u$ becomes an actual covering and we have that the Euler characteristics of the punctured target surface is negative. But then $\chi(\mathring{D})=d(u|_D)\chi(\mathring{Y})<0$ and thus $D$ is stable.
\end{proof}

Recall \cref{prop:equivalent-formulation-topology} and that $|\mcM_{g,k,h,n}(T)|$ is second countable. From this string of lemmata we deduce the following statement.

\begin{thm}
  Any sequence $\{[C_k,u_k,X_k,\bq_k,\bp_k]\}_{k\in\NN}\subset|\mcR_{g,k,h,n}(T)|$ of Hurwitz covers contains a subsequence which converges in the sense of \cref{prop:equivalent-formulation-topology} to a Hurwitz cover $[C,u,X,\bq,\bp]\in|\mcR_{g,k,h,n}(T)|$. Thus the moduli space $|\mcR_{g,k,h,n}(T)|$ is sequentially compact and therefore also compact. As $\iota:|\mcM_{g,k,h,n}(T)|\lra|\mcR_{g,k,h,n}(T)|$ is proper the space $|\mcM_{g,k,h,n}(T)|$ is compact, too.
  \label{thm:compactness-moduli-space-hurwitz-covers}
\end{thm}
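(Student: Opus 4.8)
The plan is to chain together the preparatory results of this section: first extract a subsequence whose targets converge in Deligne--Mumford space, reinterpret the resulting degeneration as a neck-stretching sequence, apply SFT-compactness, and then show that the limiting broken holomorphic curve is in fact a Hurwitz cover of type $T$ and that the convergence is convergence in the sense of \cref{prop:equivalent-formulation-topology}. Compactness of the two moduli spaces then follows formally.

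More precisely, I would first invoke \cref{prop:target-j-subsequence} to pass to a subsequence such that $(X_k,\bp_k)$ converges in Deligne--Mumford space to a nodal surface $(X,\bp)$, with a fixed finite set of collapsing simple closed geodesics $\gamma_k^i\subset X_k$. Following \cref{sec:apply-sft-compactness-hurwitz-covers}, I would realize this sequence, up to biholomorphism and after discarding finitely many members, as a neck-stretching sequence $(X_k,\omega_k,J_k)$ for the stable hypersurface $M$ equal to the disjoint union of the circles attached to the $\gamma_k^i$, using \cref{lem:const-symp-form-bi-collar} to interpolate to the standard symplectic form on the bi-collar and \cref{lem:move-twist-outside} to push the twist parameters into the complex structures away from the neck. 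Since every $u_k$ is a degree-$d$ branched cover of Riemann surfaces, the symplectic areas $\int_{C_k}u_k^*\omega_k$ are uniformly bounded, so \cref{thm:sft-compactness} applies: after a further subsequence, the $u_k$ converge, in the sense of \cref{def:convergence-broken-hol-curve}, to a stable broken holomorphic curve $u^*\colon(C^*,\bq)\lra(X^*,J^*)$ with $k$ marked points.

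The heart of the argument is then to identify this broken curve with a genuine Hurwitz cover. This is the content of the string of lemmata in \cref{sec:relate-broken-holomorphic-curves-hurwitz-covers}: compactifying the cylindrical ends of $C^*$ and $X^*$ yields nodal Riemann surfaces $C'\supset C^*$ and $X'\supset X^*$ and a holomorphic extension $u\colon C'\lra X'$ which maps nodes to nodes, is locally surjective at nodes, and has equal degrees on both sides of each node (\cref{lem:nodal-extension-broken-holomorphic-curve}); the source surfaces converge in Deligne--Mumford space to $(C',\bq)$ (\cref{lem:DM-convergence-C-k}); the limiting marked points satisfy $u(q_j)=p_{\nu(j)}$ and lie on the non-cylindrical components, the homology class of $u$ is $d[X']$ (\cref{lem:homology-class-limit-u}), and there are no constant components, no bubbled-off spheres, and no interior nodes on the intermediate levels. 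Feeding all of this into the nodal Riemann--Hurwitz identity \cref{cor:nodal-riemann-hurwitz} upgrades the inequality $\deg_{q_j}u\ge l_j$ of \cref{lem:degrees-limit-map-u} to an equality, shows that all critical points of $u$ lie in $\bq$ (\cref{lem:critical-points-u-limit}), rules out cylindrical components entirely (so $X^*=X^0$ and hence $X'=X$, $[X',\bp]=[X,\bp]$), and shows that every component of $C'$ is stable. Thus $[C',u,X,\bq,\bp]\in|\mcR_{g,k,h,n}(T)|$.

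It then remains to observe that SFT-convergence in the sense of \cref{def:convergence-broken-hol-curve} translates verbatim into convergence in the sense of \cref{prop:equivalent-formulation-topology}: the diffeomorphisms $\phi_k$, the collapsing curve systems $\Delta_{\mathrm{n}},\Delta_{\mathrm{p}}$ together with their images, the $\cin_{\text{loc}}$-convergence of the complex structures and of the maps away from the nodes, and the uniform convergence across the breaking cylinders are exactly the required data, the last point providing both $\cin_{\text{loc}}$-convergence away from the new nodes and uniform ($C^0$) convergence of the maps over them. Hence $[C_k,u_k,X_k,\bq_k,\bp_k]\to[C',u,X,\bq,\bp]$ in $|\mcR_{g,k,h,n}(T)|$, so this space is sequentially compact; being second countable (it carries the quotient topology from the second-countable $|\mcM_{g,k,h,n}(T)|$), it is therefore compact. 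Finally, $\iota\colon|\mcM_{g,k,h,n}(T)|\lra|\mcR_{g,k,h,n}(T)|$ is a proper surjection by \cref{lem:inclusion-proper}, so $|\mcM_{g,k,h,n}(T)|$ is compact as well. The main obstacle is the identification step: excluding the degenerations that SFT-compactness permits in general (constant components, bubbling, nontrivial cylindrical levels, spurious or higher-order critical points outside $\bq$) and matching $X'$ with the Deligne--Mumford limit $X$, with the decisive leverage being that Riemann--Hurwitz passes to the limit and collapses all of these issues at once.
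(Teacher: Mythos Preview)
Your proposal is correct and follows essentially the same approach as the paper: you assemble the preparatory results of the section (\cref{prop:target-j-subsequence}, the neck-stretching reformulation, \cref{thm:sft-compactness}, and the chain of lemmata in \cref{sec:relate-broken-holomorphic-curves-hurwitz-covers}) in exactly the intended order, and you correctly identify the Riemann--Hurwitz argument as the key step that rules out extra critical points and cylindrical levels. The paper's own proof at this point is terser only because it treats those lemmata as already established and focuses on the final translation from SFT-convergence to the notion of \cref{prop:equivalent-formulation-topology}, which you also handle correctly.
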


\begin{proof}
  First notice that there exist no additional nodal components in $X'$, i.e.\ $X'$ consists only of the level $X^0$. By construction of the neck-stretching sequence this implies that $(X',\bp')$ is biholomorphic to $(X,\bp)$. Thus we obtain a Hurwitz cover $(C,u,X,\bq,\bp)\in\Ob\mcR_{g,k,h,n}(T)$, where we have abused notation by suppressing the biholomorphism $(X',\bp')\lra(X,\bp)$ and we have renamed $C\coloneqq C'$.

  It remains to relate the convergence of the broken holomorphic curve to the convergence of Hurwitz covers in the sense of \cref{prop:equivalent-formulation-topology}. Note that $\ol{C}$ was obtained from $C^*$ by adding copies of $M$ at the cylindrical ends in order to obtain a topological surface. The nodal surface $C$ was obtained from $C^*$ by gluing in holomorphic discs centered around the cylindrical ends. This means that the diffeomorphisms $\phi_k:C_k\lra \ol{C}$ can be interpreted as diffeomorphisms $C_k\setminus\Gamma_k\lra C\setminus\Delta_{\text{p}}$ where $\Delta_{\text{p}}$ corresponds to the newly collapsed nodes and $\Gamma_k\coloneqq \phi_k^{-1}(\Delta_{\text{p}})$ is a disjoint union of simple non-intersecting curves. The same argument on the target surface yields diffeomorphisms $\rho_k;X_k\setminus \Theta_k\lra X\setminus\{\text{collapsed nodes}\}$ where $\Theta_k\coloneqq \rho_k^{-1}(M)$. Notice that we had rewritten the sequence $(X_k,\bp_k)$ as a neck-stretching sequence and the diffeomorphisms $\rho_k$ are given by these identifications.

  By construction of $X$ and by \cref{def:convergence-broken-hol-curve}, respectively, we have that $(\phi_k)_*j_k\lra j$ on $C\setminus\Delta_{\text{p}}$ and $(\rho_k)_*J_k\lra J$ on $X\setminus\{\text{collapsed nodes}\}$ for $k\lra\infty$. The statement in \cref{prop:equivalent-formulation-topology} on the convergence of the marked points is clear. Also the $\cinl$-convergence of the maps $\rho_k\circ u_k\circ \phi_k^{-1}\lra u$ as well as the uniform convergence follows immediately from \cref{def:convergence-broken-hol-curve}.
\end{proof}

\section{Compactness of \texorpdfstring{$\boldsymbol{\wt{\mcM}_{g,k,h,n}(T)}$}{M\_gkhn(T)}}

\label{sec:compactness-moduli-space-bordered-hurwitz-covers}

The moduli space of bordered Hurwitz covers is not compact as the hyperbolic lengths of the boundary geodesics can escape to infinity.\footnote{Notice that this phenomenon is already taken care of for interior geodesics as in a hyperbolic pair of pants a diverging length of a boundary geodesic forces some other geodesic to collapse. However, if the geodesic is exterior this collapsing geodesic is only a geodesic arc in the surface.} This is why we will formulate the compactness theorem differently in the form of a proper function. Define $K\coloneqq\prod_{i=1}^nK_i$ where
\begin{equation*}
  K_i\coloneqq\lcm\{l_j\mid j\in\nu^{-1}(i)\}
\end{equation*}
and the $l_j$ are the local degrees of $u$ at the points $q_j$.

\begin{definition}
  We define 
  \begin{align}
    \wt{\mu}:\obj\wt{\mcR}_{g,nd+k,h,2n}(\wt{T}) & \lra\RR^n \\ 
    (C,u,X,\bq,\bp,\Gamma) & \longmapsto\frac{1}{2}(K_1L_1^2,\ldots,K_nL_n^2),
    \label{eq:def-mom-map-dm}
  \end{align}
  where $L_i$ denotes the hyperbolic length of the geodesic in the free homotopy class of $\Gamma_i$, i.e.\ the one on the \emph{target} surface. If that curve is contractible then the length is defined to be zero. Note that there are only $n$ curves contained in the tuple $\Gamma$ as they are required to bound a sphere or a pair of pants with two marked points on $X$. Similarly we define
  \begin{align*}
    \wh{\mu}:\obj\wh{\mcR}_{g,k,h,n}(T) & \lra\RR^n \\ 
    (C,u,X,\bq,\bp,\bz) & \longmapsto\frac{1}{2}(K_1L_1^2,\ldots,K_nL_n^2),
  \end{align*}
  where $L_i$ is the hyperbolic length of the boundary geodesic $\del_iX$.
\end{definition}

\begin{lem}
  The maps $\wh{\mu}$ and $\wt{\mu}$ are equivariant and descend to continuous maps on the orbit spaces. The pull-back of $\wt{\mu}$ to $\Ob\wt{\mcM}_{g,nd+k,h,2n}(\wt{T})$ is an equivariant smooth map. They satisfy
  \begin{equation*}
    \wh{\mu}=\wt{\mu}\circ \glue.
  \end{equation*}
  \label{lem:properties-length-maps}
\end{lem}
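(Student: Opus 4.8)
The plan is to prove the three assertions of \cref{lem:properties-length-maps} in turn, each of which is essentially a matter of unwinding the relevant definitions and appealing to facts already established in the excerpt.

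\emph{Equivariance and descent to orbit spaces.} First I would observe that a morphism $(\Phi,\varphi)$ in $\wt{\mcR}_{g,nd+k,h,2n}(\wt{T})$ consists of biholomorphisms which, by \cref{lem:hol-map-loc-isom}, are isometries for the uniformized hyperbolic metrics and which by definition satisfy $\varphi_*(\bG_r)=\bG'_r$ for all $r$. Since $\varphi$ is a hyperbolic isometry taking the free homotopy class of $\Gamma_i$ to that of $\Gamma_i'$, it maps the unique geodesic representative to the unique geodesic representative and hence preserves its length $L_i$; the case of a contractible curve (length zero) is likewise preserved. As the constants $K_i$ depend only on the combinatorial data $\wt{T}$, not on the object, we conclude $\wt{\mu}(C,u,X,\bq,\bp,\bG)=\wt{\mu}(C',u',X',\bq',\bp',\bG')$, i.e.\ $\wt{\mu}$ is constant on isomorphism classes, so it descends to a well-defined map on $|\wt{\mcR}_{g,nd+k,h,2n}(\wt{T})|$. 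The same argument applies verbatim to $\wh{\mu}$, using that morphisms in $\wh{\mcR}_{g,k,h,n}(T)$ send boundary component $\del_iX$ to $\del_iX'$ and are isometries, hence preserve the boundary geodesic lengths $L_i$. Continuity of the descended maps follows since, by \cref{def:topology-moduli-space-hurwitz-covers} and the analogous definitions for the bordered spaces, the orbit-space topologies are the quotient topologies from the $\mcM$-groupoids, and the length functions are continuous in those coordinates; explicitly, $L_i$ is a Fenchel--Nielsen length coordinate (or a limit thereof), which depends continuously on the point in the family $\Psi^\lambda$.

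\emph{Smoothness of the pullback to $\Ob\wt{\mcM}_{g,nd+k,h,2n}(\wt{T})$.} Here I would use that the orbifold structure $\wt{\mcM}_{g,nd+k,h,2n}(\wt{T})$ was built, as in \cref{sec:orbifold-structure-wtmcr} and \cref{sec:modify-doma-param}, so that on each chart $O^\lambda$ one has Fenchel--Nielsen coordinates on the target surface whose decomposing set of curves \emph{includes} the homotopy classes $\bG$. In these coordinates the length $L_i$ of the geodesic in the class $\Gamma_i$ is precisely one of the Fenchel--Nielsen length coordinates (for a curve that has degenerated to a node the corresponding length coordinate extends smoothly by zero, as discussed after \cref{fig:complex-gluing-hyperbolic-geodesic} and in \cite{hubbard_analytic_2014}). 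Therefore $L_i$, and hence $\tfrac12 K_i L_i^2$, is a smooth function of the chart coordinates, so $\wt{\mu}\circ\iota:\Ob\wt{\mcM}_{g,nd+k,h,2n}(\wt{T})\lra\RR^n$ is smooth; equivariance under the morphism action is the restriction of the equivariance statement already proved. One subtlety to spell out: one must check that the Fenchel--Nielsen length coordinate agrees with the hyperbolic length appearing in the definition of $\wt{\mu}$ even at nodal points, which is exactly the content of the plumbing-family construction used to define the charts.

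\emph{The identity $\wh{\mu}=\wt{\mu}\circ\glue$.} This is the heart of the lemma but is short given the construction of $\glue$. By definition $\glue(C,u,X,\bq,\bp,\bz)=(\wt C,\wt u,\wt X,\wt\bq,\wt\bp,\wt\bG)$, where $\wt X$ is obtained from $X$ by gluing hyperbolic pairs of pants of matching boundary length along the geodesic boundaries $\del_iX$, and $\wt\bG$ contains exactly the images of the free homotopy classes of the $\del_iX$ (together with the reference curves near nodes). Because the gluing in the $\glue$-construction is done along \emph{geodesic} boundaries using \cref{lem:local-gluing}, the curve $\del_iX$ remains a closed geodesic in the uniformized hyperbolic metric on $\wt X$ (this is stressed in \cref{sec:def-glue-map}), so the geodesic representative of the $i$-th essential class of $\wt\bG$ on $\wt X$ has length exactly $L_i=l(\del_iX)$. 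Hence the $i$-th component of $\wt\mu\circ\glue$ is $\tfrac12 K_i L_i^2$, which is the $i$-th component of $\wh\mu$. For boundary components that are actually cusps the corresponding class in $\wt\bG$ bounds a twice-punctured sphere, i.e.\ is contractible after collapsing, and both $\wh\mu$ and $\wt\mu$ assign it length zero, so the identity holds there too.

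The main obstacle I anticipate is not the equivariance or the $\glue$-compatibility, which are essentially formal, but rather the smoothness claim: one must be careful that the ``hyperbolic length of the geodesic in the free homotopy class'' really extends smoothly across the nodal locus, since by \cref{rmk:different-differentiable-structures-deligne-mumford} the naive complex gluing parameter and the Fenchel--Nielsen data differ by a non-smooth reparametrization at the origin. The resolution is that it is precisely the \emph{length-squared} (equivalently, the Fenchel--Nielsen length coordinate built into the chart via \cite{hubbard_analytic_2014}, Proposition~6.1) that is smooth, which is why $\wt\mu$ is defined with an $L_i^2$ rather than $L_i$; I would make sure to invoke the Fenchel--Nielsen construction of the charts explicitly rather than arguing in complex coordinates.
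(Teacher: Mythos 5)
Your proof is correct and follows essentially the same route as the paper: equivariance because morphisms are hyperbolic isometries preserving geodesic lengths (and the $K_i$ depend only on $\wt T$), continuity on orbit spaces via the chart-based quotient topology, smoothness on $\Ob\wt{\mcM}_{g,nd+k,h,2n}(\wt T)$ because the charts were constructed so that $\bG$ is part of the decomposing multicurve and hence $L_i$ is a Fenchel--Nielsen length coordinate, and $\wh\mu = \wt\mu\circ\glue$ because the gluing is performed along the geodesic boundaries $\del_iX$ themselves. The paper then derives continuity of $\wh\mu$ from that of $\wt\mu$ and $\glue$, whereas you argue it directly; this is immaterial.

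One small correction to your closing ``anticipated obstacle'' paragraph: in the Fenchel--Nielsen differentiable structure the length coordinate $L_i$ itself is smooth (it \emph{is} a chart coordinate), not just $L_i^2$, and the two are not ``equivalent.'' The factor $\tfrac12 L_i^2$ in the definition of $\wt\mu$ and $\wh\mu$ is \emph{not} there for smoothness; it is there so that, after Wolpert's identity $\wwp = \sum \dd\theta_i\wedge\dd L_i$, the unit-speed twist generator (which has period $L_i$, hence normalized infinitesimal generator $L_i\,\del/\del\tau_i$) satisfies $\iota_{L_i\,\del/\del\tau_i}\wwp = L_i\,\dd L_i = \dd(\tfrac12 L_i^2)$, i.e.\ so that \cref{lem:moment-maps} goes through. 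The genuine subtlety you are gesturing at is real --- FN length is not smooth in the complex-gluing differentiable structure --- but the paper resolves it by declaring that $O^\lambda$ carries the Fenchel--Nielsen smooth structure, not by squaring.
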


\begin{rmk}
  Of course we could also use the tuple of geodesic lengths as a map but this particular form will be used later in \cref{sec:sympl-geom-moduli} as a momentum map for some Hamiltonian $T^n$-action. 
\end{rmk}

\begin{proof}
  It is clear that the maps are equivariant because a biholomoprhism gives an isometry between the corresponding hyperbolic surfaces and thus their boundaries have the same hyperbolic lengths. The map $\wt{\mu}$ is continuous because the topology on $|\wt{\mcR}_{g,nd+k,h,2n}(\wt{T})|$ was constructed from open sets $O^{\lambda}\subset\Ob\mcM_{g,nd+k,h,n}(\wt{T})$ where we required that the curves $\bG$ come from one such multicurve in the central fibre of the family $\Psi^{\lambda}(O^{\lambda})$, see \cref{sec:orbifold-structure-wtmcr}. But then it is clear that $\wt{\mu}$ is continuous because the hyperbolic metrics depend continuously on the point in $O^{\lambda}$ and for a fixed homotopy class the length of the unique geodesic in this class then also depends continuously on the parameter in $O^{\lambda}$.

  As we build the map $\glue:\obj\wh{\mcR}_{g,k,h,n}(T)\lra\obj\wt{\mcR}_{g,nd+k,h,2n}(\wt{T})$ by gluing along the hyperbolic boundary geodesic it is clear that $\wh{\mu}=\wt{\mu}\circ \glue$ holds as we measure the lengths of the same geodesics. Since $\glue$ is continuous we get that $\wh{\mu}$ is continuous. Their pull-back or rather restriction to $\Ob\wt{\mcM}_{g,nd+k,h,2n}(\wt{T})$ is clearly still equivariant. It is smooth in the Fenchel--Nielsen differentiable structure as the geodesic length functions are smooth coordinate functions.

\end{proof}

Before proving the properness of these maps we will need a slight extension of the Deligne--Mumford compactness theorem. Consider a sequence $(X_k,\bp_k,\bG_k)$ where $X_k$ is a closed connected nodal stable oriented Riemann surface of genus $h$ with $2n$ marked points $\bp_k$ and a $n$-multicurve $\bG_k$ such that $(\bG_k)_i$ bounds a disc with $(\bp_k)_{2i-1}$ and $(\bp_k)_{2i}$ or it bounds a disc with a nodal sphere with those two marked points on it.\footnote{This is of course precisely the same data we have on the target surface of an element in $\Ob\wt{\mcR}_{g,nd+k,h,2n}(\wt{T})$.} Two such triples will be called equivalent if there exists a biholomorphism mapping the corresponding $\bp_k$ and $\bG_k$ onto each other keeping the enumeration. Note that if an element in $\bG_k$ bounds a disc with two marked points then we can consider its unique hyperbolic representative in the uniformized metric such that the marked points are cusps. We will refer to this particular curve which has a well-defined length without mentioning it explicitly. If the curve is contractible then the length is of course zero.

\begin{prop}[Hummel]
   Given a sequence $(X_k,\bp_k,\bG_k)$ as above such that the lengths of the curves in $\bG_k$ converge to some lengths $(L_1,\ldots,L_n)$, which might be zero, there exists a subsequence converging to some closed connected nodal stable oriented Riemann surface in the DM-sense, see \cref{prop:target-j-subsequence}, such that
   \begin{enumerate}[label=(\roman*), ref=(\roman*)]
     \item the geodesic representatives in $\bG_k$ whose lengths do not converge to zero do not intersect the geodesic representatives of the collapsing curves $\Theta_k\subset X_k$ and
     \item additionally the diffeomorphisms $\rho_k:X_k\setminus \bigcup\Theta_k\lra X\setminus\{\text{collapsed nodes}\}$ are such that every hyperbolic geodesic representative of $\rho_k(\bG_k)\subset X$ is independent of $k$.
   \end{enumerate}
   \label{prop:dm-improved}
\end{prop}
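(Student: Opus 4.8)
\textbf{Proof proposal for Proposition~\ref{prop:dm-improved}.}

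The plan is to bootstrap the ordinary Deligne--Mumford compactness theorem (in the hyperbolic formulation of \cite{hummel_gromovs_1997}, as recalled in \cref{prop:target-j-subsequence}) and then perform two successive subsequence extractions together with a careful choice of the diffeomorphisms $\rho_k$. First I would apply the standard compactness result to the sequence $(X_k,\bp_k)$ alone, obtaining a subsequence, a limit nodal surface $(X,\bp)$, geodesic curves $\Theta_k\subset X_k$ collapsing to the nodes of $X$, and diffeomorphisms $\rho_k:X_k\setminus\bigcup\Theta_k\lra X\setminus\{\text{nodes}\}$ with $(\rho_k)_*J_k\to J$ in $\cin_{\text{loc}}$. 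Since the lengths of the curves in $\bG_k$ are assumed to converge to $(L_1,\ldots,L_n)$, I would split the index set $\{1,\ldots,n\}$ into those $i$ with $L_i>0$ and those with $L_i=0$; for the latter the geodesic representative collapses in the limit and there is nothing to arrange, while for the former the geodesic representative has length bounded away from zero and from above, so by the collar lemma (\cref{lem:collar-neighborhood}) it has a standard collar of width bounded below uniformly in $k$.

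The first substantive step, item (i), is then to show that after passing to a further subsequence the non-collapsing geodesics in $\bG_k$ are disjoint from the collapsing geodesics $\Theta_k$. Here the key input is \cref{lem:collar-and-cusp-neighborhood-disjoint}: any two simple closed geodesics whose collar neighborhoods overlap must in fact have uniformly comparable lengths, or more precisely, a short geodesic (one in $\Theta_k$, whose length tends to $0$) has an arbitrarily wide collar, and a geodesic of length bounded below cannot enter a sufficiently wide collar of a disjoint-homotopy-class geodesic without intersecting it. Since the curves in $\bG_k$ are simple closed geodesics of length bounded away from $0$ and the curves in $\Theta_k$ have length tending to $0$, their collars are eventually disjoint, hence the geodesics themselves are disjoint; the intersection number being a homotopy invariant, this shows the geodesic representatives realize zero intersection, which is what is claimed. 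I would also note here that the curves in $\bG_k$ that do collapse are, by hypothesis on the combinatorial type, exactly the ones homotopic to a node bounding a sphere with two marked points, so they are accounted for among (or compatible with) the $\Theta_k$ and cause no trouble.

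The second step, item (ii), is the normalization of the diffeomorphisms $\rho_k$ so that the images $\rho_k((\bG_k)_i)$ are, up to free homotopy, a fixed multicurve on $X$ independent of $k$. The point is that $\rho_k((\bG_k)_i)$ is a simple closed curve on the fixed surface $X\setminus\{\text{nodes}\}$, disjoint from the nodes by step (i); there are only finitely many free homotopy classes of such curves that can arise as geodesic boundaries of a fixed-type configuration of discs-with-two-marked-points, so by pigeonhole a subsequence has $\rho_k((\bG_k)_i)$ in a single free homotopy class $[\gamma_i]$ for every $i$. Passing to its unique geodesic representative (\cref{lem:collar-neighborhood} and the uniqueness statement for geodesics in essential classes) gives a curve on $X$ independent of $k$; one then defines the final multicurve on the limit as this geodesic multicurve, and the limit triple is $(X,\bp,\bG)$ with $\bG=([\gamma_i])_i$. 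The $\cin_{\text{loc}}$ convergence $(\rho_k)_*J_k\to J$ away from the nodes together with convergence of the boundary lengths then forces the geodesic representatives of $\rho_k(\bG_k)$ in the pulled-back metrics to converge to the geodesic representatives of $\bG$, so the configuration is consistent and $(X_k,\bp_k,\bG_k)$ converges to $(X,\bp,\bG)$ in the claimed enhanced sense.

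The main obstacle I anticipate is the precise interplay in step (i) between the collar widths: one must make the quantitative estimate — if $\gamma$ is a simple closed geodesic of length $\geq \ell_0>0$ and $\delta$ is one with $\ell(\delta)<\epsilon$, then $\gamma$ cannot enter the $w(\epsilon)$-collar of $\delta$ without crossing $\delta$ — fully uniform in $k$, and then combine it with the fact that collars of the (uniformly short) curves in $\Theta_k$ eventually engulf any fixed compact piece of $X_k$ away from those curves. Everything else — the pigeonhole finiteness argument in step (ii), the uniqueness of geodesic representatives, the extraction of nested subsequences — is routine given the lemmas already assembled in \cref{sec:basics-hyperbolic-geometry}. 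I would close by remarking, as the proposition's statement anticipates, that this enhanced convergence is exactly what is needed so that the geodesic-length functions $\wt\mu$ of \cref{eq:def-mom-map-dm} behave well under degeneration, which is what will make the properness argument for $\wt\mu$ and $\wh\mu$ go through.
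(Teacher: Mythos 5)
Your argument for item (i) follows the same route as the paper: a simple closed geodesic meeting one of the collapsing geodesics in $\Theta_k$ must cross its standard collar and hence has length at least twice the collar width, which diverges as the lengths of the curves in $\Theta_k$ tend to zero; because the lengths of the non-collapsing curves in $\bG_k$ converge (and in particular remain bounded), they cannot meet. Your phrasing is slightly backwards in places — you cite the disjointness of collars of already-disjoint geodesics (\cref{lem:collar-and-cusp-neighborhood-disjoint}) rather than the fact that any geodesic \emph{intersecting} a short geodesic is necessarily long — but the collar estimate and the conclusion are the same as the paper's, which points to Buser's Corollary~4.1.2 for exactly this.

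Your argument for item (ii) contains a real gap. You assert that ``there are only finitely many free homotopy classes of such curves that can arise as geodesic boundaries of a fixed-type configuration of discs-with-two-marked-points'' and then pigeonhole. This finiteness claim is false: already on the four-punctured sphere the isotopy classes of simple closed curves separating $\{p_1,p_2\}$ from $\{p_3,p_4\}$ form an infinite family (iterate a Dehn twist along a transversal curve). The \emph{correct} finiteness statement — there are only finitely many free homotopy classes whose $g$-geodesic representative has length below a fixed bound — could save the argument, but invoking it requires first showing that the images $\rho_k((\bG_k)_i)$ remain, uniformly in $k$, in a compact region of $X\setminus\{\text{nodes}\}$ on which $(\rho_k)_*g_k\to g$ uniformly, and then transferring the length bound; item (i) keeps the geodesics off the collapsing geodesics themselves but does not by itself give this uniform compact confinement, and your proposal does not address it. The paper avoids the issue entirely by re-examining Hummel's proof of Deligne--Mumford compactness: it uses Bers' theorem to choose a bounded pair-of-pants decomposition of each $X_k$ that explicitly \emph{contains} the pair of pants bounded by $(\bG_k)_i$ and its two marked points (possible precisely because $(\bG_k)_i$ has bounded length), passes to a subsequence on which the combinatorial type of the decomposition is constant, and then observes that Hummel's construction of the $\rho_k$ carries the chosen pair of pants to a fixed pair of pants in $X$. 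That pins down the free homotopy class of $\rho_k((\bG_k)_i)$ directly, with no length-transfer step. If you want to salvage your pigeonhole approach you would need to supply the uniform compactness and metric comparison estimates; as written, the key finiteness claim does not hold.
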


\begin{proof}
  Since the curves $\Theta_k$ collapse the lengths of the geodesic representatives become arbitrarily close to zero. Therefore the width of their collar neighborhoods becomes arbitrarily large and thus the length of the geodesics intersecting the curves in $\Theta_k$ becomes large. By assumption the lengths of the curves in $\bG_k$ converges and thus stays bounded. Of course if their lengths converge to zero then they are included in the set $\Theta_k$. This is also Corollary~4.1.2 in \cite{buser_geometry_2010}. In particular we see that the collapsing of curves happens away from the non-collapsing $\bG_k$.

  For the second statement we need to look at the proof of Deligne--Mumford compactness in \cite{hummel_gromovs_1997} in more detail. First notice that if the length of a curve in $\bG_k$ actually converges to zero then the second statement is already contained in the standard formulation of Deligne--Mumford compactness as the $\rho_k$ would map this curve in $\bG_k$ to a fixed node in $X$. So suppose a curve $\gamma_k\in\bG_k$ is such that its length converges to some non-zero limit.\footnote{Notice that Hummel cuts the surfaces $X_k$ first along the collapsing curves and considers these hyperbolic surfaces with collapsing boundaries and no internal collapsing geodesics separately. However, this does not change anything.} In the very first step Hummel chooses a pair of pants decomposition $(Y_k^1,\ldots,Y_k^N)$ of $(X_k,\bp_k)$\footnote{Our marked points are cusps in the hyperbolic surfaces.} by using Bers theorem to assume that the lengths of the boundary components of the pairs of pants are bounded independent of $k$ and then noting that the number of such pairs of pants decompositions is finite. Thus he chooses a subsequence in order to be able to assume that there exist diffeomorphisms $\chi_k:X_1\lra X_k$ with $\chi_k(Y_1^i)=Y_k^i$. He then proceeds to perturb these maps slightly and apply Whitneys extension theorem in order to obtain the diffeomorphisms $\rho_k$ or rather their inverses. Notice that because the length of our curve $\gamma_k$ converges it is in particular bounded and we can include the pair of pants bounded by $\gamma_k$ and the two marked points in our choice of the pair of pants decomposition $\{Y_k^i\}_{i=1,\ldots,N}$. By the construction in the proof of Proposition~5.1. in \cite{hummel_gromovs_1997} we see that the resulting maps $\rho_k:X_k\lra X$ map the pair of pants bounded by $\gamma_k$ and $(\bp_k)_{2i-1}$ and $(\bp_k)_{2i}$ to a fixed pair of pants and thus the free homotopy class of $\gamma_k$ is mapped to a free homotopy class on $X$ independent of $k$.
\end{proof}

\begin{thm}
  The maps $\wh{\mu}$ and $\wt{\mu}$ are proper.
  \label{thm:properness-mu}
\end{thm}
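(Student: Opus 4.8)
Since $\wh{\mu} = \wt{\mu}\circ\glue$ by \cref{lem:properties-length-maps} and $\glue$ is proper by \cref{lem:prop-top-moduli-space-bordered-hurwitz-covers}, it suffices to prove that $\wt{\mu}$ is proper; properness of $\wh{\mu}$ then follows as the composition of a proper map with a proper map. So the plan is to show that $\wt{\mu}:|\wt{\mcR}_{g,nd+k,h,2n}(\wt{T})|\lra\RR^n$ is proper. As noted after \cref{def:topology-moduli-space-hurwitz-covers}, the orbit space $|\wt{\mcR}_{g,nd+k,h,2n}(\wt{T})|$ is second-countable and locally compact and $\wt{\mu}$ descends to a continuous map, so it is enough to check that $\wt{\mu}$ is closed and has compact preimages of points; in fact, since we are in a second-countable setting, it suffices to show that for every bounded sequence $(\wt{\mu}([C_k,u_k,X_k,\bq_k,\bp_k,\bG_k]))_{k\in\NN}$ converging in $\RR^n$ there is a subsequence of $[C_k,u_k,X_k,\bq_k,\bp_k,\bG_k]$ converging in $|\wt{\mcR}_{g,nd+k,h,2n}(\wt{T})|$.

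First I would fix such a sequence with $\wt{\mu}([C_k,u_k,X_k,\bq_k,\bp_k,\bG_k]) = \tfrac12(K_1L_1^{(k)2},\ldots,K_nL_n^{(k)2})\lra \tfrac12(K_1L_1^2,\ldots,K_nL_n^2)$, so in particular the geodesic lengths $L_i^{(k)}$ of the curves in $\bG_k$ on the target surfaces $X_k$ are bounded and converge to some limits $L_i\geq 0$. The target surfaces $(X_k,\bp_k,\bG_k)$ are exactly of the type covered by \cref{prop:dm-improved}: they are closed connected nodal stable Riemann surfaces of genus $h$ with $2n$ marked points together with an $n$-multicurve each of whose components bounds a disc (possibly through a node) with two marked points, and the lengths of the multicurves converge. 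Thus \cref{prop:dm-improved} gives a subsequence of $(X_k,\bp_k,\bG_k)$ converging in the Deligne--Mumford sense to some $(X,\bp,\bG)$, with diffeomorphisms $\rho_k:X_k\setminus\Theta_k\lra X\setminus\{\text{collapsed nodes}\}$ chosen so that (a) the non-collapsing geodesics in $\bG_k$ stay disjoint from the collapsing curves $\Theta_k$ and (b) each hyperbolic geodesic representative of $\rho_k(\bG_k)\subset X$ is independent of $k$. The limit curve $X$ carries $2n$ marked points and an $n$-multicurve $\bG$ whose components still bound discs (through nodes) with two marked points, because this configuration is preserved under Deligne--Mumford convergence thanks to (b); hence $(X,\bp,\bG)$ is again of the required type.

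Next I would run the SFT-compactness argument of \cref{sec:SFT-compactness} on the sequence of Hurwitz covers $u_k:C_k\lra X_k$. The key point is that the target degeneration now consists of two kinds of collapsing curves: the curves in $\Theta_k$ whose lengths go to zero, \emph{and} the curves in $\bG_k$ whose lengths converge to a strictly positive limit $L_i>0$ are \emph{not} collapsing and need not be included. For the collapsing part $\Theta_k$ we set up a neck-stretching sequence exactly as in \cref{sec:apply-sft-compactness-hurwitz-covers}: choose a disc structure on $X$ at the collapsed nodes, build $(X_0,\omega_0,M)$ with $M$ a disjoint union of circles over the collapsing curves, realize the $X_k$ as a neck-stretching sequence, and apply \cref{thm:sft-compactness}. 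The bordered curves $\bG_k$ with $L_i>0$ do not degenerate, so by \cref{prop:dm-improved}(a) they sit in the ``thick part'' where the hyperbolic metrics converge in $\cin_{\text{loc}}$ and one gets honest $\cin_{\text{loc}}$-convergence of the maps near them; in particular the free homotopy classes $\bG_k$ are transported to $\bG$ via $\rho_k$ by (b), so $\bG$ is the limit multicurve. The chain of lemmata in \cref{sec:relate-broken-holomorphic-curves-hurwitz-covers} then applies verbatim: the source surfaces $C_k$ converge in Deligne--Mumford to a nodal curve $C$, the broken limit of $u_k$ compactifies to a holomorphic nodal branched cover $u:C\lra X$ of degree $d$, the marked points converge with $u(q_j)=p_{\nu(j)}$, no bubbling or constant components occur, the Riemann--Hurwitz count of \cref{cor:nodal-riemann-hurwitz} forces $\deg_{q_j}u = l_j$ and all critical points to lie in $\bq$, there are no extra cylindrical levels in $X$, every component of $C$ is stable, and the nodal conditions (locally surjective at nodes, equal degrees on the two sides) hold. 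One has to additionally check that the limit multicurve $\bG$ on $X$ still satisfies the defining conditions of $\wt{\mcR}_{g,nd+k,h,2n}(\wt{T})$ — that each of its curves bounds a disc (through a node) with exactly two marked points whose labels differ by one, and that the branch points of $u$ lie in such discs or nodal spheres — but this is exactly what one gets by taking limits of the corresponding conditions on $(X_k,\bp_k,\bG_k)$ using (b) and the no-bubbling/no-constant-component statements. Hence $(C,u,X,\bq,\bp,\bG)\in\Ob\wt{\mcR}_{g,nd+k,h,2n}(\wt{T})$, and by the reformulation of the topology as in \cref{prop:equivalent-formulation-topology} the subsequence $[C_k,u_k,X_k,\bq_k,\bp_k,\bG_k]$ converges to $[C,u,X,\bq,\bp,\bG]$ in $|\wt{\mcR}_{g,nd+k,h,2n}(\wt{T})|$. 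By continuity of $\wt{\mu}$ the value $\wt{\mu}$ of the limit equals $\tfrac12(K_1L_1^2,\ldots,K_nL_n^2)$, so the preimage under $\wt{\mu}$ of a compact set is sequentially compact, i.e.\ $\wt{\mu}$ is proper. Finally $\wh{\mu} = \wt{\mu}\circ\glue$ with $\glue$ proper gives that $\wh{\mu}$ is proper.

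\textbf{Main obstacle.} The main subtlety is separating the two r\^oles played by the multicurve lengths: the curves in $\bG_k$ whose lengths stay bounded away from zero must \emph{not} be swept into the neck-stretching region, yet the rest of the target may still degenerate, so one must invoke precisely the refined Deligne--Mumford statement \cref{prop:dm-improved} to guarantee both that collapsing happens away from these curves and that the free homotopy classes $\bG_k$ are carried consistently to a single limit multicurve $\bG$ on $X$. Once that separation is in place, the neck-stretching construction and the identification of the broken limit with a genuine object of $\wt{\mcR}_{g,nd+k,h,2n}(\wt{T})$ are exactly as in \cref{sec:SFT-compactness}, and the rest is bookkeeping; the compactness of $|\wt{\mcR}_{g,nd+k,h,2n}(\wt{T})|$ itself (the case where \emph{all} the $L_i$ are allowed to vary in a compact set) would be a genuinely new statement if the $L_i$ were unbounded, but here boundedness of $\wt{\mu}$ pins them down and reduces everything to the compact-target situation already handled in \cref{thm:compactness-moduli-space-hurwitz-covers}.
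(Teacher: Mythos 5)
Your proposal is correct and follows essentially the same route as the paper's own proof: reduce to $\wt{\mu}$ via $\wh{\mu}=\wt{\mu}\circ\glue$ and the properness of $\glue$, pass to a subsequence with converging multicurve lengths, invoke \cref{prop:dm-improved} to keep the non-collapsing curves in $\bG_k$ away from the neck-stretching region and carry them consistently to a limit multicurve, then rerun the SFT-compactness machinery of \cref{sec:SFT-compactness}. The additional remarks you make — that the limit multicurve still bounds discs with two marked points and that continuity of $\wt{\mu}$ places the limit in the preimage — are points the paper leaves implicit, but they do not change the argument.
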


\begin{proof}
  Recall that we have the commuting diagram of maps
  \begin{equation}
    \xymatrix{
      |\wh{\mcR}_{g,k,h,n}(T)| \ar[dr]_{\wh{\mu}} \ar[r]^-{\glue} & |\wt{\mcR}_{g,nd+k,h,2n}(\wt{T})| \ar[d]^{\wt{\mu}} & |\wt{\mcM}_{g,nd+k,h,2n}(\wt{T})| \ar[dl]^{\wt{\mu}} \ar[l]_{\iota} \\
      & \RR^n &
      }.
  \end{equation}
  Furthermore we know that all spaces are locally compact by \cref{lem:prop-top-moduli-space-bordered-hurwitz-covers} and because $\Ob\wt{\mcM}_{g,nd+k,h,2n}(\wt{T})\lra|\wt{\mcM}_{g,nd+k,h,2n}(\wt{T})|\lra |\wt{\mcR}_{g,nd+k,h,2n}(\wt{T})|$ are all open quotient maps. Furthermore $\glue$ is proper and so we only need to show that the $\wt{\mu}$'s are proper as therefore $\wh{\mu}$ is proper, too.

  It remains to show that $\wt{\mu}:|\wt{\mcR}_{g,nd+k,h,2n}(\wt{T})|\lra\RR^n$ is proper. Consider a compact subset $K\subset\RR^n$. We need to show that any sequence in the preimage of $K$ under $\wt{\mu}:|\wt{\mcR}_{g,nd+k,h,2n}(\wt{T})|\lra \RR^n$ has a convergent subsequence. So consider some arbitrary sequence $[C_k,u_k,X_k,\bq_k,\bp_k,\Gamma_k]\in|\wt{\mcR}_{g,nd+k,h,2n}(\wt{T})|$ with $\wt{\mu}([C_k,u_k,X_k,\bq_k,\bp_k,\Gamma_k])\in K$. As $K$ is compact there exists a subsequence such that the hyperbolic lengths of the curves in $\Gamma_k$ converge to $(L_1,\ldots,L_n)$. Denote this new subsequence again by $[C_k,u_k,X_k,\bq_k,\bp_k,\Gamma_k]$.

  Now we apply \cref{prop:dm-improved} to find a subsequence such that the sequence of target surfaces $(X_k,\bp_k,\bG_k)$ converges in Deligne--Mumford space to a surface $(X,\bp,\bG)$ with the diffeomorphisms $\rho_k$ mapping $\bG_k$ to $\bG$. Now we can apply the SFT-compactness theorem in the same way as in \cref{sec:apply-sft-compactness-hurwitz-covers} and \cref{sec:relate-broken-holomorphic-curves-hurwitz-covers} to this sequence. This way we obtain a subsequence $[C_k,u_k,X_k,\bq_k,\bp_k,\Gamma_k]$ converging to some $[C,u,X,\bq,\bp]$ in the sense of \cref{sec:topology-moduli-spaces-closed-hurwitz-covers} with the additional property that the $\rho_k(\bG_k)$ is constant on $X$. It is clear from the definition of the topology on $\Ob\wt{\mcM}_{g,nd+k,h,2n}(\wt{T})$ which induces the topology on $|\wt{\mcR}_{g,nd+k,h,2n}(\wt{T})|$ as well as \cref{prop:equivalent-formulation-topology} that the subsequence $[C_k,u_k,X_k,\bq_k,\bp_k,\Gamma_k]$ converges to $[C,u,X,\bq,\bp,\bG]$ in $|\wt{\mcR}_{g,nd+k,h,2n}(\wt{T})|$.
\end{proof}

\chapter{Symplectic Geometry on the Moduli Space of Bordered Hurwitz Covers}

\label{sec:sympl-geom-moduli}

This section describes the symplectic geometry on the various introduced moduli spaces in order to apply Duistermaat--Heckman for obtaining relations between $\Psi$-intersections on Deligne--Mumford space and Hurwitz numbers.

Recall that we have the following diagram of maps involving moduli spaces. Also recall that these spaces are the orbifold versions, so in particular moduli spaces of Hurwitz covers are not the real moduli spaces because we ignored some lower-dimensional strata of morphisms.

\begin{equation}
  \label{eq:diag-mod-spaces}
  \xymatrix{ & \mcM_{g,k,h,n}(T)  \ar[rd]^{\ev}  \ar[ld]_{\fgt} & & \wt{\mcM}_{g,nd+k,h,2n}(\wt{T}) \ar@{~>}[ll] \ar[d]^{\wt{\ev}}   & &  \wh{\mcM}^{\square}_{g,k,h,n}(T)  \ar[ll]_{\glue} \ar[d]^{\wh{\ev}}  \\
    \mcM_{g,k} & & \mcM_{h,n}  &  \wt{\mcM}_{h,2n} \ar@{~>}[l]  & & \wh{\mcM}^{\square}_{h,n} \ar[ll]_{\sglue}    \\  
    }
\end{equation}

Recall that the index $\square$ means that the boundaries are non-degenerate but we do not impose any condition on interior geodesics. In contrast the index $\circ$ refers to completely smooth Hurwitz covers. In \cref{eq:diag-mod-spaces} we have the usual correspondence between Deligne--Mumford spaces and moduli spaces of Hurwitz covers, discussed in \cref{sec:main-results}. In particular $\fgt$ is a smooth map and $\ev$ is a branched morphism covering meaning it is in particular a morphism covering outside the nodal covers. In the middle column we have the orbifolds for the moduli spaces of glued Hurwitz covers as well as the moduli space of glued admissible surfaces, see \cref{sec:moduli-spaces-admissible-riemann-surfaces}. Note that the curved lines are not maps but refer to a symplectic quotient construction which we will work out in the next section. On $|\wt{\mcM}_{g,k,h,n}(T)|$ and $|\wt{\mcM}_{h,n}|$ we will consider symplectic structures together with Hamiltonian $T^n$-actions such that the symplectic quotients correspond to moduli spaces of Hurwitz covers and Riemann surfaces, respectively, of fixed boundary length. This will allow us to use Duistermaat--Heckman to formulate relations between certain cohomology classes on $|\mcM_{g,k,h,n}(T)|$ as well as on $|\mcM_{h,n}|$. The corresponding situation for Deligne--Mumford spaces was established in \cite{mirzakhani_weil-petersson_2007}. The right hand side corresponds to the moduli spaces of smooth bordered Hurwitz covers and Riemann surfaces. Its corresponding actual moduli space $\wh{\mcR}_{g,k,h,n}(T)$ is a space where the $T^n$-action will be free but which is unfortunately not an orbifold, see \cref{sec:pull-back-orbifold-structure-via-glue}.

\section{Symplectic Structures and Torus Actions}

Recall from \cref{sec:weil-peterss-sympl} that we have the Weil--Petersson symplectic structure on the Deligne--Mumford orbifold $\mcM_{g,k}$ and by construction on the moduli space of admissible hyperbolic surfaces $\wt{\mcM}_{h,2n}$, as this orbifold is locally diffeomorphic to $\mcM_{h,2n}$. Pulling back via $\ev$, $\sglue\circ\wt{\ev}$ and $\sglue$ we obtain thus symplectic orbifolds

\begin{equation*}
  (\wh{\mcM}^{\square}_{h,n},\wwp), (\mcM_{g,k,h,n}(T),\omega)\text{ and }(\wh{\mcM}_{g,k,h,n}(T),\wt{\omega}).
\end{equation*}

Also we have $T^n$-actions on the orbifolds $\wh{\mcM}^{\square}_{g,k,h,n}(T)$ and $\wh{\mcM}^{\square}_{h,n}$ by rotating the marked points $\bz$ close to the boundaries and cusps. However, we need to be a bit more careful regarding their parametrization, so let us give more details in the following.

We define the $T^n$-action on $\wh{\mcM}_{h,n}$ by defining it on the objects and morphisms, see \cref{def:group-action-on-orbifold} for a definition. So let

\begin{equation*}
  (C,\bz)\in\obj\wh{\mcM}_{h,n}
\end{equation*}

be a point  and define $\rot_t(\bz)\in \Gamma(C)$ for $t\in[0,1)^n$ as follows.\footnote{By $\Gamma(C)$ we denote the tuple of reference curves in $C$ which are the preimages of the reference curve in $X$. Compare with \cref{sec:orb-structure-mod-space-bordered-hurwitz-covers-definitions} and \cref{sec:reference-curve}.} First uniformize $C$ such that all special points are nodes and such that all boundaries are geodesics. In this metric the points $z_i$ lie on reference curves $\Gamma_i(C)$ close to the boundary $\del_iC$, see \cref{sec:reference-curve} for general comments and \cref{sec:orb-structure-mod-space-bordered-hurwitz-covers-definitions} for the precise condition such that preimages of reference curves on $X$ are reference curves on $C$. Now parametrize these curves from $[0,1]$ proportional to arc length: $\beta_j:[0,1]\lra \Gamma_j(C)$ such that $\beta_j(0)=z_j$ and define $\rot_t(\bz)$ with $\rot_t(\bz)_j\coloneqq \beta_j(t_jl_jF(l(\del_{\nu(j)}X)))$, where $F:\RR_{\geq 0}\lra\RR_{>0}$ is a function fixed beforehand from \cref{sec:orb-structure-mod-space-bordered-hurwitz-covers-definitions} and thus $l_jF(l(\del_{\nu(j)}X)$ is the hyperbolic length of $\Gamma_j(C)$. This defines a torus action as follows.

\begin{definition}
  We define an action of $T^n\coloneqq \faktor{\RR^n}{\ZZ^n}$ on $\wh{\mcM}_{h,n}$ by
  \begin{align*} 
	[0,1]^n\times \obj\wh{\mcM}_{h,n} & \lra \obj\wh{\mcM}_{h,n} \\
	(t,(C,\bz)) &\longmapsto (C,\rot_t(\bz))
  \end{align*}
  and correspondingly
  \begin{align*} 
	[0,1]^n\times \Mor\wh{\mcM}_{h,n} & \lra \Mor\wh{\mcM}_{h,n} \\
	(t,\Phi\in \Hom((C,\bz),(C',\bz')) &\longmapsto \Phi\in\Hom((C,\rot_t(\bz)),(C',\rot_t(\bz'))).
  \end{align*}
  \label{def:torus-action-DM-spaces}
\end{definition}

\begin{lem}
  The $T^n$-action in \cref{def:torus-action-DM-spaces} is a well-defined torus action on the groupoid $\wh{\mcM}_{h,n}$ and it is smooth on the orbifold $\wh{\mcM}^{\square}_{h,n}$ of admissible Riemann surfaces such that the boundaries are non-degenerate.
  \label{lem:torus-action-moduli-space-admissible-surfaces}
\end{lem}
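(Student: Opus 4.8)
The statement has two parts: (i) the assignment in Definition~\ref{def:torus-action-DM-spaces} is a well-defined action of $T^n=\faktor{\RR^n}{\ZZ^n}$ on the groupoid $\wh{\mcM}_{h,n}$, and (ii) this action is smooth on the suborbifold $\wh{\mcM}^{\square}_{h,n}$. For part (i) the plan is to check the defining properties of a group action on an orbifold groupoid as in Definition~\ref{def:group-action-on-orbifold}: that each $t\in[0,1)^n$ induces an invertible homomorphism $(\Psi_t,\Phi_t):\wh{\mcM}_{h,n}\lra\wh{\mcM}_{h,n}$ with inverse $(\Psi_{-t},\Phi_{-t})$, and that $\Psi_{t+s}=\Psi_t\circ\Psi_s$ and likewise on morphisms, with the group law read modulo $\ZZ^n$. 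The key point making this well-defined is that a reference curve $\Gamma_j(C)$ is intrinsically attached to the uniformized hyperbolic metric (Lemma~\ref{lem:existence-reference-curves}), so parametrizing it by arc length starting at $z_j$ and translating by $t_j$ times its total hyperbolic length $l_jF(l(\del_{\nu(j)}X))$ is a canonical operation depending only on the isomorphism class. In particular, for a morphism $\Phi:(C,\bz)\lra(C',\bz')$, since $\Phi$ is a biholomorphism it is an isometry of the uniformized metrics, maps $\Gamma_j(C)$ isometrically onto $\Gamma_j(C')$ preserving orientation and the basepoints $z_j\mapsto z_j'$, hence intertwines the arc-length parametrizations and therefore $\Phi(\rot_t(\bz))=\rot_t(\bz')$; this is exactly what is needed for $\Phi_t$ to be a functor and for the naturality required by Lemma~\ref{lem:relation-group-action-orbifold}. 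The $\ZZ^n$-periodicity is immediate because translating by the full hyperbolic length returns each point to itself.

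Concretely I would organize part (i) into three short verifications. First, $\Psi_t$ respects the structure maps $s,t,u,i,m$ of the groupoid: this is formal once one knows $\rot_t$ commutes with biholomorphisms, since the underlying surface $C$ is unchanged and only the tuple $\bz$ is moved. Second, invertibility: $\rot_{-t}\circ\rot_t=\id$ on the level of objects because arc-length translation on each reference curve is a group action of $\RR/(\text{length})\ZZ$, and combined with $\ZZ^n$-periodicity this descends to an action of $T^n$. Third, that $(\Psi_t,\Phi_t)$ are honest inverses (equalities, not just natural isomorphisms, as demanded by Definition~\ref{def:group-action-on-orbifold}): this holds because $\Psi_t$ fixes objects up to relabelling $\bz$ and the relabelling composes additively.

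For part (ii), smoothness, the plan is to transport the question through the isomorphism $\sglue:\wh{\mcM}^{\square}_{h,n}\lra\wt{\mcM}^{\square}_{h,2n}$ of orbifold groupoids, or else to work directly in the Hurwitz/universal-unfolding charts $O^{\lambda}$ from which the orbifold structure on the admissible-surface moduli space is built (Section~\ref{sec:moduli-spaces-admissible-riemann-surfaces} and Section~\ref{sec:orbifold-structure-wtmcr}). In either model one must exhibit the maps $\Psi:[0,1]^n\times\Ob\lra\Ob$ and $\Phi:[0,1]^n\times\Mor\lra\Mor$ as smooth maps of manifolds. The content is that, on the open locus where all boundary geodesics $\del_iC$ are non-degenerate, the reference curve $\Gamma_j(C)$, its total hyperbolic length, and the arc-length parametrization starting at $z_j$ all depend smoothly on the parameters in a chart: the hyperbolic metric varies smoothly (by Proposition~\ref{prop:universal-teichmueller-family-vertical-hyp-metric} and the collar description of Lemma~\ref{lem:collar-neighborhood}, together with the fact that on the $\square$-locus the relevant boundary length $l(\del_{\nu(j)}X)$ stays bounded away from $0$ and $\infty$), so the curve of constant geodesic curvature of prescribed length $l_jF(l(\del_{\nu(j)}X))$ is cut out smoothly, and solving the ODE for arc-length parametrization depends smoothly on initial data and coefficients. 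Hence $(t,\text{chart point})\mapsto\rot_t(\bz)$ is smooth, and correspondingly on morphisms.

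\textbf{Main obstacle.} The genuinely delicate point is the smoothness in part (ii), and specifically the smooth dependence of the reference curve and its arc-length parametrization on the moduli near the \emph{boundary} of the $\square$-locus, i.e.\ where interior geodesics may degenerate even though the marked boundary components do not. Here one must use that the reference curves sit in collar/cusp neighborhoods of the non-degenerate boundary components, which are disjoint from the pinching regions (Lemma~\ref{lem:collar-and-cusp-neighborhood-disjoint}), so on a fixed such neighborhood the hyperbolic metric converges in $\cin_{\text{loc}}$ and the construction is smooth there; one also has to be slightly careful that the relevant differentiable structure is the Fenchel--Nielsen one (cf.\ Remark~\ref{rmk:different-differentiable-structures-deligne-mumford}) rather than the complex-gluing one, since only in the former are the length and twist functions smooth. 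Modulo this care, the verification is routine and parallels the corresponding statement in \cite{mirzakhani_weil-petersson_2007}.
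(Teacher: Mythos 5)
Your proposal follows essentially the paper's own route. For well-definedness you use, as the paper does, that a biholomorphism $\Phi$ is an isometry of the uniformized metrics and hence commutes with $\rot_t$, and that $\rot_1 = \id$ gives a genuine $T^n$-action; your extra observation that $\Psi_{-t}$ is an \emph{equal} (not merely naturally isomorphic) inverse, as \cref{def:group-action-on-orbifold} demands, is a worthwhile refinement of the paper's brief ``obviously reversible.'' For smoothness the paper also transports the question through $\sglue$ and identifies $\rot_t$ with a Fenchel--Nielsen twist around the geodesics $\gamma_j$ in the glued surface, appealing to Wolf/Wolpert for smoothness of the twist flow in the Deligne--Mumford differentiable structure on the $\square$-locus.

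Two cautions. First, your sentence that ``only in the [Fenchel--Nielsen] one are the length and twist functions smooth'' has it backwards for the locus at hand: on $\wh{\mcM}^{\square}_{h,n}$ the geodesics $\gamma_j$ are not collapsed, and near a non-zero gluing parameter the Fenchel--Nielsen and complex differentiable structures are compatible, so the twist flow is smooth in the Deligne--Mumford structure as well — this is exactly what the paper invokes via \cite{wolf_real_1992-1}, and what one needs since the orbifold structure on $\wh{\mcM}^{\square}_{h,n}$ is pulled back from the Deligne--Mumford one on $\wt{\mcM}^{\square}_{h,2n}$. Second, your sketched ``direct'' alternative (smooth dependence of the reference curve and its arc-length parametrization on the chart point) understates the real issue: twisting a hyperbolic surface along a closed geodesic and re-uniformizing is, pointwise on the surface, not a smooth operation, and the smoothness of the induced path in the moduli/Teichmüller parameter in the complex-analytic coordinates is precisely the non-trivial content of Wolpert's theorem. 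A $\cin_{\text{loc}}$ argument on collar neighborhoods does not give this; you should cite that result rather than try to re-derive it by hand.
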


\begin{rmk}
  Recall from \cref{def:group-action-on-orbifold} that a $T^n$-action on an orbifold $\mcG$ is a smooth group homomorphism $T^n\lra\Isom(\mcG)$ where $\Isom(\mcG)$ are invertible homomorphisms and smooth means that the maps $T^n\times\Ob\mcG\lra\Ob\mcG$ and $T^n\times\Mor\mcG\lra\Mor\mcG$ are smooth.
\end{rmk}

\begin{proof}
  By definition of $\rot_t(\bz)$ we see that $\rot_1(\bz)=\bz$ and thus we obtain indeed group actions by the torus $T^n$ on $\Ob\wh{\mcM}_{h,n}$ and $\Mor\wh{\mcM}_{h,n}$. Notice that $\Phi:C\lra C'$ is a biholomorphism and therefore an isometry for the hyperbolic metrics. This means that from $\Phi(\bz)=\bz'$  follows $\Phi(\rot_t(\bz))=\rot_t(\bz')$.

  As the rotation of the marked points is obviously reversible the functor $\wh{\mcM}_{h,n}\lra\wh{\mcM}_{h,n}$ defined by $\rot_t$ for $t\in T^n$ is indeed a category isomorphism. Furthermore it is smooth because we pulled back the manifold structure from $\wt{\mcM}_{h,2n}$ via the functor $\sglue$. This means that in local coordinates the map $\rot_t$ is given by Fenchel--Nielsen twists of length $t_j\in[0,1]$ around the corresponding geodesics $\gamma_j$ corresponding to the actual boundary components $\del_jC$ in the glued surface $\wt{C}$. By \cite{wolf_real_1992-1} this is smooth in the Fenchel--Nielsen differentiable structure on $\wh{\mcM}_{h,n}$ and in the Deligne--Mumford differentiable structure on $\wh{\mcM}^{\square}_{h,n}$.
\end{proof}

In a similar way we can define $\rot_t(C,X,u,\bp,\bq,\bz)$ for $(C,X,u,\bp,\bq,\bz)\in\obj\wh{\mcM}^{\square}_{g,k,h,n}$ and $t\in[0,1]^n$. The only modification we need is to rescale the speeds accordingly because we need to adhere to the condition $u(z_i)=u(z_j)$ for $i,j$ such that $\nu(i)=\nu(j)$. To this end define $K_i= \lcm\{l_j|\nu(j)=i\}$.

\begin{definition}
  We define a $T^n$-action on $\wh{\mcM}^{\square}_{g,k,h,n}$ by
  \begin{align} 
    [0,1]^n\times \obj\wh{\mcM}^{\square}_{g,k,h,n}(T) & \lra \obj\wh{\mcM}^{\square}_{g,k,h,n}(T) \label{eq:def-skew-torus-action} \\
    (t,(C,u,X,\bq,\bp,\bz)) &\longmapsto (C,u,X,\bq,\bp,\rot_{t'}(\bz)), \nonumber
  \end{align}
  and correspondingly
  \begin{align*} 
    [0,1]^n\times \Mor\wh{\mcM}^{\square}_{g,k,h,n}(T) & \lra \Mor\wh{\mcM}^{\square}_{g,k,h,n}(T) \\
    (t,(\Phi,\varphi))\in \Hom((C,u,X,\bq,\bp,\bz),(C',u',X',\bq',\bp',\bz')) &\longmapsto \\
    \Phi\in\Hom((C,u,X,\bq,\bp,\rot_{t'}(\bz)), & (C',u',X',\bq',\bp',\rot_{t'}(\bz')))
  \end{align*}
  with
  \begin{equation*}
    t'\coloneqq\left(\frac{K_{\nu(1)}}{l_1}t_{\nu(1)}, \cdots , \frac{K_{\nu(k)}}{l_k}t_{\nu(k)}\right)\in \RR^k.
    \label{eq:def-skew-torus}
  \end{equation*}
  \label{def:torus-action-moduli-spaces}
\end{definition}


\begin{prop}
  The action defined by \cref{eq:def-skew-torus-action} is a well defined smooth torus action on the orbifold $\wh{\mcM}^{\square}_{g,k,h,n}$. 
\end{prop}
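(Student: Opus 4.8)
The strategy is to reduce the statement to the already-established properties of the $T^n$-action on $\wh{\mcM}_{h,n}$ from \cref{lem:torus-action-moduli-space-admissible-surfaces} together with the compatibility of the $\glue$-construction. First I would check that the formulas in \cref{def:torus-action-moduli-spaces} really define a group action: since $\rot_1(\bz)=\bz$ by the same argument as in the proof of \cref{lem:torus-action-moduli-space-admissible-surfaces}, and since the rescaling $t\mapsto t'$ in \cref{eq:def-skew-torus} is linear in $t$, we have $\rot_{(t+s)'} = \rot_{t'}\circ\rot_{s'}$ on each reference curve $\Gamma_j(C)$, hence $\rot$ descends to an action of $T^n=\faktor{\RR^n}{\ZZ^n}$. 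The crucial point for well-definedness on $\wh{\mcM}^{\square}_{g,k,h,n}(T)$ is that $\rot_{t'}(\bz)$ still satisfies the constraint $u(\rot_{t'}(\bz)_j)=u(\rot_{t'}(\bz)_l)$ whenever $\nu(j)=\nu(l)$; this is exactly why the speeds are rescaled by $\frac{K_{\nu(j)}}{l_j}$. Concretely, by \cref{lem:collar-neighborhoods-boundary-hurwitz-cover} the map $u$ looks like $(r,\theta)\mapsto(r,l_j\theta)$ in collar coordinates, so moving $z_j$ along $\Gamma_j(C)$ by hyperbolic length $\frac{K_{\nu(j)}}{l_j}t_{\nu(j)}\cdot l_jF(L)= K_{\nu(j)}t_{\nu(j)}F(L)$ maps under $u$ to a shift of $u(z_j)$ by $K_{\nu(j)}t_{\nu(j)}F(L)$, which is independent of $j$ in the fibre $\nu^{-1}(\nu(j))$. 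Hence $u(\rot_{t'}(\bz))$ is well-defined and $(C,u,X,\bq,\bp,\rot_{t'}(\bz))\in\obj\wh{\mcR}_{g,k,h,n}(T)$.

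\textbf{Equivariance on morphisms and invertibility.} Next I would verify that the action on morphisms is well-defined: a morphism $(\Phi,\varphi)$ is in particular a biholomorphism $\Phi:C\lra C'$, hence an isometry for the uniformized hyperbolic metrics by \cref{lem:hol-map-loc-isom}, so it maps reference curves to reference curves preserving arc length and $\Phi(\bz)=\bz'$ implies $\Phi(\rot_{t'}(\bz))=\rot_{t'}(\bz')$. Therefore $(\Phi,\varphi)$ is still a morphism between the rotated objects, and since $\rot_{-t}$ is inverse to $\rot_t$, each $\rot_t$ is an invertible homomorphism of the groupoid, i.e.\ an element of $\Isom(\wh{\mcM}^{\square}_{g,k,h,n}(T))$ in the sense of \cref{def:group-action-on-orbifold}.

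\textbf{Smoothness.} For smoothness I would use the commuting diagram \cref{eq:diag-mod-spaces}, in particular that $\glue:\wh{\mcM}^{\square}_{g,k,h,n}(T)\lra\wt{\mcM}^{\square}_{g,nd+k,h,2n}(\wt{T})$ is an isomorphism of orbifold groupoids. Under $\glue$, rotating the point $z_j\in\Gamma_j(C)$ corresponds precisely to a Fenchel--Nielsen twist along the geodesic in the glued surface $\wt{C}$ that corresponds to the former boundary $\del_jC$, with twist amount $\frac{K_{\nu(j)}}{l_j}t_{\nu(j)}$ depending linearly (hence smoothly) on $t\in T^n$. By \cref{rmk:different-differentiable-structures-deligne-mumford} and the results of Wolf cited there, Fenchel--Nielsen twisting is smooth both in the Fenchel--Nielsen differentiable structure and in the Deligne--Mumford differentiable structure on $\wt{\mcM}^{\square}_{g,nd+k,h,2n}(\wt{T})$, so pulling back along the diffeomorphism $\glue$ on objects and morphisms gives that $T^n\times\obj\wh{\mcM}^{\square}_{g,k,h,n}(T)\lra\obj\wh{\mcM}^{\square}_{g,k,h,n}(T)$ and the corresponding map on morphisms are smooth. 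The linearity of the rescaling $t\mapsto t'$ causes no trouble since it is just a fixed linear automorphism of $\RR^n\to\RR^k$ composed with the already-smooth twisting action.

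\textbf{Main obstacle.} The one step that needs genuine care, rather than being routine bookkeeping, is verifying that the rescaled rotation is compatible with the marked-point constraint and that the resulting object still lies in $\wh{\mcM}^{\square}_{g,k,h,n}(T)$ (as opposed to merely in $\wh{\mcR}_{g,k,h,n}(T)$) — i.e.\ that nothing about the orbifold charts $O^{\lambda}$ is violated. This amounts to tracking the collar-coordinate picture of \cref{lem:collar-neighborhoods-boundary-hurwitz-cover} carefully and checking that the free-homotopy-class data $\bG$ (under $\glue$) is preserved by twisting, which it is since twisting along $\gamma_j$ fixes the homotopy class of $\gamma_j$ itself; once this is in hand, the remaining assertions follow formally from the isomorphism $\glue$ and the smoothness of Fenchel--Nielsen twisting already recorded in the excerpt.
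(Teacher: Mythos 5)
Your proposal is correct and follows essentially the same route as the paper: check closure of the constraint $u(z_i)=u(z_j)$ using the local-isometry / degree-$l_j$ description, observe that morphisms are isometries and hence commute with the rotation, and derive smoothness from the identification via $\glue$ with Fenchel--Nielsen twisting. The only place the paper is slightly more explicit than you is in verifying that the rescaling descends from $\RR^n$ to $T^n$: it records explicitly that for $t=(1,\ldots,1)$ one has $t'\in\NN^k$ because $l_j\mid K_{\nu(j)}$, hence $\rot_{t'}=\id$; your phrase ``$\rot_1(\bz)=\bz$ by the same argument'' glosses over this integrality, which is the actual content needed.
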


\begin{proof}
  As the orbifold structure on $\wh{\mcM}^{\square}_{g,k,h,n}$ comes locally from a submanifold of the source moduli space of the glued Hurwitz cover it is clear that the action is smooth. Also it is clear as in the proof of \cref{lem:torus-action-moduli-space-admissible-surfaces} that $\Phi$ and $\varphi$ are isometries and thus $(\Phi,\varphi)$ is a morphism between the twisted Hurwitz covers. Furthermore for every $t$ the resulting functor is an isomorphism of categories as we can just rotate backwards.

  However, we need to show that with the definition of $t'$ in terms of $t$ this is indeed a torus action and that the condition $u(z_j)=u(z_i)$ for any $i,j$ such that $\nu(i)=\nu(j)$ is preserved as this was a condition for the marked points in $\wh{\mcM}_{g,k,h,n}(T)$. First notice that if $t=1$ then $t'\in\NN^n$ as $l_j\mid K_{\nu(j)}$ for every $j=1,\ldots,k$ and therefore $\rot_{t'}=\id$ and thus we have indeed a torus action. Secondly every $\del_jC$ covers $\del_{\nu(j)}X$ with the degree $l_j$ which by choice of the reference curves remains true for those. As $u:C\lra X$ is holomorphic and thus a local isometry for the hyperbolic metrics it preserves the hyperbolic lengths. Parametrize $\del_{\nu(j)}X$ proportional to arc length with the map $\beta:[0,1]\lra\del_{\nu(j)}X$ such that $\beta(0)=u(z_j)=u(z_i)$. Then

  \begin{align*}
    u(\rot_{t'}(z_j)) & =u\left(\beta_j\left(\frac{K_{\nu(j)}}{l_j}t_{\nu(j)}\right)\right)=\beta\left(l_j\cdot\left(\frac{K_{\nu(j)}}{l_j}t_{\nu(j)}\right)\right) \\
    & =\beta\left(K_{\nu(j)} t_{\nu(j)}\right)=u\left(\beta_i\left(\frac{K_{\nu(j)}}{l_i}t_{\nu(i)}\right)\right)=u(\rot_{t'}(z_i))
  \end{align*}

  for all $1\leq i,j\leq k$ such that $\nu(i)=\nu(j)$.
\end{proof}

We now have symplectic structures and torus actions on our moduli spaces and we want to show that they are indeed Hamiltonian. Recall from \cref{sec:compactness-moduli-space-bordered-hurwitz-covers} that we have defined a map $\wh{\mu}:\wh{\mcM}_{g,k,h,n}(T)\lra\RR^n$. Correspondingly we make the following definition.

\begin{definition}
  On $\wh{\mcM}_{h,n}$ we define
  \begin{align*}
    \mu:\obj\wh{\mcM}_{h,n} & \lra\RR^n \\ 
    (X,\bp) & \longmapsto\frac{1}{2}(L_1^2,\ldots,L_n^2), 
  \end{align*}
  where $L_i$ is the hyperbolic length of the boundary geodesic $\del_iX$.
  \label{def:mom-map-dm}
\end{definition}

\begin{rmk}
  Of course the proofs of \cref{lem:properties-length-maps} and \cref{thm:properness-mu} apply to $\mu$ as well and we see that it is a proper continuous map on $\wh{\mcM}_{h,n}$. Additionally it is smooth in the Fenchel--Nielsen coordinates and smooth in Deligne--Mumford coordinates on $\wh{\mcM}_{h,n}^{\square}$.
\end{rmk}

\begin{lem}
  The maps $\wh{\mu}$ and $\mu$ are momentum maps for the $T^n$-actions on $\wh{\mcM}^{\square}_{h,n}$ and $\wh{\mcM}^{\square}_{g,k,h,n}(T)$ defined above and the $T^n$-actions are Hamiltonian.
  \label{lem:moment-maps}
\end{lem}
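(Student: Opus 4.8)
The goal is to show that $\mu$ and $\wh\mu$ satisfy the three conditions in the definition of a moment map for the respective $T^n$-actions: (i) properness of the induced quotient map, (ii) $\Ad$-equivariance, and (iii) the Hamiltonian identity $\dd\langle\mu,X\rangle = -i_{\ul X}\wwp$ (respectively with $\wt\omega$). Since $T^n$ is abelian, $\Ad$-equivariance is automatic: $\Ad$ is trivial, so the equivariance condition reduces to $\Psi_t^*\mu = \mu$, which holds because a Fenchel--Nielsen twist does not change the lengths of the boundary geodesics. Properness of $\mu:|\wh{\mcM}^{\square}_{h,n}|\lra\RR^n$ and of $\wh\mu:|\wh{\mcM}^{\square}_{g,k,h,n}(T)|\lra\RR^n$ follows from \cref{thm:properness-mu} (restricting from $|\wt{\mcR}|$ to the open suborbifolds), together with the remark after \cref{def:mom-map-dm} that $\mu$ is smooth in Fenchel--Nielsen and in Deligne--Mumford coordinates. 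So conditions (i) and (ii) are essentially bookkeeping.

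The substance is condition (iii), and here the plan is to reduce everything to Wolpert's theorem. First I would treat $\mu$ on $\wh{\mcM}^{\square}_{h,n}$. Recall that the $T^n$-action is, in local Fenchel--Nielsen coordinates coming from the identification $\sglue:\wh{\mcM}_{h,n}^{\square}\lra\wt{\mcM}_{h,2n}^{\square}$, nothing but the flow of twisting along the $n$ boundary geodesics $\gamma_1,\ldots,\gamma_n$ by the parameters $t_i$; concretely $\rot_t$ moves the twist coordinate $\theta_i \mapsto \theta_i + t_i$ (up to a normalization absorbed into the parametrization of the reference curves). Hence the infinitesimal generator of the $i$-th circle factor is $\ul{e_i} = \partial/\partial\theta_i$. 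By Wolpert's theorem (stated in the excerpt) the Weil--Petersson form in these coordinates is $\wwp = \sum_j \dd\theta_j\wedge\dd \ell_j$ where $\ell_j$ is the length of $\gamma_j$, so $i_{\ul{e_i}}\wwp = i_{\partial/\partial\theta_i}\sum_j \dd\theta_j\wedge \dd\ell_j = \dd\ell_i$ — wait, with the sign convention $i_{\partial_{\theta_i}}(\dd\theta_i\wedge\dd\ell_i) = \dd\ell_i$, so $i_{\ul{e_i}}\wwp = \dd\ell_i$, hence $-i_{\ul{e_i}}\wwp = -\dd\ell_i$. Since $\langle\mu,e_i\rangle = \tfrac12 L_i^2$ and $L_i = \ell_i$ for a non-degenerate boundary, we get $\dd\langle\mu,e_i\rangle = L_i\,\dd L_i = \ell_i\,\dd\ell_i$. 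To make these agree I would choose the normalization of the generating vector field $\ul{X}$ of the $T^n$-action (i.e.\ the precise parametrization speed of $\rot_t$, which is a free choice in \cref{def:torus-action-DM-spaces} and was deliberately set up via $F$) so that $\ul{e_i} = -\ell_i\,\partial/\partial\theta_i$, or equivalently work with the quadratic moment map: the correct statement is that twisting \emph{at unit speed with respect to the length-weighted parametrization} has generator whose contraction with $\wwp$ is $-\dd(\tfrac12\ell_i^2)$. This is exactly Mirzakhani's computation in \cite{mirzakhani_weil-petersson_2007}, and I would cite it, checking only that our conventions for $\rot_t$ match hers; this matching is the one genuinely delicate point, since a sign error or a factor of $2\pi$ propagates into \cref{thm:main-result-3}.

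For $\wh\mu$ on $\wh{\mcM}^{\square}_{g,k,h,n}(T)$ the argument is parallel but one pulls back through $\glue$ and $\wt{\ev}$. The symplectic form $\wt\omega$ on $\wh{\mcM}^{\square}_{g,k,h,n}(T)$ is by definition $(\sglue\circ\wt{\ev}\circ\glue)^*\wwp$ — more precisely it is pulled back from $\wt{\mcM}_{h,2n}$ via the target map — and the $T^n$-action on the source is defined so that it covers the $T^n$-action on $\wh{\mcM}^{\square}_{h,n}$ but with rescaled speeds $t' = (\tfrac{K_{\nu(j)}}{l_j}t_{\nu(j)})_j$. Under $\wt{\ev}$ (equivalently $\wh{\ev}$) the $j$-th boundary of $C$ maps to $\del_{\nu(j)}X$ with degree $l_j$; since $u$ is a local isometry, twisting the target boundary geodesic $\del_iX$ by $s$ lifts to twisting each preimage $\del_jC$ ($\nu(j)=i$) by $s$ as well, but measured in \emph{its own} arc length this is a twist by $l_j s$ — which is exactly compensated by the factor $K_i/l_j$ versus $K_i$ in the definitions. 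The upshot is that the infinitesimal generator $\ul{X}_i$ on the source pushes forward under $\wt{\ev}$ to $K_i$ times the corresponding generator on $\wt{\mcM}_{h,2n}$, while $\wt\omega$ is the pullback of $\wwp$; by naturality of interior products, $i_{\ul{X}_i}\wt\omega = \wt{\ev}^*\big(i_{K_i\,\partial_{\theta_i}}\wwp\big) = K_i\,\wt{\ev}^*\dd(\tfrac12 L_i^2) = \dd\langle K_i\cdot\tfrac12 L_i^2, \cdot\rangle$ up to sign, and $\wh\mu_i = \tfrac12 K_i L_i^2$ by \cref{eq:def-mom-map-dm}. So the factor $K_i$ in the definition of $\wh\mu$ is precisely what is needed, and the identity $\wh\mu = \mu\circ\glue$-type compatibility (\cref{lem:properties-length-maps}) combined with the $K_i$-rescaling makes $\wh\mu$ into the moment map. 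I would write this as: $\wh\mu$ is a moment map because $\wt\omega$ and $\ul{X}$ are pullbacks/pushforwards of the corresponding data on $\wt{\mcM}_{h,2n}$ under the homomorphism $\wt{\ev}$, and \cref{rmk:chern-classes-orbibundles}-style naturality of $\dd$, $i_{\bullet}$, and pullback reduces the claim to the already-established statement for $\mu$. The main obstacle, as above, is purely one of bookkeeping the normalization constants ($K_i$, the speed factors $K_{\nu(j)}/l_j$, and the Wolpert sign) so that the quadratic moment maps $\tfrac12 L_i^2$ and $\tfrac12 K_i L_i^2$ come out exactly right rather than off by a scalar.
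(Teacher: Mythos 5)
Your argument matches the paper's proof essentially line for line: properness is pulled from \cref{thm:properness-mu}, equivariance reduces to invariance by abelianness, and the Hamiltonian identity comes from Wolpert's normal form together with the observation that the torus generator is the length-weighted twist field (so the moment map comes out quadratic, $\tfrac12 L_i^2$), with the $K_{\nu(j)}/l_j$ speed factors on the Hurwitz side arranged so the generator pushes forward under $\wh{\ev}$ to $K_i$ times the target generator — the paper writes this as $\ul{e_i}=\sum_j\frac{K_i}{l_j}\del_{z_j}$ and $\wh{\omega}(\ul{e_i},\cdot)=\wwp(K_i\del_{\tau_i})=\dd(\tfrac12 K_iL_i^2)$, absorbing the length-weighting into the coordinate $\tau_i$ rather than spelling it out as you do. The one item the paper records explicitly that you leave implicit is the invariance of $\wwp$ under the twist flow (needed since \enquote{Hamiltonian} presupposes a symplectic action in the paper's definition), but this is immediate from Wolpert's normal form, and the sign-convention wobble you flag is present in the paper as well and does not affect the substance.
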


\begin{proof}
  First notice that both maps are smooth if the boundaries are non-degenerate and by \cref{thm:properness-mu} the maps are proper on orbit spaces. As tori are Abelian the equivariance condition of $\wh{\mu}$ and $\mu$ becomes an invariance condition under the group action which is satisfied as by changing the marked points the hyperbolic length of the boundary geodesic does not change. Thus it remains to prove invariance of $\omega_{\Ob}$ under the $T^n$-action and $\dd\langle\wh{\mu},X\rangle =-i_{\ul{X}}\omega_{\Ob}$ for all $X\in\mft$.

  Recall that the symplectic form is the pull-back of $\wwp$ under the gluing map. Thus at a point $(C,u,X,\bq,\bp,\bz)\in\Ob\wh{\mcM}^{\square}_{g,k,h,n}(T)$ we can choose Fenchel--Nielsen coordinates $(\tau_i,L_i)$ including the curves $\wt{\bG}$ obtained after gluing. In these coordinates $\wwp$ has standard symplectic form with respect to the length and twist coordinates around these curves. So in particular $\omega\left(\frac{\del}{\del \tau_i},\cdot\right)=\dd\left(\frac{L_i^2}{2}\right)$. Also we invariance of $\wwp$ is well known from \cite{wolpert_weil-petersson_1985}. This proves that $\wh{\mcM}_{h,n}^{\square}$ is Hamiltonian. The calculations for $\wh{\omega}$ and $\wh{\mu}$ on $\wh{\mcM}^{\square}_{g,k,h,n}(T)$ are very similar. However, notice that
  \begin{equation*}
    \ul{e_i}=\sum_{j=1}^k\frac{K_i}{l_j}\frac{\del}{\del z_j},
  \end{equation*}
  where $\frac{\del}{\del z_j}$ is the vector field corresponding to a rotation of the marked point $z_j$. As $\wh{\omega}=\wh{\ev}^*\wwp$ we can calculate
  \begin{equation*}
    \wh{\omega}\left(\ul{e_i},\cdot\right)=\wwp\left(K_i\frac{\del}{\del \tau_i}\right)=\dd\left(\frac{K_iL_i^2}{2}\right)
  \end{equation*}
  showing the statement for the $T^n$-action on $\wh{\mcM}^{\square}_{g,k,h,n}(T)$. The equivariance follows in the same way as on $\wh{\mcM}_{h,n}^{\square}$.
\end{proof}

Note that we have some more $T^n$-actions as explained in the following statements.

\begin{lem}
  The $T^n$-action above on $\wh{\mcM}^{\square}_{g,k,h,n}(T)$ defines a continuous free action on $\wh{\mcR}_{g,k,h,n}(T)$ where free means that it acts freely on objects.
\end{lem}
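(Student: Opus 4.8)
The plan is to prove this statement by first constructing the $T^n$-action on all of $\wh{\mcR}_{g,k,h,n}(T)$, not just the full subcategory $\wh{\mcR}^{\square}_{g,k,h,n}(T)$, essentially by the same formula as in \cref{def:torus-action-moduli-spaces}, and then verifying continuity and freeness separately. The key point is that the rotation maps $\rot_{t'}$ make sense for \emph{every} bordered Hurwitz cover, whether the boundaries are actual geodesics or degenerate (cusps): at a boundary component we rotate $z_j$ along the hyperbolic hypercircle $\Gamma_j(C)$ of length $l_jF(l(\del_{\nu(j)}X))$ parametrized proportionally to arc length, and at a cusp we rotate $z_j$ along the horocyclic reference curve $\Gamma_j(C)$ of length $l_jF(0)$, again parametrized proportionally to arc length. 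In both cases \cref{lem:existence-reference-curves} guarantees that the reference curve exists and is unique, so $\rot_{t'}(\bz)$ is well-defined, and the choice of $t'$ in terms of $t$ (dividing the speeds by $l_j$ and multiplying by $K_{\nu(j)}$) ensures both that $\rot_{1} = \id$ and that the constraint $u(z_i) = u(z_j)$ for $\nu(i)=\nu(j)$ is preserved, exactly as in the proof of the previous proposition. On morphisms we set $(\Phi,\varphi)$ to act by leaving the maps unchanged but reinterpreting source and target as the rotated objects; since biholomorphisms are isometries for the uniformized hyperbolic metrics, they map reference curves to reference curves and commute with the arc-length parametrizations, so this is again well-defined.

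Next I would address continuity. The topology on $|\wh{\mcR}_{g,k,h,n}(T)|$ was defined in \cref{sec:topology-bordered-hurwitz-covers} via the subbasis sets $\mcU_{[C,u,X,\bq,\bp,\bz]}(U,\epsilon)$, which are built from open sets $U\subset\Ob\wt{\mcM}_{g,nd+k,h,2n}(\wt{T})$ together with the hyperbolic-distance condition $|d_{\text{hyp}}(z_j,w_j) - d_{\text{hyp}}(z'_j,w'_j)| < \epsilon$ along the reference curves, where $w_j$ is the canonical reference point obtained from a geodesic in the pair-of-pants decomposition. I would show that $\glue$ intertwines this $T^n$-action with the Fenchel--Nielsen twist action on $\wt{\mcR}_{g,nd+k,h,2n}(\wt{T})$ (which is continuous because it is smooth on the orbifold $\wt{\mcM}$), and that the extra reference-point coordinate $d_{\text{hyp}}(z_j, w_j)$ transforms continuously: rotating $\bz$ by $t'$ changes this distance by the fixed amount $t'_j \cdot l_j F(\cdots)$ up to the placement of $w_j$, which itself depends continuously on the complex structure. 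Combining the continuity of the twist action on $\wt{\mcR}$ (via the commuting square in \cref{eq:diagram-glue-sglue} and the fact that $\glue$ is an open quotient map by \cref{lem:prop-top-moduli-space-bordered-hurwitz-covers}) with the continuous dependence of the reference-curve coordinate gives continuity of $T^n\times|\wh{\mcR}_{g,k,h,n}(T)|\lra|\wh{\mcR}_{g,k,h,n}(T)|$.

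For freeness on objects, suppose $\rot_{t'}(\bz) = \bz$ as an element of $\Ob\wh{\mcR}_{g,k,h,n}(T)$, meaning $t$ fixes the object itself (not merely up to isomorphism). Since $\nu$ is surjective, for each $i\in\{1,\dots,n\}$ there is some $j$ with $\nu(j) = i$, and the condition forces $\beta_j\!\left(\frac{K_i}{l_j} t_i \cdot l_j F(\cdots)\right) = \beta_j(0)$ along the reference curve $\Gamma_j(C)$, i.e. $\frac{K_i}{l_j} t_i \in \ZZ$. But actually, since the reference curve is a genuine embedded circle and $\beta_j$ is injective on $[0,1)$, the only way $\rot$ leaves the \emph{point} $z_j$ fixed is $t'_j \in \ZZ$; as $t_i\in[0,1)$ this means $\frac{K_i}{l_j} t_i = 0$, hence $t_i = 0$. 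Doing this for every $i$ gives $t = 0$, so the action is free on objects. I should double-check here that we genuinely require fixing the object on the nose (the group-action-on-orbifold framework of \cref{def:group-action-on-orbifold} does demand this), so "free" in the stated sense is really about the action on $\Ob$, and this is precisely the content of the last remark before the statement about $\wh{\mcM}_{g,nd+k,h,2n}$ and Mirzakhani's idea.

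The main obstacle I anticipate is the continuity verification, specifically checking that the reference point $w_j$ (defined via the unique perpendicular geodesic going up the cusp in the pair-of-pants decomposition encoded in the chart data $\lambda$) varies continuously and compatibly under the action — this is exactly the kind of delicate interplay between the Fenchel--Nielsen/orbifold structure and the extra $S^1$-coordinate that made the topology on $|\wh{\mcR}_{g,k,h,n}(T)|$ awkward to define in the first place. Everything else (well-definedness, the torus-action axioms, freeness) is essentially a repetition of arguments already carried out for $\wh{\mcM}^{\square}_{g,k,h,n}(T)$, using \cref{lem:existence-reference-curves} and the isometry property of biholomorphisms.
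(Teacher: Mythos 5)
Your overall structure — defining the action by the same formula, checking well-definedness via uniqueness of reference curves, verifying continuity against the subbasis topology, and then arguing freeness on objects — is the same route the paper takes (the paper's own proof is essentially a one-liner doing exactly this, plus noting that smoothness fails since $\wh{\mcR}_{g,k,h,n}(T)$ is not an orbifold groupoid). The well-definedness and continuity parts are fine, and the added detail is reasonable.

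However, there is a genuine gap in your freeness argument. You pick \emph{one} $j$ with $\nu(j)=i$, deduce $t'_j = \frac{K_i}{l_j}t_i \in \ZZ$ from the injectivity of $\beta_j$ on $[0,1)$, and then conclude ``as $t_i\in[0,1)$ this means $\frac{K_i}{l_j}t_i = 0$, hence $t_i=0$.'' That last step is false whenever $K_i > l_j$: for instance if $K_i/l_j = 2$ and $t_i = \tfrac{1}{2}$, then $\frac{K_i}{l_j}t_i = 1 \in\ZZ$ although $t_i\neq 0$. Since $K_i = \lcm\{l_j\mid \nu(j)=i\}$ is in general strictly larger than an individual $l_j$, a single $j$ does not suffice. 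The correct argument must use \emph{all} $j$ in the fibre $\nu^{-1}(i)$: fixing the object on the nose forces $\frac{K_i}{l_j}t_i \in \ZZ$ simultaneously for every $j$ with $\nu(j)=i$, and then one invokes the coprimality $\gcd\{K_i/l_j : \nu(j)=i\}=1$ (this is precisely the content of \cref{lem5} applied to $K_i = \lcm$, via the B\'ezout argument given there) to conclude $t_i\in\ZZ$, hence $t_i=0$. As written, your proof proves freeness only under the additional hypothesis that for each $i$ some preimage $q_j$ has $l_j = K_i$, which need not hold.
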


\begin{proof}
  Take the same definition as in \cref{def:torus-action-moduli-spaces} on objects and on morphisms in $\wh{\mcR}_{g,k,h,n}(T)$. It is clearly continuous with respect to the topology as defined in \cref{lem:prop-top-moduli-space-bordered-hurwitz-covers}. However we can not say that it is smooth because $\wh{\mcR}_{g,k,h,n}(T)$ is not an orbifold groupoid. It is also clearly free on objects.
\end{proof}

\begin{definition}
  We define a $T^n$-action on $\wt{\mcM}_{g,nd+k,h,2n}(\wt{T})$ as follows. Consider
  \begin{equation*}
    (C,u,X,\bq,\bp,\bG)\in\Ob\wt{\mcM}_{g,nd+k,h,2n}(\wt{T}).
  \end{equation*}
  Keep the map $u$ as it is and for every non-contractible curve $\gamma_i\in\bG$ change the hyperbolic structure on $(X,\bp)$ by doing a Fenchel--Nielsen twist by $2\pi t_i$ in the positive direction. If the curve $\gamma_i$ is contractible then do nothing. On morphisms we define the action in the same way as in \cref{eq:def-skew-torus} and \cref{def:torus-action-moduli-spaces} by leaving the morphism map identical but changing the domain and target by doing the same Fenchel--Nielsen twists.
  \label{def:torus-action-tilde-mcm}
\end{definition}

\begin{prop}
  The \cref{def:torus-action-tilde-mcm} defines a smooth Hamiltonian torus action on $\wt{\mcM}_{g,nd+k,h,2n}(\wt{T})$. The number of contractible curves in $\bG$ is the dimension of the subtorus fixing the point $(C,u,X,\bq,\bp,\bG)$. The functor
  \begin{equation*}
    \glue:\wh{\mcM}^{\square}_{g,k,h,n}(T)\lra\wt{\mcM}_{g,nd+k,h,2n}(\wt{T})
  \end{equation*}
  is equivariant.
\end{prop}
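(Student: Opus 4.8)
The plan is to verify the three claims in order, reusing as much of the earlier machinery as possible. For the smoothness and Hamiltonian assertions, the key observation is that $\wt{\mcM}_{g,nd+k,h,2n}(\wt{T})$ is constructed exactly as in Sections~\ref{sec:constr-an-orbif} and~\ref{sec:orbifold-structure-wtmcr}: its object manifold is a disjoint union of Fenchel--Nielsen charts $O^{\lambda}\subset\RR^{6h-6+2\cdot 2n}$ in which the chosen multicurve $\bG$ appears among the decomposing curves. In such coordinates the Fenchel--Nielsen twist by $2\pi t_i$ along the curve $\gamma_i\in\bG$ is literally the translation $\tau_i\mapsto\tau_i+2\pi t_i$ (and the identity whenever $\gamma_i$ is contractible, since then $L_i=0$ and that coordinate pair is absent), so the maps $T^n\times\Ob\wt{\mcM}\to\Ob\wt{\mcM}$ and $T^n\times\Mor\wt{\mcM}\to\Mor\wt{\mcM}$ are smooth; that morphisms are sent to morphisms follows because the maps $\Phi,\varphi$ are hyperbolic isometries, exactly as in the proof of \cref{lem:torus-action-moduli-space-admissible-surfaces}. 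First I would record that this is a group action by a torus: twisting by $2\pi\cdot 1$ along $\gamma_i$ is a Dehn twist, which is isotopic to the identity and hence gives the same point in $\wt{\mcM}$, so the $\RR^n$-action descends to $T^n$. The only subtlety is that a Dehn twist is an isotopy relative to the boundary but not relative to marked points in general; here the curves in $\bG$ each bound a twice-marked disc (or a nodal sphere with two marks), so the Dehn twist along $\gamma_i$ is supported away from all other marked points and isotopic to the identity through maps fixing all of $\bp$, which is what is needed.

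Next I would establish the Hamiltonian property and identify the moment map with (the pull-back of) $\wt{\mu}$ from \cref{eq:def-mom-map-dm}, using Wolpert's theorem that in Fenchel--Nielsen coordinates $\wwp=\sum \dd\theta_i\wedge\dd l_i$. Since the orbifold symplectic form on $\wt{\mcM}_{g,nd+k,h,2n}(\wt{T})$ is $\wt{\ev}^*\wwp$ (composed through to $\wt{\mcM}_{h,2n}$), and the infinitesimal generator $\ul{e_i}$ of the $i$-th circle is $K_i\,\del/\del\tau_i$ in these coordinates by exactly the computation in \cref{lem:moment-maps}, one gets $i_{\ul{e_i}}\wt{\ev}^*\wwp = \dd(K_iL_i^2/2)$ on the locus of non-degenerate curves, and $\wt{\mu}$ was defined precisely as $\tfrac12(K_1L_1^2,\dots,K_nL_n^2)$. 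Ad-equivariance is invariance (the torus is abelian) and holds because twisting does not change hyperbolic lengths; properness of the induced quotient map is \cref{thm:properness-mu}. The statement that the dimension of the stabilizer of $(C,u,X,\bq,\bp,\bG)$ equals the number of contractible curves in $\bG$ then follows: a circle $S^1_i$ acts trivially on a neighborhood exactly when the $\tau_i$-coordinate is absent, i.e.\ when $L_i=0$, i.e.\ when $\gamma_i$ is contractible (bounds a node); conversely when $L_i>0$ the twist flow moves the point, so the stabilizer is exactly the subtorus spanned by the circles indexed by contractible curves. Here one should be a little careful to distinguish "fixes the point in $|\wt{\mcM}|$" from "acts trivially on a neighborhood", but since an element of $T^n$ fixing $(C,u,X,\bq,\bp,\bG)$ gives a biholomorphism realizing a product of Dehn twists along $\bG$, which is isotopic to the identity only if all the twisted curves are contractible, the two notions agree and give the stated dimension.

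Finally, equivariance of $\glue:\wh{\mcM}^{\square}_{g,k,h,n}(T)\to\wt{\mcM}_{g,nd+k,h,2n}(\wt{T})$ is essentially a bookkeeping check. By \cref{def:torus-action-moduli-spaces}, the element $t\in T^n$ acts on a bordered Hurwitz cover by rotating $\bz$ with the rescaled speeds $t'=(K_{\nu(j)}t_{\nu(j)}/l_j)_j$; by \cref{def:torus-action-tilde-mcm}, $t$ acts on the glued object by a Fenchel--Nielsen twist of length $2\pi t_i$ along the curve $\wt{\bG}_i$, which is precisely the image under $\glue$ of the former boundary $\del_iX$ (respectively $\del_jC$ on the source). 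Recall from \cref{sec:def-glue-map} that $\glue$ builds $\wt{X}$ by isometrically gluing a fixed hyperbolic pair of pants $\Sigma(i)$ to $\del_iX$ so that the marked point $u(z_j)\in\del_iX$ is identified with the fixed endpoint $y_i$; rotating $u(\bz)$ along $\del_iX$ by arc-length $K_i t_i$ (the common speed, since $u(\rot_{t'}(z_j))=\beta(K_{\nu(j)}t_{\nu(j)})$ as computed in the proof just above) is the same as gluing $\Sigma(i)$ with that angular offset, which is exactly the Fenchel--Nielsen twist by $2\pi t_i$ along the geodesic in the free homotopy class of $\del_iX$. On the source side the matching works because the standard cover $\wt\Sigma(j)\to\Sigma(i)$ has degree $l_j$, so a twist by $2\pi t_i$ downstairs lifts to a twist by $2\pi t_i$ upstairs along $\del_jC$, i.e.\ to rotating $z_j$ by arc-length $l_j\cdot(K_{\nu(j)}t_{\nu(j)}/l_j)\cdot(\text{length factor})$ --- precisely the speed in $t'$. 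Hence $\glue\circ\rot_t=\rot_t\circ\glue$ on objects, and the same identity on morphisms since both sides leave the underlying biholomorphism untouched and only change the domain/target in the matching way.

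\textbf{Main obstacle.} The routine parts are the coordinate computations; the step I expect to need the most care is matching the \emph{parametrizations} of the two torus actions in the last paragraph --- verifying that "rotate $\bz$ at speed $K_i/l_j$ along $\Gamma_j(C)$" on the bordered side corresponds to "twist by $2\pi t_i$ along $\wt{\bG}_i$" on the closed side, including getting the reference points, the orientation conventions, and the normalization constant $F(l(\del_i X))$ to line up so that the rescaled speeds in \cref{def:torus-action-moduli-spaces} are exactly what makes $\glue$ intertwine the actions. This is where the deliberate choice of the constant $K_i=\lcm\{l_j\mid\nu(j)=i\}$ and of the identification $u(z_j)\leftrightarrow y_i\leftrightarrow 1\in\DD$ in the gluing construction pays off, and it must be traced carefully rather than asserted.
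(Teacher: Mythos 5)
Your proposal follows the same overall strategy as the paper's (brief) proof: reduce to the Fenchel--Nielsen/Wolpert machinery and to the already-established Hamiltonian picture on $\wh{\mcM}^{\square}$, then trace normalizations through $\glue$. The second and third paragraphs — Hamiltonian structure with moment map $\wt{\mu}$, stabilizer dimension, and $\glue$-equivariance — are sound and match the paper's argument with more detail; the care you flag about matching parametrizations in the last paragraph is exactly the computation in \cref{lem:moment-maps}.

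There is, however, a genuine error in the first paragraph. You claim that the full twist along $\gamma_i$ is ``a Dehn twist, which is isotopic to the identity'' because $\gamma_i$ bounds a twice-marked disc. That is false: a Dehn twist along a curve enclosing exactly two marked points is a \emph{nontrivial} element of the pure mapping class group. On a four-punctured sphere with $\gamma$ separating $\{p_1,p_2\}$ from $\{p_3,p_4\}$, the twist $T_\gamma$ is a squared braid generator and has infinite order; being supported away from the other marked points does not make it isotopic to the identity rel $\bp$. The reason the $\RR^n$-action nevertheless descends to $T^n$ has nothing to do with Dehn twist isotopy. Two clean justifications are available: (i) a Fenchel--Nielsen twist by angle $2\pi$ (arclength $l(\gamma_i)$) re-glues by the identity map on the geodesic, so the resulting hyperbolic \emph{metric} is literally the same — it is only the Teichmüller marking, which the objects of $\wt{\mcM}$ do not track, that picks up the Dehn twist; or (ii) transport periodicity from $\wh{\mcM}^{\square}$ through $\glue$ on the $\square$-locus, where $\rot_1=\id$ is immediate because it rotates a marked point by the full circumference of its reference curve, and observe that on the complement the circles in question act trivially by definition. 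The same confusion resurfaces in your stabilizer argument (``isotopic to the identity only if all twisted curves are contractible''). The correct, and simpler, statement is that a small nontrivial twist along a non-contractible $\gamma_i$ changes the twist coordinate and hence the object, while a contractible $\gamma_i$ contributes a coordinate circle that acts trivially by fiat, so the stabilizer at the object level is exactly the coordinate subtorus indexed by the contractible curves.
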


\begin{proof}
  The equivariance of $\glue$ is clear from the definitions of the torus actions, in fact we have already used that argument in the proof of \cref{lem:moment-maps}. The same arguments as before show that this action is indeed a smooth Hamiltonian torus action on $\wt{\mcM}_{g,nd+k,h,2n}(\wt{T})$. At any contractible curve in $\bG$ the torus action does nothing so this corresponds to a $S^1$ fixing the point.
\end{proof}

Note that we can now do symplectic reduction on the orbifolds $(\wh{\mcM}^{\square}_{h,n},\wwp,\mu,T^n)$ and $(\wh{\mcM}^{\square}_{g,k,h,n}(T),\wh{\omega},\wh{\mu},T^n)$ at some points $\frac{1}{2}(L^2_1,\ldots,L_n^2)\in\RR_{>0}^n$ and $\frac{1}{2}(K_1L^2_1,\ldots,K_nL_n^2)\in\RR_{>0}^n$, respectively, as in \cref{thm:symplectic-reduction}.\footnote{Obviously we chose the values at which to do symplectic reduction such that the lengths of the boundaries agree as we have different normalizations of the momentum maps. We will denote the quotient always with an additional $[L]$ specifying these lengths.} Next we need to make sure that the evaluation maps and the symplectic reduction behave well with respect to these reductions.

\begin{lem}
  For the symplectic orbifolds
  \begin{equation*}
    (\wh{\mcM}^{\square}_{h,n},\wwp,\mu,T^n)\text{ and }(\wh{\mcM}^{\square}_{g,k,h,n}(T),\wh{\omega},\wh{\mu},T^n)
  \end{equation*}
  and the branched morphism covering $\wh{\ev}:\wh{\mcM}^{\square}_{g,k,h,n}\lra\wh{\mcM}^{\square}_{h,n}$ we have
  \begin{enumerate}[label=(\roman*), ref=(\roman*)]
  \item $\wh{\ev}$ descends to a branched morphism covering $\wh{\ev}$ of the symplectic quotients $\wh{\mcM}^{\square}_{g,k,h,n}(T)[L]$ and $\wh{\mcM}^{\square}_{h,n}[L]$ of the same degree and
  \item $\wh{\omega}_L=\wh{\ev}^*{\wwp}|_L$.
  \end{enumerate}
  \label{lem:ham-orb-covering-reduction}
\end{lem}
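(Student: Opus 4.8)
The statement has two parts, and the plan is to handle them in sequence, reducing everything to the already-established properties of $\wh{\ev}$ as a branched morphism covering together with the standard functoriality of symplectic reduction for orbifold groupoids from \cref{thm:symplectic-reduction} and \cref{thm:duistermaat-heckman}. First I would record the key compatibility: since $\wh{\mu}$ and $\mu$ are the momentum maps for the respective $T^n$-actions, and since $\wh{\ev}$ is equivariant with $\wh{\mu} = \mu\circ\wh{\ev}$ on the nose up to the normalization factor — more precisely, writing $L$ for the common boundary-length tuple, the preimage $\wh{\mu}^{-1}(\tfrac12(K_1L_1^2,\dots,K_nL_n^2))$ maps under $\wh{\ev}$ into $\mu^{-1}(\tfrac12(L_1^2,\dots,L_n^2))$ — the functor $\wh{\ev}$ restricts to a homomorphism $\wh{\mcM}^{\square}_{g,k,h,n}(T)|_{\wh{\mu}^{-1}(\dots)} \lra \wh{\mcM}^{\square}_{h,n}|_{\mu^{-1}(\dots)}$ between the full-subcategory orbifold groupoids appearing in \cref{thm:symplectic-reduction}. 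Because the $T^n$-actions commute with $\wh{\ev}$ (equivariance), this descends to a homomorphism $\wh{\ev}\colon \wh{\mcM}^{\square}_{g,k,h,n}(T)[L] \lra \wh{\mcM}^{\square}_{h,n}[L]$ between the symplectic quotient groupoids. Here I would need the fact that the $T^n$-action is free on the relevant level sets (which holds on the $\square$-locus since there the points $\bz$ genuinely rotate, as noted after \cref{def:torus-action-moduli-spaces}), so that \cref{thm:symplectic-reduction} indeed produces orbifold groupoids.

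For part (i) — that the descended $\wh{\ev}$ is again a branched morphism covering of the same degree — I would argue locally. Using the local normal form of $\wh{\ev}$ (which, via $\glue$, comes from the local form of $\ev$ in \cref{thm:ev-local-structure}, namely $(x,z_1,\dots,z_N)\mapsto(x,z_1^{K_1},\dots,z_N^{K_N})$) together with the local slice theorem for the free $T^n$-action, one sees that $\wh{\ev}$ on the quotient is locally the product of this same branched-covering normal form with the identity on the quotient-by-$T^n$ directions; in particular it is a covering and local diffeomorphism away from the nodal locus, and the lifting property descends because morphisms in the quotient groupoid are equivalence classes of morphisms upstairs. The degree: by \cref{prop:morphism-coverings-deg} the degree is computed fibrewise as $|H_x|\sum_{[y]\in\phi^{-1}([x])}|G_y|^{-1}$, and on a smooth fibre the symplectic reduction neither adds nor removes preimages nor changes automorphism groups (the $T^n$-action being free there), so the fibrewise count is unchanged. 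I would phrase this as: the degree of $\wh{\ev}$ on $\wh{\mcM}^{\circ}_{g,k,h,n}(T)[L]$ equals the degree on $\wh{\mcM}^{\circ}_{g,k,h,n}(T)$, which by the bordered analogue of \cref{thm:ev-local-structure} and the remark relating it to $H_{g,k,h,n}(T)$ is the expected Hurwitz-type number (up to the factor $K$ already flagged in the remark before \cref{sec:SFT-compactness}'s chapter heading).

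For part (ii) — that $\wh{\omega}_L = \wh{\ev}^*(\wwp|_L)$ — I would use that the identity $\wh{\omega} = \wh{\ev}^*\wwp$ holds on $\wh{\mcM}^{\square}_{g,k,h,n}(T)$ before reduction (this is precisely how $\wh{\omega}$ was defined, as the pullback of the Weil--Petersson form via $\wh{\ev}$, cf.\ the paragraph introducing $(\wh{\mcM}^{\square}_{g,k,h,n}(T),\wt\omega)$). The reduced forms satisfy $\pi^*\wh{\omega}_L = \wh{\omega}|_{\wh{\mu}^{-1}(\dots)}$ and $\pi^*(\wwp|_L) = \wwp|_{\mu^{-1}(\dots)}$ where $\pi$ denotes the relevant quotient projections. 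Since $\wh{\ev}$ commutes with these projections (it descends to the quotient) and $\wh{\ev}^*\wwp = \wh{\omega}$ restricts correctly to the level sets, pulling back and using that $\pi$ is a submersion (so $\pi^*$ is injective on forms) yields $\wh{\omega}_L = \wh{\ev}^*(\wwp|_L)$. This is a routine diagram chase once the level sets are correctly matched up.

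\textbf{Main obstacle.} The genuinely delicate point is the bookkeeping in part (i): verifying that the branched-covering local normal form of $\wh{\ev}$ survives symplectic reduction, i.e.\ that the reduction directions (the $T^n$-orbit and momentum directions) are "transverse" to the branching directions $z_i\mapsto z_i^{K_i}$ so that the normal form on the quotient is literally a product. Concretely one must check that the curves in $\bG$ along which the $T^n$ twists act are disjoint from — and hence the Fenchel--Nielsen length/twist coordinates independent of — the gluing parameters $z_i$ at the nodes over which $\ev$ branches; this is plausible because the branching happens at interior nodes of the target while the torus acts near the boundary components, but it requires invoking the disjointness of the corresponding collar/cusp neighbourhoods (as in \cref{lem:collar-and-cusp-neighborhood-disjoint}) to make rigorous. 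Once that separation of coordinates is in hand, both the covering property and the degree count fall out immediately, and part (ii) is essentially formal.
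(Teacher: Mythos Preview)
Your proposal is correct and follows essentially the same route as the paper: the paper likewise establishes the commuting square of $\wh{\ev}$ with the restriction and quotient maps, then for (i) writes out the local model explicitly by choosing Fenchel--Nielsen coordinates $(l_i,\theta_i)$ around the curves $\bG$ alongside the gluing discs $z_j$ for interior nodes, so that $\wh{\ev}$ reads $(x,l_1,\theta_1,\dots,l_n,\theta_n,z_1,\dots,z_N)\mapsto(x,l_1,\theta_1,\dots,l_n,\theta_n,z_1^{K_1},\dots,z_N^{K_N})$ and taking a slice $l_i=L_i$, fixed $\theta_i$ manifestly preserves the branched-covering form; the lifting property and degree are checked exactly as you outline, and (ii) is the same $\pi^*$-injectivity diagram chase. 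Your ``main obstacle'' is precisely the separation of coordinates that the paper resolves by this explicit product form rather than by appealing to collar disjointness.
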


\begin{proof}
  First notice that the diagram of functors between orbifold categories
  \begin{equation}
    \xymatrix{
      \wh{\mcM}^{\square}_{g,k,h,n}(T) \ar[r]^{\wh{\ev}} & \wh{\mcM}^{\square}_{h,n} \\
      \wh{\mcM}^{\square}_{g,k,h,n}(T)|_{\wh{\mu}^{-1}(L)} \ar[r]^{\wh{\ev}} \ar@{^{(}->}[u] \ar[r]^{\wh{\ev}} \ar@{->>}[d] & \wh{\mcM}^{\square}_{h,n}|_{\mu^{-1}(L)} \ar@{^{(}->}[u] \ar@{->>}[d] \\ 
      \wh{\mcM}^{\square}_{g,k,h,n}(T)[L] \ar[r]^{\wh{\ev}} & \wh{\mcM}^{\square}_{h,n}[L]
      }
    \label{eq:diagram-evaluation-maps-symplectic-quotient}
  \end{equation}
  exists and commutes. The vertical arrows correspond to full subcategories and quotients by the torus action as defined in \cref{thm:symplectic-reduction}, respectively. The horizontal functor on the bottom is well-defined.

  Next we show that this functor $\wh{\ev}:\wh{\mcM}^{\square}_{g,k,h,n}(T)[L]\lra\wh{\mcM}^{\square}_{h,n}[L]$ is a branched morphism covering and has the same degree as the original $\wh{\ev}$. So consider some $(C,u,X,\bq,\bp,\bz)\in\Ob\wh{\mcM}^{\square}_{g,k,h,n}(T)$. Recall from the proof of \cref{thm:ev-local-structure} that we chose coordinate neighborhoods on the Teichmüller space of the smooth components of $X$ as well as discs for parametrizing the opening of all the nodes in the source surface $C$. We will now choose the coordinates on the Teichmüller factor such that they include the Fenchel--Nielsen coordinates around the curves $\bG$ from $\glue(C,u,X,\bq,\bp,\bz)\in \Ob\wt{\mcM}_{g,nd+k,h,2n}(\wt{T})$. As these curves are not collapsed by assumption the twist and length functions are smooth and we can complete this multicurve to a maximal one in order to obtain coordinates. Also we will use the same Fenchel--Nielsen coordinates around $\wh{\ev}(C,u,X,\bq,\bp,\bz)\in\Ob\wh{\mcM}_{h,n}$. By \cref{thm:ev-local-structure} the map $\wh{\ev}$ then looks like

  \begin{align*}
    \DD^{3k-3-N} \times \prod_{i=1}^n\left([L_i-\epsilon,L_i+\epsilon]\times S^1\right) \times \DD^N & \lra\DD^{3k-3-N} \times \prod_{i=1}^n\left([L_i-\epsilon,L_i+\epsilon]\times S^1\right) \times\DD^N \\
    (x,l_1,\theta_1,\ldots,l_n,\theta_n,z_1,\ldots,z_N) & \longmapsto \left(x,l_1,\theta_1,\ldots,l_n,\theta_n,z_1^{K_1},\ldots,z_N^{K_N}\right),
  \end{align*}

  where the $N$ discs correspond to \emph{interior} nodes. Now restricting to $l_i=L_i$ for all $i=1,\ldots,L_n$ we obtain a slice for the torus action corresponding to the Fenchel--Nielsen twists generated by the $\frac{\del}{\del \theta_i}$ by picking one fixed twist coordinate for every $i=1,\ldots,n$. First we see that the resulting map $\wh{\ev}:\wh{\mcM}^{\square}_{g,k,h,n}(T)[L]\lra\wh{\mcM}^{\square}_{h,n}[L]$ is still smooth and has the correct form in a neighborhood of a nodal Hurwitz cover. So it remains to show that $\wh{\ev}$ is a morphism covering on the (completely) smooth part of the moduli spaces and that the degree is the same as the one of

  \begin{equation*}
    \wh{\ev}:\wh{\mcM}^{\square}_{g,k,h,n}(T)\lra \wh{\mcM}^{\square}_{h,n}.
  \end{equation*}
  
  To this end recall \cref{eq:diagram-evaluation-maps-symplectic-quotient}. It is clear that restricting $\wh{\ev}$ to the full subcategory $\wh{\mcM}^{\square}_{g,k,h,n}(T)|_{\wh{\mu}^{-1}(L)}$ remains a covering on objects and morphisms as this is the preimage of $\wh{\mcM}^{\square}_{h,n}|_{\mu^{-1}(L)}$ under the functor $\wh{\ev}$. Furthermore taking a quotient with respect to the free $T^n$-action on both objects and morphisms gives a locally trivializable bundle showing that $\wh{\ev}$ is still a covering on the quotient $\wh{\mcM}^{\square}_{g,k,h,n}(T)[L]$. We need to show the lifting property on the full subcategory of completely smooth Hurwitz covers $\wh{\mcM}^{\circ}_{g,k,h,n}(T)[L]$.

  The lifting property is easily verified from the properties of the functor $\ev$. Consider $[C,u,X,\bq,\bp,\bz]\in\Ob\wh{\mcM}^{\circ}_{h,n}[L]$ and $\varphi:(X,\bp)\lra(X',\bp')\in\Mor\wh{\mcM}_{h,n}[L]$.\footnote{Note that taking the quotient with respect to the torus action on $\Ob\wh{\mcM}^{\square}_{h,n}$ is the same as forgetting the marked point. On $\Ob\wh{\mcM}^{\square}_{g,k,h,n}(T)$ however, the equivalence class still remembers some information about the relative differences between choices of the marked points $\bz$ in the fibre over $u(\bz)$. This issue will lead to \cref{lem:degrees-evaluation-map-hurwitz}.} There exists a morphism $(\id,\varphi):(C,u,X,\bq,\bp,\bz)\lra(C,\varphi\square u,\bq,\varphi(\bp),\bz)$ in $\wh{\mcM}^{\square}_{g,k,h,n}(T)$ which is mapped to $\varphi$ under $\wh{\ev}$ and $s(\id,\varphi)=(C,u,X,\bq,\bp)$ and gives a morphism $[C,u,X,\bq,\bp,\bz]\lra[C,\varphi\square u,\bq,\varphi(\bp),\bz]$ in $\wh{\mcM}^{\square}_{g,k,h,n}(T)[L]$ with the same properties.

  This shows that

  \begin{equation*}
    \wh{\ev}:\wh{\mcM}^{\circ}_{g,k,h,n}(T)[L]\lra \wh{\mcM}^{\circ}_{h,n}[L]
  \end{equation*}

  is a morphism covering and therefore

  \begin{equation*}
    \wh{\ev}:\wh{\mcM}^{\square}_{g,k,h,n}(T)[L]\lra \wh{\mcM}^{\square}_{h,n}[L]
  \end{equation*}

  is a branched morphism covering. Regarding the degree we can see that it does not change when passing to the symplectic quotient as follows. The restriction to the full subcategory $\wh{\mcM}^{\square}_{g,k,h,n}(T)|_{\wh{\mu}^{-1}(L)}$ does not change the degree. By taking quotients with respect to the free $S^1$-action we do not change the number of preimages of an object in $\Ob\mcM_{h,n}$ or their automorphisms as the evaluation map is also equivariant.

  It remains to calculate the symplectic volume forms. However, $\wh{\omega}_L$ is the unique form on $\Ob\wh{\mcM}^{\square}_{g,k,h,n}(T)[L]$ such that its pullback under $\pi:\Ob\wh{\mcM}^{\square}_{g,k,h,n}(T)|_{\wh{\mu}^{-1}(L)}\lra\Ob\wh{\mcM}^{\square}_{g,k,h,n}(T)[L]$ is equal to $\wh{\omega}|_{\wh{\mu}^{-1}(L)}$. By \cref{eq:diagram-evaluation-maps-symplectic-quotient} we have

  \begin{equation*}
    \pi^*\wh{\ev}^*\wwp|_L=\wh{\ev}^*\pi^*\wwp|_L=\wh{\ev}^*\wwp|_{\mu^{-1}(L)}=(\wh{\ev}^*\wwp)|_{\wh{\mu}^{-1}(L)}=\wh{\omega}|_{\wh{\mu}^{-1}(L)}
  \end{equation*}

  and therefore

  \begin{equation*}
    \wh{\omega}_L=\wh{\ev}^*\wwp|_L.
  \end{equation*}
\end{proof}

\section{Bundles and \texorpdfstring{$\boldsymbol{\Psi}$}{Psi}-classes}

\subsection{Complex Orbifold Vector Bundles over the Moduli Space of Closed Hurwitz Covers}

In this section we define two complex orbifold vector bundles $E$ and $F$ which are direct sums of complex orbifold line bundles and whose first Chern classes will be needed later. These Chern classes are certain kinds of $\Psi$-classes over moduli spaces.\footnote{We say ``kind'' of $\Psi$-classes because there are different notions due to compactification differences of Deligne--Mumford space and moduli spaces of $J$-holomorphic curves. However, as it will later turn out, due to the fact that we mark enough points, these will actually coincide in our case.} Recall from \cref{sec:orb-bundles-coverings} that a complex orbifold vector bundle over $\mcM_{g,k,h,n}(T)$ is a manifold $E$ together with an anchor $\pi_E:E\lra\obj\mcM_{g,k,h,n}(T)$ that is a complex vector bundle as well as an action $\mu_E:\Mor\mcM_{g,k,h,n}(T){}_s\times_{\pi} E\lra E$ satisfying the usual group action axioms and which acts linearly on the fibres.

Thus we make the following definition.

\begin{definition}
  Define the complex orbifold vector bundle $E$ over $\mcM_{g,k,h,n}(T)$ by
  \begin{align}
    E & \coloneqq \bigsqcup_{\lambda\in\Lambda}O^{\lambda}\times\CC^k \nonumber \\
    \pi_E: E & \lra \obj\mcM_{g,k,h,n}(t) \nonumber \\
    (b,z) & \longmapsto b \nonumber \\
    \mu_E: \Mor\mcM_{g,k,h,n}(T){_s\times_{\pi_E}} E & \lra E \nonumber \\
    (\underbrace{(b,(\Phi,\varphi),c)}_{\in M(\lambda,\lambda')}, \underbrace{(b,z)}_{\in O^{\lambda}\times \CC^k}) & \longmapsto \left(c,\left(\left(\D_0\rho_j^{c}\right)^{-1}\circ \D_{q^b_j}\Phi\circ \D_0\rho_j^{b}(z_j)\right)_{j=1}^k\right), \label{eq:transition-function-E}
  \end{align}
  where the map $\rho_j^{b}:\DD\lra C^b$ is defined below.
  \label{def:vector-bundle-E}
\end{definition}

In order to define the chart $\rho_j^b$ notice that $b\in O^{\lambda}$ contains a choice of decomposition of $C^b$ into pairs of pants as well as a uniformized hyperbolic metric on $C^b$ such that the marked points $q_j$ and cusps are nodes. In turn this hyperbolic metric defines a unique collar curve $\Gamma_j(C^b)$ with the length $F(0)$ close to $q_j$, as was described e.g.\ in \cref{sec:orb-structure-mod-space-bordered-hurwitz-covers-definitions}. Notice that there are exactly two hyperbolic geodesics $\gamma$ on this pair of pants going up the cusp $q_j$ and perpendicular on the geodesic boundary components or going up these cusps if they happen to be degenerate. For each of these curves there exists a unique point $m\in\Gamma_j(C)\cap\gamma$. For both choices there exists a unique biholomorphism $\rho_j^b:\DD\lra C^b$ onto the disc bounded by $\Gamma_j(C)$ such that $\rho_j^b(p_j)=0$ and $\rho_j^b(m)=1$. This is illustrated in \cref{fig:definition-chart-phi}.

Notice that we can make this choice for $\rho_j^c$ and $\rho_j^b$ independently, so we fix this by requiring that $\Phi(m^b)=m^c$ where $m^b$ is the intersection point of $\Gamma_j(C^b)$ with the chosen geodesic going up the cusp $q_j$ which is perpendicular to one of the boundary components of the hyperbolic pair of pants.
	
\begin{figure}[h!]
  \centering
  \def\svgwidth{\textwidth}
  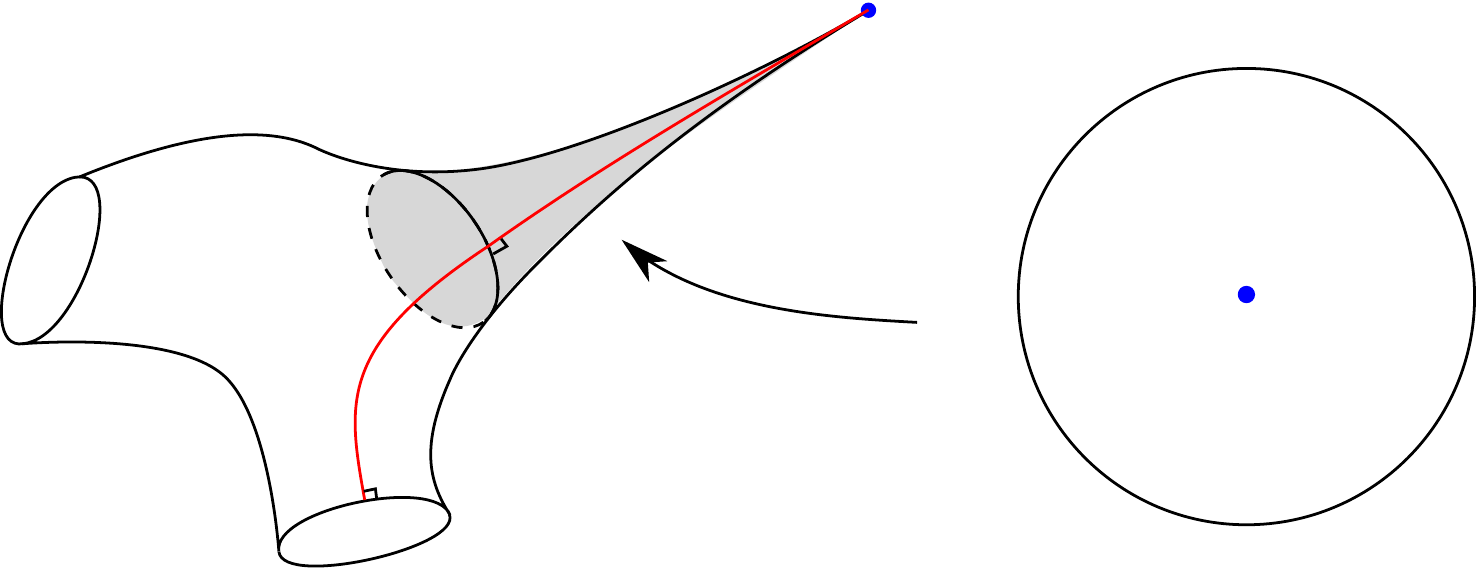
  \caption{This figure illustrates the definition of the map $\rho_j^b:\DD\lra C^b$ used in \cref{def:vector-bundle-E}.}
  \label{fig:definition-chart-phi}
\end{figure}

\begin{lem}
  $E$ as defined in \cref{def:vector-bundle-E} is a well-defined complex orbifold vector bundle and a direct sum of complex orbifold line bundles.
  \label{lem:E-orbibundle}
\end{lem}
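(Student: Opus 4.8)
The plan is to verify the three defining properties of a complex orbifold vector bundle from \cref{def:orbifold-bundles}: that $\pi_E:E\lra\Ob\mcM_{g,k,h,n}(T)$ is a complex vector bundle, that $\mu_E$ is a well-defined action of the groupoid $\mcM_{g,k,h,n}(T)$ on $E$, and that this action is fibrewise complex-linear. Since $E$ is by construction $\bigsqcup_{\lambda\in\Lambda}O^{\lambda}\times\CC^k$ sitting over $\Ob\mcM_{g,k,h,n}(T)=\bigsqcup_{\lambda\in\Lambda}O^{\lambda}$, the first property is immediate: on each chart $O^{\lambda}$ it is the trivial bundle $O^{\lambda}\times\CC^k$, which is a holomorphic (in particular complex) vector bundle, and the anchor is just the projection. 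The decomposition as a direct sum of $k$ line bundles is visible from the factorisation $\CC^k=\CC\oplus\cdots\oplus\CC$ together with the fact that the transition formula \cref{eq:transition-function-E} acts diagonally, i.e.\ the $j$-th component of the image depends only on the $j$-th component $z_j$ of the input. So after establishing well-definedness I would simply remark that each summand $E_j$ defined by restricting to the $j$-th coordinate is itself an orbifold line bundle and $E=\bigoplus_{j=1}^k E_j$.

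\textbf{The key step: well-definedness of the action.} The substantive content is showing that the charts $\rho_j^b:\DD\lra C^b$ are genuinely well defined (up to the ambiguity that the formula is designed to absorb) and that $\mu_E$ as given in \cref{eq:transition-function-E} satisfies the action axioms $1_x\cdot e=e$ and $g\cdot(h\cdot e)=(gh)\cdot e$. First I would pin down $\rho_j^b$: given $b\in O^{\lambda}$, the data $\lambda$ includes a pants decomposition and a uniformising hyperbolic metric on $C^b$, hence a unique reference curve $\Gamma_j(C^b)$ of length $F(0)$ near $q_j$ and, by \cref{lem:existence-reference-curves}, the two perpendicular geodesics going up the cusp $q_j$. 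Choosing one of them fixes a point $m\in\Gamma_j(C^b)$, and then there is a \emph{unique} biholomorphism $\rho_j^b:\DD\lra(\text{disc bounded by }\Gamma_j(C^b))$ with $\rho_j^b(q_j)=0$ and $\rho_j^b(m)=1$ by the Riemann mapping theorem plus the three-point normalisation (here two points plus the requirement of sending $0$ to the cusp). The differential $\D_0\rho_j^b:\CC\lra\ts_{q_j}C^b$ is therefore well defined up to the choice of which of the two perpendicular geodesics one uses; I would observe that the two choices differ by $m\mapsto m'$ on opposite sides, corresponding to $1\mapsto -1$, i.e.\ by a sign, so $\D_0\rho_j^b$ is canonical up to $\pm 1$. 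The crucial point is that in the formula $\left(\D_0\rho_j^c\right)^{-1}\circ\D_{q_j^b}\Phi\circ\D_0\rho_j^b$ this sign ambiguity cancels once we impose the compatibility $\Phi(m^b)=m^c$ (both chosen on the side picked out by a perpendicular geodesic, and $\Phi$ maps perpendicular geodesics to perpendicular geodesics since it is a biholomorphism hence an isometry of the uniformised metrics by \cref{lem:hol-map-loc-isom}). With that settled, the identity axiom is trivial since $\Phi=\id$ gives $\D_{q_j}\id=\id$, and the cocycle property follows from the chain rule: for composable $(b,(\Phi,\varphi),c)$ and $(c,(\Phi',\varphi'),d)$ the composition of the two transition maps is $\left(\D_0\rho_j^d\right)^{-1}\circ\D_{q_j^c}\Phi'\circ\D_0\rho_j^c\circ\left(\D_0\rho_j^c\right)^{-1}\circ\D_{q_j^b}\Phi\circ\D_0\rho_j^b=\left(\D_0\rho_j^d\right)^{-1}\circ\D_{q_j^b}(\Phi'\circ\Phi)\circ\D_0\rho_j^b$, which is exactly the transition map for the composite morphism. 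Smoothness of $\mu_E$ in $b$ follows because the morphisms $\Phi$ in $\Mor\mcM_{g,k,h,n}(T)$ vary smoothly (they are restrictions of extensions into universal unfoldings, see \cref{sec:mfd-structure-morphisms}) and the hyperbolic uniformisation, hence the reference curves and the charts $\rho_j^b$, depend smoothly on $b$ by \cref{prop:universal-teichmueller-family-vertical-hyp-metric}.

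\textbf{Main obstacle.} The one genuinely delicate point — and the place I would spend the most care — is the canonicity (up to the controlled sign) of $\D_0\rho_j^b$ and the verification that the compatibility convention $\Phi(m^b)=m^c$ really is consistent, i.e.\ that a morphism $(\Phi,\varphi)$ does carry the chosen perpendicular-geodesic data on the source to that on the target. This rests on the fact that $\Phi$ is a biholomorphism and therefore a hyperbolic isometry of the uniformised surfaces (\cref{lem:hol-map-loc-isom}), so it maps the unique reference curve of length $F(0)$ near $q_j$ to that near $q_j^c=\Phi(q_j)$, maps cusps to cusps, and maps geodesics perpendicular to pants boundaries to geodesics perpendicular to pants boundaries; hence it identifies the two-element sets of perpendicular geodesics and, after fixing one on the source, determines the matching one on the target. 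Everything else is formal. Finally I would note that linearity of $\mu_E$ on fibres is obvious because $z\mapsto\left(\D_0\rho_j^c\right)^{-1}\circ\D_{q_j^b}\Phi\circ\D_0\rho_j^b(z_j)$ is a composition of $\CC$-linear maps in each coordinate, which simultaneously re-proves the direct-sum decomposition claim.
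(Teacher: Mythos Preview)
Your proposal is correct and follows essentially the same architecture as the paper's proof: resolve the $\pm 1$ ambiguity in the chart $\rho_j^b$, verify the action axioms via the chain rule, argue smoothness, and read off the direct-sum decomposition from the diagonal form of the transition maps. The well-definedness discussion (the two perpendicular geodesics give charts differing by $z\mapsto -z$, and the signs cancel in the composite) is exactly what the paper does.

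The one place where your argument is noticeably thinner is smoothness of $\mu_E$. You cite \cref{prop:universal-teichmueller-family-vertical-hyp-metric}, but that proposition only asserts \emph{continuity} of the fibrewise hyperbolic metric and says nothing about how the Riemann map $\rho_j^b$ varies with $b$. The paper breaks this into two non-formal steps: first, the reference curves $\Gamma_j(C^b)$ depend smoothly on $b$ because they are solutions of a first-order ODE (constant geodesic curvature, prescribed length) whose coefficients depend smoothly on the metric; second, the Riemann mapping of the disc bounded by a smoothly varying Jordan curve converges in $C^\infty$ on compact sets to the limit Riemann mapping, for which the paper cites Carath\'eodory's 1912 kernel-convergence theorem. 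This is the genuine analytic input, and your sentence ``the hyperbolic uniformisation, hence the reference curves and the charts $\rho_j^b$, depend smoothly on $b$'' skips over precisely this point.

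A minor remark: your claim that $\Phi$ ``maps geodesics perpendicular to pants boundaries to geodesics perpendicular to pants boundaries'' tacitly assumes $\Phi$ respects the pants decompositions coming from $\lambda$ and $\lambda'$, which need not hold. The paper sidesteps this by simply imposing $\Phi(m^b)=m^c$ as a normalisation and then showing the formula is invariant under the residual $\pm 1$ choice; both treatments are somewhat informal here, but it is worth being aware that this is where the argument is most delicate.
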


\begin{proof}
  Before proving the properties of a complex orbifold vector bundle we need to make sure that the action of $\mcG$ on $E$ stated in \cref{def:vector-bundle-E} is well defined. The issue is that we have defined two different charts $\rho_j^b$ coming from the two possible choices of boundary geodesics of the pair of pants specified by $\lambda$. Note that in $\DD$ the real axis connecting $0$ and $1$ is a fixed point set of an anti-holomorphic involution and therefore geodesic, see \cref{sec:embedding-moduli-space-boundary}. Recall that a hyperbolic pair of pants can be cut into two isometric hyperbolic hexagons by cutting along the unique geodesics perpendicular to the pairs of boundary components (or going up the cusp). The isometry is a reflection with respect to these cut geodesics which include the two possible choices $\gamma$ and $\gamma'$ from the definition. This shows that that on $\DD$ the image of the other possible choice of $m'=\Gamma_j(C)\cap\gamma'$ is the point $-1$. This in turn proves that $\ol{\rho}_j^b(z)=\rho_j^b(-z)$ for $z\in\DD$ is the other chart. But then we see that

  \begin{align*}
    \left(\D_0\ol{\rho}_j^{c}\right)^{-1}\circ \D_{q^b_j}\Phi\circ \D_0\ol{\rho}_j^{b}(z_j) & =-\left(\D_0\rho_j^{c}\right)^{-1}\circ \D_{q^b_j}\Phi\circ \D_0\rho_j^{b}(-z_j) \\
    & = \left(\D_0\rho_j^{c}\right)^{-1}\circ \D_{q^b_j}\Phi\circ \D_0\rho_j^{b}(z_j)
  \end{align*}

  which shows that the map $\mu_E$ does not depend on that choice.

  Next we show that $E$ is indeed a complex orbifold vector bundle. First we notice that $\pi_E:E\lra\Ob\mcM_{g,k,h,n}(T)$ is indeed a complex vector bundle and the action $\Mor\mcM_{g,k,h,n}(T)$ on the fibres is indeed $\CC$-linear. It remains to show that $\mu_E:\Mor\mcM_{g,k,h,n}(T){_s\times_{\pi}} E \lra E$ is smooth and satisfies $\pi_E(g\cdot e)=t(g)$, $\id_x\cdot e=e$ and $g\cdot(h\cdot e)=(gh)\cdot e$ for all suitable $e\in E$ and $g,h\in\Mor\mcM_{g,k,h,n}(T)$.

  From \cref{def:vector-bundle-E} it is clear that
  \begin{equation*}
    \pi_E(\mu_E((b,(\Phi,\phi),c),z))=\pi_E(c,z')=c=t(b,(\Phi,\phi),c)
  \end{equation*}
  where $z'$ is an abbreviation for the long expression in \cref{eq:transition-function-E}. Secondly we have $\id_b\cdot e=e$ as in \cref{eq:transition-function-E} $\Phi$ is given by the identity.  And thirdly we have $g\cdot(h\cdot e)=(gh)\cdot e$ because this is true for the underlying morphisms on $\mcM_{g,k,h,n}(T)$ and in the $\CC$-part the two middle charts cancel out. This remains true even if the two maps are defined with different choices for the map $\rho_j^c$ as we obtain always an even number of factors of $-1$.

  The smoothness of $\mu_E$ can be shown as follows. As the map $\mu_E$ is linear on fibres it depends smoothly on that coordinate. For the dependence on the complex structure represented by the point $b\in O^{\lambda}$ we notice first that the uniformized hyperbolic metric depends smoothly on $b$ in a neighborhood of a cusp $p_j$. Since the curve $\Gamma_j(C)$ is determined by a length condition as well as the solution to a first-order ordinary differential equation whose coefficients depend smoothly on $b$ we see that the curves $\Gamma_j(C)$ depend smoothly on $b$. We choose a sufficiently large coordinate chart around $p_j$ and consider everything in $\CC$. The curves $\Gamma_j(C)$ bound contractible neighborhoods of $0$ which might depend on $b$. The chart $\rho_j^b$ is then the unique biholomorphism from this neighborhood on the unit disc after rotating the neighborhoods such that the point $m$ which might depend smoothly on $b$ points e.g.\ in the positive real direction. This way we obtain a smooth family of Jordan curves in $\CC$ such that $0$ is contained in the interior and which are all contractible. In \cite{caratheodory_untersuchungen_1912} it is shown that for one parameter families of such neighborhoods of zero with a small additional assumption the corresponding Riemann mappings converge in $\cin$ on compact sets to the Riemann mapping of the limit neighborhood.\footnote{The assumption is that the sequence of domains bounded by the curves converges versus its so-called core. This core is the largest domain $G$ such that any closed domain contained in $\mathring{G}$ is contained in all but finitely many members of the sequence of domains. Convergence means that any subsequence has the same core. This is satisfied if the boundary curves converge.} This shows that the map

  \begin{equation*}
    b\mapsto \left(\left(\D_0\rho_j^{c}\right)^{-1}\circ \D_{q^b_j}\Phi\circ \D_0\rho_j^{b}(z_j)\right)_{j=1}^k
  \end{equation*}
  
  is $\cin$ in all directions and thus a smooth map of $b\in O^{\lambda}$ which is used for the charts of $M(\lambda,\lambda')$.

  The bundle $E$ is a direct sum of complex orbifold line bundles because the bundle $\pi_E:E\lra\Ob\mcM_{g,k,h,n}(T)$ is a trivial bundle and the action $\mu_E$ acts diagonally on the fibres.
\end{proof}

We do need another similar bundle called $F$. This bundle will be essentially the bundle of $\Psi$-classes for the marked points on the target, so in particular it will have rank $n$ instead of rank $k$.

\begin{definition}
  Define the complex orbifold vector bundle $F$ over $\mcM_{g,k,h,n}(T)$ by
  \begin{align*}
    F & \coloneqq \bigsqcup_{\lambda\in\Lambda}O^{\lambda}\times\CC^n  \\
    \pi_F: F & \lra \obj\mcM_{g,k,h,n}(t)  \\
    (b,z) & \longmapsto b  \\
    \mu_F: \Mor\mcM_{g,k,h,n}(T){_s\times_{\pi_F}} F & \lra F \\
    (\underbrace{(b,(\Phi,\varphi),c)}_{\in M(\lambda,\lambda')}, \underbrace{(b,z)}_{\in O^{\lambda}\times \CC^n}) & \longmapsto \left(c,\left(\left(\D_0\eta_i^{c}\right)^{-1}\circ \D_{p^b_i}\varphi\circ \D_0\eta_i^{b}(z_i)\right)_{i=1}^n\right),
  \end{align*}
  where the map $\eta_i^{b}:\DD\lra X^b$ is defined in the same way as before only that we use the reference curve $\Gamma_i(X^b)$ and the pair of pants decomposition contained in $\lambda$ for the target surface.
  \label{def:vector-bundle-F}
\end{definition}

\begin{lem}
  $F$ as defined in \cref{def:vector-bundle-F} is indeed a well-defined complex orbifold vector bundle and a direct sum of complex orbifold line bundles.
\end{lem}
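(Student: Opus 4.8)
The plan is to mirror the proof of \cref{lem:E-orbibundle} almost verbatim, since $F$ is built in exactly the same way as $E$ but using the target surface data instead of the source surface data. First I would check well-definedness of the action $\mu_F$ with respect to the ambiguity in the choice of chart $\eta_i^b$. As in the proof for $E$, the pair of pants decomposition contained in $\lambda$ for the target surface $X^b$ determines two candidate geodesics going up the cusp $p_i$ and perpendicular to the pair of boundary geodesics; these are interchanged by the reflection cutting the pair of pants into two isometric hexagons, and under the biholomorphism $\eta_i^b:\DD\lra X^b$ this reflection corresponds to $z\mapsto -z$. Hence the alternative chart $\ol{\eta}_i^b(z)=\eta_i^b(-z)$ differs by a sign, and since $\mu_F$ involves one factor of $\D_0\eta_i^b$ and one factor of $(\D_0\eta_i^c)^{-1}$, the two signs cancel, so $\mu_F$ is independent of the choices. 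The only subtlety is to fix the choices consistently by requiring $\varphi(m_i^b)=m_i^c$ for the marked intersection points, exactly as in \cref{def:vector-bundle-E}.

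Next I would verify the vector bundle and groupoid action axioms. The anchor $\pi_F:F\lra\Ob\mcM_{g,k,h,n}(T)$ is a trivial complex vector bundle over each $O^\lambda$, and $\Mor\mcM_{g,k,h,n}(T)$ acts diagonally and $\CC$-linearly on the fibres, so once $\mu_F$ is shown to be a groupoid action it follows that $F$ is a direct sum of complex orbifold line bundles. The identities $\pi_F(g\cdot e)=t(g)$ and $\id_b\cdot e=e$ are immediate from the formula, since $\varphi$ is the identity for an identity morphism. The cocycle relation $g\cdot(h\cdot e)=(g h)\cdot e$ holds because it holds for the underlying biholomorphisms of target surfaces and because the intermediate charts $\D_0\eta_i$ telescope; one again checks that any potential sign ambiguity enters an even number of times and therefore cancels.

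The remaining point, and the one requiring the most care, is the smoothness of $\mu_F$ as a map on $\Mor\mcM_{g,k,h,n}(T){}_s\times_{\pi_F}F$, i.e.\ smoothness of
\begin{equation*}
  b\longmapsto\left(\left(\D_0\eta_i^c\right)^{-1}\circ\D_{p_i^b}\varphi\circ\D_0\eta_i^b(z_i)\right)_{i=1}^n
\end{equation*}
in the Fenchel--Nielsen coordinates on $M(\lambda,\lambda')$. This is handled exactly as in the proof of \cref{lem:E-orbibundle}: the uniformized hyperbolic metric on $X^b$ depends smoothly on $b$ near each cusp $p_i$, hence so does the reference curve $\Gamma_i(X^b)$ (it is determined by a length condition and a first-order ODE with smoothly varying coefficients), hence so does the marked point $m_i^b$. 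In a fixed large coordinate chart around $p_i$ the curves $\Gamma_i(X^b)$ form a smooth family of Jordan domains containing the origin, and the Carathéodory convergence result from \cite{caratheodory_untersuchungen_1912} gives $\cin$-convergence on compact sets of the corresponding Riemann maps $\eta_i^b$, so the above expression is $\cin$ in all directions of $b$. I expect this smoothness step to be the only genuine obstacle, and it is dispatched by citing the corresponding argument for $E$ verbatim, since the target surface plays here precisely the role the source surface played there.
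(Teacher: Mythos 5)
Your proposal is correct and takes exactly the same approach as the paper: the paper's proof consists of a single sentence citing the proof of \cref{lem:E-orbibundle} verbatim, and you have simply unpacked what that citation entails for the target-surface bundle $F$ in place of the source-surface bundle $E$. The details you give — the $z\mapsto -z$ chart ambiguity and its cancellation, the groupoid-action axioms, the Carathéodory smoothness argument — match the argument in \cref{lem:E-orbibundle} point for point.
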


\begin{proof}
  The proof is exactly the same as the proof of \cref{lem:E-orbibundle}.
\end{proof}

\index{Good Vector Bundle}

\begin{rmk}
 Unfortunately it turns out that the orbifold vector bundle $E$ can be a bad bundle (i.e.\ not a good one), see \cref{def:good-vector-bundle} for a definition. The problem is as follows: Suppose we have an automorphism $(\Phi,\id):(C,u,X,\bq,\bp)\lra(C,u,X,\bq,\bp)$ on the completely smooth Hurwitz cover $x=(C,u,X,\bq,\bp)$. Then according to \cref{sec:constr-an-orbif} a neighborhood of this point $x\in\Ob\mcM_{g,k,h,n}(T)$ is described by a neighborhood of $X$ in its Teichmüller space whose complex structures are then pulled-back via $u$ to complex structures on $C$. It is then clear that $(\Phi,\id)$ also gives an automorphism of all the points $x'$ close to $x$ in $\Ob\mcM_{g,k,h,n}(T)$. This is because we did not modify the maps $u$ or $\Phi$ and thus everything still commutes and $u$ is holomorphic by construction.

 Thus $(\Phi,\id)$ is contained in the ineffective morphisms $\Mor_{\text{ineff}}\mcM_{g,k,h,n}(T)$ and it is also an automorphism of $x$ and therefore it needs to act trivially on the fibres of $E$ if the bundle was good. Looking at the definition in \cref{def:vector-bundle-E} we see that the action of $(\Phi,\phi)\in\Aut_{\mcM_{g,k,h,n}(T)}(x)$ on $\CC$ is given by multiplication with the so-called rotation number of $\Phi$ at $z_j$ in the $j$-th component. It is clear that there exist morphisms with rotation numbers different from the identity, as any non-trivial automorphism needs to have a non-trivial rotation number and from examples we can see that most cases for Hurwitz numbers do have automorphisms. Therefore these automorphisms exist on most moduli spaces and the bundle $E$ will be bad.

 Note that this does not contradict the fact that every stable Deligne--Mumford orbifold except $\mcM_{2,0}$ is reduced as the complex structures of $C$ admitting Hurwitz covers are actual submanifolds of the corresponding object manifolds of the source Deligne--Mumford orbifold. This means that it is possible for an automorphism to be effective on a neighborhood of the whole Deligne--Mumford space but when restricted to the constructed submanifolds it acts as an automorphism everywhere in an open set. This is precisely what happens here.

 Furthermore note that the bundle $F$ is actually good. The reason is that for the target surface we use a neighborhood of $X$ in the full target universal unfolding. But since only $\mcM_{2,0}$ and $\mcM_{1,1}$ are non-reduced there are generally no ineffective morphisms on the target Deligne--Mumford orbifold. The special case $h=2$ and $n=0$ is of no concern to us as in that case the bundles do not even exist. This means that if we have an automorphism $(\Phi,\phi)\in\Aut_{\mcM_{g,k,h,n}(T)}(C,u,X,\bq,\bp)$ then it never extends to automorphisms in a neighborhood of $(\Phi,\phi)$ except if $\phi=\id$. But in that case it acts trivially on the fibre of the bundle $F$.
  \label{rmk:orbibundles-e-f-bad}
\end{rmk}

\begin{rmk}
  The vector bundles $E$ and $F$ define vectors of Chern classes $c_1(E)\in H^2(|\mcM_{g,k,h,n}(T)|,\QQ)^k$ and $c_1(F)\in H^2(|\mcM_{g,k,h,n}(T)|,\QQ)^n$ as was explained in \cref{rmk:chern-classes-orbibundles}. In the case of $F$ this is done directly via Chern--Weil theory on the bundle whereas in the case of $E$ we have to go through some extension of the bundle to a good one as was described in \cite{seaton_characteristic_2007} and in \cref{rmk:chern-classes-orbibundles}. Note that in our case we do not have to pass to the vertical tangent bundle because $E$ is naturally the pull-back of a $\CC^k$-orbifold bundle on $\mcM_{g,k}$ which is reduced for all parameter cases of interest except $g=1$ and $k=1$. Thus this extension is a good bundle and the Chern class of $E$ is given by restricting the Chern--Weil Chern class of this good bundle over $\mcM_{g,k}$. In any case we can do our calculations as if the Chern class was defined via Chern--Weil theory. The local calculations just need to be done on the extended bundle and then pulled-back afterwards.
  \label{rmk:chern-classes-e-f}
\end{rmk}

Notice that there exists of course also a corresponding bundle on Deligne--Mumford space.

\begin{definition}
  We denote by $\LL_i^*\lra\Ob\mcM_{h,n}$ the \emph{$i$-th tangent line bundle} over Deligne--Mumford space $\mcM_{h,n}$ with objects and morphisms defined by universal unfoldings $U^{\lambda}$ for $\lambda\in\Lambda$ which were used in the construction for the $O^{\lambda}$ for the moduli space of Hurwitz covers. It is defined by
  \begin{align*}
    \LL_i^* & \coloneqq \bigsqcup_{\lambda\in\Lambda}U^{\lambda}\times\CC  \\
    \pi_{\LL_i^*}: \LL_i^* & \lra \obj\mcM_{h,n}  \\
    (b,z) & \longmapsto b  \\
    \mu_{\LL_i^*}: \Mor\mcM_{h,n}{_s\times_{\pi_{\LL_i^*}}} \LL_i^* & \lra \LL_i^* \\
    ((b,\varphi,c), (b,z)) & \longmapsto \left(c,\left(\D_0\rho_i^{c}\right)^{-1}\circ \D_{p^b_i}\varphi\circ \D_0\rho_i^{b}(z_i)\right),
  \end{align*}
  where the map $\rho_i^b$ is defined as above at the marked point $p_i$ of $X$ at $b=(X,\bp)\in\Ob\mcM_{h,n}$.  
\end{definition}

\begin{rmk}
Note that with this transition map this line bundle is in fact the dual of the usual line bundle which has the cotangent space at a marked point as a fibre. It is a good bundle as again $\mcM_{h,n}$ is reduced for all interesting cases of parameters and the fact that it is an orbifold bundle at all can be proven exactly in the same way as for \cref{lem:E-orbibundle}.
\end{rmk}

\begin{definition}
  The $i$-th $\psi$-class on $\mcM_{h,n}$ is defined as
  \begin{equation*}
    \psi_i\coloneqq c_1(\LL_i)\in H^2(|\mcM_{h,n}|,\QQ)
  \end{equation*}
  for $\LL_i\lra\Ob\mcM_{h,n}$.
  \label{def:psi-classes}
\end{definition}

\subsection{Relating Various \texorpdfstring{$\boldsymbol{\Psi}$}{Psi}-Classes}

\begin{lem}
  The complex orbifold vector bundles $E$ and $\bigoplus_{j=1}^k\fgt^*\LL_j$ are isomorphic as complex orbifold vector bundles. Furthermore $F$ and $\bigoplus_{i=1}^n\ev^*\LL_i$ are isomorphic as complex orbifold vector bundles.
  \label{lem:pulled-back-line-bundles}
\end{lem}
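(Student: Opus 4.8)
\textbf{Proof plan for \cref{lem:pulled-back-line-bundles}.}

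The plan is to exhibit an explicit orbifold-vector-bundle isomorphism component by component, using the fact that all four bundles are defined by the same kind of local data, namely trivial bundles over the chart domains $O^{\lambda}$ with transition maps given by derivatives of biholomorphisms at marked points, computed in the distinguished charts $\rho_j^b$ (resp.\ $\eta_i^b$). First I would recall that $\fgt:\mcM_{g,k,h,n}(T)\lra\mcM_{g,k}$ is a homomorphism (\cref{prop:properties-functors-between-groupoids}) sending $b\in O^{\lambda}$ to $(C^{\lambda}_b,\bq)$ and $(b,(\Phi,\phi),c)$ to $\Phi$; hence by \cref{def:pull-back-orbibundle} the pulled-back bundle $\bigoplus_{j=1}^k\fgt^*\LL_j$ has total space $\bigsqcup_{\lambda}O^{\lambda}\times\CC^k$ with $\mcM_{g,k,h,n}(T)$ acting on the fibre $\CC^k$ via $(b,(\Phi,\phi),c)\cdot z = \phi_{\fgt}((\Phi))\cdot z$, where the action of $\Mor\mcM_{g,k}$ on $\bigoplus\LL_j$ is, by definition of $\LL_j$ on $\mcM_{g,k}$, precisely $z_j\mapsto (\D_0\rho_j^c)^{-1}\circ\D_{q^b_j}\Phi\circ\D_0\rho_j^b(z_j)$. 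Comparing with \cref{def:vector-bundle-E}, this is literally the same formula, so the identity map on $\bigsqcup_{\lambda}O^{\lambda}\times\CC^k$ is the desired isomorphism.

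The one genuine point to check is that the two constructions use compatible charts $\rho_j^b$. In the definition of $E$ the chart $\rho_j^b:\DD\lra C^{\lambda}_b$ is built from the uniformized hyperbolic metric on $C^{\lambda}_b$ with cusps at $\bq$, the pair-of-pants decomposition recorded in $\lambda$, and the reference curve $\Gamma_j(C^{\lambda}_b)$ of length $F(0)$; exactly the same data is used to define $\LL_j$ on $\mcM_{g,k}$ once we have arranged — as is done in \cref{sec:main-results} and reiterated at the start of \cref{sec:main-results} — that the universal unfoldings $U^{\lambda}$ for $\mcM_{g,k}$ used in constructing $\fgt$ carry the source surfaces $C^{\lambda}$ with the pair-of-pants decompositions and hyperbolic metrics induced from the Hurwitz deformation data. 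Thus the chart $\rho_j^b$ computed from the $\fgt$-image of $b$ agrees with the one used for $E$, and the two-fold ambiguity in the choice of perpendicular geodesic (the $\rho_j^b$ vs.\ $\bar\rho_j^b$ issue handled in the proof of \cref{lem:E-orbibundle}) cancels in the same way on both sides, since it contributes an even number of signs $-1$ to any composition of the transition maps. I would spell this out briefly: the identity map intertwines the anchors trivially, is fibrewise $\CC$-linear, and intertwines the two actions because the action formulas coincide term by term; hence it is an orbifold-vector-bundle isomorphism in the sense of \cref{def:orbibundle-morphism}.

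The argument for $F\cong\bigoplus_{i=1}^n\ev^*\LL_i$ is identical after replacing $\fgt$ by $\ev$, $C^{\lambda}_b$ by $X^{\lambda}_b$, the points $q_j$ by $p_i$, the curves $\Gamma_j(C)$ by $\Gamma_i(X)$, and using that $\ev$ sends $(b,(\Phi,\phi),c)$ to $\phi$ (\cref{prop:properties-functors-between-groupoids}); comparing \cref{def:vector-bundle-F} with the definition of $\LL_i$ on $\mcM_{h,n}$ pulled back along $\ev$ via \cref{def:pull-back-orbibundle} gives matching transition formulas, and again one must note that the same universal unfoldings of the target surfaces are used for $\mcM_{h,n}$ and in the construction of $\mcM_{g,k,h,n}(T)$, which is the standing assumption of this chapter. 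I do not expect a serious obstacle here; the only subtlety — and the place I would be most careful — is the bookkeeping that the distinguished charts $\rho_j^b,\eta_i^b$ are literally the same on both sides, which requires invoking that the moduli space of Hurwitz covers was built from the universal unfoldings that also define the Deligne--Mumford orbifolds $\mcM_{g,k}$ and $\mcM_{h,n}$, together with the $\cin$-dependence of $\rho_j^b$ on $b$ already established in \cref{lem:E-orbibundle}. Since everything is defined by the same formula, the isomorphism is the identity on the underlying trivial bundles and no computation is needed beyond this identification.
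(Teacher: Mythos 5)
Your plan matches the paper's proof: both observe that $\fgt$ and $\ev$ are homomorphisms, write out the pull-back bundles from \cref{def:pull-back-orbibundle}, note that the transition formulas for $\LL_j$ (resp.\ $\LL_i$) agree term by term with those of $E$ (resp.\ $F$) because the charts $\rho_j^b,\eta_i^b$ are built from the same data that is preserved by the orbifold atlases, and conclude that the canonical identification of total spaces is a $\mcM_{g,k,h,n}(T)$-equivariant fibrewise-linear bijection. Your extra remark about the two-fold ambiguity of $\rho_j^b$ cancelling is a worthwhile detail that the paper defers to the proof of \cref{lem:E-orbibundle} rather than repeating here, but the overall route is the same.
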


\begin{proof}
  Recall that $\ev:\mcM_{g,k,h,n}(T)\lra\mcM_{h,n}$ and $\fgt:\mcM_{g,k,h,n}(T)$ are homomorphisms and we can therefore pull back the orbibundles, see \cref{def:pull-back-orbibundle} for a definition and some discussion. In particular the pulled-back bundles are given by
  \begin{align*}
    \ev^*\LL_i & = \Ob\mcM_{g,k,h,n}(T){_{\ev}\times_{\pi}}\LL_i, \\
    \mu_{\ev^*\LL_i}((\Phi,\phi),(C,u,X,\bq,\bp),(X,\bp,z)) & = ((C',u',X',\bq',\bp'),(X',\bp',z')), \\
    \fgt^*\LL_j & = \Ob\mcM_{g,k,h,n}(T){_{\fgt}\times_{\pi}}\LL_j, \\
    \mu_{\fgt^*\LL_j}((\Phi,\phi),(C,u,X,\bq,\bp),(C,\bq,w)) & = ((C',u',X',\bq',\bp'),(C',\bq',w')) ,
  \end{align*}
  where $(\Phi,\phi):(C,u,X,\bq,\bp)\lra(C',u',X',\bq',\bp')$ is a morphism in $\mcM_{g,k,h,n}(T)$ and $z'$ and $w'$ are given by
  \begin{align*}
    z' & =\left(\D_0\eta_i^{X'}\right)^{-1}\circ \D_{p^X_i}\varphi\circ \D_0\eta_i^{X}(z) \text{ and } \\
    w' & =\left(\D_0\rho_j^{C'}\right)^{-1}\circ \D_{q^C_i}\varphi\circ \D_0\rho_j^{C}(w).
  \end{align*}
  Note that by definition of the orbifold atlases and the evaluation functor we have that the data in $\lambda$ is preserved and thus the maps $\eta_i$ and $\rho_j$ on $\mcM_{g,k,h,n}(T), \mcM_{g,k}$ and $\mcM_{h,n}$ correspond to each other. Therefore the obvious maps
  \begin{align*}
    \oplus_{i=1}^n\ev^*\LL_i & \lra F \\
    ((C,u,X,\bq,\bp),((X,\bp,z_i))_{i=1}^n) & \longmapsto (C,u,X,\bq,\bp,\bz) \text{ and} \\
    \oplus_{j=1}^k\fgt^*\LL_j & \lra E \\
    ((C,u,X,\bq,\bp),((X,\bp,w_j))_{j=1}^k) & \longmapsto (C,u,X,\bq,\bp,\bw)
  \end{align*}
  are clearly $\mcM_{g,k,h,n}(T)$-maps and smooth bijective fibre maps and therefore orbibundle isomorphisms.
\end{proof}

Now we will relate the Chern classes of the two orbifold vector bundles $E$ and $F$ over $\mcM_{g,k,h,n}(T)$. We want to express $c_1(F)$ in terms of $c_1(E)$ in $H^2(\mcM_{g,k,h,n}(T),\QQ)$. We will discuss in \cref{rmk:chern-classes-on-bundles} that we can also just look at the torus subbundles and compare their vectors of first Chern classes.

Relating Chern classes of $E$ and $F$ will be done in three steps: First we switch to the summand-wise unit-vector subbundles $\mathring{E}$ and $\mathring{F}$ which are torus principal bundles and have the same first Chern classes. Then we define a $n$-dimensional torus subbundle of $\mathring{E}$ corresponding to those unit tangent vectors at the $q_j$ which correspond to the same unit tangent vectors under $u$. And thirdly we see that there is a map from this subbundle to $\mathring{F}$ by just pushing forward these points via the Hurwitz cover $u$. This map will be a fibrewise covering of every circle summand. Note that we cannot describe this as a subbundle of $E$ by identifying tangent vectors that are mapped to each other under $u$ because the $q_j$ might be critical points.

\begin{definition}
  Consider a Hurwitz cover $(C,u,X,\bq,\bp)\in O^{\lambda}\subset\mcM_{g,k,h,n}(T)$. Then for every $j=1,\ldots,k$ we can define the map $\rho_j:\DD\lra C$ and $\eta_{\nu(j)}:\DD\lra X$ as in \cref{def:vector-bundle-E}. Now we define maps $\wt{\rho}_j$ and $\wt{\eta}_{\nu(j)}$ from $S^1\subset\CC$ to $\Gamma_j(C)$ and $\Gamma_{\nu(j)}(X)$, respectively. By $\D_0\rho_j:\CC\lra\ts_{q_j}C$ we obtain a tangent vector to $q_j$ for every point in $S^1$. This tangent vector in turn corresponds to a unique hyperbolic geodesic going up the cusp $q_j$ which intersects $\Gamma_j(C)$ perpendicularly. Now $\wt{\rho}_j$ maps a point in $S^1$ to this intersection point. The same construction gives a map $\wt{\eta}_{\nu(j)}:S^1\lra \Gamma_{\nu(j)}(X)$. These maps are obviously homeomorphisms.
  \label{def:s1-parametrizations}
\end{definition}

\begin{definition}
  We define the torus bundles $\mathring{E}$ and $\mathring{F}$ in the same way as in \cref{def:vector-bundle-E} and \cref{def:vector-bundle-F} but using $(S^1)^k\subset\CC^k$ as a fibre.
\end{definition}

\begin{lem}
  The subsets $\mathring{E}$ and $\mathring{F}$ are indeed $T^k$- and $T^n$-orbifold principal bundles, respectively.
  \label{lem:torus-subbundles-are-bundles}
\end{lem}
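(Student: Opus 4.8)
The plan is to verify the three defining conditions of a complex orbifold vector bundle (anchor is a vector bundle, action is fibrewise linear and smooth) for the restrictions to unit vectors, exactly as was done for $E$ and $F$ in \cref{lem:E-orbibundle}, and then observe that the fibrewise-linear action of $\Mor\mcM_{g,k,h,n}(T)$ restricts to a fibrewise $T^k$- or $T^n$-equivariant action on the unit circle bundles. First I would recall from \cref{def:vector-bundle-E} and \cref{def:vector-bundle-F} that the anchor maps $\pi_E$ and $\pi_F$ are trivial bundles $\bigsqcup_{\lambda}O^{\lambda}\times\CC^k$ and $\bigsqcup_{\lambda}O^{\lambda}\times\CC^n$, so their restrictions to unit vectors are $\bigsqcup_{\lambda}O^{\lambda}\times(S^1)^k$ and $\bigsqcup_{\lambda}O^{\lambda}\times(S^1)^n$, which are honest trivial $T^k$- and $T^n$-principal bundles over the object manifold.

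The key point is that the morphism action stays inside the unit sphere subbundle and is equivariant. The action $\mu_E$ from \cref{eq:transition-function-E} acts in the $j$-th $\CC$-factor by the linear map $\left(\D_0\rho_j^{c}\right)^{-1}\circ\D_{q_j^b}\Phi\circ\D_0\rho_j^{b}$, which is a $\CC$-linear isomorphism between one-dimensional complex vector spaces, hence multiplication by a nonzero complex number; precomposing with the standard $S^1$-action on $\CC$ and postcomposing with it shows the map is $S^1$-equivariant. However, an arbitrary nonzero scalar does \emph{not} preserve $S^1\subset\CC$, so one must check that this scalar in fact has modulus one. This is exactly where the geometric choices enter: the maps $\rho_j^b$ and $\rho_j^c$ are biholomorphisms onto the discs bounded by the reference curves $\Gamma_j(C^b)$ and $\Gamma_j(C^c)$, and $\Phi$ is an isometry for the uniformized hyperbolic metrics mapping $C^b$ biholomorphically to $C^c$ with $\Phi(q_j^b)=q_j^c$ and, by the normalization $\Phi(m^b)=m^c$, mapping one reference point to the other. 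Hence $\left(\rho_j^c\right)^{-1}\circ\Phi\circ\rho_j^b:\DD\lra\DD$ is a biholomorphism of the disc fixing $0$ and sending $1$ to $1$, so by the Schwarz lemma it is the identity; its differential at $0$ is therefore the identity of $\CC$, which certainly has modulus one. More to the point, even without the $\Phi(m^b)=m^c$ normalization, the map is a rotation of $\DD$, so its derivative at the origin is a unit complex number — this is the ``rotation number'' already referenced in \cref{rmk:orbibundles-e-f-bad}. The same reasoning applies to $\mu_F$ with $\eta_i^b$, $\eta_i^c$ and $\varphi$ in place of $\rho_j^b$, $\rho_j^c$ and $\Phi$. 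Thus $\mu_E$ and $\mu_F$ restrict to well-defined actions on the unit circle subbundles, and these restricted actions are fibrewise $T^k$- respectively $T^n$-equivariant (they commute with the standard torus action on $(S^1)^k$ and $(S^1)^n$ because multiplication by a unit complex number commutes with the $S^1$-rotation in each factor).

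Finally I would note that smoothness of the restricted actions, the cocycle identities $g\cdot(h\cdot e)=(gh)\cdot e$, $\id_x\cdot e = e$, and independence of the choice of boundary geodesic (the even-number-of-signs argument) are all immediate restrictions of the corresponding statements already proved in \cref{lem:E-orbibundle}; in particular the smooth dependence of $\rho_j^b$ and $\eta_i^b$ on $b$, established there via the Carath\'eodory convergence of Riemann maps in \cite{caratheodory_untersuchungen_1912}, carries over verbatim since the unit circle bundle is a smooth submanifold of the total space of $E$ (resp.\ $F$) and the action preserves it. I expect the only genuinely substantive point to be the verification that the transition scalars lie in $S^1$, i.e.\ identifying $\left(\rho_j^c\right)^{-1}\circ\Phi\circ\rho_j^b$ (and the analogue for $F$) as a rotation of the disc using the Schwarz lemma together with the isometry property of $\Phi$ and $\varphi$; everything else is a routine transcription of the proof of \cref{lem:E-orbibundle}.
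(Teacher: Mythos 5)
Your argument is essentially the one the paper gives: the only substantive point is that for any morphism $(\Phi,\varphi)\colon b\to c$, the conjugate $\left(\rho_j^{c}\right)^{-1}\circ\Phi\circ\rho_j^{b}\colon\DD\to\DD$ is a biholomorphism of the disc fixing $0$, hence a rotation by the Schwarz lemma, so its derivative at the origin has modulus one and the action preserves $(S^1)^k$ (and likewise for $\eta_i$ and $(S^1)^n$). Smoothness and the cocycle identities are inherited by restriction from \cref{lem:E-orbibundle}, exactly as you say.

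There is, however, a genuine misstep in the first half of your argument that you should excise: the claim that $\left(\rho_j^{c}\right)^{-1}\circ\Phi\circ\rho_j^{b}$ is \emph{the identity} because it sends $1$ to $1$. If every transition map were the identity, the orbibundle $E$ would be trivial and in particular every automorphism of a Hurwitz cover would act trivially on the fibres — but \cref{rmk:orbibundles-e-f-bad} explicitly observes that the automorphism action is multiplication by the rotation number of $\Phi$ at $q_j$, which is in general a nontrivial unit complex number, and the entire Chern class machinery in \cref{prop:chern-class-calculations} and beyond depends on $E$ being nontrivial. The point is that a general morphism $(\Phi,\varphi)\in M(\lambda,\lambda')$ is \emph{not} required to carry the pair of pants decomposition (and hence the reference point $m^b$) fixed in $\lambda$ to the one fixed in $\lambda'$; the line in \cref{def:vector-bundle-E} about $\Phi(m^b)=m^c$ should not be read as a hypothesis on arbitrary morphisms. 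Your "more to the point" sentence is the correct argument and is all that is needed; the Schwarz-lemma rotation conclusion does not require any normalization at $1$ and is exactly what the paper uses.
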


\begin{proof}
  We need to check that the $\mu_E$- and $\mu_F$-actions restrict to the $(S^1)^k$-subbundles. This is because for $(\Phi,\varphi):b\lra c$ the map $\rho_j^c\circ \Phi\circ\rho_j^b:\CC\lra\CC$ is a biholomorphism of the disc fixing zero and therefore a rotation. But then its differential at zero is also a rotation thus preserving $S^1$. This argument works for $\eta_i$, too, of course.
\end{proof}

\begin{definition}
  Define the subset $\mathring{E}'\subset \mathring{E}$ as follows. A point $(b,z)\in O^{\lambda}\times (S^1)^k\subset \mathring{E}$ with $b=(C,u,X,\bq,\bp)$ and $z=(z_1,\ldots,z_k)\in (S^1)^k$ is contained in $E'$ if and only if
  \begin{equation}
    (\wt{\eta}_{\nu(j)})^{-1}\circ u\circ\wt{\rho}_j(z_j)= (\wt{\eta}_{\nu(i)})^{-1}\circ u \circ\wt{\rho}_i(z_i)
    \label{eq:condition-subbundle-e-prime}
  \end{equation}
  for all $i,j\in\{1,\ldots,k\}$ such that $\nu(i)=\nu(j)$.
  \label{def:e-prime-mathring}
\end{definition}

\begin{prop}
  The subset $\mathring{E}'$ is a $T^n$-principal orbibundle over the orbifold groupoid
  \begin{equation*}
    \faktor{\mcM_{g,k,h,n}(T)\ltimes \mathring{E}'}{T^n}
  \end{equation*}
  with respect to the following $T^n$-action on $\mathring{E}'$. We define $K_i\coloneqq \lcm\{l_j\mid \nu(j)=i\}$ for $i=1,\ldots,n$ and set
  \begin{align*}
    T^n & \stackrel{\iota}{\lra} T^k \\
  (e^{2\pi\ii \theta_1},\ldots,e^{2\pi\ii \theta_n}) & \mapsto\left(e^{2\pi\ii \frac{K_{\nu(1)}}{l_1} \theta_{\nu(1)}},\ldots,e^{2\pi\ii \frac{K_{\nu(k)}}{l_k} \theta_{\nu(k)}}\right).
  \end{align*}
  This way, $T^n$ acts on $\mathring{E}'$ and on $\mcM_{g,k,h,n}(T)\ltimes \mathring{E}'$. In particular this action is free on $\mcM_{g,k,h,n}(T)\ltimes \mathring{E}'$. Also this orbifold principal torus bundle $\mathring{E}'$ is a sum of circle bundles.
  \label{prop:E-prime-E-subbundle}
\end{prop}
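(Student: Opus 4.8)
The plan is to verify the three bundle-theoretic claims in order: that $\mathring E'$ is a submanifold of $\mathring E$ on which the described $T^n$-action is well-defined, that this action is free on the translation groupoid $\mcM_{g,k,h,n}(T)\ltimes\mathring E'$, and finally that the resulting orbifold-principal-bundle structure decomposes as a sum of circle bundles. First I would check that condition \cref{eq:condition-subbundle-e-prime} cuts out a submanifold of $\mathring E$. Locally over $O^\lambda$ the bundle $\mathring E$ is trivial, $O^\lambda\times(S^1)^k$, and the maps $\wt\rho_j$ and $\wt\eta_{\nu(j)}$ from \cref{def:s1-parametrizations} depend smoothly on $b\in O^\lambda$ — this follows from the same argument as in the proof of \cref{lem:E-orbibundle}, namely the smooth dependence of the uniformized hyperbolic metric and hence of the reference curves and the geodesics perpendicular to them. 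Thus $(\wt\eta_{\nu(j)})^{-1}\circ u\circ\wt\rho_j$ is a smooth circle-valued function of $(b,z_j)$, and \cref{eq:condition-subbundle-e-prime} says that for each fibre-index $i$ the $|\nu^{-1}(i)|$ functions agree pairwise. Over each $O^\lambda$ this exhibits $\mathring E'$ as the graph of a smooth map sending $(b,z_{j_0})$ (one representative $j_0\in\nu^{-1}(i)$ per $i$) to the remaining $z_j$'s, via the diffeomorphisms $z_j\mapsto (\wt\eta_{\nu(j)})^{-1}\circ u\circ\wt\rho_j$ restricted to fibres, so $\mathring E'|_{O^\lambda}\cong O^\lambda\times(S^1)^n$ with one $S^1$-factor per target marked point. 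I would also check that \cref{eq:condition-subbundle-e-prime} is preserved by the groupoid action $\mu_E$: a morphism $(\Phi,\varphi)$ intertwines $u$ and $u'$ and maps the reference curves and the perpendicular geodesics to each other (being an isometry respecting the data in $\lambda$), so it carries the equality \cref{eq:condition-subbundle-e-prime} to the corresponding equality for the image Hurwitz cover; hence $\mathring E'$ is a sub-orbibundle, in particular a $\mcM_{g,k,h,n}(T)$-invariant submanifold.

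Next I would verify that the map $\iota\colon T^n\to T^k$ is a Lie group homomorphism and that its image acts on $\mathring E'$ (not just on $\mathring E$). On $\mathring E$, the $T^k$-action rotating each $S^1$-factor is the principal structure; restricting along $\iota$ we let $\theta_{\nu(j)}$ act on the $j$-th factor by rotation through $\tfrac{K_{\nu(j)}}{l_j}\theta_{\nu(j)}$. To see this preserves $\mathring E'$, I would compute the effect on $(\wt\eta_{\nu(j)})^{-1}\circ u\circ\wt\rho_j(z_j)$: rotating $z_j$ by $\tfrac{K_{\nu(j)}}{l_j}\theta_{\nu(j)}$ on the source side corresponds, because $\wt\rho_j$ and $\wt\eta_{\nu(j)}$ are arc-length-type parametrizations of reference curves and $u$ has local degree $l_j$ at $q_j$, to a rotation by $l_j\cdot\tfrac{K_{\nu(j)}}{l_j}\theta_{\nu(j)}=K_{\nu(j)}\theta_{\nu(j)}$ on the target circle — independent of $j$ within $\nu^{-1}(i)$. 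This is precisely the same bookkeeping as in the proof that \cref{def:torus-action-moduli-spaces} is well-defined, and it shows both that the equalities \cref{eq:condition-subbundle-e-prime} are preserved and that $\iota(\theta)=\mathrm{id}$ whenever each $\theta_i\in\tfrac{1}{K_i}\ZZ$... wait, more precisely whenever each $\theta_i\in\ZZ$, since $l_j\mid K_{\nu(j)}$ forces $\tfrac{K_{\nu(j)}}{l_j}\in\NN$, so $\iota$ descends to $T^n$. Combining with the local trivialization $\mathring E'|_{O^\lambda}\cong O^\lambda\times(S^1)^n$ above, in which the $T^n$-action is visibly the standard rotation on each of the $n$ circle factors, I conclude that $\mathring E'$ is a $T^n$-principal orbibundle, and being a product of $n$ circle factors fibrewise it is a direct sum of circle bundles. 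The action on the translation groupoid $\mcM_{g,k,h,n}(T)\ltimes\mathring E'$ is defined on objects as above and on morphisms by leaving the morphism fixed, exactly as in \cref{lem:relation-group-action-orbifold}; freeness on objects is immediate since the $T^n$-action on each $(S^1)^n$-fibre is free, and this propagates to morphisms by the argument already used in \cref{thm:symplectic-reduction} (a fixed morphism forces a fixed source object).

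The main obstacle I anticipate is not any single hard estimate but rather getting the normalization of the parametrizations $\wt\rho_j$, $\wt\eta_i$ from \cref{def:s1-parametrizations} exactly right so that the local degree of $u$ at $q_j$ enters as the clean factor $l_j$ in the rotation computation, and checking that the two possible choices of perpendicular geodesic (the $m$ versus $m'$ ambiguity from the proof of \cref{lem:E-orbibundle}) do not spoil invariance of $\mathring E'$ — but since that ambiguity only introduces an even number of sign flips and the condition \cref{eq:condition-subbundle-e-prime} is an equality of points on $\Gamma_{\nu(j)}(X)$, it is harmless. A secondary point worth care is confirming that $\mathring E'$ genuinely has the structure of an orbifold groupoid (étale, proper) after passing to the quotient by $T^n$; this follows from \cref{thm:symplectic-reduction}-style reasoning applied to the translation groupoid together with the local product structure, since the $T^n$-action is free and the fibres are compact. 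Everything else is a direct unwinding of the definitions of $\mu_E$, $\iota$, and the reference-curve parametrizations.
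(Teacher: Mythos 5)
Your overall plan mirrors the paper's proof: verify that condition \cref{eq:condition-subbundle-e-prime} is preserved by the groupoid action (making $\mathring E'$ a $\mcM_{g,k,h,n}(T)$-space), show that $\iota:T^n\to T^k$ is a Lie group homomorphism giving a free smooth action, and then conclude by taking the quotient. The computation showing that rotating $z_j$ by $\tfrac{K_{\nu(j)}}{l_j}\theta_{\nu(j)}$ produces a target-rotation by $K_{\nu(j)}\theta_{\nu(j)}$ independent of $j$ is correct and is exactly how the paper verifies the $T^n$-action preserves $\mathring E'$.

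However, the "graph" trivialization $\mathring E'|_{O^\lambda}\cong O^\lambda\times(S^1)^n$ is wrong, and you lean on it at two crucial points. The condition $(\wt\eta_{\nu(j)})^{-1}\circ u\circ\wt\rho_j(z_j) = (\wt\eta_{\nu(j_0)})^{-1}\circ u\circ\wt\rho_{j_0}(z_{j_0})$ reads, by \cref{lem:local-description-transfer-map}, as $e^{2\pi\ii\alpha_j}z_j^{l_j} = e^{2\pi\ii\alpha_{j_0}}z_{j_0}^{l_{j_0}}$; this determines $z_j$ only up to an $l_j$-th root of unity, so there is no smooth map sending $z_{j_0}$ to "the remaining $z_j$'s." The fibre of $\mathring E'$ over $b$ inside each $(S^1)^{|\nu^{-1}(i)|}$ is a translated $1$-dimensional closed subgroup with $\prod_{j\in\nu^{-1}(i)} l_j / K_i$ components, not a single circle, so $\mathring E'|_{O^\lambda}$ is a disjoint union of $T^n$-torsors over $O^\lambda$, not $O^\lambda\times(S^1)^n$. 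Consequently the assertion that "the $T^n$-action is visibly the standard rotation on each of the $n$ circle factors" is also off (it is rotation by the integer $K_i/l_{j_0}$ in a chosen coordinate), and — more seriously — freeness does \emph{not} follow from this picture, since rotation by $K_i/l_{j_0}>1$ alone is not free. You need to prove directly that $\iota$ is \emph{injective}, which the paper does by invoking the B\'ezout/coprimality argument from \cref{lem5}: the exponents $\{K_i/l_j : j\in\nu^{-1}(i)\}$ are coprime, so $e^{2\pi\ii K_i\theta_i/l_j}=1$ for all such $j$ forces $\theta_i\in\ZZ$. Once $\iota$ is injective, freeness is automatic (as $T^k$ acts freely on $(S^1)^k$), and the quotient by the free compact $T^n$-action is a principal bundle by the slice theorem, without any need for an explicit trivialization. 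The splitting into circle bundles should likewise be deduced from the factorization of the $T^n$-action, not from the faulty fibrewise product structure.
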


\begin{proof}
  We need to check the following things:
  \begin{enumerate}[label=(\roman*), ref=(\roman*)]
    \item $\mathring{E}'$ is a $\mcM_{g,k,h,n}(T)$-space, \label{item:e-prime-g-space}
    \item $T^n$ acts smoothly and freely on $\mathring{E}'$ and \label{item:smooth-free-tn-action}
    \item $\mathring{E}'\lra\Ob \faktor{\mcM_{g,k,h,n}(T)\ltimes \mathring{E}'}{T^n}$ is a principal $T^n$-bundle splitting as a sum of $S^1$-bundles. \label{item:principal-tn-orbibundle}
  \end{enumerate}
  To see \cref{item:e-prime-g-space} we calculate for
  \begin{equation*}
    g=(\Phi,\varphi):b\coloneqq(C,u,X,\bq,\bp)\lra c\coloneqq(C',u',X',\bq',\bp')
  \end{equation*}
  whether it preserves the condition \cref{eq:condition-subbundle-e-prime}. Notice that from the argument in the proof of \cref{lem:torus-subbundles-are-bundles} we see that $\rho_j^c\circ \Phi\circ\rho_j^b:\CC\lra\CC$ is a rotation and thus its tangent map equals the map under the obvious identification $\ts_0\CC\cong\CC$. As all involved maps are holomorphic and thus preserve angles this rotation agrees with $(\wt{\rho}_j^c)^{-1}\circ\Phi\circ\wt{\rho}_j^b:S^1\lra S^1$. In the same way we have $(\wt{\eta}_{\nu(j)}^c)^{-1}\circ\varphi\circ\wt{\eta}_{\nu(j)}^b=(\D_0\eta_{\nu(j)}^c)^{-1}\circ\D_{p_{\nu(j)}^b}\varphi\circ\D_0\eta_{\nu(j)}^b(z_j)$ on $S^1$.
  \begin{align*}
    (\wt{\eta}^c_{\nu(j)})^{-1}\circ u'\circ\wt{\rho}^c_j(g\cdot z_j) & = (\wt{\eta}^c_{\nu(j)})^{-1}\circ u' \circ\wt{\rho}^c_j\circ (\D_0\rho_j^c)^{-1}\circ\D_{q_j^b}\Phi\circ\D_0\rho_j^b(z_j) \\
    & = (\wt{\eta}^c_{\nu(j)})^{-1}\circ u' \circ\wt{\rho}^c_j\circ (\wt{\rho}_j^c)^{-1}\circ\Phi\circ \wt{\rho}_j^b(z_j) \\
    & = (\wt{\eta}^c_{\nu(j)})^{-1}\circ \varphi \circ u \circ \wt{\rho}_j^b(z_j) \\
    & = (\wt{\eta}^c_{\nu(j)})^{-1}\circ \varphi \circ \wt{\eta}_{\nu(j)}^b \circ (\wt{\eta}_{\nu(j)}^b)^{-1} \circ u \circ \wt{\rho}_j^b(z_j) \\
    & = (\wt{\eta}^c_{\nu(i)})^{-1}\circ \varphi \circ \wt{\eta}_{\nu(i)}^b \circ (\wt{\eta}_{\nu(i)})^{-1}\circ u \circ\wt{\rho}_i(z_i) \\
    & = (\wt{\eta}^c_{\nu(i)})^{-1}\circ u' \circ \Phi \circ \wt{\rho}_i^b(z_i) \\
    & = (\wt{\eta}^c_{\nu(i)})^{-1}\circ u' \circ \wt{\rho}^c_i \circ (\wt{\rho}_i^c)^{-1} \circ \Phi \circ \wt{\rho}_i^b(z_i) \\
    & = (\wt{\eta}^c_{\nu(i)})^{-1}\circ u' \circ\wt{\rho}^c_i\circ (\D_0\rho_i^c)^{-1}\circ\D_{q_i^b}\Phi\circ\D_0\rho_i^b(z_i) \\
    & = (\wt{\eta}^c_{\nu(i)})^{-1}\circ u'\circ\wt{\rho}^c_i(g\cdot z_i),
  \end{align*}
  where we have used $\nu(j)=\nu(i)$. The compatibility conditions of the $\mcM_{g,k,h,n}(T)$-action and the projection in \cref{def:g-space} are immediate from the fact that $\mathring{E}$ satisfies them.

  For \cref{item:smooth-free-tn-action} we need to show that $\iota:T^n\lra T^k$ is an injective smooth Lie-group homomorphism. It then defines automatically a smooth free action of $T^n$ on $\mathring{E}'$. Since $T^n$ is compact the action is properly discontinuous and the quotient map $\mathring{E}'\lra \faktor{\mathring{E}'}{T^n}$ is a $T^n$-principal fibre bundle. However, by the same argument as in the proof of \cref{lem5} the map $\iota$ is injective and it is clearly a smooth Lie group homomorphism.

  Recall that $\Ob\mcM_{g,k,h,n}(T)\ltimes \mathring{E}'=\mathring{E}'$ and therefore $\mathring{E}'\lra \faktor{\Ob\mcM_{g,k,h,n}(T)\ltimes \mathring{E}'}{T^n}$ is a $T^n$-principal bundle. Notice that $T^n$ acts freely on
  \begin{equation*}
    \Mor\mcM_{g,k,h,n}(T)\ltimes\mathring{E}'=\Mor\mcM_{g,k,h,n}(T){_s\times_{\pi}}\mathring{E}'
  \end{equation*}
  by acting on the second factor. We thus have a smooth $T^n$-action on $\mcM_{g,k,h,n}(T)\ltimes\mathring{E}'$ and we can form the quotient orbifold category $\faktor{\mcM_{g,k,h,n}(T)\ltimes \mathring{E}'}{T^n}$.

  For \cref{item:principal-tn-orbibundle} it remains to check that $\faktor{\Mor_{g,k,h,n}(T)\ltimes\mathring{E}'}{T^n}$ acts diagonally via fibrewise equivariant maps on $\mathring{E}'$ in the sense of \cref{def:g-space}.\footnote{Note that the $T^n$-action on $\Mor\mcM_{g,k,h,n}(T)\ltimes\mathring{E}'$ does not actually modify the morphism of Hurwitz covers but identifies the families of morphisms corresponding to the same Hurwitz covers with different marked points $\bz$.} However, all the pointwise properties are clear as they are satisfied by the action of $\mcM_{g,k,h,n}(T)$ on $\mathring{E}'$. The fact that $\faktor{\mcM_{g,k,h,n}(T)\ltimes\mathring{E}'}{T^n}$ acts on the fibres by rotations can again be seen in the same way as above in the proof of \cref{lem:torus-subbundles-are-bundles}. Thus the bundle also satisfies \cref{def:orbifold-bundles} as rotations are equivariant maps with respect to the torus actions which are also rotations. Therefore we have a $T^n$-principal orbibundle which clearly splits as a direct sum of $S^1$-principle orbibundles.
\end{proof}

Before discussing how $\mathring{E}'$ relates to $\mathring{F}$ let us investigate the maps $(\wt{\eta}_{\nu(j)})^{-1}\circ u\circ\wt{\rho}_j$ a bit further.

\begin{lem}
  We have
  \begin{equation*}
    (\wt{\eta}_{\nu(j)})^{-1}\circ u\circ\wt{\rho}_j(z_j)=e^{2\pi\ii\alpha_j}\cdot z_j^{l_j},
  \end{equation*}
  where $e^{2\pi\ii\alpha_j}=(\wt{\eta}_{\nu(j)})^{-1}\circ u\circ\wt{\rho}_j(1)$ depends only on $j$ and $z_j\in S^1$.
  \label{lem:local-description-transfer-map}
\end{lem}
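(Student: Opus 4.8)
The statement is essentially a local computation using the charts $\rho_j$ and $\eta_{\nu(j)}$ on the disc together with the fact that $u$ is given in suitable charts by $z\mapsto z^{l_j}$. The plan is to work entirely inside the discs bounded by the reference curves $\Gamma_j(C)$ and $\Gamma_{\nu(j)}(X)$ and to translate everything through the biholomorphisms $\rho_j:\DD\lra C$ and $\eta_{\nu(j)}:\DD\lra X$ introduced in \cref{def:vector-bundle-E} and used again in \cref{def:vector-bundle-F}, together with the homeomorphisms $\wt\rho_j:S^1\lra\Gamma_j(C)$ and $\wt\eta_{\nu(j)}:S^1\lra\Gamma_{\nu(j)}(X)$ from \cref{def:s1-parametrizations}.

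First I would observe that the composition $\eta_{\nu(j)}^{-1}\circ u\circ\rho_j$ is a holomorphic map $\DD\lra\DD$: indeed $\rho_j$ maps $0\mapsto q_j$ and $\eta_{\nu(j)}$ maps $0\mapsto p_{\nu(j)}$, and $u(q_j)=p_{\nu(j)}$, so this composition fixes the origin; it is defined on a neighborhood of $0$ and holomorphic there, and since $u$ has local degree $l_j$ at $q_j$ by the definition of a Hurwitz cover (\cref{def:hurwitz-cover}), this map has a zero of order exactly $l_j$ at the origin. The key point is that after restricting to the disc bounded by the reference curve, this holomorphic map is \emph{exactly} $z\mapsto cz^{l_j}$ for some constant $c$; this is not automatic for an arbitrary holomorphic map with a zero of order $l_j$, so I would argue it the way \cref{lem2} does — by composing with a suitable chart change — or, more directly in our hyperbolic setting, by noting that $u$ restricted to the disc bounded by $\Gamma_j(C)$ is an $l_j$-fold branched cover of the disc bounded by $\Gamma_{\nu(j)}(X)$, fully branched over $p_{\nu(j)}$, and that by \cref{prop:eq-class-hurwitz-disc} such a cover is biholomorphic to the standard one $z\mapsto z^{l_j}$ in a unique way up to an $l_j$-th root of unity. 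Since $\rho_j$ and $\eta_{\nu(j)}$ are biholomorphisms normalized by sending $0$ to the special point and $1$ to the chosen intersection point $m$, the composition $\eta_{\nu(j)}^{-1}\circ u\circ\rho_j$ differs from $z\mapsto z^{l_j}$ precisely by post-composition with a rotation, i.e.\ it equals $z\mapsto e^{2\pi\ii\alpha_j}z^{l_j}$ where $e^{2\pi\ii\alpha_j}$ is the image of $1$.

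Next I would pass from the disc picture to the circle picture. By \cref{def:s1-parametrizations} the map $\wt\rho_j:S^1\lra\Gamma_j(C)$ is obtained from $\D_0\rho_j:\CC\lra\ts_{q_j}C$ by sending a unit tangent direction to the perpendicular intersection of the corresponding geodesic with $\Gamma_j(C)$; because $u$ is holomorphic it preserves angles, so the restriction of $\eta_{\nu(j)}^{-1}\circ u\circ\rho_j$ to $S^1$, read through $\wt\rho_j$ and $\wt\eta_{\nu(j)}$, is the \emph{same} map on $S^1$ as the one induced by the differential at $0$ — this is the identification already used in the proof of \cref{prop:E-prime-E-subbundle}, where $(\wt\rho_j^c)^{-1}\circ\Phi\circ\wt\rho_j^b$ and $(\D_0\rho_j^c)^{-1}\circ\D_{q_j^b}\Phi\circ\D_0\rho_j^b$ are shown to agree on $S^1$. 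Hence $(\wt\eta_{\nu(j)})^{-1}\circ u\circ\wt\rho_j(z_j)=e^{2\pi\ii\alpha_j}z_j^{l_j}$ for $z_j\in S^1$, and evaluating at $z_j=1$ gives $e^{2\pi\ii\alpha_j}=(\wt\eta_{\nu(j)})^{-1}\circ u\circ\wt\rho_j(1)$, which manifestly depends only on $j$.

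\textbf{Main obstacle.} The only genuinely non-routine point is the claim that the holomorphic map $\eta_{\nu(j)}^{-1}\circ u\circ\rho_j$ is \emph{exactly} a monomial $cz^{l_j}$ on the whole disc rather than merely having a zero of that order; this is where one must invoke the normal-form statement for fully-branched covers of the disc. I would handle it by reducing to \cref{prop:eq-class-hurwitz-disc}: the restriction of $u$ to the component of $u^{-1}(\text{disc bounded by }\Gamma_{\nu(j)}(X))$ containing $q_j$ is a branched cover of the disc of degree $l_j$ with $q_j$ the unique, fully branched, branch point — exactly the situation of that proposition — so after the normalizing biholomorphisms $\rho_j$ and $\eta_{\nu(j)}$ it becomes $z\mapsto z^{l_j}$ up to a rotation. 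Everything else is bookkeeping about how the chosen normalizations ($0\mapsto$ special point, $1\mapsto m$) pin down the rotation, and how angle-preservation lets one replace the differential at $0$ by the honest restriction to $S^1$.
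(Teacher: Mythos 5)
Your proposal is correct in substance but follows a genuinely different route from the paper's own proof. The paper argues purely hyperbolically at the level of the reference curves: it uses that $\wt{\rho}_j$ sends $e^{2\pi\ii\theta}$ to the point at arc length $l_jF(0)\theta$ along $\Gamma_j(C)$ from the chosen base point, that $u$ is a local isometry of the uniformized metrics and hence preserves arc-length parametrizations (cf.\ \cref{lem:hyperbolic-lift-geodesic}), and that $\Gamma_{\nu(j)}(X)$ has length $F(0)$, so the image point sits at angular coordinate $l_j\theta$ up to a constant rotation coming from the independent choice of base points. Your argument instead passes to the disc charts $\rho_j$, $\eta_{\nu(j)}$, identifies $\eta_{\nu(j)}^{-1}\circ u\circ\rho_j$ as a fully-branched degree-$l_j$ cover $\DD\to\DD$, and invokes \cref{prop:eq-class-hurwitz-disc} (plus the Schwarz lemma) to get the exact monomial form $z\mapsto cz^{l_j}$, then restricts to $S^1$. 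Both arguments land in the same place; the paper's is more economical here because it never needs the global normal form on the whole disc, only the lengths of the boundary circles, whereas yours proves the stronger fact that the disc-level map is literally $cz^{l_j}$ (which the paper never uses).

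One wrinkle you should fix: in the disc-to-circle passage you appeal to ``the map induced by the differential at $0$'' and to the identification used in the proof of \cref{prop:E-prime-E-subbundle}. That identification is stated for \emph{biholomorphisms} $\Phi$, whose differential at the marked point is nonzero and hence a rotation; here $u$ has a critical point of order $l_j$ at $q_j$, so $\D_{q_j}u=0$ whenever $l_j>1$ and ``the map on $S^1$ induced by the differential at $0$'' is not defined. The correct justification is the one you half-gesture at: by the rotational symmetry of the hyperbolic cusp neighborhood and the conformality of the Riemann map $\rho_j$, the map $\wt{\rho}_j:S^1\to\Gamma_j(C)$ of \cref{def:s1-parametrizations} coincides with the continuous boundary extension of $\rho_j:\DD\to C$ to $S^1$ (radial hyperbolic geodesics from the cusp correspond to radial lines in $\DD$, both hitting the boundary at the same angular coordinate), and similarly for $\wt{\eta}_{\nu(j)}$. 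Hence $(\wt{\eta}_{\nu(j)})^{-1}\circ u\circ\wt{\rho}_j$ is precisely the boundary restriction of $\eta_{\nu(j)}^{-1}\circ u\circ\rho_j=cz^{l_j}$, no differential at $0$ required. With that substitution the argument is sound.
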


\begin{proof}
  Notice that $\wt{\rho}_j:S^1\lra \Gamma_j(C)$ maps a point $e^{2\pi\ii\theta_j}$ with $\theta_j\in[0,1)$ to a point of distance $l_jF(0)$ on $\Gamma_j(C)$ in the positive direction from the point $\rho_j(1)\in\Gamma_j(C)$ by definition of the map, rotational symmetry of a cusp neighborhood and because $\rho_j$ is holomorphic and thus conformal. The map $u$ is an isometry and maps $\Gamma_j(C)$ to $\Gamma_{\nu(j)}(X)$ and thus maps an arc-length parametrized curve to an arc-length parametrized curve, see \cref{lem:hyperbolic-lift-geodesic}. We can repeat the argument for $\wt{\eta}_{\nu(j)}:S^1\lra\Gamma_{\nu(j)}(X)$ to see that this arc of length $l_jF(0)$ is mapped to an angle $e^{2\pi\ii l_j\theta_j}$. However, as we chose the reference points on $\Gamma_j(C)$ and $\Gamma_i(X)$ independently there might be an additional constant rotation depending on $j$ corresponding to the angle between $(\wt{\eta}_{\nu(j)})^{-1}\circ u\circ\wt{\rho}_j(1)$ and $1$. All in all we see that the maps are given by
  \begin{equation*}
    (\wt{\eta}_{\nu(j)})^{-1}\circ u\circ\wt{\rho}_j(z_j)=e^{2\pi\ii\alpha_j}\cdot z_j^{l_j}.
  \end{equation*}
\end{proof}

\begin{rmk}
  Notice that the angles $\alpha_j$ are non-zero because we did not require any relation between the choices for the reference points on the curves $\Gamma_j(C)$ and $\Gamma_i(X)$. In particular one could fix this by requiring that the points on $\Gamma_j(C)$ are preimages of $\Gamma_{\nu(j)}(X)$. In any case we still have that points in $\mathring{E}'$ are well-defined, they are just ``twisted'' within the torus $T^k$ by these angles.
\end{rmk}

\begin{prop}
  The map $\Theta:\mathring{E}'\lra \mathring{F}$ defined by
  \begin{equation*}
    \Theta(b,(z_1,\ldots,z_k))\coloneqq \left(b,\left((\wt{\eta}_i)^{-1}\circ  u^b \circ \wt{\rho}_j(z_j)\text{ for }j\text{ s.t.\ }\nu(j)=i\right)_{i=1}^n\right)
  \end{equation*}
  on $\mathring{E}'\cap (O^{\lambda}\times T^k)$ together with the homomorphism
  \begin{align*}
    \pi : \faktor{\mcM_{g,k,h,n}(T)\ltimes \mathring{E}'}{T^n} & \lra \mcM_{g,k,h,n}(T) \\
    [(b,(z_1,\ldots,z_k)] & \longmapsto b \\
    [(b,(\Phi,\varphi),c),(b,(z_1,\ldots,z_k))] & \longmapsto (b,(\Phi,\varphi),c)
  \end{align*}
  is a $T^n$-principal orbibundle morphism with the Lie group homomorphism
  \begin{align*}
    \beta:T^n & \lra T^n \\
    (e^{2\pi\ii\theta_1},\ldots,e^{2\pi\ii\theta_n}) & \longmapsto (e^{2\pi\ii K_1\theta_1},\ldots,e^{2\pi\ii K_n\theta_n}).
  \end{align*}
  The homomorphism $\pi$ is a full morphism covering of degree $K\coloneqq\prod_{i=1}^nK_i$.
  \label{prop:Tn-bundle-map-E-prime-F}
\end{prop}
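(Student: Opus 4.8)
The plan is to verify the four assertions of \cref{prop:Tn-bundle-map-E-prime-F} in the following order: (1) $\Theta$ is well-defined and fibrewise an isomorphism of tori onto the image-circles it covers; (2) $(\Theta,\pi)$ intertwines the $T^n$-actions via $\beta$, i.e.\ $\Theta$ is $\beta$-equivariant and compatible with the groupoid actions so that it is a bona fide orbifold $T^n$-principal bundle morphism in the sense of \cref{def:orbibundle-morphism} (with the equivariance modification mentioned in the remark following it); (3) $\pi$ is a homomorphism which is a covering and local diffeomorphism on objects and morphisms and satisfies the lifting property, hence a morphism covering; and (4) its degree is $K=\prod_{i=1}^nK_i$, and it is full in the sense that $\pi$ is surjective on automorphism groups so that by \cref{lem:morphism-covering-actual-covering} it even induces an honest covering on orbit spaces. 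Throughout I would work in a chart $O^\lambda$ and use the explicit local form of $\Theta$ together with \cref{lem:local-description-transfer-map}.

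\textbf{Step (1) and (2).} Well-definedness of $\Theta$ is exactly the content of \cref{def:e-prime-mathring}: the defining equation \cref{eq:condition-subbundle-e-prime} guarantees that the tuple $\big((\wt\eta_i)^{-1}\circ u^b\circ\wt\rho_j(z_j)\big)$ does not depend on the choice of $j$ with $\nu(j)=i$, so $\Theta$ lands in $\mathring F$. Smoothness follows from smoothness of the maps $\wt\rho_j,\wt\eta_i$ in the base point, which in turn reduces to the smooth dependence of uniformized hyperbolic metrics, reference curves and Riemann maps on moduli, exactly as established in the proof of \cref{lem:E-orbibundle}. That $\Theta$ is fibrewise the map $z_j\mapsto e^{2\pi\ii\alpha_j}z_j^{l_j}$ on each circle factor is \cref{lem:local-description-transfer-map}; restricted to $\mathring E'$, the $j$-th and $j'$-th circle factors with $\nu(j)=\nu(j')$ carry the same point, and the map onto the $i$-th circle of $\mathring F$ is a covering of degree $l_j$. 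For equivariance: the $T^n$-action on $\mathring E'$ is $\iota(e^{2\pi\ii\theta})$ acting coordinatewise, i.e.\ $z_j\mapsto e^{2\pi\ii(K_{\nu(j)}/l_j)\theta_{\nu(j)}}z_j$, so on the $i$-th circle of $\mathring F$ the image point is multiplied by $\big(e^{2\pi\ii(K_i/l_j)\theta_i}\big)^{l_j}=e^{2\pi\ii K_i\theta_i}$, which is precisely $\beta(e^{2\pi\ii\theta})$ acting on $\mathring F$. Compatibility with the groupoid actions (the square in \cref{def:orbibundle-morphism}) was essentially checked already in the proof of \cref{prop:E-prime-E-subbundle}, using that biholomorphisms act on the reference curves by rotations agreeing with their differentials at the marked/critical points, together with $\varphi\circ u=u\circ\Phi$; I would just record that the same computation shows $\Theta$ commutes with the two actions.

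\textbf{Step (3) and (4).} That $\pi$ is a homomorphism is immediate from its definition on objects and morphisms. On objects, $\Ob\faktor{\mcM_{g,k,h,n}(T)\ltimes\mathring E'}{T^n}=\faktor{\mathring E'}{T^n}$ and the map to $\Ob\mcM_{g,k,h,n}(T)$ is, locally over $O^\lambda$, the projection of the quotient of $O^\lambda\times(\text{a sum of }n\text{ circles, each }l_j\text{-covering a circle})$ by $T^n$; since $T^n$ acts freely (by \cref{prop:E-prime-E-subbundle}) and the fibre over a point of $O^\lambda$ is $\faktor{(S^1)^n}{T^n}$ acting through $\beta$, which is a finite covering, $\pi$ is a finite covering and local diffeomorphism on objects, and the same on morphisms. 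The lifting property follows because any morphism $(\Phi,\varphi)$ in $\mcM_{g,k,h,n}(T)$ with source $\pi([b,z])$ acts on $\mathring E'$ and $(b,(\Phi,\varphi),c)$ together with the data $z$ gives a morphism in the translation groupoid projecting to it. For the degree I would compute $\deg\pi$ using \cref{prop:morphism-coverings-deg} at a smooth point: the fibre $\pi^{-1}([b])$ in the quotient corresponds to $\faktor{(S^1)^n}{\beta(T^n)}$, which has exactly $K=\prod_i K_i$ elements (a point of $(S^1)^n$ and its $T^n$-orbit under the $\beta$-twisted action are identified precisely when their coordinates differ by $K_i$-th roots of unity), and the automorphism groups match up, giving $\deg\pi=K$. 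Finally, fullness (surjectivity on automorphisms) holds because the automorphism group of $[b,z]$ surjects onto that of $b$: given $(\Phi,\varphi)\in\Aut(b)$ there is a lift $(b,(\Phi,\varphi),gb)$ and, since $T^n$ acts transitively on each $\beta$-fibre, we may adjust by a torus element to get an automorphism of $[b,z]$; then \cref{lem:morphism-covering-actual-covering} upgrades this to an actual topological covering of degree $K$ on orbit spaces.

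\textbf{Main obstacle.} The routine but delicate point is the bookkeeping in Step (4): carefully identifying the fibre of $\pi$ over a smooth Hurwitz cover as $\faktor{(S^1)^n}{\beta(T^n)}$ and counting its size as $K$, while simultaneously tracking automorphism groups through the quotient so that the weighted count in \cref{prop:morphism-coverings-deg} comes out exactly to $K$ independently of the chosen point. The subtlety is that $\mathring E'$ is a subbundle cut out by the twisted conditions \cref{eq:condition-subbundle-e-prime}, and the $T^n$ acts through the non-standard embedding $\iota$, so one must be attentive to the relation between the degrees $l_j$ and the $K_i=\lcm\{l_j:\nu(j)=i\}$ — this is the same coprimality/Bézout phenomenon that appeared in \cref{lem5} and \cref{cor:param-B-hol-discs}, and I expect the cleanest argument to factor the $i$-th circle computation through that lemma. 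Everything else is a transcription of arguments already carried out for $E$, $F$ and $\mathring E'$.
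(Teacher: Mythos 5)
Your Steps (1)–(3) follow the paper's own argument for \cref{prop:Tn-bundle-map-E-prime-F} closely and accurately: well-definedness of $\Theta$ from \cref{def:e-prime-mathring}, smoothness as in \cref{lem:E-orbibundle}, and the $\beta$-equivariance computation $z_j\mapsto e^{2\pi\ii(K_i/l_j)\theta_i}z_j$ hence $z_j^{l_j}\mapsto e^{2\pi\ii K_i\theta_i}z_j^{l_j}$ via \cref{lem:local-description-transfer-map}, together with the groupoid-action square already checked in the proof of \cref{prop:E-prime-E-subbundle}. The lifting property and automorphism-surjectivity in Step (4) are also exactly the paper's arguments.

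There is, however, a gap in Step (4) in the identification of the fibre. You write that $\pi^{-1}([b])$ ``corresponds to $\faktor{(S^1)^n}{\beta(T^n)}$'' and has $K$ elements, but this is asserted rather than computed, and the description does not match what the fibre actually is. By definition of the translation-groupoid quotient, $\pi^{-1}([b])=\mathring{E}'_b/T^n$, where $\mathring{E}'_b\subset T^k$ is (a translate of) the closed subgroup $\bigl\{(z_j)_{j=1}^k : z_j^{l_j}=z_{j'}^{l_{j'}}\ \text{whenever}\ \nu(j)=\nu(j')\bigr\}$. A direct component count — by the same monodromy/B\'ezout argument that gives \cref{lem5} and \cref{cor:param-B-hol-discs} — shows this subgroup has $\prod_{i=1}^n\tfrac{\prod_{j\in\nu^{-1}(i)}l_j}{K_i}$ connected components, each an $n$-torus on which the $\iota$-twisted $T^n$-action is free and transitive, so the quotient has one point per component. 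This equals $K=\prod_i K_i$ only when $\prod_{j\in\nu^{-1}(i)}l_j=K_i^2$ for every $i$; it fails already for a singleton $\nu^{-1}(i)=\{j\}$ with $l_j>1$ (fibre contribution $1$, not $K_i=l_j$). What your $\beta$-argument actually computes is the degree of the torus map $\Theta$ on a single component of the fibre (which is $\beta$ up to translation, hence degree $K$ per component), not the number of components. You should redo the fibre count of $\pi$ directly from the explicit subgroup $\mathring{E}'_b$ rather than appealing to $\beta$ alone — and be aware that the paper's own proof of this step asserts the fibre is $\mcU\times\faktor{T^n}{\beta(T^n)}$ with essentially the same absence of computation, so this is a point worth scrutinising carefully rather than copying.
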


\begin{proof}
  Recall from \cref{def:orbibundle-morphism} that we need to show the following properties:
  \begin{enumerate}[label=(\roman*), ref=(\roman*)]
    \item the map $\Theta$ is a well-defined smooth $T^n$-equivariant map $\mathring{E}'\lra\mathring{F}$ with $\Theta(g\cdot x)=\beta(g)\cdot \Theta(x)$ for $g\in T^n$ and $x\in \mathring{E}'$, \label{item:theta-well-defined}
    \item the induced map on quotient spaces is the same as $\pi_{\Ob}$, \label{item:induced-map-pi}
    \item the diagram
      \begin{equation}
        \xymatrix{
           \Mor\left(\faktor{\mcM_{g,k,h,n}(T)\ltimes \mathring{E}'}{T^n}\right){_s\times_{\pi}}\mathring{E}' \ar[d]_{\pi_{\Mor}\times\Theta} \ar[r]^-{\mu_{\mathring{E}'}} & \mathring{E}' \ar[d]^{\Theta} \\
           \Mor\mcM_{g,k,h,n}(T){_s\times_{\pi}}\mathring{F} \ar[r]^-{\mu_{\mathring{F}}} & \mathring{F}
          } \label{eq:diag-commuting-actions}
      \end{equation}
      commutes and \label{item:commuting-diagram-actions}
    \item the functor $\pi$ is a morphism covering whose morphism map is surjective when restricted to automorphisms. \label{item:pi-morphism-covering}
  \end{enumerate}
  Now let us check \cref{item:theta-well-defined}. By \cref{def:e-prime-mathring} this map is well-defined. It is smooth by the same argument as in the proof of \cref{lem:E-orbibundle} since the Riemann mapping chart depends smoothly on the smooth family of reference curves. For the equivariance we can calculate in the $i$-th $S^1$ summand of $\mathring{F}$ for $g=(e^{2\pi\ii \theta_1},\ldots,e^{2\pi\ii \theta_n})\in T^n$ and $j$ such that $\nu(j)=i$
  \begin{align*}
    \Theta(g\cdot (z_1,\ldots,z_k))_i & = (\wt{\eta}_{\nu(j)})^{-1}\circ u\circ\wt{\rho}_j\left(\iota(g)\cdot z_j\right) \\
    & = (\wt{\eta}_{\nu(j)})^{-1}\circ u\circ\wt{\rho}_j\left(e^{2\pi\ii\frac{K_i}{l_j}\theta_i}z_j\right) \\
    & = e^{2\pi\ii\alpha_j}e^{2\pi\ii K_i\theta_i}z_j^{l_j} \\
    & = e^{2\pi\ii K_i\theta_i}\Theta(z_1,\ldots,z_k)_i \\
    & = \beta(g)\cdot \Theta(z_1,\ldots,z_k)_i.
  \end{align*}
  
  The map $\Theta$ on $\Ob\faktor{\mcM_{g,k,h,n}(T)\ltimes \mathring{E}'}{T^n} = \faktor{\mathring{E}'}{T^n}$ maps $[b,(z_1,\ldots,z_k)]$ to $b$ which coincides with $\pi_{\Ob}$ and thus we have \cref{item:induced-map-pi}.
  
  Furthermore the diagram in \cref{eq:diag-commuting-actions} commutes because the maps are given by
  \begin{align*}
    \Theta(\mu_{\mathring{E}'}([(b,(\Phi,\varphi),c), & (b,(z_1,\ldots,z_k))]) =\Theta\left(c,\left(\left(\D_0\rho_j^c\right)^{-1}\circ \D_{q^b_j}\Phi\circ \D_0\rho_j^{b}(z_j)\right)^k_{j=1}\right) \\
    & = \left(c,\left((\wt{\eta}^c_{i})^{-1}\circ \varphi \circ u \circ \wt{\rho}_j^b(z_j) \text{ for }j\text{ s.t. }\nu(j)=i\right)_{i=1}^n\right)
  \end{align*}
  and
  \begin{align*}
    \mu_{\mathring{F}}(\pi([(b, & (\Phi,\varphi),c),(b,(z_1,\ldots,z_k))]), \Theta(b,(z_1,\ldots,z_k)) =  \\
    & = \mu_{\mathring{F}}\left((b,(\Phi,\varphi),c),\left(b,\left((\wt{\eta}^b_i)^{-1}\circ  u^b \circ \wt{\rho}^b_j(z_j)\text{ for }j\text{ s.t.\ }\nu(j)=i\right)_{i=1}^n\right)\right) \\
    & = \left(c,\left((\D_o\eta_i^c)^{-1}\circ\D_{p_i^b}\varphi\circ\D_0\eta_i^b\circ(\wt{\eta}^b_i)^{-1}\circ  u^b \circ \wt{\rho}^b_j(z_j)\text{ for }j\text{ s.t.\ }\nu(j)=i \right)_{i=1}^n\right) \\
    & = \left(c,\left((\wt{\eta}_i^c)^{-1}\circ\varphi\circ\wt{\eta}_i^b\circ(\wt{\eta}^b_i)^{-1}\circ  u^b \circ \wt{\rho}^b_j(z_j)\text{ for }j\text{ s.t.\ }\nu(j)=i \right)_{i=1}^n\right),
  \end{align*}
  where we have used the same calculations as in the proof of \cref{prop:E-prime-E-subbundle}. 
  
  It remains to prove that $\pi$ is a morphism covering and surjective when restricted to automorphism groups. However, it is clearly a smooth functor and also a finite covering on objects and morphisms as preimages of sufficiently small neighborhoods $\mcU$ are given by $\mcU\times \faktor{T^n}{\beta(T^n)}$ for open $\mcU$ in $\Ob\mcM_{g,k,h,n}(T)$ and $\mcU\subset\Mor\mcM_{g,k,h,n}(T)$. The lifting property is clear as any $(\Phi,\phi):b\lra c$ in $\mcM_{g,k,h,n}(T)$ defines a morphism $[(b,(\Phi,\phi),c),(b,(z_1,\ldots,z_k))]$ in $\Mor\faktor{\mcM_{g,k,h,n}(T)\ltimes \mathring{E}'}{T^n}$. The same argument shows that the surjectivity when restricted to automorphisms as any automorphism $(\Phi,\phi):b\lra b$ in $\mcM_{g,k,h,n}(T)$ defines an automorphism 
  \begin{equation*}
    [(b,(\Phi,\phi),b),(b,(z_1,\ldots,z_k))]\in\Mor\faktor{\mcM_{g,k,h,n}(T)\ltimes \mathring{E}'}{T^n}.
  \end{equation*}
  By definition of $\faktor{\mcM_{g,k,h,n}(T)\ltimes \mathring{E}'}{T^n}$ there are no morphisms between the elements in the fibre over an object in $\mcM_{g,k,h,n}(T)$ and thus the covering has degree $K=\prod_{i=1}^nK_i$.
\end{proof}

\begin{cor}
  The map $\pi:\left|\faktor{\mcM_{g,k,h,n}(T)\ltimes \mathring{E}'}{T^n}\right|\lra|\mcM_{g,k,h,n}(T)|$ is a topological covering of degree $K$.
  \label{cor:pi-top-covering}
\end{cor}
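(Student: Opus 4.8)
The plan is to deduce \cref{cor:pi-top-covering} directly from \cref{prop:Tn-bundle-map-E-prime-F} together with \cref{lem:morphism-covering-actual-covering}. Recall that \cref{prop:Tn-bundle-map-E-prime-F} established that the homomorphism
\begin{equation*}
  \pi : \faktor{\mcM_{g,k,h,n}(T)\ltimes \mathring{E}'}{T^n} \lra \mcM_{g,k,h,n}(T)
\end{equation*}
is a full morphism covering of degree $K=\prod_{i=1}^nK_i$, and that it is surjective when restricted to automorphism groups. So the only real task here is to invoke the already-proven general fact that a morphism covering which is surjective on automorphism groups induces an honest topological covering on orbit spaces.

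Concretely, I would first note that $\mcM_{g,k,h,n}(T)$ and $\faktor{\mcM_{g,k,h,n}(T)\ltimes \mathring{E}'}{T^n}$ are both compact orbifold groupoids: the base is compact by \cref{thm:compactness-moduli-space-hurwitz-covers}, and the total space is a compact $T^n$-principal orbibundle over it, hence compact. Then I would observe that \cref{prop:Tn-bundle-map-E-prime-F} gives exactly the hypothesis of \cref{lem:morphism-covering-actual-covering}, namely that $\pi$ is a morphism covering such that $\pi:\Aut(x)\lra\Aut(\pi(x))$ is surjective for every object $x$. Applying \cref{lem:morphism-covering-actual-covering} yields that $|\pi|:\left|\faktor{\mcM_{g,k,h,n}(T)\ltimes \mathring{E}'}{T^n}\right|\lra|\mcM_{g,k,h,n}(T)|$ is a topological covering.

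It remains to identify the degree of this covering. Here I would use \cref{prop:morphism-coverings-deg}: since $\pi$ is a morphism covering between compact ep-Lie groupoids and $|\mcM_{g,k,h,n}(T)|$ is connected, the quantity $\deg\pi = |G_x|\sum_{[y]\in\pi^{-1}([x])}\frac{1}{|G_y|}$ is well-defined and equals $K$ as computed in \cref{prop:Tn-bundle-map-E-prime-F}. When the morphism covering restricts to a surjection on automorphism groups, \cref{lem:morphism-covering-actual-covering} and its proof show that all preimages $y_j^1,\ldots,y_j^n$ of a nearby class are equivalent, so the number of sheets of the topological covering over a point equals $\deg\pi$; thus the covering has degree $K$. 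I do not anticipate a genuine obstacle here — the substance was all packed into \cref{prop:Tn-bundle-map-E-prime-F} and the general orbifold lemmas — but the one point requiring a little care is making the bookkeeping between the orbifold-theoretic ``degree'' $\deg\pi$ (which can a priori be a weighted count) and the integer sheet-count of the induced topological covering explicit, which is precisely what surjectivity on automorphism groups buys us.
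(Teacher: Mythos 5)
Your proposal is correct and takes essentially the same route as the paper: invoke \cref{prop:Tn-bundle-map-E-prime-F} together with \cref{lem:morphism-covering-actual-covering} to get the topological covering, then identify the degree as $K$. The paper's proof of the degree is a bit more direct — it simply observes that each $b\in\Ob\mcM_{g,k,h,n}(T)$ has exactly $K$ preimages under $\pi_{\Ob}$ which are pairwise non-isomorphic — but this is the same content as your bookkeeping via $\deg\pi$ and surjectivity on automorphism groups.
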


\begin{proof}
  \cref{prop:Tn-bundle-map-E-prime-F} and \cref{lem:morphism-covering-actual-covering} imply that $\pi$ is a topological covering on orbit spaces. It is easy to see that an element $b\in\Ob\mcM_{g,k,h,n}(T)$ has $K$ preimages under $\pi_{\Ob}$ which are pairwise not identified and thus $\pi$ has topological covering degree $K$.
\end{proof}

\begin{rmk}
  From \cref{sec:algebraic-topology-orbifolds}, \cref{rmk:chern-classes-e-f} and general knowledge about connection forms on $S^1$-principal bundles from e.g.\ \cite{chern_circle_1977} a rational Chern class of a $T^k$-bundle $\pi:\mathring{E}\lra\Ob\mcM_{g,k,h,n}(T)$ in Chern--Weil theory is defined as the cohomology class of an invariant $2$-form $\alpha\in\Omega^2(\mcM_{g,k,h,n}(T),\RR^k)$ such that its lift to $E$ satisfies $\pi^*\alpha=\frac{\ii}{2\pi}\dd A$, where $A$ is a connection $1$-form on $E$. So let us pick such a form $A\in\Omega^1(E,\mft)$ satisfying\footnote{Here, $\mft$ denotes the Lie-algebra of the Lie group, in our case $\mft=\ii\RR^k$.}

\begin{enumerate}[label=(\roman*), ref=(\roman*)]
\item $A(\ul{v})=v\quad\forall v\in\mft$,
\item $A_{g.p}(\D_pg\cdot v)=A_p(v)\qquad\forall p\in E, v\in\ts_p E\text{ and }g\in T^n$,
\end{enumerate}

where $\ul{v}$ denotes the vector field corresponding to the Lie algebra element $v\in\mft\cong\ii\RR^n$. Note that this definition gives the Chern class of the complex vector bundle $E$ whose unit-torus bundle is given by $\mathring{E}$, too.
\label{rmk:chern-classes-on-bundles}
\end{rmk}

Using \cref{prop:E-prime-E-subbundle} and \cref{prop:Tn-bundle-map-E-prime-F} we can now relate the Chern classes of $E$ and $F$.

\begin{prop}
  The Chern classes of $E, E'$ and $F$ satisfy
  \begin{align}
    c_1(E')_i & = \frac{1}{K_i}\pi^*c_1(F)_i\text{ and} \label{eq:chern-classes-2} \\
    c_1(E')_i & = \frac{1}{dK_i}\sum_{\substack{j=1,\ldots,k\\ \nu(j)=i}}l_j^2\pi^*c_1(E)_j, \label{eq:chern-classes-1}
  \end{align}
  where the index $i$ or $j$ denotes the $i$-th or $j$-th component of the Chern class vector, respectively.
  \label{prop:chern-class-calculations}
\end{prop}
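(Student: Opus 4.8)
The plan is to compute the relevant Chern classes by using the bundle maps constructed in the preceding propositions together with the naturality of the first Chern class under pullback (\cref{rmk:chern-classes-orbibundles}) and its behaviour under fibrewise covering maps of circle bundles. The starting point is that $\mathring{E}'$ is a sum of $S^1$-orbibundles over $\faktor{\mcM_{g,k,h,n}(T)\ltimes\mathring{E}'}{T^n}$ by \cref{prop:E-prime-E-subbundle}, that $\pi:\faktor{\mcM_{g,k,h,n}(T)\ltimes\mathring{E}'}{T^n}\lra\mcM_{g,k,h,n}(T)$ is a full morphism covering of degree $K$ by \cref{prop:Tn-bundle-map-E-prime-F}, and that $\mathring{E},\mathring{F}$ have the same vectors of Chern classes as $E,F$ by \cref{rmk:chern-classes-on-bundles}.

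First I would prove \cref{eq:chern-classes-2}. By \cref{prop:Tn-bundle-map-E-prime-F} the map $\Theta:\mathring{E}'\lra\mathring{F}$ is, in the $i$-th circle summand, a $T^n$-principal bundle morphism covering the homomorphism $z\mapsto z^{K_i}$ (i.e.\ the map $\beta$ in its $i$-th component). This means that $\mathring{E}'_i$ is the pullback $\pi^*\mathring{F}_i$ of the $i$-th circle bundle of $F$ equipped with the $T^1$-action rescaled by $K_i$; equivalently, the $i$-th circle bundle of $\mathring{E}'$ is a $K_i$-th root of $\pi^*\mathring{F}_i$. For a connection $1$-form $A$ on $\pi^*\mathring{F}_i$ which is the pullback of a connection on $\mathring{F}_i$, the form $\frac{1}{K_i}A$ pulled back to $\mathring{E}'_i$ via $\Theta$ is a connection $1$-form there, because $\Theta$ multiplies the vertical direction by $K_i$ and hence $\Theta^*(\frac{1}{K_i}A)$ evaluates to $1$ on the generating vector field of the $T^1$-action on $\mathring{E}'_i$, while $T^n$-invariance is immediate. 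Taking curvature and using $c_1(\pi^*F)=\pi^*c_1(F)$ gives $c_1(\mathring{E}')_i=\frac{1}{K_i}\pi^*c_1(F)_i$, which by \cref{rmk:chern-classes-on-bundles} is \cref{eq:chern-classes-2}.

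Next I would prove \cref{eq:chern-classes-1}. Here the relevant observation, contained in \cref{def:e-prime-mathring} and \cref{lem:local-description-transfer-map}, is that the inclusion $\mathring{E}'\hookrightarrow\mathring{E}$ followed by the $j$-th projection $\mathring{E}\lra\mathring{E}_j$ and pulled back along $\pi$ expresses each circle bundle $(\pi^*\mathring{E})_j$ as the $l_j$-th power of the circle bundle $\mathring{E}'_i$ (for $\nu(j)=i$), since on fibres the map is $z_j\mapsto e^{2\pi\ii\alpha_j}z_j^{l_j}$ up to the constant twist $\alpha_j$ and the auxiliary $T^k$-reparametrization $\iota$. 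Translating to Chern classes, this yields $\pi^*c_1(E)_j=l_j\,c_1(\mathring{E}')_i$ for every $j$ with $\nu(j)=i$ — wait, one has to be careful about the direction of the rescaling; more precisely, arguing exactly as in the previous paragraph but with $\mathring{E}'_i$ now the base and $(\pi^*\mathring{E})_j$ its $l_j$-th power via the map $z\mapsto z^{l_j}$, we get $c_1\bigl((\pi^*\mathring{E})_j\bigr)=l_j\,c_1(\mathring{E}'_i)$, hence $\pi^*c_1(E)_j=l_j\,c_1(E')_i$. Then I would sum the identity $l_j\,\pi^*c_1(E)_j=l_j^2\,c_1(E')_i$ over all $j$ with $\nu(j)=i$. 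The left-hand side is $\sum_{\nu(j)=i}l_j\,\pi^*c_1(E)_j$, and here I would invoke that $E\cong\bigoplus_j\fgt^*\LL_j$ (\cref{lem:pulled-back-line-bundles}) together with the degree-$d$ covering property of $u$ to rewrite $\sum_{\nu(j)=i}l_j\,c_1(E)_j$ in terms of the pulled-back $\psi$-class data — concretely, the relation $\sum_{\nu(j)=i}l_j=d$ from the combinatorial data, combined with the local model $z\mapsto z^{l_j}$ for $u$ near $q_j$, identifies $\sum_{\nu(j)=i}l_j\,c_1(E)_j$ with $d\cdot(\text{the }i\text{-th class }c_1(F)\text{ transported to }E')$, giving $\sum_{\nu(j)=i}l_j^2\,\pi^*c_1(E)_j=d K_i\,c_1(E')_i$ after substituting \cref{eq:chern-classes-2}. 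Rearranging produces \cref{eq:chern-classes-1}.

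The main obstacle I anticipate is the bookkeeping in the second identity: keeping track of the nonzero twist angles $\alpha_j$ (\cref{lem:local-description-transfer-map}), the reparametrization homomorphism $\iota:T^n\to T^k$, and the fact that $E$ is a \emph{bad} orbifold bundle (\cref{rmk:orbibundles-e-f-bad}) so that its Chern class is only defined via the good extension of \cref{seaton_characteristic_2007}. The twist angles are harmless because they contribute closed (indeed exact, after averaging) correction terms to connection forms and drop out of curvature; the $\iota$-rescaling is exactly what produces the $K_i$ and $l_j$ factors; and the bad-bundle issue is handled by performing all connection-form computations on the good extensions over $\mcM_{g,k}$ and $\mcM_{h,n}$ respectively and then pulling back, as noted in \cref{rmk:chern-classes-e-f}. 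The genuinely delicate point is verifying that the factor of $d$ is correctly placed when passing from the circle-bundle identities for individual $j$ to the averaged statement — this requires using that the total degree of $u$ over the $i$-th branch point is $\sum_{\nu(j)=i}l_j=d$ and that summing the $l_j$-fold covers weighted by $l_j$ reconstructs the full $d$-fold behaviour of $\Theta$ on $\mathring{F}_i$, which I would check by a direct local computation in the disc charts $\rho_j,\eta_i$.
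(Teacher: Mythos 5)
Your argument for \eqref{eq:chern-classes-2} is essentially the paper's: you rescale a connection form on $\mathring F_i$ by $1/K_i$ and pull back via $\Theta$, using that $\Theta$ covers the fibre map $z\mapsto z^{K_i}$ (the $i$-th component of $\beta$), and that the twist $e^{2\pi\ii\alpha_j}$ is a constant rotation which drops out of curvature. That part is fine.

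For \eqref{eq:chern-classes-1} there is a genuine error in the single-$j$ identity you write down, and it stems from confusing the two maps out of $\mathring E'$. You describe the composite $\mathring E'\hookrightarrow\mathring E\to\mathring E_j$ as having fibre map $z_j\mapsto e^{2\pi\ii\alpha_j}z_j^{l_j}$, but that is $\Theta$, the map to $\mathring F$ from \cref{lem:local-description-transfer-map}. The inclusion $\epsilon:\mathring E'\hookrightarrow\mathring E$ is just set-theoretic inclusion; what carries the multiplicity is its equivariance via $\iota:T^n\to T^k$, whose $j$-th component (for $\nu(j)=i$) is $e^{2\pi\ii\theta_i}\mapsto e^{2\pi\ii(K_i/l_j)\theta_i}$. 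Equivalently, as the paper computes, $\epsilon_*\ul{e_i'}=\sum_{m:\nu(m)=i}\frac{K_i}{l_m}\ul{e_m}$, so $\epsilon^*A_j$ evaluates to $K_i/l_j$ on $\ul{e_i'}$ and the correctly normalized connection form on the $i$-th circle bundle of $\mathring E'$ is $\frac{l_j}{K_i}\epsilon^*A_j$, not $\frac{1}{l_j}\epsilon^*A_j$. The resulting single-$j$ relation is therefore
\begin{equation*}
c_1(E')_i=\frac{l_j}{K_i}\,\pi^*c_1(E)_j\qquad\text{for each }j\text{ with }\nu(j)=i,
\end{equation*}
whereas your proposal has $c_1(E')_i=\frac{1}{l_j}\pi^*c_1(E)_j$, i.e.\ it is missing the $K_i$. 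With the correct factor, averaging over $j\in\nu^{-1}(i)$ with weights $l_j/d$ (which sum to $1$ because $\sum_{\nu(j)=i}l_j=d$) gives exactly \eqref{eq:chern-classes-1}; with your factor it would give $\bigl(\sum_{\nu(j)=i}l_j^3\bigr)/(dK_i)=1$, which is false in general (e.g.\ $l_1=2,l_2=1$ has $d=3$, $K_i=2$, $\sum l_j^3=9\neq dK_i=6$). Your later appeal to ``the $d$-fold behaviour of $\Theta$'' does not repair this because the mistake is in the degree of $\epsilon$, not of $\Theta$.

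For comparison: the paper does not go through single-$j$ identities and then average; it writes down $A'=\sum_i\frac{1}{dK_i}\sum_{\nu(j)=i}l_j^2\,A_j|_{\mathring E'}\,e_i'$ all at once, verifies $A'(\ul{e_l'})=e_l'$ directly from the Killing-field formula $\epsilon_*\ul{e_i'}=\sum_{\nu(m)=i}\tfrac{K_i}{l_m}\ul{e_m}$, and reads off the curvature. Your averaging route is a legitimate alternative and arguably more transparent (it makes clear that the $d$-weighting is a choice of symmetrization of equivalent single-$j$ identities), but to make it work you must use $c_1(E')_i=\frac{l_j}{K_i}\pi^*c_1(E)_j$ in each term.
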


\begin{proof}
  Notice first that by \cref{rmk:chern-classes-on-bundles} and \cref{rmk:chern-classes-e-f} we can prove this by showing the equivalent equalities for the Chern classes of the corresponding $T^k$- and $T^n$-bundles.

  For \cref{eq:chern-classes-2} pick a connection $1$-form $A\in\Omega^1(\mathring{F},\mft)$ as in \cref{rmk:chern-classes-on-bundles} and define
  \begin{equation*}
    A_i'\coloneqq \frac{1}{K_i}\Theta^*A_i.
  \end{equation*}
  Notice that we have the bundle map
  \begin{equation}
    \xymatrix{
      \mathring{E}' \ar[r]^{\Theta} \ar[d] & \mathring{F} \ar[d] \\
      \Ob\faktor{\mcM_{g,k,h,n}(T)\ltimes \mathring{E}'}{T^n} \ar[r]^-{\pi_{\Ob}} & \Ob\mcM_{g,k,h,n}(T)
    }
    \label{eq:commutin-diagram-bundles}
  \end{equation}
  which satisfies
  \begin{equation}
    \Theta(g\cdot p)=\beta(g)\cdot\Theta(p)
    \label{eq:theta-equivariance}
  \end{equation}
  for $g\in T^n,p\in \mathring{E}'$ and $\beta:T^n\lra T^n$ as in \cref{prop:Tn-bundle-map-E-prime-F}. We need to check that $A'=\sum_{i=1}^nA_i'e_i'$ satisfies both equations from \cref{rmk:chern-classes-on-bundles} with the standard basis $\{e_i'\}_{i=1}^n$ of $\RR^n$. We calculate
  \begin{align*}
    K_i\cdot(A'_i)_{g\cdot p}(\D_pg\cdot X) & = (A_i)_{\Theta(g\cdot p)}(\D_{g\cdot p}\Theta\circ\D_pg\cdot X) \\
    & = (A_i)_{\beta(g)\cdot\Theta(p)}(\D_{\Theta(p)}\beta(g)\circ\D_{\Theta(g\cdot p)}\beta(g)^{-1}\circ \D_{g\cdot p}\Theta\circ\D_pg\cdot X) \\
    & = (A_i)_{\Theta(p)}(\D_{\Theta(g\cdot p)}\beta(g)^{-1}\circ \D_{g\cdot p}\Theta\circ\D_pg\cdot X) \\
    & = (A_i)_{\Theta(p)}(\D_p(\beta(g^{-1})\cdot\Theta\circ g)\cdot X) \\
    & = (A_i)_{\Theta(p)}(\D_p\Theta\cdot X) \\
    & = (\Theta^*A_i)_p(X)
  \end{align*}
  for $X\in\ts_p\mathring{E}'$ where we have used again \cref{eq:theta-equivariance}. The second equation can be checked as follows. Here, $v=\sum_{i=1}^nv_ie_i'\in\mft$ and
  \begin{equation*}
    \ul{v}_p=\ddt \exp(2\pi\ii tv)\cdot p=\ddt \left(e^{2\pi\ii\frac{K_{\nu(1)}}{l_1} t v_{\nu(1)}},\ldots,e^{2\pi\ii\frac{K_{\nu(k)}}{l_k} t v_{\nu(k)}}\right)\cdot p
  \end{equation*}
  such that for $p=(b,(z_1,\ldots,z_k))\in \mathring{E}'$
  \begin{align*}
    (A')_p(\ul{v}(p)) & = \sum_{i=1}^n\frac{1}{K_i}(A_i)_{\Theta(p)}(\D_p\Theta\cdot \ul{v}(p)) \\
    & = \sum_{i=1}^n\frac{1}{K_i}(A_i)_{\Theta(p)}\left(\ddt\Theta(b,(e^{2\pi\ii\frac{K_{\nu(1)}}{l_1} t v_{\nu(1)}}z_1,\ldots,e^{2\pi\ii\frac{K_{\nu(k)}}{l_k} t v_{\nu(k)}}z_k))\right) \\
    & = \sum_{i=1}^n\frac{1}{K_i}(A_i)_{\Theta(p)}\left(\ddt \left(b,\left(e^{2\pi\ii\alpha_j}e^{2\pi\ii K_iv_it}z_j^{l_j}\text{ for }j\text{ s.t.\ }\nu(j)=i\right)_{i=1}^n\right)\right) \\
    & = \sum_{i=1}^n\frac{1}{K_i}(A_i)_{\Theta(p)}\left(\sum_{i=1}^nK_iv_i\ul{e_i}(\Theta(p))\right) \\
    & = \sum_{i=1}^n\frac{1}{K_i}(A_i)_{\Theta(p)}(K_iv_i\ul{e_i}) \\
    & = \sum_{i=1}^nv_ie_i = v.
  \end{align*}
  As \cref{eq:commutin-diagram-bundles} commutes and therefore $\pi_{\mathring{E}'}^*\pi^*c_1(\mathring{F})_i=\Theta^*\pi_{\mathring{F}}^*c_1(\mathring{F})_i=\frac{\ii}{2\pi}\Theta^*\dd A_i=\frac{\ii}{2\pi}\dd A_i'$, this proves \cref{eq:chern-classes-2}.

  Regarding \cref{eq:chern-classes-1} notice that the map $\epsilon:\mathring{E}' \hookrightarrow \mathring{E}$ is equivariant in the sense that for $g\in T^n$ and $p\in\mathring{E}'$ we have
  \begin{equation*}
    \epsilon(g\cdot p)=\iota(g)\cdot\epsilon(p)
  \end{equation*}
  which is the only fact that we needed in the first half of showing \cref{eq:chern-classes-2}. Thus $A_j|_{\mathring{E}'}=\epsilon^*A_j$ still satisfies the invariance condition for a connection form by the same calculation. Therefore, if we define
  \begin{equation*}
    A'\coloneqq\sum_{i=1}^n\frac{1}{dK_i}\sum_{\substack{j=1\\ \nu(j)=i}}^kl_j^2A_j|_{\mathring{E}'}e_i'\in\Omega^1(\mathring{E}',\mft)
  \end{equation*}
  this form $A'$ satisfies $(A')_{g\cdot p}(\D_pg\cdot X)=(A')_p(X)$ for all $X\in\ts_p\mathring{E}'$. It remains to check the other relation. To this end, we first calculate the Killing fields of the $T^n$-action on $\mathring{E}'$ in terms of those of the $T^k$-action on $\mathring{E}$. We get for $\ul{e_i'}(p)=\ddt\exp(t e_i')\cdot p$ with the standard basis  $\{e_i'\}_{i=1}^n$ of $\RR^n$ and $p\in\mathring{E}'$
  \begin{align*}
    \D_p\epsilon\cdot\ul{e_i'}(p) & = \ddt \epsilon(\exp(te_i)\cdot p) \\
    & = \ddt \iota(\exp(t e_i))\cdot \epsilon(p) \\
    & = \ddt \iota(1,\ldots,e^{2\pi\ii t},\ldots,1)\cdot\epsilon(p) \\
    & = \ddt \left( \begin{cases}e^{2\pi\ii\frac{K_i}{l_j}t} & \nu(j)=i \\ 1 & \nu(j)\neq i\end{cases}\right)_{j=}^k \cdot \epsilon(p) \\
    & = \sum_{\substack{j=1 \\ \nu(j)=i}}^k \frac{K_i}{l_j}\ul{e_j}(p),
  \end{align*}
  where $\{e_j\}_{j=1}^k$ is the standard basis of $\RR^k$. We therefore have
  \begin{align*}
    A'_p(\ul{e_l'}(p)) & = \sum_{i=1}^n\frac{1}{d K_i}\sum_{\substack{j=1 \\ \nu(j)=i}}^kl_j^2\sum_{\substack{m=1 \\ \nu(m)=l}}^k\frac{K_l}{l_m}(A_j)_p(\ul{e_m}(p))e_i' \\
    & = \sum_{i=1}^n\frac{1}{d K_i}\sum_{\substack{j=1 \\ \nu(j)=i}}^kl_j^2\sum_{\substack{m=1 \\ \nu(m)=l}}^k\frac{K_l}{l_m}\delta_{jm}e_i' \\
    & = \sum_{i=1}^n\frac{1}{d}\sum_{\substack{j=1 \\ \nu(j)=i}}^kl_j \delta_{il}e_i' \\
    & = e_l'
  \end{align*}
  which proves the other condition by linearity and thus $A'$ is a connection $1$-form for $\mathring{E}'$. Notice that the $\iota$-equivariant map $\epsilon$ gives a commuting diagram of $T^n$-principle bundles
  \begin{equation*}
    \xymatrix{
      \mathring{E}' \ar[d]_{\pi_{\mathring{E}'}} \ar[r]^{\epsilon} & \mathring{E} \ar[d]^{\pi_{\mathring{E}}} \\
      \Ob\faktor{\mcM_{g,k,h,n}(T)\ltimes \mathring{E}'}{T^n} \ar[r]^-{\pi_{\Ob}} & \Ob\mcM_{g,k,h,n}(T)
      }
  \end{equation*}
  and therefore we have again $\pi_{\mathring{E}'}^*\pi^* c_1(\mathring{E})_j=\epsilon^*\pi_{\mathring{E}}^*c_1(\mathring{E})_j=\frac{\ii}{2\pi}\epsilon^*\dd A_j=\frac{\ii}{2\pi}\dd A_j|_{\mathring{E}'}$ which proves \cref{eq:chern-classes-1}.
\end{proof}

Combining \cref{prop:chern-class-calculations}, \cref{lem:pulled-back-line-bundles} and \cref{def:psi-classes} we obtain \cref{cor:chern-class-relations}.

\begin{cor}
  The various $\Psi$-classes satisfy
  \begin{align*}
    \pi^*\ev^*\psi_i=\frac{1}{d}\sum_{\substack{j=1 \\ \nu(j)=i}}^kl_j^2\pi^*\fgt^*\psi_j
  \end{align*}
  in $H^2\left(\faktor{\mcM_{g,k,h,n}(T)\ltimes \mathring{E}'}{T^n},\QQ\right)$ for every $i=1,\ldots,n$.
  \label{cor:chern-class-relations}
\end{cor}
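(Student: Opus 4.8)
The plan is to combine the two Chern-class identities from \cref{prop:chern-class-calculations} with the bundle isomorphisms from \cref{lem:pulled-back-line-bundles} and the definition of the $\psi$-classes. First I would recall that by \cref{lem:pulled-back-line-bundles} we have $E\cong\bigoplus_{j=1}^k\fgt^*\LL_j$ and $F\cong\bigoplus_{i=1}^n\ev^*\LL_i$ as complex orbifold vector bundles. Since the first Chern class is additive over direct sums and is natural under pullback by homomorphisms (see \cref{rmk:chern-classes-orbibundles}), this gives $c_1(E)_j=\fgt^*c_1(\LL_j)=\fgt^*\psi_j$ and $c_1(F)_i=\ev^*c_1(\LL_i)=\ev^*\psi_i$ componentwise, using \cref{def:psi-classes}. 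These identities live in $H^2(|\mcM_{g,k,h,n}(T)|,\QQ)$.

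Next I would pull everything back along the morphism covering $\pi:\faktor{\mcM_{g,k,h,n}(T)\ltimes\mathring{E}'}{T^n}\lra\mcM_{g,k,h,n}(T)$, which by \cref{rmk:chern-classes-orbibundles} again commutes with taking Chern classes because $\pi$ is smooth. Thus $\pi^*\ev^*\psi_i=\pi^*c_1(F)_i$ and $\pi^*\fgt^*\psi_j=\pi^*c_1(E)_j$. Now I invoke the two equalities of \cref{prop:chern-class-calculations}: equation \cref{eq:chern-classes-2} reads $c_1(E')_i=\tfrac{1}{K_i}\pi^*c_1(F)_i$, and equation \cref{eq:chern-classes-1} reads $c_1(E')_i=\tfrac{1}{dK_i}\sum_{j:\nu(j)=i}l_j^2\,\pi^*c_1(E)_j$. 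Eliminating $c_1(E')_i$ between these two lines yields
\begin{equation*}
  \frac{1}{K_i}\pi^*c_1(F)_i=\frac{1}{dK_i}\sum_{\substack{j=1\\ \nu(j)=i}}^k l_j^2\,\pi^*c_1(E)_j,
\end{equation*}
and multiplying through by $K_i$ and substituting the two Chern-class identifications from the previous paragraph gives exactly
\begin{equation*}
  \pi^*\ev^*\psi_i=\frac{1}{d}\sum_{\substack{j=1\\ \nu(j)=i}}^k l_j^2\,\pi^*\fgt^*\psi_j
\end{equation*}
in $H^2\!\left(\faktor{\mcM_{g,k,h,n}(T)\ltimes\mathring{E}'}{T^n},\QQ\right)$, for each $i=1,\ldots,n$, which is the claim.

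There is essentially no hard part left at this level: all the genuine work has already been done in establishing \cref{prop:chern-class-calculations} (the explicit connection-form computations), \cref{lem:pulled-back-line-bundles} (the bundle isomorphisms), and \cref{prop:Tn-bundle-map-E-prime-F} (that $\pi$ is a genuine covering so that pullback on rational cohomology is well behaved). The only point requiring a line of care is to make sure that one is consistently working with the \emph{vector} bundle Chern classes rather than the torus-bundle Chern classes; but \cref{rmk:chern-classes-on-bundles} records that these agree, so the identities of \cref{prop:chern-class-calculations} transfer directly to the $\psi$-class language. Hence the corollary follows by pure algebra from the cited results.
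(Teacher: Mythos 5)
Your proof is correct and follows exactly the route the paper intends: the paper's own "proof" of this corollary is a single sentence citing Proposition \ref{prop:chern-class-calculations}, Lemma \ref{lem:pulled-back-line-bundles}, and Definition \ref{def:psi-classes}, and what you have done is simply spell out the elimination of $c_1(E')_i$ between \cref{eq:chern-classes-1} and \cref{eq:chern-classes-2} together with the identifications $c_1(E)_j=\fgt^*\psi_j$ and $c_1(F)_i=\ev^*\psi_i$ coming from the bundle isomorphisms. Your explicit attention to why the torus-bundle and vector-bundle Chern classes agree (\cref{rmk:chern-classes-on-bundles}) and to the role of the covering $\pi$ is a welcome clarification of what the paper leaves implicit.
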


\subsection{The Limit \texorpdfstring{$\boldsymbol{L\to 0}$}{L->0}} 

We will now discuss how to understand the limit $L\lra 0$ of
\begin{equation*}
  \xymatrix{
    \wh{\mu}^{-1}(L) \ar[d] \ar[rr]^-{``L\to 0"} & & \mcM_{g,k,h,n}(T)\ltimes\mathring{E}' \ar[d] \\
    \wh{\mcM}_{g,k,h,n}^{\square}(T)[L] \ar[rr]^-{``L\to 0"} & & \faktor{\mcM_{g,k,h,n}(T)\ltimes\mathring{E}'}{T^n} \\
    }
\end{equation*}
by defining a family of orbifold isomorphisms
\begin{equation*}
  \Xi_L:\mcM_{g,k,h,n}(T)\ltimes\mathring{E}' \lra \wh{\mcM}^{\square}_{g,k,h,n}(T)|_{\wh{\mu}^{-1}(L)}
\end{equation*}
for every $L\in\RR_{>0}^n$. This $\Xi_L$ will depend continuously on $L$ and be $T^n$-equivariant for the corresponding $T^n$-actions and will allow us to compute the limits $L\to 0$.

Recall that a point in $\Ob\mcM_{g,k,h,n}(T)\ltimes\mathring{E}'$ is given by a tuple 
\begin{equation*}
  (b,\bz)\in O^{\lambda}\times T^k
\end{equation*}
satisfying \cref{def:e-prime-mathring}. Furthermore, recall that we defined a function $F:\RR_{\geq 0}\lra\RR_{>0}$ fixing the lengths of the reference curves. Another choice was a set $\Lambda$ of central Hurwitz covers with additional data used for defining the Hurwitz deformation families $\Psi^{\lambda}$ for $\lambda\in\Lambda$. As we will now consider the categories $\mcR_{g,k,h,n}(T)$ as well as $\wt{\mcR}_{g,nd+k,h,2n}(\wt{T})$ we will distinguish the sets $\Lambda$ and $\wt{\Lambda}$ and the families $\Psi^{\lambda}:O^{\lambda}\lra\Ob\mcR_{g,k,h,n}(T)$ and $\wt{\Psi}^{\lambda}:\wt{O}^{\lambda}\lra\Ob\wt{\mcR}_{g,nd+k,h,2n}(\wt{T})$, respectively. Furthermore we will include another choice with every $\lambda\in\Lambda$ as follows.

Via $\lambda\in\Lambda$ we have chosen for every $i=1,\ldots,n$ one interior pair of pants on $X^{\lambda}$ including the degenerate boundary $\del_iX$. Pick one interior boundary component of such a pair of pants and intersect the unique geodesic perpendicular to that boundary geodesic and going up the cusp $\del_iX^{\lambda}$ with $\Gamma_i(X^{\lambda})$. We denote this intersection point by $m_i$. Now choose one preimage $m_j'$ of $m_i$ under $u^{\lambda}$ on every $\Gamma_j(C^{\lambda})$ for all $j=1,\ldots,k$ such that $\nu(j)=i$. We will use these points to define maps $\wt{\eta}_i:S^1\lra\Gamma_i(X^{\nu})$ and $\wt{\rho}_j:S^1\lra\Gamma_j(C^{\nu})$ as in \cref{def:s1-parametrizations}. Note that this defines corresponding maps for any admissible Hurwitz cover close to $(C,u,X,\bq,\bp)$ as the construction for $m_i$ only depends on $\Gamma_i(X)$ and the choice of one boundary of the interior pair of pants. By passing to a small enough neighborhood in the moduli space of Hurwitz covers we also obtain a well-defined choice of preimages $m_j'$ which are close to the corresponding ones on the central Hurwitz cover. This is because the gluing construction on $C^{\lambda}$ in \cref{sec:local-parametrizations} actually gives a map from the boundary of the disc structure on $C^{\lambda}$ to the boundary of the glued-in annulus. See \cref{fig:choices-zeros-circles-1} and \cref{fig:choices-zeros-circles-2} for an illustration.

\begin{figure}[!ht]
  \centering
  \def\svgwidth{0.8\textwidth}
  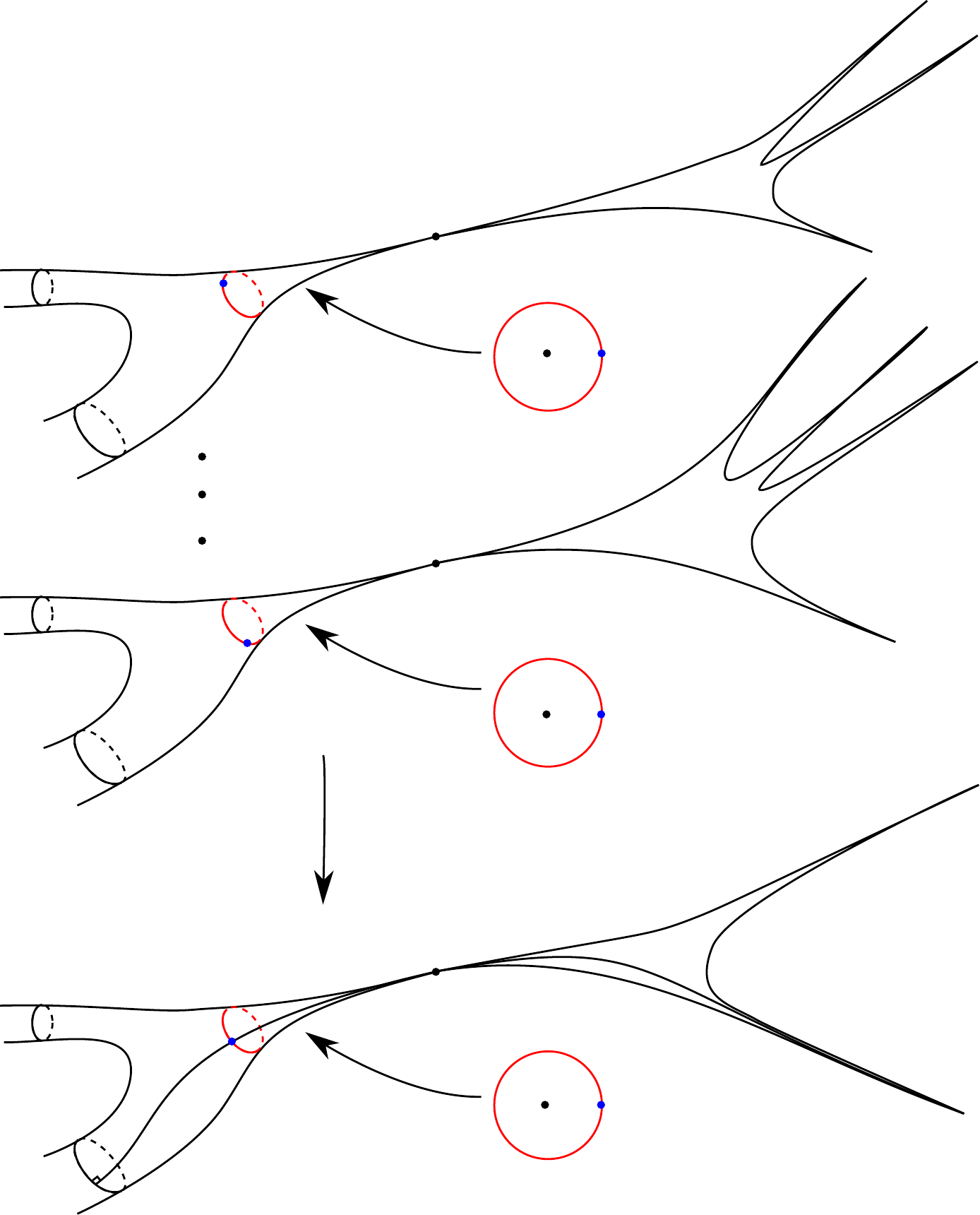
  \caption{This shows the Hurwitz cover $(C^{\lambda},u^{\lambda},X^{\lambda},\bq^{\lambda},\bp^{\lambda})\in\Ob\mcM_{g,k,h,n}(T)$ or rather its image under $\glue\circ\,\iota$, together with the additional choices in $\lambda$. The points $m_i$ and $m_j'$ are used as reference points on the reference curves such that we can measure hyperbolic distances from an origin. Notice that the pairs of pants on the left hand side are \emph{not} mapped onto each other, as a degree-$l_j$ preimage of a pair of pants cannot be a pair of pants.Thus we can do the geometric construction for $m_i\in X^{\lambda}$, but not at the same time for $m'_j\in C^{\lambda}$.}
  \label{fig:choices-zeros-circles-1}
\end{figure}

\begin{figure}[!ht]
    \centering
    \def\svgwidth{0.8\textwidth}
    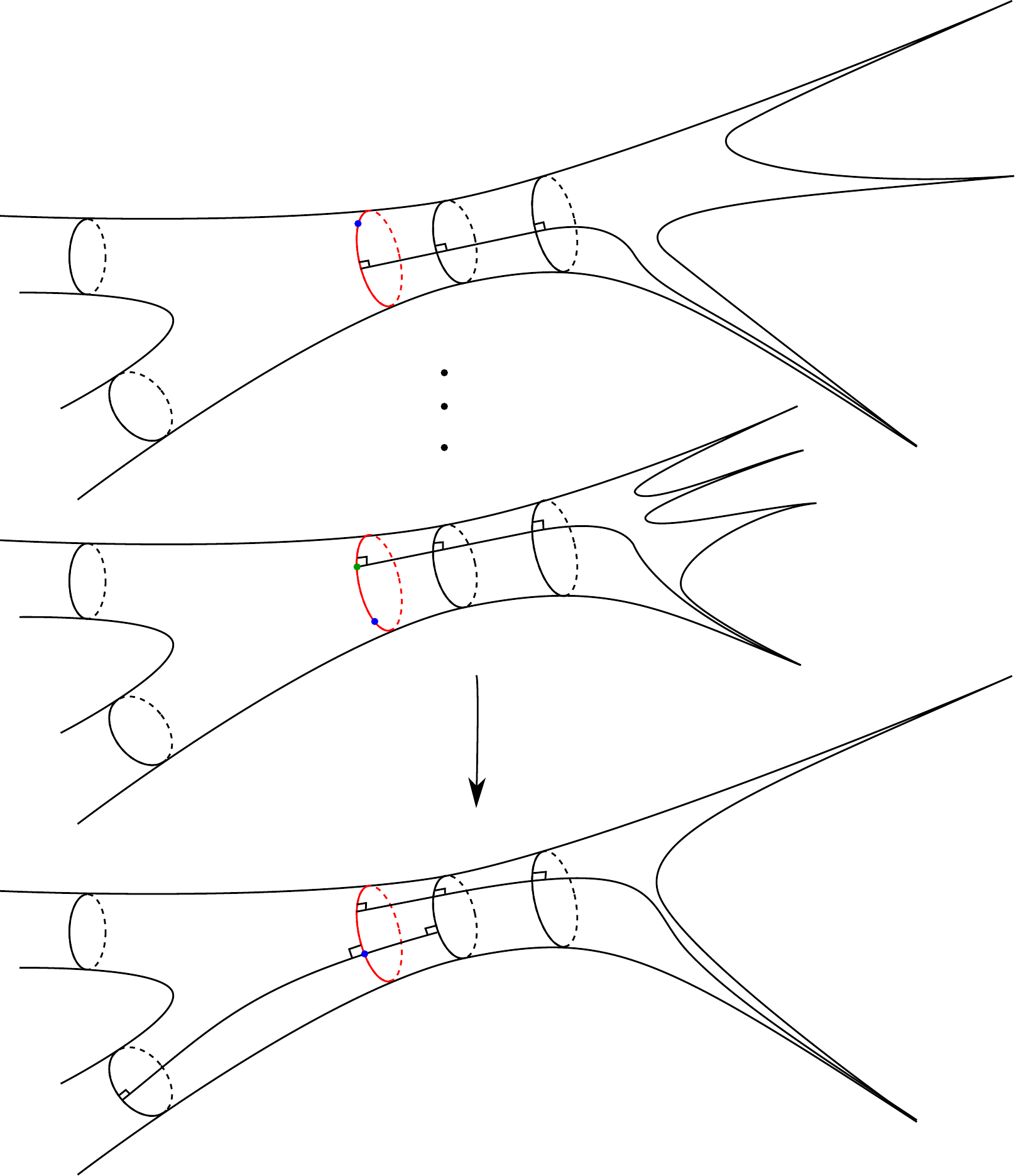
    \caption{In this figure one can see the Hurwitz cover $\Xi_L(b,(e^{2\pi\ii\theta_1},\ldots,e^{2\pi\ii\theta_k}))$ or rather its still glued version. Notice that the twisting in the construction is arbitrary as we cut the cover anyway. However, we still mark the point $z_j\in\Gamma_j(C)$ which would correspond to a Fenchel--Nielsen twist of $\theta_j$ by following the reference curve for the correct hyperbolic distance.}
    \label{fig:choices-zeros-circles-2}
\end{figure}

\begin{lem}
  The functor
  \begin{equation*}
    \iota:\mcM_{g,k,h,n}(T)\ltimes\mathring{E}'\lra\wh{\mcR}_{g,k,h,n}(T)
  \end{equation*}
  given by
  \begin{align*}
    \iota_{\Ob}(b,\bz) & \coloneqq (C,u,X,\bq,\bp,\bz') \\
    \iota_{\Mor}((b,(\Phi,\varphi),c),(b,\bz)) & \coloneqq (\iota_{\Ob}(b),(\Phi,\varphi),\iota_{\Ob}(c)) \\
  \end{align*}
  is well-defined, fully faithful and injective on objects. It is continuous on orbit spaces. Here the tuple $\bz'$ is defined as
  \begin{equation*}
    z_j'\coloneqq \wt{\rho}_j(z_j)
  \end{equation*}
  and $(C,u,X,\bq,\bp)\coloneqq\Psi^{\lambda}(b)$ for $b\in O^{\lambda}$ and the Hurwitz deformation family $\Psi^{\lambda}$.
  \label{lem:glue-spheres-to-closed-hurwitz-cover}
\end{lem}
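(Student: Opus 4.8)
The plan is to verify each asserted property of the functor $\iota:\mcM_{g,k,h,n}(T)\ltimes\mathring{E}'\lra\wh{\mcR}_{g,k,h,n}(T)$ in turn: that it is a well-defined functor, that it lands in $\wh{\mcR}_{g,k,h,n}(T)$ (i.e.\ produces genuine bordered Hurwitz covers with the required reference-curve condition), that it is injective on objects, fully faithful, and continuous on orbit spaces. The key observation throughout is that the map $\wt{\rho}_j:S^1\lra\Gamma_j(C)$ from \cref{def:s1-parametrizations} sends a point of $S^1$ to the perpendicular-foot of the hyperbolic geodesic going up the cusp $q_j$, so $z_j'=\wt{\rho}_j(z_j)$ genuinely lies on the reference curve $\Gamma_j(C)$; thus the output tuple $(C,u,X,\bq,\bp,\bz')$ automatically has its $\bz'$-marked points on the correct reference curves, as required in \cref{def:whmcm}.

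First I would check well-definedness on objects. Given $(b,\bz)\in O^\lambda\times T^k$ in $\mathring{E}'$, the pair $(C,u,X,\bq,\bp)=\Psi^\lambda(b)$ is an admissible Hurwitz cover of the closed (possibly nodal) type; forgetting the condition that special points be cusps and re-reading them as honest marked points, or keeping them as punctures, gives an admissible Riemann surface in the sense of \cref{def:admissible-riemann-surface}. The defining condition $(\wt\eta_{\nu(j)})^{-1}\circ u\circ\wt\rho_j(z_j)=(\wt\eta_{\nu(i)})^{-1}\circ u\circ\wt\rho_i(z_i)$ for $\nu(i)=\nu(j)$ of $\mathring{E}'$ (\cref{def:e-prime-mathring}) translates, after applying $\wt\eta_{\nu(j)}$, directly into $u(z_j')=u(z_i')$ whenever $\nu(i)=\nu(j)$, which is precisely the constraint in the definition of $\wh{\mcR}_{g,k,h,n}(T)$. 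So $\iota_{\Ob}(b,\bz)\in\Ob\wh{\mcR}_{g,k,h,n}(T)$. For morphisms, $(\Phi,\varphi)$ is already a morphism of the underlying closed Hurwitz covers, so the diagram $\varphi\circ u=u'\circ\Phi$ commutes and the enumerations of $\bq,\bp$ are preserved; the only new thing to check is $\Phi(z_j')=(z_j')'$ where the primed tuple is that of $\iota_{\Ob}(c)$. This follows from the same computation that appears in the proof of \cref{prop:E-prime-E-subbundle}: the local representative of $\Phi$ in the charts $\rho_j^b,\rho_j^c$ is a rotation, and conformality forces $(\wt\rho_j^c)^{-1}\circ\Phi\circ\wt\rho_j^b$ to agree on $S^1$ with $(\D_0\rho_j^c)^{-1}\circ\D_{q_j^b}\Phi\circ\D_0\rho_j^b$, which is exactly how the $\mathring{E}'$-action was defined. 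Hence $\Phi$ carries $\wt\rho_j^b(z_j)$ to $\wt\rho_j^c((\D_0\rho_j^c)^{-1}\circ\D_{q_j^b}\Phi\circ\D_0\rho_j^b(z_j))$, which is the $j$-th entry of the marked tuple of $\iota_{\Ob}(c)$ by definition of $\mu_{\mathring{E}'}$. Functoriality (compatibility with composition and identities) is then inherited from functoriality of $(b,\bz)\mapsto b$ and from the groupoid axioms of the $\mathring{E}'$-action.

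Next, injectivity on objects: if $\iota_{\Ob}(b,\bz)=\iota_{\Ob}(b',\bz')$ then in particular the underlying closed Hurwitz covers $\Psi^\lambda(b)$ and $\Psi^{\lambda'}(b')$ are literally equal as objects of $\mcR_{g,k,h,n}(T)$, and since the $\Psi^\lambda$ are the charts of the manifold $\Ob\mcM_{g,k,h,n}(T)$ (they are local homeomorphisms onto their images and we may assume $\lambda=\lambda'$ on the overlap) we get $b=b'$; then $\wt\rho_j(z_j)=\wt\rho_j(z_j')$ for all $j$ forces $\bz=\bz'$ because each $\wt\rho_j:S^1\lra\Gamma_j(C)$ is a homeomorphism. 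For full faithfulness one argues in the reverse direction: a morphism $(\Phi,\varphi):\iota_{\Ob}(b,\bz)\lra\iota_{\Ob}(c,\bw)$ in $\wh{\mcR}_{g,k,h,n}(T)$ is, forgetting the $\bz$-data, a morphism of closed Hurwitz covers, hence a morphism $b\to c$ in $\mcM_{g,k,h,n}(T)$ (here one uses that both source and target are genuinely in the image of the atlas so the morphism has maximal order, as in the proof of \cref{prop:properties-functors-between-groupoids}); that this morphism, paired with $\bz$, gives an arrow of $\mcM_{g,k,h,n}(T)\ltimes\mathring{E}'$ landing on $(c,\bw)$ uses once more the identity $\Phi(z_j')=\wt\rho_j^c(\mu_{\mathring{E}'}$-entry$)$ together with the hypothesis $\Phi(z_j')=w_j'$ and injectivity of $\wt\rho_j^c$ to identify $\bw$ with the $\mathring{E}'$-translate of $\bz$. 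Uniqueness of this lift gives faithfulness, existence gives fullness.

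Finally, continuity on orbit spaces follows from the construction of the topology on $|\wh{\mcR}_{g,k,h,n}(T)|$ in \cref{sec:topology-bordered-hurwitz-covers}: that topology is the coarsest making $\glue$ continuous and pulling back (via subbasic sets $\mcU_{[\cdots]}(U,\epsilon)$) the topology on $|\wt{\mcR}_{g,nd+k,h,2n}(\wt T)|$, which in turn is the quotient topology from $\Ob\wt{\mcM}_{g,nd+k,h,2n}(\wt T)$; since $\glue\circ\iota$ and the hyperbolic-distance functions $d_{\mathrm{hyp}}(z_j',w_j)$ that enter the subbasic sets depend continuously on $(b,\bz)$ (the uniformized hyperbolic metric and the reference curves depend smoothly on $b\in O^\lambda$, and $\wt\rho_j(z_j)$ depends continuously on $(b,z_j)$ by the Carathéodory-type statement on continuity of Riemann maps already invoked in the proof of \cref{lem:E-orbibundle}), the composite is continuous, hence descends to a continuous map on orbit spaces. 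I expect the main obstacle to be the bookkeeping in the full-faithfulness step, specifically matching the marked-point tuple $\bw$ of the target object with the $\mathring{E}'$-translate of $\bz$ under a given morphism $(\Phi,\varphi)$: this requires carefully identifying the local rotation-number representation of $\Phi$ with the linearized action $\mu_{\mathring{E}'}$ and then transporting it back along $\wt\rho_j$, the same computation that is the technical core of \cref{prop:E-prime-E-subbundle} and \cref{prop:Tn-bundle-map-E-prime-F}, but now run in reverse.
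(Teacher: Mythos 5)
Your proposal follows the same route as the paper's (very terse) proof and fills in genuinely more detail: verifying that the $\mathring{E}'$-condition from \cref{def:e-prime-mathring} translates into the constraint $u(z'_i)=u(z'_j)$ for $\nu(i)=\nu(j)$, and checking that $\Phi(z'_j)$ equals the $j$-th marked point of $\iota_{\Ob}(c)$ by reducing to the rotation-number computation in \cref{prop:E-prime-E-subbundle}, is exactly the right content, and the paper simply asserts it. Your appeal to Carathéodory-type continuity of the Riemann maps $\rho_j^b$ in the continuity step also correctly identifies the technical input, which the paper dispatches with one sentence.

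Two points need correction. First, in the full-faithfulness step you write that a morphism $(\Phi,\varphi)$ between $\iota_{\Ob}(b,\bz)$ and $\iota_{\Ob}(c,\bw)$ automatically has maximal order ``because both source and target are genuinely in the image of the atlas.'' This is not a valid inference: the whole reason $\Mor\mcM_{g,k,h,n}(T)$ is a proper subset of the fibre isomorphisms is that, over nodal target surfaces, morphisms between objects in the charts $O^{\lambda}$ and $O^{\lambda'}$ (e.g.\ the identity on a central fibre that lies in two deformation families constructed with different discs $b_{\xi}$) can have lower order. \cref{prop:properties-functors-between-groupoids}, which you invoke, only proves fullness on the \emph{smooth} subcategory, precisely because there the target has no nodes; it does not support the claim for general $b,c$. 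The paper's own proof says ``obvious'' here and does not confront this, so you at least flag that something must be argued, but the reason you give does not close the gap. Essentially the same issue affects your injectivity step, where ``we may assume $\lambda=\lambda'$ on the overlap'' is not justified when distinct $\lambda,\lambda'$ produce literally the same central Hurwitz cover.

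Second, you state that the topology on $|\wh{\mcR}_{g,k,h,n}(T)|$ is ``the coarsest making $\glue$ continuous.'' The paper explicitly warns that this is \emph{not} how that topology is defined, since the coarsest such topology would make every open set near a degenerate boundary contain the whole circle of possible marked points $z_j$; instead the topology is generated by the subbasic sets $\mcU_{[\cdots]}(U,\epsilon)$ which also record a hyperbolic distance along $\Gamma_j(C)$. You do go on to mention those subbasic sets, and the substance of your continuity argument — check that $\glue\circ\iota$ and the distance functions $d_{\mathrm{hyp}}(z'_j,w_j)$ vary continuously in $(b,\bz)$ — is sound, but the opening characterization of the topology should be dropped.
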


\begin{proof}
  The condition on the points in $\mathring{E}'$ from \cref{def:e-prime-mathring} is precisely such that the points $\bz'$ satisfy $u(z'_i)=u(z'_j)$ for all $i,j$ such that $\nu(j)=\nu(i)$. Therefore, $\iota$ is a well-defined functor. Continuity on orbit spaces is also clear as we used the same type of construction to define the topology on $|\wh{\mcR}_{g,k,h,n}(T)|$ in \cref{sec:topology-bordered-hurwitz-covers}. The injectivity on objects is due to the fact that $\wt{\rho}_j$ is bijective.

  It remains to notice that for any $b,c\in\Ob\mcM_{g,k,h,n}(T)\ltimes\mathring{E}'$ the map
  \begin{equation*}
    \iota:\Hom_{\mcM_{g,k,h,n}(T)\ltimes\mathring{E}'}(b,c)\lra\Hom_{\wh{\mcR}_{g,k,h,n}(T)}(\iota(b),\iota(c))
  \end{equation*}
  is bijective. However, this is again obvious as the notion of morphisms in both categories is the same, namely a pair $(\Phi,\varphi)$ acting in the obvious way on the marked points on $\bG$ and on $(S^1)^k$, respectively.
\end{proof}

\begin{rmk}
  Notice that this means that objects in $\mcM_{g,k,h,n}(T)\ltimes\mathring{E}'$ define elements in $\Ob\wt{\mcM}_{g,nd+k,h,2n}(\wt{T})$ by gluing nodal spheres to the marked points, i.e. we obtain a functor
  \begin{equation*}
    \mcM_{g,k,h,n}(T)\ltimes\mathring{E}'\overset{\iota}{\lra}\wh{\mcR}_{g,k,h,n}(T)\overset{\glue}{\lra}\wt{\mcR}_{g,nd+k,h,n}(\wt{T}).
  \end{equation*}
  This glued surface has of course the obvious contractible loops $\bG$ as induced data but it has forgotten the points in the fibre of $\mathring{E}'$. Nevertheless, we can use it to define the functor $\Xi_L$.
  \label{rmk:glue-spheres-to-closed-hurwitz-cover}
\end{rmk}

We will choose the set $\wt{\Lambda}$ defining the orbifold structure on $\wt{\mcM}_{g,nd+k,h,2n}(\wt{T})$ such that it includes $\glue(\iota(\lambda,\bz))$ for any $\lambda\in\Lambda$. Notice that $\glue\circ\iota$ forgets the point $\bz$, so this statement makes sense.

\begin{definition}
  For any sufficiently small $L\in\RR_{>0}^n$ we define
  \begin{equation*}
    (\Xi_L)_{\Ob}:\Ob\mcM_{g,k,h,n}(T)\ltimes\mathring{E}'=\mathring{E}'\lra \Ob\wh{\mcM}^{\square}_{g,k,h,n}(T)|_{\wh{ev}^{-1}(L)}
  \end{equation*}
  as follows. For every $(b,(e^{2\pi\ii\theta_1},\ldots,e^{2\pi\ii\theta_k}))\in O^{\lambda}\times T^k$ satisfying \cref{def:e-prime-mathring} we choose a point $(\glue(\iota(b)),y_1,\ldots,y_n)\in O^{\lambda}\times\DD^n\cong \wt{O}^{\wt{\lambda}}$ such that the hyperbolic length of $\Gamma_i$ on $X$ of $(C,u,X,\bq,\bp,\bG)=\wt{\Psi}^{\wt{\lambda}}(\glue(\iota(b)),y_1,\ldots,y_n)$ is $L_i$ for $i=1,\ldots,n$. See \cref{rmk:y-variables} for a few comments on the $y_i$. We then cut the Hurwitz cover along $\bG$ and mark the point $z_j$ on $\Gamma_j(C)$ at a hyperbolic distance of $l_jF(L_{\nu(j)})\cdot \theta_j$ from $m_j'$ in the positive direction. We define this Hurwitz cover with marked points $\bz$ as $\Xi_L(b,(e^{2\pi\ii\theta_1},\ldots,e^{2\pi\ii\theta_k}))$.

  On morphisms we can define
  \begin{equation*}
    (\Xi_L)_{\Mor}:\Mor\mcM_{g,k,h,n}(T)\ltimes\mathring{E}'\lra \Mor\wh{\mcM}_{g,k,h,n}(T)|_{\wh{ev}^{-1}(L)}
  \end{equation*}
  in the same way by mapping $((b,(\Phi,\varphi),c),(b,(e^{2\pi\ii\theta_1},\ldots,e^{2\pi\ii\theta_k})))$ with $(b,(\Phi,\varphi),c)\in M(\lambda,\lambda')$ first to the same morphism in $M(\wt{\lambda},\wt{\lambda}')\subset\Mor\wt{\mcM}_{g,nd+k,h,2n}(\wt{T})$ and then to the unique morphism starting at $\Xi_L(b,(e^{2\pi\ii\theta_1},\ldots,e^{2\pi\ii\theta_k}))$ agreeing with $(\Phi,\varphi)$ outside the gluing region and then restricted to the cut surface. Here we use \cref{prop1}.
  \label{def:xi}
\end{definition}

\begin{rmk}
  The role of the $(y_1,\ldots,y_n)\in\DD^n$ is as follows. As we have glued pairs of pants to the marked points we turned these marked points into interior nodes. Thus we need $n$ complex gluing parameters in order to define a smoothened Hurwitz cover. By \cite{hubbard_analytic_2014} we know that there is a homeomorphism to Fenchel--Nielsen coordinates on a small enough neighborhood of $0\in\DD^n$ (hence why we restrict to sufficiently small $L$). We can therefore fix these Fenchel--Nielsen coordinates by using the prescribed parameters $L\in\RR_{>0}^n$ for the lengths of the target surface and arbitrary Fenchel--Nielsen twists. The reason the twists do not matter is because we immediately afterwards cut the cover along these geodesics. The $(y_1,\ldots,y_n)$ are just the complex gluing parameters such that the corresponding hyperbolic geodesics have the prescribed lengths. We will not actually need their precise values.

  Also note that the letter $\Gamma$ is used here in two different ways. The multicurve $\bG$ is only used to remember where the boundary was inside of the glued surface. So in our case we can replace it mentally by their hyperbolic geodesic representatives. In contrast the curves $\Gamma_j(C)$ are reference curves which are in a collar neighborhood of the corresponding boundary geodesic.
  \label{rmk:y-variables}
\end{rmk}

\begin{rmk}
See \cref{fig:choices-zeros-circles-2} for an illustration. Furthermore notice that the points $y_1,\ldots,y_n\in\DD^n$ are not uniquely determined by just fixing the hyperbolic lengths as we would also need to fix the Fenchel--Nielsen twist around those curves. However, as we cut the surface along these curves we do not need to specify this twist. Instead we add it later by marking the points $z_j$ correspondingly.
\end{rmk}

\begin{prop}
  For every $L\in\RR_{>0}^n$ the functor
  \begin{equation*}
    \Xi_L:\mcM_{g,k,h,n}(T)\ltimes\mathring{E}\lra \wh{\mcM}^{\square}_{g,k,h,n}(T)|_{\wh{ev}^{-1}(L)}
  \end{equation*}
  is well-defined and a $T^n$-equivariant orbifold isomorphism and therefore descends to an orbifold isomorphism
  \begin{equation*}
    \xi_L:\faktor{\mcM_{g,k,h,n}(T)\ltimes\mathring{E}}{T^n}\lra\wh{\mcM}^{\square}_{g,k,h,n}(T)[L].
  \end{equation*}
  \label{prop:xil-well-defined}
\end{prop}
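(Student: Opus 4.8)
The plan is to verify three things in order: that $\Xi_L$ is a well-defined functor, that it is an isomorphism of orbifold groupoids (i.e.\ compatible diffeomorphisms on objects and morphisms with a functorial inverse), and that it is $T^n$-equivariant; the descent to $\xi_L$ on the symplectic quotients is then automatic once equivariance and the relation $\wh\mu = \wt\mu\circ\glue$ from \cref{lem:properties-length-maps} are in hand. For well-definedness I would first check that the construction in \cref{def:xi} genuinely lands in $\wh{\mcM}^{\square}_{g,k,h,n}(T)|_{\wh{ev}^{-1}(L)}$: by construction the target surface $X$ obtained after cutting along $\bG$ has the $i$-th boundary geodesic of length exactly $L_i$, so $\wh\ev$ of the result is $\frac12(K_1L_1^2,\dots,K_nL_n^2)$; and the condition \cref{def:e-prime-mathring} on points of $\mathring E'$ is precisely what guarantees $u(z_j)=u(z_i)$ whenever $\nu(j)=\nu(i)$, which is the compatibility requirement in \cref{def:whmcm}. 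The independence of the choice of gluing parameters $(y_1,\dots,y_n)\in\DD^n$ needs the remark that these parameters only differ by the Fenchel--Nielsen twist around the curves in $\bG$ (by \cite{hubbard_analytic_2014}), and we cut along exactly those curves, so the cut surface is independent of the twist; this is the content of \cref{rmk:y-variables} and should be spelled out carefully. Functoriality on morphisms uses \cref{prop1}: a fibre isomorphism $(b,(\Phi,\varphi),c)$ of $\mcM_{g,k,h,n}(T)$-type, after gluing spheres and pairs of pants via $\glue\circ\iota$ as in \cref{rmk:glue-spheres-to-closed-hurwitz-cover} and \cref{lem:glue-spheres-to-closed-hurwitz-cover}, extends uniquely to a morphism of the glued Hurwitz covers, and restricting back to the cut surfaces gives the morphism in $\wh{\mcM}^{\square}_{g,k,h,n}(T)$; one checks it respects composition because the extension in \cref{prop1} is unique.

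Next I would construct the inverse. Given an element of $\wh{\mcM}^{\square}_{g,k,h,n}(T)|_{\wh{ev}^{-1}(L)}$, i.e.\ a bordered Hurwitz cover $(C,u,X,\bq,\bp,\bz)$ with $\wh\ev$-value determined by $L$, I would uniformize, read off the point $b\in O^{\lambda}$ of the Hurwitz deformation family obtained by gluing pairs of pants and spheres and then forgetting the extra data (this uses \cref{lem:glue-functor-surjectivity} together with the fact that the orbifold structure on $\wh{\mcM}^{\square}_{g,k,h,n}(T)$ is pulled back from $\wt{\mcM}^{\square}_{g,nd+k,h,2n}(\wt T)$ via $\glue^{-1}$, \cref{lem:orbifold-structure-rtildesquare}), and recover the angles $e^{2\pi\ii\theta_j}\in S^1$ by measuring the hyperbolic distance of $z_j$ from the reference point $m_j'$ along $\Gamma_j(C)$, normalized by $l_jF(L_{\nu(j)})$. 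The point $(b,(e^{2\pi\ii\theta_1},\dots,e^{2\pi\ii\theta_k}))$ lies in $\mathring E'$ because the marked points $\bz$ satisfy the source--target compatibility $u(z_j)=u(z_i)$ for $\nu(j)=\nu(i)$, which translates via \cref{lem:local-description-transfer-map} into exactly \cref{eq:condition-subbundle-e-prime}. Smoothness of both $\Xi_L$ and its inverse in the respective differentiable structures follows from the same Carathéodory-type continuity of the Riemann mappings associated to the reference curves used in the proof of \cref{lem:E-orbibundle}: the reference curves $\Gamma_j(C)$ depend smoothly on $b$, hence so do the parametrizations $\wt\rho_j$, and the cutting/gluing maps are smooth in the Fenchel--Nielsen coordinates (and in the Deligne--Mumford coordinates on the $\square$-locus by \cite{wolf_real_1992-1}). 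That the two constructions are mutually inverse is essentially a bookkeeping check that ``glue, forget, re-glue'' is the identity and that measuring then re-marking recovers $z_j$, using that $\wt\rho_j$ is a bijection (\cref{lem:glue-spheres-to-closed-hurwitz-cover}).

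For $T^n$-equivariance I would compare the two torus actions: the action on $\mcM_{g,k,h,n}(T)\ltimes\mathring E'$ is $\iota:T^n\to T^k$ from \cref{prop:E-prime-E-subbundle} acting by rotation of the $S^1$-factors, while the action on $\wh{\mcM}^{\square}_{g,k,h,n}(T)$ from \cref{def:torus-action-moduli-spaces} rotates the points $\bz$ with the skew speeds $t'_j = \tfrac{K_{\nu(j)}}{l_j}t_{\nu(j)}$. Since $\Xi_L$ marks $z_j$ at hyperbolic distance $l_jF(L_{\nu(j)})\theta_j$ from $m_j'$ and the reference curve $\Gamma_j(C)$ has total length $l_jF(L_{\nu(j)})$, a shift $\theta_j\mapsto \theta_j + \tfrac{K_{\nu(j)}}{l_j}s_{\nu(j)}$ on the $\mathring E'$-side corresponds precisely to the arc-length displacement defining $\rot_{t'}$ on the $\wh{\mcM}$-side, so $\Xi_L\circ(\text{action by }\iota(s)) = (\rot_{t'}\text{ with }t=s)\circ\Xi_L$; this is essentially the same computation already carried out in the proof of \cref{prop:Tn-bundle-map-E-prime-F}. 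Equivariance then lets $\Xi_L$ descend to $\xi_L$ on the quotients, where the quotient on the right is $\wh{\mcM}^{\square}_{g,k,h,n}(T)[L]$ by \cref{lem:ham-orb-covering-reduction} and the relation $\wh\mu=\tfrac12(K_\bullet L_\bullet^2)$ identifies $\wh\mu^{-1}(L)$ with the full subcategory used in symplectic reduction.

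I expect the main obstacle to be the smoothness and inverse-smoothness of $\Xi_L$ at the nodal Hurwitz covers in $\wh{\mcM}^{\square}_{g,k,h,n}(T)$ — i.e.\ interior nodes of $C$ and $X$, which are allowed in the $\square$-locus. One must check that cutting along $\bG$, which turns former boundary components into geodesics bounding the glued-in pairs of pants, interacts well with the complex (plumbing) coordinates describing the remaining interior nodes, and that the $(y_1,\dots,y_n)$-ambiguity really is confined to twists even when interior nodes are present. This is delicate because of the incompatibility of the Fenchel--Nielsen and Deligne--Mumford differentiable structures near nodes (\cref{rmk:different-differentiable-structures-deligne-mumford}), so the argument has to be phrased in the $\square$-locus where boundaries are non-degenerate and one can use the Deligne--Mumford structure, invoking \cite{hubbard_analytic_2014} and \cite{wolf_real_1992-1} for the smooth dependence. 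Everything else is routine given \cref{prop1}, \cref{lem:glue-functor-surjectivity}, \cref{lem:glue-spheres-to-closed-hurwitz-cover}, and \cref{lem:local-description-transfer-map}.
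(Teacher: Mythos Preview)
Your proposal is correct and follows the same overall structure as the paper's proof: well-definedness via the $\mathring E'$ condition matching the marked-point condition in $\wh{\mcM}_{g,k,h,n}(T)$, bijectivity by constructing an explicit inverse from hyperbolic distance along reference curves, and the same equivariance computation comparing $\iota(t)$-rotation with $\rot_{t'}$.

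The one point where the paper is noticeably simpler is smoothness (and this also dissolves your anticipated obstacle at interior nodes). You argue via Carath\'eodory-type continuous dependence of Riemann mappings, but the paper instead exploits the fact that $\wt\Lambda$ was \emph{chosen} to contain $\glue(\iota(\lambda))$ for every $\lambda\in\Lambda$, with the same pair-of-pants decompositions and disc structures. Consequently, in the coordinate charts $O^\lambda\times\DD^n\cong\wt O^{\wt\lambda}$ the map $\Xi_L$ is literally the identity on the $O^\lambda$-factor (together with the marking of $z_j$, which depends smoothly on $\theta_j$ and the continuously-varying reference curves). This makes smoothness automatic and also shows that the interior-node coordinates $\DD^N$ are untouched by the construction, so there is no interaction between the plumbing coordinates and the boundary-gluing that you were worried about. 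For bijectivity on morphisms the paper invokes \cref{prop:morphisms-nodal-hurwitz-covers} rather than \cref{prop1}; both work, but the former is the natural partner since one is going from a glued morphism back to a nodal one.
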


\begin{proof}
  Notice that by definition the target surface of $\Xi_L(b,(e^{2\pi\ii\theta_1},\ldots,e^{2\pi\ii\theta_k}))$ has boundary lengths $L$. The condition for the elements in $\mathring{E}'$ yields the condition for the marked points in $\wh{\mcM}_{g,k,h,n}(T)$. Regarding the morphisms
  \begin{equation*}
    \Xi_L((b,(\Phi,\varphi),c),(b,(e^{2\pi\ii\theta_1},\ldots,e^{2\pi\ii\theta_k})))
  \end{equation*}
  we see that a biholomorphism is an isometry of the induced hyperbolic metrics and therefore maps the curves $\Gamma_j(C_b)$ to $\Gamma_j(C_c)$ and also preserves lengths along these curves. From this it follows that $\Xi_L$ is indeed a functor.

  For the smoothness of $\Xi$ notice that we fixed the lengths of the boundary geodesics and that the marked points $\bz$ depend smoothly on $(b,(e^{2\pi\ii\theta_1},\ldots,e^{2\pi\ii\theta_k}))$. Also the Hurwitz cover depends smoothly on the parameters $b$ as the map is actually the identity in coordinates, since we use the same pair of pants decompositions and disc structures to define the orbifold structures on $\mcM_{g,k,h,n}(T),\wh{\mcM}_{g,k,h,n}(T)$ and $\wt{\mcM}_{g,nd+k,h,2n}(\wt{T})$.

  The functor $\Xi_L$ is bijective on objects as we have chosen corresponding $\Lambda$ and $\wt{\Lambda}$ and because all $O^{\lambda}$ and $\wt{O}^{\wt{\lambda}}$ are small enough such that the Hurwitz deformation families are injective. Therefore we can recover unique gluing parameters and marked points from the surfaces. Recall for this argument that $\lambda$ and $\wt{\lambda}$ both contain pair of pants decompositions as well disc structures and marked reference points $m_j'$ used for calculating distances along the reference curves. Bijectivity on morphisms is the same argument together with \cref{prop:morphisms-nodal-hurwitz-covers}.

  It remains to check the $T^n$-equivariance. Let $t=e^{2\pi\ii\alpha_1},\ldots,e^{2\pi\ii\alpha_n}\in T^n$. Multiplication by $\iota(t)\in T^k$ rotates $\theta_j\mapsto \theta_j+\frac{K_{\nu(j)}}{l_j}\alpha_{\nu(j)}$ and thus moves the marked point $z_j$ on $\Gamma_j(C)$ by a hyperbolic distance of $l_jF(L_{\nu(j)})\frac{K_{\nu(j)}}{l_j}\alpha_{\nu(j)}=F(L_{\nu(j)})K_{\nu(j)}\alpha_{\nu(j)}$. Recalling the definition of the torus action on $\wh{\mcM}^{\square}_{g,k,h,n}[L]$ from \cref{def:torus-action-moduli-spaces} we see that this is the same as a rotation by
  \begin{equation*}
    \rot_{\iota(t)}(\bz)_j=\beta_j\left(\frac{K_{\nu(j)}}{l_j}\alpha_jl_jF(L_{\nu(j)})\right)=\beta_j(K_{\nu(j)}F(L_{\nu(j)})\alpha_j),
  \end{equation*}
  where $\beta_j:[0,1]\lra\Gamma_j(C)$ is a parametrization by arc length, showing that $\Xi_L$ is equivariant.
\end{proof}

\begin{lem}
  We have
  \begin{equation*}
    \lim_{L\to 0}\;\xi_L^*[\wh{\omega}_L]=\pi^*[\omega]
  \end{equation*}
  in $H^2\left(\faktor{\mcM_{g,k,h,n}(T)\ltimes\mathring{E}'}{T^n},\RR\right)$, where
  \begin{equation*}
    \pi : \faktor{\mcM_{g,k,h,n}(T)\ltimes \mathring{E}'}{T^n} \lra \mcM_{g,k,h,n}(T)
  \end{equation*}
  is the projection from \cref{prop:Tn-bundle-map-E-prime-F}.
  \label{lem:limit-wwp}
\end{lem}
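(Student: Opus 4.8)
The plan is to compute the limit by tracking the Weil--Petersson symplectic form through the two descriptions we have of the reduced moduli space. On the one hand, $\wh{\omega}_L = \wh{\ev}^*\wwp|_L$ by \cref{lem:ham-orb-covering-reduction}, and on the other hand $\omega = \ev^*\wwp$ on $\mcM_{g,k,h,n}(T)$. So the statement amounts to comparing $\xi_L^*\wh{\ev}^*\wwp|_L$ with $\pi^*\ev^*\wwp$ in the limit $L\to 0$. The natural strategy is to use the Duistermaat--Heckman theorem (\cref{thm:duistermaat-heckman}) for the Hamiltonian $T^n$-action on $\wt{\mcM}_{g,nd+k,h,2n}(\wt{T})$, together with the fact that $\glue$ and $\wt{\ev}$ are equivariant, which gives us a formula for how $[\wh{\omega}_L]$ varies with $L$ away from the nodal locus.

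First I would set up the Duistermaat--Heckman comparison: on $\wh{\mcM}^{\square}_{h,n}$, with momentum map $\mu$ and the $T^n$-action, we have for regular values $L,L'$ the identification of reduced cohomologies and $[\omega_{L'/2}] = [\omega_{L/2}] + 2\pi\langle \tfrac{1}{2}(L'^2 - L^2), c_1\rangle$ where $c_1$ is the Chern class of the circle bundle $\mu^{-1}(L/2)\to\wh{\mcM}_{h,n}[L]$. Pulling back along $\xi_L$ and using \cref{lem:ham-orb-covering-reduction} and \cref{prop:Tn-bundle-map-E-prime-F}, this translates into a statement in $H^2\big(\faktor{\mcM_{g,k,h,n}(T)\ltimes\mathring{E}'}{T^n},\RR\big)$ of the form $\xi_L^*[\wh{\omega}_L] = (\text{something independent of }L) + (\text{a polynomial in the }L_i^2\text{ with coefficients Chern classes})$. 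The key identification is that the ``$L$-independent part'' must be exactly $\pi^*[\omega]$: this is because at $L=0$ the glued Hurwitz cover in \cref{def:xi} degenerates so that the attached pairs of pants become nodal spheres, i.e.\ the image of $\Xi_L$ limits onto $\glue(\iota(\mcM_{g,k,h,n}(T)\ltimes\mathring{E}'))$ in $\wt{\mcR}_{g,nd+k,h,2n}(\wt{T})$, and on that locus $\wt{\ev}^*\wwp$ restricts (after the covering $\pi$, which only permutes preimages and preserves the pulled-back form) to $\pi^*\ev^*\wwp = \pi^*\omega$. Concretely, I would argue that $\Xi_L$ depends continuously on $L\in\RR_{\geq 0}^n$ (extending to $L=0$ by the gluing/cutting construction, using that Fenchel--Nielsen coordinates extend continuously to the nodal curve with zero length as in \cite{hubbard_analytic_2014}), that $\xi_L^*\wh{\omega}_L$ is a family of closed invariant $2$-forms depending continuously on $L$ in the appropriate sense, and that the cohomology class therefore converges to the class at $L=0$, which is $\pi^*[\omega]$.

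The cleanest route is probably to avoid invoking Duistermaat--Heckman explicitly and instead argue directly: since $\glue\circ\iota: \mcM_{g,k,h,n}(T)\ltimes\mathring{E}' \to \wt{\mcR}_{g,nd+k,h,2n}(\wt{T})$ has image in the nodal locus, and $\wt{\ev}$ is continuous there, the forms $\xi_L^*(\glue^*\wt{\ev}^*\wwp)$ restricted to $\wh{\mu}^{-1}(L)$ (pushed down) form a continuous family in $L$ whose $L=0$ member is $\pi^*\ev^*\wwp$. One then needs to check that $\wh{\omega}_L = \xi_L^*{}^{-1}\wh{\ev}^*\wwp$ agrees, up to an exact form depending continuously on $L$, with this restricted family — this is precisely the content of how Fenchel--Nielsen length/twist coordinates behave under the Weil--Petersson form (Wolpert's formula $\wwp = \sum \dd\theta_i\wedge\dd l_i$), since on $\wt{\mu}^{-1}(L)$ we fix $l_i$ and quotient by the $\theta_i$-twists, which kills exactly the $\dd\theta_i\wedge\dd l_i$ terms, leaving the Weil--Petersson form of the complement. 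The passage to the limit then uses that the remaining terms (involving the interior curves only) converge in $\cin_{\text{loc}}$ away from the collapsing node and that the collapsing happens in a controlled region, so the cohomology classes converge.

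The main obstacle will be the analytic subtlety in \cref{rmk:different-differentiable-structures-deligne-mumford}: the map from complex gluing parameters $y_i\in\DD^n$ to Fenchel--Nielsen coordinates is \emph{not} differentiable at the origin, so one cannot naively differentiate in $L$ at $L=0$, and one must phrase ``continuity of the family of symplectic forms'' carefully — working with cohomology classes and the $\cin_{\text{loc}}$-convergence of hyperbolic metrics rather than with smooth dependence of the forms themselves. I expect the bulk of the work to be in justifying that $\xi_L^*[\wh{\omega}_L]$, as an element of the (canonically $L$-independent, by Duistermaat--Heckman or by the local product structure near the nodal locus) cohomology group $H^2\big(\faktor{\mcM_{g,k,h,n}(T)\ltimes\mathring{E}'}{T^n},\RR\big)$, really does extend continuously to $L=0$ and takes the value $\pi^*[\omega]$ there — which should follow from the explicit form of $\Xi_L$ in \cref{def:xi} together with \cref{lem:ham-orb-covering-reduction}, \cref{prop:xil-well-defined}, and the commutativity of $\ev$, $\wt{\ev}$, $\wh{\ev}$ with $\glue$ and $\sglue$.
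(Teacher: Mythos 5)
Your ``cleanest route'' paragraph is essentially the paper's argument: the proof indeed exploits that $\Xi_L$ degenerates to the inclusion $\iota$ as $L\to 0$, and that $\glue\circ\iota$ lands in the fixed locus of the $T^n$-action inside $\wt{\mcM}_{g,nd+k,h,2n}(\wt{T})$, so that the pullbacks of the (fixed) Weil--Petersson form $\wt{\omega}$ along the continuous family $\glue\circ\Xi_L$ converge to the pullback along $\glue\circ\iota$. Two remarks on the differences. First, the Duistermaat--Heckman detour you entertain in the opening half would be circular here: \cref{thm:duistermaat-heckman} only compares reduced classes at \emph{regular} values $\xi_0,\xi_1$ in a common chamber, and extending the comparison to $L_0=0$, where the $T^n$-action on $\wt{\mcM}_{g,nd+k,h,2n}(\wt{T})$ has fixed points, is precisely what this lemma is for --- in the paper it is this lemma, not Duistermaat--Heckman, that justifies the $L_0\to 0$ limit in \cref{prop:limit-chern-classes} and in the proof of \cref{thm:main-result}. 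Second, the paper makes the direct argument cleaner than your Wolpert-formula sketch by first pulling everything back to the total spaces of the $T^n$-bundles (where pullback is injective on the relevant cohomology), and then comparing inside the single fixed orbifold $\wt{\mcM}_{g,nd+k,h,2n}(\wt{T})$: one has $\Xi_L^*\bigl(\wh{\omega}|_{\wh{\mu}^{-1}(L)}\bigr)=(\glue\circ\Xi_L)^*\wt{\omega}$ and $\eta^*\pi^*\omega=(\glue\circ\iota)^*\wt{\omega}$, using that the symplectic map $F$ (gluing the standard nodal sphere covers) intertwines $\omega$ with $\wt{\omega}$. This sidesteps the Fenchel--Nielsen differentiability issue you flag: the needed convergence is only $C^0$-convergence of the maps $\glue\circ\Xi_L\to\glue\circ\iota$ (clear from the construction of $\Xi_L$ in \cref{def:xi}), and closed forms pull back continuously along that; no differentiation in $L$ at $L=0$ is ever performed, so the non-smoothness of the length/twist coordinates at the nodal locus does not enter.
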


\begin{proof}
  From \cref{prop:xil-well-defined} we obtain isomorphisms
  \begin{equation*}
    \xi_L^*:H^2(\wh{\mcM}^{\square}_{g,k,h,n}[L],\RR)\lra H^2\left(\faktor{\mcM_{g,k,h,n}(T)\ltimes\mathring{E}'}{T^n},\RR\right)
  \end{equation*}
  for every $L\in\RR_{>0}^n$. Notice that we have the following diagram
  \begin{equation*}
    \xymatrix{
      \mcM_{g,k,h,n}(T)\ltimes\mathring{E}' \ar[r]^{\Xi_L} \ar[d]^{\eta} \ar@/^2pc/[rr]^{\glue\circ\iota} & \wh{\mcM}^{\square}_{g,k,h,n}(T)|_{\wh{\mu}^{-1}(L)} \ar[d] \ar[r]^{\glue} & \wt{\mcM}_{g,nd+k,h,2n}(\wt{T}) \\
      \faktor{\mcM_{g,k,h,n}(T)\ltimes\mathring{E}'}{T^n} \ar[d]^{\pi} \ar[r]^{\xi_L} & \wh{\mcM}^{\square}_{g,k,h,n}(T)[L] &  \\
      \mcM_{g,k,h,n}(T) \ar@/_3pc/[rruu]^{F} 
      }
  \end{equation*}
  where $F$ just glues a standard sphere Hurwitz cover of the correct type to the punctures in the same way as in \cref{sec:glue-at-branch-point}. However, the diagram does \emph{not} commute. The only commuting parts are the $\Xi_l-\xi_L$-square in the middle and the outer square. The horizontal map in the upper triangle
  \begin{equation*}
    \glue:\wh{\mcM}^{\square}_{g,k,h,n}(T)|_{\wh{\mu}^{-1}(L)} \lra \wt{\mcM}_{g,nd+k,h,2n}(\wt{T})
  \end{equation*}
  has a disjoint image from $\glue\circ\iota$. It is enough to check that the pullbacks of these forms to the total spaces of the $T^n$-bundles satisfy
  \begin{equation*}
    \Xi_L^*[\wh{\omega}|_{\wh{\mu}^{-1}(L)}]\overset{L\to 0}{\lra} \eta^*\pi^*\omega,
  \end{equation*}
  because the pull-backs are injective.

  Notice that $F$ is actually symplectic. The orbifold $\wt{\mcM}_{g,nd+k,h,2n}(\wt{T})$ is equipped with the (target) Weil--Petersson symplectic structure since it is locally diffeomorphic to $\mcM_{g,nd+k,h,2n}(\wt{T})$ and we can use Fenchel--Nielsen coordinates to see this. The horizontal $\glue$-functor satisfies $\glue^*\wt{\omega}=\wh{\omega}|_{\wh{\mu}^{-1}(L)}$ because of the commuting diagram in \cref{eq:diag-mod-spaces}. Thus it is enough to show that
  \begin{equation*}
    \glue\circ\, \Xi_L\lra \glue\circ\,\iota
  \end{equation*}
  for $L\to 0$. However, this is clear from the construction.
\end{proof}

\begin{prop}
  We have for the vector of rational first Chern classes
  \begin{align*}
    c_1\left((\wh{\mu})^{-1}\left(L\right)\right) &  \coloneqq c_1\left(\wh{\mcM}^{\square}_{g,k,h,n}(T)|_{\wh{\mu}^{-1}(L)}\lra\wh{\mcM}^{\square}_{g,k,h,n}(T)[L]\right) \\
    & \hspace{3cm} \in H^2\left(\left|\wh{\mcM}^{\square}_{g,k,h,n}(T)[L]\right|,\QQ^n\right)
  \end{align*}
  the limit
  \begin{align}
    \left\langle c_1\left((\wh{\mu})^{-1}\left(L\right)\right)^J\wedge\wh{\omega}_L^{3h-3+2n-|J|}, \right. & \left. [\wh{\mcM}^{\square}_{g,k,h,n}(T)[L]] \right\rangle \nonumber \\
    & \overset{L\to 0}{\lra} \left\langle c_1(E')^J\wedge\pi^*\omega,\left[\faktor{\mcM_{g,k,h,n}(T)\ltimes \mathring{E}'}{T^n}\right]\right\rangle \label{eq:limit}
  \end{align}
  for any multi-index $J\in\NN^n$.
  \label{prop:limit-chern-classes}
\end{prop}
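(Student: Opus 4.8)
The plan is to reduce the statement to an application of the Duistermaat--Heckman theorem (\cref{thm:duistermaat-heckman}) together with the identifications already established in this section. First I would observe that, by \cref{prop:xil-well-defined}, the orbifold isomorphism $\xi_L:\faktor{\mcM_{g,k,h,n}(T)\ltimes\mathring{E}'}{T^n}\lra\wh{\mcM}^{\square}_{g,k,h,n}(T)[L]$ intertwines the projection $\pi$ with the circle bundle $(\wh{\mu})^{-1}(L)\lra\wh{\mcM}^{\square}_{g,k,h,n}(T)[L]$; more precisely $\xi_L$ lifts to the $T^n$-equivariant isomorphism $\Xi_L$ of principal bundles, so that $\xi_L^*c_1\bigl((\wh{\mu})^{-1}(L)\bigr)=c_1\bigl(\mathring{E}'\lra\faktor{\mcM_{g,k,h,n}(T)\ltimes\mathring{E}'}{T^n}\bigr)$ as elements of $H^2$ with values in $\QQ^n$. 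Since both sides of \cref{eq:limit} are pairings of top-degree forms against fundamental classes of compact orbifolds (compactness of the relevant moduli spaces comes from \cref{thm:compactness-moduli-space-hurwitz-covers} and \cref{thm:properness-mu}, and the fundamental classes exist by \cref{thm:alg-top-orbifolds}), it suffices to show that $\xi_L^*c_1\bigl((\wh{\mu})^{-1}(L)\bigr)$ is, for small $L$, the \emph{fixed} cohomology class $c_1(E')$ independent of $L$, and that $\xi_L^*[\wh{\omega}_L]\to\pi^*[\omega]$.

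The second of these is exactly \cref{lem:limit-wwp}, so the main content is the first: I need that the Chern class of the circle bundle $(\wh{\mu})^{-1}(L)$, pulled back via $\xi_L$, does not depend on $L$ for $L$ in a small enough box $(0,\epsilon)^n\subset\RR_{>0}^n$. This is where Duistermaat--Heckman enters. By \cref{lem:moment-maps} the $T^n$-action on $\wh{\mcM}^{\square}_{g,k,h,n}(T)$ is Hamiltonian with proper moment map $\wh{\mu}$, and by \cref{thm:properness-mu} every point of the open box $(0,\epsilon)^n$ is a regular value (the action being free on $\wh{\mu}^{-1}(L)$ for $L$ with all positive components, since the boundary geodesics are non-degenerate there). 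Hence \cref{thm:duistermaat-heckman} provides, for any $L_0,L_1$ in this connected box, a natural identification $H^*(\wh{\mcM}^{\square}_{g,k,h,n}(T)[L_0],\RR)\cong H^*(\wh{\mcM}^{\square}_{g,k,h,n}(T)[L_1],\RR)$ under which the reduced Chern classes $c_1((\wh{\mu})^{-1}(L_i))$ correspond; combined with the isomorphisms $\xi_{L_i}$ (which are compatible with this natural identification because they are all modeled on the \emph{same} coordinate charts, as in the proof of \cref{prop:xil-well-defined}), this shows $\xi_{L_0}^*c_1((\wh{\mu})^{-1}(L_0))=\xi_{L_1}^*c_1((\wh{\mu})^{-1}(L_1))$ in $H^2(\faktor{\mcM_{g,k,h,n}(T)\ltimes\mathring{E}'}{T^n},\QQ^n)$. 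It remains to identify this common class with $c_1(E')$: this follows by taking the limit $L\to 0$, using \cref{prop:xil-well-defined} once more to see that $\Xi_L$ converges to the inclusion of $\mathring{E}'$ as the unit-torus bundle over $\faktor{\mcM_{g,k,h,n}(T)\ltimes\mathring{E}'}{T^n}$ (compare \cref{rmk:glue-spheres-to-closed-hurwitz-cover}), whose Chern class is by definition $c_1(\mathring{E}')=c_1(E')$.

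Once both ingredients are in place I would finish by a continuity-of-integration argument: fix a multi-index $J\in\NN^n$, write the left-hand side of \cref{eq:limit} using $\xi_L$ as
\begin{equation*}
  \left\langle \xi_L^*c_1\bigl((\wh{\mu})^{-1}(L)\bigr)^J\wedge\xi_L^*\wh{\omega}_L^{3h-3+2n-|J|},\left[\faktor{\mcM_{g,k,h,n}(T)\ltimes\mathring{E}'}{T^n}\right]\right\rangle,
\end{equation*}
note that the first factor is the $L$-independent class $c_1(E')^J$ by the Duistermaat--Heckman step, and let $L\to 0$ in the second factor using \cref{lem:limit-wwp}. Since cup product and integration against the fundamental class are continuous in the de Rham cohomology classes, the pairing converges to $\langle c_1(E')^J\wedge\pi^*\omega^{3h-3+2n-|J|},[\ldots]\rangle$; the exponent on $\pi^*\omega$ is the one forced by dimension, and since $[\omega]^{3h-3+2n-|J|}$ appears only through the top form this is what \cref{eq:limit} asserts (with the understanding that $\pi^*\omega$ in the statement abbreviates the appropriate power). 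The main obstacle I anticipate is the bookkeeping needed to check that the ``natural identification'' of Duistermaat--Heckman is genuinely compatible with the family of isomorphisms $\xi_L$ — i.e.\ that one is not secretly comparing classes on different spaces via inequivalent identifications — which requires spelling out that all the $\xi_L$ are built from one fixed atlas in which the $T^n$-reduction is literally restriction of a Fenchel--Nielsen twist coordinate, so that the monodromy of the Duistermaat--Heckman connection is trivial; everything else is the standard continuity argument.
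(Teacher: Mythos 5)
Your first paragraph already contains the paper's entire argument, and it is correct: since \cref{prop:xil-well-defined} gives a $T^n$-equivariant orbifold isomorphism $\Xi_L$ covering $\xi_L$, the pulled-back Chern class $\xi_L^*c_1\bigl((\wh{\mu})^{-1}(L)\bigr)$ equals $c_1(\mathring{E}')=c_1(E')$, a class that visibly does not depend on $L$, and then \cref{lem:limit-wwp} handles the convergence of $\xi_L^*[\wh{\omega}_L]$ to $\pi^*[\omega]$. At that point you are done; the rest is pairing against the (pushed-forward) fundamental class, exactly as the paper does.

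The trouble is the second paragraph, which re-derives the $L$-independence via Duistermaat--Heckman. This is not merely redundant --- it re-opens a question you had already closed and replaces a one-line argument (isomorphic $T^n$-principal bundles have equal Chern classes) with a longer one that genuinely needs more care. Duistermaat--Heckman gives a ``natural identification'' of the cohomologies of the reductions at nearby regular values, but in order to use it the way you propose you must check that this identification is intertwined with the family of isomorphisms $\xi_L$; you acknowledge this as ``the main obstacle'' and assert that the monodromy is trivial, but that assertion would itself require a proof. In the paper, Duistermaat--Heckman is deliberately \emph{not} used at this stage: it enters only afterwards, in \cref{sec:application-duistermaat-heckman}, to express the variation of $[\wh{\omega}_L]$ in terms of the Chern class --- i.e.\ precisely the step that follows this proposition. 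Importing it here conflates two distinct steps of the overall strategy and, without the claimed compatibility being established, leaves a gap where none was necessary.

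Your closing observation that the power on $\pi^*\omega$ in \cref{eq:limit} must be the dimensionally forced one (rather than the literal first power) is a fair reading of the statement and matches the computation actually carried out in the paper's proof.
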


\begin{proof}
  Using the isomorphisms
  \begin{equation*}
    \xi_L:\faktor{\mcM_{g,k,h,n}(T)\ltimes\mathring{E}}{T^n}\lra\wh{\mcM}^{\square}_{g,k,h,n}(T)[L]
  \end{equation*}
  we see $\xi_L^*c_1\left(\wh{\mu}^{-1}(L)\right)=c_1(E')$ because it is covered by a $T^n$-principal bundle isomorphism $\Xi_L$ and
  \begin{equation*}
    (\xi_L)_* \left[\faktor{\mcM_{g,k,h,n}(T)\ltimes \mathring{E}'}{T^n}\right]=[\wh{\mcM}^{\square}_{g,k,h,n}(T)[L]].
  \end{equation*}
  Thus
  \begin{align*}
    & \left\langle c_1\left((\wh{\mu})^{-1}\left(L\right)\right)^J\wedge\wh{\omega}_L^{3h-3+2n-|J|},[\wh{\mcM}^{\square}_{g,k,h,n}(T)[L]] \right\rangle \\
    & = \left\langle \left(\xi_L^*c_1\left((\wh{\mu})^{-1}\left(L\right)\right)\right)^J\wedge\left(\xi_L^*\wh{\omega}_L\right)^{3h-3+2n-|J|},\left[\faktor{\mcM_{g,k,h,n}(T)\ltimes \mathring{E}'}{T^n}\right] \right\rangle \\
    & = \left\langle c_1(E')^J\wedge\left(\xi_L^*\wh{\omega}_L\right)^{3h-3+2n-|J|},\left[\faktor{\mcM_{g,k,h,n}(T)\ltimes \mathring{E}'}{T^n}\right] \right\rangle. \\  
  \end{align*}
  By \cref{lem:limit-wwp} this converges to \cref{eq:limit}.
\end{proof}

\begin{lem}
  We have $\deg\wh{\ev}=K\cdot H_{g,k,h,n}(T)$ for $\wh{\ev}:\wh{\mcM}^{\square}_{g,k,h,n}(T)[L]\lra\wh{\mcM}^{\square}_{h,n}[L]$.
  \label{lem:degrees-evaluation-map-hurwitz}
\end{lem}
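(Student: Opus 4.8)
The plan is to compute the degree of the branched morphism covering $\wh{\ev}:\wh{\mcM}^{\square}_{g,k,h,n}(T)[L]\lra\wh{\mcM}^{\square}_{h,n}[L]$ at a smooth point and match it with the Hurwitz number. Recall from \cref{prop:morphism-coverings-deg} that the degree of a morphism covering can be computed at any point in the base whose quotient lies in the smooth locus, namely as $\deg\wh{\ev}=|H_x|\sum_{[y]\in\wh{\ev}^{-1}([x])}\frac{1}{|G_y|}$. First I would fix a smooth target surface $(Y,\br,\bz_Y)\in\Ob\wh{\mcM}^{\circ}_{h,n}$ with all boundaries non-degenerate, take its class in $\wh{\mcM}^{\square}_{h,n}[L]$ (which, after the symplectic quotient, amounts to forgetting the marked points $\bz_Y$, so effectively I am looking at a smooth bordered surface $(Y,\br)$ of prescribed boundary lengths $L$), and enumerate the preimages under $\wh{\ev}$.

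The key step is to relate the preimages of $[Y,\br]$ under $\wh{\ev}$ to equivalence classes of bordered Hurwitz covers over $Y$. By \cref{lem:glue-functor-surjectivity}, \cref{lem:glue-functor-objects-not-injective} and \cref{prop:glue-morphisms-unique} the gluing construction $\glue$ is a category equivalence $\wh{\mcR}^{\square}_{g,k,h,n}(T)\lra\wt{\mcR}^{\square}_{g,nd+k,h,2n}(\wt{T})$, and the orbifold $\wt{\mcM}_{g,nd+k,h,2n}(\wt{T})$ carries the evaluation functor $\wt{\ev}$ which is a branched morphism covering with the \emph{same} degree as $\ev:\mcM_{g,nd+k,h,2n}(\wt{T})\lra\mcM_{h,2n}$ (the horizontal forgetful functors in \cref{eq:diagram-glue-sglue} being coverings of equal degree on orbit spaces, as remarked at the end of \cref{sec:moduli-spaces-admissible-riemann-surfaces}). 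On the smooth locus $\deg\ev=H_{g,nd+k,h,2n}(\wt{T})$ by \cref{thm:ev-local-structure}. So I would first argue $\deg\wt{\ev}=H_{g,nd+k,h,2n}(\wt{T})$ and then translate this into the degree of $\wh{\ev}$, where the subtlety — already flagged in the remark after \cref{lem:glue-spheres-to-closed-hurwitz-cover}'s preparatory material and in the remark preceding \cref{lem:degrees-evaluation-map-hurwitz} — is that smooth bordered Hurwitz covers correspond bijectively to closed Hurwitz covers in $\mcR_{g,k,h,n}(T)$ \emph{together with} an extra marking of $nd$ non-critical points (the preimages of the second marked point on each glued-in pair of pants), and quotienting by $T^n$ in $\wh{\mcM}^{\square}_{g,k,h,n}(T)[L]$ collapses the reference-point data but retains the relative positions of the $z_j$ in each fibre of $u$.

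I would then carry out the combinatorial bookkeeping: over a smooth $(Y,\br)$, an element of $\wh{\ev}^{-1}([Y,\br])$ is a class of bordered Hurwitz cover $[C,u,Y,\bq,\bp,\bz]$ modulo the $T^n$-action, and by the equivalence with $\wt{\mcR}^{\square}$ this corresponds to the corresponding closed Hurwitz cover data. Comparing automorphism groups: automorphisms of $(C,u,Y,\bq,\bp,\bz)$ that fix the boundary marked points are the same as automorphisms of the underlying closed Hurwitz cover $(C,u,X,\bq,\bp)$ (after the sphere-gluing construction, the extra spherical components carry rigid maps $z\mapsto z^{l_j}$, so no new automorphisms appear — this uses \cref{prop:eq-class-hurwitz-disc}), while the $nd$ extra enumerated non-critical fibre points contribute an overcounting factor. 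A Hurwitz-cover $u$ over the $i$-th boundary has, over the regular marked point $\wt{\bp}_{2i}$, exactly $d$ non-critical preimages distributed as one $l_j$-sheeted cover for each $j\in\nu^{-1}(i)$; fixing the point $z_j$ on the reference curve $\Gamma_j(C)$ amounts to choosing one of the $l_j$ sheets, and the $T^n$-quotient removes one overall rotation per boundary, leaving $\frac{l_1\cdots l_{|\nu^{-1}(i)|}}{K_i}$ — wait, more carefully: the relation $u(z_j)=u(z_{j'})$ for $\nu(j)=\nu(j')$ together with the $T^n$-quotient shows the choices at boundary $i$ are parametrized by $\lcm\{l_j : j\in\nu^{-1}(i)\}=K_i$ worth of extra data per closed cover. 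I expect the cleanest route is: $\deg\wh{\ev}$ equals $\deg\wt{\ev}$ divided by the factor $\prod_i(\text{number of extra fibre-point enumerations per closed cover})$, and tracking through \cref{thm:rel-std-hurwitz-numbers} (which relates $H_{g,k,h,n}(T)$ to standard Hurwitz numbers by the combinatorial factor $\mfK$) one finds $H_{g,nd+k,h,2n}(\wt{T}) = K\cdot H_{g,k,h,n}(T)\cdot(\text{enumeration factor})$, and the enumeration factor is exactly what the $T^n$-quotient undoes, giving $\deg\wh{\ev}=K\cdot H_{g,k,h,n}(T)$.

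The main obstacle will be the precise combinatorics in this last paragraph: disentangling which factors of $l_j$, $K_i$, and the factorials in $\mfK$ come from enumerating critical points, which from enumerating the $nd$ trivial fibre points, and which are absorbed by the $T^n$-quotient versus by the automorphism groups in \cref{prop:morphism-coverings-deg}. I would handle this by setting up the analogue of the three-category chase used in the proof of \cref{prop:rel-hurwitz-2} (with $\mcA$, $\mcB$, and $F$ forgetting enumerations), applied now to the comparison between $\wh{\ev}$-preimages and $\ev$-preimages of Hurwitz covers over a smooth target, and carefully reconciling the normalization $K=\prod_{i=1}^nK_i$ appearing both here and as the scaling factor $K_i$ in the local model $z\mapsto z^{K_i}$ of \cref{thm:ev-local-structure} and in the momentum map $\wh{\mu}$ of \cref{def:mom-map-dm}. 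Once the bookkeeping is organized so that each factor is attributed to exactly one source, the identity $\deg\wh{\ev}=K\cdot H_{g,k,h,n}(T)$ should drop out.
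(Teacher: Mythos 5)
Your proposal takes a genuinely different route from the paper's proof. The paper proves the lemma in a single stroke: it constructs an isomorphism $\rho_L:\mcM_{h,n}\lra\wh{\mcM}^{\square}_{h,n}[L]$ analogous to $\Xi_L$ (gluing a trivial pair of pants of prescribed boundary length, then cutting), observes that the square
\begin{equation*}
  \xymatrix{
    \faktor{\mcM_{g,k,h,n}(T)\ltimes\mathring{E}'}{T^n} \ar[r]^-{\xi_L} \ar[d]_{\pi} & \wh{\mcM}^{\square}_{g,k,h,n}(T)[L] \ar[dd]^{\wh{\ev}} \\
    \mcM_{g,k,h,n}(T) \ar[d]_{\ev} & \\
    \mcM_{h,n} \ar[r]^-{\rho_L} & \wh{\mcM}^{\square}_{h,n}[L]
  }
\end{equation*}
commutes with $\xi_L,\rho_L$ orbifold isomorphisms, and concludes by multiplicativity of morphism-covering degrees, since $\pi$ has degree $K$ by \cref{cor:pi-top-covering} and $\ev$ has degree $H_{g,k,h,n}(T)$ by \cref{thm:ev-local-structure}. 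All three ingredients are already in place; no fresh combinatorics is needed. Your plan instead identifies $\wh{\ev}$ with $\wt{\ev}$ through $\glue$, reduces to $\deg\wt{\ev}=\deg\ev_{\wt{T}}=H_{g,nd+k,h,2n}(\wt{T})$, and then tries to prove the Hurwitz-number identity $H_{g,nd+k,h,2n}(\wt{T})=K\cdot H_{g,k,h,n}(T)$ by a three-category chase modelled on \cref{prop:rel-hurwitz-2}.

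There is a genuine gap at that last step, and your own parenthetical ``--- wait, more carefully:'' already flags it. If you apply \cref{thm:rel-std-hurwitz-numbers} naively to $\wt{T}$, the trivial fibre over each $\wt{\bp}_{2i}$ contributes a factor of $d!$ to $\mfK_{\wt{T}}$ (all $d$ preimages have degree $1$), so $\mfK_{\wt{T}}=(d!)^n\mfK_T$ and hence $H_{g,nd+k,h,2n}(\wt{T})=(d!)^n H_{g,k,h,n}(T)$, which is in general not $K\cdot H_{g,k,h,n}(T)$. For the identity you want, you must explain exactly which enumerations of the trivial fibre are \emph{admissible} after the $\glue$ construction (the $l_j$ new trivial preimages per block must sit on the same attached disc as the critical point of degree $l_j$, and the $T^n$-quotient removes one rotation per boundary), and reconcile whether the residual count is $\prod_{j\in\nu^{-1}(i)}l_j$, $K_i$, or $\prod_{j\in\nu^{-1}(i)}l_j/K_i$ per boundary. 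The paper's remark before the lemma asserts $K$ without proof; your plan makes that assertion load-bearing. Until you actually settle the monodromy/component count for the admissible enumerations --- or, cleaner, just reuse the paper's $\pi$ whose degree encapsulates exactly this count --- the proposal is not a complete argument, and the bookkeeping is precisely where a sign-or-factor error would hide.
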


\begin{proof}
  Notice that we can repeat the construction from \cref{def:xi} for $\Xi_L$ to obtain an orbifold isomorphism
  \begin{equation*}
    \rho_L:\mcM_{h,n}\lra\wh{\mcM}^{\square}_{h,n}[L]
  \end{equation*}
  by adding a trivial pair of pants, doing the usual gluing construction with specified lengths $L$ but arbitrary twisting and then cutting along the corresponding hyperbolic geodesic. This gives a well-defined orbifold homomorphism which we can make an isomorphism by choosing the sets $\Lambda$ correspondingly. In total we have the following commuting diagram
  \begin{equation*}
    \xymatrix{
      \faktor{\mcM_{g,k,h,n}(T)\ltimes\mathring{E}'}{T^n} \ar[r]^{\xi_L} \ar[d]^{\pi} & \wh{\mcM}^{\square}_{g,k,h,n}(T)[L] \ar[dd]^{\wh{\ev}} \\
      \mcM_{g,k,h,n}(T) \ar[d]^{\ev} & \\
      \mcM_{h,n} \ar[r]^{\rho_L} & \wh{\mcM}^{\square}_{h,n}[L]
      }
  \end{equation*}
  where the horizontal functors are isomorphisms and the vertical ones are morphism coverings of degrees $K,H_{g,k,h,n}(T)$ and $\deg\wh{\ev}$, respectively, giving the result.
\end{proof}

\section{Applying Duistermaat--Heckman}

\label{sec:application-duistermaat-heckman}

Now we are in a setting that we can apply the earlier statements on the various bundles. First recall the Duistermaat--Heckman theorem for the $T^n$-action on $\wh{\mcM}^{\square}_{g,k,h,n}(T)$ at the value $\wh{\mu}^{-1}(L_0)$.\footnote{Recall that $\wh{\mu}^{-1}(L_0)$ corresponds to geodesic boundary lengths of $L_o\in\RR^n$.}

\begin{lem}
  By using Duistermaat--Heckman for the symplectic reductions of the Hamiltonian system $(\wh{\mcM}_{g,k,h,n}(T),\omega)$ to $\wh{\mcM}_{g,k,h,n}(T)[L]$ we see
\begin{equation}
  [\wh{\omega}_L]=[\wh{\omega}_{L_0}]+\frac{1}{2}\sum_{i=1}^nc_1\left((\wh{\mu})^{-1}\left(L_0\right)\right)_i\cdot K_i\left(L_i^2-(L_0)_i^2\right)
\label{eq:duistermaat}
\end{equation}
in $H^2\left(\wh{\mcM}_{g,k,h,n}(T)[L],\QQ\right)$ for any $L,L_0\in\RR^n$ close enough.
\end{lem}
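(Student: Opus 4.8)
The statement is essentially a direct application of the Duistermaat--Heckman theorem (\cref{thm:duistermaat-heckman}) to the Hamiltonian torus action on the symplectic orbifold $(\wh{\mcM}^{\square}_{g,k,h,n}(T),\wh{\omega})$ with moment map $\wh{\mu}$, established to be a genuine moment map in \cref{lem:moment-maps}. The only genuine content beyond ``cite the theorem'' is bookkeeping the normalization factors $K_i$ appearing in the moment map $\wh{\mu}(C,u,X,\bq,\bp,\bz)=\tfrac12(K_1L_1^2,\dots,K_nL_n^2)$. First I would fix $L_0\in\RR_{>0}^n$ and choose a small enough connected neighborhood $U\subset\mft^*\cong\RR^n$ of $\tfrac12(K_1(L_0)_1^2,\dots,K_n(L_0)_n^2)$ so that every point of $U$ is a regular value of $\wh{\mu}$ and $T^n$ acts freely on $\wh{\mu}^{-1}(\xi)$ for all $\xi\in U$. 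Freeness of the action on objects holds because the action by rotating the marked points $\bz$ on the reference curves is free by construction (compare the remark after \cref{lem:glue-functor-objects-not-injective} and the proof of \cref{lem:moment-maps}); regularity of the values holds on a neighborhood of any positive boundary-length tuple since the length functions are smooth coordinate functions in Fenchel--Nielsen coordinates on the $\square$-locus. Shrinking $L_0$ (equivalently restricting to sufficiently small boundary lengths) if necessary guarantees such a $U$ exists.

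Next I would invoke \cref{thm:duistermaat-heckman} verbatim on this Hamiltonian system. For two values $\xi_0,\xi_1\in U$ with $\xi_1-\xi_0=(v_1,\dots,v_n)$ it gives, in $H^2(\wh{\mcM}^{\square}_{g,k,h,n}(T)[\xi_0],\RR)$ under the natural identification of the de Rham cohomologies of the reduced spaces at $\xi_0$ and $\xi_1$,
\begin{equation*}
  [\omega_{\xi_1}]=[\omega_{\xi_0}]+2\pi\sum_{i=1}^n v_i\, c_1\!\left(\wh{\mu}^{-1}(\xi_0)\lra\wh{\mcM}^{\square}_{g,k,h,n}(T)[\xi_0]\right)_i.
\end{equation*}
The remaining step is to substitute the actual values $\xi_0=\tfrac12(K_1(L_0)_1^2,\dots)$ and $\xi_1=\tfrac12(K_1L_1^2,\dots)$, so that $v_i=\tfrac12 K_i(L_i^2-(L_0)_i^2)$, and to use the translation-invariant identification of the reduced spaces $\wh{\mcM}^{\square}_{g,k,h,n}(T)[L]$ and $\wh{\mcM}^{\square}_{g,k,h,n}(T)[L_0]$ (this is exactly the identification furnished by the trivialization in \cref{thm:duistermaat-heckman}, here made concrete by the maps $\Xi_L$ of \cref{def:xi} and \cref{prop:xil-well-defined}, which show all these reduced spaces are canonically isomorphic as orbifolds). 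Writing $\wh{\omega}_L\coloneqq\omega_{\xi_1}$ and $\wh{\omega}_{L_0}\coloneqq\omega_{\xi_0}$ and noting that the $2\pi$ factor is absorbed into our convention for $c_1$ — i.e.\ the classes $c_1((\wh{\mu})^{-1}(L_0))_i$ are defined as the Chern classes of the circle bundles, which in the Duistermaat--Heckman formula appear with the coefficient $2\pi$ only because of the normalization of the symplectic form; alternatively one just absorbs $2\pi$ into the definition of $c_1$ as is done implicitly elsewhere in the paper when comparing with Mirzakhani's conventions — yields precisely \cref{eq:duistermaat}.

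The main (and essentially only) obstacle is conceptual rather than technical: one must be careful that the paper's moment map $\wh{\mu}$ carries the weights $K_i$, which come from the ``skewed'' torus action of \cref{def:torus-action-moduli-spaces} (the reparametrization $t\mapsto t'$ with factors $K_{\nu(j)}/l_j$), so the decomposition of $\mft^*$ into the $n$ coordinate directions must match the generators $e_i$ for which $\wh{\omega}(\ul{e_i},\cdot)=\dd(\tfrac12 K_iL_i^2)$ as computed in the proof of \cref{lem:moment-maps}. Once that identification is fixed, the factor $K_i$ in \cref{eq:duistermaat} is forced and the formula is immediate. I would also remark that the identification of $H^2$ of the reduced spaces at nearby $L$ is the canonical one coming from $\xi_L^{-1}\circ\xi_{L_0}$, so that the equation makes sense as written; this is harmless since the $\xi_L$ depend continuously on $L$ by \cref{prop:xil-well-defined}. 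No further estimates or constructions are needed.
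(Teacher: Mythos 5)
Your proposal is correct and takes the same route the paper (implicitly) intends: the lemma is stated without an independent proof precisely because it is a direct application of the Duistermaat--Heckman theorem (\cref{thm:duistermaat-heckman}) to the Hamiltonian system $(\wh{\mcM}^{\square}_{g,k,h,n}(T),\wh{\omega},\wh{\mu},T^n)$, with the $K_i$ weights coming from the normalization $\wh{\mu}=\tfrac12(K_1L_1^2,\dots,K_nL_n^2)$ established in \cref{lem:moment-maps}. Your careful verification of freeness and regularity on the $\square$-locus is appropriate, and your observation that the lemma is really stated on $\wh{\mcM}^{\square}_{g,k,h,n}(T)$ rather than $\wh{\mcM}_{g,k,h,n}(T)$ correctly reads through a notational slip.

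One point worth making sharper: the $2\pi$ discrepancy you notice between \cref{thm:duistermaat-heckman} as stated and \cref{eq:duistermaat} is not really ``absorbed into the convention for $c_1$'' — the paper explicitly defines $c_1(E)=\tfrac{1}{2\pi\ii}[F^\nabla]$, which is the \emph{standard} normalization, so no such absorption occurs. The truth is that the $2\pi$ in the paper's statement of \cref{thm:duistermaat-heckman} is extraneous; with the standard normalization of $c_1$, Duistermaat--Heckman reads $[\omega_{\xi_1}]=[\omega_{\xi_0}]+\langle\xi_1-\xi_0,c_1\rangle$, which is exactly what Mirzakhani uses and what the lemma and the subsequent proof of \cref{thm:main-result} rely on. So the factor is a typo in the theorem statement, not a hidden convention, and your derivation with $\xi_1-\xi_0=\tfrac12(K_i(L_i^2-(L_0)_i^2))_i$ then gives \cref{eq:duistermaat} on the nose without any further fudging.
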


\begin{thm}
  We have
  \begin{align*}
     K \cdot H_{g,k,h,n}(T)V_{h,n}(L) & = \sum_{\substack{\alpha\in\NN^n,\\ |\alpha|\leq 3h+n-3}}\sum_{\substack{\beta_1\in\NN^{I_1}\\ |\beta_1|=\alpha_1}}\cdots \sum_{\substack{\beta_n\in\NN^{I_n}\\ |\beta_n|=\alpha_n}} \frac{L^{2\alpha}l_1^{2(\beta_{\nu(1)})_1}\cdots l_k^{2(\beta_{\nu(k)})_k}}{(2d)^{|\alpha|}\beta_1!\cdots\beta_n!(3h+n-3-|\alpha|)!} \cdot \\
 & \phantom{=} \cdot\left\langle [\omega]^{3h+n-3-|\alpha|}{\fgt^*\psi_1}^{(\beta_{\nu(1)})_1}\cdots{\fgt^*\psi_k}^{(\beta_{\nu(k)})_k},[\mcM_{g,k,h,n}(T)] \right\rangle, \numberthis \label{eq:hurvolpsi}
  \end{align*}
  where $H_{g,k,h,n}(T)\in\QQ$ is the Hurwitz number, $V_{h,n}=\int_{\wh{\mcM}_{h,n}[L]}\frac{\wh{\omega}_L^{3h+n-3}}{(3h+n-3)!}$ the Weil--Petersson volume, $\psi_j\in H^2(\mcM_{g,k,h,n}(T),\QQ)$ the $\Psi$-classes on the moduli space of closed Hurwitz covers and $K=\prod_{i=1}^nK_i$.
  \label{thm:main-result}  
\end{thm}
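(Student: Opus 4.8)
The plan is to combine three ingredients that have been assembled in the preceding sections: the Duistermaat--Heckman relation \eqref{eq:duistermaat} for the Hamiltonian $T^n$-action on $\wh{\mcM}_{g,k,h,n}(T)$, the limit statement \cref{prop:limit-chern-classes} which lets us take $L_0\to 0$, and the Chern class identity \cref{cor:chern-class-relations} relating the evaluation-$\Psi$-classes to the forgetful-$\Psi$-classes on $\mcM_{g,k,h,n}(T)$. First I would integrate the top power of \eqref{eq:duistermaat}. Raising both sides of \eqref{eq:duistermaat} to the power $3h+n-3$ and expanding by the multinomial theorem gives, inside $H^{6h-6+2n}(\wh{\mcM}_{g,k,h,n}(T)[L],\QQ)$,
\begin{equation*}
  \frac{[\wh{\omega}_L]^{3h+n-3}}{(3h+n-3)!} = \sum_{\alpha\in\NN^n}\frac{[\wh{\omega}_{L_0}]^{3h+n-3-|\alpha|}}{(3h+n-3-|\alpha|)!}\prod_{i=1}^n\frac{\left(\tfrac12 c_1(\wh{\mu}^{-1}(L_0))_i K_i(L_i^2-(L_0)_i^2)\right)^{\alpha_i}}{\alpha_i!},
\end{equation*}
where the sum is over $|\alpha|\le 3h+n-3$. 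Pairing with the fundamental class $[\wh{\mcM}_{h,n}[L]]$ realized via the branched morphism covering $\wh{\ev}$ of degree $K\cdot H_{g,k,h,n}(T)$ (by \cref{lem:degrees-evaluation-map-hurwitz}, using that $\wh{\ev}^*\wwp|_L=\wh{\omega}_L$ from \cref{lem:ham-orb-covering-reduction}) turns the left-hand side into $K\cdot H_{g,k,h,n}(T)\,V_{h,n}(L)$.

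Next I would send $L_0\to 0$. By \cref{prop:limit-chern-classes} each term
\begin{equation*}
  \left\langle c_1(\wh{\mu}^{-1}(L_0))^{\alpha}\wedge\wh{\omega}_{L_0}^{3h+n-3-|\alpha|},[\wh{\mcM}^{\square}_{g,k,h,n}(T)[L_0]]\right\rangle
\end{equation*}
converges to $\big\langle c_1(E')^{\alpha}\wedge\pi^*\omega^{3h+n-3-|\alpha|},[\faktor{\mcM_{g,k,h,n}(T)\ltimes\mathring{E}'}{T^n}]\big\rangle$. Then I would apply \cref{cor:chern-class-relations}: $\pi^*\ev^*\psi_i=\tfrac1d\sum_{j:\nu(j)=i} l_j^2\,\pi^*\fgt^*\psi_j$, so that $c_1(E')_i=\tfrac{1}{dK_i}\sum_{j:\nu(j)=i}l_j^2\pi^*\fgt^*\psi_j$ by \cref{prop:chern-class-calculations}. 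Expanding the $\alpha_i$-th power of this sum over $j\in\nu^{-1}(i)=:I_i$ via the multinomial theorem introduces the inner sums over $\beta_i\in\NN^{I_i}$ with $|\beta_i|=\alpha_i$; the factors $\tfrac{1}{dK_i}$ and $K_i$ combine to produce the $\tfrac{1}{d^{|\alpha|}}$ and, together with the $\tfrac12$ from \eqref{eq:duistermaat}, the $(2d)^{-|\alpha|}$ appearing in \eqref{eq:hurvolpsi}. Finally, since $\pi:\faktor{\mcM_{g,k,h,n}(T)\ltimes\mathring{E}'}{T^n}\to\mcM_{g,k,h,n}(T)$ is a topological covering of degree $K$ (\cref{cor:pi-top-covering}), the projection formula converts $\langle\pi^*(\text{stuff}),[\faktor{\mcM_{g,k,h,n}(T)\ltimes\mathring{E}'}{T^n}]\rangle$ into $K\langle\text{stuff},[\mcM_{g,k,h,n}(T)]\rangle$. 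Dividing through by this last $K$, or rather tracking it carefully, yields \eqref{eq:hurvolpsi} after rewriting $l_1^{2(\beta_{\nu(1)})_1}\cdots l_k^{2(\beta_{\nu(k)})_k}$ as the collection of the monomials produced by the multinomial expansions over each $I_i$.

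The main obstacle I expect is not any single hard step but the bookkeeping of the combinatorial constants and, more subtly, making sure the limit in \cref{prop:limit-chern-classes} is applied to the right cohomological expression: the Duistermaat--Heckman identity lives on $\wh{\mcM}_{g,k,h,n}(T)[L]$ for a \emph{fixed} boundary length $L$, while the limiting procedure varies the auxiliary parameter $L_0$ (the length at which one does the second symplectic reduction) rather than $L$ itself. One must verify that $V_{h,n}(L)$ and the left side are genuinely independent of $L_0$, and that the natural cohomology-class identifications of $H^*(\wh{\mcM}_{g,k,h,n}(T)[L_0],\QQ)$ for nearby $L_0$ (Duistermaat--Heckman) are compatible with the isomorphisms $\xi_{L_0}^*$ used in \cref{prop:limit-chern-classes}; this is where one leans on the fact that $\wh{\ev}^*\wwp|_L=\wh{\omega}_L$ and on $\deg\wh{\ev}=K\cdot H_{g,k,h,n}(T)$ being constant. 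The remaining work — checking the factorials $\beta_1!\cdots\beta_n!(3h+n-3-|\alpha|)!$ and the power of $2d$ — is routine multinomial algebra once the multinomial expansions of both $[\wh\omega_L]^{3h+n-3}$ and of each $c_1(E')_i^{\alpha_i}$ are set up side by side.
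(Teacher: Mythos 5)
Your proposal follows the paper's proof of \cref{thm:main-result} step for step: integrate the top power of the Duistermaat--Heckman relation \eqref{eq:duistermaat}, use \cref{lem:degrees-evaluation-map-hurwitz} and \cref{lem:ham-orb-covering-reduction} to identify the left side with $K\cdot H_{g,k,h,n}(T)V_{h,n}(L)$, pass to the limit $L_0\to 0$ via \cref{prop:limit-chern-classes}, expand $c_1(E')_i^{\alpha_i}$ with \cref{prop:chern-class-calculations}/\cref{cor:chern-class-relations}, and finally invoke \cref{cor:pi-top-covering} for the degree-$K$ covering $\pi$. The one place you should make explicit, which you only flag informally as ``tracking it carefully,'' is that the factor $K$ from the covering $\pi$ must be reconciled against the factor $K^{\alpha}/\prod_i(dK_i)^{\alpha_i}=d^{-|\alpha|}$ and the $K$ already on the left-hand side; this is precisely the bookkeeping the paper carries out in the displayed chain of equalities preceding the final formula.
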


\begin{proof}
Recall that $\wh{\omega}_L=\wh{\ev}^*{\wwp}|_L$ and that $\wh{\ev}$ is a branched morphism covering due to \cref{lem:ham-orb-covering-reduction} and thus
\begin{align*}
  \int_{\mcM_{g,k,h,n}(T)[L]}\wh{\omega}_L^{3h+n-3} & =\int_{\mcM_{g,k,h,n}(T)[L]}\wh{\ev}^*{\wwp}_L^{3h+n-3} \\
  & = (\deg\wh{\ev})\cdot\int_{\mcM_{h,n}[L]}{\wwp}_L^{3h+n-3} \\
  & = K\cdot H_{g,k,h,n}(T)\int_{\mcM_{h,n}[L]}{\wwp}_L^{3h+n-3}
\end{align*}
using \cref{lem:degrees-evaluation-map-hurwitz}.

Integrating \cref{eq:duistermaat} to the power of $3h+n-3$ over $\wh{\mcM}_{g,k,h,n}(T)[L]$ we therefore obtain for the left-hand side
\begin{align*}
  \left\langle\frac{[\wh{\omega}_L]^{3h+n-3}}{(3h+n-3)!},[\wh{\mcM}_{g,k,h,n}(T)[L]]\right\rangle&=\left\langle\frac{\wh{\ev}^*[\wwp|_L]^{3h+n-3}}{(3h+n-3)!},[\wh{\mcM}_{g,k,h,n}(T)[L]]\right\rangle\\
&=K H_{g,k,h,n}(T)\left\langle\frac{[\wwp|_L]^{3h+n-3}}{(3h+n-3)!},[\wh{\mcM}_{h,n}[L]]\right\rangle \numberthis \label{eq:dh_ev_covering}
\end{align*}
and for the right-hand side
\begin{align*}
  \left\langle\frac{[\wh{\omega}_L]^{3h+n-3}}{(3h+n-3)!}, \right. & \left. [\wh{\mcM}_{g,k,h,n}(T)[L]]\right\rangle = \sum_{\substack{\alpha\in\NN^n,\\ |\alpha|\leq 3h+n-3}}\left(\frac{1}{2}\right)^{|\alpha|}\frac{K^{\alpha}(L^{2\alpha}-(L_0)^{2\alpha})}{\alpha!(3h+n-3-|\alpha|)!}\cdot \\
  & \qquad \cdot\left\langle[\wh{\omega}_{L_0}]^{3h+n-3-|\alpha|}c_1\left((\wh{\mu})^{-1}\left(L_0\right)\right)^{\alpha},[\wh{\mcM}_{g,k,h,n}(T)[L]]\right\rangle \\
  & = \sum_{\substack{\alpha\in\NN^n,\\ |\alpha|\leq 3h+n-3}}\left(\frac{1}{2}\right)^{|\alpha|}\frac{K^{\alpha}L^{2\alpha}}{\alpha!(3h+n-3-|\alpha|)!}\cdot \\
  & \qquad \cdot\left\langle[\pi^*\omega]^{3h+n-3-|\alpha|}c_1\left(E'\right)^{\alpha},\left[\faktor{\mcM_{g,k,h,n}(T)\ltimes\mathring{E}'}{T^n}\right]\right\rangle \numberthis \label{eq:apply_dh}
\end{align*}
where we have used \cref{prop:limit-chern-classes} for the limit $L_0\lra 0$. Denote the index set $I_i:=\{j\in\{1,\ldots,k\}\mid\nu(j)=i\}$ for which we see by using \cref{cor:chern-class-relations}
\begin{align*}
  c_1(E')_i^{\alpha_i} & = \left(\frac{1}{dK_i}\sum_{j\in I_i}l_j^2\pi^*\fgt^*\psi_j\right)^{\alpha_i} \\
  & = \frac{1}{(d K_i)^{\alpha_i}}\sum_{\substack{\beta_i\in\NN^{I_i}\\ |\beta_i|=\alpha_i}}\frac{\alpha_i!}{\beta_i!}\prod_{j\in I_i}l_j^{2(\beta_i)_j}{\pi^*\fgt^*\psi_j}^{(\beta_i)_j}.
\end{align*}
Thus we obtain for the expression in \cref{eq:apply_dh}
\begin{align*}
  \sum_{\substack{\alpha\in\NN^n,\\ |\alpha|\leq 3h+n-3}} \left(\frac{1}{2}\right)^{|\alpha|} & \frac{K^{\alpha}L^{2\alpha}}{\alpha!(3h+n-3-|\alpha|)!} \left\langle\pi^*[\omega]^{3h+n-3-|\alpha|} \prod_{i=1}^n\sum_{\substack{\beta_i\in\NN^{I_i},\\ |\beta_i|=\alpha_i}}\frac{\alpha_i!}{\beta_i!(d K_i)^{\alpha_i}}\cdot \right. \\
& \left. \prod_{j\in I_i} l_{j}^{2(\beta_i)_j} \pi^*\fgt^*\psi_{j}^{(\beta_i)_j},\left[\faktor{\mcM_{g,k,h,n}(T)\ltimes\mathring{E}'}{T^n}\right]\right\rangle \\
  = \sum_{\substack{\alpha\in\NN^n,\\ |\alpha|\leq 3h+n-3}} \left(\frac{1}{2}\right)^{|\alpha|} & \frac{L^{2\alpha}}{\alpha!(3h+n-3-|\alpha|)!d^{|\alpha|}} \left\langle[\omega]^{3h+n-3-|\alpha|} \prod_{i=1}^n\sum_{\substack{\beta_i\in\NN^{I_i},\\ |\beta_i|=\alpha_i}}\frac{\alpha_i!}{\beta_i!}\cdot \right. \\
& \left. \prod_{j\in I_i} l_{j}^{2(\beta_i)_j} \fgt^*\psi_{j}^{(\beta_i)_j},[\mcM_{g,k,h,n}(T)\right\rangle,
\end{align*}
where we have used \cref{cor:pi-top-covering}.

Note that the multi-indices $\beta_i$ are indexed by the elements of $I_i$, thus $(\beta_i)_j$ only exists for those $j$ such that $\nu(j)=i$. In the following intermediate step we denote by $(I_1)[j]$ the $j$-th element of $I_1$. Then we can see
\begin{align*}
  \prod_{i=1}^n & \sum_{\substack{\beta_i\in\NN^{I_i},\\ |\beta_i|=\alpha_i}} \frac{\alpha_i!}{\beta_i!}\prod_{j\in I_i}l_{j}^{2(\beta_i)_j} {\fgt^*\psi_{j}}^{(\beta_i)_j} = \\
 & = \sum_{\substack{\beta_1\in\NN^{I_1}\\ |\beta_1|=\alpha_1}}\frac{\alpha_1!}{\beta_1!}l_{(I_1)[1]}^{2(\beta_1)_{(I_1)[1]}}{\fgt^*\psi}^{(\beta_1)_{(I_1)[1]}}_{(I_1)[1]}\cdots l_{(I_1)[|I_1|]}^{2(\beta_1)_{(I_1)[|I_1|]}}{\fgt^*\psi}^{(\beta_1)_{(I_1)[|I_1|]}}_{(I_1)[|I_1|]}\cdots  \\
 & \phantom{=}  \cdots \sum_{\substack{\beta_n\in\NN^{I_n}\\ |\beta_n|=\alpha_n}}\frac{\alpha_n!}{\beta_n!}l_{(I_n)[1]}^{2(\beta_n)_{(I_n)[1]}}{\fgt^*\psi}^{(\beta_n)_{(I_n)[1]}}_{(I_n)[1]}\cdots l_{(I_n)[|I_n|]}^{2(\beta_n)_{(I_n)[|I_n|]}}{\fgt^*\psi}^{(\beta_n)_{(I_n)[|I_n|]}}_{(I_n)[|I_n|]} \\
& = \sum_{\substack{\beta_1\in\NN^{I_1}\\ |\beta_1|=\alpha_1}}\cdots \sum_{\substack{\beta_n\in\NN^{I_n}\\ |\beta_n|=\alpha_n}}\frac{\alpha!}{\beta_1!\cdots\beta_n!}l_1^{2(\beta_{\nu(1)})_1}\cdots l_k^{2(\beta_{\nu(k)})_k}{\fgt^*\psi_1}^{(\beta_{\nu(1)})_1}\cdots{\fgt^*\psi_k}^{(\beta_{\nu(k)})_k},
\end{align*}
because in the second line each $j\in\{1,\ldots,k\}$ appears in one of the $I_i$'s, namely the one such that $\nu(j)=i$. 

Now we can write
\begin{align*}
  K \cdot H_{g,k,h,n}(T)V_{h,n}(L) & = \sum_{\substack{\alpha\in\NN^n,\\ |\alpha|\leq 3h+n-3}}\sum_{\substack{\beta_1\in\NN^{I_1}\\ |\beta_1|=\alpha_1}}\cdots \sum_{\substack{\beta_n\in\NN^{I_n}\\ |\beta_n|=\alpha_n}} \frac{L^{2\alpha}l_1^{2(\beta_{\nu(1)})_1}\cdots l_k^{2(\beta_{\nu(k)})_k}}{(2d)^{|\alpha|}\beta_1!\cdots\beta_n!(3h+n-3-|\alpha|)!} \cdot \\
 & \phantom{=} \cdot\left\langle [\omega]^{3h+n-3-|\alpha|}{\fgt^*\psi_1}^{(\beta_{\nu(1)})_1}\cdots{\fgt^*\psi_k}^{(\beta_{\nu(k)})_k},[\mcM_{g,k,h,n}(T)] \right\rangle,
\end{align*}
where we denoted the Weil--Petersson volume of $\wh{\mcM}_{h,n}[L]$ by $V_{h,n}(L)$ .
\end{proof}

\begin{rmk}
This equation relates Hurwitz numbers, Weil--Petersson volumes and certain integrals of $\Psi$-classes over the moduli space of Hurwitz covers. We will apply the following corollary of \cref{thm:main-result} in the next section.
\end{rmk}

\begin{cor}
  We can rewrite \cref{eq:hurvolpsi} in degree $|\alpha|=3h+n-3$ as
  \begin{align*}
    & K \cdot H_{g,k,h,n}(T)V_{h,n}(L)[3h+n-3] = \\
    & \sum_{\substack{\alpha\in\NN^n,\\ |\alpha|= 3h+n-3}} \sum_{\substack{\beta_1\in\NN^{I_1}\\ |\beta_1|=\alpha_1}}\cdots \sum_{\substack{\beta_n\in\NN^{I_n}\\ |\beta_n|=\alpha_n}} \frac{L^{2\alpha}l_1^{2(\beta_{\nu(1)})_1}\cdots l_k^{2(\beta_{\nu(k)})_k}}{(2d)^{3h+n-3}\beta_1!\cdots\beta_n!} \left\langle {\psi_1}^{(\beta_{\nu(1)})_1}\cdots{\psi_k}^{(\beta_{\nu(k)})_k},D_{g,k,h,n}(T) \right\rangle, \numberthis \label{eq:hvpsi_top}
  \end{align*}
  where $V_{h,n}(L)[3h+n-3]$ denotes the homogeneous part of the polynomial of degree $3h+n-3$.
  \label{cor:main-result}
\end{cor}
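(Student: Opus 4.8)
The plan is to extract the homogeneous top-degree part of \cref{eq:hurvolpsi}. On the left-hand side, $V_{h,n}(L)$ is a polynomial in the $L_i^2$, and $K\cdot H_{g,k,h,n}(T)$ is a constant, so $K\cdot H_{g,k,h,n}(T)V_{h,n}(L)[3h+n-3]$ is precisely the sum of those monomials $L^{2\alpha}$ with $|\alpha|=3h+n-3$. On the right-hand side of \cref{eq:hurvolpsi}, the general term carries a factor $L^{2\alpha}$ together with $[\omega]^{3h+n-3-|\alpha|}$; since $\dim_{\CC}\mcM_{g,k,h,n}(T)=3h-3+n$ (the real dimension is $6h-6+2n$ by \cref{thm:main-result-1}), the pairing $\langle [\omega]^{3h+n-3-|\alpha|}\cdots,[\mcM_{g,k,h,n}(T)]\rangle$ only sees cohomology of total complex degree $3h-3+n$, but anyway the point is simply to collect the coefficient of each fixed multi-index $\alpha$. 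So I would equate the coefficients of $L^{2\alpha}$ on both sides for $|\alpha|=3h+n-3$: on the right this kills the factorials $(3h+n-3-|\alpha|)!=0!=1$ and replaces $[\omega]^{3h+n-3-|\alpha|}$ by $[\omega]^0=1$, leaving exactly the displayed sum over $\beta_1,\dots,\beta_n$.

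The second ingredient is the replacement of $\langle {\fgt^*\psi_1}^{(\beta_{\nu(1)})_1}\cdots{\fgt^*\psi_k}^{(\beta_{\nu(k)})_k},[\mcM_{g,k,h,n}(T)]\rangle$ by $\langle {\psi_1}^{(\beta_{\nu(1)})_1}\cdots{\psi_k}^{(\beta_{\nu(k)})_k},D_{g,k,h,n}(T)\rangle$. This is the projection formula for the forgetful map $\fgt:\mcM_{g,k,h,n}(T)\lra\mcM_{g,k}$, which is a homomorphism of orbifold groupoids by \cref{prop:properties-functors-between-groupoids}\cref{prop-functors-case-2}. Indeed, by definition $D_{g,k,h,n}(T)=\fgt_*[\mcM_{g,k,h,n}(T)]$, the $\psi_j$ on the right of \cref{eq:hvpsi_top} are the $\Psi$-classes on the source Deligne--Mumford space $\mcM_{g,k}$, and by \cref{lem:pulled-back-line-bundles} together with \cref{rmk:chern-classes-orbibundles} we have $\fgt^*\psi_j$ is exactly the pullback of $\psi_j$ along $\fgt$ (up to the notational identification of $\psi_j\in H^2(\mcM_{g,k,h,n}(T),\QQ)$ in \cref{eq:hurvolpsi} with $\fgt^*$ of the class on $\mcM_{g,k}$). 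Then the orbifold projection formula, valid over $\QQ$-coefficients by the Poincar\'e duality statement in \cref{thm:alg-top-orbifolds}, gives
\begin{equation*}
  \left\langle \fgt^*\alpha, [\mcM_{g,k,h,n}(T)] \right\rangle = \left\langle \alpha, \fgt_*[\mcM_{g,k,h,n}(T)] \right\rangle = \left\langle \alpha, D_{g,k,h,n}(T) \right\rangle
\end{equation*}
for any $\alpha\in H^{6g-6+2k}(|\mcM_{g,k}|,\QQ)$, applied to $\alpha={\psi_1}^{(\beta_{\nu(1)})_1}\cdots{\psi_k}^{(\beta_{\nu(k)})_k}$.

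Putting these together, in degree $|\alpha|=3h+n-3$ the right-hand side of \cref{eq:hurvolpsi} becomes
\begin{equation*}
  \sum_{\substack{\alpha\in\NN^n,\ |\alpha|= 3h+n-3}} \sum_{\substack{\beta_1\in\NN^{I_1}\\ |\beta_1|=\alpha_1}}\cdots \sum_{\substack{\beta_n\in\NN^{I_n}\\ |\beta_n|=\alpha_n}} \frac{L^{2\alpha}l_1^{2(\beta_{\nu(1)})_1}\cdots l_k^{2(\beta_{\nu(k)})_k}}{(2d)^{3h+n-3}\beta_1!\cdots\beta_n!} \left\langle {\psi_1}^{(\beta_{\nu(1)})_1}\cdots{\psi_k}^{(\beta_{\nu(k)})_k}, D_{g,k,h,n}(T)\right\rangle,
\end{equation*}
which is exactly \cref{eq:hvpsi_top}. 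I do not expect a serious obstacle here: the corollary is a formal bookkeeping consequence of \cref{thm:main-result} plus the projection formula. The one point requiring mild care is making sure the degree truncation is legitimate, i.e.\ that $V_{h,n}(L)$ really is a polynomial in the $L_i^2$ (stated in the introduction, and following from Mirzakhani's results used in \cref{sec:appl-exampl}) so that "$[3h+n-3]$" is well-defined, and that the terms with $|\alpha|<3h+n-3$ in \cref{eq:hurvolpsi} contribute only to lower-degree monomials in $L$ and hence drop out — which is immediate from the explicit factor $L^{2\alpha}$.
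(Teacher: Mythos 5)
Your proposal is correct and matches the paper's intent; the corollary is stated without a separate proof there precisely because it is the formal bookkeeping you describe. Your two ingredients — isolating the top-degree $L^{2\alpha}$ piece with $|\alpha|=3h+n-3$ (which turns $(3h+n-3-|\alpha|)!$ into $1$ and $[\omega]^{3h+n-3-|\alpha|}$ into $[\omega]^0=1$), and then applying the pushforward--pullback adjunction for $\fgt$ to trade $\langle \fgt^*(\cdot),[\mcM_{g,k,h,n}(T)]\rangle$ for $\langle\cdot,\fgt_*[\mcM_{g,k,h,n}(T)]\rangle=\langle\cdot,D_{g,k,h,n}(T)\rangle$ — are exactly what is needed, and you correctly flag that the notational shift is from $\psi_j$ meaning $\fgt^*\psi_j$ on $\mcM_{g,k,h,n}(T)$ in \cref{eq:hurvolpsi} to $\psi_j$ on $\mcM_{g,k}$ in the corollary.
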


\begin{rmk}
  Recall from \cite{mirzakhani_weil-petersson_2007} that $V_{h,n}(L)$ is a polynomial in $L$ whose coefficients are given by $\Psi$-intersections on Deligne--Mumford space. Thus we can rewrite \cref{cor:main-result} as an equation between pure $\Psi$-intersection numbers and $\Psi$-intersection numbers with the Hurwitz class. As the former numbers can be easily computed e.g.\ by using Mirzakhanis recursion relation for Weil--Petersson volumes we can deduce concrete equations for the $\Psi$-intersection numbers of the Hurwitz class. However, this particular conclusion from \cref{thm:main-result} we could have also obtained directly from \cref{cor:chern-class-relations} on pull-backs of $\Psi$-classes without applying Duistermaat--Heckman. 
\end{rmk}

\chapter{Applications}

\label{sec:appl-exampl}

This section calculates a few concrete examples for \cref{thm:main-result}. Note that the equation involves Hurwitz numbers, Weil--Petersson volumes of moduli spaces and pairings of the Hurwitz class with $\psi$-classes on the source moduli space. These pairings seem to be most difficult to understand, so we will calculate Hurwitz numbers with the help of a computer and deduce the Weil--Petersson volumes from the McShane identity.

\section{Recollections and Weil-Petersson Volumes}

Recall that we denote by
\begin{equation*}
  V_{h,n}(L)\coloneqq\int_{\mcM_{h,n}[L]}\frac{\wwp^{3h-3+n}}{(2\pi)^{3h-3+n}}
\end{equation*}
the Weil--Petersson volume of the moduli space of bordered Riemann surfaces. It can be calculated explicitly via Mirzakhanis recursion relation in \cite{mirzakhani_weil-petersson_2007}. As the combinatorial calculations become somewhat involved rather quickly we calculated the volumes via a SAGE/Python program whose source code is also included in \cref{appendix:calculation-weil-petersson-volumes}.

\index{Hurwitz Number!Standard}
\index{Hurwitz Number}

Also recall from \cref{sec:hurwitz-numbers} and \cref{sec:main-results} that we have Hurwitz numbers $H_{g,k,h,n}(T)$ and standard Hurwitz numbers $\mcH_h(T_1,\ldots,T_n)$ which are related by a factor of $\mfK$ corresponding to the number of possible enumerations of the fibres, i.e.\

\begin{equation*}
  \mfK\coloneqq\prod_{i=1}^n\prod_{u=1}^d\left(\# \{1\leq j \leq k\mid \nu(j)=i, l_j=u\}\right)!
\end{equation*}

Also we denote by
\begin{equation*}
  D_{g,k,h,n}(T)=\fgt_*[\mcM_{g,k,h,n}(T)]\in H_{6h-6+2n}(|\mcM_{g,k}|,\QQ)
\end{equation*}
the Hurwitz class. Another combinatorial term appearing is $K=\prod_{j=1}^kK_i$ where
\begin{equation*}
  K_i=\lcm\{l_j\mid \nu(j)=i\}.
\end{equation*}
\index{Hurwitz Class}

\section{Examples}

In the following examples the numbers at the symbols stand for the index of the corresponding marked point. The degree of the Hurwitz cover at $q_j$, i.e.\ the integer $l_j$ will be drawn with an obvious pictogram. This means that the map $\nu:\{1,\ldots,k\}\lra\{1,\ldots,n\}$ can be read off from the diagram by following the arrow.

\subsection{The Case \texorpdfstring{$\boldsymbol{g=h=0, k=4,n=3, d=2}$}{gh0k4n3d2}}

\label{ex:0-4-0-3}

Assume the branching profile looks as follows:

\begin{figure}[!ht]
    \centering
    \def\svgwidth{0.6\textwidth}
    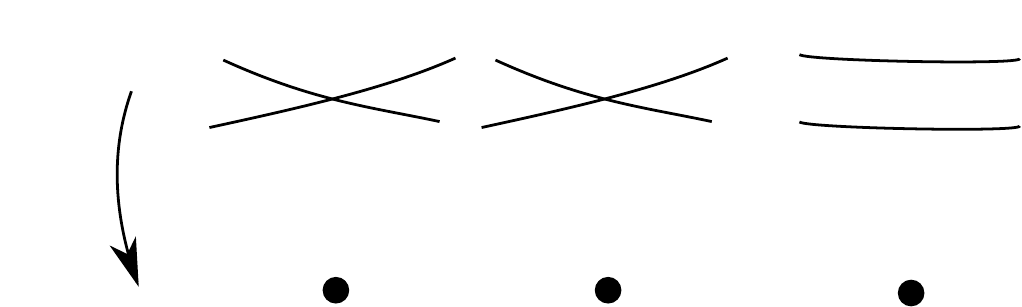
\end{figure}

Computer calculations show that $\mcH_0(2,2,1+1)=\frac{1}{2}$ as well as $\mfK=2!=2$ and thus $H_{0,4,0,3}(T)=1$. There is one Hurwitz cover with two automorphisms. Furthermore, we have $K=K_1\cdot K_2\cdot K_3=2\cdot 2\cdot 1=4$ for the factor corresponding to the least common multiplies.

We also have $V_{0,3}(L)=1$ and $\dim\mcM_{g,k,h,n}(T)=6h-6+2n=0$, i.e.\ $D_{0,4,0,3}(T)\in H_0(|\mcM_{0,4}|,\QQ)\cong\QQ$ is just a $0$-cycle on $|\mcM_{0,4}|\cong \CC P^1$ and \cref{thm:main-result} or rather \cref{cor:main-result} becomes
\begin{equation*}
  \langle 1, D_{0,4,0,3}(T)\rangle=4
\end{equation*}
and therefore
\begin{equation*}
  D_{0,4,0,3}(T)=4[\pt].
\end{equation*}

\subsection{The Case \texorpdfstring{$\boldsymbol{g=1, h=0, k=4, n=4, d=2}$}{g1h0k4n4d2}}

Now we consider the branching profile

\begin{figure}[!ht]
    \centering
    \def\svgwidth{0.9\textwidth}
    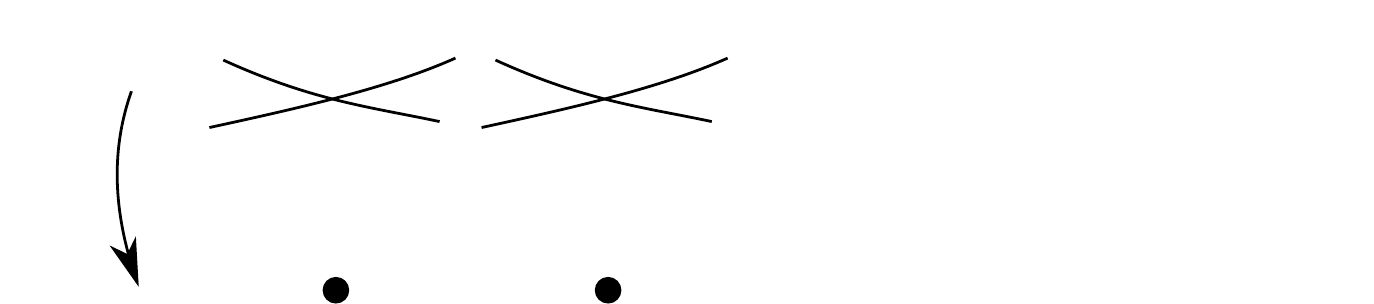
\end{figure}

Computer calculations show $\mcH_{0}(2,2,2,2)=\frac{1}{2}$ as there is one Hurwitz cover with two automorphisms, one of them the identity and the other one the sheet change. Together with $\mfK=1$ we get $H_{1,4,0,4}(T)=\frac{1}{2}$.

We have $V_{0,4}(L)=\frac{1}{2}(4\pi^2+L_1^2+L_2^2+L_3^2+L_4^2)$. Also, in degree $|\alpha|=1$ in \cref{eq:hvpsi_top} only one of the $\beta_1,\ldots,\beta_4$ can be nonzero and thus equal to one. As $l_1=l_2=l_3=l_4=2$ we obtain $K=2^4=16$ and thus
\begin{align*}
  4\sum_{i=1}^4L_i^2&=\sum_{i=1}^4\frac{L_i^24}{4}\langle\psi_i,D_{1,4,0,4}(T)\rangle \\
 &=\sum_{i=1}^4L_i^2\langle\psi_i,D_{1,4,0,4}(T)\rangle
\end{align*}
and therefore $\langle\psi_i,D_{1,4,0,4}(T)\rangle=4$ for $i=1,\ldots,4$ by comparing coefficients of the polynomials.

\subsection{The Case \texorpdfstring{$\boldsymbol{g=h=0, k=10, n=4,d=4}$}{gh0k10n4d4}}

Now we consider the following branching profile:

\begin{figure}[!ht]
    \centering
    \def\svgwidth{0.8\textwidth}
    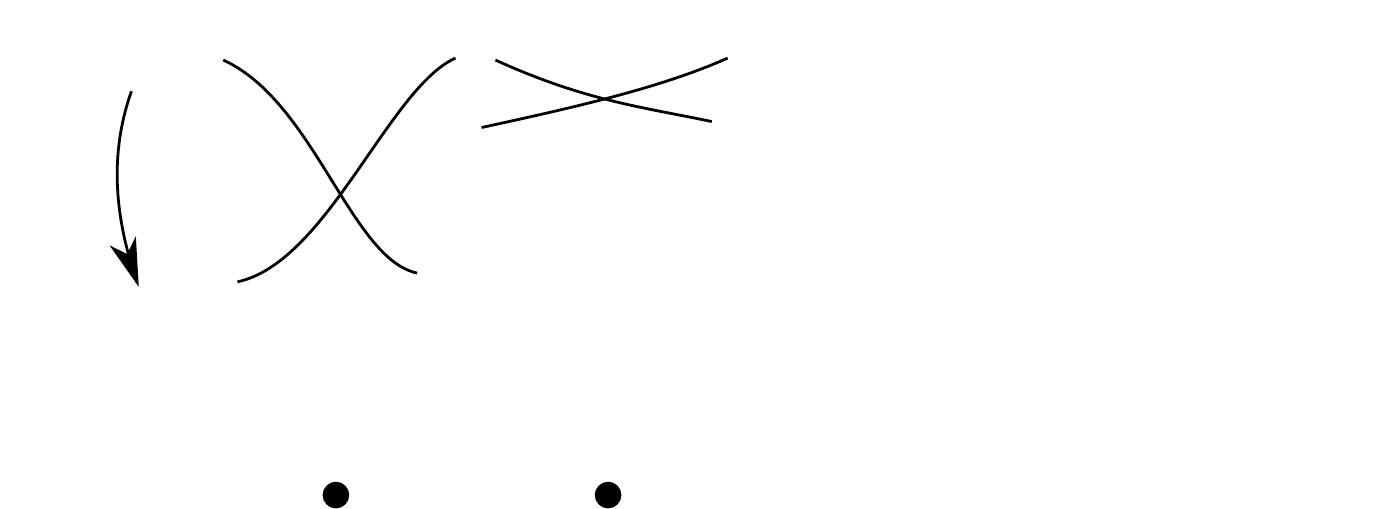
\end{figure}

Computer calculations show that $\mcH_0(3,2+1+1,2+1+1,2+1+1)=4$ with four Hurwitz covers, none of which have non-trivial automorphisms. As $\mfK=(2!)^3=8$ we have $H_{0,10,0,4}(T)=32$.

Here we have $K=2^3\cdot4=32$ and $V_{0,4}=\frac{1}{2}\left(4\pi^2+\sum_{i=1}^4L_i^2\right)$. Since $|\alpha|=1$ we can instead sum over the index of the component of $\alpha$ which is one instead of zero. Then we obtain
\begin{align*}
  512\sum_{i=1}^4L_i^2&=\sum_{i=1}^4\sum_{\substack{\beta_i\in\NN^{I_i}\\ |\beta_i|=1}}\frac{L_i^2l_1^{2(\beta_1)_1}l_2^{2(\beta_2)_2}l_5^{2(\beta_3)_5}l_8^{2(\beta_4)_8}}{8\cdot 1}\langle\psi_1^{(\beta_{\nu(1)})_1}\cdots\psi_{10}^{(\beta_{\nu(10)})_{10}},D_{0,10,0,4}(T)\rangle \\
  & = 2L_1^2\langle\psi_1,D_{0,10,0,4}(T)\rangle + \\
  & \phantom{=} + L_2^2\left(\frac{1}{2}\langle\psi_2,D_{0,10,0,4}(T)\rangle+\frac{1}{8}\langle\psi_3,D_{0,10,0,4}(T)\rangle+\frac{1}{8}\langle\psi_4,D_{0,10,0,4}(T)\rangle\right)+ \\
  & \phantom{=} + L_3^2\left(\frac{1}{2}\langle\psi_5,D_{0,10,0,4}(T)\rangle+\frac{1}{8}\langle\psi_6,D_{0,10,0,4}(T)\rangle+\frac{1}{8}\langle\psi_7,D_{0,10,0,4}(T)\rangle\right)+ \\
  & \phantom{=} + L_4^2\left(\frac{1}{2}\langle\psi_8,D_{0,10,0,4}(T)\rangle+\frac{1}{8}\langle\psi_9,D_{0,10,0,4}(T)\rangle+\frac{1}{8}\langle\psi_{10},D_{0,10,0,4}(T)\rangle\right)
\end{align*}
for $D_{0,10,0,4}(T)\in H_2(|\mcM_{0,10}|,\QQ)$ implying e.g.\ $\langle \psi_1,D_{0,10,0,4}(T)\rangle=256$ from comparing the coefficients in front of $L_1^2$.

\subsection{The Case \texorpdfstring{$\boldsymbol{g=h=0, k=8, n=4, d=3}$}{gh0k8n4d3}}

\label{ex:0-8-0-4}

Next we assume the branching profile looks like

\begin{figure}[!ht]
    \centering
    \def\svgwidth{0.8\textwidth}
    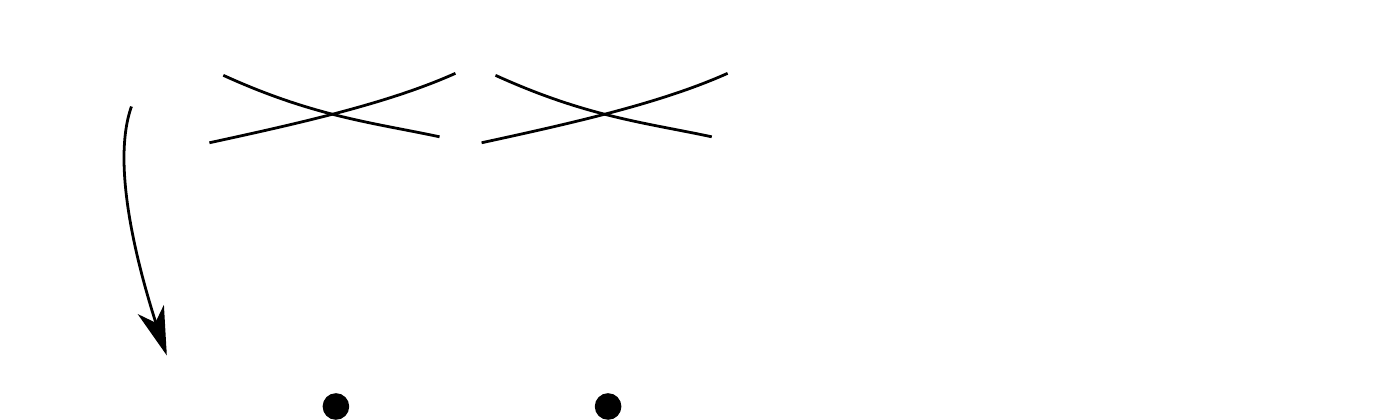
\end{figure}

Again, computer calculations show that $\mcH_0(2+1,2+1,2+1,2+1)=\frac{9}{2}$ as we have five Hurwitz covers, four of which have only the identity as an automorphism and the fifth one with two automorphisms. Since $\mfK=1$ we have therefore $H_{0,8,0,4}(T)=\frac{9}{2}$.

Also we see $K=2^4=16, I_i=\{2i-1,2i\}$ and $V_{0,4}=\frac{1}{2}\left(4\pi^2+\sum_{i=1}^4L_i^2\right)$. Thus
\begin{align*}
  36\sum_{i=1}^4L_i^2&=\sum_{i=1}^4\sum_{\substack{\beta_i\in\NN^{I_i}\\ |\beta_i|=1}}\frac{L_i^2l_1^{2(\beta_1)_1}l_3^{2(\beta_2)_3}l_5^{2(\beta_3)_5}l_7^{2(\beta_4)_7}}{(2\cdot 3)^1\cdot 1}\langle\psi_1^{(\beta_{\nu(1)})_1}\cdots\psi_{10}^{(\beta_{\nu(10)})_{10}},D_{0,8,0,4}(T)\rangle \\
  & = L_1^2\left(\frac{2}{3}\langle\psi_1,D_{0,8,0,4}(T)\rangle+\frac{1}{6}\langle\psi_2,D_{0,8,0,4}(T)\rangle\right) + \\
  & \phantom{=} + L_2^2\left(\frac{2}{3}\langle\psi_3,D_{0,8,0,4}(T)\rangle+\frac{1}{6}\langle\psi_4,D_{0,8,0,4}(T)\rangle\right)+\\
  & \phantom{=} + L_3^2\left(\frac{2}{3}\langle\psi_5,D_{0,8,0,4}(T)\rangle+\frac{1}{6}\langle\psi_6,D_{0,8,0,4}(T)\rangle\right) \\
  & \phantom{=} + L_4^2\left(\frac{2}{3}\langle\psi_7,D_{0,8,0,4}(T)\rangle+\frac{1}{6}\langle\psi_8,D_{0,8,0,4}(T)\rangle\right)
\end{align*}
for $D_{0,8,0,4}(T)\in H_2(|\mcM_{0,8}|,\QQ)$ which shows for example $\langle 4\psi_1+\psi_2,D_{0,8,0,4}(T)\rangle=216$.

\subsection{The Case \texorpdfstring{$\boldsymbol{g=h=0, k=11, n=5, d=3}$}{gh0k11n5d3}}

The next example takes place in a higher dimension than before and is the same as \cref{ex:0-8-0-4} with an added trivial fibre. Assume the branching profile is

\begin{figure}[!ht]
    \centering
    \def\svgwidth{0.9\textwidth}
    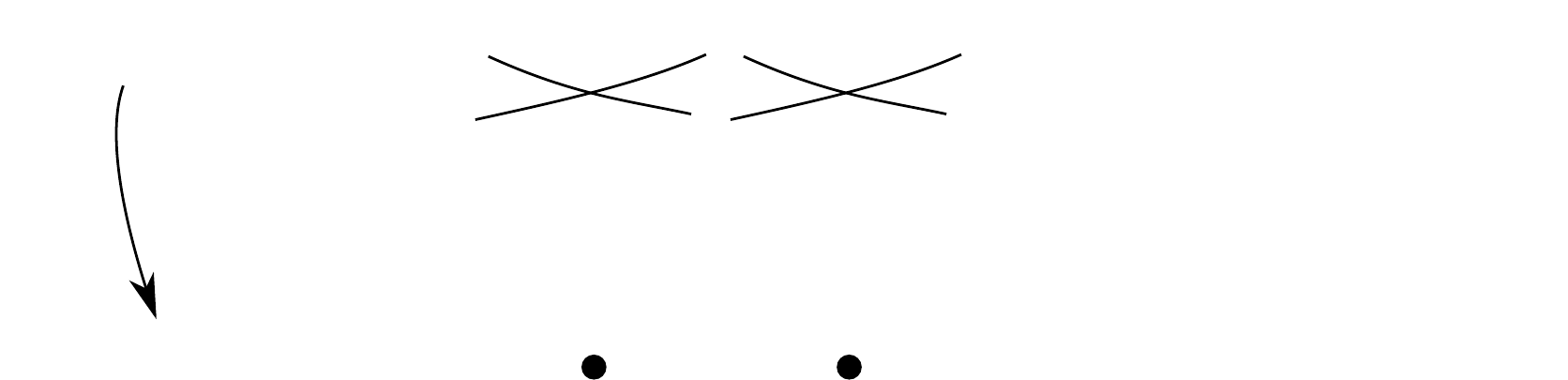
\end{figure}

As usual computer calculations show that $\mcH_0(1+1+1,2+1,2+1,2+1,2+1)=\frac{9}{2}$ which is expected because we have of course the same Hurwitz covers as in \cref{ex:0-8-0-4}. However, we now have $\mfK=3!=6$ and therefore $H_{0,11,0,5}(T)=27$.

Here we have $K=1\cdot 2^4=16$ and $V_{0,5}[2]=\frac{1}{8}\sum_{i=1}^5L_i^4+\frac{1}{2}\sum_{\substack{i,j=1}\\ i\neq j}^5L_i^2L_j^2$ as is seen by the calculation in \cref{sec:calculation-v-05}. Therefore
\begin{align*}
  432\left(\frac{1}{8}\sum_{i=1}^5L_i^4+\frac{1}{2}\sum_{\substack{i,j=1\\ i\neq j}}^5L_i^2L_j^2\right)=&\sum_{\substack{\alpha\in\NN^5\\ |\alpha|=2}}\sum_{\substack{\beta_1\in\NN^{\{1,2,3\}}\\|\beta_1|=\alpha_1}}\sum_{\substack{\beta_2\in\NN^{\{4,8\}}\\|\beta_2|=\alpha_2}}\sum_{\substack{\beta_3\in\NN^{\{5,9\}}\\|\beta_3|=\alpha_3}}\sum_{\substack{\beta_4\in\NN^{\{6,10\}}\\|\beta_4|=\alpha_4}}\sum_{\substack{\beta_5\in\NN^{\{7,11\}}\\|\beta_5|=\alpha_5}} \\[0.4em]
& \phantom{=} \frac{L^{2\alpha}4^{(\beta_2)_4+(\beta_3)_6+(\beta_4)_8+(\beta_5)_{10}}}{6^2\beta_1!\beta_2!\beta_3!\beta_4!\beta_5!}\left\langle\prod_{i=1}^{11}\psi_{i}^{(\beta_{\nu(i)})_i},D_{0,11,0,5}(T)\right\rangle,
\end{align*}
where $D_{0,11,0,5}\in H_4(|\mcM_{0,11}|,\QQ)$.

Expanding this expression with the help of a computer shows that the right hand side contains for example the following summands
\begin{align*}
&  L_1^4\left(\frac{1}{72}\langle\psi_1^2,D_{0,11,0,5}(T)\rangle+\frac{1}{72}\langle\psi_2^2,D_{0,11,0,5}(T)\rangle+\frac{1}{72}\langle\psi_3^2,D_{0,11,0,5}(T)\rangle\right.+ \\
& \left. +\frac{1}{36}\langle\psi_1\psi_2,D_{0,11,0,5}(T)\rangle+\frac{1}{36}\langle\psi_1\psi_3,D_{0,11,0,5}(T)\rangle+\frac{1}{36}\langle\psi_2\psi_3,D_{0,11,0,5}(T)\rangle\right)+ \\
& + L_2^4\left(\frac{2}{9}\langle\psi_4^2,D_{0,11,0,5}(T)\rangle+\frac{1}{9}\langle\psi_4\psi_5,D_{0,11,0,5}(T)\rangle+\frac{1}{72}\langle\psi_5^2,D_{0,11,0,5}(T)\rangle\right) + \\
& + L_2^2L_3^2\left(\frac{4}{9}\langle\psi_4\psi_6,D_{0,11,0,5}(T)\rangle +\frac{1}{9}\langle\psi_5\psi_6,D_{0,11,0,5}(T)\rangle+\frac{1}{9}\langle\psi_4\psi_7,D_{0,11,0,5}(T)\rangle + \right. \\
& + \left. \frac{1}{36}\langle\psi_5\psi_7,D_{0,11,0,5}(T)\rangle\right) + \ldots
\end{align*}
This proves for example that $\langle(\psi_1+\psi_2+\psi_3)^2,D_{0,11,0,5}(T)\rangle=3888$ and similar formulas by comparing the coefficients in front of $L_1^4$ and simplifying the polynomial of $\Psi$-classes.

\subsection{The Case \texorpdfstring{$\boldsymbol{g=h=1, k=4, n= 2, d=2}$}{gh1k4n2d2}}

Assume the branching profile looks as follows:

\begin{figure}[!ht]
    \centering
    \def\svgwidth{0.5\textwidth}
    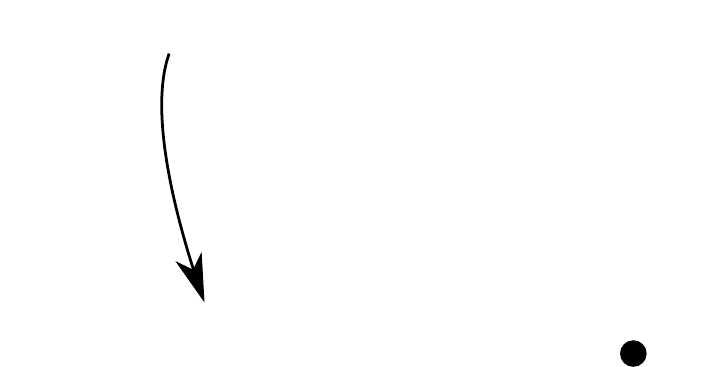
\end{figure}

In this case $K=1$ and $V_{1,2}(L)=\frac{1}{192}\left(L_1^4+2L_1^2L_2^2+L_2^4\right)+\frac{\pi^2}{12}(L_1^2+L_2^2)+\frac{\pi^4}{4}$ as is calculated in \cref{sec:calculation-wp-volume-1-2}. Also we have $\mcH_1(1+1,1+1)=2$ using a computer program which shows that there are four Hurwitz covers having each two automorphisms. Notice that this case corresponds to an unbranched two-fold covering of the torus by a torus so you can actually write down these covers explicitly. Since $\mfK=(2!)^2=4$ we obtain $H_{1,4,1,2}(T)=8$ and thus
\begin{align*}
  \frac{1}{24} & \left(L_1^4+2L_1^2L_2^2+L_2^4\right)= \\
&= \sum_{\substack{\alpha_1,\alpha_2\in\NN\\ \alpha_1+\alpha_2=2}}\sum_{\substack{\beta_1,\beta_2\in\NN\\ \beta_1+\beta_2=\alpha_1}}\sum_{\substack{\beta_3,\beta_4\in\NN\\ \beta_3+\beta_4=\alpha_2}} \frac{L_1^{2\alpha_1}L_2^{2\alpha_2}}{16\cdot \beta_1!\beta_2!\beta_3!\beta_4!}\langle\psi_1^{\beta_1}\psi_2^{\beta_2}\psi_3^{\beta_3}\psi_4^{\beta_4},D_{1,4,1,2}(T)\rangle \\
&=\frac{1}{16}\left[L_1^4\left(\frac{1}{2}\langle\psi_1^2,D_{1,4,1,2}(T)\rangle+\langle\psi_1\psi_2,D_{1,4,1,2}(T)\rangle+\frac{1}{2}\langle\psi_2^2,D_{1,4,1,2}(T)\rangle\right)\right.+ \\
&\phantom{=} +L_1^2L_2^2\left(\langle\psi_1\psi_3,D_{1,4,1,2}(T)\rangle+\langle\psi_1\psi_4,D_{1,4,1,2}(T)\rangle \right. + \\
&\phantom{= +L_1^2L_2^2(} \left. +\langle\psi_2\psi_3,D_{1,4,1,2}(T)\rangle+\langle\psi_2\psi_4,D_{1,4,1,2}(T)\rangle\right)+ \\
&\phantom{=} \left.+L_2^4\left(\frac{1}{2}\langle\psi_3^2,D_{1,4,1,2}(T)\rangle+\langle\psi_3\psi_4,D_{1,4,1,2}(T)\rangle+\frac{1}{2}\langle\psi_4^2,D_{1,4,1,2}(T)\rangle\right)\right],
\end{align*}
where $D_{1,4,1,2}(T)\in H_4(|\mcM_{1,4}|,\QQ)$. Thus we have e.g.\ $\frac{4}{3}=\langle(\psi_1+\psi_2)^2,D_{1,4,1,2}(T)\rangle$.

\subsection{The Case \texorpdfstring{$\boldsymbol{g=1,h=0,k=6,n=4,d=3}$}{g1h0k6n4d3}}

As in \cref{ex:0-8-0-4} we have $V_{0,4}(L)=\frac{1}{2}\left(4\pi^2+\sum_{i=1}^4L_i^2\right)$. However with the parameters above we have two possible subcases of distributions of critical points.

\subsubsection{The Subcase \texorpdfstring{$\boldsymbol{\{3,3,2,2,1,1\}}$}{332211}}

Assume first that the branching profile $T$ looks as follows:

\begin{figure}[!ht]
    \centering
    \def\svgwidth{0.6\textwidth}
    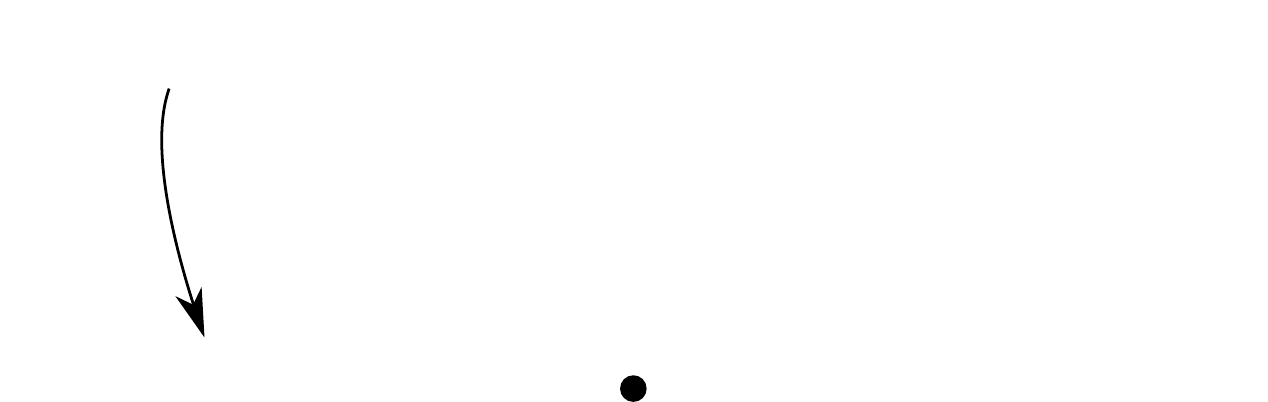
\end{figure}

Computer calculations show $\mcH_0(3,3,2+1,2+1)=2$ because there are two Hurwitz covers with just one automorphism and as $\mfK=1$ we have $H_{1,6,0,4}(T)=2$.

Then $K=3^2\cdot 2^2=36$ such that
\begin{align*}
  36\sum_{i=1}^4L_i^2&=\sum_{i=1}^4\sum_{\substack{\beta_i\in\NN^{I_i}\\ |\beta_i|=1}} \frac{L_i^2l_1^{2(\beta_1)_1}l_2^{2(\beta_2)_2}l_3^{2(\beta_3)_3}l_5^{2(\beta_4)_5}}{6^1}\left\langle\psi_1^{(\beta_{\nu(1)})_1}\cdots\psi_6^{(\beta_{\nu(6)})_6},D_{1,6,0,4}(T)\right\rangle \\
&=\frac{3}{2}\langle\psi_1,D_{1,6,0,4}(T)\rangle L_1^2+\frac{3}{2}\langle\psi_2,D_{1,6,0,4}(T)\rangle L_2^2+ \\
& \qquad + \left(\frac{2}{3}\langle\psi_3,D_{1,6,0,4}(T)\rangle+\frac{1}{6}\langle\psi_4,D_{1,6,0,4}(T)\rangle\right)L_3^2+ \\
& \qquad + \left(\frac{2}{3}\langle\psi_5,D_{1,6,0,4}(T)\rangle+\frac{1}{6}\langle\psi_6,D_{1,6,0,4}(T)\rangle\right)L_4^2,
\end{align*}
where $D_{1,6,0,4}(T)\in H_2(|\mcM_{1,6}|,\QQ)$. This implies e.g.\ $\langle\psi_1,D_{1,6,0,4}(T)\rangle=24$ as well as $\langle 4\psi_5+\psi_6,D_{1,6,0,4}(T)\rangle=216$ by comparing coefficients in front of $L_1^2$ and $L_4^2$, respectively.

\subsubsection{The Subcase \texorpdfstring{$\boldsymbol{\{3,3,3,1,1,1\}}$}{333111}}

\label{ex:1-6-0-4}

Assume now that the branching profile $T'$ looks like

\begin{figure}[!ht]
    \centering
    \def\svgwidth{0.6\textwidth}
    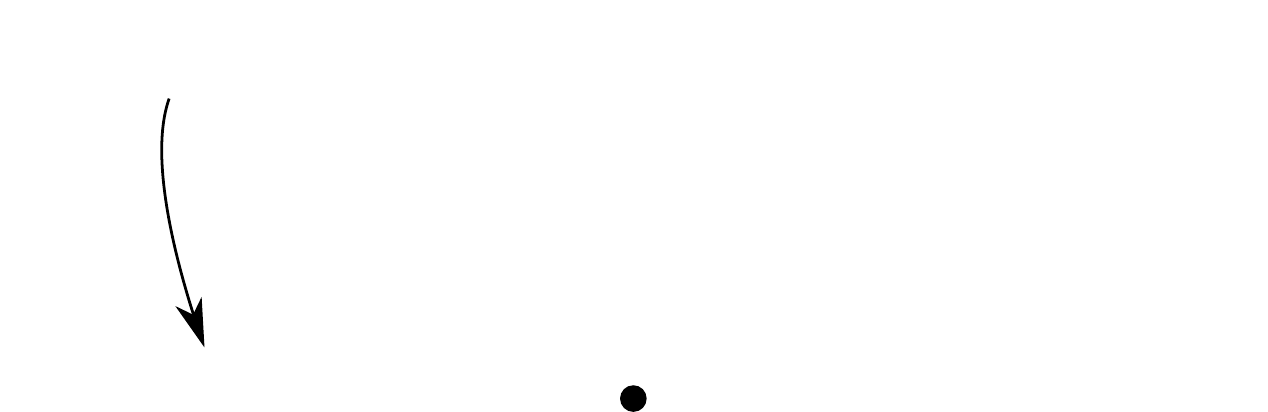
\end{figure}

Here, computer calculations show that $H_0(3,3,3,1+1+1)=\frac{1}{3}$ because there is only one Hurwitz cover with three automorphisms and since $\mfK=6$ we have $H_{1,6,0,4}(T')=2$. Note that this is the same as \cref{ex:1-3-0-3} but with one added trivial fibre.

Then we see $K=27$ such that
\begin{align*}
  27\sum_{i=1}^4L_i^2&=\sum_{i=1}^4\sum_{\substack{\beta_i\in\NN^{I_i}\\ |\beta_i|=1}} \frac{L_i^2l_1^{2(\beta_1)_1}l_2^{2(\beta_2)_2}l_3^{2(\beta_3)_3}}{6^1}\left\langle\psi_1^{(\beta_{\nu(1)})_1}\cdots\psi_6^{(\beta_{\nu(6)})_6},D_{1,6,0,4}(T')\right\rangle \\
&=\frac{3}{2}\langle\psi_1,D_{1,6,0,4}(T')\rangle L_1^2+\frac{3}{2}\langle\psi_2,D_{1,6,0,4}(T')\rangle L_2^2+\frac{3}{2}\langle\psi_3,D_{1,6,0,4}(T')\rangle L_3^2+ \\
&\phantom{=}+\frac{1}{6}(\langle\psi_4,D_{1,6,0,4}(T')\rangle+\langle\psi_5,D_{1,6,0,4}(T')\rangle+\langle\psi_6,D_{1,6,0,4}(T')\rangle)L_4^2
\end{align*}
where $D_{1,6,0,4}(T')\in H_2(|\mcM_{1,6}|,\QQ)$ implying for example $\langle\psi_1,D_{1,6,0,4}(T')\rangle=18$.

\subsection{The Case \texorpdfstring{$\boldsymbol{g=1,h=0,k=3,n=3,d=3}$}{g1h0k3n3d3}}

\label{ex:1-3-0-3}

This is a somewhat special case as we are looking at a target sphere with three punctures and therefore a target moduli space consisting of just a single point without automorphisms. Let us consider the following branching profile anyway

\begin{figure}[!ht]
    \centering
    \def\svgwidth{0.6\textwidth}
    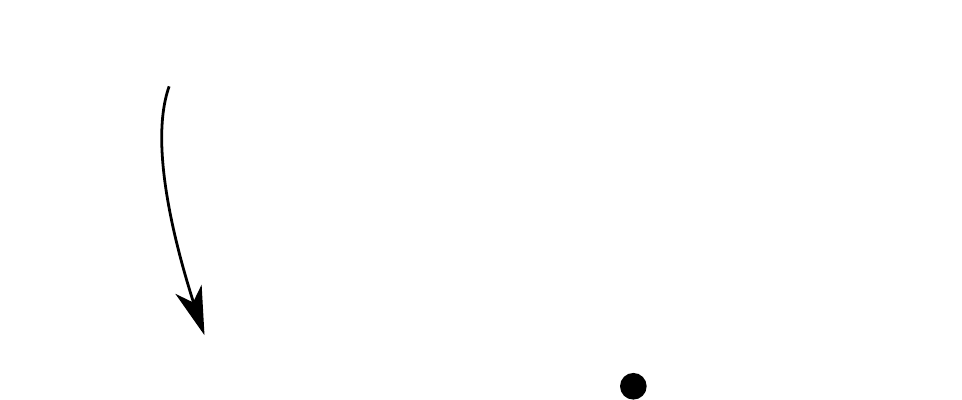
\end{figure}

Computer calculations show that $\mcH_0(3,3,3)=\frac{1}{3}$ as in \cref{ex:1-6-0-4} because there is one Hurwitz cover with three automorphisms. Since $\mfK=1$ we have $H_{1,3,0,3}(T)=\frac{1}{3}$ and $K=3^3=27$.

We therefore have $V_{0,3}(L)=1$ and as in \cref{ex:0-4-0-3} we get for $D_{1,3,0,3}(T)\in H_0(|\mcM_{1,3}|,\QQ)\cong \QQ$ that\footnote{Deligne--Mumford spaces are connected as Teichmüller spaces are connected and nodal curves in Deligne--Mumford space can be smoothened to smooth curves.}

\begin{equation*}
  D_{1,3,0,3}(T)=27\cdot\frac{1}{3}[\pt]=9[\pt].
\end{equation*}

This can be interpreted as follows. It implies that $\ev:\mcM_{1,3,0,3}(T)\lra\mcM_{0,3}$ is an actual morphism covering as there cannot be any nodal Hurwitz covers and $\dim\mcM_{1,3,0,3}(T)=0$. Therefore $\mcM_{1,3,0,3}(T)$ is a disjoint union of points with finite automorphism groups such that their inverses add up to $\frac{1}{3}$. As there was only one Hurwitz cover with three automorphisms this means there is only one such point with an automorphism group with three elements.

In this particular example one can also see the result slightly differently. If there is one Hurwitz cover then it has at least three automorphisms from permuting the sheets cyclically. As $S_3$ has six elements it would be possible there are two Hurwitz covers with each six automorphisms. However, pick a small disc close to one of the branch points in a standard neighborhood as in \cref{lem2} and consider its three disjoint preimages. Any automorphism of the discs has to biholomorphically map these discs to themselves and needs to extend to a map fixing the critical point. This means there cannot be an automorphism fixing one of these discs as this would need to be the identity.

\subsection{Hyperelliptic Curves of Genus \texorpdfstring{$\boldsymbol{2}$}{2}}

Consider the case $g=2, d=2, h=0$ such that $k=n=2g+2=6$ and $l_1=\ldots=l_6=2$. This is the case of genus-$2$ hyperelliptic surfaces. Denote the marked points such that the marked points are related by $\nu=\id$. We can calculate the usual factor $K=2^6=64$ and because of $I_i=\{i\}$ we have $\beta_i\in\NN^{\{i\}}\cong\NN$ and thus $\beta_i=\alpha_i$. Then \cref{eq:hvpsi_top} reads
\begin{equation*}
64\cdot H_{2,6,0,6}V_{0,6}(L)=\sum_{\substack{\alpha\in\NN^6\\ |\alpha|=3}}\frac{L^{2\alpha}}{(2\cdot 2)^{3}}\frac{2^6}{\alpha_1!\cdots\alpha_6!}\left\langle\psi_1^{\alpha_1}\cdots\psi_6^{\alpha_6},D_{2,6,0,6}(T)\right\rangle
\end{equation*}
Now we have to understand what $D_{2,6,0,6}(T)\in H_6(|\mcM_{2,6}|,\QQ)$ is. One might think that because every genus 2 surface is hyperelliptic the map $\fgt:|\mcM_{2,6,0,6}(T)|\lra|\mcM_{2,6}|$ should be surjective and thus the image of the fundamental class should go to some multiple of the fundamental class (ignoring compactification issues). However, a quick dimension count shows that this cannot be and in fact the map $\fgt$ records the position of the  critical points. But given a hyperelliptic surface $C$, the critical points are the Weierstrass points which are uniquely determined by the underlying source curve, thus it is not possible to prescribe the curve together with the position of the critical points. In fact, composing $\fgt$ with the map to $|\mcM_{2,0}|$ forgetting the marked points is surjective.

However, we have $\mcH_0(2,2,2,2,2,2)=H_{2,6,0,6}(T)=\frac{1}{2}$ as $\mfK=1$. This is easily seen by looking at the combinatorics as $S_2$ only has two elements, by using a computer program or by noticing that a hyperelliptic surface has precisely one non-trivial automorphism which corresponds to the map interchanging the sheets of the two-fold covering over $S^2$. A computer program then shows
\begin{align*}
  V_{0,6}(L)=&\frac{1}{48}\sum_{i=1}^6L_i^6 + \frac{3}{16}\sum_{\substack{i,j=1\ldots 6\\ i\neq j}}L_i^4L_j^2 + \frac{3}{4}\sum_{\substack{i,j,k=1\ldots 6\\ i\neq j\neq k}}L_i^2L_j^2L_k^2 +\\
&+ \frac{3\pi^2}{2}\sum_{i=1}^6L_i^4+6\pi^2\sum_{\substack{i,j=1\ldots 6\\ i\neq j}}L_i^2L_j^2+26\pi^4\sum_{i=1}^6L_i^2+\frac{244\pi^6}{3}.
\end{align*}
This gives in the highest order
\begin{align*}
 \frac{2}{3}\sum_{i=1}^6L_i^6 & + 6\sum_{\substack{i,j=1\ldots 6\\ i\neq j}}L_i^4L_j^2 + 24\sum_{\substack{i,j,k=1\ldots 6\\ i\neq j\neq k}}L_i^2L_j^2L_k^2= \frac{1}{6}\sum_{i=1}^6\langle\psi_i^3,D_{2,6,0,6}(T)\rangle L_i^6+ \\
& +\frac{1}{2}\sum_{\substack{i,j=1\ldots 6\\ i\neq j}}\langle \psi_i^2\psi_j,D_{2,6,0,6}(T)\rangle L_i^4L_j^2+\sum_{\substack{i,j,k=1\ldots 6\\ i\neq j\neq k}}\langle\psi_i\psi_j\psi_k,D_{2,6,0,6}(T)\rangle L_i^2L_j^2L_k^2.
\end{align*}
So for example we have $\langle\psi_1^3,D_{2,6,0,6}(T)\rangle=4$.

\appendix

\chapter{Appendix}
\label{appendix}

\section{Overview of Moduli Spaces}

\label{sec:moduli-spaces-3}

\index{Moduli Space!Of Admissible Riemann Surfaces}
\index{Moduli Space!Of Closed Riemann Surfaces With Multicurve}
\index{Deligne--Mumford Space}

\begin{table}[H]
\begin{tabular*}{\textwidth}{c p{12cm}}
\toprule
\textbf{Notation} & \textbf{Name and Description} \\
\midrule \\

$\mcR_{g,k}$ & {\RaggedRight \textbf{Deligne--Mumford space}:} \linebreak
Equivalence classes of stable nodal closed connected curves of genus $g$ with $k$ marked points. $$\{(C,\bq)\mid C\text{ stable cx.\ curve of genus }g,\bq\in C^k\setminus\Delta\}$$ \\

$\wh{\mcR}_{g,k}$ & {\RaggedRight \textbf{Moduli space of admissible Riemann surfaces}:} \linebreak Equivalence classes of stable nodal connected Riemann surfaces $C$ of genus $g$ with $k$ enumerated boundaries or cusps together with a marked point on a reference curve $\Gamma_i(C)$ close to each boundary component or cusp $\del_iC$. $$\{(C,\bz)\mid C\text{ admissible Riemann surface}, z_i\in\Gamma_i(C)\;\forall\; 1\leq i\leq k\}$$ \\

$\wt{\mcR}_{g,k}$ & {\RaggedRight \textbf{Moduli space of complex surfaces with a multicurve}:} \linebreak Equivalence classes of closed stable connected nodal surfaces $C$ of genus $g$ with $k\in 2\ZZ$ enumerated marked points $\bq$ together with a $k$-multicurve $\bG$ such that $q_{2i-1}$ and $q_{2i}$ are either on a sphere component and one corresponding element of $\bG$ is contractible or they are contained in a disc bounded by a curve in the multicurve. $$ \left\{ \begin{aligned} (C,\bq,\Gamma)\mid  \; & C\text{ cl.\ stable nodal surface}, \bq\in C^k\setminus\Delta, \bG\text{ a multicurve}  \\ & \text{whose elements bound pairs of pants with } \bq \end{aligned} \right\}$$ \\

\bottomrule
\end{tabular*}

\caption{This table shows the objects of the groupoid categories corresponding to moduli spaces of surfaces. The morphisms are always the natural ones. Note that in $\wh{\mcR}_{g,k}$ we allow boundary components as well as degenerate boundary components, i.e.\ cusps. In contrast $\wt{\mcR}_{g,k}$ and $\mcR_{g,k}$ contain only closed surfaces. These categories correspond to those used by Mirzakhani in \cite{mirzakhani_weil-petersson_2007}. The orbifold groupoids $\mcM_{g,k},\wt{\mcM}_{g,k}$ and $\wh{\mcM}_{g,k}$ were constructed in \cite{robbin_construction_2006} and the gluing idea is contained in \cite{mirzakhani_weil-petersson_2007}. Thick diagonals are denoted by $\Delta$, i.e.\ $\Delta\subset X^n$ is the subset $\{(x_1,\ldots,x_n)\mid \exists\, i,j:x_i=x_j\}$.}

\label{tab:moduli-spaces-wo-maps}
\end{table}

\index{Hurwitz Cover!Bordered}
\index{Moduli Space!Of Closed Hurwitz Covers}
\index{Moduli Space!Of Admissible Hurwitz Covers}
\index{Moduli Space!Of Closed Hurwitz Covers With Multicurve}

\begin{table}[H]
\begin{tabular*}{\textwidth}{c p{12cm}}
\toprule
\textbf{Notation} & \textbf{Name and Description} \\
\midrule \\

$\mcR_{g,k,h,n}(T)$ & {\RaggedRight \textbf{Moduli space of closed Hurwitz covers}:} \linebreak
Equivalence classes of holomorphic maps $u:C\lra X$ with closed nodal complex curves $C$ of genus $g$ and $X$  of genus $h$ of type $T$ where all branched points and their preimages are marked and enumerated. $$ \begin{aligned} \{(C & , X, u , \bq, \bp)\mid C,X\text{ closed stable nodal Riemann surfaces of genus} \\ & g,h,\text{ respectively}, u:C\rightarrow X \text{ hol., sat. }T,\bq\in C^k\setminus\Delta,\bp\in X^n\setminus\Delta\} \end{aligned} $$ \\

$\wh{\mcR}_{g,k,h,n}(T)$ & {\RaggedRight \textbf{Moduli space of admissible Hurwitz covers}:} \linebreak
Equivalence classes of holomorphic maps $u:C\lra X$ with admissible complex curves\footnotemark $C$ of genus $g$ and $X$ of genus $h$ of type $T$ where all branched points and their preimages as well as boundaries and their preimages are marked and enumerated together with one marked point $z_j$ per boundary component or cusp on $C$ such that $u(z_j)=u(z_i)$ if $\nu(i)=\nu(j)$. $$ \left\{ \begin{aligned} (C,X,u,\bq,\bp,\bz)\mid \; & C,X\text{ adm.}, u: C\rightarrow X \text{ hol., sat. }T, \bq\in C^k\setminus\Delta, \\ & \bp\in X^n\setminus\Delta, z_i\in\Gamma_i(C)\; \forall\; 1\leq i\leq k, u(z_i)=u(z_j) \\ & \forall\; i,j\text{ s.t. } \nu(i)=\nu(j) \end{aligned} \right\} $$ \\

$\wt{\mcR}_{g,k,h,n}(T)$ & {\RaggedRight \textbf{Moduli space of closed Hurwitz covers with a multicurve}:} \linebreak
Equivalence classes of holomorphic maps $u:C\lra X$ with closed complex curves $C$ of genus $g$ and $X$ of genus $h$ of type $T$ where all $n\in 2\ZZ$ branched points and their preimages are marked and enumerated together with a multicurve $\bG$ on $X$ such that consecutive pairs of marked points on $X$ are either on a sphere component and there is a contractible curve in $\bG$ or they are contained in a disc bounded by an element in the multicurve. $$ \left\{ \begin{aligned} (C,X,u,\bq,\bp,\bG)\mid \; & C,X\text{ cl. stable nodal surface}, u: C\rightarrow X \text{ hol.,} \\ & \text{sat. }T, \bq\in C^k\setminus\Delta, \bp\in X^n\setminus\Delta, \Gamma \text{ a multicurve} \\ & \text{whose elements bound pairs of pants with } \bp \end{aligned} \right\} $$ \\

\bottomrule
\end{tabular*}

\caption{This table shows most categories used in this thesis that include maps. Note however that the usage of $\mcR$ means that these are the general categories and written are the objects only. The morphisms are always (pairs of) maps between objects preserving everything. The corresponding orbifold categories will use an $\mcM$ and have far fewer objects and morphisms. The hat and tilde symbols correspond to the ones in \cref{tab:moduli-spaces-wo-maps}.}

\label{tab:moduli-spaces-with-maps}
\end{table}
\footnotetext{{See \cref{def:admissible-riemann-surface}.}}

\section{Definition of Convergence to a Broken Holomorphic Curve}

\label{appendix-convergence-broken-holomorphic-curves}

In this chapter we will give the definitions in order to understand convergence of sequences of $J$-holomorphic curves in a neck-stretching sequence to a broken holomorphic curve as stated in \cref{def:convergence-broken-hol-curve}. We follow again Cieliebak--Mohnke in \cite{cieliebak_compactness_2005}. First suppose we are given sequences of numbers
\begin{equation*}
  -w_k=r_k^0<r_k^1<\cdots <r_k^{N+1}=0
\end{equation*}
such that $r_k^{\nu+1}-r_k^{\nu}\to\infty$ as $k\to\infty$. The map $\cdot + R$ for $R\in\RR$ is defined as
\begin{align*}
  X_k & \lra \mathring{X}_0\cup_{M_-\sqcup M_+}[-w_k-\epsilon+R,\epsilon+R]\times M \\
  x & \longmapsto x+R\coloneqq
  \begin{cases}
    x & x\in\mathring{X}_0, \\
    (r+R,y) & x=(r,y)\in [-\epsilon-w_k,\epsilon]\times M.
  \end{cases}
\end{align*}
Then we define
\begin{align*}
  X_k^{\nu} & \coloneqq \ol{X}_0\cup_{M_-\sqcup M_+}[-w_k-\epsilon-r_k^{\nu},\epsilon+-r_k^{\nu}]\times M \qquad\text{for }\nu=1,\ldots,N, \\
  X_k^0 & \coloneqq \faktor{\left[-\frac{w_k}{2},\epsilon\right]\times M\cup_{M_+}\mathring{X}_0\cup_{M_-}\left[-\epsilon,\frac{w_k}{2}\right]\times M}{\left(-\frac{w_k}{2},x\right)\sim \left(\frac{w_k}{2},x\right)}
\end{align*}
and using the maps $f_k: C_k\lra X_k$ we also define
\begin{align*}
  f_k^{\nu}:C_k & \lra X_k^{\nu} \\
  z & \longmapsto f_k^{\nu}(z)\coloneqq f_k(z)-r_k^{\nu}
\end{align*}
for $\nu=1,\ldots, N$ as well as
\begin{align*}
  f_k^0:C_k & \lra X_k^0 \\
  z & \longmapsto f_k^0(z)\coloneqq
  \begin{cases}
    f_k(z) & f_k(z)\in \left[-\frac{w_k}{2},\epsilon\right]\times M\cup \mathring{X}_0, \\
    f_k(z)+w_k & f_k(z)\in \left[-w_k-\epsilon,-\frac{w_k}{2}\right]\times M.
  \end{cases}
\end{align*}

\begin{figure}[H]
    \centering
    \def\svgwidth{\textwidth}
    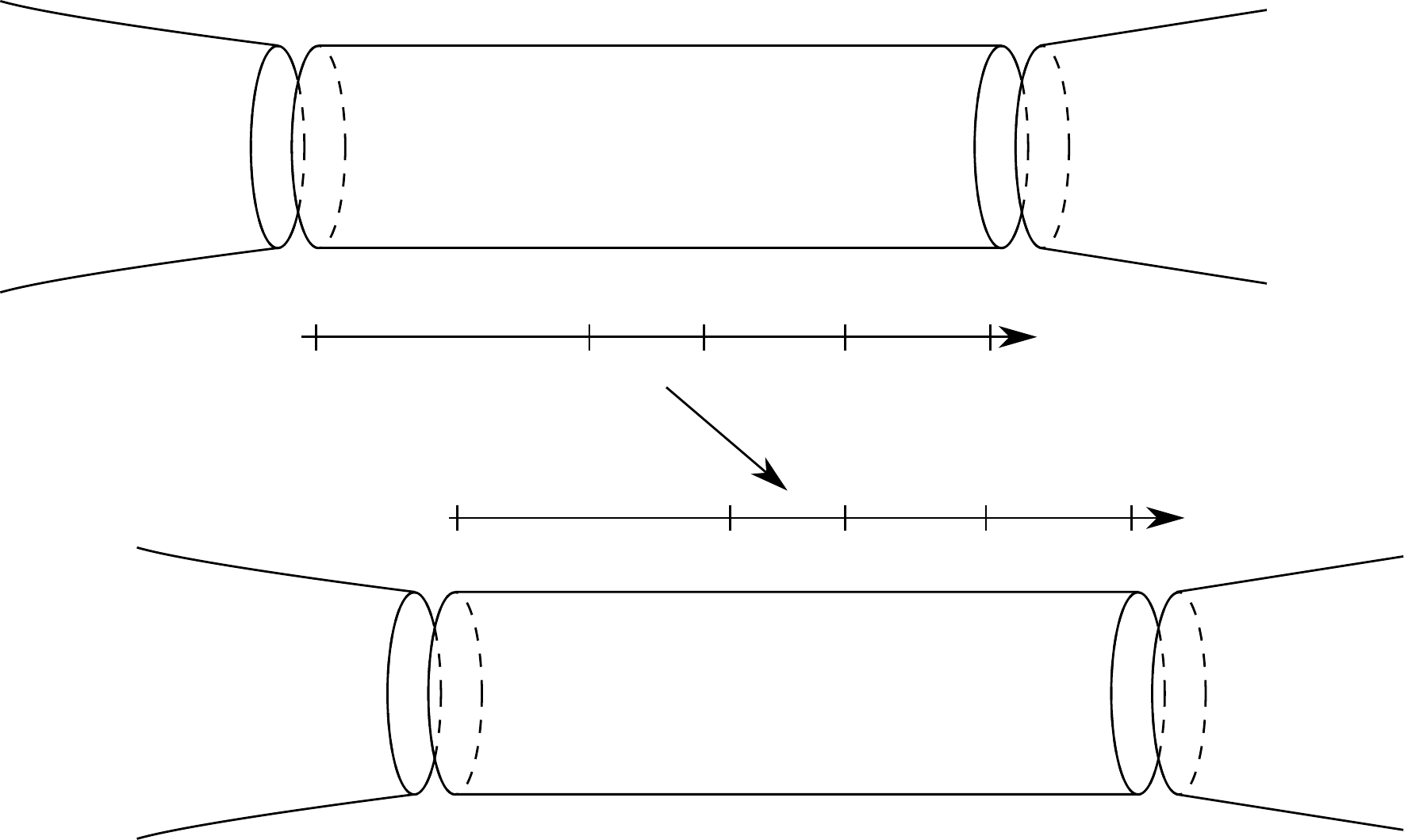
    \caption{This shows an example of the shift maps $\cdot+r_k^{\nu}$. Note that for every $k$ and $\nu$ we define a map such that the points $r_k^{\nu}$ are shifted to zero. The points $r_k^{\nu}$ will then be chosen to correspond to a sequence of values for which the corresponding holomorphic curve $f_k$ splits and develops an asymptotic puncture. Thus in the shifted picture this takes place at the real coordinate zero. Similarly the maps $f_k^0$ are defined using shifts such that again the splitting happens at real coordinate zero.}
    \label{fig:shift-maps}
\end{figure}

We denote the projections on $\RR\times M$ by
\begin{equation*}
  \xymatrix{
    & \RR\times M \ar[dl]_{\pi_{\RR}} \ar[dr]^{\pi_M} & \\
    \RR & & M
    }.
\end{equation*}

Also we choose a sequence of diffeomorphisms $\phi_k:[-\epsilon-w_k,\epsilon]\lra[-\epsilon,\epsilon]$ with $\phi_k'=1$ near the boundaries. Then we define shifts of these diffeomorphisms by
\begin{align*}
  \phi_k^{\nu}:[-\epsilon-w_k-r_k^{\nu},\epsilon-r_k^{\nu}] & \lra[-\epsilon,\epsilon] \\
  z & \longmapsto \phi_k^{\nu}(z)\coloneqq \phi_k(z+r_k^{\nu})
\end{align*}
for $\nu=1,\ldots,N$ and
\begin{align*}
  \phi_k^0:\faktor{\left[-\frac{w_k}{2},\epsilon\right]\cup \left[-\epsilon,\frac{w_k}{2}\right]}{\left(-\frac{w_k}{2}\sim \frac{w_k}{2}\right)}  & \lra[-\epsilon,\epsilon] \\
  z & \longmapsto \phi_k^0(z)\coloneqq 
  \begin{cases}
    \phi_k(z) & z\in \left[-\frac{w_k}{2},\epsilon\right], \\
    \phi_k(z)-w_k & z\in \left[-\epsilon,\frac{w_k}{2}\right].
  \end{cases}
\end{align*}
Last we assume that $\lim_{k\to\infty}\phi_k^{\nu}=\phi^{\nu}$ in $\cin_{\text{loc}}$ for $\nu=0,\ldots,N$ where $\phi^{\nu}$ is the diffeomorphism defined after \cref{eq:def-phi-shift}.

\section{Calculations of Weil--Petersson Volumes}

\label{appendix:calculation-weil-petersson-volumes}

\subsection{The Recursion Relation of Mirzakhani}

Before stating the McShane identity let us define a few functions. In the following, $V_{h,n}(L)$ is the Weil--Petersson symplectic volume of the moduli space of bordered Riemann surfaces of genus $h$ and $n$ boundary components of lengths $L_i$ for $i=1,\ldots,n$. We will also need the functions $H(x,y)\coloneqq\frac{1}{1+\exp(\frac{x+y}{2})}+\frac{1}{1+\exp(\frac{x-y}{2})}$ and $m(h,n)\coloneqq\delta_{h,1}\delta_{n,1}$. Then we can define

\begin{align*}
  \mcA_{h,n}^{\text{con}} & \coloneqq\frac{1}{2}\int_0^{\infty}\int_0^{\infty}xy\frac{V_{h-1,n+1}(x,y,L_2,\ldots,L_n)H(x+y,L_1)}{2^{m(h-1,n+1)}}\dd x\dd y, \\
  \mcA_{h,n}^{\text{dcon}} & \coloneqq\frac{1}{2}\int_0^{\infty}\int_0^{\infty}xy\sum_{a\in\mcI_{h,n}}\frac{V_{h_1,n_1+1}(x,L_{I_1})}{2^{m(h_1,n_1+1)}}\frac{V_{h_2,n_2+2}(x,L_{I_2})}{2^{m(h_2,n_2+1)}}H(x+y,L_1)\dd x\dd y, \\
  \mcB_{h,n}(L) & \coloneqq \frac{2^{-m(h,n-1)}}{2}\sum_{j=2}^n\int_0^{\infty}x(H(x,L_1+L_j)+H(x,L_1-L_j))\times \\
  & \phantom{\coloneqq \frac{2^{-m(h,n-1)}}{2}\sum_{j=2}^n\int_0^{\infty}x}\times V_{h,n-1}(x,L_2,\ldots,\wh{L_j},\ldots,L_n)\dd x,
\end{align*}

where $\mcI_{h,n}\coloneqq\{(h_1,I_1),(h_2,I_2)\}$ is the set of ordered pairs with $I_1,I_2\subset\{2,\ldots,n\}$ and $0\leq h_1,h_2\leq h$ such that $I_1$ and $I_2$ are disjoint and $I_1\cup I_2=\{2,\ldots,n\}$, $h_1, h_2$ and $n_i=|I_i|$ satisfy $2\leq 2h_i+n_i$ for $i=1,2$ and $h_1+h_2=h$. Then the McShane identity is given as follows.

\begin{thm}[Mirzakhani]
  With the definitions as above it was proven in \cite{mirzakhani_weil-petersson_2007} that one has
  \begin{equation}
     \frac{\partial}{\partial L_1}L_1V_{h,n}(L)=\mathcal{A}^{\text{con}}_{h,n}(L)+\mathcal{A}^{\text{dcon}}_{h,n}(L)+\mathcal{B}_{h,n}(L)
  \label{eq:mcshane}
  \end{equation}
  as well as the initial conditions
  \begin{align*}
    V_{0,3}(L) & = 1, \\
    V_{1,1}(L) & = \frac{\pi^2}{6} + \frac{L^2}{24}.
  \end{align*}
\end{thm}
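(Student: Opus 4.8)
The plan is to follow Mirzakhani's original argument from \cite{mirzakhani_weil-petersson_2007}, which rests on two largely independent ingredients: a \emph{generalized McShane identity} valid on each individual bordered hyperbolic surface, and an \emph{integration (unfolding) formula} that converts an integral over moduli space of a sum over a mapping-class-group orbit of multicurves into an integral over a lower-dimensional moduli space.

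First I would establish the generalized McShane identity. Fix a hyperbolic surface $X$ of type $(h,n)$ with geodesic boundary components $\del_iX$ of lengths $L_i$. For Lebesgue-almost-every point $x\in\del_1X$, the unit-speed geodesic leaving $\del_1X$ perpendicularly at $x$ eventually self-intersects or meets the boundary, and in either case determines a unique embedded geodesic pair of pants $P\subset X$ one of whose cuffs is $\del_1X$. Partitioning $\del_1X$ according to the topological type of $P$ and the lengths of its other two cuffs, and computing the length of each piece by explicit hyperbolic trigonometry inside $P$, yields an identity of the form
\begin{equation*}
  L_1 = \sum_{\{\gamma_1,\gamma_2\}} \mcD(L_1,\ell_{\gamma_1}(X),\ell_{\gamma_2}(X)) + \sum_{j=2}^{n}\sum_{\gamma} \mcE(L_1,L_j,\ell_{\gamma}(X)),
\end{equation*}
where the first sum runs over unordered pairs of simple closed geodesics cobounding a pair of pants with $\del_1X$, the second over simple closed geodesics cobounding a pair of pants with $\del_1X$ and $\del_jX$, and $\mcD,\mcE$ are explicit functions whose $L_1$-derivatives are built from the kernel $H$ appearing in \cref{eq:mcshane}. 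One must verify absolute convergence of these sums, which follows from the exponential decay of $\mcD$ and $\mcE$ in the geodesic lengths together with the standard polynomial growth bound for numbers of simple closed geodesics of bounded length.

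Next I would apply the integration formula. Differentiating in $L_1$, multiplying by $L_1$, and integrating over $\mcM_{h,n}(L)$ against the Weil--Petersson volume form turns the left-hand side into $\frac{\partial}{\partial L_1}\big(L_1 V_{h,n}(L)\big)$. On the right, each mapping-class-group orbit of a cutting curve (or pair) contributes, via the covering of $\mcM_{h,n}(L)$ by the moduli space that records $\gamma$, an integral over the moduli space of $X$ cut along $\gamma$ together with an integral over the length $\ell_\gamma$ and twist $\tau\in[0,\ell_\gamma)$ of the cut curve. Wolpert's formula $\wwp=\sum \dd\ell_i\wedge\dd\tau_i$ makes the twist integration produce a factor $\ell_\gamma$, and integrating the remaining kernel against $\mcD$ or $\mcE$ over $\ell_\gamma\in(0,\infty)$ reproduces exactly $\mcA^{\mathrm{con}}_{h,n}(L)$, $\mcA^{\mathrm{dcon}}_{h,n}(L)$, and $\mcB_{h,n}(L)$ according to the three possible topological types of the pair of pants containing $\del_1X$: non-separating with two new cuffs, separating with two new cuffs, and one new cuff together with some $\del_jX$. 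The symmetry constants $2^{-m(h,n)}$ and the factors $\tfrac12$ record the automorphisms (the hyperelliptic involution of a one-holed torus, and the unordered pair of new cuffs). The initial conditions are immediate: $\mcM_{0,3}(L)$ is a single point without nontrivial automorphisms, so $V_{0,3}(L)=1$; and $V_{1,1}(L)=\tfrac{\pi^2}{6}+\tfrac{L^2}{24}$ follows from a direct computation in Fenchel--Nielsen coordinates for the one-holed torus, with the order-two automorphism accounting for the $\tfrac12$.

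The hardest part will be making the integration formula rigorous: justifying the unfolding over the orbifold moduli space, checking absolute convergence of the resulting sums of integrals so that summation and integration may be interchanged, and tracking the automorphism contributions so that the combinatorial constants $2^{-m(h,n)}$ and the factors $\tfrac12$ emerge precisely as in \cref{eq:mcshane}. Since the statement is quoted verbatim from \cite{mirzakhani_weil-petersson_2007}, in the thesis one simply cites that reference; the outline above indicates what a self-contained proof would require.
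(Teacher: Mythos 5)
Your outline correctly reproduces Mirzakhani's two-step argument — the generalized McShane identity on each bordered hyperbolic surface followed by her integration/unfolding formula over the moduli orbifold — and you correctly note that the thesis itself offers no proof but simply cites \cite{mirzakhani_weil-petersson_2007}. Since the paper provides only the citation and you have sketched precisely the approach of the cited work, there is nothing to reconcile.
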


\begin{rmk}
  This result allows to compute recursively arbitrary Weil--Petersson volumes $V_{h,n}(L)$. In particular the volumes are polynomials in $L$ allowing us to deduce \cref{cor:main-result} from \cref{thm:main-result}.
\end{rmk}

\subsection{Calculation of \texorpdfstring{$\boldsymbol{V_{1,2}(L_1,L_2)}$}{V_12(L_1,L_2)}}

\label{sec:calculation-wp-volume-1-2}

As there is no pair of pants bounding $\del X_1$ such that its complement is a disconnected stable surface the recursion relation for $V_{1,2}(L_1,L_2)$ has no $\mcA^{\text{dcon}}$-term and reads
\begin{align}
  \frac{\partial}{\partial L_1}(L_1V_{1,2}(L_1,L_2))=&\frac{1}{2}\int_0^{\infty}\int_0^{\infty}xyH(x+y,L_1)\dd x\dd y +  \\
&+\frac{1}{4}\int_0^{\infty}x\big(H(x,L_1+L_2)+H(x,L_1-L_2)\big)V_{1,1}(x)\dd x, \numberthis \label{eq:rec_rel_vol}
\end{align}
where $H(x,y)=\frac{1}{1+e^{x+y}}+\frac{1}{1+e^{x-y}}$ and $V_{1,1}(L_1)=\frac{L_1^2}{24}+\frac{\pi^2}{6}$. It can be shown that
\begin{align*}
  \int_0^{\infty}\int_0^{\infty}x^{2i+1} & y^{2j+1}H(x+y,t)\dd x\dd y= \\
  &=(2i+1)!(2j+1)!\sum_{k=0}^{i+j+2}\zeta(2k)(2^{2k+1}-4)\frac{t^{2i+2j+4-2k}}{(2i+2j+4-2k)!}
\end{align*}
and
\begin{equation*}
  \int_0^{\infty}x^{2i+1}H(x,t)\dd x=(2i+1)!\sum_{k=0}^{i+1}\zeta(2k)(2^{2k+1}-4)\frac{t^{2i+2-2k}}{(2i+2-2k)!}.
\end{equation*}
Using these equations we obtain for \ref{eq:rec_rel_vol}
\begin{align*}
  \frac{\partial}{\partial L_1} & (L_1V_{1,2}(L_1,L_2))= \\
  &=\frac{L_1^4}{48}+\frac{\pi^2}{6}L_1^2+\frac{7}{45}\pi^4+\frac{\pi^2}{24}\left(\frac{(L_1+L_2)^2}{2}+\frac{2}{3}\pi^2+\frac{(L_1-L_2)^2}{2}+\frac{2}{3}\pi^2\right)+ \\
 +\frac{1}{96} & \left(\frac{(L_1+L_2)^4}{4}+2\pi^2(L_1+L_2)^2+\frac{28}{15}\pi^4+\frac{(L_1-L_2)^4}{4}+2\pi^2(L_1-L_2)^2+\frac{28}{15}\pi^4\right) \\
  &=\frac{5}{192}L_1^4+\frac{1}{32}L_1^2L_2^2+\frac{1}{192}L_2^4+\frac{\pi^2}{4}L_1^2+\frac{\pi^2}{12}L_2^2+\frac{\pi^4}{4}.
\end{align*}
As the argument on the left hand side is zero for $L_1=0$ we can integrate this equation without an additive constant and obtain
\begin{equation*}
  V_{1,2}(L_1,L_2)=\frac{1}{192}\left(L_1^4+2L_1^2L_2^2+L_2^4\right)+\frac{\pi^2}{12}(L_1^2+L_2^2)+\frac{\pi^4}{4}.
\end{equation*}

\subsection{Calculation of \texorpdfstring{$\boldsymbol{V_{0,5}(L)}$}{V_05(L)}}

\label{sec:calculation-v-05}

Notice first that there is no $\mcA^{\text{con}}$-term because there is no pair of geodesics bounding a pair of pants with a boundary component such that the complement is connected (because this would have to have genus $-1$). The disconnected sum consists of volumes $V_{0,3}$ which are equal to one, the sum however counts \emph{ordered} pairs of decompositions into disconnected surfaces (again by a pair of geodesics bounding a pair of pants with a specified boundary component), of which there are six. Therefore we obtain
\begin{align*}
  \mcA^{\text{dcon}}_{0,5}(L)&=\frac{6}{2}\int_0^{\infty}\int_0^{\infty}xyH(x+y,L_1)\dd x\dd y, \\
  &=3\frac{1}{6}F_3(L_1)\\
  &=\frac{1}{8}L_1^4+\pi^2L_1^2+\frac{14}{15}\pi^4.
\end{align*}
The sum over pairs of boundary components and their bounded pairs of pants is somewhat longer and is given by
\begin{align*}
  \mcB_{0,5}(L)&=\frac{1}{2}\sum_{j=2}^5\int_0^{\infty}x(H(x,L_1+L_j)+H(x,L_1-L_j))V_{0,4}(x,L_2,\ldots,\wh{L_j},\ldots,L_5)\dd x\\
  &=\frac{1}{8}(L_2^4+L_3^4+L_4^4+L_5^4)+L_1^2(\frac{3}{2}(L_2^2+L_3^2+L_4^2+L_5^2)+8\pi^2)+\\
  &\phantom{=}+\frac{1}{2}L_1^4+\frac{1}{2}\sum_{\substack{i,j=2\\ i\neq j}}^5L_i^2L_j^2+3\pi^2(L_2^2+L_3^2+L_4^2+L_5^2)+\frac{1}{8}(L_2^4+L_3^4+L_4^4+L_5^4),
\end{align*}
where we have used
\begin{align*}
  \int_0^{\infty}x^3H(x,L_1+L_j)&=\frac{1}{4}(L_1+L_j)^4+2\pi^2(L_1+L_j)^2+\frac{28}{15}\pi^4,\\
  \int_0^{\infty}xH(x,L_1+L_j)&=\frac{1}{2}(L_1+L_j)^2+\frac{2}{3}\pi^2
\end{align*}
as well as many simplifications. Integrating (\ref{eq:mcshane}) with respect to $L_1$ and dividing by $L_1$ we obtain
\begin{equation*}
  V_{0,5}(L)=\frac{1}{8}\sum_{i=1}^5L_i^4+\frac{1}{2}\sum_{\substack{i,j=1}\\ i\neq j}^5L_i^2L_j^2+3\pi^2\sum_{i=1}^5L_i^2+10\pi^4.
\end{equation*}

\chapter{Source Code for Calculations of Hurwitz Numbers}

The following SAGE/Python code was used to calculate the standard Hurwitz numbers $\mcH_h(T_1,\ldots,T_n)$ used in \cref{sec:appl-exampl} and defined in \cref{sec:hurwitz-numbers} via the combinatorial description in \cref{sec:comb-desciption-hurwitz-numbers}.

\begin{lstlisting}[language=Python, basicstyle=\footnotesize]
import itertools

def calculate_hurwitz_numbers(h, profile):
    ##### Parameters
    
    # Profile is given by a tuple of tuples where the outer tuple enumerates the marked points of the target surface and the inner tuples are (ordered) degrees of critical points in that fibre. h is the target genus.
    
    ##### Preparations
    
    # Other parameters calculated from the profile.
    n = len(profile)
    d = sum(profile[0])
    
    # Symmetric group acting on the fibres.
    
    G = SymmetricGroup(d)
    e = G.identity()
    
    # Initialize variables for results.
    
    hurwitz_number = 0
    possible_decompositions = []
    common_conjugacy_classes = [[()]]
    automorphisms = []
    
    
    ##### Find all Decompositions
    
    # This loop goes over all possible (n+2h)-tuples of elements in G and first checks whether they have the correct (admissible) cycle type. If yes it then verifies if the tuple satisfies the correct equation and saves it to "possible_decompositions". This algorithm ignores some obvious optimizations.        
    
    for group_tuple in itertools.product(G, repeat = n+2*h):
        admissible = True
        for i in range(2*h,n+2*h):
            if Permutation(group_tuple[i]).cycle_type() != profile[i-2*h]:
                admissible = False
        if admissible == True:
            relation = e
            for k in range(h):
                relation = relation * group_tuple[2*k] * group_tuple[2*k+1] * group_tuple[2*k].inverse() * group_tuple[2*k+1].inverse()
            for k in range(n+2*h-1,2*h-1,-1):
                relation = relation * group_tuple[k].inverse()
            if relation == e:
                possible_decompositions.append(group_tuple)
                                
    ##### Sort all Decompositions into Common Conjugacy Classes
    
    # Loop goes through all found decompositions and sorts them into common conjugacy classes.
    
    i=0
    j=0
    while i in range(len(possible_decompositions)):
        j=0
        N = len(common_conjugacy_classes)
        assigned = False
        while j in range(N):
            if possible_decompositions[i] in eval("gap.function_call('Orbit',[gap.SymmetricGroup(d),common_conjugacy_classes[j][0],'OnTuples'])"):
                common_conjugacy_classes[j].append(possible_decompositions[i])
                assigned = True
            j = j + 1
        if assigned == False:
            common_conjugacy_classes.append([possible_decompositions[i]])
            automorphisms.append(gap.function_call('Size',[gap.function_call('Stabilizer',[gap.SymmetricGroup(d),possible_decompositions[i],'OnTuples'])]))
        i = i + 1        
    
    ##### Remove Dummy Conjugacy Class
    
    del common_conjugacy_classes[0]
       
    ##### Calculate Hurwitz number
    
    for a in common_conjugacy_classes:
        hurwitz_number = hurwitz_number + 1 / int(automorphisms[common_conjugacy_classes.index(a)])
    
    ##### Return Result
    
    return hurwitz_number, common_conjugacy_classes, automorphisms
\end{lstlisting}

\cleardoublepage
\refstepcounter{dummy}
\addtocontents{toc}{\protect\vspace{1cm}}
\addcontentsline{toc}{chapter}{Bibliography}
\label{app:bibliography}
\printbibliography

\cleardoublepage
\addtocontents{toc}{\protect\vspace{1cm}} 
\refstepcounter{dummy}
\label{app:index}
\addcontentsline{toc}{chapter}{Index}
\printindex

\end{document}